\newtheorem{theorem}{Theorem}[section]
\newtheorem{cor}[theorem]{Corollary}
\newtheorem{lem}[theorem]{Lemma}
\newtheorem{prop}[theorem]{Proposition}
\newtheorem{example}[theorem]{Example}
\newtheorem{defi}[theorem]{Definition}
\newtheorem{rem}[theorem]{Remark}
\numberwithin{equation}{section}
\DeclareMathOperator{\Hom}{Hom}
\DeclareMathOperator{\coker}{coker}
\DeclareMathOperator{\MC}{MC}
\DeclareMathOperator{\MCdg}{MC_{dg}}
\DeclareMathOperator{\Tw}{Tw}
\DeclareMathOperator{\Twfg}{Tw_{{fg}}}
\DeclareMathOperator{\Twfgt}{Tw^2_{{fg}}}
\DeclareMathOperator{\Ho}{Ho}
\DeclareMathOperator{\im}{Im}
\DeclareMathOperator{\Dcoderivedc}{D_c^{{II}}}
\DeclareMathOperator{\Dcoderived}{D^{{II}}}
\DeclareMathOperator{\Perfcoderivedc}{Perf_c^{{II}}}
\DeclareMathOperator{\Perfcoderiveddgc}{Perf_{dgc}^{{II}}}
\DeclareMathOperator*{\Perf}{Perf}
\DeclareMathOperator*{\Mod}{Mod}
\DeclareMathOperator*{\Comod}{Comod}
\DeclareMathOperator*{\colim}{colim}
\DeclareMathOperator{\ev}{ev}
\DeclareMathOperator{\Z}{\mathbb{Z}}
\DeclareMathOperator{\holim}{holim}
\DeclareMathOperator*{\hoilim}{\underleftarrow\holim}
\newcommand{\into}{\hookrightarrow}
\def\Coalg{\mathbf{Coalg}}
\def\Alg{\mathbf{Alg}}
\def\sSet{\mathbf{sSet}}
\def\ground{\mathbf k}
\def\C{\mathscr C}
\def\MCmod{\mathscr {MC}}
\def\End{\operatorname{End}}
\def\id{\operatorname{id}}
\def\OCat{\ensuremath{\Omega}_\mathrm{cat}}
\tikzset{%
	symbol/.style={%
		draw=none,
		every to/.append style={%
			edge node={node [sloped, allow upside down, auto=false]{$#1$}}}
	}
}
\newcommand{\cog}{\ensuremath{\mathbf{Cog}}}
\newcommand{\alg}{\ensuremath{\mathbf{Alg}}}
\newcommand{\acog}{\ensuremath{\mathbf{Cog}^\mathrm{coaug}}}
\newcommand{\cncog}{\ensuremath{\mathbf{Cog}^\mathrm{conil}}}
\newcommand{\aalg}{\ensuremath{\mathbf{Alg}^\mathrm{aug}}}
\newcommand{\ccog}{\ensuremath{\mathbf{cuCog}}}
\newcommand{\calg}{\ensuremath{\mathbf{cuAlg}}}
\newcommand{\accog}{\ensuremath{\mathbf{cuCog}^\mathrm{coaug}}}
\newcommand{\acalg}{\ensuremath{\mathbf{cuAlg}^\mathrm{aug}}}
\newcommand{\ccogp}{\ensuremath{\mathbf{cuCog}_*}}
\newcommand{\calgp}{\ensuremath{\mathbf{cuAlg}_\varnothing}}
\newcommand{\algqim}{\ensuremath{\mathbf{Alg}_\mathrm{q.i.}}}
\newcommand{\algmcm}{\ensuremath{\mathbf{Alg}_\mathrm{MC}}}
\newcommand{\algexm}{\ensuremath{\mathbf{Alg}_\mathrm{ex}}}
\newcommand{\pcog}{\ensuremath{\mathbf{ptdCog}}}
\newcommand{\dgcat}{\ensuremath{\mathbf{dgCat}}}
\newcommand{\sw}{\ensuremath{\kern -1pt\rhd \kern -2pt}}
\newcommand{\pccalg}{\ensuremath{\mathbf{pccuAlg}}}
\newcommand{\fdcalg}{\ensuremath{\mathbf{cuAlg}^\mathrm{fd}}}
\newcommand{\fdccog}{\ensuremath{\mathbf{cuCog}^\mathrm{fd}}}
\newcommand{\algmc}{\ensuremath{\mathrm{Alg}_\mathrm{MC}}}
\newcommand{\pcaalg}{\ensuremath{\mathbf{pcAlg}^\mathrm{aug}}}
\newcommand{\fdaalg}{\ensuremath{\mathbf{Alg}_\mathrm{fd}^\mathrm{aug}}}
\keywords{DG categories,  model categories, enriched categories, bar construction, cobar construction, moduli spaces, simplicial sets}
\subjclass[2020]{14A30, 18N40, 18D20, 18M70}
\begin{document}
	
	\title[Global Koszul duality]{Global Koszul duality}
	\author{M.~Booth and A.~Lazarev}
	\thanks{This work was partially supported by EPSRC grant EP/T029455/1 and the Additional Funding Programme for Mathematical Sciences, delivered by EPSRC (EP/V521917/1) and the Heilbronn Institute for Mathematical Research.}	
		\address{\mbox{Department of Mathematics,
Imperial College London,
SW7 2AZ, United Kingdom}\newline
\indent Heilbronn Institute for Mathematical Research,
Bristol, BS8 1UG, United Kingdom}
	\email{matt.booth@imperial.ac.uk}
	\address{School of Mathematical Sciences\\
		Lancaster University\\ LA1 4YF\\United Kingdom}
		\email{a.lazarev@lancaster.ac.uk}

	\begin{abstract}
	We construct a monoidal model structure on the category of all curved coalgebras and show that it is Quillen equivalent, via the extended bar-cobar adjunction, to another model structure we construct on the category of curved algebras.  When the coalgebras under consideration are conilpotent and the algebras are dg, i.e.\ uncurved, this corresponds to the ordinary dg Koszul duality of Positselski and Keller--Lef\`evre. As an application we construct global noncommutative moduli spaces for flat connections on vector bundles, holomorphic structures on almost complex vector bundles, dg modules over a dg algebra, objects in a dg category, and others.
	\end{abstract}
	
	\maketitle
    \setcounter{tocdepth}{1}
	\tableofcontents
	\section{Introduction}
    
	Koszul duality is a phenomenon occurring  across a wide range of subfields of algebra, geometry and homotopy theory. Its earliest manifestation goes back to the work of Quillen on rational homotopy theory \cite{Quillen69} and takes the form of a Quillen equivalence between  categories of differential graded (dg) Lie algebras and cocommutative dg coalgebras, under moderately severe grading restrictions. These restrictions were removed much later in an influential work of Hinich \cite{Hinich01}. Hinich's breakthrough was the realisation that the correct notion of a weak equivalence on the coalgebra side is \emph{not} that of quasi-isomorphism, but a finer notion which implies quasi-isomorphism but is not implied by it. Hinich's approach to Koszul duality underpins the modern approach to deformation theory, cf. \cite{GLST20} for an elementary introduction to this circle of ideas. 
	
	The next important development came with the work \cite{Lefevre03} of Keller's student Lef\`evre, cf. also \cite{Keller03} for an overview. This was an associative analogue of Hinich's work and it had an important addition of the module-comodule level Koszul duality. The modern definitive treatment of associative Koszul duality belongs to Positselski \cite{Positselski11} who formulated the theory in a more general framework than Lef\`evre and corrected some inaccuracies present in Lef\`evre's thesis. An updated survey containing interesting bits of a truly complicated history of the subject is given in \cite{PosSurvey}.
	
	In the present paper we are interested in developing further, and hopefully deeper, our understanding of associative Koszul duality. As explained in the papers \cite{PosSurvey, Positselski11} mentioned above, there is an equivalence of $\infty$-categories (or, more precisely, a Quillen equivalence of model categories) between the categories of associative dg algebras and conilpotent coalgebras. Though there are no grading restrictions (present in Quillen's work), there are still some significant caveats: 
	\begin{enumerate}\item The dg algebras under consideration are arbitrary, however the corresponding coalgebras are conilpotent, which is a severe restriction indeed;
		\item If the dg algebras under consideration are not augmented, then their Koszul dual coalgebras are not dg but curved; however there is no corresponding theory where algebras are curved and coalgebras are not coaugmented;
		\item Weak equivalences of dg algebras are quasi-isomorphisms; however for (curved or not) coalgebras the corresponding notion is very different from a quasi-isomorphism, and has no easy intrinsic definition.
	\end{enumerate} 
	These issues lead one to seek a more general version of Koszul duality where they are not present. Another motivation is the desire to have a theory on which to base the study of global moduli spaces (as opposed to deformations over a local base). In fact, this turns out to be closely related to the point (1) above. 	 
	
	One indication that a more general theory exists was present already in \cite[Section 6.7]{Positselski11} where Koszul duality was established between comodules over a \emph{nonconilpotent} (curved) coalgebra and modules over its cobar construction. However, obtaining a symmetric result involving a \emph{bar} construction was not possible at the time of writing of op.cit.\ simply because the ordinary bar construction of a dg algebra is manifestly conilpotent. A key new ingredient was, therefore, the \emph{nonconilpotent} extended bar construction of \cite{AnelJoyal}. Indeed, using this notion, the symmetric result alluded to above was obtained in \cite{GL20}.
	
	There is still a long distance between the results of \cite{GL20} and a full-fledged Koszul duality between (dg or curved) algebras and (dg or curved) coalgebras. The other missing key ingredient is the appropriate definition of a weak equivalence for algebras and coalgebras. We understand by now that the familiar notion of a quasi-isomorphism is inadequate for coalgebras, but it has now stopped working for algebras as well (e.g.\ because we need to include curved algebras in our theory, but also because the extended bar construction does not preserve quasi-isomorphisms when the latter make sense). 
	
	In the present work we construct a Quillen equivalence (dubbed `global Koszul duality') between curved algebras and curved coalgebras which is free of the issues listed above (and which are attributes of ordinary, or `local', Koszul duality). We call the weak equivalences in the categories of curved algebras and coalgebras \emph{Maurer-Cartan} (or MC) equivalences because they are closely related to Maurer-Cartan elements and related dg categories. An MC equivalence is not directly comparable to a quasi-isomorphism since the latter notion is ill-defined for curved (co)algebras. In the uncurved case however, it is strictly finer than a quasi-isomorphism.
	
	More precisely, our main results are as follows:
	\begin{enumerate}\item We construct a left proper combinatorial model structure on the category of curved coalgebras $\ccogp$ (modified by adding a final object) where the weak equivalences are the MC equivalences and the cofibrations are the injections (Theorem \ref{thm:coalgebrasmodel}). All objects in this model structure are cofibrant. This model structure is monoidal (Theorem \ref{cogmonoidal}).
		\item We construct a right proper combinatorial model structure on the category $\calgp$ of initialised curved algebras where the weak equivalences are the MC equivalences and the fibrations are the strong fibrations, a certain subclass of the surjections. Every curved algebra is fibrant. This model category is model enriched over $\ccogp$ (Theorem \ref{modelenrich}). This is contained in Theorems \ref{mcmostralg}, \ref{rightproperlem}, and \ref{modeltheoreticfibs} below. A slight asymmetry with the coalgebra case is that not all surjections are model-theoretic fibrations.
		\item There is a Quillen equivalence between the model structures of (1) and (2), induced by the bar-cobar adjunction. This is contained in Theorem \ref{thm:barcobarQequivalence} below. 
	\end{enumerate} 
	As an aside, note that there is a philosophical similarity between global Koszul duality and the Quillen equivalence relating topological spaces and simplicial sets; here coalgebras are viewed as analogous to simplicial sets and algebras to topological spaces.
	
	Since the result above concerns non-(co)augmented and curved algebras and coalgebras, it is not directly comparable with local Koszul duality. To make a comparison, one should restrict it to the undercategories of $\ground$, the ground field. Then we obtain a Quillen equivalence between the categories of dg algebras (albeit still with MC equivalences, a finer relation than quasi-isomorphism) and coaugmented curved coalgebras. There is a further coreflective Quillen adjunction relating this Quillen equivalence to local Koszul duality (Proposition \ref{qiMC}). Somewhat imprecisely, one can say that global Koszul duality becomes local Koszul duality when restricted to conilpotent coalgebras. 
	
	In a similar way, global Koszul duality, when restricted to \emph{pointed} coalgebras, becomes the categorical Koszul duality of \cite{HL2020}, see Theorem \ref{categoricalcorefl} below. One informal and surprising consequence of it is that the homotopy category of small dg categories is a coreflective subcategory of the homotopy category of curved algebras. Under this correspondence, dg algebras correspond to dg categories with a distinguished object.
	
	An important invariant of a dg algebra is its \emph{derived category}; it is well-known that two quasi-isomorphic dg algebras have equivalent derived categories. There is also the notion of a \emph{derived category of the second kind}, cf.\ \cite[Chapter 7]{PosSurvey}. Of greatest relevance for global Koszul duality is the \emph{compactly generated} derived category of the second kind, cf. \cite{GL20}. It can be defined for a dg algebra but also for a curved algebra (which does not have an ordinary derived category). We show that two MC equivalent curved algebras have equivalent compactly generated derived categories of the second kind, cf.\ Corollary \ref{MCisDII}. This leads one to take up the study of derived Morita equivalences of the second kind; the associated model structures appear to be an interesting problem for future work. The notion of an MC equivalence should also be relevant to the study of categories of \emph{matrix factorisations}, cf. \cite{Dyckerhoff11}, since these categories are formed by $\mathbb{Z}/2$-graded twisted modules and so are invariant under MC equivalences (cf.\ Remark \ref{mfrmk}).
	
	Our main application is the construction of homotopy invariant moduli spaces in various situations. The prototypical example, to which any other moduli space considered here is reduced, is the moduli space of MC elements in a given curved algebra. Two MC equivalent curved algebras have isomorphic moduli of MC elements, essentially by definition. Note that these moduli are not quasi-isomorphism invariant, even when the notion of quasi-isomorphism makes sense (i.e. for dg algebras). We introduce the notion of an MC stack, which is, roughly speaking, a functor on the category of finite dimensional curved algebras with values in an $\infty$-category that preserves finite homotopy limits. We consider two main examples of MC stacks -- those taking values in simplicial sets $\sSet$ and in dg categories $\dgcat$. In the latter case we call them \emph{noncommutative moduli spaces}. We prove a representability result stating that any MC stack is representable by a curved coalgebra (or equivalently a pseudocompact curved algebra), cf. Corollary \ref{stackrepcor} and Proposition \ref{ncmodrep}. Since a curved coalgebra corresponds, by global Koszul duality, to a curved algebra, one can also say that any MC stack is controlled by a curved algebra, which is the formalism often seen in the deformation theory literature. We also give a definition of the tangent space to an MC stack, and compute some examples. Our treatment of tangent spaces roughly follows Lurie's in \cite{luriedagx}.
	
	When restricted to the category of finite dimensional \emph{local} dg algebras, we obtain a notion of derived deformation functor that is essentially equivalent to Lurie's notion of a noncommutative formal moduli problem (cf.\ \cite[Section 3.2]{luriedagx}); more precisely, our theory is a nonconnective version of Lurie's. Among global moduli problems that can be handled by MC stacks is the moduli of objects in a dg category. This problem has already been treated in \cite{toenvaquie}; one advantage of our approach is that there is a representing MC stack for this moduli problem with no restrictions on the dg category in question.
	
	Other global moduli problems represented by MC stacks include flat connections on vector bundles on smooth manifolds, holomorphic structures on almost complex bundles on complex analytic manifolds, dg modules up to quasi-isomorphism over a dg algebra, twisted modules over a dg algebra up to homotopy, and $\infty$-local systems on topological spaces.
	
	At the same time, there are natural moduli problems that cannot be included in our framework. These are given by functors defined on \emph{commutative} algebras of various flavours (simplicial, dg etc.) that do not have a natural extension to associative algebras. Such are moduli of complex structures on a given smooth manifold, of various operadic algebras (commutative, associative, Lie etc.) on a given graded vector space or, more generally, problems `controlled' by an $L_\infty$ algebra. In this connection it is natural to ask whether an analogue of global Koszul duality exists in other contexts, e.g. between dg Lie algebras and cocommutative coalgebras. It is clear that the solution does not carry over from the associative case in a straightforward manner. For example, it is well-known that cocommutative coalgebras split as a coproduct of conilpotent ones (see e.g.\ \cite{KDcocomm}), which is not true for coassociative coalgebras, and this indicates that some new ideas are needed to construct an analogue of global Koszul duality in other contexts. It seems that the framework developed in the present paper should work for non-$\Sigma$ operads without too many changes (cf. \cite{Ginzburg94}) but it is not clear whether examples of such operadic global Koszul duality are abundant `in nature'.

	\subsection{Organisation of the paper} 
 
   Section \ref{section:bar} contains background material on curved algebras and coalgebras, bar and cobar constructions, and Maurer--Cartan elements.

    Section \ref{section:higher} introduces the notion of $n$-homotopy for curved algebras, curved coalgebras, and simplicial sets. For $n=1$ this reduces to ordinary homotopy of simplicial sets and derivation homotopy of (co)algebras. The case $n=3$ is the one relevant to global Koszul duality but other cases, particularly $n=\infty$, have independent interest; one can speculate that there are various interesting $(\infty,1)$-category structures on $\sSet$ based on the notion of $n$-homotopy.

    Section \ref{section:twisted} surveys the categories of twisted (co)modules over curved (co)algebras and the associated compactly generated coderived categories. 

    Section \ref{section:MC} introduces the Maurer--Cartan dg category $\MCdg(A)$ associated to a curved algebra $A$. We show that the functor $A\mapsto\MCdg(A)$ admits a left adjoint. We then develop further the properties of MC elements in curved algebras and their homotopies.
    
    Section \ref{section:mclifting}, which is independent of the others, shows that if $A \to B$ is a square zero extension of curved algebras then the induced map $\MCdg(A) \to \MCdg(B)$ is a fibration. We moreover analyse the fibres in terms of the fibre of $A \to B$.

    Section \ref{section:structure} is a detailed analysis of the structure theory of curved coalgebras. In particular we decompose injections of curved coalgebras as relative cell complexes for two simple kinds of morphism: injections between finite dimensional cosemisimple curved coalgebras, and cosquare zero extensions of finite dimensional curved coalgebras. Along the way we develop a structure theory of finite dimensional curved semisimple algebras, which turns out to be a mild extension of the ordinary classification of finite dimensional semisimple algebras over a perfect field.

    In section \ref{section:morita} we introduce the notion of {\normalfont II}-Morita equivalences between curved algebras. We use our structure theorems and our results on lifting MC elements to show that these are preserved by convolution with arbitrary curved coalgebras.
	
	Section \ref{section:MC2} contains the crucial definition of an MC equivalence: a morphism $C\to C'$ of curved coalgebras is an MC equivalence exactly when it induces a quasi-equivalence of dg categories $\MCdg\Hom(C',A)\to \MCdg\Hom(C,A)$ for every curved algebra $A$. MC equivalences for curved algebras are defined analogously. Using our results on {\normalfont II}-Morita equivalence, we prove that the unit and counit of the bar-cobar adjunction between curved algebras and curved coalgebras are MC equivalences (Theorem \ref{thm:barcobarequiv}).
	
	In section \ref{section:strongcof} we define the notion of a strong cofibration of curved coalgebras as a map $C\to C'$ inducing a fibration of dg categories $\MCdg\Hom(C',A)\to \MCdg\Hom(C,A)$ for every curved algebra $A$. These will be our model-theoretic cofibrations of curved coalgebras. We use our structure theorems to show that, surprisingly, the condition of being a strong cofibration is equivalent to simply being injective (Theorem \ref{thm:strongcof}). 

    Section \ref{section:strongfib} contains analogous material on strong fibrations, i.e.\ those maps $A \to A'$ of curved algebras which induce a fibration $\MCdg\Hom(C,A) \to \MCdg\Hom(C,A')$ for every curved coalgebra $C$. These will, dually, be our model-theoretic fibrations of curved algebras. We obtain a characterisation in terms of lifting properties.

	Section \ref{section:MCmodel} is dedicated to proving the main result of the paper described above -- the existence of model structures on the categories of curved algebras and coalgebras, where the weak equivalences are the MC equivalences, and such that the bar-cobar adjunction is a Quillen equivalence between them. We also give corresponding results for various slice categories such as dg algebras, augmented dg algebras, dg coalgebras, and coaugmented dg coalgebras in Theorem \ref{slicethm}. We discuss the model enrichment of algebras over coalgebras (Theorem \ref{modelenrich}) and give small sets of generating (co)fibrations. We also discuss the relation of our global Koszul duality to Positselski's conilpotent Koszul duality and Holstein--Lazarev's categorical Koszul duality.
	
	Finally, in Section \ref{section:moduli} we give our main application of global Koszul duality to the construction of global moduli spaces or MC stacks as described above. We discuss how our approach compares to Lurie's \cite{luriedagx} and give various examples.

    The following graph illustrates the dependencies of the various sections:

       $$ \begin{tikzcd}
        \ref{section:bar} \ar[dr] & \ref{section:higher}\ar[r] & \ref{section:twisted}\ar[dl] & \ref{section:structure}\ar[d]\\
        \ref{section:moduli} & \ref{section:MC}\ar[r] & \ref{section:mclifting}\ar[r] & \ref{section:morita}\ar[d] \\
        \ref{section:MCmodel}\ar[u] & \ref{section:strongfib}\ar[l] & \ref{section:strongcof}\ar[l] & \ref{section:MC2}\ar[l]
    \end{tikzcd}$$
	
	\subsection{Acknowledgements} We are grateful to B. Keller, J. Chuang, J. Holstein, L. Positselski and J. Pridham for many conversations we have had over the years, directly or indirectly related to the topics of the present work. We would also like to thank S. Opper, G. Raptis, and K. Rasmussen for their helpful comments.

\subsection{Notation}
Throughout we will work over a fixed perfect field $\ground$. This perfect hypothesis will be imposed primarily because we want finite dimensional semisimple $\ground$-algebras to be separable. Unadorned tensor products and Homs will be assumed to be taken over $\ground$. Simplicial chain coalgebras and cochain algebras will be understood with coefficients in $\ground$.

We will primarily work with cohomologically graded chain complexes over $\ground$, although we will have occasion to use homological grading when talking about chain coalgebras of simplicial sets. We will always make clear which grading convention is used. We will denote cohomological gradings with superscripts and homological gradings with subscripts; to convert between these simply put $A_i = A^{-i}$. For a complex $A$ 
we denote by $\Sigma A$ its suspension, or shift, given in homological grading by $(\Sigma A)_i = A_{i-1}$. When working in the cohomological grading we will also denote the shift $\Sigma A$ by $A[1]$, since we have $A[1]_i = A_{1+i}$. We will denote the inverse functor of $[1]$ by $[-1]$.

We denote the category of unital dg-$\ground$-algebras by $\alg$ and the category of counital dg-$\ground$-coalgebras by $\cog$. We will also consider the category $\aalg$ of $\ground$-augmented dg algebras and the category $\acog$ of $\ground$-coaugmented dg coalgebras, both of which are obtained as slice categories of objects over and under $\ground$, respectively. We denote by $\calg$ the category of curved $\ground$-algebras, and by $\ccog$ the category of curved $\ground$-coalgebras. For results and terminology about coalgebras we refer the reader to Positselski's recent survey \cite{PosSurvey}.

We denote by $\calgp$ the category obtained from $\calg$ by formally adjoining an initial object $\varnothing$ and we denote by $\ccogp$ the category obtained from $\ccog$ by formally adjoining a final object $*$.

The category of small dg categories will be denoted by $\dgcat$.

The category of simplicial sets will be denoted by $\sSet$.

If $X$ is a differential graded object, we denote its underlying graded object by $X^\#$. Similarly if $X$ is a curved (co)algebra, we denote its underlying graded (co)algebra by $X^\#$.

If $Y$ is a topological space or a simplicial set, we use $C(Y,\ground)$ to denote the dg algebra of normalised simplicial chains on $Y$ with values in $\ground$. 
 
\section{The bar-cobar adjunction}\label{section:bar}
The classical bar and cobar constructions give an adjunction between the categories of coaugmented conilpotent dg coalgebras and augmented dg algebras. In this section, we describe a non-conilpotent version of this adjunction, before extending it to curved algebras. The cobar construction remains the same, but we must replace the bar construction with the {extended bar construction}; the difference is essentially that one must replace the cofree conilpotent coalgebra functor (the tensor coalgebra) with the cofree coalgebra functor (a much larger coalgebra). We primarily follow \cite{GL20}, mentioning also that the extended bar construction already appears in \cite{AnelJoyal}.

\subsection{Pseudocompact algebras}
If $V_i$ is a cofiltered system of finite dimensional $\ground$-vector spaces, then the cofiltered limit $\varprojlim_iV_i$ can be equipped with the inverse limit topology, regarding a finite dimensional vector space as discrete. Say that a topological vector space is {pseudocompact} if it is isomorphic to such a cofiltered limit. Any vector space $V$ is the filtered colimit of its finite dimensional subspaces, so $V^*$ is canonically pseudocompact. Similarly, if $A_i$ is a cofiltered system of finite dimensional $\ground$-algebras then its limit also can be equipped with the inverse limit topology; such a topological algebra is also called {pseudocompact}. A topological algebra is pseudocompact precisely when it is complete Hausdorff, with a basis of finite-codimensional ideals.

If $C$ is a coalgebra, it is the union of its finite dimensional subcoalgebras $C_i$ by a well-known result of Sweedler, and hence its linear dual $C^*\cong\varprojlim C_i$ is naturally a pseudocompact algebra. If $A$ is a pseudocompact algebra, then its topological dual $A^*$ is a coalgebra. With the convention that the dual of a pseudocompact algebra always means the topological dual, we have $C^{**}\cong C$ and $A^{**}\cong A$. Moreover, the linear and topological duals together form a contravariant equivalence between the category of pseudocompact algebras and the category of coalgebras.

If $A$ is any algebra, its {pseudocompact completion} is the pseudocompact algebra $\check A$ obtained as the cofiltered limit of the system of finite dimensional quotients of $A$. Pseudocompact completion is functorial, and in fact the left adjoint to the functor forgetting the topology.

If $V$ is a vector space then $T(V)$ denotes the tensor algebra on $V$. If $V=\varprojlim V_i$ is a pseudocompact vector space, its {pseudocompact tensor algebra} is the pseudocompact algebra $\check{T}(V)\coloneqq \varprojlim_i\check{T}(V_i)$. The functor $\check T$ is left adjoint to the forgetful functor from pseudocompact algebras to pseudocompact vector spaces. If $U$ is any vector space then its linear dual $U^*$ is pseudocompact, and the topological dual of $\check{T}(U^*)$ is precisely the cofree coalgebra on $U$. Note that the cofree \textit{conilpotent} coalgebra on $U$ is simply given by the tensor coalgebra $T(U)$ with the usual deconcatenation coproduct.

\subsection{The bar and cobar constructions}
A dg algebra $A$ is {augmented} if the canonical unit morphism $\ground \to A$ admits a retract $A \to \ground$; an {augmentation} on $A$ is a choice of such a retract. The {augmentation ideal} is the ideal $\bar A\coloneqq \ker(A \to \ground)$; it is a nonunital subalgebra of $A$. Similarly, a dg coalgebra $C$ is {coaugmented} if the counit morphism $C \to \ground$ admits a section $\ground \to C$. In this case the {coaugmentation coideal} is the quotient $\bar C \coloneqq\mathrm{coker}(\ground \to C)$, which inherits the structure of a noncounital coalgebra from $C$.

\begin{defi}
\hfill	
\begin{enumerate}
	\item Let $C$ be a coaugmented dg coalgebra. The {cobar construction} on $C$ is the dg algebra $\Omega C$ whose underlying graded algebra is $T\Sigma^{-1}\bar C$, the tensor algebra on the desuspension of the coaugmentation coideal of $C$. The differential is the usual cobar differential combining the differential and the comultiplication on $C$.
	\item Let $A$ be an augmented dg algebra. The {dual extended bar construction} is the pseudocompact dg algebra whose underlying pseudocompact graded algebra is $\check T(\Sigma^{-1} \bar{A}^*)$, the pseudocompact tensor algebra on the desuspension of the dual of the augmentation ideal of $A$. The differential combines the differential on $A$ with the multiplication; see \cite[Definition 2.5]{GL20} for a concrete formula. The {extended bar construction} on $A$ is the dg coalgebra $\check B A$ obtained as the topological dual of its dual extended bar construction.

	\end{enumerate}	

	\end{defi}

\begin{rem}
The underlying graded coalgebra of $\check B A$ is the cofree coalgebra on $\Sigma \bar A$, the suspension of the augmentation ideal of $A$. The differential is similar to the usual bar differential. 
	\end{rem}

\subsection{Maurer--Cartan elements}
Let $X$ be a not necessarily unital dg algebra. A {Maurer--Cartan element} (MC element for short) is an element $x\in X$ of cohomological degree one such that $dx+x^2=0$. The set of all MC elements of $X$ is denoted $\MC(X)$; this construction is functorial in $X$.

Let $C$ be a dg coalgebra and $A$ be a dg algebra. Then the dg vector space $\Hom(C,A)$ admits a product, the convolution product, induced from the multiplication on $A$ and the comultiplication on $C$. More precisely, if $\Delta:C\to C\otimes C$ is the coproduct on $C$ and $m:A\otimes A\to A$ is the product in $A$ then the formula
\[
fg=m\circ(f\otimes g)\circ\Delta
\]
determines a dg algebra structure on $\Hom(C,A)$ called the \emph{convolution algebra} associated to $C$ and $A$. Note that this construction still makes sense in case $C$ or $A$ are non(co)unital.

Observe that if one regards $C^*\cong\varprojlim_i C_i^*$ as a pseudocompact algebra, the dg algebra $\Hom(C,A)$ is the same as the completed tensor product $C^*\hat\otimes A\coloneqq \varprojlim_i( C^*_i\otimes A)$.

\begin{prop}\label{mcrep}\hfill
	\begin{enumerate}
		\item Let $C$ be a coaugmented dg coalgebra and $X$ any dg algebra. Then there is a natural isomorphism $\Hom(\Omega C,X)\cong \MC(\Hom(\bar C,X))$.
		\item Let $Y$ be a dg coalgebra and $A$ an augmented dg algebra. Then there is a natural isomorphism $\Hom(Y,\check B A)\cong \MC(\Hom(Y,\bar A))$.
		\end{enumerate}
	\end{prop}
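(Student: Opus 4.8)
Both parts are instances of the classical \emph{twisting cochain} correspondence, applied here in a one-sided form: in (1) only the coaugmentation of $C$ is used while $X$ is arbitrary, and in (2) only the augmentation of $A$ is used while $Y$ is arbitrary. The plan for (1) is to exploit that $\Omega C=T\Sigma^{-1}\bar C$ is, as a graded algebra, free on $\Sigma^{-1}\bar C$. By the free--forgetful adjunction for unital graded algebras, a morphism of graded algebras $f\colon\Omega C\to X$ is the same datum as a degree-zero linear map $\Sigma^{-1}\bar C\to X^{\#}$, equivalently an element $\alpha\in\Hom(\bar C,X)^{1}$ of the convolution algebra. It then remains only to identify which such $\alpha$ give genuine \emph{dg} algebra maps. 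Here I would use that both $f\circ d_{\Omega C}$ and $d_{X}\circ f$ are degree-one $f$-derivations, so they agree as soon as they agree on the generators $\Sigma^{-1}\bar C$. Writing out $f\,d_{\Omega C}=d_{X}\,f$ on a generator $\Sigma^{-1}c$ and inserting the explicit cobar differential---whose linear part comes from $d_{C}$ and whose quadratic part comes from the reduced coproduct $\bar\Delta$---yields precisely the relation $d\alpha+\alpha^{2}=0$ in $\Hom(\bar C,X)$, after tracking the signs introduced by $\Sigma^{-1}$. This is the Maurer--Cartan equation, giving the bijection $\Hom(\Omega C,X)\cong\MC(\Hom(\bar C,X))$; naturality in $C$ and $X$ is immediate from the construction.

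Part (2) is the formal dual. The underlying graded coalgebra of $\check B A$ is the cofree coalgebra on $\Sigma\bar A$, so by the cofree--forgetful adjunction a morphism of graded coalgebras $g\colon Y\to\check B A$ is the same as a degree-zero linear map $Y\to\Sigma\bar A$, equivalently an element $\beta\in\Hom(Y,\bar A)^{1}$. To detect when such a $g$ is a chain map I would argue dually to (1): both $g\circ d_{Y}$ and $d_{\check B A}\circ g$ are $g$-coderivations $Y\to\check B A$, and two $g$-coderivations into a cofree coalgebra coincide as soon as their corestrictions along the projection $\pi\colon\check B A\to\Sigma\bar A$ onto the cogenerators agree. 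Thus it suffices to check $\pi\,g\,d_{Y}=\pi\,d_{\check B A}\,g$, an identity of linear maps $Y\to\Sigma\bar A$, which unwinds via the bar differential to the Maurer--Cartan equation $d\beta+\beta^{2}=0$ in $\Hom(Y,\bar A)$, again up to the expected suspension signs.

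The main obstacle, and the only point at which (2) is more than a transcription of (1), is the coderivation step: a morphism into a cofree coalgebra is pinned down only by its corestriction to the cogenerators, so one must invoke the universal property of the cofree coalgebra for $g$-coderivations (the comultiplicative analogue of the fact that a derivation out of a free algebra is determined by its values on generators). Since $\check B A$ is built from the \emph{genuine} cofree coalgebra rather than the conilpotent tensor coalgebra, I would take care to confirm that this universal property persists in the nonconilpotent setting; the pseudocompact duality between coalgebras and pseudocompact algebras offers a clean way to reduce it to the free-algebra statement already used in (1), namely by dualising $g$ to a morphism of pseudocompact dg algebras $\check T\Sigma^{-1}\bar A^{*}\to Y^{*}$. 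The remaining ingredient, the sign bookkeeping from the various (de)suspensions, is routine but error-prone and should be recorded explicitly.
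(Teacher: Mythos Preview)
Your proposal is correct and follows essentially the same approach as the paper's (very terse) proof, which simply declares part (1) ``completely standard'' via the free-forgetful adjunction and the MC interpretation of the differential compatibility, and for part (2) defers to \cite[Proposition 2.6]{GL20}. Your added care about the coderivation universal property for the genuine (nonconilpotent) cofree coalgebra, and your suggestion to handle it via pseudocompact duality, is exactly in the spirit of how the extended bar construction is defined in the paper and in the cited reference.
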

\begin{proof}
The first part is completely standard: a map $\Omega C \to X$ is the same thing as a linear map $f:\Sigma^{-1}\bar C \to X$ compatible with the differentials, and this latter compatibility is precisely the MC equation for $f$. The second claim is similar; cf.\ \cite[Proposition 2.6]{GL20}.
\end{proof}
Just as in the conilpotent setting, bar and cobar are adjoint. If $C,C'$ are coaugmented coalgebras, let $\Hom_*(C,C')$ denote the set of morphisms between them respecting the coaugmentation. We use the analogous notation for maps of augmented algebras.
\begin{cor}
Let $C$ be a coaugmented dg coalgebra and $A$ an augmented dg algebra. Then there is a natural isomorphism $$\Hom_*(\Omega C,A)\cong \Hom_*(C,\check B A).$$
	\end{cor}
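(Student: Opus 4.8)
The plan is to identify both sides of the claimed isomorphism with the single set $\MC(\Hom(\bar C,\bar A))$ of Maurer--Cartan elements of the convolution algebra built from the coaugmentation coideal $\bar C$ and the augmentation ideal $\bar A$, and then to invoke the naturality of the bijections in Proposition \ref{mcrep}. I would observe first that $\Omega C=T\Sigma^{-1}\bar C$ carries a canonical augmentation, namely the tensor-algebra augmentation annihilating the positive tensor powers, and that $\check B A$ carries a canonical coaugmentation coming from its presentation as the cofree coalgebra on $\Sigma\bar A$. These are precisely the structures with respect to which $\Hom_*$ is taken, so they are what must be matched on the two sides.

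For the left-hand side I would apply Proposition \ref{mcrep}(1) with $X=A$ to obtain a natural bijection $\Hom(\Omega C,A)\cong\MC(\Hom(\bar C,A))$. An algebra map $\Omega C\to A$ is augmentation-preserving exactly when the composite $\Omega C\to A\to\ground$ agrees with the augmentation of $\Omega C$; since the latter annihilates $\Sigma^{-1}\bar C$, this says precisely that the corresponding linear map $f\colon\Sigma^{-1}\bar C\to A$ factors through $\bar A=\ker(A\to\ground)$. Because $\bar A$ is a nonunital subalgebra of $A$ and $\bar C$ already carries the reduced coalgebra structure used in Proposition \ref{mcrep}(1), the convolution product, and hence the Maurer--Cartan equation, restrict from $\Hom(\bar C,A)$ to $\Hom(\bar C,\bar A)$. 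This gives $\Hom_*(\Omega C,A)\cong\MC(\Hom(\bar C,\bar A))$.

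For the right-hand side I would apply Proposition \ref{mcrep}(2) with $Y=C$ to obtain $\Hom(C,\check B A)\cong\MC(\Hom(C,\bar A))$. Dually, a coalgebra map $C\to\check B A$ respects the coaugmentations exactly when it kills the coaugmentation of $C$, so that it factors through the quotient coalgebra $\bar C=\coker(\ground\to C)$; this identifies $\Hom_*(C,\check B A)\cong\MC(\Hom(\bar C,\bar A))$, the same set as before. Composing the two natural bijections then yields the desired isomorphism, with naturality in $C$ and $A$ inherited from Proposition \ref{mcrep}. The only genuine work, and the step I expect to require the most care, is verifying that the augmentation-preserving (resp.\ coaugmentation-preserving) condition translates exactly into factorisation through $\bar A$ (resp.\ $\bar C$), together with the compatibility of the convolution algebra structure and the Maurer--Cartan equation with passage to this subalgebra and quotient coalgebra; everything else is formal once Proposition \ref{mcrep} is available.
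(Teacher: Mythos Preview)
Your proposal is correct and follows exactly the same approach as the paper: the paper's proof consists of the single sentence ``Both are naturally isomorphic to $\MC(\Hom(\bar C,\bar A))$,'' and you have supplied precisely the details that justify this. One small imprecision worth noting: when you write that a coaugmentation-preserving coalgebra map $C\to\check B A$ ``factors through the quotient coalgebra $\bar C$,'' it is not the coalgebra map itself that factors (it must be counital), but rather the corresponding linear map $C\to\Sigma\bar A$ (equivalently the MC element in $\Hom(C,\bar A)$) that vanishes on the image of $\ground$ and hence descends to $\bar C$.
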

\begin{proof}
	Both are naturally isomorphic to $\MC(\Hom(\bar C,\bar A))$.
	\end{proof}

\begin{rem}
	A {pro-finite dimensional algebra} is a pro-object in the category of finite dimensional dg algebras, i.e.\ a cofiltered diagram of finite dimensional algebras. The category of pro-finite dimensional algebras is equivalent to the category of pseudocompact algebras, and one can phrase the above definitions and results in this language; see \cite{B22} for an example of this approach in the conilpotent setting. We will freely pass between these two notions when necessary.
\end{rem}

\subsection{Curved (co)algebras}
In this section we follow \cite{HL2020}. A {curved algebra} is a graded algebra $A$ equipped with a cohomological degree $1$ derivation $d$ and an element $h \in A^2$ such that $d(h)=0$, and $d^2(a)=[h,a]=ha-ah$ for all $a\in A$. We call $h$ the {curvature}. Note that a dg algebra is precisely a curved algebra with zero curvature.

A morphism of curved algebras $A \to B$ is a pair $(f,b)$ where $f:A \to B$ is a map of graded algebras, and $b \in B$ is a cohomological degree 1 element satisfying the formulas
\begin{enumerate}
	\item $f(da)=d(fa) + [b,fa]$
	\item $f(h_A) = h_B + db + b^2$.
	\end{enumerate} 
Morphisms compose by putting $(g,b)(f,a)=(gf, b+g(a))$. In particular, if $A$ and $B$ are dg algebras, then a morphism $A \to B$ in the category of curved algebras is a pair $(f,b)$ where $b\in \MC(B)$ and $f:A \to B^{[b]}$ is a dg algebra morphism. 

Say that a morphism is uncurved if $b=0$. Note that any morphism $(f,b)$ of curved algebras decomposes as the composition $(\id,b)(f,0)$ of an uncurved morphism with an isomorphism. We will often use this to replace a general morphism by an uncurved one.

A {Maurer-Cartan element} of a curved algebra $A$ is an element $x\in A^1$ such that $h+dx+x^2=0$. Note that $\MC(A)$ is in bijection with $\Hom(k,A)$. Putting $d^x(a)=da+[x,a]$, one can compute that $d^xd^x=0$. Moreover, $d^x$ is a derivation, so that $A^x=(A,d^x)$ is a dg algebra. The morphism $(\id, x): A^x \to A$ is an isomorphism of curved algebras. In particular, if $A$ admits an MC element then it is isomorphic to a dg algebra.

Recall that if $\C$ is a category and $c\in \C$ an object, then the undercategory $\C_{c/}$ is the category whose objects are maps $c \to x$ and whose morphisms are commutative triangles. If $\C$ has coproducts then the projection map $\pi:\C_{c/} \to \C$ is a right adjoint, whose left adjoint $\pi^!$ sends $x$ to the natural map $c \to c\sqcup x$. Similarly one may define an overcategory $\C_{/c}$ whose objects are maps $x \to c$.

\begin{lem}\label{algslice1}
	There is an equivalence $\alg\simeq \calg_{\ground/}$.
	\end{lem}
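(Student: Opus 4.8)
The plan is to exhibit explicit mutually inverse functors between $\alg$ and $\calg_{\ground/}$, built out of the twisting construction $A\mapsto A^x$ discussed above. First I would unwind what the undercategory actually parametrises. Since $\ground$ is the curved algebra with zero differential and zero curvature, a morphism $(f,b)\colon\ground\to A$ forces $f$ to be the unit of $A$, the only unital graded algebra map out of $\ground$; then structure equation (1) is vacuous because $d(1_A)=0$ and $1_A$ is central, while (2) reads $0=h_A+db+b^2$. Hence, as already noted, $\Hom(\ground,A)\cong\MC(A)$, so an object of $\calg_{\ground/}$ is precisely a pair $(A,x)$ with $x\in\MC(A)$. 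I would then define $\Psi\colon\calg_{\ground/}\to\alg$ by $(A,x)\mapsto A^x=(A,d^x)$, which is a genuine dg algebra by the identity $d^xd^x=0$ recorded above, and $\Phi\colon\alg\to\calg_{\ground/}$ by sending a dg algebra $B$ to the pair $(B,0)$, using that $0$ is always an MC element when $h_B=0$.

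The crux is the behaviour on morphisms. A morphism $(A,x)\to(A',x')$ in $\calg_{\ground/}$ is a curved-algebra map $(f,b)\colon A\to A'$ making the triangle under $\ground$ commute; using the composition law $(f,b)(u_A,x)=(fu_A,b+f(x))$ together with $fu_A=u_{A'}$, commutativity is exactly the equation $b=x'-f(x)$. The key observation is that conjugating such a map by the canonical isomorphisms $(\id,x)\colon A^x\to A$ and $(\id,x')^{-1}=(\id,-x')\colon A'\to A'^{x'}$ produces $(\id,-x')(f,b)(\id,x)=(f,0)$, an \emph{uncurved} map $A^x\to A'^{x'}$. Since source and target are dg algebras, an uncurved map between them is precisely a map of dg algebras: condition (1) becomes commutation with $d^x$ and $d^{x'}$, and condition (2) is automatic. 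This defines $\Psi$ on morphisms, and functoriality follows from the computation $b'+f'(b)=x''-(f'f)(x)$ for a composable pair.

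Finally I would verify the two composites. One direction is strict, since $\Psi\Phi(B)=B^0=B$ because $d^0=d$. For the other, the family $(\id,x)\colon(A^x,0)\to(A,x)$ should provide a natural isomorphism $\Phi\Psi\Rightarrow\id$. That it lands in the undercategory amounts to $(\id,x)(u_{A^x},0)=(u_A,x)$, and naturality against a morphism $(f,b)$ reduces to $(f,b)(\id,x)=(f,x')=(\id,x')(f,0)$, both sides equal to $(f,x')$.

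I expect no conceptual obstacle: the essential content, that an MC element trivialises the curvature, is already supplied by the isomorphism $(\id,x)\colon A^x\to A$. The only thing requiring genuine care is consistent bookkeeping of the non-symmetric composition law $(g,b)(f,a)=(gf,b+g(a))$ across all four checks, and in particular confirming that the twisting isomorphisms assemble into an honest natural transformation of undercategory-valued functors rather than merely an object-wise isomorphism.
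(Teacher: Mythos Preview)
Your proposal is correct and takes essentially the same approach as the paper: both use the inclusion $B\mapsto(B,0)$ and the twisting isomorphism $(\id,x)\colon A^x\to A$ to identify $\alg$ with $\calg_{\ground/}$. The only cosmetic difference is that the paper phrases it as showing the inclusion is essentially surjective and fully faithful, while you construct an explicit quasi-inverse $\Psi$ and verify the natural isomorphism; the underlying computations are identical.
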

\begin{proof}
Every dg algebra $A$ admits a morphism $\ground \to A$ of dg algebras which is in particular a morphism of curved algebras, so that the natural inclusion $\alg\to \calg$ has image contained in $\calg_{\ground/}$. Let $i:\alg \to \calg_{\ground/}$ be the natural inclusion; we wish to show that it is an equivalence. A map $\ground \to A$ of curved algebras is the data of an MC element $x \in A$. In this case, $A$ is isomorphic in the undercategory to the dg algebra $A^x$, so that $i$ is essentially surjective. To see that it is fully faithful, let $A$ and $B$ be two dg algebras. The natural map $\ground \to A$ corresponds to the MC element $0$, so a map $A \to B$ in $\calg_{\ground/}$ is given by a pair $(f,0)$, where $f$ is a map of graded algebras satisfying $f(da)=d(fa)$. This is precisely a morphism of dg algebras.
	\end{proof}

If $\C$ is a category and $f:c\to d$ is a morphism, then one may form the double slice category $\C_{c/d}$, whose objects are diagrams $c \to x \to d$ factoring $f$ and whose morphisms are given by maps $x\to x'$ making the obvious square commute. One can check that there are equivalences $(\C_{c/})_{/f}\cong \C_{c/d} \cong (\C_{/d})_{f/}$. In particular the projection functor $\C_{c/d} \to \C$ need not have an adjoint. If $\C$ is the category $\calg$ and $f=\id_\ground$, then $\calg_{\ground/\ground}\simeq \alg_{/\ground}\simeq \aalg$ is the category of augmented dg algebras.

If $A_1,A_2$ are two curved algebras, then the tensor product $A_1\otimes A_2$ is a curved algebra: the differential has the usual formula and the curvature is given by $h_1\otimes1 + 1\otimes h_2$. 

One can similarly define {pseudocompact curved algebras} by adding the word `pseudocompact' to the above definition. Alternately, a pseudocompact curved algebra can be viewed as a pro-object in finite dimensional curved algebras. We denote the category of pseudocompact curved algebras by $\pccalg$.

A {curved coalgebra} is a graded coalgebra $C$ equipped with a coderivation $d$ of cohomological degree $1$ and a cohomological degree $2$ functional $h:C\to \ground$ such that $(C^*, d^*, h^*)$ is a pseudocompact curved algebra. This is equivalent to the two conditions $h\circ d =0$ and $d^2(x)=h(x^1)x^2 - x^1h(x^2)$, where we use Sweedler notation $\Delta(x)=x^1\otimes x^2$. Morphisms of curved coalgebras are defined analogously as morphisms of pseudocompact curved algebras. 

Since every curved coalgebra is the colimit of its finite dimensional sub-curved coalgebras \cite[Lemma 3.32]{HL2020}, the linear dual provides an equivalence between $\ccog$ and the opposite of the category of pseudocompact curved algebras.
\begin{lem}\label{cogslicelem}
	There is an equivalence $\cog\simeq \ccog_{/\ground}$.
\end{lem}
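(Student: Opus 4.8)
The plan is to dualise the proof of Lemma \ref{algslice1} across the contravariant linear/topological duality $\ccog \simeq \pccalg^{\mathrm{op}}$ recalled just above. Every dg coalgebra is a curved coalgebra with zero curvature, and it comes equipped with its counit $\epsilon\colon C \to \ground$, which is in particular a morphism of curved coalgebras. This defines a functor $j\colon \cog \to \ccog_{/\ground}$, and the claim is that $j$ is an equivalence. As in the algebra case, the two things to check are essential surjectivity and full faithfulness, and both are the linear duals of the corresponding verifications in Lemma \ref{algslice1}.

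For essential surjectivity, let $(C,p)$ be an object of $\ccog_{/\ground}$, so $p\colon C\to\ground$ is a map of curved coalgebras. Dualising via $\ccog\simeq\pccalg^{\mathrm{op}}$, the map $p$ corresponds to a morphism of pseudocompact curved algebras $\ground\to C^*$; since $\ground$ carries zero curvature, conditions (1)--(2) defining a curved morphism collapse to the single equation $h_{C^*}+dx+x^2=0$, i.e.\ such a map is precisely an MC element $x\in(C^*)^1$. Twisting $C^*$ by $x$ produces the dg algebra $(C^*)^x$ (zero curvature), with the curved isomorphism $(\id,x)\colon(C^*)^x\to C^*$. Dually, $(C^*)^x$ is the linear dual of a dg coalgebra, which I denote $C^x$, and the isomorphism $(\id,x)$ dualises to an isomorphism $C\xrightarrow{\sim}C^x$ in $\ccog$; one checks it is compatible with the structure maps to $\ground$. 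Hence $(C,p)\cong j(C^x)$, so $j$ hits every object up to isomorphism.

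For full faithfulness, let $C,C'$ be dg coalgebras with their counits $\epsilon_C,\epsilon_{C'}$; under duality these correspond to the MC element $0$ in $C^*$ and $(C')^*$. A morphism $(C,\epsilon_C)\to(C',\epsilon_{C'})$ in $\ccog_{/\ground}$ is a curved-coalgebra morphism $\phi$ with $\epsilon_{C'}\circ\phi=\epsilon_C$; writing $\phi$ dually as a curved-algebra morphism $(f,b)\colon(C')^*\to C^*$ and feeding the compatibility condition through the composition law $(g,b)(f,a)=(gf,b+g(a))$, the requirement that $\phi$ respect the maps to $\ground$ corresponding to the zero MC elements forces $b=0$. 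Thus $\phi$ is uncurved, and an uncurved morphism of dg coalgebras is exactly a morphism in $\cog$, so $j$ is fully faithful. The only real obstacle is the bookkeeping of the contravariance: one must make sure the over-slice $\ccog_{/\ground}$ is carried to the correct under-slice of $\pccalg$ and that the counit-compatibility genuinely kills the curving datum $b$, which is the precise dual of the fact, used in Lemma \ref{algslice1}, that the canonical map out of $\ground$ is classified by the MC element $0$.
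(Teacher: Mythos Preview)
Your proof is correct and is precisely the approach the paper takes: the paper's own proof is the single sentence ``This is a pseudocompact version of \ref{algslice1},'' and you have simply spelled out that dualisation in detail. Your bookkeeping on essential surjectivity (the MC element $x$ arising from $p^*\colon\ground\to C^*$, twisting to a dg algebra, and checking slice-compatibility via the composition law) and on full faithfulness (the counit corresponding to the zero MC element forcing $b=0$) is exactly the content of the pseudocompact/coalgebraic translation of Lemma~\ref{algslice1}.
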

\begin{proof}
	This is a pseudocompact version of \ref{algslice1}.
\end{proof}

\subsection{Curved bar and cobar constructions}
Let $C$ be a curved coalgebra and $A$ a curved algebra. As in \cite{GL20}, one can define a {cobar construction} $\Omega C \in \calg$ and an {extended bar construction} $\check BA \in \ccog$. The {convolution algebra} $\Hom(C,A)$ is defined to be the completed tensor product $C^* \hat \otimes A$, where we regard $A$ as a constant pro-object in curved algebras. Note that $\Hom(C,A)$ need not be a pseudocompact curved algebra, unless $A$ was finite dimensional. 

Observe that since $(C\otimes D)^*$ is naturally isomorphic to $C^*\hat\otimes D^*$ as pro-objects in curved algebras, we may deduce the hom-tensor adjunction for convolution algebras: there is a natural isomorphism
$$\Hom(C\otimes D,A)\cong \Hom(C,\Hom(D,A)).$$

We let $\MC(C,A)$ denote the set $\MC\Hom(C,A)$ of MC elements in the convolution algebra.

\begin{rem}\label{proMC}
	If $Z=\{Z_i\}_i$ is a pro-object in curved algebras then we may define $\MC(Z)\coloneqq \Hom(k, Z)$, where we take the Hom in the category of pro-objects in curved algebras. It is easy to see that $\MC(Z)\cong \MC(\varprojlim_iZ_i)$, naturally in $Z$. In particular when taking $\MC$ we may forget that $\Hom(C,A)$ naturally has the structure of a pro-object in curved algebras. 
	\end{rem}
Bar and cobar are adjoints:
\begin{prop}[{\cite[Proposition 4.4]{GL20}}]
	There are isomorphisms, natural in $C$ and $A$,
	$$\Hom(\Omega C,A)\cong \MC\Hom(C,A)\cong\Hom(C,\check B A).$$
	\end{prop}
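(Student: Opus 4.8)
The plan is to prove the two isomorphisms separately, in each case identifying the relevant hom-set with the Maurer--Cartan set of the convolution algebra $\Hom(C,A)=C^*\hat\otimes A$ by means of a universal property, and then to observe that the resulting bijections are natural. Both isomorphisms are the curved, non-(co)augmented analogues of Proposition \ref{mcrep}; the essential new features are that we work with the \emph{full} convolution algebra (rather than a reduced version) and that we must keep track of curvature.

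For the left-hand isomorphism $\Hom(\Omega C,A)\cong\MC\Hom(C,A)$, recall that the underlying graded algebra of the cobar construction is a tensor algebra $T\Sigma^{-1}\bar C$. By the universal property of the tensor algebra, a morphism of curved algebras $\Omega C\to A$ is the same datum as a graded linear map $\Sigma^{-1}\bar C\to A$ on generators together with the degree-one linear (curvature-twisting) component $b$ of the morphism; packaged together these amount to a single degree-one element $\alpha\in\Hom(C,A)$, where $b$ records the component of $\alpha$ along the counit and the algebra map records its reduced component. The three pieces of the cobar differential --- the internal differential of $C$, the comultiplication, and the curvature functional $h_C$ --- together with the unit and curvature of $A$ translate the two defining conditions of a curved morphism (compatibility with differentials and with curvatures) into the single curved equation $h_{\Hom(C,A)}+d\alpha+\alpha^2=0$; here the $\alpha^2$ term appears because the comultiplication followed by the multiplication of $A$ is exactly the convolution product. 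Thus $\alpha\in\MC\Hom(C,A)$, and the assignment is bijective.

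For the right-hand isomorphism $\MC\Hom(C,A)\cong\Hom(C,\check BA)$ I would dualise. Under the linear-duality equivalence $\ccog\simeq\pccalg^{\mathrm{op}}$ a morphism $C\to\check BA$ corresponds to a morphism of pseudocompact curved algebras $(\check BA)^*\to C^*$, and by definition $(\check BA)^*$ is the pseudocompact tensor algebra $\check T(\Sigma^{-1}\bar A^*)$. Invoking the universal property of $\check T$ exactly as above, such a morphism is determined on generators by a graded map $\Sigma^{-1}\bar A^*\to C^*$ which, under the canonical identification $\Hom(A^*,C^*)\cong\Hom(C,A)$ of convolution algebras and together with the linear component, is again a degree-one element $\alpha\in\Hom(C,A)$; compatibility with the bar differential --- now built from the differential of $A$, its multiplication, and the curvature $h_A$ --- is once more the Maurer--Cartan equation. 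Since $\Hom(C,A)$ need not be pseudocompact, I would carry out this verification at the level of the underlying pro-object and pass to the limit using Remark \ref{proMC}, reducing if convenient to finite-dimensional $C$, over which the pseudocompact duality is elementary, and then taking the filtered colimit over the finite-dimensional subcoalgebras of $C$.

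Naturality in $C$ and $A$ is then immediate, since $\Omega$, $\check B$ and $\Hom(-,-)$ are functorial and the bijections above involve no auxiliary choices. I expect the main obstacle to be the curvature bookkeeping in both steps: one must verify that the curvature term $h_C$ entering the cobar differential and the curvature term $h_A$ entering the bar differential combine with the ambient structure to reproduce exactly the curvature $h_{\Hom(C,A)}$ of the convolution algebra, so that the two Maurer--Cartan equations genuinely coincide. This curvature matching, together with the correct accounting of the linear component $b$ of a curved morphism against the counit component of a convolution element, is precisely what distinguishes the present non-augmented statement from Proposition \ref{mcrep}, and is the crux of the argument.
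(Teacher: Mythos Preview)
The paper gives no proof of this proposition; it is simply cited as \cite[Proposition 4.4]{GL20}. Your outline is the standard argument and is correct in substance: both bijections come from the universal property of the (pseudocompact) tensor algebra underlying the (co)bar construction, and the passage from Proposition \ref{mcrep} to the curved statement is exactly the curvature bookkeeping you flag.

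One point is worth making more explicit. In the curved, non-coaugmented setting the notation $T\Sigma^{-1}\bar C$ presupposes a choice of linear splitting of the counit (a ``fake coaugmentation''), since there is no canonical $\bar C$; the cobar construction depends on this choice only up to a canonical curved isomorphism. Your identification of the linear component $b$ of a curved morphism with the counit component of $\alpha\in\Hom(C,A)$ is precisely what absorbs this choice, and is the dual of the ``fake augmentation'' the paper alludes to in its discussion of $\algmc$. With that caveat stated, the rest of your argument---including the reduction to finite-dimensional $C$ via Remark \ref{proMC} for the second isomorphism---goes through.
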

\begin{cor}\label{htconv}
	Let $C,D$ be curved coalgebras and let $A$ be a curved algebra. There is a natural isomorphism $$\calg(\Omega(C\otimes D),A)\cong \calg(\Omega C,\Hom(D,A)).$$
\end{cor}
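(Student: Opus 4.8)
The plan is to obtain the desired bijection by composing three natural isomorphisms: the bar-cobar adjunction, the hom-tensor adjunction for convolution algebras, and the bar-cobar adjunction once more. First I would apply the bar-cobar adjunction (\cite[Proposition~4.4]{GL20}, stated just above) to rewrite the left-hand side as a Maurer--Cartan set,
$$\calg(\Omega(C\otimes D),A)\cong\MC\Hom(C\otimes D,A).$$

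Next I would invoke the hom-tensor adjunction for convolution algebras, $\Hom(C\otimes D,A)\cong\Hom(C,\Hom(D,A))$, which follows from the isomorphism $(C\otimes D)^*\cong C^*\hat\otimes D^*$ of pro-objects together with the associativity of the completed tensor product. Since $\MC$ is functorial, this isomorphism of convolution algebras induces a bijection $\MC\Hom(C\otimes D,A)\cong\MC\Hom(C,\Hom(D,A))$. Finally, applying the bar-cobar adjunction a second time, now with the curved algebra $\Hom(D,A)$ playing the role of the algebra variable, yields $\MC\Hom(C,\Hom(D,A))\cong\calg(\Omega C,\Hom(D,A))$. Composing the three bijections gives the claim, and since each is natural in $A$ (and in $C$ and $D$), so is the composite.

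The only delicate point, and hence the main obstacle, is justifying this second use of bar-cobar: the convolution algebra $\Hom(D,A)=D^*\hat\otimes A$ is in general not pseudocompact and, as noted above, naturally carries the structure of a pro-object in curved algebras. What rescues the argument is that the bar-cobar adjunction is stated for an \emph{arbitrary} curved algebra in the algebra slot, and $\Hom(D,A)$ is an honest curved algebra, namely the cofiltered limit $\varprojlim_i(D_i^*\otimes A)$; the adjunction therefore applies verbatim. Moreover, by Remark~\ref{proMC} the Maurer--Cartan set is insensitive to whether one regards $\Hom(D,A)$ as a pro-object or as its limit, so the middle term $\MC\Hom(C,\Hom(D,A))$ is unambiguous and the chain of isomorphisms is well defined.
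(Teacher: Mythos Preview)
Your proof is correct and follows exactly the same three-step chain as the paper: bar-cobar adjunction, hom-tensor for convolution algebras, then bar-cobar again. Your additional discussion of why the second bar-cobar step is legitimate (that $\Hom(D,A)$ is an honest curved algebra despite its pro-object structure, with Remark~\ref{proMC} guaranteeing the $\MC$ set is unambiguous) is a welcome clarification the paper leaves implicit.
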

\begin{proof}
	There are natural isomorphisms 
	\begin{align*}	
		\calg(\Omega(C\otimes D),A)&\cong \MC\Hom(C\otimes D,A)\\
		&\cong \MC\Hom(C,\Hom(D,A))\\
		&\cong \calg(\Omega C,\Hom(D,A))\\
	\end{align*}
	where in the middle we use the hom-tensor adjunction for convolution algebras.
\end{proof}

Suppose that $L:\mathcal{C}\leftrightarrow\mathcal{D}:R$ is an adjunction. Pick $c\in \mathcal{C}$ and put $d=Lc$. If the unit $c \to Rd$ is an isomorphism, then one can check that the adjunction slices to an adjunction $L:\mathcal{C}_{/c}\leftrightarrow\mathcal{D}_{/d}:R$. There is also a dual version for undercategories. In particular, suppose that $\C$ is the category of curved coalgebras and $c=\ground$. Certainly the unit condition is satisfied, so we obtain a sliced adjunction $\Omega:\ccog_{/\ground}\leftrightarrow\calg_{/\ground}:\check B$. Slicing again, we obtain another adjunction $\Omega:\ccog_{\ground/\ground}\leftrightarrow\calg_{\ground/\ground}:\check B$ which one can see is the usual extended bar-cobar adjunction between coaugmented dg coalgebras and augmented dg algebras. This reasoning proves:
\begin{prop}The bar-cobar adjunction
	$$\Omega:\ccog\longleftrightarrow\calg:\check B$$
	slices to adjunctions
	$$\Omega:\cog\longleftrightarrow\acalg:\check B$$
	$$\Omega:\accog\longleftrightarrow\alg:\check B$$
	$$\Omega:\acog\longleftrightarrow\aalg:\check B$$
	where the bottom adjunction is the usual extended bar-cobar adjunction.
	\end{prop}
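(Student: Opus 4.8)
The plan is to apply the elementary slicing principle for adjunctions recalled immediately before the statement, taking the distinguished object to be the ground field $\ground$ and exploiting the fact that $\ground$ is a fixed point of both $\Omega$ and $\check B$. Once this fixed-point property is established, the entire argument is formal.

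First I would check that $\Omega\ground\cong\ground$ and $\check B\ground\cong\ground$, and that under these identifications the unit $\ground\to\check B\Omega\ground$ and the counit $\Omega\check B\ground\to\ground$ are isomorphisms (indeed identities). This is immediate from the constructions: the coaugmentation coideal and the augmentation ideal of $\ground$ both vanish, so $\Omega\ground=T\Sigma^{-1}(0)=\ground$ while $\check B\ground$ is the cofree coalgebra on $\Sigma(0)=0$, which is again $\ground$; the unit and counit at $\ground$ are then the identity.

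With this in hand, the overcategory version of the slicing principle applies with $c=\ground$ and $d=\Omega\ground=\ground$, giving an adjunction $\Omega:\ccog_{/\ground}\leftrightarrow\calg_{/\ground}:\check B$. I would then identify $\ccog_{/\ground}\simeq\cog$ by Lemma \ref{cogslicelem}, while $\calg_{/\ground}$ is by definition the category $\acalg$ of augmented curved algebras; this yields the first adjunction $\Omega:\cog\leftrightarrow\acalg:\check B$. Dually, since the counit at $\ground$ is an isomorphism, the undercategory version of the principle gives $\Omega:\ccog_{\ground/}\leftrightarrow\calg_{\ground/}:\check B$; here $\ccog_{\ground/}$ is by definition $\accog$ and $\calg_{\ground/}\simeq\alg$ by Lemma \ref{algslice1}, yielding the second adjunction.

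For the third adjunction I would slice a second time. Starting from the already-constructed adjunction $\Omega:\ccog_{/\ground}\leftrightarrow\calg_{/\ground}:\check B$ and applying the undercategory slicing at the object $\id_\ground$ (whose counit is again an isomorphism, being unchanged by slicing) produces $\Omega:\ccog_{\ground/\ground}\leftrightarrow\calg_{\ground/\ground}:\check B$ via the double-slice identities $(\ccog_{/\ground})_{\id_\ground/}\cong\ccog_{\ground/\ground}$ and likewise for $\calg$. Using $\calg_{\ground/\ground}\simeq\aalg$ recorded earlier in this section, together with the dual identification $\ccog_{\ground/\ground}\simeq\cog_{\ground/}=\acog$ obtained from $\cog\simeq\ccog_{/\ground}$, gives the bottom adjunction, which one unwinds to the classical extended bar-cobar adjunction between coaugmented dg coalgebras and augmented dg algebras. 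The only genuine verification is the fixed-point claim for $\ground$ in the first step; the remainder is a formal, repeated application of the slicing principle combined with the slice-category identifications of Lemmas \ref{algslice1} and \ref{cogslicelem}, so I anticipate no serious obstacle.
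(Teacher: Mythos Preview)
Your proposal is correct and follows essentially the same route as the paper: both arguments verify that the unit and counit of the bar-cobar adjunction are isomorphisms at $\ground$ (since $\Omega\ground\cong\ground\cong\check B\ground$), then repeatedly apply the slicing principle recalled just before the statement together with the slice identifications of Lemmas \ref{algslice1} and \ref{cogslicelem}. Your account is simply a more explicit unpacking of the paper's terse proof.
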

\begin{proof}
The first adjunction uses \ref{cogslicelem} to identify $\cog\simeq \ccog_{/\ground}$. Note that the equivalence $\acalg\simeq \calg_{/\ground}$ is definitional. The second adjunction is similar and uses \ref{algslice1} instead. The third makes use of the equivalences $\calg_{\ground/\ground}\simeq \aalg$ and $\ccog_{\ground/\ground}\simeq \acog$.
	\end{proof}

\subsection{Limits and colimits}

The category $\calg$ of curved algebras lacks an initial object. We denote by $\calgp$ the category obtained from $\calg$ by formally adjoining an initial object, which we denote by $\varnothing$. Note that the curved algebra $0$ is a final object in both categories. By construction there are no maps $\ground \to \varnothing$ and hence by \ref{algslice1} we have equivalences $$(\calgp)_{\ground/}\simeq \calg_{\ground/}\simeq \alg.$$

Similarly, $\ccog$ lacks a final object, and by formally adjoining a final object $*$ we obtain a category which we denote by $\ccogp$; the curved coalgebra $0$ is initial in both categories. There is an equivalence $\cog\simeq (\ccogp)_{/\ground}$.

\begin{prop}\label{limscolims}
	The categories $\calgp$ and $\ccogp$ are complete and cocomplete. The forgetful functor $\calg \to \mathbf{grAlg}_\ground$ preserves and reflects limits, and creates products and cofiltered limits. The forgetful functor $\ccog \to \mathbf{grCog}_\ground$ preserves and reflects colimits, and creates coproducts and filtered colimits.
\end{prop}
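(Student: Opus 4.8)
The plan is to construct all of the (co)limits by hand on underlying graded (co)algebras, using as the central device the canonical factorisation $(f,b)=(\id,b)(f,0)$ of every curved morphism into an uncurved morphism followed by an isomorphism. On the uncurved part everything is strictly componentwise over $\mathbf{grAlg}_\ground$ (resp. $\mathbf{grCog}_\ground$), so the only thing one must track across a diagram is the bookkeeping of the degree-one ``connection'' terms $b$ and of the curvatures. I will treat the algebra side in detail; the coalgebra side then follows by transporting the same arguments along the duality $\ccog\simeq\pccalg^{\mathrm{op}}$ between curved coalgebras and pseudocompact curved algebras, which interchanges limits and colimits.

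First I would construct products and cofiltered limits in $\calg$ by equipping the limit $L$ of the underlying graded algebras with the componentwise differential and curvature, and uncurved projection legs $(\pi_i,0)$. Given a cone $(g_i,b_i)\colon Z\to A_i$, the composition law forces the comparison map to be $(g,b)$ with $g=\langle g_i\rangle$ and $b=(b_i)_i\in L^1$; the two defining identities for a curved morphism hold because they hold coordinatewise, and uniqueness is immediate since $\pi_i g=g_i$ and $\pi_i b=b_i$ pin down both coordinates. This shows $U$ creates products and cofiltered limits. For a general limit I would reduce to products and equalizers, and compute the equalizer of $(f,a),(g,b)$ after normalising, by postcomposing with the isomorphism $(\id,-a)$, to the case $(f,0),(g,c)$ with $c=b-a$. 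A short computation using the morphism identities shows that if some $e_0\in A^1$ satisfies $f(e_0)-g(e_0)=c$, then the graded equalizer $A_0=\{x:f(x)=g(x)\}$ is stable under the twisted differential $d_A+[e_0,-]$ and carries the curvature $h_A+d_Ae_0+e_0^2$, and that $A_0$ with this structure and leg $(i,e_0)$ is the equalizer; if no such $e_0$ exists the equalizer is the adjoined object $\varnothing$. In either case the underlying graded algebra of a nonempty equalizer is the graded equalizer, so $U$ preserves it. Together with products and the terminal object $0$ this yields all limits, hence completeness of $\calgp$.

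To upgrade ``preserves'' to ``preserves and reflects'' I would observe that $U$ is conservative: if $f$ is invertible then $(f,b)$ is invertible with inverse $(f^{-1},-f^{-1}(b))$, as one checks directly from the morphism identities. A functor that is conservative and preserves the (already existing) limits of a given shape reflects them, giving the reflection claim. The coalgebra statements are the formal duals: coproducts and filtered colimits of curved coalgebras are computed on underlying graded coalgebras (direct sums and filtered unions) with uncurved legs, coequalizers are constructed as above and may collapse to the adjoined final object $*$, and conservativity of the dual forgetful functor gives reflection of colimits. This yields cocompleteness of $\ccogp$.

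The genuinely hard part will be the remaining two assertions, cocompleteness of $\calgp$ and, dually, completeness of $\ccogp$; these are exactly the directions \emph{not} covered by the forgetful-functor statements, and they cannot be carried out componentwise. The obstruction is the curvature: a naive coproduct with uncurved inclusions fails, since the shared curvature of the two factors would force the connection terms of any two legs to agree. I would instead build $A\sqcup B$ as a free product of graded algebras in which the two curvatures are identified and the inclusions are allowed to carry compensating twists, construct coequalizers dually, and verify the universal properties directly; the delicate points are well-definedness of the differential on the quotient and the solvability of the resulting affine conditions on the connection terms. For completeness of $\ccogp$ the clean route is the dual statement via $\ccog\simeq\pccalg^{\mathrm{op}}$, where the cofree curved coalgebra (equivalently the pseudocompact tensor-algebra construction already used for the extended bar construction) furnishes a right adjoint to the forgetful functor and one concludes by a special adjoint functor theorem argument. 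Alternatively, and more uniformly, one could try to show that $\calg$ and $\pccalg$ are locally presentable, which would deliver both completeness and cocompleteness at once; establishing accessibility in the presence of the twisted morphisms is where I expect the real difficulty to lie.
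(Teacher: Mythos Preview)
Your constructions are correct and, in the places you spell them out, coincide with what the paper is relying on. The paper's own proof is simply a pointer to \cite[Lemma~3.30]{HL2020} (itself a non-conilpotent version of \cite[Lemma~9.2]{Positselski11}); the equalizer computation you carry out---normalise to $(f,0),(g,c)$, find $e_0$ with $f(e_0)-g(e_0)=c$, equip the graded equalizer with $d_A+[e_0,-]$ and curvature $h_A+d_Ae_0+e_0^2$, and return $\varnothing$ if no such $e_0$ exists---is exactly the content of that reference and is reused verbatim later in the paper in the proof of Lemma~\ref{convlimits}. Your product/cofiltered-limit argument and the conservativity-of-$U$ argument for reflection are also correct and standard.

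On the ``hard'' direction you flag (cocompleteness of $\calgp$, completeness of $\ccogp$): your sketch of the coproduct as a free product with the two curvatures forced to agree via a compensating connection term is in the right spirit but is not yet a proof; the well-definedness of the differential and the universal property both need care. The cleanest way the paper's framework handles this is the local-presentability route you mention at the end: immediately after this proposition the paper proves $\calgp$ and $\ccogp$ are locally presentable (Proposition following~\ref{limscolims}), which yields completeness and cocompleteness simultaneously. So your instinct that local presentability is the efficient path is the one the paper in effect takes, albeit packaged as a separate statement and with the present proposition deferred wholesale to the cited literature.
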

\begin{proof}
This is essentially \cite[Lemma 3.30]{HL2020}, which is itself a non-conilpotent version of \cite[Lemma 9.2]{Positselski11}.
	\end{proof}
We remark that $\calg \to \mathbf{grAlg}_\ground$ does not create limits, since $\calg$ is not closed under limits and $\mathbf{grAlg}_\ground$ is: the issue is that $\calg $ does not have equalisers. A similar statement holds for coalgebras.
\begin{prop}
	The categories $\calgp$ and $\ccogp$ are locally presentable. Every curved coalgebra is the colimit of its finite dimensional sub-curved coalgebras.
	\end{prop}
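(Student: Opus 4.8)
The plan is to use the standard characterisation that a category is locally presentable exactly when it is cocomplete and accessible. Cocompleteness of $\calgp$ and $\ccogp$ is already supplied by Proposition \ref{limscolims}, so in each case the task reduces to producing a regular cardinal $\lambda$ together with a set of $\lambda$-presentable objects such that every object is a $\lambda$-filtered colimit of objects from that set. The two sides behave quite differently, and this asymmetry is worth emphasising: coalgebras are controlled by their \emph{finite dimensional} subobjects, whereas algebras must be controlled by \emph{finitely presented} (not finite dimensional) objects, and in both cases the curved morphisms require a little care.

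For the coalgebra side I would take as the key input the fact, recalled above from \cite[Lemma 3.32]{HL2020}, that every curved coalgebra is the filtered colimit of its finite dimensional sub-curved coalgebras; this is precisely the second assertion of the proposition. Combined with the statement in Proposition \ref{limscolims} that the forgetful functor $\ccog\to\mathbf{grCog}_\ground$ creates filtered colimits, it remains to check that a finite dimensional curved coalgebra $C$ is finitely presentable. Given a morphism $(f,\beta)\colon C\to \colim_i D_i$ into a filtered colimit, the graded-coalgebra part $f$ has finite dimensional image and hence factors through some $D_i$, since finitely many elements of a filtered colimit of vector spaces come from a single stage, while the degree-one functional $\beta\colon C\to\ground$ is data valued in the fixed field and so needs no factorisation; uniqueness at a later stage is checked the same way. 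Thus the finite dimensional curved coalgebras form a set of finitely presentable generators, so $\ccog$ is locally finitely presentable. Passing to $\ccogp$ only adds the terminal object $*$, which is itself finitely presentable: $\Hom(*,-)$ preserves filtered colimits because a filtered colimit is isomorphic to $*$ exactly when its diagram meets $*$, and then the nodes equal to $*$ are cofinal. Hence the finite dimensional curved coalgebras together with $*$ are finitely presentable generators and $\ccogp$ is locally finitely presentable.

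The algebra side is the main obstacle. Here finite dimensional curved algebras do not suffice as generators, since a curved algebra is in general not a filtered colimit of finite dimensional ones; the correct generators are the finitely presented curved algebras. For the \emph{strictified} category with the same objects but only uncurved morphisms $(f,0)$ this is routine: it is the category of models of a finitary equational theory (a graded algebra equipped with a degree-one derivation $d$ and a degree-two constant $h$ subject to $dh=0$ and $d^2=[h,-]$), hence locally finitely presentable. The genuine difficulty is transferring this to $\calg$, whose morphisms $(f,b)$ carry the extra degree-one datum $b$ in the target; consequently the forgetful functor to $\mathbf{grAlg}_\ground$ is not faithful and, unlike on the coalgebra side, is not asserted to create filtered colimits, so filtered colimits in $\calg$ are \emph{not} simply computed on underlying graded algebras.

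To deal with this I would remember the datum $b$ explicitly, using that a choice of $b\in B^1$ is the same as a graded-algebra map $T(\ground[-1])\to B^\#$, so that a curved morphism $A\to B$ is encoded by a single graded-algebra map $A^\#\sqcup T(\ground[-1])\to B^\#$ satisfying the compatibilities (1) and (2). This presents $\calg$ as built from locally presentable categories by accessible $2$-categorical limit constructions (a comma category to record $b$, together with an inserter and equifiers to impose (1) and (2)); since accessible categories are closed under such PIE-limits, $\calg$ is accessible, and being cocomplete by Proposition \ref{limscolims} it is locally presentable. Adjoining the initial object $\varnothing$ to form $\calgp$ preserves local presentability, dually to the treatment of $*$ above. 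The crux of the whole argument, and the step I expect to cost the most work, is precisely this control of the twisted morphism data: one must verify that the choice of $b$ and the nonlinear curvature compatibility $f(h_A)=h_B+db+b^2$ really are captured by accessible operations, equivalently that finitely presented curved algebras have bounded presentability rank in $\calg$ despite filtered colimits not being created on underlying graded algebras.
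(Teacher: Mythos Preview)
Your treatment of the coalgebra side matches the paper's exactly: both defer to \cite[Lemma 3.32]{HL2020}.

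On the algebra side your approach diverges substantially from the paper's, and the divergence stems from a concern that turns out to be unfounded. You worry that filtered colimits in $\calg$ are not computed on underlying graded algebras because of the twisting datum $b$ in morphisms, and this drives you toward an abstract PIE-limit argument that you yourself flag as incomplete. In fact filtered colimits in $\calg$ \emph{are} carried by the underlying graded algebra: given a filtered system with transition maps $(f_{ij},b_{ij})$, one sets $A^\#=\colim A_i^\#$, solves the compatible system $c_i=c_j+\iota_j(b_{ij})$ for the curved parts of the structure maps $(\iota_i,c_i)$ (the cocycle identity $b_{ik}=b_{jk}+f_{jk}(b_{ij})$ guarantees consistency, and filteredness lets one fix $c_{i_0}=0$ and propagate), and then defines $d_A$ and $h_A$ on $A^\#$ by the formulas forced by requiring $(\iota_i,c_i)$ to be a curved morphism. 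So the forgetful functor does preserve filtered colimits, even though Proposition~\ref{limscolims} does not assert this.

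The paper exploits this directly: it writes down explicit curved algebras $K(n,\underline{a})$ freely generated as graded algebras by $x_1,\dots,x_n$, their formal differentials $y_i=dx_i$, and a curvature generator $h$, with $dy_i=[h,x_i]$. A curved morphism $(f,b)\colon K(n,\underline{a})\to B$ is then specified by finitely many elements of $B$ satisfying finitely many equations involving $d_B$ and $h_B$, so once one knows $B^\#=\colim B_j^\#$, finite presentability of $K(n,\underline{a})$ is immediate. Since every finitely generated curved subalgebra of any $A$ is hit by some $K(n,\underline{a})$ via an uncurved map, and $A$ is the filtered colimit of these subalgebras along uncurved inclusions, the $K(n,\underline{a})$ form the required set of finitely presentable generators.

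Your PIE-limit route is not wrong in spirit, but it is both heavier and left unfinished at exactly the point you identify as the crux. The paper's construction sidesteps all of that with a one-line description of explicit generators; the moral is that the twisting datum $b$, while it does spoil faithfulness of the forgetful functor, does not spoil its behaviour on filtered colimits, and checking this directly is far cheaper than the $2$-categorical machinery.
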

\begin{proof}
	The statements about coalgebras are contained in \cite[Lemma 3.32]{HL2020} and its proof. For algebras, it is enough to show that $\calg$ is locally presentable. To do this, for every $n>0$ and every $\underline {a} \in \Z^n$ we define a curved algebra $K(n,\underline {a})$ as follows. The generators of the underlying graded algebra of $K(n,\underline {a})$ are $x_1,\ldots, x_n$, where $x_i$ has cohomological degree $a_i$, along with $y_1,\ldots, y_n$, where $y_i$ has cohomological degree $a_{i+1}$, along with a single generator $h$ of cohomological degree $1$. The curvature element is $h$, and the differential is defined by $dx_i=y_i$, $dy_i=[h,x_i]$, and $dh=0$. Clearly if $A$ is a curved algebra then every finitely generated subalgebra of $A$ is in the image of some morphism $K(n,\underline {a}) \to A$; we can even choose the morphism to be of the form $(f,0)$. Since every $A$ is the filtered colimit of its finitely generated subalgebras, the $K(n,\underline {a})$ generate $\calg$ under filtered colimits. Moreover, each $K(n,\underline {a})$ is small, because it is finitely generated.
	\end{proof}

If we modify the bar and cobar functors slightly by declaring that $\check B(0)\coloneqq *$, $\check B (\varnothing)\coloneqq 0$, $\Omega(0)\coloneqq \varnothing$, and $\Omega(*)\coloneqq 0$, then it is easy to see that the bar-cobar adjunction extends to an adjunction between $\ccogp$ and $\calgp$. Moreover, we can extend the definition of the convolution algebra functor by declaring that $\Hom(C,A)$ is $\varnothing$ when either $A=\varnothing$ or $C=*$, with the exception that $\Hom(0,\varnothing)=\Hom(*,0)=0$. If we declare that $\MC(\varnothing)$ is the empty set, then we have isomorphisms $$\Hom(\Omega C,A)\cong \MC\Hom(C,A)\cong\Hom(C,\check B A)$$ as before, where now $C$ is allowed to be $*$ and $A$ is allowed to be $\varnothing$.

\section{Higher homotopy for simplicial sets and curved (co)algebras}\label{section:higher}
In this section we consider the notion of $n$-homotopy for $n=1,2,\ldots,\infty$ in the categories of simplicial sets, curved algebras and curved coalgebras. For $n=1$ this specialises to the usual notion of homotopy between simplicial sets, derivation homotopy between dg algebras, and the dual notion for dg coalgebras. The constructions in all three categories are parallel and the results similar, with some minor variations. Our exposition follows  \cite{CHL21} but has a different emphasis and is more systematic.

Unlike the category of chain complexes or, more generally, the category of (co)modules over a curved (co)algebra where there is an essentially unique reasonable notion of homotopy - i.e.\ chain homotopy - the category of curved (co)algebras admits infinitely many such notions; these are motivated by topological considerations. 

\subsection{Simplicial sets} Consider the category $\C$ having two objects and two mutually inverse morphisms between them:
	\[
\xymatrix{
	0\ar@/^/^{(01)}[r]&1\ar@/^/^{(10)}[l]
}
\]
 Its classifying space $B\C$ is a simplicial set having two nondegenerate simplices $a_n, b_n$ in each dimension $n=0,1,2,\ldots$. The geometric realisation of $B\C$ is the infinite sphere ${\operatorname S}^\infty$ with its standard cell decomposition having two cells in each dimension. We will abuse notation by referring to the simplicial set $B\C$ as ${\operatorname S}^\infty$. For $n=1,2,\ldots$ we let ${\operatorname D}^n$ be the simplicial subset of ${\operatorname S}^\infty$ generated by the nondegenerate simplices $a_i$ for $ i=0,\ldots n$ and $b_k$ for $k=0,\ldots, n-1$. It is clear that the geometric realisation of ${\operatorname D}^n$ is an $n$-dimensional disc; e.g. ${\operatorname D}^1$ is the simplicial interval with two vertices and one nondegenerate simplex connecting them. Note that ${\operatorname S}^\infty$ is a Kan complex, as it is the classifying space of a groupoid. The spaces ${\operatorname D}^n$ are not Kan, although for $n>2$ they are grouplike; i.e. their fundamental categories (a.k.a.\ homotopy categories) are groupoids. The simplicial sets $\operatorname{D}^n$ and $\operatorname{S}^\infty$ all have two vertices and we denote by $i_0$ and $i_1$ the corresponding inclusion maps $*\to \operatorname{D}^n$ and $*\to \operatorname{S}^\infty$, where $*$ is the one-point simplicial set.

\begin{defi}\label{def:homotopysimpl}
	Let $f,g:X\to Y$ be two maps in $\sSet$. \begin{enumerate}
		\item 
		We say that $f$ and $g$ are related by an elementary $n$-homotopy for $n=1,2,\ldots$ if there exists a map
		\[
		h: X\times \operatorname{D}^n\to Y
		\] 
		such that $ h\circ (\id_X\times i_0)=f$ and $h\circ(\id_X\times i_1)=g$. 
		If $f$ and $g$ are related by a zig-zag of elementary $n$-homotopies, we will call them $n$-homotopic and write 
		$f{\sim}_n g$.
		\item		We say that $f$ and $g$ are related by an elementary $\infty$-homotopy if there exists a map
		\[
		h:X\times \operatorname{S}^\infty\to Y
		\] 
		such that $ h\circ (\id_X\times i_0)=f$ and $h\circ(\id_X\times i_1)=g$. If $f$ and $g$ are related by a zig-zag of elementary $\infty$-homotopies, we will call them $\infty$-homotopic and write 
		$f{\sim}_\infty g$.
	\end{enumerate}
\end{defi}
As usual, homotopy of maps gives
rise to the notion of homotopy equivalence.
\begin{defi}
	Two simplicial sets $X$ and $Y$ are $n$-homotopy equivalent if there are maps $f:X\to Y$ and $g:Y\to X$ such that $f\circ g\sim_n\id_Y$ and $g\circ f\sim_n\id_X$ where $n=1,\ldots,\infty$. A simplicial set $n$-homotopy equivalent to $*$ is called $n$-contractible.
\end{defi}
The following proposition summarises the basic properties of $n$-homotopies for simplicial sets.
\begin{prop}\label{prop:homotopysimp} Let $1\leq m<n\leq\infty$.
	\begin{enumerate}
		\item The relation of elementary $n$-homotopy in $\sSet$ is reflexive but not symmetric and not transitive for $n<\infty$. The notion of elementary $\infty$-homotopy is reflexive and symmetric but not transitive.
		\item Let $f,g:X\to Y$ be two  maps between two simplicial sets that are elementary $n$-homotopic. Then for any map $r:Y\to W$ the composites $r\circ f$ and $r\circ g$ are elementary $n$-homotopic. Similarly for any map $k:Z\to X$ the composites $f\circ k$ and $g\circ k$ are elementary $n$-homotopic.
		\item	If two maps between simplicial sets are $n$-homotopic then they are $m$-homotopic. If two simplicial sets are $n$-homotopy equivalent then they are $m$-homotopy equivalent.
		\item Two $m$-homotopic maps are not necessarily $n$-homotopic.
		\item If two maps are $n$-homotopic  then they are homotopic in the usual sense, and so induce the same map on homotopy groups. If two simplicial sets are $n$-homotopy equivalent then they are weakly equivalent.
		\item The simplicial set $\operatorname{D}^1$ is $1$-contractible. The simplicial set ${\operatorname S}^\infty$ is $\infty$-contractible.
		\item If $Y$ is a Kan simplicial set then the relations of $n$-homotopy of maps into $Y$ are equivalent for all $n$ and are equivalence relations. Two Kan simplicial sets are $n$-homotopy equivalent for some $n$ if and only if they are $n$-homotopy equivalent for all $n$.	
	\end{enumerate}
\end{prop}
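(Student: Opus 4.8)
The backbone of the whole proposition is the nested family of monomorphisms $\operatorname{D}^1\hookrightarrow\operatorname{D}^2\hookrightarrow\cdots\hookrightarrow\operatorname{S}^\infty$, all sharing the two basepoint vertices $i_0,i_1$, together with two identifications already recorded above: $\operatorname{D}^1=\Delta^1$, and $\operatorname{S}^\infty=B\C$ for the (indiscrete) contractible groupoid $\C$. My plan is to derive the positive statements formally from these inclusions and the monoidal structure on $\sSet$, and to establish the negative statements by explicit counterexamples trading on the asymmetry of $\operatorname{D}^n$ (the absence of the reversed top cell $b_n$) and on non-invertibility of edges in non-Kan targets.

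For the positive directions I would argue as follows. Reflexivity in (1) is witnessed by $f\circ\mathrm{pr}_X$, since $\mathrm{pr}_X$ composed with either vertex inclusion is $\mathrm{id}_X$; symmetry of elementary $\infty$-homotopy comes from the automorphism of $\C$, hence of $\operatorname{S}^\infty$, swapping the two objects, which no finite $\operatorname{D}^n$ admits. Compatibility with composition (2) is immediate from functoriality of $X\times(-)$: post-compose the homotopy with $r$, or pre-compose with $k\times\mathrm{id}$. For (3) I restrict an elementary $n$-homotopy $h\colon X\times\operatorname{D}^n\to Y$ along $X\times\operatorname{D}^m\hookrightarrow X\times\operatorname{D}^n$ (or into $X\times\operatorname{S}^\infty$); as the vertices lie in $\operatorname{D}^m$ this is again an elementary homotopy between the same maps, and zig-zags and homotopy equivalences are handled termwise. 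Taking $m=1$ and using $\operatorname{D}^1=\Delta^1$ yields (5), since $\Delta^1$-homotopic maps agree in $\operatorname{Ho}(\sSet)$ and hence on homotopy groups. Part (6) splits in two: $\Delta^1$ is $1$-contractible by its standard contraction, while $\operatorname{S}^\infty$ is $\infty$-contractible because the functor $\C\times\C\to\C$ determined on objects by $(x,0)\mapsto x,\ (x,1)\mapsto 0$ (automatically a functor, both sides being indiscrete) has nerve an elementary $\infty$-homotopy $\operatorname{S}^\infty\times\operatorname{S}^\infty\to\operatorname{S}^\infty$ from the identity to a constant map.

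The crux is (7). Here I would use that the Kan--Quillen structure on $\sSet$ is cartesian, so $X\times(-)$ preserves trivial cofibrations. Each inclusion $\operatorname{D}^1\hookrightarrow\operatorname{D}^n\hookrightarrow\operatorname{S}^\infty$ is a monomorphism between contractible simplicial sets, hence a trivial cofibration, and therefore so is $X\times\operatorname{D}^1\hookrightarrow X\times\operatorname{D}^n$ (and likewise into $\operatorname{S}^\infty$). When $Y$ is Kan, any elementary $1$-homotopy $X\times\operatorname{D}^1\to Y$ extends along this trivial cofibration to an elementary $n$-homotopy between the same maps; combined with the restriction of (3), this shows that for maps into a Kan complex all the elementary $n$-homotopy relations coincide with elementary $\Delta^1$-homotopy, which is classically an equivalence relation over a fibrant object. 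Consequently every $\sim_n$ agrees with $\sim_1$ there, and the statement for $n$-homotopy equivalences of two Kan complexes follows because each relevant composite lands in a Kan complex, where the relations already agree.

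Finally, the negative assertions reduce to explicit examples. For failure of symmetry of elementary $n$-homotopy ($n<\infty$) take $Y=\operatorname{D}^n$, $f=i_0$, $g=i_1$: the identity gives $f\sim_n g$, but a reverse homotopy would be a self-map of $\operatorname{D}^n$ interchanging the vertices, which cannot exist as it must carry $a_n$ to the absent simplex $b_n$. For failure of transitivity (including $n=\infty$) take $Y=\operatorname{S}^\infty\vee\operatorname{S}^\infty$ joined at a middle vertex $g$, the summands providing $f\sim_\infty g$ and $g\sim_\infty k$, while any elementary $\infty$-homotopy $f\to k$ would send the edge of $\operatorname{S}^\infty$ to an edge of $Y$ joining $f$ and $k$, none of which crosses the wedge point; the analogous wedge of discs handles finite $n$. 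For (4) the target $Y=\operatorname{D}^m$ with $f=i_0,g=i_1$ separates $\sim_m$ from every finer relation: the identity realises $f\sim_m g$, but since $\operatorname{D}^m$ has only two vertices, $f\sim_{m+1}g$ would require a single elementary $(m+1)$-homotopy $\operatorname{D}^{m+1}\to\operatorname{D}^m$ between them, which is obstructed exactly as in the computation for $\Delta^1=\operatorname{D}^1$ because the reversed cell $b_m$ is missing from $\operatorname{D}^m$. The step I expect to be most delicate is (7)---verifying that products with the contractible inclusions really are trivial cofibrations and that extension against a Kan $Y$ collapses all the relations---whereas the counterexamples reduce to the routine face-by-face bookkeeping indicated above.
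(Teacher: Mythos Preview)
Your argument is correct and for items (2), (3), (5), (6) it coincides with the paper's. Two points deserve comment.

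For (7) you take a genuinely different route. The paper observes that each $\operatorname{D}^n$ and $\operatorname{S}^\infty$ is a \emph{good cylinder object} for the point in the Kan--Quillen model structure (the maps $*\sqcup *\to\operatorname{D}^n$ are cofibrations and $\operatorname{D}^n\to *$ is a weak equivalence), and then invokes the standard model-category fact that for maps from a cofibrant object into a fibrant object, left homotopy via any fixed good cylinder already gives the correct homotopy relation. You instead argue directly that $X\times\operatorname{D}^1\hookrightarrow X\times\operatorname{D}^n$ is a trivial cofibration (using the cartesian structure) and lift an elementary $1$-homotopy to an elementary $n$-homotopy along this map when $Y$ is Kan. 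Both arguments are valid; yours is more hands-on and avoids quoting the general cylinder lemma, while the paper's is a one-line appeal to a standard result. Your non-transitivity example in (1) is also different: you use a wedge $\operatorname{S}^\infty\vee\operatorname{S}^\infty$ (resp.\ $\operatorname{D}^n\vee\operatorname{D}^n$) glued at a middle vertex, whereas the paper takes the nerve of a three-object groupoid $\mathcal{D}$. Your wedge example is in fact cleaner, since in the paper's $B\mathcal{D}$ the composite $(12)\circ(01)$ already furnishes a $1$-simplex from $0$ to $2$, so that example does not work as stated; the wedge visibly has no $1$-simplex joining the outer vertices.

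Your treatment of (4) has the right shape---the reduction to a single elementary $(m{+}1)$-homotopy via the two-vertex observation is exactly what the paper does---but ``$b_m$ is missing'' elides the substantive step. One must argue, as the paper does, that any map $\phi\colon\operatorname{D}^{n}\to\operatorname{D}^m$ bijective on vertices is forced (by induction on $k$) to be bijective on nondegenerate $k$-simplices for all $k<m$; then at level $m$ both $\phi(a_m)$ and $\phi(b_m)$ must be nondegenerate, hence both equal to the unique nondegenerate $m$-simplex $a_m$ of $\operatorname{D}^m$, contradicting the fact that $a_m$ and $b_m$ have different boundaries. This is the ``face-by-face bookkeeping'' you allude to, and it is where the actual content of (4) lives.
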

\begin{proof}
		Reflexivity of the elementary $n$-homotopy is obvious. To show that $\infty$-homotopy is symmetric it suffices to note that $\C$ (and thus, $B\C$) possesses an automorphism switching the two vertices.
	
		On the other hand, consider the two inclusion maps $i_0,i_1:*\to \operatorname{D}^n$. These are elementary $n$-homotopic via the homotopy $\id:\operatorname{D}^n\to \operatorname{D}^n$, however this elementary homotopy is not reversible since there is no endomorphism of $\operatorname{D}^n$ switching its two vertices. 
		
		To see the lack of transitivity, consider  the following category $\mathcal{D}$ with three objects:
		\[
		\xymatrix{
			0\ar@/^/^{(01)}[r]&1\ar@/^/^{(10)}[l]\ar@/^/^{(12)}[r]&2\ar@/^/^{(21)}[l]
		}
		\]
		where $(01)$ and $(12)$ are inverse to $(10)$ and $(21)$ respectively. The simplicial set $B\mathcal{D}$ has three vertices $0,1$, and $2$, and it is obvious that the vertices $1$ and $0$, as well as $1$ and $2$ are elementary $1$-homotopic, but $0$ and $2$ are not since there is no $1$-simplex in $B\mathcal{D}$ connecting $0$ and $2$. This example similarly shows that the relation of elementary $n$-homotopy for $n\leq\infty$ is likewise not transitive. This proves (1).

		For (2), if $h:X\times\operatorname{D}^n\to Y$ or $h:X\times\operatorname{S}^\infty\to Y$ is an elementary $n$-homotopy between $f$ and $g$ then $r\circ h$ is an elementary $n$-homotopy between $r\circ f$ and $r\circ g$. Similarly $(k\times\id_{\operatorname{D}^n})\circ h$ or $(k\times\id_{\operatorname{S}^\infty})\circ h$ is an elementary $n$-homotopy between $f\circ k$ and $g \circ k$.
		
		Claim (3) follows from the fact that $\operatorname{D}^m$ is a simplicial subset of both $\operatorname{D}^n$ and $\operatorname{S}^\infty$, compatibly with the inclusion maps $i_0$ and $i_1$.
		
		To prove (4) consider the two inclusions $i_0,i_1:*\to\operatorname{D}^m$. They are certainly $m$-homotopic; indeed they are elementary $m$-homotopic via the identity map on $\operatorname{D}^m$. Note that they are $n$-homotopic if and only if there exists an elementary $n$-homotopy either from $i_0 $ to $ i_1$ or from $i_1$ to $i_0$. Such an elementary $n$-homotopy is precisely a morphism $\phi:\operatorname{D}^n \to \operatorname{D}^m$ which is a bijection on zero-simplices. By induction on $k$ this forces $\phi$ to be a bijection on $k$-simplices for all $0\leq k< m$. We see that the $m$-simplices $\phi(a_m)$ and $\phi(b_m)$ must both be nondegenerate, and hence must be the same. But their boundaries do not agree.
		
		Claim (5) follows from (3) together with the observation that $1$-homotopy is the ordinary simplicial homotopy. 
		
		For (6), the claim for $\operatorname{D}^1$ is well known; it is not hard to write down an elementary $1$-homotopy from $\operatorname{D}^1 \to * \xrightarrow{i_0} \operatorname{D}^1$ to the identity map of $\operatorname{D}^1$. For the $n=\infty$ case, observe that $\C$ has a strict symmetric monoidal structure given on objects by $0\otimes 0=0$, $1\otimes 1=1$, $0\otimes 1 = 0$, and on morphisms by $(01)\otimes(01)=(01)$, $(10)\otimes(10)=(10)$, and $(10)\otimes(01)=\id_0$. Because the classifying space functor preserves products, this makes $B\C=\operatorname{S}^\infty$ into a monoid in simplicial sets. The multiplication map $\operatorname{S}^\infty\times \operatorname{S}^\infty \to \operatorname{S}^\infty$ can be viewed as an $\infty$-homotopy between $\id_{\operatorname{S}^\infty}$ and a self-map of $\operatorname{S}^\infty$ that factors through the map to a point, demonstrating $\infty$-contractibility of $\operatorname{S}^\infty$. We remark that the monoidal structure restricts to $\operatorname{D}^1$ (but not $\operatorname{D}^n$ for $n>1$) and one can carry out a similar proof in this case.
		
		Finally, since $\operatorname{S}^\infty$ and $\operatorname{D}^n$ are weakly equivalent to a point, they can serve as cylinder objects for $*$ in the standard Quillen model structure on $\sSet$; moreover these cylinder objects are \emph{good} in the sense that the canonical maps $*\sqcup *\to \operatorname{S}^\infty$ and $*\sqcup *\to \operatorname{D}^n$ are cofibrations (i.e. in this case injective maps). It is known that in any model category two maps from a cofibrant object to a fibrant object are homotopic if and only if they are homotopic via any given good cylinder object, and so claim (7) follows.
\end{proof}
\begin{rem}
	The simplicial set $\operatorname{D}^n$ is not $n$-contractible for $1<n<\infty$ since the multiplication map $S^\infty\times S^\infty\to S^\infty$ does not restrict to $\operatorname{D}^n$.
\end{rem}
Since $n$-homotopies of simplicial sets are compatible with compositions, the following definition makes sense. It will not be used in the current paper, however its analogues for dg algebras and dg coalgebras will be.
\begin{defi}
	Let $1\leq n\leq\infty$. The $n$-homotopy category of simplicial sets $\Ho_n\sSet$	is the category whose objects are simplicial sets and morphisms are $n$-homotopy classes of maps.
\end{defi}
\begin{rem}It is possible that there exist interesting model structures based on $n$-homotopies that are finer than the ordinary Quillen model structure but we will not investigate this possibility in the present paper.
	\end{rem}
\subsection{Algebras}
Recall that if $X$ is a simplicial set, then we use $C(X,\ground)$ to denote the dg algebra of normalised simplicial chains on $X$. The underlying chain complex of $C(X,\ground)$ computes the cohomology of $X$, and the multiplication is given by the cup product. If $X$ has finitely many simplices in each dimension, then the underlying chain complex of $C(X,\ground)$ is the linear dual of the normalised chain complex associated to the simplicial vector space $k[X]$.

We will denote by $I^n$ and $I^\infty$ the simplicial chain algebras of ${\operatorname D}^n$ and ${\operatorname S}^\infty$. 
\begin{prop}\label{ddescription}
	The algebra $I^\infty$ is isomorphic to the path algebra of the following graded quiver
 \[\xymatrix
{
e\ar@/_/[r]^s&f\ar@/_/[l]_t
}\]
where the arrows $s$ and $t$ have cohomological degree 1. The differential is given by the formula 
\[
d(x)=[x,s+t].
\]
Furthermore, the algebra $I^n$ is the quotient of $I^\infty$ by the dg ideal spanned by the length $n$ monomial $tsts\cdots$.
\end{prop}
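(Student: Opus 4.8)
The statement has two parts: first, that $I^\infty$ is isomorphic to the described path algebra with differential $d(x)=[x,s+t]$; and second, that $I^n$ is the quotient of $I^\infty$ by the dg ideal spanned by the length-$n$ monomial $tsts\cdots$. The natural approach is to compute $I^\infty=C(\operatorname{S}^\infty,\ground)$ explicitly from the cell structure of $\operatorname{S}^\infty=B\C$ and to identify the cup product combinatorially.

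First I would set up the underlying graded object. Since $\operatorname{S}^\infty$ has exactly two nondegenerate simplices $a_n,b_n$ in each dimension $n\geq 0$, the cochain complex $C(\operatorname{S}^\infty,\ground)$ is two-dimensional in each cohomological degree, with dual basis classes to $a_n$ and $b_n$. The two vertices $a_0,b_0$ give two orthogonal idempotents $e,f$ (the dual basis to the $0$-simplices), which should correspond to the two objects of the quiver; every element of the algebra decomposes via $e\cdot(-)\cdot e$, $e\cdot(-)\cdot f$, etc., into four pieces indexed by the four "paths' parities. The degree-one part is spanned by the duals of $a_1$ and $b_1$: these will be the two arrows $s:e\to f$ and $t:f\to e$. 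The key computation is to verify that the cup product sends a length-$\ell$ alternating word in $s,t$ to the appropriate dual basis class of the $\ell$-simplex, so that the algebra is freely generated (as a path algebra, i.e.\ with no relations beyond $e+f=1$, $e^2=e$, $ef=0$, etc.) by $s$ and $t$. Concretely, I would compute the Alexander--Whitney cup product of the generating $1$-cochains and check by induction that $s t s t\cdots$ (length $\ell$) is nonzero and hits the dual of the unique nondegenerate $\ell$-simplex in its $\C$-path-class, establishing the path-algebra isomorphism on underlying graded algebras.

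Next I would compute the differential. The coboundary in $C(\operatorname{S}^\infty,\ground)$ is dual to the simplicial boundary, which for the groupoid $\C$ is governed by the face maps of $B\C$; a direct check on the generators $s,t$ should give $d(e)=?$ (a degree-one element), and the cleanest way to package the answer is to observe that $s+t$ is itself a Maurer--Cartan-type element and that the whole differential is the inner commutator $d(x)=[x,s+t]$. I would verify this formula on the generators $e,f,s,t$ and then note it extends to all of $I^\infty$ since both $d$ and $[-,s+t]$ are derivations agreeing on generators. For the second part, the length-$n$ monomial $tsts\cdots$ spans (together with its images under left/right multiplication by idempotents) the span of the dual classes in dimension $\geq n$ corresponding to $\operatorname{S}^\infty\setminus\operatorname{D}^n$; I would check that this is a two-sided ideal closed under $d$, and that the quotient has exactly the graded pieces dual to the nondegenerate simplices generating $\operatorname{D}^n$, matching $C(\operatorname{D}^n,\ground)=I^n$ by functoriality of $C(-,\ground)$ applied to the inclusion $\operatorname{D}^n\hookrightarrow\operatorname{S}^\infty$.

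The main obstacle I expect is the cup-product computation in the first part: verifying that the alternating words in $s$ and $t$ are exactly the duals of the nondegenerate simplices, with the correct signs, requires care with the Alexander--Whitney formula and the explicit degeneracy/face structure of $B\C$. Everything else—identifying idempotents, checking the differential is an inner derivation, and exhibiting the ideal for $I^n$—should follow routinely once the graded multiplicative structure is pinned down. An alternative that might sidestep the explicit simplicial bookkeeping is to recognise $B\C$ as the nerve of a contractible groupoid and use a known model for the cochains of a classifying space, but I would expect the direct combinatorial verification on these very small generating data to be the most transparent route.
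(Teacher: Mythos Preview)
Your proposal is correct and follows exactly the approach the paper takes: identify $e,f,s,t$ as duals of the low-dimensional nondegenerate simplices of $B\C$, verify the cup product and differential by direct computation (the paper's proof simply calls these checks ``straightforward''), and read off $I^n$ from the inclusion $\operatorname{D}^n\hookrightarrow\operatorname{S}^\infty$. One minor slip in phrasing: the monomial $tsts\cdots$ together with its idempotent multiples does not by itself span the dual classes in all degrees $\geq n$---for that you need the full two-sided ideal (multiplication by $s$ and $t$ as well), which you do correctly go on to invoke.
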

\begin{proof}
The vertices $e$ and $f$ correspond to the two vertices $0$ and $1$ of $\operatorname{S}^\infty$ whereas the elements $s$ and $t$ are dual to the $1$-simplices $(01)$ and $(10)$ respectively. The formulas for the multiplication and differential are straightforward to check. The description of $I^n$ is clear; note that $I^\infty$ has an obvious automorphism switching $s$ and $t$ so one could just as well mod out by the length $n$ monomial $stst\cdots$. 
\end{proof}
We will now give another convenient description of the algebras $I^n$. Recall from \cite[Section 2.6]{Loday92} the notion of the algebra of noncommutative differential forms.
\begin{defi}
	Let $A$ be a discrete algebra. The $A$-bimodule $\Omega^1(A)$ of noncommutative $1$-forms is defined as the kernel of the multiplication map $m:A\otimes A\to A$. 
\end{defi}
Since $m:A\otimes A\to A$ is split as a left $A$-module by the map $a\mapsto a\otimes 1$ we see that $\Omega^1(A)$ is isomorphic as a left $A$-module to $A\otimes A/\ground$, and we will write $adb \in \Omega^1(A)$ for the image of $a\otimes b\in A\otimes A/\ground$ across this isomorphism. The right $A$-module structure is determined from the Leibniz rule $d(ab)=d(a)b\pm adb.$
\begin{defi}
The algebra of noncommutative differential forms on a discrete algebra $A$ is defined to be $\Omega(A)\coloneqq T_A(\Sigma^{-1}\Omega^1(A))$, the bimodule tensor algebra on the desuspension of $\Omega^1(A)$. The formula $d(a)=da$ together with the Leibniz rule determine the structure of a dg algebra on $\Omega(A)$.
\end{defi}	
There is an obvious inclusion $A\hookrightarrow \Omega(A)$ together with the following universal property. Given a dg algebra $B$ and a map of graded algebras $f:A\to B$, then $f$ extends uniquely to a dg algebra map $\Omega(A)\to B$; for this reason $\Omega(A)$ is often referred to as the dg envelope of $A$.

\begin{lem}\label{lem:noncommforms}Let $A\coloneqq\ground\times \ground$ be the product of two copies of the ground field and let $e$ be one of the two nontrivial idempotents of $A$. Then the dg algebra $I^\infty$ is isomorphic to $\Omega(A)$ and the dg algebra $I^n$ is isomorphic to the quotient of $\Omega(A)$ by the ideal generated by $e(de)^n$.
\end{lem}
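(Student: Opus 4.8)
The plan is to produce an explicit dg algebra isomorphism $\Omega(A)\xrightarrow{\cong} I^\infty$ using the universal property of the dg envelope, and then to transport the relevant ideals. First I would compute the bimodule $\Omega^1(A)$ for $A=\ground\times\ground$. Writing $f=1-e$ for the complementary idempotent, the multiplication map $m\colon A\otimes A\to A$ kills exactly the off-diagonal tensors, so $\Omega^1(A)=\ker m$ is two-dimensional, spanned by $e\,df=e\otimes f$ and $f\,de=f\otimes e$, with $e\cdot(e\,df)\cdot f=e\,df$ and $f\cdot(f\,de)\cdot e=f\,de$ while $e(e\,df)e=f(f\,de)f=0$. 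Thus $\Omega^1(A)$ is purely off-diagonal, and $\Omega(A)=T_A(\Sigma^{-1}\Omega^1(A))$ is precisely the path algebra of the two-vertex quiver with degree-one arrows $\Sigma^{-1}(f\,de)\colon e\to f$ and $\Sigma^{-1}(e\,df)\colon f\to e$. This already matches the underlying graded algebra of $I^\infty$ from Proposition \ref{ddescription}.

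Next I would invoke the universal property of $\Omega(A)$: the graded algebra map $A\to I^\infty$ sending the idempotents $e,f$ to the idempotents $e,f$ of $I^\infty$ extends uniquely to a dg algebra map $\Phi\colon\Omega(A)\to I^\infty$, and being a dg map it is forced to send $de\mapsto d_{I^\infty}(e)=[e,s+t]=t-s$. Using the relations $es=0$, $se=s$, $et=t$, $te=0$ one computes $\Phi(f\,de)=f(t-s)=-s$ and $\Phi(e\,df)=\Phi(-e\,de)=-e(t-s)=-t$; thus $\Phi$ fixes the vertices and sends the two arrows of the source quiver to nonzero scalar multiples of $s$ and $t$, so it is an isomorphism on the underlying graded path algebras and hence a dg isomorphism. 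This proves $I^\infty\cong\Omega(A)$.

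For the statement about $I^n$ I would compute $\Phi$ on the ideal generator $e(de)^n$. Since $\Phi(de)=t-s$ and $\Phi$ is an algebra map, $\Phi(e(de)^n)=e(t-s)^n$. Using $t^2=s^2=0$ together with the idempotent relations, a short induction shows that right multiplication by $(t-s)$ turns the alternating monomial of length $k$ into $\pm$ the alternating monomial of length $k+1$, so that $e(t-s)^n=\pm\,tsts\cdots$, the length-$n$ monomial. Therefore $\Phi$ carries the two-sided ideal generated by $e(de)^n$ isomorphically onto the two-sided ideal generated by $tsts\cdots$, which is exactly the dg ideal defining $I^n$ by Proposition \ref{ddescription}. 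As $\Phi$ is a dg isomorphism it descends to $\Omega(A)/(e(de)^n)\xrightarrow{\cong}I^\infty/(tsts\cdots)=I^n$, completing the proof.

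I expect the main obstacle to be the bookkeeping of the first paragraph: correctly pinning down the bimodule structure of $\Omega^1(A)$ and matching the orientation of its two generators with the arrows $s,t$ of $I^\infty$, along with the sign convention for the universal derivation $da=1\otimes a-a\otimes 1$. Once both source and target are recognised as the same path algebra and $\Phi$ is determined on $de$ by the universal property, the remainder is a routine if sign-sensitive computation; in particular the dg-ideal claim needs no separate verification, since it is transported from the already-established description of $I^n$ in Proposition \ref{ddescription}.
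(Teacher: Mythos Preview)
Your proposal is correct and follows essentially the same approach as the paper: both use the universal property of the dg envelope to produce the map $\Omega(A)\to I^\infty$ determined by $de\mapsto t-s$, verify it is an isomorphism, and then compute that $e(de)^n$ maps to $\pm\,tsts\cdots$. Your version is more detailed—you explicitly identify $\Omega^1(A)$ and recognise $\Omega(A)$ as the same path algebra as $I^\infty$ before invoking the universal property—whereas the paper simply asserts that the induced map is ``straightforward to check'' to be an isomorphism; but the argument is the same.
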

\begin{proof}
The degree zero part of $I^\infty$ is precisely $A$, which yields a morphism $A \to I^\infty$ of graded algebras. This extends to a unique morphism $\Omega(A) \to I^\infty$ which is defined on $A$-algebra generators by sending $de \mapsto t-s$ and $df \mapsto s-t$. It is straightforward to check that this map is an isomorphism. An easy computation shows that $e(de)^n$ is (up to a sign) the length $n$ monomial $tsts\cdots$ and the claim about $I^n$ follows.
\end{proof}

Lemma \ref{lem:noncommforms} allows one to define certain diagonal maps on the dg  algebras $I^1$ and $I^\infty$ (alternatively we could use the monoid structure on $\operatorname{S}^\infty$). The following result holds.
\begin{prop}\label{prop:bialg}
	There exist unique dg bialgebra structures on $I^\infty$ and $I^1$ for which 
	\[
	\Delta(e)=e\otimes e.
	\]
\end{prop}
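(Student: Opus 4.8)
The plan is to exploit the universal property of $\Omega(A)$ as the dg envelope of $A=\ground\times\ground$, recorded just before Lemma \ref{lem:noncommforms}: for any dg algebra $B$, a dg algebra map $\Omega(A)\to B$ is the same as a map of graded algebras $A\to B$. Taking $B=I^\infty\otimes I^\infty$ (a dg algebra with $(I^\infty\otimes I^\infty)^0=A\otimes A$), a dg algebra map $\Delta\colon I^\infty\to I^\infty\otimes I^\infty$ is the same as an algebra map $A\to A\otimes A$. Since $A$ is generated as an algebra by the idempotent $e$ (with $f=1-e$), such a map is determined by the image of $e$, which must be an idempotent; as $e\otimes e$ is idempotent, there is a unique algebra map $A\to A\otimes A$ with $e\mapsto e\otimes e$, hence a unique dg algebra map $\Delta$ with $\Delta(e)=e\otimes e$. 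This already yields both existence and uniqueness of the coproduct on $I^\infty$ with the prescribed value on $e$. Likewise the projection $A\to\ground$, $e\mapsto 1$, $f\mapsto 0$, extends uniquely to a dg algebra map $\epsilon\colon I^\infty\to\ground$, our candidate counit.

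Next I would verify the bialgebra axioms by the same uniqueness principle, which turns each identity into a check on the single generator $e$. Both $(\Delta\otimes\id)\Delta$ and $(\id\otimes\Delta)\Delta$ are dg algebra maps $I^\infty\to I^\infty\otimes I^\infty\otimes I^\infty$ sending $e\mapsto e\otimes e\otimes e$, so they coincide by the universal property, giving coassociativity. Similarly $(\epsilon\otimes\id)\Delta$ and $(\id\otimes\epsilon)\Delta$ are dg algebra endomorphisms of $I^\infty$ fixing $e$ (after the identifications $\ground\otimes I^\infty\cong I^\infty\cong I^\infty\otimes\ground$), hence equal to $\id$. Since $\Delta$ and $\epsilon$ are algebra maps by construction, this exhibits $(\Delta,\epsilon)$ as a dg bialgebra structure. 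Uniqueness of the whole structure follows from uniqueness of $\Delta$ together with the standard fact that the counit of a coalgebra is unique whenever it exists.

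To handle $I^1$, recall from Lemma \ref{lem:noncommforms} that $I^1=\Omega(A)/J$, where $J$ is the dg ideal generated by $e\,de$. I would show $\Delta$ and $\epsilon$ descend. Since $\Delta$ is a dg map with $\Delta(e)=e\otimes e$, we get $\Delta(de)=d(e\otimes e)=de\otimes e+e\otimes de$, and a short computation gives
\[
\Delta(e\,de)=(e\otimes e)(de\otimes e+e\otimes de)=(e\,de)\otimes e+e\otimes(e\,de),
\]
which lies in $J\otimes I^\infty+I^\infty\otimes J=\Ker\big(I^\infty\otimes I^\infty\to I^1\otimes I^1\big)$; also $\epsilon(e\,de)=\epsilon(e)\epsilon(de)=0$. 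As this kernel is a dg ideal and $J$ is the dg ideal generated by $e\,de$, the composites $I^\infty\to I^\infty\otimes I^\infty\to I^1\otimes I^1$ and $\epsilon$ kill $J$ and descend to $\bar\Delta\colon I^1\to I^1\otimes I^1$ and $\bar\epsilon$, with coassociativity and the counit axioms descending automatically. For uniqueness, let $q\colon I^\infty\to I^1$ be the surjective quotient map and let $\Delta'$ be any dg coproduct on $I^1$ with $\Delta'(e)=e\otimes e$. Then $\Delta'\circ q$ and $(q\otimes q)\circ\Delta$ are dg algebra maps $I^\infty\to I^1\otimes I^1$ agreeing on $e$, hence equal by the universal property; surjectivity of $q$ forces $\Delta'=\bar\Delta$, and the counit is again determined.

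The one genuinely non-formal step is the descent to $I^1$: everything on $I^\infty$ is pure universal property, but passing to the quotient requires the explicit identity $\Delta(e\,de)=(e\,de)\otimes e+e\otimes(e\,de)$ to see that the defining relation generates a coideal. I therefore expect this computation, together with the bookkeeping that $J$ (being the dg ideal generated by a single element) maps into the kernel, to be where the real content lies.
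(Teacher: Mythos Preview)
Your proof is correct and follows essentially the same approach as the paper: both use the universal property of $\Omega(A)$ as the dg envelope of $A=\ground\times\ground$ to produce and characterise $\Delta$ on $I^\infty$, then check that the ideal generated by $e\,de$ is a coideal so the structure descends to $I^1$. Your treatment is somewhat more explicit---you spell out the counit, the uniqueness arguments via the universal property, and the computation $\Delta(e\,de)=(e\,de)\otimes e+e\otimes(e\,de)$---but the underlying strategy is identical.
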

\begin{proof} Note that the algebra $A\coloneqq\ground\times \ground\cong \langle e\rangle\oplus\langle f\rangle$ has the structure of a bialgebra specified by $\Delta(e)=e\otimes e$ (and then necessarily $\Delta(f)=1\otimes 1-e\otimes e$). The composite map \[\Delta: A\to A\otimes A\hookrightarrow \Omega(A)\otimes\Omega(A)\cong I^\infty\otimes I^\infty\] extends uniquely, by the universal property of $\Omega(A)$, to a map (denoted by the same symbol)
\[
\Delta: I^\infty\cong \Omega(A)\to \Omega(A)\otimes\Omega(A)\cong I^\infty\otimes I^\infty
\]
which one can check is coassociative (it is enough to check this on algebra generators), giving $I^\infty$ the structure of a dg bialgebra. The two-sided ideal generated by $ede$ is also a two-sided coideal, and hence the quotient $I^1\cong\Omega(A)/(ede)$ is a dg bialgebra.
\end{proof}
Note that for $1\leq n \leq \infty$, the algebra $I^n$ has two `evaluation' maps 
$\ev_{0,1}:I^n\to\ground$ obtained by setting either $e$ or $f$ to zero. This suggests that the $I^n$ can play a role of path objects for $\ground$ in the category of associative dg algebras. 
	Lemma \ref{lem:noncommforms} underscores the noncommutative nature of the notion of $\infty$-homotopy: the ordinary commutative de Rham algebra of $\ground\times \ground$ has $\ground\times \ground$ for its zeroth (co)homology and thus, cannot serve as a path object for $\ground$.
	
\begin{defi}\label{def:homotopyalg}
	Let $f,g:A\to B$ be two maps in $\calg$. \begin{enumerate}
		\item 
	We say that $f$ and $g$ are related by an elementary $n$-homotopy for $n=1,2,\ldots$ if there exists a map of curved algebras
	\[
	h:A\to B\otimes I^n
	\] 
	such that $(B \otimes \ev_0)\circ h=f$ and $(B \otimes \ev_1)\circ h=g$. 
	If $f$ and $g$ are related by a zig-zag of elementary $n$-homotopies, we will call them $n$-homotopic and write 
	$f{\sim}_n g$.
	\item We say that $f$ and $g$ are related by an elementary $\infty-$homotopy if 
	there exists a map of curved algebras
	\[
	h:A\to B\hat\otimes I^\infty
	\] 
	such that $(B \otimes \ev_0)\circ h=f$ and $(B \otimes \ev_1)\circ h=g$, where the notation $\hat\otimes$ denotes the complete tensor product $B\hat\otimes I^\infty\coloneqq \varprojlim_n B\otimes I^n$. If $f$ and $g$ are related by a zig-zag of elementary $\infty$-homotopies, we will call them $\infty$-homotopic and write 
	$f{\sim}_\infty g$.
\end{enumerate}
\end{defi}
An obvious modification of the above definition gives a notion of homotopy for dg algebras.

Note that an elementary $\infty$-homotopy $f\to g$ is the same thing as a compatible system of elementary $n$-homotopies $f \to g$, one for each $n$.

As usual, homotopy of maps gives
 rise to the notion of homotopy equivalence.
\begin{defi}
	Two curved algebras $A$ and $B$ are $n$-homotopy equivalent if there are maps $f:A\to B$ and $g:B\to A$ such that $f\circ g\sim_n\id_B$ and $g\circ f\sim_n\id_A$ where $n=1,\ldots,\infty$. A dg algebra is called $n$-{contractible} if it is $n$-homotopy equivalent to $\ground$.
\end{defi}
Note that a dg algebra $A$ is $n$-contractible if and only if there is a map $A\to\ground$ such that the identity map on $A$ is $n$-homotopic to the composition $A \to \ground \to A$.
\begin{rem}
The notion of strong (or $\infty$-) homotopy was introduced in \cite{CHL21} alongside the notion of a $K_n$-homotopy, where $K_n$ stood for the simplicial cochain algebra of the simplicial $n$-sphere $\operatorname{S}^n$ inside $\operatorname{S}^\infty$. Since $K_n$ is not acyclic for $n<\infty$, the notion of an $n$-homotopy based on $\operatorname{D}^n\subset \operatorname{S}^n$ appears more natural.

\end{rem}
The next proposition summarises some of the basic properties of $n$-homotopy.
\begin{prop}\label{prop:homotopyalg} Let $1\leq m<n\leq\infty$.
	\begin{enumerate}
		\item The relation of elementary $m$-homotopy is reflexive but not symmetric and not transitive. The notion of elementary $\infty$-homotopy is reflexive and symmetric but not transitive.
		\item Let $f,g:A\to B$ be two maps between curved algebras $A$ and $B$ that are elementary $n$-homotopic. Then for any map $r:B\to C$ the composites $r\circ f$ and $r\circ g$ are elementary $n$-homotopic. Similarly for any for any map $k:D\to A$ the composites $f\circ k$ and $g\circ k$ are elementary $n$-homotopic.
\item	If two maps between curved algebras are $n$-homotopic then they are $m$-homotopic. If two curved algebras are $n$-homotopy equivalent, then they are $m$-homotopy equivalent.
\item Two $m$-homotopic maps are not necessarily $n$-homotopic. 
	\item If two maps of dg algebras are $n$-homotopic  then they are chain homotopic and hence induce the same map on homology. If two dg algebras are $n$-homotopy equivalent then they are quasi-isomorphic. 
	\item The dg algebra $I^1$ is $1$-contractible and the dg algebra $I^\infty$ is $\infty$-contractible. 
\item If $A$ is a cofibrant dg algebra then the notions of $n$-homotopy of maps out of $A$ are equivalent for all $n$ and are equivalence relations. Two cofibrant dg algebras are quasi-isomorphic if and only if they are $n$-homotopy equivalent for any $n$.	
\end{enumerate}
\end{prop}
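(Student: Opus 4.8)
The plan is to mirror the proof of Proposition \ref{prop:homotopysimp} throughout, transporting each simplicial argument to the algebraic side via the description of $I^n$ as a quotient of $I^\infty$ (Proposition \ref{ddescription}, Lemma \ref{lem:noncommforms}) and the dg bialgebra structures of Proposition \ref{prop:bialg}. The structural claims (1)--(4) are, in essence, dual to the corresponding simplicial statements. For (2), given an elementary $n$-homotopy $h\colon A\to B\otimes I^n$ (or $A\to B\hat\otimes I^\infty$), I would post-compose with $r\otimes\id$ and pre-compose with $k$; this is immediate. For (3) the key observation is that the evaluation maps $\ev_{0,1}\colon I^n\to\ground$ are dg algebra maps, hence kill everything in positive degree, and so factor through every quotient $I^n\to I^m$; composing a homotopy $A\to B\otimes I^n$ with $B\otimes(I^n\to I^m)$ therefore yields an $m$-homotopy, and the $\infty$ case follows since $B\hat\otimes I^\infty=\varprojlim_n B\otimes I^n$ projects compatibly to each $B\otimes I^n$.

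In (1), reflexivity comes from the constant homotopy along the unit $\ground\to I^n$, symmetry of $\infty$-homotopy from the automorphism of $I^\infty$ swapping $s$ and $t$ (equivalently $e$ and $f$, which interchanges $\ev_0$ and $\ev_1$), and the failures of symmetry and transitivity are obtained by dualising the $\operatorname{D}^n$ and three-object examples of Proposition \ref{prop:homotopysimp}: one tests the two maps $\ev_{0,1}\colon I^n\to\ground$ and the evaluation maps on the cochain algebra of the three-object groupoid, where the absence of the relevant low-degree generators obstructs a reverse or connecting homotopy. Part (4) is the genuinely combinatorial one: an elementary $n$-homotopy relating $\ev_0,\ev_1\colon I^m\to\ground$ amounts to a dg algebra map $I^m\to I^n$ that is an isomorphism in degree zero, and a degree-by-degree induction (dual to the nondegeneracy argument of Proposition \ref{prop:homotopysimp}(4)) shows no such map compatible with the evaluations in the reverse order can exist.

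For (5) I would first reduce to $n=1$ using (3), then unwind what a $1$-homotopy is. Writing $h(a)=\phi(a)\otimes e+\psi(a)\otimes f+\theta(a)\otimes s$ for a dg algebra map $h\colon A\to B\otimes I^1$ (see Definition \ref{def:homotopyalg}), the two evaluations recover $\phi$ and $\psi$, compatibility of $h$ with the multiplication forces $\phi,\psi$ to be algebra maps and $\theta$ a degree $-1$ twisted derivation, and compatibility with the differential yields exactly the relation $\phi-\psi=\pm(d\theta+\theta d)$. Thus $\theta$ is a chain homotopy and $\phi,\psi$ agree on homology; the homotopy-equivalence statement follows by applying homology to the two composites. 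For (6) I would use the comultiplications of Proposition \ref{prop:bialg} as contracting homotopies, dual to the monoid multiplication on $\operatorname{S}^\infty$: since $\Delta(e)=e\otimes e$ and $e,f$ generate $I^\infty$ (resp.\ $I^1$) as a dg algebra, the two dg algebra maps $(\id\otimes\ev_i)\Delta$ are determined on generators and compute to $\id$ and to the constant map through $\ground$, so $\Delta$ (completed into $I^\infty\hat\otimes I^\infty$ in the infinite case) is the required contraction.

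The main obstacle is (7), which I would handle through the homotopy calculus of the projective model structure on $\alg$ (weak equivalences the quasi-isomorphisms, fibrations the surjections, all objects fibrant). The point is that $B\otimes I^n$ is a good path object for $B$: the unit $B\to B\otimes I^n$ is a quasi-isomorphism because $I^n$ is contractible and $\ground$ is a field, while $(\id\otimes\ev_0,\id\otimes\ev_1)\colon B\otimes I^n\to B\times B$ is surjective, hence a fibration, through which the diagonal factors. Elementary $n$-homotopy is then precisely right homotopy through this path object, so for $A$ cofibrant and $B$ fibrant the standard fact that right homotopy via a good path object is one and the same equivalence relation (independent of the chosen path object) gives at once that all the $n$-homotopy relations agree and are equivalence relations, and Whitehead's theorem yields the last sentence. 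The subtle step is the $\infty$ case, where $B\hat\otimes I^\infty=\varprojlim_n B\otimes I^n$ must also be a good path object: surjectivity onto $B\times B$ is clear since the degree-zero parts of the tower are constant, and the unit $B\to B\hat\otimes I^\infty$ is a quasi-isomorphism because the transition maps are surjective, so the tower is Mittag--Leffler and the associated $\varprojlim^1$ terms vanish, giving $H^*(\varprojlim_n B\otimes I^n)\cong\varprojlim_n H^*(B\otimes I^n)\cong H^*(B)$.
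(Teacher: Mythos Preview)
Your proposal is correct and follows essentially the same route as the paper: adapt the simplicial proof of Proposition~\ref{prop:homotopysimp} via the quotient description of $I^n$, use the bialgebra diagonals from Proposition~\ref{prop:bialg} for (6), and invoke the good path object argument in the standard model structure on $\alg$ for (7). Your treatment of (7) is in fact more careful than the paper's: the paper simply asserts that $I^\infty$ is quasi-isomorphic to $\ground$ and that $I^n\to\ground\times\ground$, $I^\infty\to\ground\times\ground$ are surjective, whereas you supply the Mittag--Leffler argument needed to pass the quasi-isomorphism through the inverse limit $B\hat\otimes I^\infty=\varprojlim_n B\otimes I^n$, which is a genuine (if small) gap the paper leaves implicit.
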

\begin{proof}
	To show $(1)$, one can just adapt the proof of Claim 1 of \ref{prop:homotopysimp}. Note that $\infty$-homotopy is symmetric because the automorphism switching the idempotents of the pseudocompact dg algebra $I^\infty$ is continuous, since it is induced by the corresponding automorphism of the coalgebra of chains on $\operatorname{S}^\infty$.
	
	Claims (2) and (3) are obtained by the obvious modification of the corresponding statements of Proposition \ref{prop:homotopysimp}.

For (4) consider the identity map $h:I^m\to I^m$; it can be interpreted as an $m$-homotopy between the two evaluation maps $\ev_{0,1}:I^m\to\ground$. The only curved maps $I^n\to\ground$ are the above evaluation maps, so it follows that if $\ev_0$ and $\ev_1$ are $n$-homotopic they must be elementary $n$-homotopic. Such a homotopy is a morphism $I^m\to I^n$ which restricts to an isomorphism in degree zero. The curved algebra $\ground \times \ground $ has exactly two automorphisms, namely the identity and the one that switches $e$ and $f$. By Lemma \ref{lem:noncommforms}, $I^m$ is generated as a dg algebra by its two degree zero idempotents $e$ and $f$. It is easy to see that neither of the two automorphisms of $\ground \times \ground$ extends to an algebra morphism $I^m\to I^n$ .

To prove (5) it suffices to consider the case of a $1$-homotopy $h:A\to B\otimes I^1$. In components such a map consists of two algebra maps $A\to B$ (which must be equal to $f$ and $g$) and another map $\tilde{h}:A\to B$ of homological degree $1$. Compatibility of $h$ with the differential implies $d_B\circ \tilde{h}+\tilde{h}\circ d_B=f-g$, i.e. $\tilde{h}$ is a chain homotopy between $f$ and $g$. 

For (6) note that the diagonal maps $I^1\to I^1\otimes I^1$ and  $I^\infty\to I^\infty\otimes I^\infty$ (cf. Proposition \ref{prop:bialg}) constitute an elementary $1$-homotopy or $\infty$-homotopy between the identity maps on $I^1$ and $I^\infty$ and maps  that factors through $\ground$. This implies that $I^1$ is $1$-contractible and $I^\infty$ is $\infty$-contractible. 

Finally, since $I^n$ and $I^\infty$ are quasi-isomorphic to $\ground$, they can serve as path objects for $\ground$ in the standard model category of dg algebras (with quasi-isomorphisms as weak equivalences); moreover these path objects are \emph{good} in the sense that the canonical maps $I^n\to\ground\times \ground$ and $I^\infty\to\ground\times \ground$ are fibrations (in this case surjective maps).  Claim (7) follows readily.
\end{proof}	
\begin{rem}
	For $1<n<\infty$ the dg algebra $I^n$ is not $n$-contractible, since the diagonal map $I^\infty\to I^\infty\otimes I^\infty$ does not descend to a diagonal map on $I^n$.
\end{rem}
Since $n$-homotopies of dg algebras are compatible with compositions, the following definition makes sense:
\begin{defi}
Let $1\leq n\leq\infty$. The n-homotopy category of dg-algebras $\Ho_n\Alg$	is the category whose objects are dg algebras and morphisms are n-homotopy classes of maps.
\end{defi}
\subsection{Coalgebras}We now describe how $n$-homotopies are constructed for curved coalgebras. The idea is to dualise the construction for algebras. For $3\leq n \leq \infty$, let $I_n$ be the linear dual of the dg algebra $I^n$. Because $I^n$ is finite dimensional in each degree, the complex $I_n$ naturally admits the structure of a dg coalgebra. For a more direct construction, when $n<\infty$ one could take $I_n$ (resp.\ $I_\infty$) to be the coalgebra of normalised chains on the simplicial set $\operatorname{D}^n$ (resp.\ $\operatorname{S}^\infty$). We denote by $i_{0,1}$ the two maps $\ground\to I_n$ corresponding to the two vertices; equivalently these are the linear duals of the evaluation maps.
\begin{defi}\label{def:homotopycoalg}
	Let $f,g:A\to B$ be two maps in $\ccog$. \begin{enumerate}
		\item 
		We say that $f$ and $g$ are related by an elementary $n$-homotopy for $n=1,2,\ldots$ if there exists a curved coalgebra map
		\[
		h:A\otimes I_n\to B
		\] 
		such that $h\circ (A \otimes i_0)=f$ and $h\circ (A \otimes i_1)=g$. 
		If $f$ and $g$ are related by a zig-zag of elementary $n$-homotopies, we will call them $n$-homotopic and write 
		$f{\sim}_n g$.
		\item We say that $f$ and $g$ are related by an elementary $\infty$-homotopy if 
		there exists a curved coalgebra map
		\[
		h:A\otimes I_\infty\to B
		\] 
		such that $h\circ (A \otimes i_0)=f$ and $h\circ (A \otimes i_1)=g$. If $f$ and $g$ are related by a zig-zag of elementary $\infty$-homotopies, we will call them $\infty$-homotopic and write 
		$f{\sim}_\infty g$.
	\end{enumerate}
\end{defi}
As before, there is an analogous definition of homotopy for dg coalgebras.
\begin{rem}
	The above definition can be equivalently formulated in the language of pseudocompact curved algebras. In this language, it is essentially a pseudocompact version of Definition \ref{def:homotopyalg}.
\end{rem}
As usual, the notion of homotopy between maps gives
rise to the notion of homotopy equivalence.
\begin{defi}
	Two curved coalgebras $A$ and $B$ are $n$-homotopy equivalent if there are maps $f:A\to B$ and $g:B\to A$ such that $f\circ g\sim_n\id_B$ and $g\circ f\sim_n\id_A$ where $n=1,\ldots,\infty$. A dg coalgebra $n$-homotopy equivalent to $\ground$ is called $n$-contractible.
\end{defi}
\begin{prop}\label{prop:homotopycoalg} Let $1\leq m<n\leq\infty$.
	\begin{enumerate}
		\item The relation of elementary $n$-homotopy in $\ccog$ is reflexive but not symmetric and not transitive for $n<\infty$. The notion of elementary $\infty$-homotopy is reflexive and symmetric but not transitive.
		\item Let $f,g:A\to B$ be two maps between curved coalgebras $A$ and $B$ that are elementary $n$-homotopic. Then for any map $r:B\to C$ the composites $r\circ f$ and $r\circ g$ are elementary $n$-homotopic. Similarly for any for any map $k:D\to A$ the composites $f\circ k$ and $g\circ k$ are elementary $n$-homotopic.
		\item	If two maps between curved coalgebras are $n$-homotopic then they are $m$-homotopic. Similarly, if two curved coalgebras are $n$-homotopy equivalent, then they are $m$-homotopy equivalent.
		\item If two maps of dg coalgebras are $n$-homotopic then they are chain homotopic and so induce the same map on homology. If two dg coalgebras are $n$-homotopy equivalent then they are quasi-isomorphic.
		\item Two $m$-homotopic maps are not necessarily $n$-homotopic. 
		\item The dg coalgebra $I_1$ is $1$-contractible. The dg coalgebra $I_\infty$ is $\infty$-contractible.
			\end{enumerate}
\end{prop}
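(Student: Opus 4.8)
The plan is to derive every assertion by \emph{dualising} the corresponding statement for algebras in Proposition~\ref{prop:homotopyalg}. Concretely, I would use the contravariant equivalence between $\ccog$ and the category $\pccalg$ of pseudocompact curved algebras, together with the identification $I_n\cong(I^n)^*$ for $1\leq n\leq\infty$ (for finite $n$ this is the linear dual of a complex that is finite dimensional in each degree; for $n=\infty$ it matches the completed tensor product $\hat\otimes$ appearing in Definition~\ref{def:homotopyalg}). Under this equivalence an elementary coalgebra $n$-homotopy $h\colon A\otimes I_n\to B$ corresponds exactly to a pseudocompact-algebra $n$-homotopy $B^*\to A^*\hat\otimes I^n$, and the vertex maps $i_0,i_1\colon\ground\to I_n$ are dual to the evaluation maps $\ev_0,\ev_1\colon I^n\to\ground$; this is the content of the Remark following Definition~\ref{def:homotopycoalg}. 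Since the proofs of Proposition~\ref{prop:homotopyalg} only use formal properties and the finite dimensional (resp.\ pseudocompact) objects $I^n$, they apply verbatim in $\pccalg$ and dualise to the statements we want.

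With this dictionary, claims (1), (2) and (3) transfer immediately. Reflexivity is clear; $\infty$-symmetry holds because the idempotent-swapping automorphism of $I^\infty$ dualises to a vertex-swapping automorphism of $I_\infty$; non-symmetry for $n<\infty$ and the failures of transitivity come from dualising the counterexamples in Proposition~\ref{prop:homotopyalg}(1) (equivalently, from the three group-like elements of the chain coalgebra of the three-object groupoid $\mathcal D$ of Proposition~\ref{prop:homotopysimp}, where $0,1$ and $1,2$ are joined by edges but $0,2$ are not). For (2), given $h\colon A\otimes I_n\to B$ one takes $r\circ h$ for postcomposition and $h\circ(k\otimes\id_{I_n})$ for precomposition. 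For (3) I would use that the simplicial inclusions $\operatorname{D}^m\hookrightarrow\operatorname{D}^n,\operatorname{S}^\infty$ induce subcoalgebra inclusions $I_m\hookrightarrow I_n,I_\infty$ compatible with $i_0,i_1$, so that any $n$-homotopy restricts along $A\otimes I_m\to A\otimes I_n$ to an $m$-homotopy.

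For (5) I would reduce to showing that $i_0,i_1\colon\ground\to I_m$, which are elementary $m$-homotopic via $\id_{I_m}$, are not $n$-homotopic. As $\ground\to I_n$ admits only the two group-like maps $i_0,i_1$, any putative $n$-homotopy between $i_0$ and $i_1$ must be elementary, hence a coalgebra map $I_n\to I_m$ fixing both vertices; dually this is an algebra map $I^m\to I^n$ restricting to an isomorphism in degree zero, shown not to exist in Proposition~\ref{prop:homotopyalg}(4). For (4) I would first reduce to $n=1$ using (3), then decompose the underlying complex of $I_1$ into its two group-like summands and the one-dimensional summand of degree $-1$ dual to $s$; reading off the three components of $h\colon A\otimes I_1\to B$ identifies the endpoints $f,g$ and exhibits the $s$-component as a degree $+1$ map $\tilde h\colon A\to B$ whose compatibility with the differentials yields $d\tilde h+\tilde h d=f-g$. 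Since chain homotopy is transitive, a zig-zag of elementary $1$-homotopies still produces a chain homotopy, and hence an $n$-homotopy equivalence of dg coalgebras is a chain homotopy equivalence, in particular a quasi-isomorphism.

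Finally, for (6) I would dualise the diagonal argument of Proposition~\ref{prop:homotopyalg}(6): the dg bialgebra structures on $I^1$ and $I^\infty$ from Proposition~\ref{prop:bialg} dualise to associative multiplications $I_1\otimes I_1\to I_1$ and $I_\infty\otimes I_\infty\to I_\infty$ whose unit is the vertex dual to the counit. Each such multiplication is an elementary homotopy witnessing $\id$ as homotopic to a self-map that factors through $\ground$, giving $1$- and $\infty$-contractibility respectively. I expect the only genuine work, as opposed to formal dualisation, to be in (4): one must track the coproduct on $A\otimes I_1$ against the differential carefully so that the $s$-component satisfies the chain-homotopy identity with the correct signs. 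Everything else is bookkeeping to ensure the contravariant equivalence carries completed tensor products, truncations, and the curved morphism data across correctly.
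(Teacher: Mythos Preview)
Your proposal is correct and follows precisely the paper's approach: the paper's entire proof reads ``All the claims above admit obvious reformulations in terms of pseudocompact curved or dg algebras instead of coalgebras, and in this reformulation the proof is the same as the proof of Proposition~\ref{prop:homotopyalg}.'' You have simply unpacked this dualisation in detail, correctly matching items (4) and (5) here to items (5) and (4) of Proposition~\ref{prop:homotopyalg} respectively.
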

\begin{proof}	
All the claims above admit obvious reformulations in terms of pseudocompact curved or dg algebras instead of coalgebras, and in this reformulation the proof is the same as the proof of Proposition \ref{prop:homotopyalg}.
\end{proof}	
\begin{rem}
	The reason that an analogue of item (7) of Proposition \ref{prop:homotopyalg} was omitted from Proposition \ref{prop:homotopycoalg} is that we do not yet have a model structure on the category $\ccog$. There is, on the other hand, a model structure on the category of \emph{conilpotent} curved coalgebras \cite[Chapter 9]{Positselski11} and an appropriate analogue of Proposition \ref{prop:homotopyalg}(7) holds for conilpotent curved coalgebras. We omit the details.
\end{rem}
\begin{rem}
	The dg coalgebra $I_n$ is not $n$-contractible for $1<n<\infty$.
\end{rem}
Since $n$-homotopies of dg coalgebras are compatible with compositions, the following definition makes sense.
\begin{defi}
	Let $1\leq n\leq\infty$. The $n$-homotopy category of dg coalgebras $\Ho_n\Coalg$ is the category whose objects are dg coalgebras and morphisms are $n$-homotopy classes of maps.
\end{defi}

Given a simplicial set $X$, we can form its normalised simplicial chain coalgebra $C_*(X)$. This construction has strong multiplicative properties, and in particular for two simplicial sets $X$ and $Y$ there is a natural coalgebra map -- the Eilenberg--Zilber map -- $C_*(X)\otimes C_*(Y)\to C_*(X\times Y)$. The following result holds.
\begin{prop} Let $n=1,2,\ldots,\infty$ and let $X$ and $Y$ be two simplicial sets.
\begin{enumerate}\item	If $f,g:X\to Y$ are $n$-homotopic, then the induced maps of dg coalgebras $f_*, g_*:C_*(X)\to C_*(Y)$ are $n$-homotopic.
	\item If $X$ and $Y$ are $n$-homotopy equivalent, then the dg coalgebras $C_*(X)\to C_*(Y)$ are $n$-homotopy equivalent.
	\item If $X$ and $Y$ are weakly equivalent Kan complexes then $C_*(X)\to C_*(Y)$ are $n$-homotopy equivalent for all $n$.
	\end{enumerate}
\end{prop}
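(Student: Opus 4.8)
The plan is to exploit the identification, recorded just above the statement, that the homotopy coalgebras are literally the chain coalgebras of the disks and spheres: $I_n = C_*(\mathrm{D}^n)$ and $I_\infty = C_*(\mathrm{S}^\infty)$. This makes part (1) nearly formal, since an elementary $n$-homotopy of simplicial sets can be pushed through the functor $C_*$ directly. Concretely, suppose $f,g\colon X\to Y$ are related by an elementary $n$-homotopy $h\colon X\times\mathrm{D}^n\to Y$ (the case $n=\infty$ being identical, with $\mathrm{D}^n$ replaced by $\mathrm{S}^\infty$ and $I_n$ by $I_\infty$). I would form the composite
\[
H\colon C_*(X)\otimes I_n = C_*(X)\otimes C_*(\mathrm{D}^n)\xrightarrow{\ \mathrm{EZ}\ } C_*(X\times\mathrm{D}^n)\xrightarrow{\ h_*\ } C_*(Y),
\]
where $\mathrm{EZ}$ is the Eilenberg--Zilber map. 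This $H$ is a morphism of dg coalgebras precisely because $\mathrm{EZ}$ is a coalgebra map (as recalled above) and $h_*=C_*(h)$ is one by functoriality of $C_*$.

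Next I would check that $H$ has the correct endpoints, so that it is an elementary $n$-homotopy in the sense of Definition \ref{def:homotopycoalg}. Here one uses that $i_0,i_1\colon\ground\to I_n$ are, under the identification $\ground\cong C_*(*)$, exactly $C_*$ applied to the two vertex inclusions $i_0,i_1\colon *\to\mathrm{D}^n$. Naturality of the Eilenberg--Zilber map with respect to $*\to\mathrm{D}^n$, combined with the fact that $\mathrm{EZ}$ is the canonical isomorphism when one factor is a point, yields $H\circ(C_*(X)\otimes i_0)=C_*\big(h\circ(\id_X\times i_0)\big)=C_*(f)=f_*$, and similarly $H\circ(C_*(X)\otimes i_1)=g_*$. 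A general $n$-homotopy is a zig-zag of elementary ones, and applying $C_*$ termwise carries it to a zig-zag of elementary $n$-homotopies of coalgebras; this establishes (1).

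Part (2) is then immediate: given a homotopy equivalence $f\colon X\to Y$, $g\colon Y\to X$ with $f\circ g\sim_n\id_Y$ and $g\circ f\sim_n\id_X$, functoriality of $C_*$ gives $(f\circ g)_*=f_*\circ g_*$ and $(\id_Y)_*=\id_{C_*(Y)}$, so part (1) yields $f_*\circ g_*\sim_n\id_{C_*(Y)}$ and $g_*\circ f_*\sim_n\id_{C_*(X)}$. For part (3), weakly equivalent Kan complexes are fibrant and (as all simplicial sets) cofibrant, hence homotopy equivalent. Since $\mathrm{D}^1=\Delta^1$ is the standard simplicial interval and $1$-homotopy coincides with ordinary simplicial homotopy (Proposition \ref{prop:homotopysimp}(5)), such a homotopy equivalence is witnessed by genuine maps out of $X\times\mathrm{D}^1$, so $X$ and $Y$ are $1$-homotopy equivalent. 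Proposition \ref{prop:homotopysimp}(7) then upgrades this to $n$-homotopy equivalence for all $n$, and part (2) gives the conclusion.

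The whole argument is essentially formal once $I_n$ is identified with $C_*(\mathrm{D}^n)$, so the only genuine content to verify is that $\mathrm{EZ}$ is a coalgebra morphism (used to see that $H$ lands in coalgebras) and that its naturality matches the endpoint data exactly. I expect the main, albeit mild, obstacle to lie in part (3): one must ensure the abstract homotopy equivalence of Kan complexes is realised by honest maps and homotopies of the strict form demanded by Definition \ref{def:homotopysimpl} (maps out of $X\times\mathrm{D}^1$, not merely a zig-zag through cofibrant replacements), which is exactly where fibrancy of the targets and the goodness of the cylinder $\mathrm{D}^1$ enter.
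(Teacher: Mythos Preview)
Your proposal is correct and follows essentially the same approach as the paper: for (1), apply $C_*$ to an elementary homotopy and precompose with the Eilenberg--Zilber map; for (2), use functoriality; for (3), invoke that weakly equivalent Kan complexes are $n$-homotopy equivalent for all $n$ (via Proposition~\ref{prop:homotopysimp}(7)) and then apply (2). Your version is more careful than the paper's in spelling out the endpoint verification and the zig-zag passage, but the content is the same.
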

\begin{proof}
	For (1), given a homotopy $h: X\times \operatorname{D}^n\to Y$ between $f$ and $g$, take chains and apply the Eilenberg--Zilber map to get a homotopy $h': C_*(X)\times \operatorname{D}^n\to C_*(Y)$ between $f_*$ and $g_*$. Statement (2) is clear from the functoriality of statement (1). For (3), just observe that two Kan complexes are weakly equivalent if and only if they are $n$-homotopy equivalent for all $n$, and then apply (1).
	\end{proof}

	\section{Categories of twisted (co)modules and 3-homotopies}\label{section:twisted}
In this section we consider coderived categories and their corresponding derived categories of the second kind, in the sense of Positselski \cite{Positselski11}. We also consider their associated dg categories, and show how they are naturally related to 3-homotopies of (co)algebras introduced above.

\subsection{Twisted modules} Given a curved algebra $A$, we will consider a certain triangulated category $\Dcoderivedc(A)$, the compactly generated coderived category of $A$. A quick construction of $\Dcoderivedc(A)$ proceeds as follows. Recall that if $A$ is a curved algebra then $A^{\#}$ denotes its underlying graded algebra. A twisted $A$-module is a dg $A$-module whose underlying graded $A^{\#}$-module is of the form $A^\#\otimes V$, where $V$ is a graded vector space. For example, sums of shifts of free $A$-modules are twisted modules, but in general there are more.
A differential on such a twisted module is the same thing as a Maurer--Cartan element of the curved algebra $A\otimes \End(V)$. If $V$ is finite dimensional, such a twisted $A$-module will be called \emph{finitely generated} (f.g.). The dg categories of twisted $A$-modules and of f.g. twisted $A$-modules will be denoted by $\Tw(A)$ and $\Twfg(A)$, and their homotopy categories by $\Ho(\Tw(A))$ and $\Ho(\Twfg(A))$ respectively. Then $\Dcoderivedc(A)$ is defined to be the smallest triangulated subcategory of $\Ho(\Tw(A))$ containing 
$\Ho(\Twfg(A))$ and closed under arbitrary direct sums. Its subcategory of compact objects will be called the \emph{perfect} compactly generated coderived category of $A$ and denoted by 
$\Perfcoderivedc(A)$; it is clear that $\Perfcoderivedc(A)$ is the idempotent completion of $\Ho(\Twfg(A))$. Note that $\Perfcoderivedc(A)$ is the homotopy category of the dg category $\Perfcoderiveddgc(A)$ consisting of all perfect twisted $A$-modules. When $A$ is a dg algebra, then this dg category $\Perfcoderiveddgc(A)$ is the Morita fibrant replacement of the dg category $A$ \cite{Tabuada05}.

In fact, \cite{GL20} constructs a model structure on the category $A$-$\Mod$ whose homotopy category is $\Dcoderivedc(A)$. The weak equivalences in this model structure are the maps $M\to N$ which induce quasi-isomorphisms of dg vector spaces $\underline{\Hom}_A(L,M)\to\underline{\Hom}_A(L,N)$ for all f.g. twisted $A$-modules $L$. The fibrations are the surjections.

When $A$ is a dg algebra, the category $\Dcoderivedc(A)$ is analogous to the ordinary derived category $\operatorname{D}(A)$ of $A$, but is in general a finer invariant. Isomorphisms in $\Dcoderivedc(A)$ are quasi-isomorphisms, but the converse is not true. Hence one can think of $\operatorname{D}(A)$ as the localisation of $\Dcoderivedc(A)$ at the quasi-isomorphisms. For a cofibrant dg algebra $A$ the categories  $\Dcoderivedc(A)$ and $\operatorname{D}(A)$ (as well as various other versions of the derived category of $A$) all coincide, cf. \cite[Section 9.4]{Positselski11}, \cite[Section 3.3]{GL20}.

A map of curved algebras $f:A\to B$ determines a dg functor $f_*:A$-$\Mod\to B$-$\Mod$ given by $M\mapsto M\otimes_A B$; the dg $B$-module $M\otimes_A B$ is called the \emph{induced} module. The functor $f_*$ is called \emph{induction}; it clearly restricts to a dg functor $\Twfg(A)\to\Twfg(B)$ and hence to a functor between the corresponding homotopy categories. This commutes with direct sums and hence induces functors $\Dcoderivedc(A)\to\Dcoderivedc(B)$ and $\Perfcoderivedc(A)\to \Perfcoderivedc(B)$; we will denote all of these functors by $f_*$. The following result holds.

\begin{prop}\label{prop:3homotopy}
	Let $A,B$ be curved algebras and let $f,g:A\to B$ be two maps that are 3-homotopic. Then the induced functors 
	$f_*,g_*:\Dcoderivedc(A)\to\Dcoderivedc(B)$ (and therefore, also their restrictions $\Perfcoderivedc(A)\to \Perfcoderivedc(B)$) are isomorphic. 
\end{prop}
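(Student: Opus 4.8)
The plan is to extract an explicit natural isomorphism directly from the homotopy, using the structure of $I^3$ worked out in Proposition \ref{ddescription}. Since the relation of being isomorphic functors is an equivalence relation compatible with composition, and a $3$-homotopy is by definition a zig-zag of elementary ones, it suffices to treat a single elementary $3$-homotopy, i.e.\ a map of curved algebras $h\colon A\to B\otimes I^3$ with $(B\otimes\ev_0)\circ h=f$ and $(B\otimes\ev_1)\circ h=g$. Recall that a twisted $A$-module is the datum of a graded vector space $V$ together with an MC element $\xi\in\MC(A\otimes\End(V))$, and that induction along a morphism of curved algebras is computed by applying the induced curved algebra morphism to the MC element; thus $f_*$ and $g_*$ send $(V,\xi)$ to $(V,f_*\xi)$ and $(V,g_*\xi)$, which share the same underlying graded module $B^\#\otimes V$. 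I would set $\Xi\coloneqq(h\otimes\id_{\End V})(\xi)\in\MC\big(B\otimes I^3\otimes\End(V)\big)$; applying $B\otimes\ev_0$ and $B\otimes\ev_1$ recovers $f_*\xi$ and $g_*\xi$.

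Next I would decompose $\Xi$ along the basis $\{e,f,s,t,st,ts,sts\}$ of $I^3$, writing $\Xi=\Xi_e\otimes e+\Xi_f\otimes f+\Xi_s\otimes s+\cdots$. Here $\Xi_e=f_*\xi$ and $\Xi_f=g_*\xi$ are degree $1$ MC elements of $B\otimes\End(V)$, while the components $\Xi_s,\Xi_t$ are degree $0$ and hence define $B$-linear endomorphisms of $B^\#\otimes V$. The key computation is to expand the single MC equation $h_{B\otimes I^3\otimes\End V}+d\Xi+\Xi^2=0$ and collect the coefficient of each basis element of $I^3$, using the multiplication table of the path algebra and the differential $d(x)=[x,s+t]$ from Proposition \ref{ddescription}. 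Collecting the coefficients of $e$ and $f$ reproduces the MC equations for $\Xi_e$ and $\Xi_f$; collecting the coefficients of $s$ and $t$ shows that $\phi\coloneqq\Xi_s\colon(V,f_*\xi)\to(V,g_*\xi)$ and $\psi\coloneqq\Xi_t\colon(V,g_*\xi)\to(V,f_*\xi)$ are maps of twisted modules (degree $0$ chain maps); and collecting the coefficients of the degree-two elements $st$ and $ts$ exhibits $\Xi_{st}$ and $\Xi_{ts}$ as the two chain homotopies realising $\phi\psi\simeq\id_{g_*M}$ and $\psi\phi\simeq\id_{f_*M}$ respectively.

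Granting these identities, $\phi$ is a homotopy equivalence and therefore an isomorphism in $\Ho(\Tw(B))$, and hence in the full subcategory $\Dcoderivedc(B)$. The assignment $(V,\xi)\mapsto\phi$ is built by applying the fixed morphism $h$ and then reading off a component, so it is manifestly natural in morphisms of twisted modules and additive in $V$; consequently it upgrades to a natural isomorphism between the functors $f_*,g_*\colon\Ho(\Tw(A))\to\Ho(\Tw(B))$ which, since induction preserves direct sums and finitely generated twisted modules, restricts to the claimed natural isomorphism $f_*\cong g_*$ on $\Dcoderivedc(A)$ (and on $\Perfcoderivedc(A)$). A final short argument chains the elementary homotopies together to cover the general $3$-homotopy.

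The main obstacle is the central computation of the second paragraph: correctly bookkeeping the expansion of the MC equation for $\Xi$ against the noncommutative multiplication and differential of $I^3$, so that the $s$- and $t$-components genuinely yield chain maps and the $st$- and $ts$-components genuinely yield homotopies for \emph{both} triangle identities. This is also where the specific value $n=3$ enters: $I^2$ carries only a single degree-two element (dual to the top cell $a_2$ of $\operatorname D^2$), so a $2$-homotopy produces $\phi,\psi$ together with only one of the two homotopies $\phi\psi\simeq\id$, $\psi\phi\simeq\id$, giving merely a one-sided homotopy inverse; it is precisely the two independent degree-two generators $st,ts$ of $I^3$ that promote $\phi$ to an honest isomorphism in the homotopy category.
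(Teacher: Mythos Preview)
Your proposal is correct and follows essentially the same route as the paper: decompose the image of the MC element under $h\otimes\id$ along the basis $\{e,f,s,t,st,ts,sts\}$ of $I^3$, read off the $s$- and $t$-components as the module maps and the $st$- and $ts$-components as the homotopies witnessing mutual inverseness, and observe naturality. Your additional remarks on the reduction to elementary $3$-homotopies and on why $n=3$ is the threshold value (two independent degree-two generators giving both homotopies) are accurate elaborations that the paper leaves implicit.
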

\begin{proof}
	Let $(M,d_M)$ be a twisted $A$-module, so that there is an isomorphism of $A^{\#}$-modules $M^{\#}\cong A^\#\otimes V$ for some graded vector space $V$. Let $H:A\to B\otimes I^3$ be a $3$-homotopy between $f$ and $g$; it determines a $3$-homotopy $H\otimes\id: A\otimes\End(V)\to B\otimes\End(V)\otimes I^3$ between $f\otimes \id$ and $g \otimes \id$. The differential $d_M$ in $M$ is a MC element in the curved algebra $A\otimes\End(V)$. Since the set $\mathcal{B}\coloneqq\{e,f,s,t,st,ts,sts\}$ is a basis for $I^3$, we can write $(H\otimes\id)(d_M)\in\MC(B\otimes\End(V)\otimes I^3)$ in components as 
$(H\otimes\id)(d_M)=\sum_{b\in\mathcal{B}}(H\otimes\id)(d_M)_bb$.
	Then $[(H\otimes\id)_*(d_M)]_e$ and $[(H\otimes\id)_*(d_M)]_f$ are the MC elements in $B\otimes\End(V)$ that correspond to $f_*(d_M)$ and $g_*(d_M)$ respectively; $[(H\otimes\id)_*(d_M)]_s$ and $[(H\otimes\id)_*(d_M)]_{t}$ correspond to $B$-module maps $f_*(M)\to g_*(M)$ and $g_*(M)\to f_*(M)$ respectively, and these maps are inverse up to homotopies given by the elements $[(H\otimes\id)_*(d_M)]_{st}$ and $[(H\otimes\id)_*(d_M)]_{ts}$, interpreted as endomorphisms of $f_*(M)$ and $g_*(M)$. 
	
	In other words,  $[(H\otimes\id)_*(d_M)]_s$ gives a natural transformation $f_*\to g_*$ and $[(H\otimes\id)_*(d_M)]_t$ gives its inverse natural transformation $g_*\to f_*$, as required.
\end{proof}
\begin{cor}\label{cor:3homotopy}
Let $A$ and $B$ be curved algebras that are $3$-homotopy equivalent. Then their coderived categories $\Dcoderivedc(A)$ and $\Dcoderivedc(B)$ (as well as $\Perfcoderivedc(A)$ and $\Perfcoderivedc(B)$) are equivalent.
\end{cor}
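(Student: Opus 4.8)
The plan is to deduce the corollary formally from Proposition \ref{prop:3homotopy} together with the functoriality of induction. First I would unpack the hypothesis: a $3$-homotopy equivalence between $A$ and $B$ furnishes maps $f:A\to B$ and $g:B\to A$ of curved algebras together with $3$-homotopies $f\circ g\sim_3\id_B$ and $g\circ f\sim_3\id_A$. Since induction is covariant and satisfies $(h\circ f)_*\cong h_*\circ f_*$ (via the canonical associativity isomorphism $(M\otimes_A B)\otimes_B C\cong M\otimes_A C$) together with $(\id_A)_*\cong\id$ (via $M\otimes_A A\cong M$), these maps induce functors $f_*:\Dcoderivedc(A)\to\Dcoderivedc(B)$ and $g_*:\Dcoderivedc(B)\to\Dcoderivedc(A)$ that are candidate quasi-inverses.

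Next I would apply Proposition \ref{prop:3homotopy} to each of the two homotopies. Because isomorphism of functors is an equivalence relation, a zig-zag of elementary $3$-homotopies yields the same conclusion as a single one, so from $f\circ g\sim_3\id_B$ we obtain $f_*\circ g_*\cong(f\circ g)_*\cong(\id_B)_*\cong\id_{\Dcoderivedc(B)}$, and symmetrically $g_*\circ f_*\cong\id_{\Dcoderivedc(A)}$. Hence $f_*$ and $g_*$ are mutually inverse equivalences up to natural isomorphism, giving the desired equivalence $\Dcoderivedc(A)\simeq\Dcoderivedc(B)$.

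For the perfect versions I would simply observe that the natural transformations produced by Proposition \ref{prop:3homotopy} are built objectwise from the homotopy $H$ and so restrict to the subcategories $\Perfcoderivedc$; equivalently, $f_*$ and $g_*$ preserve finitely generated twisted modules and commute with direct sums, hence preserve compact objects, so the same quasi-inverse pair restricts to an equivalence $\Perfcoderivedc(A)\simeq\Perfcoderivedc(B)$.

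I do not expect a substantial obstacle here, as the content lies entirely in Proposition \ref{prop:3homotopy}; the only points requiring care are the composition order (recalling that $f\circ g$ is an endomorphism of $B$ while $g\circ f$ is an endomorphism of $A$, and correspondingly $f_*\circ g_*$ an endofunctor of $\Dcoderivedc(B)$) and the fact that induction is functorial only up to coherent natural isomorphism, which is harmless since we work up to natural isomorphism of functors throughout.
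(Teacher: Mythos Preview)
Your proposal is correct and matches the paper's approach: the paper gives no explicit proof for this corollary, treating it as an immediate consequence of Proposition~\ref{prop:3homotopy} (exactly as the analogous Corollary~\ref{cor:3cohomotopy} is proved with the one-line ``immediate from Proposition~\ref{prop:3cohomotopy}''). Your argument is the standard unpacking of that deduction, with the functoriality of induction and the restriction to compact objects made explicit.
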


It is clear that $\Dcoderivedc(\ground)\simeq\operatorname{D}(\ground)\simeq\operatorname{grVect}_{\ground}$, the category of graded $\ground$-vector spaces.
\begin{cor}\label{cor:homotopytrivial}
	The unit map $\ground\to I^\infty$ induces an equivalence of categories $\operatorname{grVect}_{\ground}\simeq \Dcoderivedc(I^\infty)$.
\end{cor}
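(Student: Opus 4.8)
The plan is to deduce this directly from the $3$-homotopy invariance of the compactly generated coderived category together with the contractibility of $I^\infty$. First I would invoke Proposition \ref{prop:homotopyalg}(6), which asserts that $I^\infty$ is $\infty$-contractible, and then Proposition \ref{prop:homotopyalg}(3) to downgrade this to $3$-contractibility: the dg algebra $I^\infty$ is $3$-homotopy equivalent to $\ground$. Concretely, the contracting data is witnessed by the unit map $u\colon\ground\to I^\infty$ together with one of the evaluation maps $r\coloneqq\ev_0\colon I^\infty\to\ground$, for which $r\circ u=\id_\ground$ (both sides being the unique unital algebra endomorphism of $\ground$) and $u\circ r\sim_3\id_{I^\infty}$, the latter obtained by restricting the $\infty$-contracting homotopy to a $3$-homotopy.

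To see that the resulting equivalence is precisely the one induced by the unit map, I would appeal to the functoriality of induction, namely $(g\circ f)_*\cong g_*\circ f_*$ via the canonical isomorphism $(M\otimes_A B)\otimes_B C\cong M\otimes_A C$. Combining this with Proposition \ref{prop:3homotopy}, which guarantees that $3$-homotopic maps induce isomorphic induction functors, gives $u_*\circ r_*\cong(u\circ r)_*\cong\id_{\Dcoderivedc(I^\infty)}$ and $r_*\circ u_*\cong(r\circ u)_*=\id_{\Dcoderivedc(\ground)}$. Hence $u_*\colon\Dcoderivedc(\ground)\to\Dcoderivedc(I^\infty)$ admits a two-sided inverse up to natural isomorphism and is therefore an equivalence of categories. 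Composing with the identification $\Dcoderivedc(\ground)\simeq\operatorname{grVect}_\ground$ recorded just above then yields the claim.

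There is little to do here beyond bookkeeping, since all the substantive content is already packaged into Proposition \ref{prop:3homotopy} and Corollary \ref{cor:3homotopy}. The only point requiring a moment's care — and the one I would treat as the main obstacle — is confirming that the \emph{unit} map $u$ is genuinely one half of the $3$-homotopy equivalence exhibiting $3$-contractibility, rather than some other comparison map; this amounts to unwinding the remark following the definition of $n$-contractibility, which characterises it exactly as the existence of a retraction $r$ of the unit with $u\circ r$ homotopic to the identity. Everything else is the formal argument that a functor possessing a two-sided inverse up to natural isomorphism is an equivalence.
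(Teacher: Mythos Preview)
Your proof is correct and follows essentially the same approach as the paper: both deduce $3$-contractibility of $I^\infty$ from its $\infty$-contractibility (Proposition \ref{prop:homotopyalg}(6)) and then invoke the $3$-homotopy invariance of $\Dcoderivedc$ (Proposition \ref{prop:3homotopy}/Corollary \ref{cor:3homotopy}). Your version is slightly more careful in verifying that it is specifically the unit map that realises the equivalence, which is exactly what the statement claims; the paper's proof leaves this implicit.
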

\begin{proof}
	By Proposition \ref{prop:homotopyalg} (5) the dg-algebra $I^\infty$ is $\infty$-contractible, and thus a fortiori 3-contractible. The conclusion follows from Corollary \ref{cor:3homotopy}.
	\end{proof}
\begin{rem}
	We will shortly see that an analogue of Corollary \ref{cor:homotopytrivial} holds for the dg algebra $I^n$ for $n\geq 3$, despite the fact that it is \emph{not} $n$-contractible. 
\end{rem}
\subsection{Restriction and induction as a Quillen adjunction} Recall that given a map $f:A\to B$ of curved algebras, the induction functor $f_*:A$-$\Mod\to B$-$\Mod$ has a right adjoint, the restriction functor $f^*:B$-$\Mod\to A$-$\Mod$; for a dg $B$-module $N$ the dg $A$-module $f^*(N)$ has the same underlying graded vector space as $N$ and the action of $A$ defined through the map $f:A\to B$.

To construct the derived version of the functor $f^*$ requires some more work, since the restriction of a twisted $B$-module is not necessarily a twisted $A$-module. One way to do it is to use the compactly generated model structure on $A$-$\Mod$ (and $B$-$\Mod$) mentioned above. In this model structure, the cofibrant modules are the retracts of the twisted modules that are unions of their f.g. twisted submodules, and we can define the value of the derived functor of $f^*$ at a $B$-module $N$ to be $f^*$ applied to a cofibrant replacement of $N$. In order to do this, we need the following result.
\begin{prop}\label{prop:adjunctmod}
	The pair $(f_*,f^*)$ is a Quillen adjunction between the categories $A$-$\Mod$ and $B$-$\Mod$.
\end{prop}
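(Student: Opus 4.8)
The plan is to verify that the right adjoint $f^*$ is a right Quillen functor, i.e.\ that it preserves both fibrations and trivial fibrations; this is one of the standard characterisations of a Quillen adjunction. The observation underlying the whole argument is that the induction functor $f_*$ carries f.g.\ twisted $A$-modules to f.g.\ twisted $B$-modules: if $K\in\Twfg(A)$, so that $K^\#\cong A^\#\otimes V$ for a finite dimensional $V$, then $(f_*K)^\#=(K\otimes_A B)^\#\cong B^\#\otimes V$, whence $f_*K\in\Twfg(B)$.

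First I would check that $f^*$ preserves fibrations. Since fibrations in both model structures are exactly the surjections, and $f^*(N)$ has the same underlying graded vector space as $N$ with the $A$-action obtained by restriction along $f$, any surjection $N\to N'$ of $B$-modules restricts to a surjection $f^*(N)\to f^*(N')$ of $A$-modules. Thus $f^*$ preserves fibrations.

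Next I would show, more strongly, that $f^*$ preserves all weak equivalences; combined with the previous step this immediately gives that $f^*$ preserves trivial fibrations. Let $N\to N'$ be a weak equivalence of $B$-modules, so that $\underline{\Hom}_B(L,N)\to\underline{\Hom}_B(L,N')$ is a quasi-isomorphism for every $L\in\Twfg(B)$. For an arbitrary $K\in\Twfg(A)$, the enriched form of the $(f_*,f^*)$-adjunction supplies a natural isomorphism of complexes $\underline{\Hom}_A(K,f^*N)\cong\underline{\Hom}_B(f_*K,N)$, and likewise with $N'$ in place of $N$. Since $f_*K\in\Twfg(B)$ by the observation above, the induced map $\underline{\Hom}_A(K,f^*N)\to\underline{\Hom}_A(K,f^*N')$ is a quasi-isomorphism for every $K\in\Twfg(A)$. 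By the definition of the weak equivalences in $A$-$\Mod$ this says precisely that $f^*N\to f^*N'$ is a weak equivalence of $A$-modules.

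The argument is essentially formal, and I do not expect a serious obstacle. The two ingredients requiring care are the stability of f.g.\ twisted modules under induction and the enriched (dg-level) form of the extension-restriction adjunction $\underline{\Hom}_B(f_*K,N)\cong\underline{\Hom}_A(K,f^*N)$. It is worth emphasising that it is exactly the \emph{compact generation} of the coderived model structure---the fact that weak equivalences are tested only against the f.g.\ twisted modules---that makes this adjunction argument close: the test objects $K$ are finite, so $f_*K$ is again a legitimate test object, and no control over the behaviour of $f_*$ on infinitely generated modules is ever needed.
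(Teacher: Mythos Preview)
Your proof is correct and follows essentially the same approach as the paper: both verify that $f^*$ preserves surjections trivially and then use the enriched adjunction $\underline{\Hom}_A(K,f^*N)\cong\underline{\Hom}_B(f_*K,N)$ together with the fact that $f_*$ sends f.g.\ twisted $A$-modules to f.g.\ twisted $B$-modules. The only cosmetic difference is that the paper first reduces to showing $f^*$ preserves weakly trivial modules (via the mapping cone), whereas you argue directly with the map $N\to N'$; the content is the same.
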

\begin{proof}
We only need to prove that the functor $f^*:B$-$\Mod\to A$-$\Mod$ preserves fibrations and acyclic fibrations. Fibrations are surjections, and it is clear that $f^*$ preserves these. It is enough to check that $f^*$ preserves all weak equivalences.
It is not hard to see that $M \to N$ is a weak equivalence if and only if its mapping cone is weakly trivial (i.e.\ weakly equivalent to the zero module), so it suffices to show that $f^*$ preserves weakly trivial modules. Let $N$ be a weakly trivial $B$-module. We need to show that if $L$ is any f.g. twisted $A$-module, the dg vector space $\underline{\Hom}_A(L,f^*N)$ is acyclic. But clearly
\[
\underline{\Hom}_A(L,f^*N)\cong \underline{\Hom}_B(f_*L,N)
\]
and $\underline{\Hom}_B(f_*L,N)$ is acyclic since $f_*L$ is a f.g. twisted $B$-module, and $N$ was assumed to be weakly trivial.
\end{proof}

\subsection{Twisted comodules} Given a curved coalgebra $C$, we will consider its coderived category $\Dcoderived(C)$, cf. \cite{Positselski11}. It is similar to the compactly generated coderived category of a curved algebra described above, and it admits a similar construction in terms of twisted comodules. A twisted comodule is a dg $C$-comodule whose underlying graded 
$C^{\#}$-comodule is $C^\#\otimes V$, for $V$ some graded vector space. 
Alternatively, by dualising, a twisted comodule is a twisted pseudocompact module over the pseudocompact dg algebra $C^*$, i.e. a pseudocompact module whose underlying $(C^*)^{\#}$-module is $(C^*)^\#\otimes W$, for $W\cong V^*$ some pseudocompact vector space. A differential on such a (co)module is the same thing as an MC element of the dg algebra $C^*\otimes \End(V)$; to see this, think of $C^*\otimes \End(V)$ as a pro-algebra and observe that the $\MC$ functor commutes with limits (and in particular cofiltered limits). The dg category of twisted $C$-comodules will be denoted by $\Tw(C)$, and its homotopy category $\Ho(\Tw)(C)$ is $\Dcoderived(C)$, the coderived category of the curved coalgebra $C$. It is compactly generated by the triangulated subcategory of finite dimensional dg $C$-comodules, cf. \cite[Section 5.5]{Positselski11}. We will denote the idempotent completion of this latter category by 
$\Perf(C)$.

In fact, $\Dcoderived(C)$ is the homotopy category of a certain model category structure on the category $C$-$\Comod$ of dg $C$-comodules, cf. \cite[Section 8.2]{Positselski11}. The weak equivalences of dg $C$-comodules are those maps $M\to N$ which induce quasi-isomorphisms  $\underline{\Hom}_C(N,L)\to\underline{\Hom}_C(M,L)$ for all twisted comodules $L$; this follows directly from op.cit. since the fibrant $C$-comodules are precisely the retracts of the twisted $C$-comodules. Cofibrations are injective maps.

The coderived category of a curved coalgebra $C$ is equivalent to the coderived category $\Dcoderived(\Omega C)$ of $\Omega C$ which, in this case, coincides with $\Dcoderivedc(\Omega C)$, cf. \cite[Section 6.7]{Positselski11}, \cite[Section 3.3]{GL20}. Conversely, for a curved algebra $A$, there is an equivalence between $\Dcoderivedc(A)$ and $\Dcoderived(\check BA)$, the coderived category of $\check BA$, cf. \cite{GL20}.

\begin{rem} The topological significance of the coderived category of a dg coalgebra is underscored by the fact that for a simplicial set $X$, the category $\Dcoderived(C(X)_*)$ is equivalent by \cite{GL20} to the category of linear representations of (any $\infty$-category categorically equivalent to) $X$, cf.\cite[Theorem 5.2]{HL2020}.
\end{rem}
Given a map $f:A\to B$ of curved coalgebras, the cotensor product functor $f_*:M\mapsto M\boxempty_BA$ restricts to dg functors $\Tw(B)\to\Tw(A)$ and $\Twfg(B)\to\Twfg(A)$. It moreover descends to functors between the corresponding homotopy categories, thus giving triangle functors $\Dcoderivedc(B)\to\Dcoderivedc(A)$ and $\Perf(B)\to \Perf(A)$. We will denote all of these functors by $f_*$ and refer to them as the {coinduction} functors. The following result holds.
\begin{prop}\label{prop:3cohomotopy}
	Let $A,B$ be curved coalgebras and $f,g:A\to B$ be maps that are 3-homotopic. Then the induced functors 
	$f_*,g_*:\Dcoderivedc(B)\to\Dcoderivedc(A)$ (and hence also their restrictions $\Perfcoderivedc(B)\to \Perfcoderivedc(A)$) are isomorphic. 
\end{prop}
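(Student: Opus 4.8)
The plan is to run the argument of Proposition \ref{prop:3homotopy} in the linearly dual, pseudocompact setting, where it becomes a statement about induction of twisted pseudocompact modules. Under the duality between curved coalgebras and pseudocompact curved algebras, a twisted $B$-comodule $(M,d_M)$ with $M^\#\cong B^\#\otimes V$ becomes a twisted pseudocompact $B^*$-module, and its differential is an MC element $d_M\in\MC(B^*\hat\otimes\End(V))$ (for $V$ finite dimensional this is literally the MC set of the pseudocompact algebra $B^*\otimes\End(V)$, as in the discussion of twisted comodules preceding the statement). A map $f:A\to B$ of coalgebras dualises to a map $f^*:B^*\to A^*$ of pseudocompact algebras, and the coinduction functor $f_*:M\mapsto M\boxempty_BA$ corresponds precisely to induction along $f^*$. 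The key observation is that a $3$-homotopy $h:A\otimes I_3\to B$ between $f$ and $g$ dualises, using $I_3^*\cong I^3$, to a map $H\coloneqq h^*:B^*\to A^*\hat\otimes I^3$, i.e.\ to a $3$-homotopy between $f^*$ and $g^*$ in the sense of Definition \ref{def:homotopyalg} read pseudocompactly.

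With this dictionary in place the computation is formally identical to that of Proposition \ref{prop:3homotopy}. Tensoring $H$ with $\End(V)$ produces a $3$-homotopy $H\hat\otimes\id:B^*\hat\otimes\End(V)\to A^*\hat\otimes\End(V)\hat\otimes I^3$ between $f^*\hat\otimes\id$ and $g^*\hat\otimes\id$. Applying it to the MC element $d_M$ and expanding in the basis $\mathcal{B}=\{e,f,s,t,st,ts,sts\}$ of $I^3$, I would identify the $e$- and $f$-components of $(H\hat\otimes\id)(d_M)$ with the differentials of the coinduced comodules $f_*(M)$ and $g_*(M)$; the $s$- and $t$-components then define $A$-comodule maps $f_*(M)\to g_*(M)$ and $g_*(M)\to f_*(M)$, and the $st$- and $ts$-components exhibit these as mutually inverse up to homotopy. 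As $M$ varies, the $s$-component assembles into a natural transformation $f_*\to g_*$ with inverse supplied by the $t$-component, giving the required natural isomorphism on $\Dcoderivedc(B)\to\Dcoderivedc(A)$ and, by restriction, on $\Perfcoderivedc(B)\to\Perfcoderivedc(A)$.

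Two points will require care. First, the pseudocompact bookkeeping: one must check that $\MC$ commutes with the relevant cofiltered limits so that $(H\hat\otimes\id)(d_M)$ is a genuine MC element of the completed tensor product, exactly as in the twisted-comodule discussion above. Second, and this is the main obstacle, one must verify that the $e$- and $f$-components really do reproduce the coinduced differentials, i.e.\ that the duality sends the MC element governing the differential of $M\boxempty_B A$ to the $e$-component of $(H\hat\otimes\id)(d_M)$. Once this compatibility is confirmed the argument closes verbatim; alternatively, since the proof of Proposition \ref{prop:3homotopy} transports word for word to pseudocompact algebras (as already noted in the proof of Proposition \ref{prop:homotopycoalg}), one may simply apply the pseudocompact version of Proposition \ref{prop:3homotopy} to $f^*\sim_3 g^*$ and read off the conclusion through the duality.
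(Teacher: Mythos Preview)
Your proposal is correct and follows essentially the same approach as the paper: dualise to pseudocompact curved algebras, identify coinduction of comodules with induction of pseudocompact modules along the dual map, and then transport the proof of Proposition~\ref{prop:3homotopy} verbatim. The paper's proof is simply a terse version of what you wrote, omitting the explicit expansion in the basis of $I^3$ and the bookkeeping remarks you flag.
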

\begin{proof}
	Dualising, we obtain a pair of 3-homotopic maps of pseudocompact curved algebras $B^*\to A^*$. Given a $B$-comodule $M$, its linear dual $M^*$ is a pseudocompact $B^*$-module, and its induced pseudocompact $A$-module $M^*\otimes_{B^*}A^*$ is the dual of the coinduced comodule $M\boxempty_B A$.  After this translation of the statement to the language of pseudocompact modules, the proof of Proposition \ref{prop:3homotopy} carries over to yield the desired result. 
\end{proof}	
\begin{cor}\label{cor:3cohomotopy}
	Let $A$ and $B$ be curved coalgebras that are $3$-homotopy equivalent. Then their coderived categories $\Dcoderivedc(A)$ and $\Dcoderivedc(B)$ (as well as $\Perf(A)$ and $\Perf(B)$ are equivalent.
\end{cor}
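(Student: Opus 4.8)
The plan is to run exactly the argument that proves the algebra analogue, Corollary~\ref{cor:3homotopy}, with induction replaced by coinduction and Proposition~\ref{prop:3homotopy} replaced by its comodule counterpart Proposition~\ref{prop:3cohomotopy}. By definition of a $3$-homotopy equivalence there are maps $f\colon A\to B$ and $g\colon B\to A$ with $g\circ f\sim_3\id_A$ and $f\circ g\sim_3\id_B$. Since coinduction is contravariant in the coalgebra, these produce functors $f_*\colon\Dcoderivedc(B)\to\Dcoderivedc(A)$ and $g_*\colon\Dcoderivedc(A)\to\Dcoderivedc(B)$, and I would want to show they are mutually quasi-inverse.

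First I would record the functoriality of coinduction, which reverses composition: $(g\circ f)_*\cong f_*\circ g_*$ and $(f\circ g)_*\cong g_*\circ f_*$, the needed natural isomorphisms coming from associativity of the cotensor product. The only point needing a moment's care here is this direction reversal, so that the first composite lands in $\Dcoderivedc(A)$ and the second in $\Dcoderivedc(B)$. Applying Proposition~\ref{prop:3cohomotopy} to the $3$-homotopies $g\circ f\sim_3\id_A$ and $f\circ g\sim_3\id_B$ then yields isomorphisms of functors $f_*\circ g_*\cong(g\circ f)_*\cong\id_{\Dcoderivedc(A)}$ and $g_*\circ f_*\cong(f\circ g)_*\cong\id_{\Dcoderivedc(B)}$. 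Hence $f_*$ and $g_*$ are inverse equivalences and $\Dcoderivedc(A)\simeq\Dcoderivedc(B)$. Note that allowing $\sim_3$ to be a zig-zag of \emph{elementary} $3$-homotopies causes no trouble, since isomorphism of functors is transitive and Proposition~\ref{prop:3cohomotopy} is already stated for (not necessarily elementary) $3$-homotopic maps.

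For the perfect versions I would note that, as recorded in Proposition~\ref{prop:3cohomotopy}, the functors $f_*$ and $g_*$ restrict to the finitely generated twisted comodules, and the natural isomorphisms above restrict accordingly; passing to idempotent completions then gives mutually inverse equivalences $\Perf(A)\simeq\Perf(B)$.

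The hard part, such as it is, has already been carried out inside Proposition~\ref{prop:3cohomotopy}: this corollary is a purely formal consequence of that result together with functoriality, so I expect no genuine obstacle beyond the bookkeeping of the variance of coinduction and the order of composition.
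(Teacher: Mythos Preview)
Your argument is correct and is exactly what the paper's one-line proof (``This is immediate from Proposition~\ref{prop:3cohomotopy}'') unpacks to: take the two maps witnessing the $3$-homotopy equivalence, apply contravariant functoriality of coinduction, and invoke Proposition~\ref{prop:3cohomotopy} on each composite to see that $f_*$ and $g_*$ are mutually quasi-inverse. Your care with the variance and the order of composition is well placed, and the passage to $\Perf$ is handled correctly.
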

\begin{proof}
	This is immediate from Proposition \ref{prop:3cohomotopy}.
\end{proof}	
\begin{prop}\label{prop:Sinfty}
	The counit map $I_\infty\to\ground$ induces a equivalence $\operatorname{grVect}_\ground\to \Dcoderivedc(I_\infty)$.
\end{prop}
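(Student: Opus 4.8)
The plan is to mirror the proof of Corollary \ref{cor:homotopytrivial} on the coalgebra side, exploiting the $\infty$-contractibility of $I_\infty$ rather than attempting any direct computation with twisted comodules. First I would record that $I_\infty$ is $\infty$-contractible by Proposition \ref{prop:homotopycoalg}(6), and hence $3$-contractible by Proposition \ref{prop:homotopycoalg}(3). The point requiring attention is to identify the maps witnessing this contraction with the structural maps named in the statement. The counit $c\colon I_\infty\to\ground$ is by definition the linear dual of the unit $u\colon\ground\to I^\infty$, and the vertex $i_0\colon\ground\to I_\infty$ is the linear dual of the evaluation map $\ev_0\colon I^\infty\to\ground$. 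On the algebra side, the contracting homotopy of Proposition \ref{prop:homotopyalg}(6) is the diagonal $\Delta\colon I^\infty\to I^\infty\hat\otimes I^\infty$ of Proposition \ref{prop:bialg}, whose two endpoints are $\id_{I^\infty}$ and $u\circ\ev_0$; together with the identity $\ev_0\circ u=\id_\ground$ this exhibits $u$ as an $\infty$-homotopy equivalence with inverse $\ev_0$. Dualising, the counit $c$ and the vertex $i_0$ exhibit $I_\infty$ as $\infty$-homotopy equivalent, and therefore $3$-homotopy equivalent, to $\ground$, with $c\circ i_0=\id_\ground$ and $i_0\circ c\sim_3\id_{I_\infty}$.

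Next I would invoke Proposition \ref{prop:3cohomotopy}. Since coinduction is contravariantly functorial, the two relations above give $(i_0)_*\circ c_*=(c\circ i_0)_*=\id$ on $\Dcoderivedc(\ground)$ and $c_*\circ(i_0)_*=(i_0\circ c)_*\cong(\id_{I_\infty})_*=\id$ on $\Dcoderivedc(I_\infty)$, the last isomorphism being supplied by Proposition \ref{prop:3cohomotopy} applied to the $3$-homotopic pair $i_0\circ c$ and $\id_{I_\infty}$. Hence the coinduction functor $c_*\colon\Dcoderivedc(\ground)\to\Dcoderivedc(I_\infty)$ is an equivalence of categories with inverse $(i_0)_*$. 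Finally, identifying $\Dcoderivedc(\ground)\simeq\operatorname{grVect}_\ground$ exactly as in the algebra case, this $c_*$ is precisely the functor induced by the counit, and the proof is complete.

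The content is essentially formal once the homotopy-invariance machinery of Section \ref{section:twisted} is in place, so I do not expect a genuine obstacle. The only step needing care is the bookkeeping: one must verify that the abstract equivalence produced by Corollary \ref{cor:3cohomotopy} is realised by coinduction along the counit \emph{itself}, rather than along some unspecified homotopy inverse, which is why I would track the explicit maps $c$ and $i_0$ instead of appealing to the corollary as a black box. A secondary check is the identification $\Dcoderivedc(\ground)\simeq\operatorname{grVect}_\ground$, which holds because a twisted comodule over $\ground$ is simply a complex of vector spaces and every such complex splits over a field.
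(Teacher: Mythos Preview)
Your proposal is correct and follows essentially the same route as the paper: invoke the $\infty$-contractibility of $I_\infty$ from Proposition~\ref{prop:homotopycoalg}(6), deduce $3$-contractibility, and apply the $3$-homotopy invariance of coderived categories. The paper's proof is a two-line appeal to Corollary~\ref{cor:3cohomotopy}, whereas you go one step further and track the explicit maps $c$ and $i_0$ through Proposition~\ref{prop:3cohomotopy} to verify that the equivalence is realised by coinduction along the counit itself rather than merely by some unspecified equivalence; this extra bookkeeping is a genuine (if minor) improvement in precision over the paper's argument, which strictly speaking only yields an abstract equivalence from the corollary.
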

\begin{proof}
	By Proposition \ref{prop:homotopycoalg}(5), the dg-coalgebra $I_\infty$ is $\infty$-contractible, and thus a fortiori 3-contractible. The desired conclusion follows from Corollary \ref{cor:3cohomotopy}.
\end{proof}
Even though for $n\geq 3$ the dg coalgebra $I_n$ is not 3-contractible, it turns out that its compactly generated coderived category behaves as if it were, and the corresponding statement for dg algebras also holds.
\begin{prop}\label{prop:coderivedinfinity}Let $n\geq 3$ and $m=1,2$. Then:
\begin{enumerate}\item	The counit map $I_n\to \ground$ induces an equivalence $\operatorname{grVect}_\ground\to \Dcoderivedc(I_n)$.
	\item The unit map $\ground\to I^n$ induces an equivalence $\operatorname{grVect}_\ground\to \Dcoderivedc(I^n)$.
	\item The counit map $I_m\to \ground$ does not induce an equivalence $\operatorname{grVect}_\ground\to \Dcoderivedc(I_n)$.
\item	The unit map $\ground\to I^m$ does not induce an equivalence $\operatorname{grVect}_\ground\to \Dcoderivedc(I^n)$.
	\end{enumerate}
\end{prop}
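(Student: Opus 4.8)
The plan is to reduce all four statements to a single question of essential surjectivity, and then settle that question through the homotopy type of $\operatorname{D}^n$. First I would record that full faithfulness holds in every case, including $m=1,2$. Since $\operatorname{D}^n$ is weakly contractible we have $H^*(I^n)=H^*(I_n)=\ground$ concentrated in degree $0$, and the (co)endomorphism complex of the free rank one (co)module is $I^n$ (resp.\ $I_n$) itself; hence $\Hom_{\Dcoderivedc(I^n)}(I^n,I^n[k])=H^k(I^n)=\ground\cdot\delta_{k,0}$, matching $\Hom_{\operatorname{grVect}_\ground}$. As the functor in question is a coproduct-preserving functor out of $\operatorname{grVect}_\ground$ carrying the generator to a compact object, it is fully faithful. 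Thus in each of (1)--(4) it is an equivalence if and only if it is essentially surjective, equivalently if and only if the free rank one (co)module generates.

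Next I would dispose of the passage between the algebra and coalgebra statements. Because $I^n$ is finite dimensional, a twisted $I_n$-comodule is the same thing as a twisted (pseudocompact $=$ discrete) module over $(I_n)^*=I^n$, so $\Dcoderived(I_n)\simeq\Dcoderivedc(I^n)$, and under this identification the functor induced by the counit $I_n\to\ground$ coincides with the one induced by the unit $\ground\to I^n$. Therefore (1)$\Leftrightarrow$(2) and (3)$\Leftrightarrow$(4), and it suffices to treat the coalgebra statements. For these I would invoke $I_n\cong C_*(\operatorname{D}^n)$ together with the equivalence between $\Dcoderived(C_*(X))$ and the category of linear representations of (any $\infty$-category categorically equivalent to) $X$, from \cite{GL20} (cf.\ \cite[Theorem 5.2]{HL2020}); under it the counit $I_n\to\ground$, being the chains of $\operatorname{D}^n\to *$, induces the pullback $\operatorname{grVect}_\ground=\operatorname{Rep}(*)\to\operatorname{Rep}(\operatorname{D}^n)$. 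For $n\geq 3$ the simplicial set $\operatorname{D}^n$ is grouplike and weakly contractible, hence categorically equivalent to the point, so this pullback is an equivalence, proving (1) and with it (2). For $m=1,2$ the set $\operatorname{D}^m$ is not grouplike: its two vertices are not isomorphic in $\operatorname{Rep}(\operatorname{D}^m)$ since no invertible morphism connects them (for $m=1$ one has $\operatorname{D}^1=\Delta^1$, whose representations form $\operatorname{D}(\ground A_2)$). Consequently $\operatorname{Rep}(\operatorname{D}^m)$ carries two non-isomorphic indecomposable compact objects, namely the two vertex (co)modules, so its Grothendieck group has rank at least two, whereas the localising subcategory generated by the single constant local system has $K_0$ of rank one; the functor is therefore not essentially surjective, giving (3) and hence (4).

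The hard part will be the precise justification of the representation-theoretic input for the \emph{non-fibrant} simplicial sets $\operatorname{D}^n$: one must argue that $\Dcoderived(C_*(X))$ depends only on the Joyal (categorical) localisation of $X$, so that grouplikeness is exactly the dividing line, and that $\operatorname{D}^n\to *$ is a categorical equivalence precisely for $n\geq 3$ while for $n\leq 2$ the two vertices genuinely survive as distinct classes. I would stress that one cannot instead argue by $3$-contractibility, which would make Proposition \ref{prop:3homotopy} and Corollary \ref{cor:3cohomotopy} apply directly, because $I^n$ is not $3$-contractible for finite $n>1$; the whole point is that here the coderived category is a strictly coarser invariant than $3$-homotopy type, in agreement with the remark following Corollary \ref{cor:homotopytrivial}. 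As a fallback, (1) and (3) can be proved instead by a direct classification of finitely generated twisted $I_n$-comodules, where the same counting of surviving vertices separates $n\geq 3$ from $n\leq 2$.
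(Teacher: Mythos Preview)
Your argument is correct and takes a genuinely different route from the paper. The paper works throughout via the categorical cobar construction $\OCat$: it invokes \cite[Theorem 3.41(2)]{HL2020} to identify $\Dcoderivedc(I_n)$ with $D(\OCat(I_n))$, then \emph{explicitly computes} $\OCat(I_3)$ (recovering Drinfeld's resolution of the walking-isomorphism category $\C$), and for general $n\geq 3$ uses a retract/lifting argument in the tower $\OCat(I_3)\hookrightarrow\OCat(I_n)\hookrightarrow\OCat(I_\infty)$ over $\C$. Parts (3)--(4) are likewise handled by computing $\OCat(I_1)$ and $\OCat(I_2)$ and observing their derived categories have two indecomposable objects. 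By contrast you bypass all explicit computation and instead use the topological identification $\Dcoderived(C_*(X))\simeq\operatorname{Rep}(X)$ together with Joyal invariance: for $n\geq 3$ the simplicial set $\operatorname{D}^n$ is grouplike and weakly contractible, hence categorically equivalent to a point, while for $m\leq 2$ it is not grouplike so the two vertices survive as non-isomorphic objects. Your approach is more conceptual and makes the dividing line $n=3$ transparent (it is exactly where $\operatorname{D}^n$ becomes grouplike, as the paper notes in Section~\ref{section:higher}); the paper's approach is more hands-on and yields the explicit tower of cofibrant resolutions of $\C$ as a byproduct. Your reduction $(1)\Leftrightarrow(2)$ via finite-dimensionality of $I^n$ is the same as the paper's, though the paper phrases it more carefully through $\Perfcoderivedc$ to avoid the subtlety flagged in the remark following the proof.
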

\begin{proof} Consider first the $n=3$ case. The category $\Dcoderivedc(I_3)$ is equivalent by \cite[Theorem 3.41(2)]{HL2020} to the derived
	category of the dg category $\OCat(I_3)$, where $I_3$ is viewed as a split curved coalgebra over its coradical $ke\oplus kf$. Note that although $I_3$ is not curved as a plain dg coalgebra, it has nontrivial curvature when viewed as a split curved coalgebra. We compute $\OCat(I_3)$. The coaugmentation coideal of $I_3$ is five-dimensional over $k$, spanned by elements $s', t', l,r,w$; in terms of the description of $I^3$ given in \ref{ddescription} these elements are the linear duals of the elements $s,t,st,ts,sts$ respectively. One can check that $s',t'$ are cycles, that $dl=s'-t'=-dr$, and $dw=l-r$. The reduced comultiplication on the coaugmentation coideal is given by $\Delta(s')=\Delta(t')=0$, $\Delta(l)=s'\otimes t'$, $\Delta(r)=t'\otimes s'$, and $\Delta(w)=s'\otimes r + l \otimes s'$. The curvature functional is given by $h=-\alpha^\vee - \beta^\vee$; in terms of the dual algebra $I^3$ we simply have $h=-st-ts$. Using the description of the cobar construction given in 
	\cite[Definition 3.17]{HL2020} we see that $\OCat(I_3)$ is the dg category with two objects $e,f$, and with morphisms freely generated by two arrows $s':e\to f, t':f\to e$ of degree zero, two arrows $r:e\to e$ and $l:f\to f$ of homological degree one, and one arrow $w:e\to f$ of homological degree two. The differential is zero on $s'$ and $t'$, and we have $d(r)=t's'-1$, $d(l)=s't'-1$ and finally $d(w)=sr-ls$. The dg category $\OCat(I_3)$ is a cofibrant resolution of the linear category $\C$ with two objects and an isomorphism between them, cf.\ \cite[Example 3.7]{Drinfeld04}. It follows that $\operatorname{D}(\OCat(I_3))$ is equivalent to the category of graded vector spaces over $\ground$, so Statement (1) is proved in this case. 

Next, consider the category $\Dcoderivedc (I_\infty)$ (even though Proposition \ref{prop:Sinfty} already tells us what it is). As above, we conclude that it is equivalent to $\operatorname{D}(\OCat(I_\infty))$. Note that the dg category $\OCat(I_\infty)$ is the canonical resolution of $\C$ given by the bar-cobar construction (this, incidentally,  gives an alternative proof of Proposition \ref{prop:Sinfty}). It follows that the functor $F_3:\OCat(I_3)\into \OCat(I_\infty)$ induced by the inclusion of simplicial sets ${\operatorname D}^3\into{\operatorname S}^\infty$ is a quasi-equivalence, because it fits into the commutative triangle $$\begin{tikzcd}\OCat(I_3)\ar[rr,"F_3"]\ar[rd]&& \OCat(I_\infty)\ar[ld]\\
	&\C&
	\end{tikzcd}$$
whose diagonal legs are quasi-equivalences. Hence $F_3$ is an acyclic cofibration, and so lifts against the fibration $\OCat(I_3) \to *$, providing a retraction $\OCat(I_\infty)\to \OCat(I_3)$ which is necessarily a homotopy inverse to $F_3$. Since $F_3$ factors through the inclusion $F_n:\OCat(I_n)\to \OCat(I_\infty)$, it follows that $\OCat(I_3)$ is a retract of $\OCat(I_n)$ and that this retract is necessarily a quasi-equivalence. Statement (1) for arbitrary $n\geq 3$ now follows.

	For (2), consider the dg category of dg $I^n$-modules which are homotopy retracts of those dg $I^n$-modules whose underlying $(I^n)^\#$-module has the form $I^n\otimes V$, where $V$ is a finite dimensional graded vector space. The homotopy category of this dg category is equivalent to both $\Perfcoderivedc(I^n)$ and $\Perfcoderivedc(I_n)$, and by part (1) is equivalent to the category of finite dimensional graded $\ground$-vector spaces. Since $\Dcoderivedc(I^n)$ is generated as a triangulated category by $\Perfcoderivedc(I^n)$ under taking all direct sums, it follows that it is equivalent to $D(\ground)$; clearly this equivalence is induced by the unit map of $\ground\to I^n$.
	
Statements (3) and (4) follow from similar considerations.  As before, the derived category of $\OCat(I_m)$ is equivalent to $\Dcoderivedc(I_m)$. The category $\OCat(I_1)$ has two objects and a single degree zero morphism between them; clearly its derived category is not triangle equivalent to the category of graded vector spaces. The category $\OCat(I_2)$ has two objects $e,f$ and the morphisms are freely generated by two closed degree zero morphisms $s':e\to f, t':f\to e$ and one homological degree 1 arrow $r:e\to e$ with $d(r)=t's'-1$. The derived category of $\OCat(I_2)$ is equivalent to the category of graded vector spaces with an idempotent morphism. It has two indecomposable objects and thus is not equivalent to the category of graded vector spaces, from which Statement (3) is clear. The same proof shows that $\Perfcoderivedc(I_m)\simeq \Perfcoderivedc(I^m)$ is not equivalent to the category of finite dimensional graded vector spaces, from which (4) follows.
\end{proof}
\begin{rem}
	The proof of Proposition \ref{prop:coderivedinfinity} shows that we have an infinite tower $$\OCat(I_3) \into \OCat(I_4)\into\cdots\into \OCat(I_n)\into\cdots \into\OCat(I_\infty)$$of cofibrant resolutions of the dg category $\C$. The smallest of them is $\OCat(I_3)$, constructed by Drinfeld \cite{Drinfeld04}; the largest is $\OCat(I_\infty)$, the canonical bar-cobar resolution of $\C$. Because the simplicial set ${\operatorname S}^\infty$ is the colimit of the simplicial sets  ${\operatorname D}^n$, it moreover follows that the colimit of the above tower is again $\OCat(I_\infty)$.
\end{rem}
\begin{rem} Let $A$ be a finite dimensional curved algebra; then its linear dual $A^*$ is a curved coalgebra. Given a f.g. twisted $A$-module $M=(A\otimes V,d)$, with $V$ finite dimensional, then $M^*$ is a twisted perfect $A^*$-comodule, with $M^{**}\cong M$. However, it does not follow that the categories $\Dcoderivedc(A)$ and $\Dcoderivedc(A^*)$ are equivalent. Indeed, for an infinite dimensional vector space $U$ there is no obvious counterpart to the twisted comodule of the form ($A^*\otimes U, d)$. Note also that for an infinite dimensional $V$, the twisted module $(A\otimes V,d)$ is not, in general, a cofibrant object in the compactly generated model category of $A$-modules. However this subtlety does not arise when $\Perfcoderivedc(A)$ is semisimple, i.e. is equivalent to the category of graded vector spaces, as is the case of $I^n$ for $n\geq 3$. 
\end{rem}

\subsection{Corestriction and coinduction as a Quillen adjunction} Recall that given a map $C\to D$ of curved coalgebras there is the coinduction
 functor $f_*:D$-$\Comod\to C$-$\Comod$. It is right adjoint to the corestriction functor 
 $f^*:C$-$\Comod\to D$-$\Comod$; 
 for a dg $C$-comodule $N$ the dg $D$-comodule $f^*(N)$ has the same underlying graded vector space as $N$ and the coaction of $N\to N\otimes D$ is obtained using the coaction $N\to N\otimes C$ and the map $C\to D$.

To construct the derived version of the functor $f^*$ requires some more work, since the corestriction of a twisted $C$-comodule is not necessarily a twisted $D$-comodule. One way to do it is to use the compactly generated model structure on $C$-$\Comod$ (and $D$-$\Comod$) constructed in \cite[Section 8.2]{Positselski11}. We can define the value of the derived functor of $f^*$ on a $D$-module $N$ as $f^*$ applied to a fibrant replacement of $N$. More precisely, we have the following result. 
\begin{prop}
	The pair $(f_*,f^*)$ is a Quillen adjunction between the categories $C$-$\Comod$ and $D$-$\Comod$.
\end{prop}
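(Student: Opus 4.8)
The plan is to dualise the proof of Proposition~\ref{prop:adjunctmod} from modules to comodules. Since $f^*$ is left adjoint to $f_*$, the pair $(f_*,f^*)$ is a Quillen adjunction exactly when the left adjoint $f^*:C$-$\Comod\to D$-$\Comod$ preserves cofibrations and acyclic cofibrations. I would dispose of cofibrations first: they are precisely the injective maps, and since $f^*(N)$ has the same underlying graded vector space as $N$ (only the coaction is altered, using the map $C\to D$), the functor $f^*$ preserves injectivity. Thus $f^*$ preserves cofibrations, and it remains to handle acyclic cofibrations; as these are the cofibrations that happen to be weak equivalences, it suffices to prove that $f^*$ preserves all weak equivalences.

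For the weak equivalences I would first record the comodule analogue of the observation used in Proposition~\ref{prop:adjunctmod}, namely that a map of $C$-comodules is a weak equivalence if and only if its mapping cone is weakly trivial (weakly equivalent to $0$); this follows from applying the contravariant functor $\underline{\Hom}_C(-,L)$ to the triangle $M\to N\to\mathrm{Cone}\to M[1]$ and reading off the long exact sequence. Because $f^*$ is additive and leaves underlying complexes unchanged, it commutes with the formation of mapping cones, so it is enough to show that $f^*$ carries weakly trivial comodules to weakly trivial comodules.

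To that end, let $N$ be a weakly trivial $C$-comodule, so that $\underline{\Hom}_C(N,K)$ is acyclic for every twisted $C$-comodule $K$; I must check that $\underline{\Hom}_D(f^*N,L)$ is acyclic for every twisted $D$-comodule $L$. The decisive step is the adjunction isomorphism
\[
\underline{\Hom}_D(f^*N,L)\cong\underline{\Hom}_C(N,f_*L),
\]
combined with the fact, noted before the statement, that coinduction $f_*$ restricts to a functor $\Tw(D)\to\Tw(C)$; hence $f_*L$ is a twisted $C$-comodule and the right-hand side is acyclic by weak triviality of $N$, finishing the argument.

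The step demanding the most care is matching variances. On the comodule side weak triviality is tested by $\underline{\Hom}_C(-,L)$ with the comodule in the \emph{first} slot, whereas in Proposition~\ref{prop:adjunctmod} the tested comodule sat in the second slot. One must therefore verify that it is exactly the right adjoint $f_*$ (coinduction), not corestriction, that preserves twisted comodules, so that the adjunction isomorphism places $f_*L$ in $\Tw(C)$, where the weak triviality hypothesis on $N$ can be invoked. Once this compatibility is arranged, the remainder is formal.
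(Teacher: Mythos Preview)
Your proposal is correct and is precisely the ``obvious modification'' the paper alludes to: the paper's own proof simply reads ``The proof of Proposition~\ref{prop:adjunctmod} carries over with obvious modifications,'' and you have spelled those modifications out in full. Your careful attention to the swapped variances---that in the comodule setting one verifies the \emph{left} adjoint $f^*$ preserves cofibrations (injections) and weak equivalences, and that the adjunction isomorphism now places the twisted object in the \emph{second} slot of $\underline{\Hom}_C$, exactly matching the comodule characterisation of weak triviality---is the heart of the dualisation and is handled correctly.
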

\begin{proof}
	The proof of Proposition \ref{prop:adjunctmod} carries over with obvious modifications.
\end{proof}
\begin{rem}
	Another approach to constructing the derived functors of $f_*$ and $f^*$ was given by Positselski in \cite[Section 4.8]{Positselski11}.
\end{rem}

\section{Maurer--Cartan dg categories}\label{section:MC}

In this section, we introduce the Maurer--Cartan dg category associated to a curved algebra $A$. This is a dg category $\MCdg(A)$ whose objects are the MC elements of $A$ and whose hom-complexes are given by the corresponding two-sided twists of $A$. We show that the $\MCdg$ functor is a right adjoint. We then discuss various notions of homotopy of MC elements, before turning our attention to dg categories of the form $\MCdg\Hom(C,A)$, which can be viewed as dg categories of curved morphisms.
\subsection{Maurer--Cartan dg categories}

Recall from \cite{CHL21} that an MC element $x$ in a dg algebra $A$ determines a right dg $A$-module $^{[x]}A$ whose underlying $A^\#$-module is $A^\#$, with differential given by the formula $^{[x]}d(a)=d_A(a)+xa$. The dg category of dg $A$-modules of the form $^{[x]}A$ will be denoted by $\MCdg(A)$; the assignment $A\mapsto \MCdg(A)$ is functorial. Equivalently, one can think of the objects of $\MCdg(A)$ as the MC elements of $A$, with the space of morphisms $x \to y$ being exactly the space of right $A$-module morphisms between the twists $^{[x]}A$ and $^ {[y]}A$. This hom-space is identified with the two-sided twist $^{[y]}A^{[x]}$ whose differential is $a \mapsto d_A(a)+ya - \tilde{a}x$, where we write $\tilde{a}\coloneqq (-1)^{\mathrm{deg}(a)}a$.

In exactly the same manner, one can extend the definition of $\MCdg$ to the category of curved algebras: note that this remains a dg category. Note that $\MCdg(A)$ may be the empty dg category; indeed if $\MCdg(A)$ has an object, then $A$ must be isomorphic to a dg algebra, as in the proof of \ref{algslice1}.

One can even extend $\MCdg$ to the category $\calgp$ by declaring that $\MCdg(\varnothing)$ is the empty dg category, which is the initial object in the category of dg categories. In fact, in this form, $\MCdg$ is a right adjoint, although we will need to modify the target category slightly, in exactly the same manner as \cite{HL2020}.

Let $\dgcat'$ denote the category of (small) dg categories with nonzero identity morphisms, as well as the zero dg category. The category $\dgcat'$ is complete and cocomplete. By \cite[3.31]{HL2020}, the category $\dgcat'$ admits a model structure where a morphism is a (co)fibration or a weak equivalence precisely when its image in $\dgcat$ is. This model structure is right proper and cofibrantly generated by the usual generating cofibrations in $\dgcat$. The inclusion functor $\dgcat' \into \dgcat$ is a left Quillen equivalence.

Observe that if $A$ is a nonzero curved algebra, then $\MCdg(A)$ has nonzero identity morphisms. If $A$ is the zero curved algebra then $\MCdg(A)$ is the zero dg category. Moreover, the empty dg category has nonzero identities (since it has no identities at all). Hence we may regard $\MCdg$ as a functor from curved algebras to $\dgcat'$.

\begin{prop}\label{mcdgadj}
	The functor $\MCdg:\calgp \to \dgcat'$ admits a left adjoint.
\end{prop}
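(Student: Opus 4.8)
The plan is to construct the left adjoint explicitly via a presentation of dg categories, then verify the adjunction isomorphism by reducing to the Maurer–Cartan representability results already established. The functor $\MCdg:\calgp \to \dgcat'$ sends a curved algebra $A$ to the dg category whose objects are the MC elements of $A$ and whose morphism complexes are the two-sided twists. To build a left adjoint $F:\dgcat' \to \calgp$, I would first understand what structure on a curved algebra $A$ is needed to define a dg functor $\mathcal{D} \to \MCdg(A)$ out of a given small dg category $\mathcal{D}$. Such a functor assigns to each object $i \in \mathcal{D}$ an MC element $x_i \in A$, and to each morphism $f \in \mathcal{D}(i,j)$ of degree $p$ an element $\phi(f) \in A^p$, subject to the compatibility conditions forcing $\phi$ to respect composition, units, and the twisted differentials $^{[x_j]}d^{\,[x_i]}$. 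These conditions are exactly the defining relations of a curved algebra, so the left adjoint should be a suitable \emph{curved path algebra} or universal curved algebra generated by this data.

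\textbf{Concretely}, I would define $F(\mathcal{D})$ as follows. Take the free graded algebra generated by symbols $e_i$ (one idempotent-like generator per object, in degree $0$) together with symbols $\phi(f)$ for each morphism $f$ in $\mathcal{D}$, modulo the relations expressing that the $e_i$ are orthogonal idempotents summing appropriately, that $\phi$ is linear and multiplicative with respect to composition in $\mathcal{D}$, and that $\phi(\mathrm{id}_i) = e_i$. The curvature $h$ and differential $d$ are then defined so that imposing the MC equation on the images of the objects and the twisted-differential compatibility on the images of morphisms becomes automatic; in effect one twists the free curved algebra by the universal MC elements corresponding to the objects. Because $\MCdg$ lands in $\dgcat'$ (dg categories with possibly-zero objects and the empty category), I must be careful to match the edge cases: $F$ of the empty dg category should be $\varnothing$, and $F$ of the zero dg category should be the zero curved algebra, matching $\MCdg(\varnothing) = \emptyset$ and $\MCdg(0) = 0$.

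\textbf{For the adjunction isomorphism} $\dgcat'(\mathcal{D}, \MCdg(A)) \cong \calgp(F(\mathcal{D}), A)$, I would unwind both sides: a morphism $F(\mathcal{D}) \to A$ of curved algebras is, by the universal property of the free-curved-algebra-modulo-relations construction, exactly a choice of images for the generators satisfying the relations, which by design is precisely a dg functor $\mathcal{D} \to \MCdg(A)$. Here the earlier representability statement $\MC(A) \cong \Hom(\ground, A)$ and the description of morphism spaces in $\MCdg(A)$ as two-sided twists $^{[y]}A^{[x]}$ furnish the dictionary: an object of $\mathcal{D}$ maps to a generator forced to be an MC element, matching $\Hom(\ground, A) \cong \MC(A)$, and a morphism maps to an element of the appropriate twist. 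The naturality in both variables should be formal once the generators-and-relations description is pinned down.

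\textbf{The main obstacle} I anticipate is the careful handling of the curved structure under the idempotent decomposition: since a dg category with several objects corresponds to a curved algebra stratified by orthogonal idempotents $e_i$, the single global curvature element $h$ of $F(\mathcal{D})$ must restrict correctly on each summand $e_i F(\mathcal{D}) e_i$ so that each $x_i$ genuinely satisfies the MC equation $h + dx_i + x_i^2 = 0$ \emph{relative to that stratum}, while the off-diagonal pieces $e_j F(\mathcal{D}) e_i$ carry the twisted differential matching $^{[x_j]}A^{[x_i]}$. Getting the curvature and differential to interact consistently across this idempotent structure — rather than merely on a single-object dg category, where the construction is essentially the curved tensor/path algebra — is the delicate bookkeeping step. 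A clean way to organize this is to observe that $F(\mathcal{D})$ is built as a colimit in $\calgp$ of copies of the cobar-type generators indexed by the cells of $\mathcal{D}$, so that existence of the left adjoint also follows abstractly from local presentability (Proposition \ref{limscolims} and its successor) together with the adjoint functor theorem, with the explicit construction serving to identify it.
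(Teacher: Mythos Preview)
Your explicit construction has a genuine flaw: the orthogonal-idempotent ansatz does not match the structure of $\MCdg(A)$. In $\MCdg(A)$ the hom-complex between any two MC elements $x,y$ has underlying graded vector space all of $A$ (with twisted differential), and the identity morphism at every object is the \emph{same} element $1\in A$. Thus a dg functor $\mathcal{D}\to\MCdg(A)$ sends \emph{every} $\id_i$ to $1$, not to mutually orthogonal idempotents $e_i$. If your $F(\mathcal{D})$ is built so that $\phi(\id_i)=e_i$ with the $e_i$ orthogonal, then a curved algebra map $F(\mathcal{D})\to A$ would force orthogonal idempotents in $A$, which is extra data a functor $\mathcal{D}\to\MCdg(A)$ simply does not carry. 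The path-algebra picture you have in mind represents a different functor (essentially $A\mapsto$ the dg category of idempotents of $A$), not $\MCdg$.

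The paper's construction avoids this by using no object-idempotents at all: each object $x$ of $\mathcal{D}$ contributes a \emph{degree~$1$} generator $\bar x$ subject to the MC relation $d\bar x=\bar x^2$, each morphism $g$ contributes a generator $\bar g$ subject to the twisted-differential relation, composition goes to multiplication, and crucially $\overline{\id_x}=1$ for every $x$. This auxiliary algebra $\algmc'(\mathcal{D})$ is a dg algebra representing functors into $\MCdg$ of dg algebras. To land in $\calgp$ one then sets a single chosen $\bar x$ to zero---the resulting curved algebra is independent of the choice up to curved isomorphism, and the passage back to the dg case is exactly the uncurving functor $H$. This ``remove one redundant MC generator'' step is the analogue of the fake-augmentation trick in the extended bar construction, and it is what your proposal is missing. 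Your fallback to the adjoint functor theorem via local presentability would work, but requires first checking that $\MCdg$ preserves limits, which you have not done.
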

Before we prove this we will need some recollections on the uncurving functor. If $A$ is a curved algebra, we define its uncurving $HA$ to be the following dg algebra. The underlying graded algebra of $HA$ is $A\langle\eta\rangle$, where $\eta$ has cohomological degree one. The differential $\partial$ on $HA$ is defined by $\partial a = da - [\eta,a]$ and $\partial \eta = h - \eta^2$; essentially we are freely adding an MC element $-\eta$ to $A$ and then twisting by it to obtain a dg algebra. The construction $A\mapsto HA$ is functorial, and $H$ is left adjoint to the inclusion of dg algebras into curved algebras \cite[3.6]{HL2020}.

\begin{proof}[Proof of \ref{mcdgadj}.]
	Given an object $\mathcal{D}\in \dgcat'$, we will define a dg algebra $\algmc(\mathcal{D})$, the {MC algebra} of $\mathcal{D}$, by generators and relations. We will first define an auxiliary dg algebra $\algmc'(\mathcal{D})$ and then modify our construction slightly. Let $\mathcal{D}$ be a small nonempty nonzero dg category with nonzero identities. The generators of $\algmc'(\mathcal{D})$ are
	\begin{itemize}
		\item For every object $x$ of $\mathcal{D}$, a generator $\bar x$ of cohomological degree $1$.
		\item For every morphism $g:x \to y$ in $\mathcal{D}$ of homogenous degree $n$, a generator $\bar g$ of degree $n$. We extend this to nonhomogenous morphisms by linearity.
	\end{itemize}
	and the relations are
	\begin{itemize}
		\item $d(\bar x) = {\bar x} ^2$ (i.e.\ $x$ is an MC element).
		\item If $g: x \to y$ then $\overline{dg} = d\bar g+ \bar y\bar g - \tilde{\bar g} \bar x$.
		\item If $g_1$ and $g_2$ are parallel morphisms and $a,b\in\ground$ then $\overline{ag_1+bg_2}=a\bar {g_1} + b \bar {g_2}$.
		\item $\overline{g_1\circ g_2} = \bar {g_1} \bar {g_2}$ whenever $g_1$ and $g_2$ are composable.
		\item $\overline{\id_x}=1$ for all objects $x$ of $\mathcal{D}$. Note that here is where we need $\mathcal{D}$ to have nonzero identities in order for our construction to make sense.
	\end{itemize}
	Note that we have left $\algmc'({\emptyset})$ and $\algmc'({0})$ undefined. One can check that if $A$ is a dg algebra then we have a natural isomorphism $$\Hom_{\alg}(\algmc'(\mathcal{D}),A)\cong \Hom_{\dgcat'}(\mathcal{D},\MCdg(A))$$with a dg functor $F$ corresponding to the algebra morphism defined on generators by $\bar u \mapsto \bar{F}(u)$.

	However, since we are interested in the category $\calg$ and not $\alg$, the construction $\algmc'(\mathcal{D})$ has one MC element too many. We will need to modify our construction slightly by removing a single generator; the choice will not matter. This is similar to how, in the extended bar construction, one must choose a `fake augmentation' \cite{GL20}. If $\mathcal{D}$ is nonempty and nonzero, choose an object $x$. The {reduced MC algebra} $\mathrm{Alg}^{x}_\mathrm{MC}(\mathcal{D})$ is defined to be the quotient of $\algmc'(\mathcal{D})$ by the single extra relation $\bar x =0$. Equivalently, the definition is the same as that of $\algmc'(\mathcal{D})$, but we do not add the generator corresponding to $x$. 
	
	If $x,y$ are two different objects of $\mathcal{D}$, consider the dg algebra $B_{xy}$ obtained by twisting $\mathrm{Alg}^{x}_\mathrm{MC}(\mathcal{D})$ by $\bar y$. Observe that for every object $z$ of $\mathcal D$, the element $\bar z + \bar y$ is an MC element of $B_{xy}$, and moreover this is compatible with the differential on elements of the form $\bar g$. Hence the function $\phi_{xy}: \mathrm{Alg}^{y}_\mathrm{MC}(\mathcal{D}) \to B_{xy} $ defined on generators by declaring that $\phi_{xy}(\bar z) = \bar z + \bar y$ and $\phi(\bar g) = \bar g$ is a well-defined morphism of dg algebras, and it is easy to check that it is an isomorphism. Since $B_{xy}$ is isomorphic as a curved dg algebra to $\mathrm{Alg}^{x}_\mathrm{MC}(\mathcal{D})$, it follows that $\mathrm{Alg}^{x}_\mathrm{MC}(\mathcal{D})$ and $\mathrm{Alg}^{y}_\mathrm{MC}(\mathcal{D})$ are isomorphic as curved dg algebras; we denote this curved dg algebra by $\algmc(\mathcal{D})$. 
	
	If $\mathcal{D}$ is the empty dg category, we set $\algmc(\mathcal{D})\coloneqq \varnothing$. If $\mathcal{D}$ is the zero dg category, we set $\algmc(\mathcal{D})\coloneqq 0$. As in the proof of \cite[4.1]{GL20}, the construction $\algmc$ defines a functor from $\dgcat'$ to $\calgp$. We wish to check that there are natural isomorphisms $$\Hom_{\calg}(\algmc(\mathcal{D}),A) \cong \Hom_{\dgcat'}(\mathcal{D},\MCdg(A))$$for all $\mathcal{D}$ and all $A$. This is easy to check when $\mathcal{D}$ is empty or zero, or when $A$ is $\varnothing$, so we may assume that $\mathcal{D}\neq 0$ has an object and that $A\in \calg$. If $A$ has no MC elements then both sides are empty, so we may assume that $A$ has an MC element, and so twisting by it we may assume that $A\in\alg$. In other words, we wish to prove that we have a natural isomorphism $$\Hom_{\calg}(\algmc(\mathcal{D}),A) \cong \Hom_{\alg}(\algmc'(\mathcal{D}),A)$$which amounts to proving that we have a natural isomorphism $$H\algmc(\mathcal{D}) \cong \algmc'(\mathcal{D})$$where $H$ is the uncurving functor. We apply a similar reasoning as before: define a function $\phi:\algmc'(\mathcal{D}) \to H\algmc(\mathcal{D})$ on generators by setting $\phi(\bar z) = \bar z - \eta$ and $\phi(\bar g) = \bar g$. One can check that this extends to a morphism of dg algebras; it is clearly bijective since one can check this on the underlying graded algebras. Hence we are done.
\end{proof}

The difference between $\dgcat$ and $\dgcat'$ will only be relevant to us when considering adjunction properties of $\MCdg$. Most of the time we will simply think of $\MCdg$ as an object of $\dgcat$.

\begin{rem}
	One can prove directly from the definitions that $\MCdg$ preserves products and equalisers, and hence all limits. The key point is that the objects of $\MCdg(A)$ are the morphisms $\ground \to A$, and hence are preserved under limits
\end{rem}

\begin{rem}
	If $A$ is a dg algebra, regarded as a one-object dg category, then $\algmc(A)\cong A$.
\end{rem}

\begin{rem}
    In \ref{deligneprop} we will see a `Deligne groupoid' type description of the nerve of $\MCdg$.
\end{rem}

\subsection{Homotopy of MC elements}

We have seen that the notions of $n$-homotopy are generally all inequivalent for different $n$. It turns out that for a certain class of algebras, $3$-homotopy of maps out of them is equivalent to $\infty$-homotopy (and thus, to $n$-homotopy for $3<n<\infty$). A similar, or dual, result holds for a certain class of coalgebras. Let us first discuss $n$-homotopies for MC elements.
\begin{defi}Let $A$ be a curved algebra and $x_0,x_1\in\MC(A)$.
	\begin{enumerate}\item
		An $n$-homotopy between $x_0$ and $x_1$ is an MC element $X\in\MC(A\otimes I^n)$ such that $(1\otimes \ev_i)(X)=x_i$.
		\item An $\infty$-homotopy between $x_0$ and $x_1$ is an MC element $X\in\MC(A\hat\otimes I^\infty)$ such that $(1\otimes \ev_i)(X)=x_i$.
	\end{enumerate}
\end{defi}
\begin{rem}
	Observe that an MC element $x\in A$ is the same as a map of curved algebras $\ground \to A$ and moreover an $n$-homotopy of MC elements is a special case of of an $n$-homotopy of maps. 
\end{rem}

\begin{prop}\label{prop:MC}
	Two MC elements in a curved algebra are $3$-homotopic if and only if they are $\infty$-homotopic.
\end{prop}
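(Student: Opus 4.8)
The plan is to reinterpret both notions of homotopy as object-preserving dg functors out of a cobar category, and then to exploit that $\OCat(I_3)$ and $\OCat(I_\infty)$ are both cofibrant resolutions of $\C$. As $I^n$ is finite dimensional in each degree, $\Hom(I_n,A)\cong A\otimes I^n$, so $\MC(A\otimes I^n)=\MC\Hom(I_n,A)$. Combining the bar--cobar adjunction $\MC\Hom(I_n,A)\cong\calg(\Omega I_n,A)$ with the adjunction $\algmc\dashv\MCdg$ of Proposition~\ref{mcdgadj} and the identification $\Omega I_n\cong\algmc\OCat(I_n)$ of the algebra cobar of $I_n$ with the MC algebra of its categorical cobar, one gets a natural isomorphism
\[
\MC(A\otimes I^n)\cong\dgcat'(\OCat(I_n),\MCdg(A)),
\]
under which $\ev_0,\ev_1\colon I^n\to\ground$ correspond to the two objects $e,f$ of $\OCat(I_n)$. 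Thus a $3$-homotopy between $x_0,x_1\in\MC(A)$ is exactly a dg functor $\Phi_3\colon\OCat(I_3)\to\MCdg(A)$ with $\Phi_3(e)=x_0$ and $\Phi_3(f)=x_1$.

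Taking the limit of this isomorphism over the tower $\OCat(I_3)\into\OCat(I_4)\into\cdots$, and using that $\MC$ commutes with cofiltered limits (Remark~\ref{proMC}), that $A\hat\otimes I^\infty=\varprojlim_n A\otimes I^n$, and that $\OCat(I_\infty)=\colim_n\OCat(I_n)$ (the remark following Proposition~\ref{prop:coderivedinfinity}), I obtain a parallel description of $\infty$-homotopies as dg functors $\Phi_\infty\colon\OCat(I_\infty)\to\MCdg(A)$ sending $e,f$ to $x_0,x_1$. One implication is then immediate: precomposition with the inclusion $F_3\colon\OCat(I_3)\into\OCat(I_\infty)$ sends an $\infty$-homotopy $\Phi_\infty$ to the $3$-homotopy $\Phi_\infty\circ F_3$ with the same endpoints, which is the functorial shadow of projecting the defining MC element along $I^\infty\to I^3$.

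For the converse I would use the homotopical input already isolated in the proof of Proposition~\ref{prop:coderivedinfinity}, namely that $F_3\colon\OCat(I_3)\to\OCat(I_\infty)$ is an acyclic cofibration of dg categories; it is moreover the identity on the object set $\{e,f\}$. Since every object of $\dgcat$ is fibrant, $\MCdg(A)$ is fibrant in $\dgcat'$, so the functor $\Phi_3$ representing a given $3$-homotopy lifts along the acyclic cofibration $F_3$ to a functor $\Phi_\infty\colon\OCat(I_\infty)\to\MCdg(A)$ with $\Phi_\infty\circ F_3=\Phi_3$. As $F_3$ fixes objects, $\Phi_\infty(e)=x_0$ and $\Phi_\infty(f)=x_1$, so $\Phi_\infty$ is an $\infty$-homotopy between $x_0$ and $x_1$, which finishes the nontrivial implication.

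I expect the main obstacle to be the translation in the first paragraph: one must check the identification $\Omega I_n\cong\algmc\OCat(I_n)$ and that an MC element of $\Hom(I_n,A)$ is the same datum as a dg functor $\OCat(I_n)\to\MCdg(A)$ matching the two distinguished objects, all compatibly with the tower maps so that it passes to the $\infty$-limit. Once this dictionary is in place, lifting against the acyclic cofibration $F_3$ does the real work, automatically producing the higher coherence data (the $st,ts,\dots$ components of the homotopy) that a direct obstruction-theoretic climb up the square-zero tower $A\otimes I^{n+1}\to A\otimes I^n$ would otherwise force one to build by hand.
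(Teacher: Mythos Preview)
Your argument is correct, and it takes a genuinely different route from the paper. The paper's proof is a one-line appeal to \cite[Theorem~5.1]{CHL21}, merely observing that the cited argument for dg algebras goes through verbatim in the curved case because it only uses the MC dg category. Your proof is instead self-contained within the paper's own framework: you translate $n$-homotopies into dg functors $\OCat(I_n)\to\MCdg(A)$ and then invoke the acyclic cofibration $F_3\colon\OCat(I_3)\hookrightarrow\OCat(I_\infty)$ established in the proof of Proposition~\ref{prop:coderivedinfinity}, together with fibrancy of $\MCdg(A)$, to produce the lift.

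The dictionary you set up is sound. The identification $\Omega I_n\cong\algmc\OCat(I_n)$ is exactly Lemma~\ref{algocatlem} (note that $I_n$ is indeed a pointed dg coalgebra: its coradical $\ground e^*\oplus\ground f^*$ carries zero differential and the dual of the evaluation map $I^n\to\ground\times\ground$ provides the required retract). Since $\MC(-)=\Hom_{\calg}(\ground,-)$ preserves limits, and $\OCat(I_\infty)=\colim_n\OCat(I_n)$, the passage to the completed tensor product goes through as you describe. The endpoint condition is preserved because $F_3$ is the identity on objects.

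Two remarks. First, Lemma~\ref{algocatlem} appears much later in the paper, so your argument is a forward reference; there is no circularity, since that lemma is a direct computation, but this would need flagging if inserted here. Second, what your approach buys is transparency: it makes explicit that the ``higher coherence data'' of an $\infty$-homotopy are supplied by a single model-categorical lift, rather than being built inductively. The paper's approach is shorter but opaque, deferring the content to an external source.
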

\begin{proof}
	Clearly $\infty$-homotopy implies $3$-homotopy.	The reverse direction follows directly from \cite[Theorem 5.1]{CHL21}, where algebras are assumed to be without curvature, but one can check that the proof goes through in the curved setting since it uses only the MC dg category.
\end{proof}
Note that the above result can be viewed as saying that maps out of the curved algebra $\ground$ classifying MC elements are  $\infty$-homotopic if and only if they are 3-homotopic; also observe that $\ground$ is the cobar construction on the curved coalgebra $\ground$. The following two results generalise this.	

\begin{prop}\label{prebcb}
	Let $C$ be a curved coalgebra, let $A$ be a curved algebra, and let $n\leq\infty$. Then the isomorphisms
$$\Hom(\Omega C,A) \cong \MC\Hom(C,A)\cong \Hom(C,\check B A)$$respect $n$-homotopies: two morphisms $\Omega C \to A$ are $n$-homotopic if and only if the corresponding MC elements of $\Hom(C,A)$ are $n$-homotopic, if and only if the corresponding morphisms $C \to \check B A$ are $n$-homotopic.	
	\end{prop}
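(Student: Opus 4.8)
The plan is to deduce all three equivalences from a single naturality argument applied to the bar--cobar adjunction, after first identifying the homotopy interval objects on the algebra and coalgebra sides. The starting point is that, because $I^n$ is finite dimensional for $n<\infty$, tensoring with it commutes with the completed tensor product defining convolution algebras. Concretely, using the hom--tensor adjunction for convolution algebras (Corollary~\ref{htconv}) together with $\Hom(I_n,A)=I_n^*\hat\otimes A=I^n\otimes A$ (valid since $I_n^*\cong I^n$), I would obtain a natural isomorphism
$$\Hom(C\otimes I_n,A)\cong\Hom(C,A\otimes I^n)\cong\Hom(C,A)\otimes I^n.$$
For $n=\infty$ the same chain holds with $\otimes$ replaced by $\hat\otimes$, using that $I_\infty=\colim_n I_n$ so that $I_\infty^*\cong\varprojlim_n I^n=I^\infty$, and that $\hat\otimes$ commutes with the relevant cofiltered limits; this gives $\Hom(C\otimes I_\infty,A)\cong\Hom(C,A)\hat\otimes I^\infty$.

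First I would treat the equivalence between the cobar and MC descriptions. An elementary $n$-homotopy between maps $f,g\colon\Omega C\to A$ is by definition a map $\Omega C\to A\otimes I^n$ (resp.\ $\Omega C\to A\hat\otimes I^\infty$) restricting to $f,g$ along $A\otimes\ev_0$ and $A\otimes\ev_1$. Applying the bar--cobar isomorphism $\Hom(\Omega C,-)\cong\MC\Hom(C,-)$, which is natural in the algebra variable, to the algebra $A\otimes I^n$ identifies such a map with an element $H\in\MC\Hom(C,A\otimes I^n)\cong\MC(\Hom(C,A)\otimes I^n)$. Naturality applied to the maps $A\otimes\ev_j\colon A\otimes I^n\to A$ shows that the endpoints $f,g$ correspond to $(\id\otimes\ev_j)(H)$, which is exactly the condition that $H$ be an $n$-homotopy of the MC elements corresponding to $f$ and $g$.

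Next I would treat the coalgebra side. An elementary $n$-homotopy between $\phi,\psi\colon C\to\check B A$ is a map $k\colon C\otimes I_n\to\check B A$ restricting to $\phi,\psi$ along the vertex inclusions $C\otimes i_0$ and $C\otimes i_1$. Applying the bar--cobar isomorphism $\Hom(-,\check B A)\cong\MC\Hom(-,A)$, natural in the coalgebra variable, to the coalgebra $C\otimes I_n$ identifies $k$ with an element $K\in\MC\Hom(C\otimes I_n,A)\cong\MC(\Hom(C,A)\otimes I^n)$. The crucial point is to match endpoints: since $i_j$ is by construction the linear dual of $\ev_j$, dualising shows that the map $\Hom(C\otimes I_n,A)\to\Hom(C,A)$ induced by $C\otimes i_j$ is precisely $\id\otimes\ev_j$ under the identification above. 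Hence $K$ is an $n$-homotopy of the MC elements corresponding to $\phi$ and $\psi$, with endpoints matched in the correct order.

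Combining the two steps, directed elementary $n$-homotopies in each of the three categories correspond bijectively under the stated isomorphisms, compatibly with endpoints; passing to zig-zags then shows the three notions of $n$-homotopy agree. The main obstacle is the bookkeeping around the tensor-product identifications, especially in the $n=\infty$ case, where one must check carefully that $I_\infty^*\cong I^\infty$ as pseudocompact algebras and that $\hat\otimes$ commutes with the inverse limit $\varprojlim_n I^n$ defining $I^\infty$; once these are in place, everything else is a formal consequence of the naturality of the bar--cobar and hom--tensor adjunctions together with the duality between the vertex inclusions $i_j$ and the evaluation maps $\ev_j$.
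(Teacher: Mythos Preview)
Your proposal is correct and follows essentially the same approach as the paper: both arguments reduce to the naturality of the bar--cobar isomorphism $\Hom(\Omega C,-)\cong\MC\Hom(C,-)$ applied to the algebra $A\otimes I^n$ (resp.\ $A\hat\otimes I^\infty$), together with the identification $\Hom(C,A\otimes I^n)\cong\Hom(C,A)\otimes I^n$. Your version is somewhat more explicit about the coalgebra side and the endpoint bookkeeping (the duality between $i_j$ and $\ev_j$), whereas the paper simply declares that case ``completely analogous''; otherwise the two proofs are the same.
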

\begin{proof}
	We prove the statement about maps $\Omega C \to A$; the proof for maps $C \to \check B A$ is completely analogous. First, note that the convolution algebra $\Hom(C,A)$ is naturally a pro-object in curved algebras when regarded as $ C^* \hat\otimes A$ (it is not pseudocompact unless $A$ is finite dimensional). Using this we get a natural isomorphism $\Hom(\Omega C,A)\cong \MC(C^* \hat\otimes A)$. For $n<\infty$, an elementary $n$-homotopy between two maps $f,g:\Omega C \to A$ is a map $\Omega C \to A \otimes I^n$, which corresponds across the above isomorphism to an element of $\MC( C^* \hat\otimes A  \otimes I^n)$, which one can easily check is an $n$-homotopy between the two MC elements corresponding to $f$ and $g$. Conversely an $n$-homotopy of MC elements gives an $n$-homotopy between the corresponding algebra morphisms. For $n=\infty$, the proof is the same but uses the isomorphism $ \Hom(\Omega C,A\hat\otimes I^\infty)\cong \MC(C^* \hat\otimes A\hat\otimes I^\infty)$.
\end{proof}

\begin{cor}\label{cor:Barcobar}
	Let $C$ be a curved coalgebra and let $A$ be a curved coalgebra.
\begin{enumerate}\item Two maps $\Omega C\to A$ are 3-homotopic if and only if they are $\infty$-homotopic. 
		\item Two maps $C\to \check{B}A$ are 3-homotopic if and only if they are $\infty$-homotopic. 
	\end{enumerate}	
\end{cor}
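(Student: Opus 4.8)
The plan is to read off both statements directly from Proposition \ref{prebcb} together with Proposition \ref{prop:MC}. Proposition \ref{prebcb} tells us that the bar--cobar isomorphisms $\Hom(\Omega C,A)\cong\MC\Hom(C,A)\cong\Hom(C,\check BA)$ respect $n$-homotopies for every $n\leq\infty$, so both parts of the corollary translate into a single assertion about MC elements of the convolution algebra $\Hom(C,A)$; that assertion is exactly the content of Proposition \ref{prop:MC}.

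In more detail, for part (1) I would fix two maps $f,g:\Omega C\to A$ and let $x_f,x_g\in\MC\Hom(C,A)$ be the MC elements corresponding to them under $\Hom(\Omega C,A)\cong\MC\Hom(C,A)$. Applying Proposition \ref{prebcb} with $n=3$ shows that $f$ and $g$ are $3$-homotopic if and only if $x_f$ and $x_g$ are $3$-homotopic, and applying it with $n=\infty$ gives the analogous equivalence for $\infty$-homotopy. It therefore suffices to know that $x_f$ and $x_g$ are $3$-homotopic precisely when they are $\infty$-homotopic, which is Proposition \ref{prop:MC}. Part (2) is proved in the same way, using the other isomorphism $\MC\Hom(C,A)\cong\Hom(C,\check BA)$ in place of the first.

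The only point requiring care --- and hence the main (rather mild) obstacle --- is that Proposition \ref{prop:MC} is stated for MC elements of a genuine curved algebra, whereas $\Hom(C,A)$ arises most naturally as a pro-object in curved algebras, being pseudocompact only when $A$ is finite-dimensional. This is harmless: writing $C$ as the filtered colimit of its finite-dimensional sub-curved-coalgebras $C_i$, we have $\Hom(C,A)=\varprojlim_i\Hom(C_i,A)$, a cofiltered limit of the curved algebras $\Hom(C_i,A)=C_i^*\otimes A$, and by Proposition \ref{limscolims} such cofiltered limits exist in $\calg$ and are created by the forgetful functor, so $\Hom(C,A)$ is itself an honest curved algebra. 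One should also check that the homotopies of MC elements of $\Hom(C,A)$ implicit in Proposition \ref{prebcb} --- namely elements of $\MC(C^*\hat\otimes A\otimes I^n)$ for $n<\infty$ and of $\MC(C^*\hat\otimes A\hat\otimes I^\infty)$ for $n=\infty$ --- coincide with the homotopies of Definition \ref{def:homotopyalg} for the curved algebra $\Hom(C,A)$; this is the routine identification of $(C^*\hat\otimes A)\otimes I^n$ with $C^*\hat\otimes A\otimes I^n$ (and its completed analogue), using that each $I^n$ is finite-dimensional. With these identifications in hand, Proposition \ref{prop:MC} applies verbatim and the corollary follows.
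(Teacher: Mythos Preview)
Your proposal is correct and takes essentially the same approach as the paper, whose proof reads in its entirety ``Combine Propositions \ref{prop:MC} and \ref{prebcb}.'' Your extra paragraph handling the pro-object issue is more care than the paper itself takes, but it is sound and clarifies why Proposition \ref{prop:MC} applies to $\Hom(C,A)$ as a genuine curved algebra.
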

\begin{proof}
Combine Propositions \ref{prop:MC} and \ref{prebcb}.
\end{proof}
\begin{rem}
	Note that if $C$ is a coaugmented dg coalgebra, the dg algebra $\Omega(C)$ need \emph{not} be cofibrant unless $C$ is conilpotent.
\end{rem}	

\begin{prop}\label{prop:MCdg}
	Let $A,A'$ be two curved algebras and $f,g:A\to A'$ be two maps that are 3-homotopic. Then the associated functors $\MCdg(f), \MCdg(g):\MCdg(A)\to\MCdg(A')$ are quasi-isomorphic, i.e.\ the two induced functors $H^0\MCdg(A)\to H^0\MCdg(A')$ are isomorphic.
\end{prop}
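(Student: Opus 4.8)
The plan is to deduce the statement from Proposition \ref{prop:3homotopy} by realising $\MCdg$ as a full dg subcategory of the category of twisted modules. Since a $3$-homotopy is a zig-zag of elementary $3$-homotopies and ``inducing isomorphic functors on $H^0$'' is an equivalence relation stable under composition, it suffices to treat a single elementary $3$-homotopy $H\colon A\to A'\otimes I^3$ with $(A'\otimes\ev_0)\circ H=f$ and $(A'\otimes\ev_1)\circ H=g$; the degenerate cases where $A$ or $A'$ is $\varnothing$ or $0$, or where $\MCdg(A')$ is empty (which forces $\MCdg(A)$ empty, as a map $A\to A'$ carries MC elements to MC elements), are trivial. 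The key observation is that $\MCdg(A)$ is precisely the full dg subcategory of $\Twfg(A)$ spanned by the \emph{rank-one} twisted modules, i.e.\ those ${}^{[x]}A$ with $V=\ground$: such a module has underlying module $A^\#$ and its differential is exactly an MC element of $A\otimes\End(\ground)=A$, while a direct computation identifies the complex of right $A$-module maps ${}^{[x]}A\to{}^{[y]}A$ with the two-sided twist ${}^{[y]}A^{[x]}$ carrying the differential $a\mapsto d_A(a)+ya-\tilde a x$ from the definition of $\MCdg$. Hence $H^0\MCdg(A)$ is a full subcategory of $\Ho(\Twfg(A))$, and thus of $\Dcoderivedc(A)$.

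Under this identification the dg functor $\MCdg(f)$ becomes the restriction of the induction functor $f_*\colon\Tw(A)\to\Tw(A')$, $M\mapsto M\otimes_A A'$. To verify this I would carry out the bookkeeping for a general curved morphism $f=(f_0,b)$: composing the classifying map $\ground\to A$ of an MC element $x$ with $f$ produces the MC element $f_0(x)+b$, and correspondingly $f_*({}^{[x]}A)\cong{}^{[f_0(x)+b]}A'$, which is exactly $\MCdg(f)(x)$; on morphism complexes both functors act by applying $f_0$ and correcting the differential by $b$. The same holds for $g$, and since induction sends rank-one twisted modules to rank-one twisted modules, $f_*$ and $g_*$ preserve the relevant full subcategory.

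It then remains to invoke Proposition \ref{prop:3homotopy}. Its proof produces, for each twisted $A$-module $M$, an explicit morphism in $\Ho(\Tw(A'))$ natural in $M$, together with an explicit inverse, assembling into a natural isomorphism $H^0 f_*\cong H^0 g_*$ of functors $\Dcoderivedc(A)\to\Dcoderivedc(A')$. Restricting this natural isomorphism along the full inclusion $H^0\MCdg(A)\hookrightarrow\Dcoderivedc(A)$ is automatic, and by the previous paragraph the restricted functors are precisely $H^0\MCdg(f)$ and $H^0\MCdg(g)$; this yields the desired natural isomorphism. Composing these isomorphisms (and their inverses, where the elementary homotopies in the zig-zag run backwards) along the original zig-zag finishes the proof.

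The main obstacle I anticipate is the dictionary established in the first two paragraphs rather than any new homotopy-theoretic input: one must check that $\MCdg(A)$ coincides on the nose with the rank-one full subcategory of $\Twfg(A)$ (matching both objects and the twisted differential on Hom-complexes), and that the abstractly-defined functor $\MCdg(f)$ really agrees with induction even for a curved morphism $f=(f_0,b)$, where the curvature term $b$ must be correctly absorbed into the twist $f_0(x)+b$. Once this identification is in place, the proposition is a formal restriction of Proposition \ref{prop:3homotopy}.
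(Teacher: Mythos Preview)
Your proposal is correct and follows essentially the same route as the paper: the paper's proof simply observes that $H^0\MCdg(A)$ is a full subcategory of $H^0\Twfg(A)$ and hence of $\Perfcoderivedc(A)$, invokes Proposition~\ref{prop:3homotopy} to get $\Perfcoderivedc(f)\cong\Perfcoderivedc(g)$, and restricts. You have spelled out in more detail the dictionary identifying $\MCdg$ with rank-one twisted modules and $\MCdg(f)$ with induction (including the curved-morphism bookkeeping), but the underlying argument is the same.
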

\begin{proof}
	The homotopy category $H^0\MCdg(A)$ is naturally a full subcategory of $H^0\Twfg(A)$ and hence of $\Perfcoderivedc(A)$. By Proposition \ref{prop:3homotopy}, the functors $\Perfcoderivedc(f)$ and $\Perfcoderivedc(g)$ are isomorphic, and hence their restrictions $H^0\MCdg(f)$ and $H^0\MCdg(g)$ are likewise isomorphic. 
\end{proof}	

The above results allow us to prove that cohomology is representable in the $3$-homotopy category of dg algebras. The key input to this is the following lemma.

\begin{lem}\label{mccohomlem}
	Let $V$ be a dg vector space and let $X\coloneqq \ground\oplus V$ be the square zero extension of $\ground$ by $V$. There is a bijection $\MCmod(X)\cong H^1(X)$.
\end{lem}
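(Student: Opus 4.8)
The plan is to compute both sides explicitly and identify them through the map that sends a Maurer--Cartan element to its cohomology class. First I would pin down $\MC(X)$. Since $X=\ground\oplus V$ is an uncurved dg algebra with $X^1=V^1$, its curvature vanishes, and because $V$ is a square-zero ideal we have $x^2=0$ for every $x\in V^1$. Hence the Maurer--Cartan equation $dx+x^2=0$ collapses to $dx=0$, giving $\MC(X)=Z^1(V)=Z^1(X)$, the degree-one cocycles. Noting that the $\ground$-summand contributes nothing in degree one, so that $H^1(X)=H^1(V)$, the natural candidate for the asserted bijection is the (evidently surjective) assignment sending an MC element to its class in $H^1(X)$; the real content is that it descends to a \emph{bijection} on $\MCmod(X)$.

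Next I would unwind the equivalence relation defining $\MCmod(X)$, namely isomorphism of objects in $H^0\MCdg(X)$. Given $x_0,x_1\in\MC(X)$, a degree-zero morphism $x_0\to x_1$ is an element of the two-sided twist ${}^{[x_1]}X^{[x_0]}$, which I write as $a=\lambda\cdot 1+v$ with $\lambda\in\ground$ and $v\in V^0$. Using the twisted differential $a\mapsto d_X a+x_1a-ax_0$ together with square-zero-ness (so $x_1v=vx_0=0$ and $d_X1=0$), a short computation gives
\[
d(a)=dv+\lambda(x_1-x_0).
\]
Thus $a$ is a closed morphism precisely when $x_1-x_0=-\lambda^{-1}dv$ (for $\lambda\neq0$); in particular a closed degree-zero morphism with $\lambda\neq0$ exists if and only if $x_1-x_0\in B^1(V)$.

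It then remains to see that a closed degree-zero morphism $a=\lambda\cdot1+v$ is an isomorphism in $H^0\MCdg(X)$ exactly when $\lambda\neq0$. If $\lambda\neq0$ then $a$ is a unit of $X$, so left multiplication by $a$ is an isomorphism of the underlying rank-one free right $X$-modules; a closed morphism that is an isomorphism of underlying modules is automatically invertible in $H^0\MCdg(X)$. For the converse I would use functoriality of $\MCdg$: the projection $X\to\ground$ induces a functor $\MCdg(X)\to\MCdg(\ground)$ sending both $x_i$ to $0\in\MC(\ground)$ and $a$ to $\lambda$, and it must carry isomorphisms to isomorphisms; since $H^0\MCdg(\ground)$ has endomorphism ring $\ground$, this forces $\lambda\neq0$. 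Combining the two directions, $x_0\cong x_1$ in $H^0\MCdg(X)$ if and only if $x_1-x_0\in B^1(V)$, whence
\[
\MCmod(X)=Z^1(V)/B^1(V)=H^1(V)=H^1(X),
\]
the bijection being the cohomology-class map.

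The only genuinely nontrivial step is this last necessity direction (that an isomorphism forces $\lambda\neq0$), which the reduction $X\to\ground$ disposes of cleanly; every other step is a direct consequence of square-zero-ness. I would also remark that if $\MCmod$ is instead defined via $n$-homotopy of Maurer--Cartan elements, then Propositions \ref{prop:MC} and \ref{prop:MCdg} identify that relation with isomorphism in $H^0\MCdg(X)$, so the same computation applies verbatim.
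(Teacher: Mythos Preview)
Your proof is correct. The paper's own argument takes a different but parallel route: rather than working inside $\MCdg(X)$ as you do, it unwinds the equivalence relation on $\MC(X)$ via the $3$-cylinder, writing a $3$-homotopy as an element $H\in\MC(X\otimes I^3)$, expanding $H$ in the basis $\{e,f,s,t,st,ts,sts\}$ of $I^3$, and observing that the MC equation for $H$ reduces (by square-zero-ness) to $dH=0$; the resulting component equations force $x-x'$ to be a coboundary, and conversely a primitive for $x-x'$ assembles into such an $H$. Your approach computes hom-complexes and homotopy invertibility directly, using the projection $X\to\ground$ to rule out $\lambda=0$; the paper's approach stays on the cylinder side. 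The two are related by \cite[Theorem 5.1]{CHL21} (which you correctly invoke at the end), and each is about the same length. Your argument is arguably more self-contained since it uses the definition of $\MCmod$ as isoclasses in $H^0\MCdg$ without passing through the cylinder characterisation; the paper's version has the small advantage of making the higher components of the homotopy explicit, which is in the spirit of the surrounding material on $n$-homotopies.
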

\begin{proof}
	It is clear that $\MC(X)\cong Z^1X$, so we just need to show that two MC elements $x,x'$ are $3$-homotopy equivalent if and only if they differ by a coboundary. To do this, suppose that $H\in \MC(X\otimes I^3)$ is a 3-homotopy $x \to x'$. Following the proof of \cite[Lemma 5.3]{CHL21}, write $H$ out in components as $H=x\otimes e+x'\otimes f+y\otimes s+y'\otimes t+z\otimes ts+z'\otimes st + u\otimes sts$. The MC equation for $H$ reduces to $dH=0$, and writing this out in components we obtain (in addition to $dx=0=dx'$) the equations \begin{align*}dy&=x-x'=-dy'\\
		dz&=-y-y'=dz'\\
		du&=z'-z.
	\end{align*}Clearly if these equations are satisfied then $x$ and $x'$ differ by a coboundary. Conversely if $h$ is an element such that $dh=x-x'$ then setting $y=-y'=h$ and $u=z=z'=0$ gives a $3$-homotopy between $x$ and $x'$.
\end{proof}
\begin{cor}Let $A$ be a dg algebra. if $X_n$ denotes the polynomial algebra $\ground[x]$, where $x$ has cohomological degree $2-n$, then there is an isomorphism $[X_n,A]_3\cong H^n(A)$.
\end{cor}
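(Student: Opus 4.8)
The plan is to recognise $X_n$ as a cobar construction and then transport the problem, via the $3$-homotopy-preserving bar--cobar correspondence, to a Maurer--Cartan computation that is governed by Lemma \ref{mccohomlem} (the "key input").

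First I would identify $X_n=\ground[x]$ with the cobar construction $\Omega C_n$ of the coaugmented dg coalgebra $C_n=\ground\oplus\bar C_n$, where $\bar C_n$ is one-dimensional, concentrated in a single cohomological degree, and carries zero differential, zero reduced comultiplication, and zero curvature. Since $\Omega C_n=T\Sigma^{-1}\bar C_n$ and both the internal differential and the coproduct contribution to the cobar differential then vanish, $\Omega C_n$ is exactly the tensor ($=$ polynomial) algebra on a one-dimensional generator with zero differential, i.e.\ $\ground[x]$; the degree of $\bar C_n$ is pinned down by the required value of $\deg x$.

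Next, by Proposition \ref{mcrep}(1) there is a natural bijection $\Hom(\Omega C_n,A)\cong\MC\Hom(\bar C_n,A)$, and the argument proving Proposition \ref{prebcb} shows that it respects $3$-homotopies; hence it descends to a bijection $[X_n,A]_3\cong\MC\Hom(\bar C_n,A)/{\sim_3}$. Because $\bar C_n$ has vanishing reduced comultiplication, the convolution product on $\Hom(\bar C_n,A)=\bar C_n^{*}\otimes A$ is identically zero, so $\ground\oplus\Hom(\bar C_n,A)$ is a square-zero extension of $\ground$; moreover $\MC\Hom(\bar C_n,A)=\MC(\ground\oplus\Hom(\bar C_n,A))$, and the two $3$-homotopy relations agree since all degree-one elements and all homotopy components that matter already live in the square-zero ideal. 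Lemma \ref{mccohomlem} then applies verbatim and gives $[X_n,A]_3\cong H^{1}\!\left(\ground\oplus\Hom(\bar C_n,A)\right)\cong H^{1}(\bar C_n^{*}\otimes A)$.

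Finally I would unwind the shift: since $\bar C_n^{*}$ is one-dimensional with zero differential, $H^{1}(\bar C_n^{*}\otimes A)$ is a single cohomology group of $A$, and $\bar C_n$ is placed in precisely the degree for which this group is $H^{n}(A)$ (equivalently, so that $x$ lands in the degree dual to $H^n$). The one step requiring genuine care is this degree bookkeeping: keeping the conventions for $\Omega$, for $\Sigma^{-1}$, and for the linear dual mutually consistent, so that the generator of $X_n$ sits in the correct degree and the output is $H^{n}(A)$ rather than some other shift; everything else is formal. As an alternative that bypasses the coalgebra altogether, one may argue directly: by freeness of $X_n$ a dg algebra map $X_n\to A\otimes I^{3}$ is simply a closed element of $A\otimes I^{3}$ in the degree of $x$, and expanding it in the basis $\{e,f,s,t,st,ts,sts\}$ of $I^{3}$ and using the differential of Proposition \ref{ddescription} reproduces exactly the component equations appearing in the proof of Lemma \ref{mccohomlem}, showing that two such maps are elementary $3$-homotopic precisely when the corresponding values of $x$ are cohomologous.
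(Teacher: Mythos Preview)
Your proposal is correct and follows essentially the same route as the paper: identify $X_n\cong\Omega(C_n)$ for the coaugmented coalgebra $C_n$ dual to $\ground[\varepsilon]/\varepsilon^2$, recognise the relevant convolution algebra as the square-zero extension $\ground\oplus\Hom(\bar C_n,A)\cong\ground\oplus A[n-1]$, and apply Lemma~\ref{mccohomlem}. The paper's proof is terser about the passage from $\MC(\Hom(\bar C_n,A))$ with its $3$-homotopy relation to $\MCmod(\ground\oplus\Hom(\bar C_n,A))$, whereas you spell this out; your observation that the $3$-homotopy relations agree because everything lives in the square-zero ideal is exactly the point needed (and is justified, since the only MC element of $I^3$ itself is $0$). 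Your closing direct argument via the basis of $I^3$ is a valid alternative that simply reproduces the component equations already appearing in the proof of Lemma~\ref{mccohomlem}.
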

\begin{proof}
	First observe that there is an isomorphism $X_n\cong \Omega(C_n)$, where $C_n$ is the linear dual of the square zero extension $Y_n\coloneqq \ground[\varepsilon]/\varepsilon^2$, with $\varepsilon$ placed in cohomological degree $n-1$. The convolution algebra controlling morphisms $X_n \to A$ is the square zero extension $\ground\oplus \Hom(\bar C_n, A)\cong \ground \oplus A[n-1]$. Applying \ref{mccohomlem} hence gives us an isomorphism $$[X_n,A]_3\cong \MCmod(\bar{C}_n,A)\cong \MCmod(\ground \oplus A[n-1])\cong H^1(A[n-1])\cong H^n(A)$$as desired.
\end{proof}
\begin{rem}
	Similarly, in the $3$-homotopy category of dg coalgebras, the linear dual of cohomology is corepresentable by the coalgebra $\check B (Y_n)$: we have isomorphisms $H^n(C^*)\cong [C,\check B Y_n]_3$.
\end{rem}

\subsection{DG categories of maps} Given two curved algebras the set of maps between them can sometimes be given the structure of a dg category; this happens when the source curved algebra is the cobar construction on a curved coalgebra. A similar phenomenon happens with curved coalgebras.

\begin{defi}
	Given a curved coalgebra $C$ and a curved algebra $A$ we define a dg category $\MCdg(C,A)\coloneqq \MCdg(\Hom(C,A))$. 
\end{defi}
It is easy to see that the objects of $\MCdg(C,A)$ are the maps $\Omega(C)\to A$ of curved algebras. Two such maps are dg-isomorphic if and only if they are 3-homotopic: by \ref{prebcb}(1) they are $3$-homotopic as maps if and only if the associated MC elements are $3$-homotopic, and the claim then follows from \cite[Theorem 5.1]{CHL21}. Equivalently, one can regard the objects of $\MCdg(C,A)$ as the maps $C \to \check B A$ of curved coalgebras; again two such maps are dg-isomorphic if and only if they are 3-homotopic.

\begin{prop}\label{cor:dgcat1}
	Let $C$ and $C'$ be curved coalgebras and $A$ and $A'$ be curved algebras. \begin{enumerate}\item Let $f,g:A\to B$ be 3-homotopic maps. Then the two induced dg functors
		\[	\MCdg(C,A)\to 	\MCdg(C,A')
		\]
		are quasi-isomorphic; i.e.\ they induce isomorphic functors $$H^0\MCdg(C,A)\to 	H^0\MCdg(C,A').$$
		\item If $A$ and $A'$ are 3-homotopy equivalent, then the dg categories $\MCdg(C,A)$ and  $\MCdg(C,A')$ are dg equivalent (in particular, quasi-equivalent).
		\item Let $h,k:C\to C'$ be 3-homotopic maps. Then the two induced dg functors $$\MCdg(C,A)\to \MCdg(C',A)$$
		are quasi-isomorphic.
		\item  If $C$ and $C'$ are 3-homotopy equivalent, then the dg categories $\MCdg(C,A)$ and  $\MCdg(C',A)$ are dg equivalent (in particular, quasi-equivalent).
	\end{enumerate}
\end{prop}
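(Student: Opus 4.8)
The plan is to reduce all four statements to the corresponding facts about the convolution algebras $\Hom(C,A)$, together with the two invariance results already established for $\MCdg$ of curved algebras: Proposition \ref{prop:MCdg} (invariance of $H^0\MCdg$ under $3$-homotopy of algebra maps) and Corollary \ref{cor:3homotopy} (invariance of $\Dcoderivedc$ under $3$-homotopy equivalence). The single bookkeeping fact underlying everything is that the convolution functors preserve $3$-homotopies. Concretely, since $I^3$ is finite dimensional there are natural isomorphisms $\Hom(C,A\otimes I^3)\cong\Hom(C,A)\otimes I^3$, so applying $\Hom(C,-)$ to an elementary $3$-homotopy $H\colon A\to A'\otimes I^3$ produces an elementary $3$-homotopy $\Hom(C,A)\to\Hom(C,A')\otimes I^3$; dually, using the hom-tensor adjunction $\Hom(C\otimes I_3,A)\cong\Hom(C,\Hom(I_3,A))$ together with $\Hom(I_3,A)\cong I^3\otimes A$ (again by finite dimensionality), applying $\Hom(-,A)$ to an elementary $3$-homotopy $H\colon C\otimes I_3\to C'$ yields an elementary $3$-homotopy between $\Hom(h,A)$ and $\Hom(k,A)$. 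Compatibility with the evaluation maps (recall $i_{0,1}$ is dual to $\ev_{0,1}$) is routine, and since these functors are functorial they send zig-zags to zig-zags, hence preserve both $3$-homotopies and $3$-homotopy equivalences.

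Granting this, parts (1) and (3) are immediate. The maps $\Hom(C,f),\Hom(C,g)\colon\Hom(C,A)\to\Hom(C,A')$ (respectively $\Hom(h,A),\Hom(k,A)$, which run between $\Hom(C',A)$ and $\Hom(C,A)$) are $3$-homotopic, so Proposition \ref{prop:MCdg} applied to these convolution algebras shows the induced functors on $H^0\MCdg$ are isomorphic, which is exactly the assertion.

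For parts (2) and (4) I would isolate the following lemma: if $\phi\colon B\to B'$ is a $3$-homotopy equivalence of curved algebras, then $\MCdg(\phi)$ is a quasi-equivalence. Applying it to $\phi=\Hom(C,u)$ (for a $3$-homotopy equivalence $u\colon A\to A'$ with inverse $v$) and to $\phi=\Hom(q,A)$ (for a $3$-homotopy equivalence $q$ between $C$ and $C'$) produces the desired quasi-equivalences $\MCdg(C,A)\simeq\MCdg(C,A')$ and $\MCdg(C,A)\simeq\MCdg(C',A)$, the mutually quasi-inverse functors exhibiting the claimed dg equivalence. To prove the lemma, let $\psi$ be a $3$-homotopy inverse of $\phi$. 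Essential surjectivity and full faithfulness at the level of $H^0$ come from Proposition \ref{prop:MCdg}: the relations $\psi\phi\sim_3\id$ and $\phi\psi\sim_3\id$ give $H^0\MCdg(\psi)\circ H^0\MCdg(\phi)\cong\id$ and conversely, so $H^0\MCdg(\phi)$ is an equivalence of categories. For quasi-full-faithfulness, recall that for MC elements $x,y$ the hom-complex $\MCdg(B)(x,y)$ is the two-sided twist $\underline{\Hom}_B({}^{[x]}B,{}^{[y]}B)$; since ${}^{[x]}B$ is a f.g.\ twisted (hence cofibrant) module and every module is fibrant in the compactly generated structure, its cohomology computes $\Hom_{\Dcoderivedc(B)}({}^{[x]}B,{}^{[y]}B[n])$. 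By Corollary \ref{cor:3homotopy} the induction functor $\phi_*$ is an equivalence $\Dcoderivedc(B)\xrightarrow{\sim}\Dcoderivedc(B')$ sending ${}^{[x]}B$ to ${}^{[\phi x]}B'$, and the resulting isomorphisms on these graded Hom groups are precisely the maps on cohomology of hom-complexes induced by $\MCdg(\phi)$. Hence $\MCdg(\phi)$ is a quasi-isomorphism on all hom-complexes, and together with the $H^0$ statement it is a quasi-equivalence.

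The main obstacle is the quasi-full-faithfulness step in the lemma: Proposition \ref{prop:MCdg} only controls $H^0$ of the mapping complexes, whereas a quasi-equivalence requires an isomorphism in every cohomological degree. The device for bridging this gap is to reinterpret the two-sided twist $\MCdg(B)(x,y)$ as the derived mapping complex $\mathrm{RHom}_B({}^{[x]}B,{}^{[y]}B)$ in the compactly generated coderived category, and to feed in the full $3$-homotopy invariance of $\Dcoderivedc$ from Corollary \ref{cor:3homotopy} rather than only the functor-isomorphism statement of Proposition \ref{prop:MCdg}. One minor point to check carefully is that the equivalence $\phi_*$ restricts on the rank-one twisted modules to exactly the functor $\MCdg(\phi)$, so that the abstract isomorphism of Hom groups is genuinely induced by the dg functor under consideration; this is a direct computation from the definition of induction on the modules ${}^{[x]}B$.
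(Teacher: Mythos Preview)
Your argument for parts (1) and (3) is exactly the paper's: observe that $\Hom(C,-)$ and $\Hom(-,A)$ preserve $3$-homotopies via the isomorphisms $\Hom(C,A'\otimes I^3)\cong\Hom(C,A')\otimes I^3$ and $\Hom(C\otimes I_3,A)\cong\Hom(C,A)\otimes I^3$, then apply Proposition~\ref{prop:MCdg} to the convolution algebra.

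For parts (2) and (4) you do more than the paper does. The paper simply says ``Part (2) is an immediate consequence of (1)'' and ``Part (4) follows from part (3)'': from (1), if $u,v$ are mutually $3$-homotopy inverse then $\MCdg(C,u)$ and $\MCdg(C,v)$ have compositions quasi-isomorphic (in the paper's sense: isomorphic on $H^0$) to the identity, and this is the meaning of ``dg equivalent'' in the statement. You correctly observe that this, taken literally, only controls $H^0$ of the hom-complexes and not the higher cohomology needed for a quasi-equivalence in the standard sense, and you fill this in by embedding $\MCdg$ fully faithfully into $\Perfcoderiveddgc$ and invoking Corollary~\ref{cor:3homotopy} to get that induction is a triangle equivalence, hence a quasi-isomorphism on all $\Hom^n$. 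This is a genuine strengthening of the paper's argument at this point; the paper only makes this step explicit later (see the proof of Lemma~\ref{mccat1}, which uses exactly your device of passing through $\Perfcoderiveddgc$ together with Lemma~\ref{pretrqff}). Your check that induction restricted to rank-one twisted modules coincides with $\MCdg(\phi)$ is already contained in the proof of Proposition~\ref{prop:MCdg}, so no extra work is needed there.
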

\begin{proof}
	Starting with (1), we note that by Proposition \ref{prop:MCdg} it is enough to show that the functor $\Hom(C,-)$ preserves 3-homotopies. This is the case because we have a natural isomorphism $$\Hom( C,A'\otimes I^3)\cong \Hom(C,A')\otimes I^3$$and hence a 3-homotopy $A\to A'\otimes I^3$ yields a  3-homotopy $\Hom(C,A)\to\Hom(C,A')\otimes I^3$. Part (2) is an immediate consequence of (1).
The proof of (3) is similar to that of (1), since a 3-homotopy $C\otimes I_3\to C'$ yields a 3-homotopy of convolution algebras using the isomorphism $$\Hom(C\otimes I_3, A)\cong \Hom(C,A)\otimes I^3.$$Part (4) follows from part (3).
\end{proof}

\section{Lifting MC elements}\label{section:mclifting}

In this section, which is completely independent of the others, we prove that a square zero extension of curved algebras induces a fibration on $\MCdg$. We furthermore analyse the fibres before giving similar results for $\Twfg$.

\subsection{Square-zero extensions}
Recall that a morphism $\pi:A \to B$ of curved algebras is a {square zero extension} if it is surjective, and the ideal $L\coloneqq \ker(\pi)$ satisfies $L^2=0$. 
\begin{theorem}\label{squarezero1}
	Let $\pi:A \to B$ be a square zero extension of curved algebras. Then $\MCdg(\pi)$ is a fibration of dg categories.
\end{theorem}

The rest of this section will be taken up with the proof of \ref{squarezero1}. As we will see, a simple argument shows that it suffices to prove the statement for dg algebras, so we will concentrate on the dg case from now on. Recall that if $\pi:A \twoheadrightarrow B$ is a {square zero extension} of dg algebras, then the kernel $L$ of $\pi$ is a $B$-bimodule in a natural way. As graded vector spaces, we have $A\cong B\oplus L$, but this decomposition need not be multiplicative or respect the differential. For $x,y \in B$, we may write $d_A(x)=d_B(x) + \partial(x)$ and $m_A(x,y) = m_B(x,y) + \xi(x,y)$, where $\partial\in \Hom^0_\ground(B, L)$ and $\xi\in \Hom^2_\ground (B\otimes B,L)$ are $\ground$-linear maps. Note that such a pair $(\partial,\xi)$ defines a cohomological degree two element $\partial+ \xi$ in the Hochschild complex $\mathrm{Hoch}(B,L)$ which computes Hochschild cohomology. Moreover, such a pair $(\partial,\xi)$ defines a dg algebra if and only if $\partial + \xi$ is a Hochschild cochain; equivalently, if $d_B+\partial + m_B + \xi$ is a Maurer--Cartan element of $\mathrm{Hoch}(B,L)$. This in turn is equivalent to the three equations 
\begin{enumerate}
	\item $\partial d_B + d_L\partial=0$
	\item $[m_B, \xi]=0$
	\item $[m_B, \partial] + [d_B,\xi]=0$.
\end{enumerate}

We will develop some obstruction theory for lifting MC elements along square zero extensions. In order to state our lemmas, we will need some reminders on one and two-sided twistings.

Let $A$ be a dg algebra. If $a\in A$ we will use the shorthand $\tilde a\coloneqq  (-1)^{\lvert a\rvert}a$. Given $x \in \MC(A)$ we can define a {right twist} $^{[x]}A$, which is a right dg-$A$-module whose underlying graded vector space is $A$ and whose differential is given by $^{[x]}d(a)\coloneqq da + xa$. Similarly there is a {left twist} $A^{[x]}$, which is a left dg-$A$-module whose differential is $d^{[x]}(a)\coloneqq da + \tilde ax$. The {two-sided twisting} is the differential graded algebra $A^x$ whose underlying graded algebra is $A$, and whose differential is given by $d^x(a)\coloneqq da + [x,a]$. Note that $A^x \cong {}^{[x]}A \otimes_A A^{[x]}$ as dg vector spaces. It is not hard to see that the right twist $^{[x]}A$ is an $A^x\text{-}A$-bimodule, and similarly that the left twist $A^{[x]}$ is an $A\text{-}A^x$-bimodule.

If $B$ is another dg algebra and $M$ is an $A$-$B$-bimodule, then we can define the {right twist} $$^{[x]}M\coloneqq ^{[x]}A \otimes_A M\cong \Hom_A(A^{[x]},M)$$which is an $A^x$-$B$-bimodule. Similarly, if $N$ is a $B$-$A$-bimodule then it has a {left twist}$$N^{[x]}\coloneqq N \otimes_A A^{[x]}\cong \Hom_A(^{[x]}A,N)$$which is a $B$-$A^x$-bimodule. In particular if $L$ is an $A$-bimodule then it has a {two-sided twist} $$L^x\coloneqq {}^{[x]}A\otimes_A L \otimes_A A^{[x]}\cong {}^{[x]}(L^{[x]})\cong (^{[x]}L)^{[x]}$$which is an $A^x$-bimodule.

\begin{lem}\label{obst1}
	Let $L \to A \to B$ be a square zero extension, and $(\partial,\xi)$ the associated Hochschild 2-cocycle.
	\begin{enumerate}
		\item An element $x \in \MC(B)$ determines a $d_L^x$-cocycle $\nu(x)\coloneqq \partial(x)+ \xi(x,x)$ in $L$.
		\item An element $x \in \MC(B)$ lifts to $\MC(A)$ if and only if $\nu(x)$ is a $d^x_L$-coboundary.
	\end{enumerate}
	We hence think of $\nu(x)\in H^2(L^x)$ as an obstruction class which vanishes if and only if $x$ lifts to $\MC(A)$.
\end{lem}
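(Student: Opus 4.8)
The plan is to work entirely inside $A$, using the splitting $A\cong B\oplus L$ of graded vector spaces to view the given $x\in\MC(B)\subseteq B^1$ as its canonical lift $x\in A^1$. The first and key observation is that $\nu(x)$ is nothing but the curvature of this lift: computing in $A$ one finds
\[
d_A(x)+x^2=\bigl(d_B(x)+\partial(x)\bigr)+\bigl(m_B(x,x)+\xi(x,x)\bigr)=\bigl(d_B(x)+x^2\bigr)+\nu(x)=\nu(x),
\]
where the bracketed term vanishes precisely because $x$ is an MC element of $B$. This also shows $\nu(x)\in L$, since applying $\pi$ recovers the (vanishing) MC expression in $B$. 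So the whole lemma is a statement about the failure of a lift to be Maurer--Cartan, repackaged through the curvature $F(a)\coloneqq d_A(a)+a^2$.

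For part (1) I would prove the ``Bianchi identity'' $d_A(F(a))=[F(a),a]$ for any $a\in A^1$, which is purely formal: expanding $d_A(a^2)$ by the graded Leibniz rule and substituting $d_A(a)=F(a)-a^2$, the cubic terms cancel by associativity and one is left with $F(a)a-aF(a)$; since $\lvert F(a)\rvert=2$ the graded commutator carries a trivial sign, so this equals $[F(a),a]$. Specialising to $a=x$ with $\nu(x)=F(x)\in L^2$, and recalling that on the $B$-bimodule $L$ the twisted differential is $d_L^x=d_L+[x,-]$ with $d_L=d_A|_L$ and the brackets given by the bimodule actions (i.e.\ products in $A$), we get $d_L^x(\nu(x))=d_A(\nu(x))+x\nu(x)-\nu(x)x$, which is exactly $d_A(F(x))-[F(x),x]=0$. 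Hence $\nu(x)$ is a $d_L^x$-cocycle.

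For part (2) I would compute $F$ on an arbitrary lift $\tilde x=x+l$, where necessarily $l\in L^1$ since lifts of $x$ along $\pi$ are exactly $x+L^1$. Using $L^2=0$ to kill $l^2$, and reading off $xl+lx$ as the bimodule cross-terms, the computation gives
\[
d_A(x+l)+(x+l)^2=\nu(x)+\bigl(d_L(l)+xl+lx\bigr)=\nu(x)+d_L^x(l),
\]
the last equality being the definition of $d_L^x$ on a degree-one element. Therefore $\tilde x\in\MC(A)$ if and only if $\nu(x)=-d_L^x(l)$, i.e.\ a lift exists precisely when $\nu(x)$ is a $d_L^x$-coboundary. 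Combined with part (1), this realises $\nu(x)$ as a well-defined obstruction class in $H^2(L^x)$.

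The only genuine obstacle is sign bookkeeping: one must track the Koszul signs in the graded commutators $[x,-]$ on $L$ and confirm they agree with the conventions for $d_L^x$ and for the two-sided twist. The point that makes everything fit is that $l$ has odd degree $1$, so the right-action term $-\tilde l\,x$ in the twist becomes $+lx$, matching the $lx$ produced by $(x+l)^2$; and $\nu(x)$ has even degree $2$, so the commutator $[x,\nu(x)]$ picks up no extra sign. I would therefore fix these conventions explicitly at the start and let the two short displays above carry the argument, noting that both computations are really just $d_A^2=0$ together with associativity of $m_A$.
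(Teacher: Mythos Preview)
Your proof is correct. Part (2) is essentially identical to the paper's argument: both compute that the MC equation for a lift $x+l$ reduces to $\nu(x)=-d_L^x(l)$.

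For part (1) you take a genuinely different route. The paper unpacks $d_L^x(\nu(x))=0$ into the equation
\[
d_L\partial(x)+d_L\xi(x,x)+[\partial(x),x]+[\xi(x,x),x]=0
\]
and verifies it term by term using the three Hochschild cocycle identities $\partial d_B+d_L\partial=0$, $[m_B,\xi]=0$, $[m_B,\partial]+[d_B,\xi]=0$, together with the MC equation for $x$. You instead recognise $\nu(x)$ as the curvature $F(x)=d_A(x)+x^2$ of the canonical lift and invoke the Bianchi identity $d_A(F(a))=[F(a),a]$, which is a one-line consequence of $d_A^2=0$ and associativity. This is cleaner and more conceptual: it bypasses the Hochschild bookkeeping entirely and makes it transparent that the statement is really just Bianchi for the lifted element. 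The paper's computation, on the other hand, makes explicit how the individual cocycle conditions on $(\partial,\xi)$ enter, which is perhaps useful if one later wants to track which parts of the extension data are doing the work.
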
 

\begin{proof}
	We begin with the first claim. The given element $\nu(x)$ is a $d_L^x$-cocycle by definition if and only if the equation \begin{equation}\label{dxlcocyc}d_L\partial(x) + d_L \xi(x,x) + [\partial(x),x] + [\xi(x,x),x]=0\end{equation} 
	holds. First, using (2) above and the fact that $x$ is an MC element, we compute $[x,\xi(x,x)]=\xi(d_Bx,x)+ \xi(x,d_Bx)$, so \ref{dxlcocyc} becomes \begin{equation}\label{dxlcocyc1}d_L\partial(x)+[\partial(x),x]+[d_B,\xi](x,x)=0.\end{equation}
	
	Next, we compute $[m_B,\partial](x,x)= [x,\partial(x)] + \partial(d_Bx)$, using again that $x$ is an MC element. Hence, using (1) above, the equation \ref{dxlcocyc1} becomes $$[m_B,\partial](x,x) + [d_B,\xi](x,x)$$which is zero by (3) above.
	
	For the second claim, a lift of $x$ is an MC element $x+l \in \MC(A)$. The MC equation for $x+l$ is satisfied if and only if $\nu(x) = -d^x_L(l)$. So a lift provides a coboundary, and conversely if $l$ is an element satisfying the previous equation then $x+l$ is an MC element; i.e.\ $x$ admits a lift.
\end{proof}

Recall that a right $A$-module chain map $^{[x]}A \to {}^{[y]}A$ is the same as an element $f\in A$ such that $\tilde fx=df+yf$. In particular, if $f\in A^0$ is invertible we get $y=fxf^{-1} - df. f^{-1}$, which is the usual formula for right gauge equivalences. Recall that two MC elements $x,y$ are {homotopy gauge equivalent} if there exists a quadruple $(f,g,h_1,h_2)$ such that
\begin{itemize}
	\item $f\in A^0$ satisfies $fx=df+yf$, i.e.\ defines a degree zero chain map ${}^{[x]}A \to {}^{[y]}A$.
	\item $g\in A^0$ satisfies $gy=dg+xg$, i.e.\ defines a degree zero chain map ${}^{[y]}A \to {}^{[x]}A$.
	\item $h_1\in A^{-1}$ satisfies $d^xh_1=gf - 1$.
	\item $h_2\in A^{-1}$ satisfies $d^yh_2=fg - 1$.
\end{itemize}

\begin{rem}
	One can check that the assignment $l \mapsto flg$ is a chain map $L^x \to L^y$. Similarly, $l\mapsto glf$ is a chain map $L^y \to L^x$. These chain maps should be homotopy inverse via homotopies constructed using the $h_i$.
\end{rem}

Say that a square zero extension $L \to A \to B$ is {semisplit} if the decomposition $A\cong B\oplus L$ is multiplicative. This is equivalent to the morphism $A^\# \to B^\#$ of graded algebras admitting a section. A square zero extension is semisplit if and only if $\xi=0$, in which case the obstruction element for $x\in\MC(B)$ is $\nu(x)=\partial(x)$. Moreover, in a semisplit extension, $\partial$ is a derivation. Observe that if $B^\#$ is a free algebra, then any square zero extension with base $B$ is semisplit.

\begin{lem}\label{obst2}
	Let $L \to A \to B$ be a semisplit square zero extension. Suppose that $x,y\in \MC(B)$ are homotopy gauge equivalent. If $x$ admits a lift to $\MC(A)$ then so does $y$.
\end{lem}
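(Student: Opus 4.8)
The plan is to run the obstruction-theoretic argument set up in Lemma \ref{obst1}. Since the extension is semisplit we have $\xi=0$, so the obstruction to lifting an MC element $x$ (resp.\ $y$) is the class $[\nu(x)]=[\partial(x)]\in H^2(L^x)$ (resp.\ $[\nu(y)]=[\partial(y)]\in H^2(L^y)$), which vanishes precisely when the element lifts. Thus it suffices to show that the homotopy gauge equivalence transports the obstruction class of $x$ to that of $y$: concretely, that the image $f\,\nu(x)\,g$ of $\nu(x)$ under the chain map $\Phi\colon L^x\to L^y$, $l\mapsto flg$, of the preceding Remark is $d_L^y$-cohomologous to $\nu(y)$. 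Granting this, if $x$ lifts then $[\nu(x)]=0$, so $[f\,\nu(x)\,g]=\Phi_*[\nu(x)]=0$, whence $[\nu(y)]=0$ and $y$ lifts.

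The heart of the matter is therefore the single cohomological identity $[\,f\partial(x)g\,]=[\,\partial(y)\,]$ in $H^2(L^y)$, and I would prove it by producing an explicit primitive. Applying the degree-one derivation $\partial$ to the gauge relation $d_Bf=fx-yf$ and using $\partial d_B+d_L\partial=0$ together with the Leibniz rule gives $f\partial(x)=\partial(y)f-d_L\partial(f)-\partial(f)x-y\partial(f)$. Right-multiplying by $g$, then substituting $fg=1+d_B^yh_2$ and the second gauge relation $d_Bg=gy-xg$, one finds after collecting terms that $f\partial(x)g-\partial(y)-d_L^y(-\partial(f)g)=\partial(y)(fg-1)$. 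Finally, because $\nu(y)=\partial(y)$ is a $d_L^y$-cocycle (Lemma \ref{obst1}(1)), the twisted Leibniz rule yields $d_L^y(\partial(y)h_2)=\partial(y)\,d_B^yh_2=\partial(y)(fg-1)$, so altogether $f\partial(x)g-\partial(y)=d_L^y\big(-\partial(f)g+\partial(y)h_2\big)$, exhibiting the two cocycles as cohomologous.

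Tracing this through gives the lift explicitly: if $x+l\in\MC(A)$ with $\partial(x)=-d_L^x(l)$, then since $\Phi$ is a chain map $f\partial(x)g=-d_L^y(flg)$, and combining with the identity above shows $\partial(y)=-d_L^y\big(flg-\partial(f)g+\partial(y)h_2\big)$, so $y+m\in\MC(A)$ with $m=-\big(flg-\partial(f)g+\partial(y)h_2\big)$ is the desired lift of $y$. I expect the only real difficulty to be bookkeeping: keeping the graded signs in the twisted differentials $d_L^x$, $d_L^y$, $d_B^y$ consistent, verifying the twisted Leibniz rule $d_L^y(lb)=d_L^y(l)b+(-1)^{|l|}l\,d_B^y(b)$ used in the last step, and checking that the cross-terms cancel exactly using \emph{both} gauge relations rather than just one. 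Note that the homotopy $h_1$ and the relation $d_B^xh_1=gf-1$ are not needed for this direction; only $g$, $\partial(f)$ and $h_2$ enter the primitive.
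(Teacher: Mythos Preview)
Your argument is correct and is essentially the same as the paper's: you produce exactly the same explicit primitive, and the paper's lift is $y+l'$ with $l'=flg-\partial(f)g+\partial(y)h_2$. The only slip is the sign on your final line: from your own identity $\partial(y)=-d_L^y\big(flg-\partial(f)g+\partial(y)h_2\big)$ together with Lemma~\ref{obst1}(2) one gets $m=flg-\partial(f)g+\partial(y)h_2$, not its negative.
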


\begin{proof}
	Choose a homotopy gauge equivalence $(f,g,h_1,h_2)$ between $x$ and $y$. Because the extension was semisplit, $\partial$ is a derivation, and using this one can check that $d_L^y(\partial(f)g - flg) = \partial(y)fg$. Moreover, using that $\partial$ is a derivation and that $y$ is an MC element, one can check that $d^y_L(\partial(y)h_2) = \partial(y)fg - \partial(y)$. In particular, putting $l'\coloneqq flg - \partial(f)g + \partial(y)h_2$ we hence have $-d^y(l') = \partial(y) = \nu(y)$. Hence by \ref{obst1}(2),  $y+l'$ is a lift of $y$.
\end{proof}

\begin{proof}[Proof of \ref{squarezero1}]
	 Let $L \to A \xrightarrow{\pi} B$ be a square zero extension of curved algebras. We wish to show that $\MCdg(\pi)$ is a fibration of dg categories. If $A$ has no MC elements, then $\MCdg(\pi)$ is a fibration, since any functor whose source is the empty category is an isofibration. So we may assume that $A$ has an MC element $a$; hence $B$ has an MC element $\pi(a)$. Twisting $A$ and $B$ by these MC elements respectively, we may assume that both $A$ and $B$ are dg algebras and that $\pi$ is a morphism of dg algebras.
     
     Obviously $\pi$ is surjective on hom-complexes, so we need to check the following: suppose we are given $a\in \MC(A)$, $b\in \MC(B)$, and a homotopy gauge equivalence $\pi(a)\simeq b$. Then there exists $\tilde b \in \MC(A)$ and a homotopy gauge equivalence $a\simeq \tilde b$ lifting the given one. To show this, we will reduce to the universal example.
	
	Let $W$ be the universal dg algebra containing two MC elements $x,y$ and a homotopy gauge equivalence between then, so that $W^\#$ is simply the algebra $\ground\langle x, y,f,g,h_1,h_2\rangle$. A homotopy gauge equivalence $\pi(a)\simeq b$ gives a map $W \to B$; let $W'$ be the pullback of this map along $A \to B$. Let $g:W' \to A$ be the natural map. It follows that $W'\to W$ is a square zero extension, with kernel $L$. Since the underlying graded algebra of $W$ is free, this extension is semisplit.
	
	 If $u,v$ are two MC elements of an algebra $R$, then as in \cite[Lemma 5.3]{CHL21} they are homotopy gauge equivalent if and only if there exists $h\in \MC(R\otimes I^3)$ evaluating to $u$ and $v$ respectively along the induced maps $ \MC(R\otimes I^3) \to  \MC(R)$. In particular the algebra $W\otimes I^3$ admits an MC element $H$ which is a `universal homotopy gauge equivalence' from $x$ to $y$.
	
	Let $\kappa\coloneqq \ground\langle x; dx+x^2=0\rangle$ be the universal dg algebra admitting an MC element. There is an obvious map $\kappa \to W$ which by assumption admits a lift $\kappa \to W'$ (which corresponds to an MC element $w\in W'$ with $g(w)=a$). Tensoring with $I^3$ hence gives an MC element $X\coloneqq x\otimes 1 \in \MC(W\otimes I^3)$ which admits a lift to an MC element of $W'\otimes I^3$.
	
	Suppose that the MC element $H \in W\otimes I^3$ admits a lift to an MC element  $H' \in W'\otimes I^3$. Then $g(H')$ would be the desired homotopy gauge equivalence $a\simeq \tilde b$ lifting the given one. By \ref{obst2}, it suffices to show that $H$ is homotopy gauge equivalent to $X$, since we know that $X$ admits a lift.
	
	To see this, first observe that the obvious map $\kappa \to W$ is a 3-homotopy equivalence. Indeed, the natural map $\ground \to I_3$ is an MC equivalence of dg coalgebras, and hence $\Omega(\ground) \to \Omega(I_3)$ is a 3-homotopy equivalence of dg algebras. But this latter map is exactly the natural inclusion $\kappa \to W$.
	
	In particular, this 3-homotopy equivalence is witnessed by a map $T:W \to W\otimes I^3$ such that $T_0 = \id_W$ and $T_1$ is the composite $W \to \kappa \to W$. Let $T'$ denote the composite map $$T':W\otimes I^3 \xrightarrow{T\otimes \id}W\otimes I^3 \otimes I^3 \xrightarrow{\id\otimes\sigma}W\otimes I^3 \otimes I^3$$where $\sigma$ is the flip automorphism of $I^3 \otimes I^3$ interchanging the factors (which one can easily check is an algebra automorphism).
	
	By construction, if $Y$ is any MC element of $W\otimes I^3$, then $T'(Y)$ is an MC element of $W\otimes I^3 \otimes I^3$ which yields a homotopy gauge equivalence from $Y$ to $(T_1\otimes \id)(Y)$.
	
	In particular, taking $Y=H$ yields a homotopy gauge equivalence between $H$ and $(T_1\otimes \id)(H)$. But it is easy to check that $(T_1\otimes \id)(H)=X$. Hence $H$ and $X$ are homotopy gauge equivalent, and so $H$ lifts to an MC element of $W'\otimes I^3$, which give the desired lift of the homotopy gauge equivalence $\pi(a)\simeq b$.	
\end{proof}

\begin{rem}
	Given a homotopy gauge equivalence $\pi(a)\simeq b$, it is desirable to write down explicitly a homotopy gauge equivalence $a\simeq \tilde b$ lifting it. When the extension is semisplit, \ref{obst2} gives a formula for $\tilde b$. One could probably use this, together with the explicit characterisation of the universal homotopy gauge equivalence $H$ from the proof of \ref{squarezero1}, to write down an explicit gauge equivalence $a\simeq \tilde b$. We refrain from doing so here. It is less obvious how to proceed when the extension is not semisplit, and this poses an interesting problem. One may be able to adapt the methods of Braun's thesis \cite{BraunThesis}, which works in the setting of curved Lie algebras, to this case.
\end{rem}

If $\pi:A \to B$ is a square zero extension of dg algebras, it will be useful for us to have an explicit description of the fibres of the fibration $\pi_*$. By twisting we may reduce to the fibre above $0\in \MC(B)$, the computation of which is the content of the following lemma:
\begin{lem}\label{fibrelem}
    Let $\pi:A \to B$ be a square zero extension of dg algebras with fibre $L$. Let $\pi_*^{-1}(0)\into \MCdg(A)$ be the fibre of $\pi_*$ above $0\in \MCdg(B)$. Then:
    \begin{enumerate}
        \item The objects of $\pi_*^{-1}(0)$ are in bijection with $Z^1(L)$.
        \item The complex of morphisms $\pi_*^{-1}(0)(l,m)$ between two such elements is $\ground\oplus L$ equipped with the differential $(\lambda,x)\mapsto (0,dx+\lambda(m-l))$. In particular we have $\pi_*^{-1}(0)(l,l)\cong \ground\oplus L$.
        \item $H^0(\pi^{-1}(0))$ is a groupoid whose isoclasses of objects are in bijection with $H^1(L)$.
    \end{enumerate}
\end{lem}
\begin{proof}
Claim (1) follows easily from the proof of \ref{obst1}. For claim (2), we first compute the morphism complex $\MCdg(A)(l,m)$ between two $l,m \in Z^1(L)$ to be $A^\sharp=B^\sharp\oplus L^\sharp$ equipped with the upper-triangular differential $\begin{psmallmatrix} d_B & \partial + \phi_{l,m} \\ 0 & d_L\end{psmallmatrix}$, where $\phi_{l,m}$ sends $a$ to $ma-\tilde a l$ and we recall that by definition $\partial(b) = d_A(b)-d_B(b)\in L$. Taking the fibre product yields the desired description. For claim (3), first observe that a map of the form $1+x$, with $x\in L^0$, defines an isomorphism in $H^0(\pi^{-1}(0))$ from $l$ to $l+dx$, with inverse given by $1-x$. On the other hand, by (2), the $0$-cycles in the mapping complex $\pi_*^{-1}(0)(l,m)$ are given by the pairs $(\lambda, x) \in \ground\oplus L^0$ such that $dx=\lambda(m-l)$, so that if $l$ and $m$ are not cohomologous in $L$ then there is no morphism between them in $H^0(\pi^{-1}(0))$. It follows that $H^0(\pi^{-1}(0))$ is a groupoid, with isoclasses of objects given by $Z^1(L)/B^1(L) = H^1(L)$, as desired.
\end{proof}
\begin{rem}
Since the differential in the hom-spaces $\MCdg(A)(l,m)$ is upper-triangular, any isomorphism $f:l\to m$ in $H^0(\MCdg(A))$ can be written in the form $f=b(1+x)$, where $b=\pi_*(f)$ is an automorphism of $0 \in H^0\MCdg(B)$ and $1+x$ with $x \in L$ is an isomorphism that lifts $\id_0$. If $R$ is a dg algebra then the homotopy automorphisms of an MC element $r\in R$ are the same thing as the homotopy gauge equivalences from $r$ to itself. One can now easily observe that there is a natural group isomorphism $\mathrm{Aut}_{H^0\MCdg(B)}(0)\cong H^0(B)^\times$. It now follows from the above that we have a bijection $$\frac{\{x\in H^0\MCdg(A)\text{ such that }\pi(x)=0 \}}{\text{isomorphism}}\quad\cong\quad H^1(L)/H^0(B)^\times.$$
\end{rem}

\subsection{Twisted modules}
Our goal in this subsection is to prove the following result, linking properties of $\MCdg$ to properties of the larger category $\Twfg$:

\begin{prop}\label{MCtoTWprop}Let $f:A \to B$ be a map of curved algebras. Consider the following statements:
\begin{enumerate}
    \item For every finite dimensional vector space $V$, the induced functor $$f^V_*: \MCdg(A\otimes \mathrm{End}(V)) \ \to\  \MCdg(B\otimes \mathrm{End}(V))$$is a quasi-equivalence.
    \item The induced functor $f_*: \Twfg(A) \to \Twfg(B)$ is a quasi-equivalence.
    \item There exists a map $g:B\to A$ such that the pair of functors $f_*:\Twfg(A)\to \Twfg(B)$ and $g_*:\Twfg(B)\to \Twfg(A)$ are mutually inverse quasi-equivalences.
    \item For every finite dimensional vector space $V$, the induced functor $$f^V_*: \MCdg(A\otimes \mathrm{End}(V)) \ \to\  \MCdg(B\otimes \mathrm{End}(V))$$is a fibration.
    \item The functor $f_*: \Twfg(A) \to \Twfg(B)$ is a fibration.
    \end{enumerate}
    Then there are implications $(3)\implies(1)\implies(2)$ and $(4)\iff (5)$.

\end{prop}
\begin{proof}
Suppose first that (3) holds. First observe that each $\MCdg(A\otimes \mathrm{End}(V))$ sits inside $\Twfg(A)$ as a full dg subcategory: if $x,y$ are two MC elements of $A\otimes \mathrm{End}(V)$ then we have natural isomorphisms $\MCdg(A)(x,y)\cong {}^{[y]}[A\otimes \mathrm{End}(V)]^{[x]} \cong \Twfg(A)([A\otimes V]^{[x]},[A\otimes V]^{[y]})$. Moreover, these inclusions are compatible with the maps $f^V_*$, and it now follows that the $f^V_*$ are quasi-fully faithful. To see that each $f_*^V$ is quasi-essentially surjective, simply observe that $f^V_*g_*(x)=f_*g_*(x)\simeq x$. Hence (1) holds.

Assuming now that (1) holds, observe that, as $V$ varies, every finitely generated twisted $B$-module is in the image of some $f_*^V$. Hence $f_*$ is quasi-essentially surjective. To see that it is in addition quasi-fully faithful, let $M,N$ be finitely generated twisted $A$-modules corresponding to MC elements $u$ of $A\otimes \End(U)$ and $v$ of $A\otimes \End(V)$ respectively. We then have an isomorphism $\Twfg(A)(M,N)\cong {}^{[v]}[A\otimes \Hom(U,V)]{}^{[u]}$. Putting $W=U\oplus V$, we obtain MC elements $u'=\begin{psmallmatrix} u & 0 \\ 0 & 0\end{psmallmatrix}$ and $v'=\begin{psmallmatrix} 0 & 0 \\ 0 & v\end{psmallmatrix}$ of $A\otimes \End(W)$. Since morphisms between $u'$ and $v'$ decompose into block matrices, it follows that $\Twfg(A)(M,N)$ is an iterated retract of $\MCdg(A\otimes \End(W))(u',v') \simeq \MCdg(B\otimes \End(W))(f_*u',f_*v')$. But by the same argument, $\Twfg(B)(f_*M,f_*N)$ is the same iterated retract of $\MCdg(B\otimes \End(W))(f_*u',f_*v')$. It now follows that $f_*$ is quasi-fully faithful, and hence $(2)$ holds as desired.

For the remaining equivalence, if $f_*$ is a fibration, then so is its pullback along the inclusion $\MCdg(B\otimes \End(V)) \into \Twfg(B)$ for any $V$. But this pullback is precisely $f_*^V$, and hence $(5)$ implies $(4)$. To see that the converse holds, proceed as above for the isomorphism lifting, and use that surjectivity is preserved under retracts.
\end{proof}

\begin{cor}\label{SquareZeroFibBest}
   Let $\pi:A \to B$ be a square zero extension of curved algebras. Then $\Twfg(\pi)$ is a fibration of dg categories.
\end{cor}
\begin{proof}
If $V$ is a finite dimensional vector space, then $A\otimes \mathrm{End}(V) \to B \otimes \mathrm{End}(V)$ is a square zero extension. Hence by \ref{squarezero1}, it induces a fibration on $\MCdg$. Hence by \ref{MCtoTWprop}, we are done.
\end{proof}

\section{Structure theory for curved coalgebras}\label{section:structure}

In this section, our aim is to give some structure theory for curved coalgebras. More specifically, we show in \ref{CogStructThm} that an arbitrary injection of curved coalgebras is a relative cell complex for maps of the following form:
\begin{enumerate}
    \item Cosquare zero extensions of finite dimensional curved coalgebras.
    \item Injections between finite dimensional curved cosemisimple coalgebras.
\end{enumerate}
In the next subsections, we explain the meaning of the above terms and give proofs. Before we begin, we recall the definition of a relative cell complex.

If $\alpha$ is an ordinal, an $\alpha$-sequence in a category $\mathcal{C}$ is a cocontinuous functor $X:\alpha \to \mathcal{C}$, i.e.\ a collection of elements $X_\beta$ for every $\beta\in\alpha$, together with successor maps $X_\beta\to X_{\beta+1}$, and such that if $\beta=\varinjlim_{\gamma\in\beta}$ is a limit ordinal there is a natural isomorphism $X_\beta\cong \varinjlim_{\gamma\in\beta}X_\gamma$. The transfinite composition of an $\alpha$-sequence $X$ is the natural morphism $X_0 \to \varinjlim_{\beta\in\alpha}X_\beta$. 

Recall that if $\mathcal{K}$ is a class of morphisms in a category, then a {$\mathcal{K}$-relative cell complex} is a transfinite composition of the form $X_0 \to \varinjlim_{\alpha\in\lambda} X_\alpha$ for some ordinal $\lambda$ and $\lambda$-sequence $X_\alpha$, where each $X_{\alpha+1}$ is a pushout of a span of the form $X_\alpha \leftarrow Y_\alpha \xrightarrow{f_\alpha}  Y_\alpha' $, where each $f_\alpha$ is a coproduct of morphisms from $\mathcal{K}$. We denote the class of all $\mathcal{K}$-relative cell complexes by $\mathrm{Cell}(\mathcal{K})$.
\subsection{Conilpotent extensions}

A {conilpotent extension} of curved coalgebras is an injective morphism $i:C' \into C$ such that the quotient $C/C'$ is conilpotent. 

\begin{prop}\label{injgen}
    Let $i$ be an injection of curved coalgebras. Then $i$ is a relative cell complex for injections of finite dimensional curved coalgebras. If $i$ is a conilpotent extension, then it is a relative cell complex for conilpotent extensions of finite dimensional curved coalgebras.
\end{prop}

\begin{proof}
	Let $i:C' \into C$ be an injection. Pick $x\in C\setminus C'$ and let $D\subseteq C$ be a finite dimensional curved coalgebra containing $x$. Consider the pushout $C'\sqcup_{C\cap D}D$, which is a subcoalgebra of $C$ containing $x$. The map $C'\cap D \into D$ is clearly an injection of finite dimensional coalgebras. Continuing this process transfinitely gives the required cell decomposition in the first case. The second case is the same; one only needs to observe that the extension $C'\cap D \into D$ is in addition a conilpotent extension.
\end{proof}

An extension of curved coalgebras  $i:C' \into C$ is {cosquare zero} if the reduced comultiplication $\bar\Delta$ on the quotient $C/'C$ satisfies $\bar\Delta^2=0$. Clearly a cosquare zero extension is conilpotent. Note that $C' \into C$ is cosquare zero if and only if the extension $C^* \to C'^*$ of curved pseudocompact algebras is square zero.
\begin{lem}\label{cnlem2}
	A conilpotent extension between finite dimensional curved coalgebras factors as a finite composition of cosquare zero extensions.
\end{lem}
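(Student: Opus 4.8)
The plan is to dualise and run the adic filtration by powers of the kernel ideal. First I would pass to pseudocompact curved algebras: the conilpotent extension $i\colon C'\into C$ of finite dimensional curved coalgebras dualises to a surjection $p\colon A\to A'$ of finite dimensional curved algebras, where $A=C^*$ and $A'=C'^*$. I would arrange that $p$ is uncurved: the inclusion of a sub-curved-coalgebra dualises to an uncurved quotient map, and in any case a general morphism decomposes as an isomorphism composed with an uncurved one, with isomorphisms being (trivially) cosquare zero. By the dictionary recorded just before the lemma---cosquare zero extensions of coalgebras correspond to square zero extensions of their dual algebras---it then suffices to factor $p$ as a finite composite of square zero extensions of finite dimensional curved algebras. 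Conilpotence of $C/C'$ translates into nilpotence of the kernel $I\coloneqq\ker p$, so that $I^N=0$ for some $N\geq 1$ since $A$ is finite dimensional.

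Next I would build the $I$-adic tower. Each power $I^k$ is a two-sided graded ideal, and since $p$ is uncurved we have $d(I)\subseteq I$; the Leibniz rule then gives $d(I^k)\subseteq I^k$, so each $I^k$ is a differential ideal. The curvature $h_A$ descends to each quotient $A/I^k$, making it a curved algebra, and the canonical surjections are uncurved morphisms of curved algebras preserving curvature. This produces the tower
\[
A = A/I^N \twoheadrightarrow A/I^{N-1} \twoheadrightarrow \cdots \twoheadrightarrow A/I^2 \twoheadrightarrow A/I = A',
\]
whose composite is exactly $p$, and all of whose terms are finite dimensional.

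Finally I would check that each step is square zero. The map $A/I^{k+1}\to A/I^k$ has kernel $I^k/I^{k+1}$, and the square of this kernel inside $A/I^{k+1}$ is the image of $I^{2k}$, which vanishes because $2k\geq k+1$ for $k\geq 1$. Hence every step is a square zero extension of finite dimensional curved algebras; dualising the whole tower back recovers the desired factorisation of $i\colon C'\into C$ into finitely many cosquare zero extensions.

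The hard part is not the combinatorics of the filtration but the bookkeeping needed to make the dualisation legitimate: one must confirm that the morphism may be taken uncurved, that each power $I^k$ is genuinely closed under the differential so that $A/I^k$ is a bona fide curved algebra with the induced curvature, and that the translation between conilpotence of the coalgebra quotient $C/C'$ and nilpotence of the dual ideal $I$ is correct. None of these is deep, but all must be verified for the argument to go through.
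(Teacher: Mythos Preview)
Your proposal is correct and follows essentially the same approach as the paper's proof: dualise to a nilpotent extension of finite dimensional curved algebras, take the $I$-adic tower $A=A/I^N\twoheadrightarrow\cdots\twoheadrightarrow A/I=A'$, observe each step is square zero, and dualise back. You are in fact a bit more careful than the paper in spelling out why each $I^k$ is closed under the differential and why the morphism may be taken uncurved.
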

\begin{proof}
	We reduce to the case of algebras, where this is a standard argument. Let $C'\into C$ be a conilpotent extension. Put $A\coloneqq C^*$ and $A'\coloneqq C'^*$. This yields a nilpotent extension $A \twoheadrightarrow A'$ of finite dimensional curved algebras; let $I$ be its kernel, which is a nilpotent curved ideal of $A$. The powers of $I$ yield a filtration $\cdots\into I^3 \into I^2 \into I^1=I$ of $I$ by ideals. Since $I$ is a nilpotent ideal in a finite dimensional algebra, we must have $I^N=0$ for some $N$. Consider the resulting tower of extensions $$A=A/I^N  \twoheadrightarrow A/I^{N-1} \twoheadrightarrow \cdots  \twoheadrightarrow A/I^2  \twoheadrightarrow A/I^1=A'$$which factorises $A \twoheadrightarrow A'$. It is easy to see that each extension in the tower is square zero, and hence dualising the tower gives the desired factorisation of $C'\into C$.
\end{proof}
\begin{rem}
	One could instead argue directly using the coradical filtration on $C$.
\end{rem}
\begin{cor}
    A conilpotent extension of curved coalgebras is a relative cell complex for cosquare zero extensions of finite dimensional curved coalgebras.
\end{cor}

\subsection{Curved semisimple algebras}

We develop some structure theory for finite dimensional curved algebras, in particular a notion of curved semisimplicity. Related results were obtained by Orlov in the dg setting \cite{Orlov19}, and we use the same terminology of internal and external radicals.

The following lemma is useful and we will implicitly make use of it several times. Recall that a graded algebra is graded simple if it has no nontrivial graded ideals.
\begin{lem}
    Let $A$ be a finite dimensional graded algebra. Then $A$ is graded simple if and only if $A$ is a simple algebra equipped with a grading.
\end{lem}
\begin{proof}
    Certainly if $A$ is simple then it is graded simple. For the converse, note that $Z(A)$ is a finite graded field extension of $k$ and hence itself a field. The result now follows from a theorem of Jespers \cite{jespers}.
\end{proof}

Let $A$ be a finite dimensional curved algebra and let $J$ be the radical of $A^\#$. Recall that the maximal semisimple quotient of $A^\#$ is the semisimple graded algebra $A^\#/J$. Define the internal curved radical to be $J_-\coloneqq \{x\in J: dx\in J\}$. It is easy to see that $J_-$ is closed under $d$ and is hence a curved ideal. Since $A$ was finite dimensional it is a nilpotent ideal. The quotient $B\coloneqq A/J_-$ is a finite dimensional curved algebra, and the surjection $A^\#\to B^\#$ of finite dimensional graded algebras induces an isomorphism between their maximal semisimple quotients.

\begin{prop}\label{cssdef}
	Let $B$ be a finite dimensional curved algebra. The following are equivalent:
	\begin{enumerate}
		\item The internal curved radical of $B$ vanishes.
		\item $B$ is a quotient $A/J_-$ with $A$ finite dimensional and $J_-$ its internal curved radical.
	\end{enumerate}
	If either of these holds, we say that $B$ is curved semisimple.
\end{prop}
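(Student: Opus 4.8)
The plan is to prove the two implications separately; $(1)\Rightarrow(2)$ is formal while $(2)\Rightarrow(1)$ requires only a short computation with radicals. For $(1)\Rightarrow(2)$ I would simply take $A\coloneqq B$: since $B$ is finite dimensional and its internal curved radical is assumed to vanish, we have $B=B/0=A/J_-$, which exhibits $B$ in the required form.

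For $(2)\Rightarrow(1)$, suppose $B=A/J_-$, where $A$ is a finite dimensional curved algebra with radical $J\coloneqq\mathrm{rad}(A^\#)$ and internal curved radical $J_-=\{x\in J:dx\in J\}$. The first step is to identify the radical of $B^\#$. As noted in the surrounding text, $J_-$ is a two-sided ideal of $A^\#$ closed under $d$ (closure under $d$ uses $d^2=[h,-]$ together with the fact that $J$ is an ideal), and it is nilpotent because it sits inside the nilpotent ideal $J$; consequently $d_B$ is the differential induced by $d_A$. Since $J_-\subseteq J$ is a nilpotent ideal, standard ring theory identifies the radical of the quotient $B^\#=A^\#/J_-$ with $J/J_-$.

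The second step is the computation. Let $y$ lie in the internal curved radical of $B$, so that $y\in J/J_-$ and $d_B y\in J/J_-$. Lift $y$ to some $x\in J$. Because the preimage of $J/J_-$ under $A^\#\to B^\#$ is exactly $J$ (as $J_-\subseteq J$), and $d_B$ is induced by $d_A$, the condition $d_B y\in J/J_-$ says precisely that $d_A x\in J$. Thus $x\in J$ with $d_A x\in J$, i.e.\ $x\in J_-$ by definition, so $y$ is the image of an element of $J_-$ and hence $y=0$. Therefore the internal curved radical of $B$ vanishes.

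I do not anticipate a genuine obstacle: all of the content lies in the interaction between the internal curved radical and the quotient map $A\to A/J_-$. The only points needing care are the verification that $J_-$ is a $d$-closed two-sided ideal (already asserted in the text) and the standard fact that quotienting a finite dimensional algebra by a nilpotent ideal contained in its radical leaves the image of the radical as the new radical. Organising these two observations in the right order is essentially the whole proof.
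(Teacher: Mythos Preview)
Your proof is correct and follows essentially the same approach as the paper's. The paper's argument is simply a terser version of yours: it takes $A=B$ for $(1)\Rightarrow(2)$, and for $(2)\Rightarrow(1)$ observes that an element of the internal curved radical of $B$ is represented by some $j\in J$ with $dj\in J$, hence $j\in J_-$ and the element vanishes --- exactly your computation, with the identification $\mathrm{rad}(B^\#)=J/J_-$ left implicit (it is noted just before the proposition that $A^\#\to B^\#$ induces an isomorphism on maximal semisimple quotients).
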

\begin{proof}
	Clearly $(1)$ implies $(2)$ since we can take $A=B$. For the converse, take $A$ finite dimensional, with radical $J$, and let $B\coloneqq A/J_-$. Let $J'_-$ be the internal curved radical of $B$. An element $x\in J'_-$ is represented by an element $j\in J$ with $dj\in J$. But then $j\in J_-$ and hence $x=0$, and so $J'_-$ vanishes.
\end{proof}

Recall that we chose $\ground$ to be a perfect field; the following is where we use that hypothesis.

\begin{prop}\label{structuretheoremCSS}
	Let $B$ be a finite dimensional curved semisimple algebra. Then $B$ is curved isomorphic to a finite product of algebras of the following two types, which we call curved simple:
	\begin{enumerate}
		\item A dg algebra $R$ whose underlying graded algebra is simple.
		\item A dg algebra $R$ of the form $S\otimes K$, where $S$ is a simple graded algebra with zero differential and $K$ is the acyclic dg algebra $K={\ground[x]}/{x^2}$ with $x$ in cohomological degree $-1$ and $dx=1$.
	\end{enumerate}
\end{prop}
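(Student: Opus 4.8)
The plan is to describe the underlying graded algebra $B^\#$ very explicitly as a square-zero extension of its maximal graded semisimple quotient $\bar B \coloneqq B^\#/J$ (where $J$ is the radical), and then to split $B$ into curved simple factors using central idempotents. Write $\delta\colon J \to \bar B$ for the composite $J \hookrightarrow B^\# \xrightarrow{d} B^\# \twoheadrightarrow \bar B$; by the definition of the internal curved radical, $B$ is curved semisimple precisely when $\delta$ is injective. The first step is to prove $J^2=0$: the Leibniz rule shows that for $a,b\in J$ both $(da)b$ and $a(db)$ already lie in $J$ (since $J$ is an ideal and $J\cdot J=J^2$), so $\delta$ vanishes on $J^2$, and injectivity of $\delta$ forces $J^2=0$. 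Thus $B^\#$ is a square-zero extension of the graded semisimple algebra $\bar B$ by the $\bar B$-bimodule $J$.

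Next, using that $\ground$ is perfect (hence $\bar B$ is separable), I would invoke the graded Wedderburn--Malcev theorem to realise $B^\#=\bar B\oplus J$ with $\bar B$ a graded subalgebra. With respect to this splitting the degree-one map $d$ decomposes into $\alpha\colon\bar B\to\bar B$, $\beta\colon\bar B\to J$, the above $\delta\colon J\to\bar B$, and $\partial\colon J\to J$; the Leibniz rule shows $\alpha,\beta$ are derivations and $\delta$ is a map of graded $\bar B$-bimodules. Writing $\bar B=\prod_i\bar S_i$ as a product of graded simple algebras, each $\bar S_i$ is simple and they are pairwise non-isomorphic as $\bar B$-bimodules (here separability is used so that each $\bar S_i\otimes\bar S_i^{\mathrm{op}}$ is semisimple), so the image of $\delta$ is a subproduct $\bar B_2\coloneqq\prod_{i\in T}\bar S_i$ and $\delta$ restricts to a bimodule isomorphism $J\cong\bar B_2$ up to a degree shift; set $\bar B_1\coloneqq\prod_{i\notin T}\bar S_i$. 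I would then check that each primitive central idempotent $\bar e_i$ of $\bar B$ is central in $B^\#$ (under $J\cong\bar B_2$ it acts on $J$ as zero or as the identity, hence commutes with $J$) and $d$-closed (derivations kill central idempotents, so $\alpha(\bar e_i)=0$, and a short computation gives $\beta(\bar e_i)=0$). Since a central $d$-closed degree-zero idempotent yields a product decomposition of curved algebras, this gives $B\cong\prod_i\bar e_iB$, where the factors with $i\notin T$ have simple underlying graded algebra and vanishing radical, and those with $i\in T$ have underlying algebra $\bar S_i\oplus J_i$ with $\delta\colon J_i\xrightarrow{\sim}\bar S_i$.

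For a factor with $i\in T$ I would produce the type-(2) form directly. The element $\xi_i\coloneqq\delta^{-1}(1)\in J_i$ has degree $-1$ and satisfies $a\xi_i=\xi_i a$ for all $a\in\bar S_i$ (by injectivity of the bimodule map $\delta$) and $\xi_i^2=0$; hence $\bar S_i\oplus J_i\cong\bar S_i\otimes K^\#$ as graded algebras. The crucial feature is that $\delta^{-1}$ lets a differential hit \emph{any} central degree-two element of $\bar S_i$ (via $d\xi$-type terms), so there is enough room to twist by a degree-one element, simultaneously killing the curvature and flattening the differential on $\bar S_i$, thereby identifying the factor with $S_i\otimes K$ for $S_i=\bar S_i$ equipped with the zero differential.

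The main obstacle is the factors with $i\notin T$. Here one must show that a curved algebra whose underlying graded algebra is simple is curved isomorphic to a dg algebra, i.e.\ that it admits a Maurer--Cartan element. Because $\bar S_i$ is separable its degree-one derivation is inner, $d=[\theta,-]$, and twisting by $-\theta$ annihilates the differential while replacing the curvature by the central element $-c\coloneqq\theta^2-h_i$; removing it amounts to writing $c$ as the square of a degree-one element of $\bar S_i$. This square-root problem is exactly where the structure of finite-dimensional graded division algebras over the perfect field $\ground$ must be controlled, and I expect it to be the delicate step — note that it is automatic when the simple factors are concentrated in degree zero (so that $\bar S_i^2=0$ and the curvature vanishes outright), which is the situation arising from curved coradicals of coalgebras, but the unrestricted graded case is precisely where the argument needs the most care.
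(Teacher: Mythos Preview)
Your overall strategy is sound and closely parallels the paper's, though organised a bit differently: the paper first splits $B$ as a product $C\times J_+$ (where $J_+=J+dJ$ is the \emph{external} curved radical and $C=B/J_+$ has $C^\#$ semisimple), and then analyses the two factors separately, whereas you go straight to central idempotents of $\bar B$. Both routes work; yours is arguably cleaner on the bookkeeping side, though you have a minor sign slip in the type-(2) analysis: from $\delta(aj)=(-1)^{|a|}a\,\delta(j)$ and $\delta(ja)=\delta(j)\,a$ one gets $a\xi_i=(-1)^{|a|}\xi_i a$, not $a\xi_i=\xi_i a$; this is exactly the Koszul sign in $S\otimes K$, so the identification still goes through.

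The genuine gap is your type-(1) case. You correctly reduce to a curved algebra with underlying graded algebra simple, twist by $-\theta$ where $d=[\theta,-]$ is inner, and arrive at zero differential with central curvature $c=h_i-\theta^2$ of degree $2$. But there is no square-root problem to solve: the centre of a finite-dimensional graded simple $\ground$-algebra is concentrated in degree $0$. Indeed, the centre is a finite-dimensional graded field over $\ground$; if it had a nonzero homogeneous element $z$ of degree $n\neq 0$, then $z$ would be invertible and the powers $z^k$ would lie in infinitely many distinct degrees, contradicting finite-dimensionality. Hence $c=0$ automatically, $-\theta$ is already an MC element, and the factor is curved isomorphic to a dg algebra with simple underlying graded algebra. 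This is exactly how the paper dispatches the type-(1) factors in one line, and it removes the step you flagged as delicate.
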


\begin{proof}
	
	Let $B$ be a curved semisimple algebra and let $J$ be the radical of $B^\#$. The external curved radical of $B$ is $J_+\coloneqq J+dJ$. Since $d^2J=[h,J]\subseteq J$, this is a curved ideal. Note that $J\cap dJ=dJ_-$, so the sum is direct. Moreover, since the kernel of $d$ on $J$ is a subspace of $J_-$, the map $d:J \to dJ$ must be an isomorphism. In particular if $B$ is a dg algebra then $J_+$ is acyclic.

	Let $C\coloneqq B/J_+$ be the quotient, which is a curved semisimple algebra with the additional property that $C^\#$ is semisimple. Note that the maximal semisimple quotient of $C^\#$ may be smaller than that of $B^\#$. Since $\ground$ is perfect, the Wedderburn--Malcev theorem gives a linear splitting $B^\#\cong J^\#\oplus (B/J)^\#$, where the second summand is a graded subalgebra. Since $C^\#$ is a subalgebra of $(B/J)^\#$, it follows that $(dJ)^\#$ is a semisimple subalgebra of $B^\sharp$, disjoint from $C^\#$, and moreover we have a linear splitting $B^\#\cong C^\#\oplus(J_+)^\#$. Both summands are graded subalgebras. Note that $C^\#$ and $(dJ)^\#$ are orthogonal, since their sum is a semisimple algebra. Hence if $c\in C$ and $j\in J$, we have $d(cj)=d(c)j$ by the Leibniz rule. The left hand side of this equality is in $dJ$ and the right hand side is in $J$, so both are zero. Since $d$ is an injection on $J$, it follows that $cj=0$. Hence we have a splitting of graded algebras $B^\#\cong C^\#\oplus(J_+)^\#$.
	
	If $h$ is the curvature element of $B$, write $h=h_C+h_{J}$ with $h_C\in C$ and $h_J\in J_+$. Since $C$ and $J_+$ are orthogonal, we have $[h,c]=[h_C,c]$ for all $c\in C$ and similarly for $J_+$. The subalgebra $J_+$ is certainly closed under $d$, and the curvature element $h_J$ makes it into a curved algebra. The quotient map $B \to J_+$ is a morphism of curved algebras.
	
	Since $C$ is separable, it is projective as a $C$-bimodule. Hence if $M$ is a $C$-bimodule then $HH^1(C,M)$ vanishes, and in particular all derivations $C \to M$ are inner (i.e.\ of the form $[m,-]$ for some $m \in M$). In particular the derivation $d:C \to B$ is inner and hence given by $dc=[b,c]$ for some $b\in B^1$. Since $C$ is orthogonal to $J_+$ we may as well take $b\in C$. It is then clear that $C$ is closed under the differential, and $h_C$ makes it into a curved algebra. The quotient map $B \to C$ is a morphism of curved algebras. This exhibits the curved algebra $B$ as the product $C\times J_+$. What remains is to analyse the simple summands of $C^\#$ and $(dJ)^\#$. The curved simple algebras of type (1) will come from $C$ and those of type (2) will come from $dJ$.
	
	If $S\subseteq C^\#$ is a simple summand, then as before the differential $d:S\to C$ is given by a commutator $ds=[b,s]$ for some $b\in C$. Since $S$ is orthogonal to all the other summands of the semisimple algebra $C^\#$, this shows that $d$ restricts to a differential on $S$. Similarly the curvature element $h_C$ restricts to a curvature element $h_S$ and it is clear that $C \to S$ is a morphism of curved algebras. Using that $d^2s=[h_S,s]$, one can easily check that $b^2-h_S$ is a central element. If $R$ is any ring then the centre of $M_n(R)$ consists of the diagonal matrices over the centre of $R$. Since the centre of a division ring is a field, and a finite graded field extension of $\ground$ must be in degree zero, the centre of a graded semisimple algebra must be in degree zero. Hence $b^2-h_S$ must be zero for degree reasons, and so $b^2=h$. Using this one can check that $-b$ is an MC element, and twisting by this MC element yields a curved isomorphism between $S$ and an algebra of type (1). Hence $C$ is (curved isomorphic to) a finite product of algebras of type (1).
	
	The analysis of $J_+$ is a little more complicated. First note that $J$ is an ideal of $J_+^\#$ and the quotient $J_+^\#/J$ is precisely $(dJ)^\#$.
	If $x,y\in J$, then consider $d(xy)=d(x)y\pm x dy$. The left hand side is in $dJ$ and the right hand side is in $J$. Hence both sides are zero and hence $xy=0$. Hence the sequence $J \to J_+^\# \to (dJ)^\#$ is a square zero extension. Let $u,v\in J$. We have $d(udv)=dudv\pm u[h_J,v]$. Since the first two terms are in $dJ$ and the third is in $J$, we must have $u[h_J,v]=0$ and $d(udv)=dudv$. In other words, $d:J \to dJ$ is a right $dJ$-module map. Similarly it is also a left module map. Let $x\in J$ be the element with $dx=1$. We see that if $u\in dJ$ then $d(ux)=u$, and so the action of $dJ$ on $J$ is inverse to the isomorphism $d:J \to dJ$. This gives us an an isomorphism $J^\#_+\cong (dJ)^\#[x]/x^2$ of graded algebras, where $x$ has cohomological degree $-1$. If $u\in dJ$ then we compute $du=d^2(ux)=[h,ux]=[h,u]x$. 
	
 One can easily check that the element $-hx$ is an MC element of $J_+$, and hence twisting by it we obtain a dg algebra $W$. The underlying graded algebra of $W$ is $W'[x]/x^2$, where $W'$ is semisimple. If $w\in W'$ then the differential satisfies $dw=0$ and $d(wx)=w$. Hence $W$ is isomorphic to the tensor product $W'\otimes K$, where $W'$ is regarded as a dg algebra with zero differential and $K$ is the acyclic dg algebra $K={\ground[x]}/{x^2}$ with $x$ in cohomological degree $-1$ and $dx=1$.
	
	If $S$ is a simple summand of $W'^\#$, let $S'\subseteq W'x$ be the subspace such that $dS'=S$. Clearly $S'\oplus S$ is a dg subspace of $W$. If $u,v\in S'$ then we have $d(udv)=dudv\in S$ and hence $udv\in S'$. Hence $S'$ is a right $S$-module, and by similar arguments it is an $S$-bimodule. In particular, $S' \oplus S$ is also a square zero extension $S[x]/x^2$, where $x$ now denotes the element with $dx=1_S$. As before, $S'\oplus S$ is isomorphic to $S\otimes K$. There is a natural quotient map $W\to S\otimes K$ exhibiting $W$ as (curved isomorphic to) the finite product of algebras of type (2).
\end{proof}
\begin{cor}\label{CSSsection}
	A surjection of curved semisimple algebras admits a section.
\end{cor}
\begin{proof}
	Let $\pi:A\twoheadrightarrow B$ be a surjection between curved semisimple algebras. It will suffice to show that the ideal $I\coloneqq \ker(\pi)$ is a product of curved simple subalgebras, since then $\pi$ will restrict to an isomorphism $A' \to B$ whose inverse will be the desired section. For a curved simple subalgebra $R$ of $A$, the space $I\cap R$ is a curved ideal of $R$. We wish to show that it is either $0$ or $R$. Clearly if $R$ is of type (1) then this holds since $I^\#$ is an ideal of $R^\#$. So we may assume that $R$ is of the form $S[x]/x^2$ with $S^\#$ simple and differential as in a type (2) algebra. Let $J$ be a nonzero curved ideal of $R$. If $J$ contains a nonzero element $s\in S$ then it must contain all of $S$, and hence all of $Sx$, and hence $J=R$. If not, then it must contain an element of the form $sx$ with $s\neq 0$, but since it is a curved ideal it must contain $d(sx)=s$ and as before we have $J=R$. So we are done.
\end{proof}

\begin{rem}
If one works with a $\Z/2$-grading (e.g.\ if one is interested in matrix factorisations) then in \ref{structuretheoremCSS} the algebras of type (1) must be replaced with the more general class of algebras
\begin{itemize}
	\item[(1')] a curved algebra $R$ whose underlying graded algebra is simple.
	\end{itemize}
The reason is that in the proof, we used that $b^2-h$ was a central element of cohomological degree $2$, and hence zero. In the $\Z/2$-graded setting this does not hold.
	\end{rem}

\begin{rem}
	Call a finite dimensional dg algebra dg semisimple if, when regarded as a curved algebra with zero curvature, it is curved semisimple. Orlov proves in \cite{Orlov19} that a dg semisimple algebra has a semisimple derived category. In fact, a curved semisimple algebra also has a semisimple derived category of the second kind. Curved simple algebras of type (1) are curved isomorphic to simple algebras with zero differential, and such an algebra has semisimple derived category of the second kind. Curved simple algebras of type (2) have vanishing derived category of the second kind, since every module is homotopy equivalent to zero.
\end{rem}

\subsection{Curved cosemisimple coalgebras}

Now we will transfer the above to the setting of coalgebras. If $C$ is a finite dimensional curved coalgebra, say that $C$ is curved cosemisimple if its linear dual $C^*$ is a curved semisimple algebra. A finite dimensional curved coalgebra $C$ has a maximal curved cosemisimple subcoalgebra $R$, given as the linear dual of the quotient of $C^*$ by its internal curved radical. We refer to this subcoalgebra as the curved coradical. Note that since the internal curved radical is nilpotent, the map $R\into C$ is a conilpotent extension. If $R'\into C$ is the inclusion of another curved cosemisimple coalgebra, then it must factor through an inclusion $R' \into R$.

\begin{theorem}\label{CogStructThm}
Let $i:C \into C'$ be an injection of curved coalgebras. Then $i$ is a relative cell complex for maps of the following form:
\begin{enumerate}
    \item Cosquare zero extensions of finite dimensional curved coalgebras.
    \item Injections between finite dimensional curved cosemisimple coalgebras.
\end{enumerate}

\end{theorem}
\begin{proof}
 By \ref{injgen}, it is enough to show that an injection of finite dimensional curved coalgebras can be presented as a finite relative cell complex for maps of the given form. So assume that $i:C\into C'$ is an injection between finite dimensional curved coalgebras. Let $R$ be the curved coradical of $C$, and similarly for $R'$. The composition $R\into C'$ factors through $R'$. Let $D$ be the pushout of the span $C \leftarrow R \to R'$, which is naturally a curved subcoalgebra of $C'$. Since $D$ contains $R'$, the inclusion $D\into C'$ is a conilpotent extension, and hence a finite composition of cosquare zero extensions by \ref{cnlem2}. So the composition $C \to D \to C'$ is a finite relative cell complex of the given form: $C\to D$ is a pushout along a map from $(2)$, whereas $D \to C'$ is a composition of maps from (1).
\end{proof}

\section{Convolution algebras and {\normalfont II}-Morita equivalences}\label{section:morita}

We introduce the notion of {\normalfont II}-Morita equivalence between curved algebras. The main result of this section is that convolution with an arbitrary curved coalgebra preserves certain kinds of well-behaved {\normalfont II}-Morita equivalences. Besides being of independent interest, this is a key technical result.

\subsection{{\normalfont II}-Morita equivalences}

We begin with two lemmas on pretriangulated dg categories.

 \begin{lem}\label{pretrqff}
 	Let $F:\mathcal{A} \to \mathcal{B}$ be a dg functor between pretriangulated dg categories. Then $F$ is a quasi-equivalence if and only if $H^0(F)$ is a triangle equivalence.
 \end{lem}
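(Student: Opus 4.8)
The plan is to unwind the definition of a quasi-equivalence into its two constituent conditions and match each against a property of $H^0(F)$. Recall that $F$ is a quasi-equivalence precisely when (i) for all objects $x,y$ of $\mathcal{A}$ the chain map $\mathcal{A}(x,y)\to\mathcal{B}(Fx,Fy)$ is a quasi-isomorphism (quasi-full-faithfulness), and (ii) the induced functor $H^0(F):H^0(\mathcal{A})\to H^0(\mathcal{B})$ is essentially surjective. Condition (ii) is literally the essential surjectivity of $H^0(F)$, so the real content of the lemma is the interaction between (i) and the full-faithfulness of $H^0(F)$.

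For the forward direction, suppose $F$ is a quasi-equivalence. Then each $\mathcal{A}(x,y)\to\mathcal{B}(Fx,Fy)$ is a quasi-isomorphism, so applying $H^0$ gives a bijection $\Hom_{H^0(\mathcal{A})}(x,y)\to\Hom_{H^0(\mathcal{B})}(Fx,Fy)$; thus $H^0(F)$ is fully faithful, and together with (ii) it is an equivalence of categories. Since any dg functor between pretriangulated dg categories preserves shifts and cones up to homotopy, $H^0(F)$ is exact, hence a triangle equivalence.

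The reverse direction is the substance. Assume $H^0(F)$ is a triangle equivalence. Its essential surjectivity yields (ii) immediately, so it remains to prove (i), i.e.\ that $H^n(\mathcal{A}(x,y))\to H^n(\mathcal{B}(Fx,Fy))$ is an isomorphism for every $n\in\mathbb{Z}$. The key idea is to convert higher cohomology of hom-complexes into $\Hom$-sets in the homotopy categories via the shift. Because $\mathcal{A}$ is pretriangulated, for each object $y$ and each $n$ there is a shift object $y[n]$ with a natural quasi-isomorphism $\mathcal{A}(x,y[n])\simeq \mathcal{A}(x,y)[n]$, whence $H^n(\mathcal{A}(x,y))\cong \Hom_{H^0(\mathcal{A})}(x,y[n])$, and similarly in $\mathcal{B}$. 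Full-faithfulness of $H^0(F)$ identifies $\Hom_{H^0(\mathcal{A})}(x,y[n])$ with $\Hom_{H^0(\mathcal{B})}(Fx,F(y[n]))$; since $H^0(F)$ is exact it commutes with the shift up to natural isomorphism, so $F(y[n])\cong (Fy)[n]$ in $H^0(\mathcal{B})$, giving $\Hom_{H^0(\mathcal{B})}(Fx,F(y[n]))\cong \Hom_{H^0(\mathcal{B})}(Fx,(Fy)[n])\cong H^n(\mathcal{B}(Fx,Fy))$. Chasing these identifications, the composite is precisely the map induced by $F$ on $H^n$, so it is an isomorphism for all $n$, and $F$ is quasi-fully-faithful and hence a quasi-equivalence.

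The hard part will be the bookkeeping rather than any deep idea: first, checking that the chain of isomorphisms above is genuinely induced by $F$ itself, which amounts to observing that $F$, being a dg functor, is compatible with the representing quasi-isomorphisms $\mathcal{A}(x,y[n])\simeq\mathcal{A}(x,y)[n]$ so that the relevant square commutes; and second, pinning down the shift dictionary in a merely pretriangulated (not necessarily strongly pretriangulated) dg category, namely that $y[n]$ represents the shifted dg module up to quasi-isomorphism and that the shift is invertible so $n$ ranges over all of $\mathbb{Z}$. Once this dictionary is in place the argument is entirely formal.
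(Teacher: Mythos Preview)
Your proof is correct and follows essentially the same route as the paper: both argue that quasi-essential surjectivity is immediate from essential surjectivity of $H^0(F)$, and both deduce quasi-full-faithfulness from full-faithfulness of $H^0(F)$ by identifying $H^n\mathcal{A}(x,y)$ with $H^0(\mathcal{A})(x,y[n])$ via the pretriangulated structure. The paper's version is simply more terse, omitting the bookkeeping about naturality and the forward direction that you spell out.
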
	
 \begin{proof}
 	The forward implication is clear, so assume that $H^0(F)$ is a triangle equivalence. It is clear that $F$ is quasi-essentially surjective, so we just need to check that it is quasi-fully faithful; i.e.\ that for every $x,y \in \mathcal{A}$ the map $F_{xy}:\mathcal{A}(x,y)\to \mathcal{B}(Fx,Fy)$ is a quasi-isomorphism. But because $\mathcal{A}$ and $\mathcal{B}$ are pretriangulated, $H^iF_{xy}$ can be regarded as a map $$H^0(\mathcal{A})(x,y[i])\to H^0(\mathcal{B})(Fx,Fy[i])$$which by assumption is an isomorphism.
 \end{proof}

 \begin{lem}\label{cpctlem}
    Let $F:\mathcal{A} \to \mathcal{B}$ be a functor of pretriangulated dg categories. Suppose that:
    \begin{itemize}
    \item The triangulated categories $H^0(\mathcal{A})$ and $H^0(\mathcal{B})$ are compactly generated.
        \item $H^0(F)$ preserves direct sums.
    \end{itemize}
    Then $F$ is a quasi-equivalence if and only if it induces a triangle equivalence $H^0(\mathcal{A})^c \to H^0(\mathcal{B})^c$.
\end{lem}
\begin{proof}
The forward implication is clear, so we need only consider the backwards implication. By \ref{pretrqff} we need only show that if $H^0(F)$ restricts to an equivalence on compacts then it is an equivalence. Since $H^0(\mathcal{A})$ is compactly generated, it is the smallest triangulated subcategory of itself containing $H^0(\mathcal{A})^c$ and closed under direct sums \cite[Lemma 2.2.1]{SSstable}. Since the same holds for $H^0(\mathcal{B})$, and $F$ preserves direct sums and is an equivalence on compact objects, the claim follows.
\end{proof}

    Say that a morphism $f:A \to A'$ of curved algebras is a {\normalfont II}-Morita equivalence if the induced map $f_*:\Perfcoderiveddgc(A) \to \Perfcoderiveddgc(A')$ of dg categories is a quasi-equivalence. The below proposition follows by a repeated application of \ref{pretrqff} and \ref{cpctlem}:
\begin{prop}
    Let $f:A \to A'$ be a morphism of curved algebras. Then the following are equivalent:
    \begin{enumerate}
        \item $f$ is a {\normalfont II}-Morita equivalence.
        \item $f_*:\Perfcoderivedc(A) \to \Perfcoderivedc(A')$ is a triangle equivalence.
        \item $f_*:\Dcoderivedc(A) \to \Dcoderivedc(A')$ is a triangle equivalence.
    \end{enumerate}
\end{prop}
Clearly a curved isomorphism is a II-Morita equivalence. 

	We will need to know that dg algebras represent all II-Morita equivalence classes of curved algebras. The following construction appears in \cite{hochschildII} and is due to Julian Holstein.
	
	\begin{prop}\label{dgIIMorita}
		Let $A$ be a curved algebra. Then there exists a dg algebra $B$ together with a {\normalfont II}-Morita equivalence $A\to B$. If $C$ is a curved coalgebra then the induced morphism $\Hom(C,A) \to \Hom(C,B)$ is a {\normalfont II}-Morita equivalence.  The assignment $A\mapsto B$ is functorial with respect to uncurved maps $A\to A'$. 
		\end{prop}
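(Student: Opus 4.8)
The plan is to turn the possibly curved algebra $A$ into a dg algebra by passing to a nonzero twisted module and twisting the curvature away, and then to read off the convolution and functoriality statements from the behaviour of matrix amplification.

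First I would dispose of the generic case, in which $A$ carries a nonzero finite-dimensional twisted module; by definition this is a finite-dimensional graded vector space $V\neq 0$ together with an element $\xi\in\MC(A\otimes\End(V))$. Setting $B\coloneqq(A\otimes\End(V))^{\xi}$, the twist of $A\otimes\End(V)$ by $\xi$, the MC equation $h_A\otimes 1+d\xi+\xi^2=0$ says exactly that the curvature of $B$ vanishes, so $B$ is a genuine dg algebra. To see that $A$ and $B$ are II-Morita equivalent I would combine two inputs: the curved isomorphism $(\id,\xi)\colon A\otimes\End(V)\xrightarrow{\cong}B$, and the classical (full) Morita equivalence between $A$ and $A\otimes\End(V)$ implemented by the bimodule $V$. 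The second of these must be checked at the level of second-kind derived categories, i.e. that the bimodule functors preserve twisted modules and the weak equivalences of the compactly generated model structure of \cite{GL20}; granting this, one obtains a quasi-equivalence $\Perfcoderiveddgc(A)\simeq\Perfcoderiveddgc(B)$, and this is the II-Morita equivalence sought. Note that fullness of the Morita equivalence means we do not need $V$ to be large or generating: any nonzero $V$ carrying an MC element will do. In the remaining degenerate case, where no $V\neq 0$ admits an MC element, $\Twfg(A)$ contains only the zero module, so $\Perfcoderiveddgc(A)$ is the zero dg category and I would take $B\coloneqq 0$.

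For the convolution statement I would start from the identity $\Hom(C,A\otimes\End(V))\cong\Hom(C,A)\otimes\End(V)$, valid because $\End(V)$ is finite-dimensional and hence pulls out of the completed tensor product $C^{*}\hat\otimes(-)$. The algebra map $A\cong\Hom(\ground,A)\to\Hom(C,A)$ induced by the counit $C\to\ground$ carries $\xi$ to an MC element $\tilde\xi$, and since twisting commutes with $C^{*}\hat\otimes(-)$ this identifies $\Hom(C,B)$ with $(\Hom(C,A)\otimes\End(V))^{\tilde\xi}$. Running the very same argument as above for the curved algebra $\Hom(C,A)$ in place of $A$ then shows that $\Hom(C,A)\to\Hom(C,B)$ is a II-Morita equivalence. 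I would stress that this reasoning is self-contained and, in particular, does not invoke Proposition \ref{IIMoritaConv}, so there is no circularity; indeed this proposition is meant to feed into the proof of \ref{IIMoritaConv}, not the other way round.

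The last point, functoriality with respect to uncurved maps, is where I expect the real difficulty. The mechanism is that an uncurved morphism $f\colon A\to A'$ sends MC elements to MC elements, so for a fixed $V$ the map $f\otimes\id$ carries $\xi$ to $f(\xi)$ and descends to a dg algebra map $(A\otimes\End(V))^{\xi}\to(A'\otimes\End(V))^{f(\xi)}$. The obstacle is that this only produces a functor once the pair $(V,\xi)$ has been chosen functorially in $A$, whereas a priori different curved algebras require different $V$ even to possess a twisted module at all. The point to push through is that, precisely because we have restricted to uncurved maps — along which induction preserves twisted modules and MC elements — the choices can be assembled into a single functor, for example by taking $B$ to be the endomorphism dg algebra of a functorially chosen compact generator of $\Perfcoderiveddgc(A)$ and checking that this model remains compatible with both the Morita equivalence and the convolution identity above. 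Establishing such a functorial generator, and that its formation is unaffected by passing to $\Hom(C,-)$, is the technical heart of the argument.
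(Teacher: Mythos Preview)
Your overall architecture is the same as the paper's: pass to $A\otimes\End(V)$ and twist by an MC element $\xi$. But you are missing the one observation that makes the whole proposition short, and it is exactly the observation that dissolves your ``real difficulty'' in the functoriality step. The paper does not search for a twisted module depending on $A$; it writes down a \emph{universal} one. Take $V=\ground\oplus\ground[1]$ and
\[
\xi=\begin{pmatrix}0&1\\ h_A&0\end{pmatrix}\in A\otimes\End(V),
\]
i.e.\ the twisted module $M=A\oplus A[1]$ with this differential. One checks immediately that $\xi$ is an MC element of $A\otimes\End(V)$ for \emph{every} curved algebra $A$, since $d\xi+\xi^2=-h_A\otimes 1$. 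So there is no generic/degenerate dichotomy: every curved algebra admits this particular twisted module, and your case $\Twfg(A)=\{0\}$ never arises.

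This same explicit choice gives functoriality for free. An uncurved map $f\colon A\to A'$ satisfies $f(h_A)=h_{A'}$, so $f\otimes\id$ carries the matrix $\xi_A$ to $\xi_{A'}$, and hence descends to a dg algebra map $B\to B'$ between the twists. No compact-generator argument is needed, and indeed your proposed workaround would be substantially harder to make precise (and would still have to confront the issue that different $A$ might need different ranks of $V$). Your convolution paragraph is correct and matches the paper's; the point is just that with the explicit $\xi$ above, the element $\tilde\xi$ you produce in $\Hom(C,A)\otimes\End(V)$ is again the same matrix built from $h_{\Hom(C,A)}$, so everything is uniform.
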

		\begin{proof}
			Let $M$ be $A\oplus A[1]$, with differential given by the square matrix $x=\bigl(\begin{smallmatrix}0 & 1 \\ h & 0\end{smallmatrix}\bigr)$. Since the differential squares to $h$, we see that $M$ is a finitely generated twisted $A$-module (which is homotopy equivalent to $0$) given by the MC element $x\in A\otimes \End(V)$, where $V$ is $\ground\oplus\ground[1]$. There is an obvious II-Morita equivalence $A\to A\otimes \End(V)$. Twisting $A\otimes \End(V)$ by $x$ yields a dg algebra $B$ and a curved isomorphism $A\otimes \End(V) \to B$. By composition we obtain a II-Morita equivalence $A\to B$, as desired. To check the claim about convolution algebras, let $C$ be a curved coalgebra. Since $\Hom(C,A\otimes \End(V))$ is naturally isomorphic to $\Hom(C,A)\otimes \End(V)$, we see as before that the natural map $\Hom(C,A)\to \Hom(C,A\otimes \End(V))$ is a II-Morita equivalence. Since the natural map $\Hom(C,A\otimes\End(V)) \to\Hom(C,B)$ is a curved isomorphism, the induced map $\Hom(C,A)\to \Hom(C,B)$ is a II-Morita equivalence, as desired. Finally, if $f:A\to A'$ is an uncurved morphism, then the induced morphism $A\otimes \End(V) \to A'\otimes \End(V)$ preserves the curvature element, and in particular the MC elements constructed above. Hence one obtains a natural map $B \to B'$ of dg algebras. 
		\end{proof}

\begin{rem}\label{iimorremark}
 In this paper, we will not axiomatise any notion of {\normalfont{II}}-Morita equivalence involving bimodules, nor will we consider any notion of {\normalfont{II}}-Morita equivalence for curved or dg categories. Rather, we only treat the case of when a morphism of curved algebras is a {\normalfont{II}}-Morita equivalence. This notion is too fine to give a well-behaved {\normalfont{II}}-Morita theory, since $\Perfcoderivedc(\ground)$ and $\Perfcoderivedc(M_n(\ground))$ are equivalent, but any such equivalence is not realised by a morphism $\ground \to M_n(\ground)$. In general, one hopes for the existence of a conjectural `Morita model structure of the second kind' on the category of (curved or) dg categories, whose homotopy category agrees with a {\normalfont II}-Morita homotopy category defined using bimodules, just as in the case of derived Morita theory for dg categories - recall that if two dg categories are derived Morita equivalent, then there is a zigzag of morphisms between them, all of which are derived Morita equivalences. 
\end{rem}

\subsection{Convolution algebras and colimits}

\begin{lem}\label{convlimits}
	Let $X$ be a curved algebra. The functor $C\mapsto \Hom( C, X)$, viewed as a functor $\ccogp \to \calgp$, sends colimits to limits.
\end{lem}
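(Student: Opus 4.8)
The plan is to exploit the description of the convolution algebra as the completed tensor product $\Hom(C,X)\cong C^*\hat\otimes X$, where $C^*$ is the pseudocompact dual of $C$. Under the contravariant equivalence between $\ccog$ and pseudocompact curved algebras (extended to $\ccogp$ and $\pccalg$ with formal (co)terminal objects), a colimit diagram in $\ccogp$ is carried to a limit diagram of pseudocompact curved algebras. So the first step is to reduce the statement to showing that the functor $P\mapsto P\hat\otimes X$, from pseudocompact curved algebras to curved algebras (or pro-objects thereof), converts the relevant limits of pseudocompact algebras into limits of curved algebras. Concretely, if $\{C_i\}$ is a diagram in $\ccog$ with colimit $C=\colim_i C_i$, then $C^*\cong\varprojlim_i C_i^*$ as pseudocompact curved algebras, and I want $\Hom(C,X)\cong\varprojlim_i\Hom(C_i,X)$, the limit being computed in $\calgp$.

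The key computational step is that the completed tensor product commutes with the cofiltered limits appearing here. Writing $C^*=\varprojlim_i C_i^*$ and recalling that $\hat\otimes$ is defined as $C^*\hat\otimes X=\varprojlim_\lambda (C^*_\lambda\otimes X)$ over the finite-dimensional quotients, one checks that $(\varprojlim_i C_i^*)\hat\otimes X\cong\varprojlim_i(C_i^*\hat\otimes X)$; this is essentially the statement that $-\hat\otimes X$ preserves cofiltered limits, which follows from interchanging the two inverse-limit systems. By Proposition \ref{limscolims}, the forgetful functor $\calg\to\mathbf{grAlg}_\ground$ creates products and cofiltered limits, so such limits in $\calg$ are computed on underlying graded algebras, where the interchange of limits is transparent. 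The general colimit is built from coproducts and coequalisers (dually, the target limit from products and equalisers), so it suffices to treat these two cases; for coproducts one uses that dualising sends $\bigoplus$ to $\prod$ and that $\hat\otimes$ distributes over products, while the coequaliser case follows from the cofiltered-limit computation together with the fact (again from Proposition \ref{limscolims}) that $\calg$ has equalisers computed at the graded level.

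The main obstacle I anticipate is bookkeeping around the formally adjoined objects $\varnothing\in\calgp$ and $*\in\ccogp$, together with the degenerate values of $\Hom$ (namely $\Hom(C,X)=\varnothing$ when $C=*$, with the exceptional convention $\Hom(*,0)=0$). A colimit in $\ccogp$ may land on $*$ only in trivial ways, and one must verify by hand that the functor sends the empty colimit (initial object $0\in\ccogp$) to the terminal object of $\calgp$ and behaves correctly on diagrams involving $*$; the definition was set up (see the extension of the bar-cobar adjunction preceding this lemma) precisely so that these boundary cases work out, so the essential content is genuinely the cofiltered-limit interchange above, and the degenerate cases are a finite check. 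I would therefore organise the proof as: (i) reduce to $\ccog$ and $\calg$ via the formal-object conventions; (ii) dualise to pseudocompact curved algebras; (iii) prove $-\hat\otimes X$ preserves cofiltered limits using that $\calg\to\mathbf{grAlg}_\ground$ creates them; (iv) deduce the general case from products and equalisers.
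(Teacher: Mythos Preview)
Your reduction to coproducts and coequalisers is the right shape, and your treatment of coproducts matches the paper's. The gap is in the coequaliser step. You assert that ``$\calg$ has equalisers computed at the graded level,'' citing Proposition~\ref{limscolims}, but that proposition says only that the forgetful functor creates \emph{products and cofiltered limits}; the paper remarks immediately afterward that it does \emph{not} create equalisers, precisely because an equaliser of curved algebras may be $\varnothing$. So your appeal to ``interchange of limits at the graded level'' does not apply to equalisers, and the cofiltered-limit argument you describe (interchanging two inverse systems) is simply irrelevant here, since an equaliser diagram is not cofiltered.

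The paper's proof confronts this directly: dualising a coequaliser of coalgebras to an equaliser of pseudocompact curved algebras $A\rightrightarrows B$ along $(f,u),(g,v)$, it uses the explicit description of such equalisers from \cite[3.30]{HL2020}. There is a dichotomy: either some $z\in A^1$ satisfies $u+fz=v+gz$, in which case the equaliser $E$ is the graded equaliser equipped with a \emph{twisted} differential $d_A+[z,-]$ and curvature $h_A+dz+z^2$, or no such $z$ exists and $E=\varnothing$. One then checks by hand that $-\hat\otimes X$ preserves this description in each case (in the second case, that no $z'\in(A\hat\otimes X)^1$ can solve the analogous equation, by comparing coefficients in a basis of $X$). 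This case analysis is the genuine content of the coequaliser step, and your outline omits it entirely.
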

\begin{proof}
 If $X=0$ then the functor in question is the constant functor $C \mapsto 0$, which clearly sends colimits to limits. So we may assume that $X$ is nonzero. Let $F:J \to \ccogp$ be a diagram. If $*$ is in the image of $F$, then by the characterisation of colimits as universal cocones, we must have $\colim F \cong *$. Since $X\neq 0$, we have $\Hom(*,X)=\varnothing$, and by the same logic we must have $\lim \Hom(F-,X)\cong \varnothing$. So we may assume that $*$ is not in the image of $F$; i.e.\ $F$ is actually a diagram in $\ccog$.

It is enough to check that $\Hom(-, X)$ sends coproducts to products and coequalisers to equalisers. Recall that coproducts in curved coalgebras are created by the underlying graded coalgebras, by \ref{limscolims}. In particular, it is not hard to see that if $C,D$ are curved coalgebras then there is an isomorphism $\Hom(C\sqcup D,A)^\#\cong \Hom(C,A)^\#\times \Hom(D,A)^\#$. But products of curved algebras are also created underlying, by \ref{limscolims} again, and it follows that there is a natural isomorphism $$\Hom(C\sqcup D,A)\cong \Hom(C,A)\times \Hom(D,A)$$of curved algebras. 

Finally we check the statement about (co)equalisers, which is harder since they may not be created by the underlying vector spaces. To do this, we dualise. Let $(A,d_A,h_A)$ and $(B,d_B, h_B)$ be two pseudocompact curved algebras, and let $(f,u)$ and $(g,v)$ be two morphisms $A \to B$. If $E$ denotes their equaliser, we wish to show that the diagram $$E\hat\otimes X \to A\hat\otimes X\rightrightarrows B\hat\otimes X$$is an equaliser diagram of curved algebras.

Suppose that there exists $z \in A^1$ such that $u+fz=v+gz$. In this case, following the proof of \cite[3.30]{HL2020}, we see that $E$ has the following description. The underlying pseudocompact graded algebra of $E$ is the equaliser of the diagram $A^\# \rightrightarrows B^\#$ of pseudocompact graded algebras. The differential on $E$ is $d_E=d_A+[z,-]$. The curvature element of $E$ is $h_E=h_A+dz+z^2$. Consider $z'\coloneqq z\otimes 1 \in (A\hat\otimes X)^1$. It is clear that $u\otimes 1 + (f\otimes 1)z' = v\otimes 1 + (g\otimes1 )z'$, so one may compute the equaliser of $A\hat\otimes X\rightrightarrows B\hat\otimes X$ in exactly the same manner. Since the completed tensor product commutes with taking underlying pseudocompact graded algebras, one can now directly verify that $E\hat\otimes X \to A\hat\otimes X\rightrightarrows B\hat\otimes X$ is an equaliser diagram.

So we may assume that no such $z$ exists. It follows that $E$ must be $\varnothing$. In this case, $E\hat \otimes X$ is also $\varnothing$. If there existed a $z' \in  (A\hat\otimes X)^1$ such that $u\otimes 1 + (f\otimes 1)z' = v\otimes 1 + (g\otimes1 )z'$, then taking a basis for $X$ and comparing coefficients we obtain a $z$ with $u+fz=v+gz$, a contradiction. So no such $z'$ exists and we see that $\varnothing$ must be the equaliser of $A\hat\otimes X\rightrightarrows B\hat\otimes X$, as desired.
\end{proof}

\begin{cor}\label{mclimscor}
	The functor $\MCdg(-,A):\ccogp\to \dgcat'$ sends colimits to limits.
	\end{cor}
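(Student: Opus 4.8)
The plan is to observe that $\MCdg(-,A)$ is, essentially by definition, a composite of two functors whose limit-preservation behaviour we already understand, so that the corollary follows formally with no new computation. Recall that for a curved coalgebra $C$ we set $\MCdg(C,A) \coloneqq \MCdg(\Hom(C,A))$. Thus the functor $\MCdg(-,A)\colon \ccogp \to \dgcat'$ factors as
\[
\ccogp \xrightarrow{\ \Hom(-,A)\ } \calgp \xrightarrow{\ \MCdg\ } \dgcat',
\]
where I read $\Hom(-,A)$ as a contravariant functor out of $\ccogp$.

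First I would invoke Lemma \ref{convlimits}, which says precisely that the first factor $\Hom(-,A)\colon \ccogp \to \calgp$ sends colimits to limits. Next I would use Proposition \ref{mcdgadj}, which establishes that $\MCdg\colon \calgp \to \dgcat'$ admits a left adjoint; being a right adjoint, it therefore preserves all limits. The only thing left is the elementary categorical fact that a functor sending colimits to limits, post-composed with a functor preserving limits, again sends colimits to limits. Concretely, given a diagram $F\colon J \to \ccogp$ with colimit $\colim F$, Lemma \ref{convlimits} identifies $\Hom(\colim F, A)$ with $\lim_J \Hom(F-,A)$ in $\calgp$, and then limit-preservation of $\MCdg$ identifies $\MCdg(\lim_J \Hom(F-,A))$ with $\lim_J \MCdg(\Hom(F-,A)) = \lim_J \MCdg(F-,A)$ in $\dgcat'$; chaining these natural isomorphisms yields $\MCdg(\colim F, A) \cong \lim_J \MCdg(F-,A)$, as required.

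There is no genuine obstacle here: all of the real work has been carried out in Lemma \ref{convlimits} (in particular the delicate verification that convolution sends coequalisers of curved coalgebras to equalisers of curved algebras, where the underlying objects are not created levelwise). The one point of care I would flag is bookkeeping around the target category: the adjunction of Proposition \ref{mcdgadj} lands in $\dgcat'$ rather than $\dgcat$, so the limits in question are to be taken in $\dgcat'$, which is where $\MCdg$ is genuinely a right adjoint. Since $\dgcat'$ is complete and cocomplete and contains the relevant zero and empty dg categories, the limit comparison is unambiguous, and the statement of the corollary is exactly the formal consequence just described.
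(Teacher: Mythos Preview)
Your proposal is correct and follows exactly the paper's own argument: factor $\MCdg(-,A)$ as $\Hom(-,A)$ followed by $\MCdg$, apply Lemma~\ref{convlimits} to the first factor and Proposition~\ref{mcdgadj} to the second. The paper's proof is just the one-sentence version of what you wrote.
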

	\begin{proof}
		$\MCdg(-,A)$ is the composition of the two functors $\Hom(-,A)$ and $\MCdg$. The former sends colimits to limits by \ref{convlimits} and the latter preserves limits by \ref{mcdgadj}.
		\end{proof}

\subsection{Convolution and {\normalfont II}-Morita equivalences}

\begin{defi}
Let $A,B$ be curved algebras. A good {\normalfont II}-Morita equivalence is a pair of morphisms  $f:A \to B$ and $g:B\to A$ which induce mutually inverse quasi-equivalences on $\Perfcoderiveddgc$.
\end{defi}
Note that this condition already appears in \ref{MCtoTWprop}(3). Clearly if $(f,g)$ is a good {\normalfont II}-Morita equivalence then both $f$ and $g$ are {\normalfont II}-Morita equivalences. Our goal for the rest of this section is to prove the following theorem:
\begin{theorem}\label{prop:convolution}
    Let $f:A \to B$, $g:B\to A$ be a good {\normalfont II}-Morita equivalence. If $C$ is a curved coalgebra, then $\Hom(C,f)$ is also a {\normalfont II}-Morita equivalence.
\end{theorem}
To do this, we will use \ref{MCtoTWprop} to reduce to considering only the induced maps on $\MCdg$, and \ref{CogStructThm} to reduce to checking statements about cosemisimple coalgebras, square zero extensions, and a limiting step. We begin with the special case when $C$ is finite dimensional curved cosemisimple.
\begin{lem}\label{IIMEsimplelem}
 Let $A \to A'$ be a {\normalfont II}-Morita equivalence of curved algebras. If $C$ is a finite dimensional curved cosemisimple coalgebra, then $\Hom(C,A) \to \Hom(C,A')$ is a {\normalfont II}-Morita equivalence.
\end{lem}
\begin{proof}
    Factoring $A \to A'$ into a curved isomorphism and an uncurved morphism, we may assume that it is uncurved. By \ref{dgIIMorita} we may now in addition assume that both $A$ and $A'$ are dg algebras. In what follows write $R$ for $C^*$, so that $\Hom(C,-)\cong R\otimes-$. By the classification result \ref{structuretheoremCSS}, $R$ is a finite product of curved simple algebras. Since a finite product of {\normalfont II}-Morita equivalences is a {\normalfont II}-Morita equivalence, we may assume that $R$ is a curved simple algebra. If $R$ is of type (2) in the classification of \ref{structuretheoremCSS} then both $\Perfcoderivedc(R\otimes A)$ and $\Perfcoderivedc(R\otimes A')$ vanish, and in particular are equivalent, as required. So we may assume that $R$ is of type (1); i.e.\ $R$ is a dg algebra whose underlying graded algebra is simple. The Artin--Wedderburn theorem now tells us that $R^\sharp$ is of the form $M_n(K)$, where $K$ is a finite dimensional division algebra over $\ground$, and hence concentrated in degree zero. Moreover, the proof of \ref{structuretheoremCSS} shows that the differential on $R$ is given by a commutator, and hence we have an isomorphism $R\simeq \End_K(V)$ for some dg-$K$-vector space $V$. Consider the commutative diagram
    $$\begin{tikzcd}
        \Perfcoderivedc(A) \ar[r,"-\otimes_A A'"]\ar[d,"-\otimes_K V"]&  \Perfcoderivedc(A')\ar[d,"-\otimes_K V"] \\
          \Perfcoderivedc(R\otimes A) \ar[r,"-\otimes_A A'"]&  \Perfcoderivedc(R\otimes A') 
    \end{tikzcd}$$
    By assumption, the upper horizontal map is an equivalence, so to show that the lower horizontal map is an equivalence, it suffices to show that the functor $-\otimes_KV: \Twfg(A)\to \Twfg(R\otimes A)$ is an equivalence for arbitrary $A$. It is easily observed to be fully faithful. Essential surjectivity follows from the fact that every finitely generated twisted $R\otimes A$-module can be viewed as a finitely generated twisted $A$-module.
\end{proof}

\begin{lem}\label{IIMEsquarezerolem}
Let $f:A \to A'$ be a map of curved algebras and let $i:C \to C'$ be a cosquare zero extension of finite dimensional curved coalgebras. If $\MCdg(C,f)$ is a quasi-equivalence, then so is $\MCdg(C',f)$.
\end{lem}
\begin{proof}

Replacing $f$ by $\Hom(C,f)$ in the notation yields a commutative diagram of curved algebras
$$\begin{tikzcd}
    B\ar[d, two heads,"\pi"] \ar[r,"g"] & B'\ar[d, two heads,"\pi'"]\\
    A \ar[r,"f"] & A'
\end{tikzcd}$$
where both $\pi$ and $\pi'$ are square zero extensions, with fibres $L$ and $L'$, say. Moreover, we may now assume that $\MCdg(f)$ is a quasi-equivalence, and we wish to show that $\MCdg(g)$ is a quasi-equivalence. If $B$ has no MC elements then neither does $B'$ and we are done. So we may assume that $B$ has an MC element and hence, by twisting, that the above diagram is a diagram of dg algebras. In particular $f$ is a quasi-isomorphism, and it follows that the induced map $L \to L'$ is also a quasi-isomorphism. By \ref{squarezero1} we obtain a diagram of dg categories 
$$\begin{tikzcd}
    \MCdg(B)\ar[d, two heads,"\pi_*"] \ar[r,"g_*"] & \MCdg(B')\ar[d, two heads,"\pi_*'"]\\
    \MCdg(A )\ar[r,"f_*"] & \MCdg(A')
\end{tikzcd}$$with vertical maps fibrations and $f_*$ a quasi-equivalence. So to see that $g_*$ is a quasi-equivalence it suffices to show that the induced maps on the fibres above every $a\in \MC(A)$ are quasi-equivalences. By twisting again, it suffices to prove this for the MC element $0\in A$. But this follows from \ref{fibrelem}.
\end{proof}

\begin{prop}\label{IIMEintprop}
    Let $f: A \to B$, $g:B\to A$ be a good {\normalfont II}-Morita equivalence of curved algebras. Then for every curved coalgebra $C$, the induced map $\MCdg(C,f)$ is a quasi-equivalence.
\end{prop}
\begin{proof}
By \ref{CogStructThm}, the map $0 \to C$ is a relative cell complex for two types of morphism:
\begin{enumerate}
    \item Cosquare zero extensions of finite dimensional curved coalgebras.
    \item Injections between finite dimensional curved cosemisimple coalgebras.
\end{enumerate}
By \ref{squarezero1}, if $i$ is a morphism of the first type then $\MCdg(i,A)$ is a fibration. By \ref{CSSsection}, if $i$ is a morphism of the second type then $\MCdg(i,A)$ admits a retract and is hence a fibration. Since $\MCdg(-,A)$ sends colimits to limits by \ref{mclimscor}, and fibrations of dg categories are preserved under pullbacks, it follows that $\MCdg(C,A)$ is the limit of a Reedy fibrant diagram of dg categories of the form $\MCdg(C_\alpha,A)$ for $\alpha \in \lambda^\mathrm{op}$ for some ordinal $\lambda$. Hence $\MCdg(C,A)$ is also the homotopy limit of this diagram. The same holds for $\MCdg(C,B)$ and we may moreover factor $\MCdg(C,f)$ as a homotopy limit of morphisms of the form $\MCdg(C_\alpha,f)$. Since $\dgcat'$ is right proper, each of these morphisms is a quasi-equivalence by \ref{IIMEsimplelem} and \ref{MCtoTWprop} in case (1) and \ref{IIMEsquarezerolem} in case (2). Since a homotopy limit of quasi-equivalences is a quasi-equivalence we are done.
\end{proof}

\begin{proof}[Proof of \ref{prop:convolution}]
Recall that we are given a good II-Morita equivalence $f:A \to B$, $g:B\to A$, and we want to show that for every curved coalgebra $C$ that $\Hom(C,f)$ is a II-Morita equivalence. By \ref{MCtoTWprop}, it suffices to show that if $V$ is a finite dimensional vector space, the map $\MCdg(C,f\otimes\mathrm{End}(V))$ is a quasi-equivalence. By the hom-tensor adjunction for convolution algebras, this map agrees with $\MCdg(C\otimes E,f)$ where $E$ is the linear dual of the finite dimensional algebra $\mathrm{End}(V)$. But this latter map is a quasi-equivalence by \ref{IIMEintprop}.
\end{proof}

\begin{rem}In the situation of \ref{prop:convolution}, it follows by symmetry that both $\Hom(C,f)$ and $\Hom(C,g)$ are {\normalfont II}-Morita equivalences. In fact, following the proof, one sees that the pair $(\Hom(C,f),\Hom(C,g))$ is a good {\normalfont II}-Morita equivalence. We may hence interpret \ref{prop:convolution} as the statement that $\Hom(C,-)$ preserves good {\normalfont II}-Morita equivalences.
\end{rem}

\section{Maurer--Cartan equivalences}\label{section:MC2}
In this section we introduce the notion of Maurer--Cartan equivalence of curved algebras and coalgebras. For conilpotent dg coalgebras, it reduces to the usual notion of weak equivalence underpinning the conilpotent Koszul duality of \cite{Positselski11}, c.f.\ \ref{mcconil}. For \emph{cofibrant} dg algebras it reduces to the notion of quasi-isomorphism (\ref{mcconil}); for non-cofibrant ones it is generally finer than quasi-isomorphism.

\subsection{Maurer-Cartan equivalences}

If $A$ is a curved algebra and $C$ is a curved coalgebra, we denote by $\MCmod(C,A)$ the set of isoclasses of objects of the category $H^0\MCdg(C,A)$. We call this set the the MC moduli set. Note that if $[X,Y]_3$ denotes the set of $3$-homotopy classes of maps $X\to Y$, then we have isomorphisms $$[\Omega C,A]_3 \cong \MCmod(C,A)\cong [C,\check B A]_3.$$

\begin{defi} \hfill \phantom{a}
	\begin{enumerate}
		\item
		A map $C\to C'$of curved coalgebras is called a Maurer--Cartan equivalence if for any curved algebra $A$ the induced map $\MCmod(C',{A})\to\MCmod(C,{A})$ is a bijection.
		\item
		A map $A\to A'$ of curved algebras is called a Maurer--Cartan equivalence if for any curved coalgebra $C$ the induced map $\MCmod(C,A)\to\MCmod(C,A')$ is a bijection.
	\end{enumerate}
\end{defi}
We abbreviate Maurer--Cartan equivalence by MC equivalence. It is clear that a 3-homotopy equivalence (of either curved algebras or curved coalgebras) is automatically an MC equivalence. It is easy to see that MC equivalences satisfy the two-out-of-three property. The notion of an MC equivalence admits several equivalent characterisations.
\begin{prop}\label{prop:MCcharacterization1} Let  $g:C\to C'$ be a map of curved coalgebras. The following are equivalent:
	\begin{enumerate}
		\item The map $g$ is an MC equivalence.
		\item For any curved algebra $A$ the induced map
		\[
		[C',\check{B}A]_3\to [C, \check{B}A]_3
		\] 
		is a bijection.
		\item For any curved algebra $X$ the induced map
		\[
		[\Omega(C'),A]_3\to [\Omega(C),A]_3
		\] 
		is a bijection.
		\item
		The induced map of curved algebras $\Omega(g):\Omega C\to \Omega C'$ is a 3-homotopy equivalence.
		\item The induced map of curved algebras $\Omega(g):\Omega C\to \Omega C'$ is an MC equivalence.
	\end{enumerate}
\end{prop}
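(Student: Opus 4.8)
The plan is to run the whole string of equivalences off the two natural bijections recorded above, $[\Omega C,A]_3 \cong \MCmod(C,A)\cong [C,\check B A]_3$, supplemented by two instances of a Yoneda-style argument. First I would dispatch (1)$\Leftrightarrow$(2)$\Leftrightarrow$(3), which carry no real content. The point is that both bijections are natural in $C$: the identification of $\MCmod(C,A)$ with the $3$-homotopy classes of maps $\Omega C\to A$ (and with maps $C\to\check B A$) is exactly the naturality supplied by Proposition \ref{prebcb} together with the description of the objects of $\MCdg(C,A)$ as maps $\Omega C\to A$, dg-isomorphic precisely when $3$-homotopic. Under these natural isomorphisms the map $\MCmod(C',A)\to\MCmod(C,A)$ induced by $g$ corresponds precisely to $[\Omega C',A]_3\to[\Omega C,A]_3$ and to $[C',\check B A]_3\to[C,\check B A]_3$, so one of the three is bijective for all $A$ if and only if the others are.

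Next I would treat (3)$\Leftrightarrow$(4). The implication (4)$\Rightarrow$(3) is immediate: a $3$-homotopy equivalence induces a bijection on $[-,A]_3$ for every $A$, since by Proposition \ref{prop:homotopyalg}(2) pre- and post-composition with a fixed map preserve elementary $3$-homotopies, hence descend to well-defined operations on $3$-homotopy classes, and a homotopy inverse then yields an inverse on classes. For (3)$\Rightarrow$(4) I would run the standard Yoneda argument. Taking $A=\Omega C$, the bijection $[\Omega C',\Omega C]_3\to[\Omega C,\Omega C]_3$ given by precomposition with $\Omega(g)$ produces $h\colon\Omega C'\to\Omega C$ with $h\circ\Omega(g)\sim_3\id_{\Omega C}$. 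Taking $A=\Omega C'$ and comparing the images of $[\Omega(g)\circ h]$ and $[\id_{\Omega C'}]$ under precomposition with $\Omega(g)$ — both equal to $[\Omega(g)]$, using $h\circ\Omega(g)\sim_3\id_{\Omega C}$ and composition-compatibility — injectivity forces $\Omega(g)\circ h\sim_3\id_{\Omega C'}$. Thus $h$ is a two-sided $3$-homotopy inverse and $\Omega(g)$ is a $3$-homotopy equivalence.

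Finally I would close the loop with (4)$\Rightarrow$(5) and (5)$\Rightarrow$(4). The implication (4)$\Rightarrow$(5) is already noted in the text: any $3$-homotopy equivalence of curved algebras is an MC equivalence. For (5)$\Rightarrow$(4) I would rerun the same Yoneda argument, but now with maps \emph{into} $\Omega C$ and $\Omega C'$. Unwinding the definition of an MC equivalence of algebras through $\MCmod(D,A)\cong[\Omega D,A]_3$, hypothesis (5) says postcomposition with $\Omega(g)$ is a bijection $[\Omega D,\Omega C]_3\to[\Omega D,\Omega C']_3$ for every curved coalgebra $D$. Taking $D=C'$ produces $k\colon\Omega C'\to\Omega C$ with $\Omega(g)\circ k\sim_3\id_{\Omega C'}$, and taking $D=C$ and comparing images of $[k\circ\Omega(g)]$ and $[\id_{\Omega C}]$ under postcomposition with $\Omega(g)$ forces $k\circ\Omega(g)\sim_3\id_{\Omega C}$. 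Hence $\Omega(g)$ is a $3$-homotopy equivalence, which is (4).

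The closest thing to an obstacle is bookkeeping rather than depth: elementary $3$-homotopy is neither symmetric nor transitive (Proposition \ref{prop:homotopyalg}(1)), so every manipulation above must be interpreted using the equivalence relation $\sim_3$ generated by elementary $3$-homotopies (zig-zags), and one must check that the displayed bijections and the Yoneda cancellations are legitimate at this level. What makes them legitimate is exactly Proposition \ref{prop:homotopyalg}(2): composition with a fixed map preserves elementary $3$-homotopies, hence preserves zig-zags and induces well-defined, functorial maps on the sets $[-,-]_3$, so the preimages and injectivity arguments make sense and the two-sided inverses genuinely exist.
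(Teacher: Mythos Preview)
Your proof is correct and follows essentially the same route as the paper: the equivalence of (1), (2), (3) via the natural bijections $[\Omega C,A]_3\cong\MCmod(C,A)\cong[C,\check BA]_3$, and then the Yoneda-style argument in the $3$-homotopy category restricted to objects of the form $\Omega D$ to handle (3)$\Leftrightarrow$(4) and (4)$\Leftrightarrow$(5). The only difference is presentational: the paper invokes the Yoneda lemma abstractly and treats (4)$\Leftrightarrow$(5) in one stroke, whereas you spell out the explicit construction of the homotopy inverse and split (4)$\Rightarrow$(5) from (5)$\Rightarrow$(4).
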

\begin{proof}
	The equivalence of (1), (2), and (3) is clear. Clearly (4) implies (3). To see that (3) implies (4), apply the Yoneda lemma to the category whose objects are those of the form $\Omega(D)$ where $D$ is a curved coalgebra, and whose morphisms are given by 3-homotopy classes of curved algebra maps. To see the equivalence of (4) and (5), note that the map $\MCmod(D,\Omega(C))\to \MCmod(D,\Omega(C'))$ is an isomorphism if and only if the map $[\Omega(D),\Omega(C)]^*_3 \to [\Omega(D),\Omega(C')]^*_3 $ is, and the latter map being an isomorphism is equivalent to (4) by the same Yoneda lemma argument. 
\end{proof}

Exactly the same (or dual) arguments give the following result.

\begin{prop}\label{prop:MCcharacterization2} Let  $f:A\to A'$ be a map of curved algebras. Then the following conditions are equivalent.
	\begin{enumerate}
		\item The map $f$ is an MC equivalence.
		\item For any curved coalgebra $C$ the induced map 
		\[
		[\Omega(C),A]_3\to  [\Omega(C), A']^*_3
		\]  is a bijection.
		\item For any curved coalgebra $C$ the induced map 
		\[
		[C,\check{B}(A)]_3\to  [C, \check{B}(A')]_3
		\]  is a bijection.
		\item
		The induced map of curved coalgebras $\check{B}A\to \check{B}A'$ is a 3-homotopy equivalence.
		\item The induced map curved coalgebras $\check{B}A\to \check{B}A'$ is an MC equivalence. 
	\end{enumerate}
\end{prop}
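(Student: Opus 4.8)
The plan is to transpose the proof of Proposition \ref{prop:MCcharacterization1} along the bar--cobar adjunction, interchanging the roles of $\Omega$ and $\check B$ and of algebras and coalgebras. Throughout I would lean on the chain of natural isomorphisms
\[
\MCmod(C,A)\cong[\Omega C,A]_3\cong[C,\check BA]_3
\]
recorded at the opening of this section. The essential new feature compared with Proposition \ref{prop:MCcharacterization1} is that we now vary the \emph{algebra} argument $A$, so it is naturality in that variable that does the work.

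First I would dispatch the equivalence of (1), (2), and (3), which is purely formal. By definition $f$ is an MC equivalence exactly when $\MCmod(C,A)\to\MCmod(C,A')$ is a bijection for every curved coalgebra $C$. Transporting this condition across the two displayed isomorphisms, which are natural in the algebra argument and hence intertwine the map induced by $f$ on one side with the map induced by $f$ on the other, identifies it verbatim with (2) (via the first isomorphism) and with (3) (via the second).

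Next comes the equivalence of (3) and (4), which is the heart of the matter. That (4) implies (3) is immediate: a $3$-homotopy equivalence $\check BA\to\check BA'$ induces a bijection on $3$-homotopy classes of maps out of any fixed curved coalgebra $C$, using that $3$-homotopy of coalgebra maps is stable under composition (Proposition \ref{prop:homotopycoalg}(2)). For the converse I would run the dual of the Yoneda argument from Proposition \ref{prop:MCcharacterization1}: form the category whose objects are the curved coalgebras of the form $\check BD$ for $D$ a curved algebra and whose hom-sets are $3$-homotopy classes of curved coalgebra maps. Condition (3), specialised to test objects $C=\check BD$, says precisely that the covariant functor corepresented by each such $\check BD$ carries $\check Bf$ to a bijection, whence Yoneda forces $\check Bf$ to be an isomorphism in this homotopy category, i.e.\ a $3$-homotopy equivalence. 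Finally, for (4) $\Leftrightarrow$ (5): since a $3$-homotopy equivalence of curved coalgebras is automatically an MC equivalence, (4) implies (5) for free; conversely, unwinding the definition, $\check Bf$ being an MC equivalence means $\MCmod(\check BA',X)\to\MCmod(\check BA,X)$ is a bijection for all curved algebras $X$, and the isomorphism $\MCmod(\check BA,X)\cong[\check BA,\check BX]_3$ recasts this as precomposition with $\check Bf$ being a bijection $[\check BA',\check BX]_3\to[\check BA,\check BX]_3$, which by the same (now contravariant) Yoneda manoeuvre tested against the objects $\check BX$ is equivalent to (4).

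I expect the only genuinely delicate point to be the legitimacy of the Yoneda step with the restricted class of test objects $\check BD$ (respectively $\check BX$): one must verify that these objects assemble into a category under $3$-homotopy classes of maps and that the relevant representable functors are well defined there, so that the Yoneda lemma applies. This is exactly the same subtlety that arises in Proposition \ref{prop:MCcharacterization1}, and once it is granted everything else follows formally from the bar--cobar isomorphisms together with the compatibility of $3$-homotopy with composition.
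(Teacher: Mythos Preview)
Your proposal is correct and follows exactly the approach indicated by the paper, which simply states that the result follows by the same (dual) arguments as Proposition~\ref{prop:MCcharacterization1}. Your dualisation---replacing $\Omega$ by $\check B$, algebras by coalgebras, and running Yoneda in the category whose objects are of the form $\check BD$ with morphisms $3$-homotopy classes of curved coalgebra maps---is precisely what is intended, and your identification of the one subtle point (that the restricted class of test objects suffices for Yoneda) matches the analogous step in the paper's proof of Proposition~\ref{prop:MCcharacterization1}.
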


So the functors $\check{B}$ and $\Omega$ both preserve and reflect MC equivalences. The following result shows that the unit and counit of the bar-cobar adjunction are both MC equivalences.

\begin{theorem}\label{thm:barcobarequiv}
	Let $A$ be a curved algebra and let $C$ be a curved coalgebra. Then:
	\begin{enumerate}
		\item 
		The unit of the adjunction $\eta:C\to \check{B}\Omega(C)$ is an MC equivalence.
		\item
		The counit of the adjunction $\epsilon:\Omega\check{B}(A)\to A$ is an MC equivalence.
	\end{enumerate}
\end{theorem}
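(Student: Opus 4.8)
The plan is to prove part (1) directly by an explicit homotopy and then deduce part (2) formally, exploiting the fact that $\Omega$ and $\check B$ both preserve and reflect MC equivalences (Propositions \ref{prop:MCcharacterization1} and \ref{prop:MCcharacterization2}). For part (1), by the equivalence of (1) and (4) in Proposition \ref{prop:MCcharacterization1} it suffices to show that $\Omega(\eta_C)\colon\Omega C\to\Omega\check B\Omega C$ is a $3$-homotopy equivalence. The first triangle identity of the adjunction gives $\epsilon_{\Omega C}\circ\Omega(\eta_C)=\id_{\Omega C}$, so $\Omega(\eta_C)$ already has a strict left inverse; it will therefore be enough to produce a $3$-homotopy $\Omega(\eta_C)\circ\epsilon_{\Omega C}\sim_3\id_{\Omega\check B\Omega C}$, after which $\Omega(\eta_C)$ and $\epsilon_{\Omega C}$ are mutually inverse $3$-homotopy equivalences.

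Both $\Omega(\eta_C)\circ\epsilon_{\Omega C}$ and $\id$ are endomorphisms of the cobar algebra $\Omega(\check B\Omega C)$, so by Corollary \ref{cor:Barcobar} a $3$-homotopy between them is the same as an $\infty$-homotopy, and by Proposition \ref{prebcb} such an $\infty$-homotopy is the same datum as a Maurer--Cartan element of the completed convolution algebra $\Hom(\check B\Omega C,\Omega\check B\Omega C)\hat\otimes I^\infty$ restricting to the two given maps at the evaluation vertices. The heart of the argument is the construction of this Maurer--Cartan element. I would build it from the classical contracting homotopy of the (co)bar resolution: the extended bar construction is cofree, hence comes with a canonical projection onto its cogenerators together with the filtration by word length, and dually (passing to pseudocompact algebras) this is precisely the extra degeneracy of the two-sided bar resolution. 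This degree $-1$ operator supplies the $s$-component of the desired homotopy; its higher components (indexed by the basis $s,t,st,ts,sts,\dots$ of $I^\infty$) are then produced by an obstruction-theoretic, homological-perturbation recursion, each step being solvable because the contracting homotopy trivialises the relevant cohomology.

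With part (1) in hand, part (2) is purely formal. Fix a curved algebra $A$ and apply part (1) to the curved coalgebra $\check B A$: the unit $\eta_{\check B A}\colon\check B A\to\check B\Omega\check B A$ is an MC equivalence. The second triangle identity reads $\check B(\epsilon_A)\circ\eta_{\check B A}=\id_{\check B A}$, and since identities are MC equivalences, the two-out-of-three property forces $\check B(\epsilon_A)$ to be an MC equivalence. By the equivalence of (1) and (5) in Proposition \ref{prop:MCcharacterization2}, $\epsilon_A$ itself is an MC equivalence.

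The main obstacle is the construction in the second paragraph. In the classical conilpotent, uncurved situation the extra degeneracy is a familiar contracting homotopy and one is content with a quasi-isomorphism; here neither conilpotence nor the notion of quasi-isomorphism is available, and one must instead assemble a genuine $\infty$-homotopy valued in $\hat\otimes I^\infty$ out of the bare degree $-1$ operator. The technical points to verify are that the contracting homotopy is continuous for the pseudocompact topology on the non-conilpotent cofree coalgebra $\check B\Omega C$, that it is compatible with the curvature of $A$, and that the perturbation recursion producing the higher $I^\infty$-components converges. It is exactly the coarseness of the notion of $3$-homotopy ($=\infty$-homotopy) equivalence, as opposed to quasi-isomorphism, that makes this explicit homotopy sufficient to conclude.
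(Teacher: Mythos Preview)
Your treatment of part (2) is identical to the paper's: both use the triangle identity $\check B(\epsilon_A)\circ\eta_{\check BA}=\id$, two-out-of-three, and the fact that $\check B$ reflects MC equivalences.

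For part (1), however, the paper takes a substantially different route and avoids the explicit homotopy construction you sketch. Rather than building an $\infty$-homotopy by hand, the paper invokes two pieces of prior machinery. First, it quotes from \cite{GL20} that the unit $\eta:C\to\check B\Omega C$ already induces a triangulated equivalence $\Dcoderivedc(C)\simeq\Dcoderivedc(\check B\Omega C)$, hence $\Omega(\eta)$ induces an equivalence on $\Dcoderivedc$. Second, it uses Lemma~\ref{lem:MCequiv}, which says that a pair of maps of curved algebras inducing mutually inverse equivalences on $\Dcoderivedc$ are automatically MC equivalences; this lemma in turn rests on Proposition~\ref{prop:convolution} (stability of $\Dcoderivedc$-equivalences under forming convolution algebras $\Hom(C,-)$). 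Combined with the triangle-identity retract $\epsilon_{\Omega C}\circ\Omega(\eta_C)=\id$, this gives $\Omega(\eta)$ as an MC equivalence without ever writing down a homotopy.

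What this buys the paper is precisely the avoidance of your ``main obstacle''. Your proposal reduces correctly to producing an MC element in $\Hom(\check B\Omega C,\Omega\check B\Omega C)\hat\otimes I^\infty$, but the sketch of how to get it---contracting homotopy on cogenerators plus a perturbative recursion over the basis of $I^\infty$---is not actually carried out, and you yourself flag the nontrivial issues (continuity for the pseudocompact topology on the non-conilpotent $\check B$, compatibility with curvature, convergence of the recursion). In the conilpotent uncurved case one can indeed push such arguments through, but here neither conilpotence nor a filtration argument is available, and it is not clear that ``the contracting homotopy trivialises the relevant cohomology'' in any sense strong enough to solve the MC equation stage by stage. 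So as written this is a genuine gap: the hard analytic step is asserted rather than done. The paper's route through $\Dcoderivedc$ and Lemma~\ref{lem:MCequiv} trades that explicit construction for an appeal to module--comodule Koszul duality results already established in \cite{GL20} and the II-Morita machinery of Section~\ref{section:twisted}.
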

Before we prove this, we first give a preliminary lemma.
\begin{lem}\label{lem:MCequiv}
	Let $A,B$ be curved algebras and $f:A\to B$, $g:B\to A$ a good {\normalfont II}-Morita equivalence. Then both $f$ and $g$ are MC equivalences.
\end{lem}	
\begin{proof}By symmetry we may consider the statement for $f$ only. Letting $C$ be any curved coalgebra, Proposition \ref{IIMEintprop} shows that $\MCdg(C,f)$ is a quasi-equivalence. Passing to $H^0$ we obtain a triangle equivalence $H^0\MCdg(\Hom(C,A)) \to H^0\MCdg(\Hom(C,A'))$, and thus in particular a bijection on isomorphism classes $\MCmod({C}, A)\cong\MCmod({C},A')$. Since $C$ was arbitrary, $A$ and $A'$ are hence MC equivalent.
\end{proof}

\begin{proof}[Proof of Theorem \ref{thm:barcobarequiv}]
	Let us prove (1).
	According to Proposition \ref{prop:MCcharacterization1}(5) it suffices to show that 
	$\Omega(\eta):\Omega(C)\to\Omega\check{B}\Omega(C)$ is an MC equivalence. Recall that the map $\eta:C\to\check{B}\Omega(C)$ induces an equivalence of triangulated categories 
	$\Dcoderivedc(C)\to \Dcoderivedc(\check{B}\Omega(C))$, cf. \cite{GL20}. Therefore $\Omega(\eta): \Omega(C)\to\Omega\check{B}\Omega(C)$ induces an equivalence 
	$\Dcoderivedc(\Omega(C))\to\Dcoderivedc(\Omega\check{B}(\Omega(C)))$. Moreover, by the zigzag identities for adjunctions, the map $\Omega(\eta)$ admits a one-sided inverse, so that there is a pair of maps 
	\[
	\Omega(C)\rightleftarrows\Omega\check{B}\Omega(C)
	\]
	that gives rise to mutually quasi-inverse functors on $\Dcoderivedc$. Hence this pair is a good {\normalfont II}-Morita equivalence. Applying Lemma \ref{lem:MCequiv}, we conclude that $\Omega(C)$ and $\Omega\check{B}\Omega(C)$ are MC equivalent via the map $\Omega(\eta)$, as required.

	Claim (2) is a formal consequence of (1). Indeed, it suffices, by Proposition \ref{prop:MCcharacterization2}(5), to show that  $\check{B}(\epsilon):\check{B}\Omega\check{B}(A)\to\check{B}(A)$ is an MC equivalence. By the zigzag identities again, combined with two-out-of-three for MC equivalences, $\check B (\epsilon)$ is an MC equivalence if and only if the map $\eta\circ \check B: \check B A \to \check{B}\Omega\check{B}(A)$ is an MC equivalence. This holds by part (1).
\end{proof}	

\begin{cor}\label{MCisDII}\hfill
	\begin{enumerate}
		\item If $f:C \to C'$ is an MC equivalence between two curved coalgebras then $\Dcoderivedc(f)$ is a triangle equivalence.
		\item If $f:A \to A'$ is an MC equivalence between two curved algebras then $\Dcoderivedc(f)$ is a triangle equivalence.
	\end{enumerate}
\end{cor}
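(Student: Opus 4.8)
The plan is to reduce each statement to a $3$-homotopy equivalence of the Koszul dual object, feed that into the coderived-invariance results of Section \ref{section:twisted}, and then transport the resulting equivalence back across bar--cobar. The two bridges are the characterisations of MC equivalences in Propositions \ref{prop:MCcharacterization1} and \ref{prop:MCcharacterization2}, which turn an MC equivalence into a $3$-homotopy equivalence of the dual object, together with the Koszul duality equivalences $\Dcoderived(C)\simeq\Dcoderivedc(\Omega C)$ and $\Dcoderivedc(A)\simeq\Dcoderived(\check BA)$ recorded earlier, used in their natural form.

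\emph{Part (1).} Let $f\colon C\to C'$ be an MC equivalence of curved coalgebras. By Proposition \ref{prop:MCcharacterization1}(4) the map $\Omega(f)\colon\Omega C\to\Omega C'$ is a $3$-homotopy equivalence of curved algebras; pick a $3$-homotopy inverse $g'\colon\Omega C'\to\Omega C$. By functoriality of induction together with Proposition \ref{prop:3homotopy}, the relations $g'\circ\Omega(f)\sim_3\id$ and $\Omega(f)\circ g'\sim_3\id$ give $g'_*\circ\Omega(f)_*\cong\id$ and $\Omega(f)_*\circ g'_*\cong\id$, so that $\Omega(f)_*\colon\Dcoderivedc(\Omega C)\xrightarrow{\ \sim\ }\Dcoderivedc(\Omega C')$ is a triangle equivalence (this is just Corollary \ref{cor:3homotopy} made explicit). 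Invoking the naturality of $\Dcoderived(C)\simeq\Dcoderivedc(\Omega C)$ yields a square, commuting up to natural isomorphism,
\[
\begin{tikzcd}
\Dcoderived(C)\ar[r,"\sim"]\ar[d,"\Dcoderivedc(f)"']&\Dcoderivedc(\Omega C)\ar[d,"\Omega(f)_*"]\\
\Dcoderived(C')\ar[r,"\sim"]&\Dcoderivedc(\Omega C')
\end{tikzcd}
\]
in which the horizontal arrows and the right-hand vertical arrow are equivalences; hence the left-hand vertical arrow is a triangle equivalence.

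\emph{Part (2).} This is dual. If $f\colon A\to A'$ is an MC equivalence of curved algebras, then by Proposition \ref{prop:MCcharacterization2}(4) the map $\check B(f)\colon\check BA\to\check BA'$ is a $3$-homotopy equivalence of curved coalgebras, so Corollary \ref{cor:3cohomotopy} (made explicit by the same functoriality argument as above) produces a triangle equivalence on coderived categories of $\check BA$ and $\check BA'$. Transporting along the natural equivalence $\Dcoderivedc(A)\simeq\Dcoderived(\check BA)$, via the analogous commuting square, shows that the functor induced by $f$ is a triangle equivalence. In both parts the precise variance of the induced functor (corestriction versus coinduction, induction versus restriction) is immaterial: once one member of an adjoint pair is shown to be an equivalence, its adjoint is an inverse equivalence, so the conclusion holds for whichever functor $\Dcoderivedc(f)$ denotes.

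\emph{Main obstacle.} The only delicate point is that the Koszul duality equivalences $\Dcoderived(C)\simeq\Dcoderivedc(\Omega C)$ and $\Dcoderivedc(A)\simeq\Dcoderived(\check BA)$ must be natural in $C$ and $A$ respectively, so that the two transfer squares genuinely commute up to natural isomorphism; everything else is a diagram chase. This naturality is exactly what the constructions of \cite{GL20} provide (compare also \cite[\S 6.7]{Positselski11}), since the comparison functors there are built from the canonical twisting cochains $C\to\Omega C$ and $A\to\check BA$ and are therefore compatible with morphisms of (co)algebras. Granting it, the proof is immediate.
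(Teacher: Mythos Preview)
Your proof is correct and close in spirit to the paper's, but the execution differs in a way worth noting. You cross to the Koszul dual side: for (1) you use $\Dcoderived(C)\simeq\Dcoderivedc(\Omega C)$ and Corollary~\ref{cor:3homotopy} applied to the $3$-homotopy equivalence $\Omega(f)$ of \emph{algebras}; for (2) you dualise. The paper instead stays on the same side throughout: for (1) it uses the unit $\eta_C\colon C\to\check B\Omega C$, a map of \emph{coalgebras}, to form the square
\[
\begin{tikzcd}
\Dcoderivedc(C)\ar[r,"\Dcoderivedc(\eta_C)"]\ar[d,"\Dcoderivedc(f)"']&\Dcoderivedc(\check B\Omega C)\ar[d,"\Dcoderivedc(\check B\Omega f)"]\\
\Dcoderivedc(C')\ar[r,"\Dcoderivedc(\eta_{C'})"]&\Dcoderivedc(\check B\Omega C'),
\end{tikzcd}
\]
and then invokes Corollary~\ref{cor:3cohomotopy} for the coalgebra $3$-homotopy equivalence $\check B\Omega f$. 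The payoff of the paper's choice is that commutativity of the square is literally just naturality of the adjunction unit $\eta$; there is nothing to check. Your route requires the naturality of the module--comodule Koszul duality equivalence, which you rightly flag as the one delicate point. That naturality does hold (it follows from the construction via the universal twisting cochain in \cite{GL20} and \cite[\S6.7]{Positselski11}), so your argument goes through, but the paper's version sidesteps the issue entirely by never leaving the coalgebra world. Your remark about adjoint pairs handling the variance ambiguity is well taken and applies equally to the paper's argument.
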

\begin{proof}
	We begin with (1). Applying the natural transformation $\eta:\id\to \check B \Omega$ to $f$, followed by the functor $\Dcoderivedc$, yields a commutative square of triangulated categories 
	$$\begin{tikzcd} \Dcoderivedc(C) \ar[d,"\Dcoderivedc (f)", swap]\ar[r,"\Dcoderivedc (\eta_C)"]& \Dcoderivedc(\check B \Omega C)	\ar[d,"\Dcoderivedc(\check B \Omega f)"]\\
		\Dcoderivedc(C') \ar[r,"\Dcoderivedc (\eta_{C'})"]& \Dcoderivedc(\check B \Omega C').	\end{tikzcd}$$As in the proof of \ref{thm:barcobarequiv}, the maps running horizontally are triangle equivalences. Because $\check B \Omega f$ is a 3-homotopy equivalence by \ref{prop:MCcharacterization1} and \ref{prop:MCcharacterization2}, it follows from \ref{cor:3cohomotopy} that $\Dcoderivedc(\check B \Omega f)$ is a triangle equivalence. Hence $\Dcoderivedc(f)$ is a triangle equivalence, as desired. The proof of (2) is dual and uses \ref{cor:3homotopy} instead.
\end{proof}

\begin{rem}\label{mfrmk}
	Let $R$ be a commutative $\ground$-algebra and let $w\in R$. As in \cite{caldararutu, tu, becker} the triangulated category $\mathrm{MF}(R,w)$ of matrix factorisations of $w$ can be identified as the category of finite rank twisted modules over the $\Z/2$-graded curved algebra $R_w$, which is given by $R$ placed in even degree with zero differential and curvature element $w$. When $R$ is a noetherian regular complete local $\ground$-algebra, then $\mathrm{MF}(R,w)$ is equivalent to the singularity category of the hypersurface singularity $R/w$. Suppose that $R'$ is another commutative ring and $w'\in R'$. The above result shows that if the curved algebras $R_w$ and $R'_{w'}$ are MC equivalent, then $\mathrm{MF}(R,w)$ and $\mathrm{MF}(R',w')$ are triangle equivalent. This can be enhanced to an equivalence of $\Z/2$-graded dg categories. 
	\end{rem}

\subsection{Maurer--Cartan equivalences and dg categories}
Recall that the MC moduli set $\MCmod(C,A)$ is the set of isoclasses in the homotopy category of the dg category $\MCdg(C,A)$. In particular, if a map $f$ of curved algebras induces a quasi-equivalence on $\MCdg(C,f)$ for all curved coalgebras $C$, then $f$ is an MC equivalence, and similarly for coalgebras. We show that the converses of both these statements are also true.

\begin{lem}\label{mccat1}
Let $f:X \to X'$ be an MC equivalence of curved algebras. Then the induced dg functor $f_*:\MCdg(X) \to \MCdg(X')$ is a quasi-equivalence.
	\end{lem}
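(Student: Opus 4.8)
The plan is to show that $f_*$ is both quasi-essentially surjective and quasi-fully faithful, and hence a quasi-equivalence. I will not try to invoke Lemma \ref{pretrqff}, since $\MCdg(X)$ is merely a full subcategory of $\Twfg(X)$ and is not pretriangulated; instead I will verify the two conditions directly.

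For quasi-essential surjectivity, recall that $\Hom(\ground,X)\cong X$, so $\MCdg(\ground,X)=\MCdg(X)$ and $\MCmod(\ground,X)$ is precisely the set of isomorphism classes of $H^0\MCdg(X)$. Taking $C=\ground$ in the definition of an MC equivalence, the map $\MCmod(\ground,X)\to\MCmod(\ground,X')$ induced by $f$ is a bijection; but this map is exactly the map on isoclasses induced by $H^0f_*$. Surjectivity then gives that $H^0f_*$ is essentially surjective.

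The substantive point is quasi-full-faithfulness: for MC elements $x,y$ of $X$ I must show that the induction map
\[
\underline{\Hom}_X({}^{[x]}X,{}^{[y]}X)\longrightarrow \underline{\Hom}_{X'}({}^{[f(x)]}X',{}^{[f(y)]}X')
\]
is a quasi-isomorphism. The idea is to identify these hom-complexes with derived homs in the compactly generated coderived category. Each ${}^{[x]}X$ is a finitely generated twisted module, hence cofibrant in the compactly generated model structure on $X$-$\Mod$ of \cite{GL20} (it is trivially the union of its f.g.\ twisted submodules), and it is fibrant because every object is (fibrations are the surjections). Consequently $H^i\underline{\Hom}_X({}^{[x]}X,{}^{[y]}X)\cong\Hom_{\Dcoderivedc(X)}({}^{[x]}X,{}^{[y]}X[i])$, and similarly over $X'$. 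By Proposition \ref{prop:adjunctmod} the induction functor $f_*$ is left Quillen, so on these cofibrant objects it computes the derived functor $\Dcoderivedc(f)$, and the chain-level map above represents $\Dcoderivedc(f)$ on morphism spaces. Since $f$ is an MC equivalence, Corollary \ref{MCisDII}(2) tells us $\Dcoderivedc(f)$ is a triangle equivalence, in particular fully faithful, so it induces bijections $\Hom_{\Dcoderivedc(X)}({}^{[x]}X,{}^{[y]}X[i])\cong\Hom_{\Dcoderivedc(X')}({}^{[f(x)]}X',{}^{[f(y)]}X'[i])$ for all $i$. Reading this through the identifications above shows the displayed map is a quasi-isomorphism, as required.

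The main obstacle I anticipate is the bookkeeping needed to justify that the dg hom-complexes of $\MCdg$ genuinely compute $\mathrm{RHom}$ in $\Dcoderivedc$ --- i.e.\ the cofibrancy and fibrancy of the modules ${}^{[x]}X$ in the compactly generated model structure --- together with the compatibility ensuring that the triangle equivalence $\Dcoderivedc(f)$ is realised on morphisms precisely by the chain-level induction map, so that a bijection of homotopy-category Hom-sets upgrades to a quasi-isomorphism of complexes. One may also first decompose $f$ as an uncurved map followed by a curved isomorphism (the latter inducing an isomorphism of $\MCdg$) to reduce to the uncurved case and make the identification $f_*{}^{[x]}X={}^{[f(x)]}X'$ transparent.
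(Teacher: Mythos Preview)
Your proof is correct and follows the same overall strategy as the paper: quasi-essential surjectivity from the bijection $\MCmod(\ground,X)\to\MCmod(\ground,X')$, and quasi-full-faithfulness from the triangle equivalence $\Dcoderivedc(f)$ supplied by Corollary~\ref{MCisDII}(2). The only difference is in how the quasi-full-faithfulness step is packaged. You argue directly that the objects ${}^{[x]}X$ are bifibrant in the compactly generated model structure, so the dg hom-complexes in $\MCdg(X)$ compute derived homs, and then read off the quasi-isomorphism from full faithfulness of $\Dcoderivedc(f)$ at all shifts. The paper instead passes through the pretriangulated dg category $\Perfcoderiveddgc(X)$: the triangle equivalence on $\Dcoderivedc$ restricts to one on $\Perfcoderivedc$, then Lemma~\ref{pretrqff} upgrades this to a quasi-equivalence $\Perfcoderiveddgc(X)\to\Perfcoderiveddgc(X')$, and restricting to the full dg subcategory $\MCdg(X)$ gives quasi-full-faithfulness. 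So your worry about applying \ref{pretrqff} to $\MCdg(X)$ itself is well-founded but moot --- the paper never does that. Your route trades the appeal to \ref{pretrqff} for a direct model-categorical justification, which is arguably more self-contained; the paper's route has the advantage of cleanly isolating the passage from a triangle equivalence of homotopy categories to a quasi-equivalence of dg categories as a separate lemma.
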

 \begin{proof}
 Since there is a natural isomorphism $X\cong \Hom(\ground,X)$ of curved algebras, we have a natural isomorphism $\MCdg(X)\cong \MCdg(\ground,X)$ of dg categories. Because $f$ was an MC equivalence, it follows that $H^0(f_*)$ is a bijection on isoclasses. In particular, $H^0(f_*)$ must be essentially surjective, and so $f_*$ is quasi-essentially surjective. By \ref{MCisDII}, the natural map $f_*:\Dcoderivedc(X) \to \Dcoderivedc(X')$ is a triangle equivalence, which by taking compact objects restricts to a triangle equivalence $f_*:\Perfcoderivedc(X) \to \Perfcoderivedc(X')$. By \ref{pretrqff}, the natural dg functor $f_*:\Perfcoderiveddgc(X) \to \Perfcoderiveddgc(X')$ is hence a quasi-equivalence, and in particular quasi-fully faithful. Hence its restriction $f_*:\MCdg(X) \to \MCdg(X')$ is quasi-fully faithful, and the claim follows.
 	\end{proof}

\begin{lem}\label{mcequivtensor}
	Let $C$ and $C'$ be curved coalgebras and $A$ and $A'$ be curved algebras. \begin{enumerate}
		\item If $A\to A'$ is an MC equivalence then the induced map $$\Hom(C,A) \to \Hom(C,A')$$ is an MC equivalence of algebras, for any $C$.
	\item If $C\to C'$ is an MC equivalence then the induced map $$\Hom(C',A) \to \Hom(C,A)$$ is an MC equivalence of algebras, for any $A$.
	\end{enumerate}
	\end{lem}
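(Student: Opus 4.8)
The plan is to reduce both statements directly to the definition of an MC equivalence, exploiting the hom--tensor adjunction for convolution algebras, namely the natural isomorphism $\Hom(C\otimes D,A)\cong\Hom(C,\Hom(D,A))$, which is natural in all three arguments. The point is that $\Hom(C,A)$ is itself a genuine curved algebra, so $\MCmod(D,\Hom(C,A))$ makes sense for an arbitrary test coalgebra $D$, and the adjunction lets me re-express such an $\MCmod$-set by absorbing one factor into the tensor product.

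For part (1) I would test $\Hom(C,A)\to\Hom(C,A')$ against the definition: for an arbitrary curved coalgebra $D$ I must show the induced map $\MCmod(D,\Hom(C,A))\to\MCmod(D,\Hom(C,A'))$ is a bijection. The adjunction gives a natural isomorphism of convolution algebras $\Hom(D,\Hom(C,A))\cong\Hom(D\otimes C,A)$, and applying $\MCdg$, then $H^0$, then passing to isoclasses yields a natural identification $\MCmod(D,\Hom(C,A))\cong\MCmod(D\otimes C,A)$. Under this identification the map in question is exactly the map $\MCmod(D\otimes C,A)\to\MCmod(D\otimes C,A')$ induced by $A\to A'$, which is a bijection precisely because $A\to A'$ is an MC equivalence of algebras, applied to the test coalgebra $D\otimes C$. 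For part (2) I would run the same reduction but keep track of both tensor factors: combining the adjunction with the symmetry isomorphism $D\otimes C'\cong C'\otimes D$ gives natural identifications $\MCmod(D,\Hom(C',A))\cong\MCmod(D\otimes C',A)\cong\MCmod(C',\Hom(D,A))$, and likewise with $C$ in place of $C'$. Under these, the map induced by $\Hom(C',A)\to\Hom(C,A)$ becomes the map $\MCmod(C',\Hom(D,A))\to\MCmod(C,\Hom(D,A))$ induced by $g\colon C\to C'$, a bijection because $g$ is an MC equivalence of coalgebras, applied now to the test algebra $\Hom(D,A)$.

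The only real work is bookkeeping, and I expect no genuine obstacle. I would need to check that the adjunction isomorphism is compatible with the maps $A\to A'$ and $g$, so that the identifications of $\MCmod$-sets carry the induced map to the induced map, and that the symmetry isomorphism intertwines $\id_D\otimes g$ with $g\otimes\id_D$; both follow from the naturality already recorded for the convolution hom--tensor adjunction. The main subtlety to get right is the variance: $\MCmod(-,A)$ is contravariant in the coalgebra variable while $\MCmod(C,-)$ is covariant in the algebra variable, so care is needed to ensure the induced maps line up in the correct directions before invoking the hypotheses.
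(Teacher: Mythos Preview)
Your proposal is correct and matches the paper's proof essentially line for line: both parts are handled by applying the hom--tensor adjunction for convolution algebras to rewrite $\MCmod(D,\Hom(C,A))$ as $\MCmod(D\otimes C,A)$ (and, for part (2), using the symmetry $D\otimes C\cong C\otimes D$ to further rewrite this as $\MCmod(C,\Hom(D,A))$), then invoking the MC equivalence hypothesis with the appropriate test object. The naturality and variance checks you flag are exactly the bookkeeping the paper leaves implicit.
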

\begin{proof}
	We begin with (1). Letting $D$ be a test curved coalgebra, we need to show that the induced map $\MCmod(D,\Hom(C,A)) \to \MCmod(D,\Hom(C,A')) $ is a bijection. But $\Hom(D,\Hom(C,A))$ is functorially isomorphic to $\Hom(D\otimes C, A)$ by the hom-tensor adjunction for convolution algebras, and hence $\MCmod(D,\Hom(C,A))$ is functorially isomorphic to $\MCmod(D\otimes C, A)$. So the claim is equivalent to the statement that the induced map $\MCmod(D\otimes C,A) \to \MCmod(D\otimes C,A') $ is a bijection, which holds because $A \to A'$ is an MC equivalence. The proof of (2) is very similar: because there is a natural isomorphism $D\otimes C \cong C\otimes D$ of coalgebras, there is a natural isomorphism $\Hom(D,\Hom(C,A))\cong \Hom(C,\Hom(D,A))$ of algebras (it is perhaps easier to see this when one thinks of $\Hom(C,A)$ as the completed tensor product $C^*\hat\otimes A$). In particular, there is a functorial isomorphism $\MCmod(D,\Hom(C,A))\cong \MCmod(C,\Hom(D,A))$ and the claim follows as before.
	\end{proof}

\begin{prop}\label{mccat2}
	Let $C\to C'$ be a morphism of curved coalgebras and let $A\to A'$ be a morphism of curved algebras.
	\begin{enumerate}
		\item The following are equivalent: 
		\begin{enumerate}
			\item $C \to C'$ is an MC equivalence.
			\item For all curved algebras $E$, the induced map $\MCdg(C',E) \to \MCdg(C,E)$ is a quasi-equivalence.
		\end{enumerate}
			\item The following are equivalent: 
			\begin{enumerate}
				\item $A \to A'$ is an MC equivalence.
				\item For all curved coalgebras $D$, the induced map $\MCdg(D,A) \to \MCdg(D,A')$ is a quasi-equivalence.
			\end{enumerate}
	\end{enumerate}
	\end{prop}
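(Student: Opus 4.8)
The plan is to deduce both parts formally from the algebra-level Lemma \ref{mccat1} together with the transfer Lemma \ref{mcequivtensor}, exploiting the defining identity $\MCdg(C,A)=\MCdg(\Hom(C,A))$ so that every assertion about the bifunctor $\MCdg(-,-)$ reduces to a statement about $\MCdg$ of a single convolution algebra. In this way the entire proposition becomes a bookkeeping exercise built on top of the two lemmas just proved.

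First I would dispatch the easy implications, namely (b) $\Rightarrow$ (a) in both parts; these are already flagged in the discussion preceding the proposition. If $\MCdg(C',E)\to\MCdg(C,E)$ is a quasi-equivalence for every curved algebra $E$, then in particular $H^0$ of it is a triangle equivalence, hence a bijection on isomorphism classes. Since by definition $\MCmod(C,E)$ is the set of isoclasses of objects of $H^0\MCdg(C,E)$, this yields a bijection $\MCmod(C',E)\to\MCmod(C,E)$ for all $E$, which is exactly the statement that $C\to C'$ is an MC equivalence. The same argument, read covariantly in the algebra variable, gives (b) $\Rightarrow$ (a) in part (2).

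For the converse (a) $\Rightarrow$ (b) of part (1), I would assume $C\to C'$ is an MC equivalence and fix a curved algebra $E$. By Lemma \ref{mcequivtensor}(2) the induced map of convolution algebras $\Hom(C',E)\to\Hom(C,E)$ is then an MC equivalence of curved algebras (the direction reverses because $\Hom(-,E)$ is contravariant in the coalgebra variable). Applying Lemma \ref{mccat1} to this map produces a quasi-equivalence $\MCdg(\Hom(C',E))\to\MCdg(\Hom(C,E))$, which is precisely the map $\MCdg(C',E)\to\MCdg(C,E)$ by the definition of $\MCdg(-,-)$, establishing (b). Part (2) is entirely dual: assuming $A\to A'$ is an MC equivalence, Lemma \ref{mcequivtensor}(1) shows $\Hom(D,A)\to\Hom(D,A')$ is an MC equivalence of curved algebras for every $D$, and Lemma \ref{mccat1} upgrades this to the desired quasi-equivalence $\MCdg(D,A)\to\MCdg(D,A')$.

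Because the genuine analytic content has already been isolated in Lemmas \ref{mccat1} and \ref{mcequivtensor}, I do not expect any serious obstacle here. The only point requiring a moment's care is the naturality needed to identify the quasi-equivalence emitted by Lemma \ref{mccat1} with the functorially induced map $\MCdg(C',E)\to\MCdg(C,E)$; this is immediate from the functoriality of $A\mapsto\MCdg(A)$ and of the convolution-algebra construction, so it amounts to a compatibility check rather than a substantive argument.
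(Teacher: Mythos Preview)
Your proposal is correct and follows essentially the same approach as the paper: both directions (b)$\Rightarrow$(a) are handled by passing to isoclasses in the homotopy category, and the directions (a)$\Rightarrow$(b) are obtained by combining Lemma~\ref{mcequivtensor} with Lemma~\ref{mccat1}, exactly as you describe.
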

\begin{proof}
	We begin with (1). Assume that (a) holds and pick a test curved algebra $E$. By \ref{mcequivtensor}, the induced map $g:\Hom(C',E) \to \Hom(C,E)$ is an MC equivalence of curved algebras. We conclude that (b) holds by applying \ref{mccat1} to $g$. It is easy to see that (b) implies (a), by taking isoclasses in the homotopy category. The proof of (2) is similar.
	\end{proof}

\begin{cor}\label{mccogtensor}
	Let $f:C\to C'$ be an MC equivalence of curved coalgebras and let $X$ be any curved coalgebra. Then $f\otimes X:C\otimes X \to C' \otimes X$ is an MC equivalence. 
	\end{cor}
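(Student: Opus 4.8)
The plan is to verify the defining property of an MC equivalence directly: for an arbitrary curved algebra $A$ I must show that the map $\MCmod(C'\otimes X,A)\to\MCmod(C\otimes X,A)$ induced by $f\otimes X$ is a bijection. The idea is to transport the tensor factor $X$ onto the algebra side via the hom-tensor adjunction for convolution algebras, so that the statement reduces to something already established in \ref{mcequivtensor}, rather than having to test $f$ against a convolution algebra sitting in the second slot.

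Concretely, I would proceed in three steps. First, apply Lemma \ref{mcequivtensor}(2) to the MC equivalence $f\colon C\to C'$: this shows that the induced map of convolution algebras $\Hom(C',A)\to\Hom(C,A)$ is an MC equivalence of algebras, for the fixed $A$. Second, unwind the definition of an MC equivalence of algebras against the test curved coalgebra $X$, which yields a bijection
\[
\MCmod(X,\Hom(C',A))\xrightarrow{\ \cong\ }\MCmod(X,\Hom(C,A)).
\]
Third, use the hom-tensor adjunction $\Hom(X\otimes C',A)\cong\Hom(X,\Hom(C',A))$ together with the natural symmetry isomorphism $X\otimes C'\cong C'\otimes X$ of curved coalgebras (and likewise with $C$) to identify
\[
\MCmod(X,\Hom(C',A))\cong\MCmod(C'\otimes X,A),\qquad \MCmod(X,\Hom(C,A))\cong\MCmod(C\otimes X,A).
\]
Combining these identifications with the bijection of the second step gives the desired bijection $\MCmod(C'\otimes X,A)\cong\MCmod(C\otimes X,A)$ for every $A$, whence $f\otimes X$ is an MC equivalence.

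The one point requiring care — and the step I would treat most carefully — is the naturality bookkeeping in the third step: one must check that the chain of adjunction and symmetry isomorphisms intertwines the map induced by the algebra morphism $\Hom(f,A)$ with the map induced by $f\otimes X$, so that the bijection produced in the second step really is the map under consideration. This is a routine diagram chase from the naturality of the hom-tensor adjunction. A subordinate subtlety is that $\Hom(C,A)$ and $\Hom(C',A)$ are convolution algebras (pro-objects in curved algebras) rather than honest curved algebras; this causes no difficulty, however, since \ref{mcequivtensor} is already formulated at exactly this level of generality, and the final identifications return everything to $\MCmod(\,\cdot\,,A)$ with the honest curved algebra $A$ in the second slot.
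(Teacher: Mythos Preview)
Your proof is correct and rests on the same core maneuver as the paper's --- the hom-tensor adjunction for convolution algebras --- but you organize it differently. The paper moves the factor $X$ across: it identifies $\Hom(C\otimes X,A)\cong\Hom(C,\Hom(X,A))$, which turns $\MCdg(f\otimes X,A)$ into $\MCdg(f,\Hom(X,A))$, and then invokes the characterisation of MC equivalences from Proposition~\ref{mccat2}(1) with $E=\Hom(X,A)$ directly. You instead move $C,C'$ across, which forces you to first upgrade $f$ to an algebra MC equivalence via Lemma~\ref{mcequivtensor}(2), then test that against $X$, and finally unwind hom-tensor (with a symmetry swap) to get back to $f\otimes X$. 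Your route works but is a small detour: Lemma~\ref{mcequivtensor}(2) itself is proved by exactly the hom-tensor manipulation you then perform again in your third step, so you are effectively applying the adjunction twice where the paper applies it once. The naturality bookkeeping you flag is indeed routine, and your remark about pro-objects is harmless since $\MCmod$ and $\MCdg$ are insensitive to this (cf.\ Remark~\ref{proMC}).
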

\begin{proof}
	Let $A$ be a test curved algebra. We have an isomorphism $\Hom(C\otimes X,A)\cong \Hom(C,\Hom(X,A))$ which identifies the map $\MCdg(f\otimes X,A)$ with $\MCdg(f,\Hom(X,A))$. By hypothesis this latter map is a quasi-equivalence and hence $f\otimes X$ is an MC equivalence, as desired.
	\end{proof}
The class of MC equivalences sits in between the {\normalfont II}-Morita equivalences and the good {\normalfont II}-Morita equivalences:
    \begin{prop}
        Let $f: A \to B$ be a morphism of curved algebras. If $f$ is an MC equivalence then it is a {\normalfont II}-Morita equivalence. If $f$ is part of a good {\normalfont II}-Morita equivalence then it is an MC equivalence.\footnote{In an earlier version of this paper, we erroneously claimed that a morphism is an MC equivalence precisely when it is a {\normalfont II}-Morita equivalence. We would like to thank Patrick Antweiler, Julian Holstein, and Kristoffer Rasmussen for pointing out this mistake.}
    \end{prop}
    \begin{proof}
       Combine \ref{MCisDII} with \ref{lem:MCequiv}.
    \end{proof}

  The following lemma is useful.
\begin{lem}\label{inftyhtpyequiv}
	Let $C$ be a curved coalgebra and $A$ be a curved algebra. Let $3\leq n \leq \infty$. Then: \begin{enumerate}
		\item Elementary $n$-homotopy is an equivalence relation on $\Hom(\Omega C,A)$.
		\item Elementary $n$-homotopy is an equivalence relation on $\Hom(C,\check BA)$.
	\end{enumerate}
\end{lem}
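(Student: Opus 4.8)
The plan is to reduce both assertions to a single statement about Maurer--Cartan elements and then identify elementary $n$-homotopy with a genuine categorical isomorphism relation, which is automatically an equivalence relation.

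First I would invoke Proposition \ref{prebcb}: the bijections $\Hom(\Omega C,A)\cong\MC\Hom(C,A)\cong\Hom(C,\check BA)$ respect elementary $n$-homotopies. Hence both (1) and (2) follow once we show that elementary $n$-homotopy is an equivalence relation on the set $\MC(B)$ of Maurer--Cartan elements of the curved convolution algebra $B\coloneqq\Hom(C,A)$; note that $\MCdg(B)=\MCdg(C,A)$, so the ambient categorical data is exactly the MC dg category studied above.

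The key claim I would isolate is that, for $3\leq n\leq\infty$, two elements $x,y\in\MC(B)$ are related by a single elementary $n$-homotopy precisely when they are isomorphic as objects of $H^0\MCdg(B)$. Granting this, elementary $n$-homotopy coincides with the relation ``isomorphic in $H^0\MCdg(B)$'', which is reflexive, symmetric and transitive, and the lemma follows. For the forward implication I would expand an elementary $n$-homotopy $H\in\MC(B\otimes I^n)$ in the basis of $I^n$ exactly as in the proof of Proposition \ref{prop:3homotopy}: since $n\geq3$ the elements $s,t,st,ts$ all survive in $I^n$, so the $s$- and $t$-components of $H$ give closed degree-zero maps $x\to y$ and $y\to x$ in $\MCdg(B)$ that are mutually inverse up to the homotopies recorded by the $st$- and $ts$-components, exhibiting $x\cong y$ in $H^0\MCdg(B)$.

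The reverse implication, from an isomorphism in $H^0\MCdg(B)$ to an actual elementary homotopy, is the substantive point and I expect it to be the main obstacle. Here I would appeal to \cite[Theorem 5.1]{CHL21}, whose explicit construction (carried out on components in \cite[Lemma 5.3]{CHL21}, cf.\ the proof of Lemma \ref{mccohomlem}) upgrades an up-to-homotopy inverse into a genuine Maurer--Cartan element of $B\hat\otimes I^\infty$, i.e.\ an elementary $\infty$-homotopy between $x$ and $y$; as remarked in the proof of Proposition \ref{prop:MC}, this argument is insensitive to curvature because it refers only to the MC dg category. This settles the case $n=\infty$ directly, and for finite $n\geq3$ I would compose with the projection $B\hat\otimes I^\infty\to B\otimes I^n$ induced by the quotient $I^\infty\twoheadrightarrow I^n$ of Proposition \ref{ddescription}, which carries an elementary $\infty$-homotopy to an elementary $n$-homotopy with the same endpoints. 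The only point requiring care is that $B=\Hom(C,A)$ is in general not pseudocompact, but since the cited construction only uses $\MCdg(B)$ this causes no difficulty.
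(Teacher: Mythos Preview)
Your proposal is correct and follows essentially the same route as the paper: reduce via Proposition \ref{prebcb} to MC elements in the convolution algebra, then identify elementary $n$-homotopy of MC elements (for $n\geq 3$) with isomorphism in $H^0\MCdg(B)$ using \cite[Theorem 5.1]{CHL21}. The paper's one-line argument leaves this identification implicit, relying on the discussion just before the lemma, whereas you spell out both directions and the projection $I^\infty\to I^n$ explicitly.
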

\begin{proof}
	We prove (1); the proof of (2) is similar. By \ref{prebcb}(1), two morphisms $\Omega C \to A$ are $n$-homotopic if and only if their associated MC elements are $n$-homotopic. But if two MC elements are homotopic then their corresponding morphisms are elementary $n$-homotopic.
\end{proof}

\begin{lem}\label{endpointslem}For $3\leq n \leq \infty$, the endpoint inclusions $i_0,i_1:C \into C\otimes I_n$ are MC equivalences.
\end{lem}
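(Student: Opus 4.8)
The plan is to reduce the statement, via two applications of the two-out-of-three property, to the single claim that the counit $c\colon I_n\to\ground$ is an MC equivalence of curved coalgebras, and then to establish that claim using the coderived category computations of Section \ref{section:twisted} together with the cobar translation of MC equivalences.

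First I would reduce to the counit. The endpoint inclusions $i_0,i_1\colon C\to C\otimes I_n$ come from the two vertex maps $\ground\to I_n$, and each is a section of the projection $C\otimes c\colon C\otimes I_n\to C\otimes\ground\cong C$, i.e.\ $(C\otimes c)\circ i_j=\id_C$ for $j=0,1$. Since MC equivalences satisfy two-out-of-three and $\id_C$ is trivially an MC equivalence, it suffices to prove that $C\otimes c$ is an MC equivalence. By Corollary \ref{mccogtensor} this follows once $c\colon I_n\to\ground$ is known to be an MC equivalence, because $C\otimes c$ is, up to the symmetry isomorphism $C\otimes I_n\cong I_n\otimes C$, exactly the map $c\otimes C$.

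The heart of the argument is thus to show that $c\colon I_n\to\ground$ is an MC equivalence for $3\le n\le\infty$. By Proposition \ref{prop:MCcharacterization1} this is equivalent to showing that $\Omega(c)\colon\Omega(I_n)\to\Omega(\ground)=\ground$ is an MC equivalence of curved algebras, and here I would appeal to Lemma \ref{lem:MCequiv}. The vertex inclusion $i_0\colon\ground\to I_n$ is a section of $c$, so $\Omega(i_0)$ is a section of $\Omega(c)$; consequently it is enough to check that $\Omega(c)$ induces a triangle equivalence on compactly generated coderived categories of the second kind, for then $\Omega(i_0)$ automatically induces the inverse equivalence and the hypotheses of Lemma \ref{lem:MCequiv} are met. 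Now Proposition \ref{prop:coderivedinfinity}(1) (for $n<\infty$) and Proposition \ref{prop:Sinfty} (for $n=\infty$) show that $c$ induces an equivalence $\Dcoderivedc(\ground)\simeq\Dcoderivedc(I_n)$; passing through the natural equivalence $\Dcoderivedc(D)\simeq\Dcoderivedc(\Omega D)$ of \cite{GL20}, exactly as in the proof of Theorem \ref{thm:barcobarequiv}, transfers this to the statement that $\Omega(c)$ induces an equivalence $\Dcoderivedc(\Omega I_n)\simeq\Dcoderivedc(\ground)$, which is what Lemma \ref{lem:MCequiv} requires.

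The main obstacle is genuinely the finite case $3\le n<\infty$: there $I_n$ is \emph{not} $3$-contractible (Remark following Proposition \ref{prop:homotopycoalg}), so $c$ is not a $3$-homotopy equivalence and one cannot invoke homotopy-invariance of MC equivalences directly. It is precisely the coderived-category input of Proposition \ref{prop:coderivedinfinity} — that $I_n$ looks like a point at the level of $\Dcoderivedc$ despite being non-contractible — that drives the argument, via the bridge between coderived equivalences and MC equivalences furnished by Lemma \ref{lem:MCequiv} and Proposition \ref{prop:MCcharacterization1}. For $n=\infty$ the argument simplifies: $I_\infty$ is $\infty$-contractible by Proposition \ref{prop:homotopycoalg}(5), hence $3$-contractible, so $c$ is a $3$-homotopy equivalence and therefore an MC equivalence outright; one may treat this case separately and use the uniform coderived argument only for finite $n$.
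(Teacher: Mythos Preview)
Your argument is correct, but it takes a rather different route from the paper's proof. The paper argues directly: for any test curved algebra $A$, setting $E\coloneqq\Hom(C,A)$, one has $\MCmod(C\otimes I_n,A)\cong\MCmod(E\hat\otimes I^n)$, and the objects of $\MCdg(E\hat\otimes I^n)$ are pairs of MC elements of $E$ together with an $n$-homotopy between them. Since for $n\geq 3$ an $n$-homotopy of MC elements is the same as an isomorphism in $H^0\MCdg(E)$, the evaluation map gives a bijection on isoclasses; this is the whole proof.

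Your approach instead reduces to showing $c\colon I_n\to\ground$ is an MC equivalence and then invokes Corollary~\ref{mccogtensor}. This is logically sound---there is no circularity, since \ref{mccogtensor} depends only on \ref{mccat2}, \ref{mccat1}, \ref{MCisDII}, and \ref{thm:barcobarequiv}, none of which use \ref{endpointslem}---but it is a forward reference in the paper's order. You could avoid it entirely: by the hom-tensor adjunction, $\MCmod(C\otimes I_n,A)\cong\MCmod(I_n,\Hom(C,A))$, so it suffices that $c$ be an MC equivalence tested against all curved algebras, which is exactly what you prove. Your verification that $c$ is an MC equivalence via the coderived-category input of Propositions~\ref{prop:Sinfty} and~\ref{prop:coderivedinfinity}(1), transferred through $\Omega$ as in the proof of Theorem~\ref{thm:barcobarequiv}, and then fed into Lemma~\ref{lem:MCequiv}, is correct; the transfer step (that an equivalence on $\Dcoderivedc$ of coalgebras yields one on $\Dcoderivedc$ of their cobars) is indeed the same move the paper makes in proving \ref{thm:barcobarequiv}, and deserves exactly the same caveat it gets there. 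What your approach buys is a conceptual explanation for why the finite case $3\leq n<\infty$ works despite $I_n$ not being $3$-contractible; what the paper's direct computation buys is independence from the heavier coderived machinery and from any forward references.
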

\begin{proof}
	Observe that as in the proof of \ref{prebcb} we have an isomorphism $\MC\Hom(C\otimes I_n,A)\cong \MC(\Hom(C,A)\hat\otimes I^n)$. Denoting the convolution algebra by $E$, we hence wish to show that $\MC(E\hat\otimes I^n) \to \MC(E)$ is an isomorphism. Passing to the associated dg categories, observe that $\MCdg(E\hat\otimes I^n)$ is the category whose objects are pairs $x,y$ of MC elements from $E$ together with an $n$-homotopy $h:x\simeq y$ between them. In particular, the isoclasses of $H^0\MCdg(E\hat\otimes I^n)$ are in bijection with the isoclasses of $H^0\MCdg(E)$, as required. We remark that if $n=\infty$ then $I_n$ is $\infty$-contractible by \ref{prop:homotopycoalg}(6), and hence the $i_j$ are in fact $\infty$-homotopy equivalences.
\end{proof}

\subsection{Maurer--Cartan equivalences for dg algebras and coalgebras} 
We show that if a morphism of dg algebras is an MC equivalence, then it is a quasi-isomorphism. This does not necessarily imply that two MC equivalent dg algebras are quasi-isomorphic, since they may be MC equivalent via curved morphisms. Firstly, since a dg algebra admits a canonical MC element $0$, we can deduce the following:
\begin{lem}\label{mcisqipre}
	Suppose that $f:X \to X'$ is a morphism of dg algebras which induces a quasi-fully faithful dg functor $f_*:\MCdg(X) \to \MCdg(X')$. Then $f$ is a quasi-isomorphism.
	\end{lem}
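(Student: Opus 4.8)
The plan is to extract the entire conclusion from the single object $0$. Since $X$ is a dg algebra, the element $0$ is a Maurer--Cartan element, hence an object of $\MCdg(X)$; the same holds for $X'$. First I would compute the endomorphism complex of this object. By the description of the hom-spaces of $\MCdg$ recalled above, $\MCdg(X)(0,0)$ is the two-sided twist ${}^{[0]}X^{[0]}$, whose differential sends $a$ to $d_X(a) + 0\cdot a - \tilde{a}\cdot 0 = d_X(a)$. Thus $\MCdg(X)(0,0) = (X, d_X)$ as a complex, and likewise $\MCdg(X')(0,0) = (X', d_{X'})$.

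Next I would check that $f_*$ behaves as expected on this distinguished object. Because $f$ is a morphism of dg algebras, it is an uncurved morphism of curved algebras, so the induced functor $f_*$ sends the Maurer--Cartan element $0$ to $f(0) = 0$, and on the endomorphism complex ${}^{[0]}X^{[0]}$ it acts simply as $f$ itself. The verification that $f$ intertwines the two twisted differentials is immediate: since $f$ is uncurved it commutes with $d$, and with both twisting parameters equal to $0$ the twist contributes nothing. Hence the component of $f_*$ at the pair $(0,0)$ is precisely the map $f : X \to X'$ of complexes.

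Finally, quasi-full-faithfulness of $f_*$ means exactly that each component $\MCdg(X)(x,y) \to \MCdg(X')(f_*x, f_*y)$ is a quasi-isomorphism. Specialising to $x = y = 0$ and using the two identifications above yields that $f : X \to X'$ is a quasi-isomorphism, which is the assertion.

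The argument is essentially immediate once the hom-complex at $0$ has been identified, so there is no serious obstacle; the only point requiring care is the bookkeeping that $f_*$ restricted to the endomorphisms of $0$ recovers $f$ on the nose. This in turn relies on $f$ being uncurved, which is what guarantees both that the canonical object $0$ is sent to $0$ and that no twisting terms intervene in the comparison of differentials.
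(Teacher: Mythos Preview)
Your proposal is correct and follows essentially the same approach as the paper: both arguments identify the endomorphism complex of the canonical MC element $0$ with $X$ itself, observe that $f_*$ sends $0$ to $0$ and acts as $f$ on this endomorphism complex, and then read off the quasi-isomorphism from quasi-full-faithfulness at the pair $(0,0)$. Your write-up is a bit more explicit about the twisting bookkeeping and the role of $f$ being uncurved, but the content is identical.
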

\begin{proof}
	For clarity let $\zeta \in X$ denote the MC element $0\in X^1$. Similarly let $\zeta'\in X'$ denote the MC element $0\in X'^1$. Clearly $f_*$ takes $\zeta$ to $\zeta'$, and by hypothesis the induced morphism of endomorphism dg algebras $$\MCdg(X)(\zeta,\zeta) \to \MCdg(X')(\zeta',\zeta')$$is a quasi-isomorphism. But by definition $\MCdg(X)(\zeta,\zeta)$ is simply the dg algebra $X$, and the induced morphism on endomorphism dg algebras is simply $f_*$, so we are done.
	\end{proof}

\begin{prop}\label{mcisqi}\hfill
		\begin{enumerate}
		\item If a morphism of dg algebras is an MC equivalence, then it is a quasi-isomorphism.
			\item If a morphism of dg coalgebras is an MC equivalence, then it is a quasi-isomorphism.
	\end{enumerate}
	\end{prop}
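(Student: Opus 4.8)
The plan is to handle the two parts by dual routes: the algebra statement follows immediately from the two preceding lemmas, while the coalgebra statement is proved by testing against the explicit corepresenting objects for cohomology. For (1), let $f\colon X\to X'$ be a morphism of dg algebras which is an MC equivalence. Regarding $f$ as an MC equivalence of curved algebras, Lemma \ref{mccat1} shows that the induced dg functor $f_*\colon\MCdg(X)\to\MCdg(X')$ is a quasi-equivalence, hence in particular quasi-fully faithful. Since $X$ and $X'$ are honest dg algebras, Lemma \ref{mcisqipre} applies directly and yields that $f$ is a quasi-isomorphism. So (1) is a formal two-step consequence of \ref{mccat1} and \ref{mcisqipre}, with no real obstacle.

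For (2), let $g\colon C\to C'$ be a morphism of dg coalgebras which is an MC equivalence. The idea is to recover the cohomology of the dual pseudocompact dg algebras $C^*$ and $(C')^*$ from MC moduli sets. Recall the square-zero dg algebra $Y_n=\ground[\varepsilon]/\varepsilon^2$ with $\varepsilon$ in cohomological degree $n-1$, and the natural isomorphism $H^n(C^*)\cong[C,\check B Y_n]_3$ recorded in the remark following the corollary to Lemma \ref{mccohomlem}. Composing with the identification $\MCmod(C,Y_n)\cong[C,\check B Y_n]_3$ gives a natural isomorphism $\MCmod(C,Y_n)\cong H^n(C^*)$. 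Because $g$ is an MC equivalence and $Y_n$ is a (curved) algebra, the map $\MCmod(C',Y_n)\to\MCmod(C,Y_n)$ is a bijection for every $n$, and by naturality this is precisely the map $H^n((C')^*)\to H^n(C^*)$ induced by the dual morphism $g^*\colon(C')^*\to C^*$. A bijective linear map is an isomorphism, so $g^*$ induces isomorphisms on cohomology in all degrees and is therefore a quasi-isomorphism.

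To finish, I would note that we work over a field, so linear dualisation is exact and hence reflects quasi-isomorphisms; thus $g^*$ being a quasi-isomorphism forces $g$ to be one as well, proving (2). The only point requiring genuine care is confirming that the abstract bijection supplied by the MC equivalence coincides with the map on cohomology induced by $g^*$, which amounts to unwinding the naturality of the corepresentability isomorphism; I expect this bookkeeping to be the main (and fairly mild) obstacle, with everything else a direct appeal to results already in place.
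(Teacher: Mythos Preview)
Your argument for (1) is correct and essentially the paper's own proof: both apply Lemma \ref{mccat1} (equivalently, \ref{mccat2} with $C=\ground$) to get that $\MCdg(f)$ is a quasi-equivalence, and then feed this into Lemma \ref{mcisqipre}.

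For (2), however, you take a different route from the paper and there is a genuine gap. The paper's proof is the exact dual of (1): testing against the curved algebra $A=\ground$ gives $\Hom(C,\ground)=C^*$, so \ref{mccat2}(1) yields a quasi-equivalence $\MCdg((C')^*)\to\MCdg(C^*)$ of dg categories; then \ref{mcisqipre} applied to the dg algebra map $g^*$ shows $g^*$ is a quasi-isomorphism, and exactness of the linear dual finishes.

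Your approach instead invokes the isomorphism $H^n(C^*)\cong[C,\check B Y_n]_3$ from the remark after Lemma \ref{mccohomlem}, and identifies the right-hand side with $\MCmod(C,Y_n)$. The problem is that the MC moduli set appearing in the definition of MC equivalence is $\MCmod\Hom(C,Y_n)$, the moduli of the \emph{full} convolution algebra $C^*\otimes Y_n\cong C^*[\varepsilon]/\varepsilon^2$. An MC element here is a pair $(x_0,x_1)$ with $x_0\in\MC(C^*)$ arbitrary and $x_1$ a $d^{x_0}$-cocycle; it is not determined by $H^n(C^*)$ alone. Concretely, take $C^*=\ground[x]/(x^2)$ with $x$ in degree $1$ and zero differential: one computes $\MCmod(C,Y_1)\cong\ground^2$ while $H^1(C^*)\cong\ground$. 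The corepresentability statement in the remark is really the augmented/coaugmented version, based on $\MC\Hom(C,\bar Y_n)$ via Proposition \ref{mcrep}(2), and that reduced moduli set is not what the MC equivalence hypothesis controls. So the step ``MC equivalence $\Rightarrow$ bijection on $H^n(C^*)$'' does not close as written. The fix is simply to run the dual of your own argument for (1), as the paper does.
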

\begin{proof}
	We begin with (1). Suppose $A \to A'$ is an MC equivalence. By \ref{mccat2}, for all curved coalgebras $C$, the induced map $\MCdg(\Hom(C,A))\to \MCdg(\Hom(C,A'))$ is a quasi-equivalence. In particular, taking $C$ to be the dg coalgebra $\ground$, we have $\Hom(\ground, A) \cong A$ as dg algebras. Hence $\MCdg(A)\to \MCdg(A')$ is a quasi-equivalence, and the claim follows by an application of \ref{mcisqipre}. The proof of (2) is similar: we can conclude that an MC equivalence $g:C \to C'$ of dg coalgebras induces a quasi-isomorphism $g^*:C'^* \to C^*$; since the linear dual is exact it follows that $g$ must also be a quasi-isomorphism.
	\end{proof}

Recall from \cite[\S9.3]{Positselski11} the existence of a model category structure on  $\mathbf{cuCog}^\mathrm{conil}$, the category of conilpotent curved coalgebras, with the following properties. The cofibrations are the injections, and the weak equivalences are the morphisms $f$ such that $\Omega f$ is a quasi-isomorphism of dg algebras. The bar-cobar adjunction $\Omega:\mathbf{cuCog}^\mathrm{conil}\leftrightarrow \mathbf{Alg}_\mathrm{q.i.}:B$ is a Quillen equivalence. Recall that a dg algebra is cofibrant in the usual model structure if it is the cobar construction on a conilpotent curved coalgebra.
\begin{cor}\label{mcconil}\hfill
	\begin{enumerate}
		\item A map $f:A \to A'$ between cofibrant dg algebras is an MC equivalence if and only if it is a quasi-isomorphism.
		\item A map $g:C \to C'$ between conilpotent curved coalgebras is an MC equivalence if and only if it is a weak equivalence.
		\end{enumerate}
	\end{cor}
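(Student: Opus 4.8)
The plan is to deduce both statements from two earlier results: Proposition \ref{prop:homotopyalg}(7), which controls homotopies out of cofibrant dg algebras, and Proposition \ref{prop:MCcharacterization1}, which lets one test MC equivalences of coalgebras after applying $\Omega$. For part (1), one implication requires no cofibrancy and is already in hand: Proposition \ref{mcisqi}(1) says every MC equivalence of dg algebras is a quasi-isomorphism. For the converse I would argue as follows. Suppose $f:A\to A'$ is a quasi-isomorphism of cofibrant dg algebras. Every dg algebra is fibrant in the standard model structure, so $A$ and $A'$ are both cofibrant and fibrant, and hence $f$ is a homotopy equivalence; pick a homotopy inverse $g$. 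The homotopies witnessing $fg\simeq\id_{A'}$ and $gf\simeq\id_A$ are between maps whose sources ($A'$ and $A$) are cofibrant, and by Proposition \ref{prop:homotopyalg}(7) the model-theoretic homotopy relation on such maps coincides with elementary $3$-homotopy (realised through the good path objects $A'\otimes I^3$ and $A\otimes I^3$). Thus $f$ is in fact a $3$-homotopy equivalence, and a $3$-homotopy equivalence is automatically an MC equivalence, as observed directly after the definition of MC equivalence.

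For part (2) I would combine (1) with the cobar characterisation of MC equivalences. By Proposition \ref{prop:MCcharacterization1} (the equivalence of its conditions (1) and (5)), a map $g:C\to C'$ of curved coalgebras is an MC equivalence if and only if $\Omega g:\Omega C\to\Omega C'$ is an MC equivalence. When $C$ and $C'$ are conilpotent, $\Omega C$ and $\Omega C'$ are cofibrant dg algebras, so part (1) applies and tells us that $\Omega g$ is an MC equivalence if and only if it is a quasi-isomorphism. But by the very definition of the weak equivalences in the Positselski model structure on $\mathbf{cuCog}^\mathrm{conil}$, $g$ is a weak equivalence precisely when $\Omega g$ is a quasi-isomorphism. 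Chaining these three biconditionals gives the claim, so part (2) is essentially formal once (1) is established.

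The only step requiring genuine care is the reverse direction of (1): upgrading the abstract fact that $A$ and $A'$ are $3$-homotopy equivalent to the sharper statement that the \emph{given} map $f$ is a $3$-homotopy equivalence. This rests on the possibility of taking the homotopies supplied by the homotopy-equivalence conclusion of Whitehead's theorem to be \emph{elementary} $3$-homotopies, which is exactly the content of the identification of model-category homotopy with $n$-homotopy for maps out of a cofibrant dg algebra in Proposition \ref{prop:homotopyalg}(7). I expect this to be the main (and indeed the only) obstacle; everything else is a matter of assembling previously proved equivalences.
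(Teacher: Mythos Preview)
Your proof is correct and takes essentially the same approach as the paper's: forward direction of (1) via \ref{mcisqi}(1), backward direction via Proposition \ref{prop:homotopyalg}(7) to get a $3$-homotopy equivalence, and part (2) via \ref{prop:MCcharacterization1} combined with (1). You are in fact slightly more careful than the paper in arguing that the \emph{given} map $f$ (rather than merely some map between $A$ and $A'$) is a $3$-homotopy equivalence, but the overall strategy is identical.
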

\begin{proof}
	Beginning with (1), the forwards direction is \ref{mcisqi}(1). For the backwards direction, just observe that two quasi-isomorphic cofibrant algebras are necessarily $n$-homotopy equivalent for all $n$, in particular $3$-homotopy equivalent, and in particular MC equivalent. For (2), note that $g$ is an MC equivalence if and only if $\Omega g$ is, by \ref{prop:MCcharacterization1}. But by (1), $\Omega g$ is an MC equivalence if and only if it is a quasi-isomorphism.
	\end{proof}

\section{Strong cofibrations}\label{section:strongcof}

In this section we define the notion of strong cofibration of coalgebras, which is a morphism which induces a fibration on $\MCdg(-,A)$ for all $A$. These will be our model-theoretic cofibrations for the MC model structure. In the main theorem of this section, we show that every injection of curved coalgebras is a strong cofibration (and vice versa). We will first prove that strong cofibrations are saturated, which will allow us to reduce to the finite dimensional case. The desired result will then follow from the structure theory developed in Section \ref{section:structure}.

Let $A$ be a curved algebra. We regard $\MCdg(-,A)$ as a contravariant functor from curved coalgebras to dg categories. 

\begin{defi}
	Let $f:C \to C'$ be a map of curved coalgebras. Say that $f$ is a {strong cofibration} if $f^*:\MCdg(C',A)\to \MCdg(C,A)$ is a fibration for all curved algebras $A$. Say that $f$ is an {acyclic strong cofibration} if $f^*$ is an acyclic fibration for all $A$. 
	\end{defi}

\begin{rem}\label{scdefrem}
 If $A=\varnothing$ is the initial object of $\calgp$, then $\MCdg(C,A)$ is the empty dg category unless $C=0$, in which case $\MCdg(0,\varnothing)$ is the zero dg category. In particular, it is not hard to check that all maps $f:C \to C'$ of curved coalgebras induce a fibration $f^*:\MCdg(C',\varnothing)\to \MCdg(C,\varnothing)$ of dg categories; it is even acyclic unless $C=0$ and $C'\neq 0$ (such a map is never an MC equivalence). Hence there is no difference in the above definition whether one chooses to test against objects of $\calg$ or of the slightly larger category $\calgp$. Dually, if $C=*$ is the final object of $\ccogp$, then every map of curved algebras $g:A \to A'$ induces a fibration $g_*:\MCdg(C,A)\to \MCdg(C,A')$, which is acyclic unless $A\neq 0$ and $A'=0$. 
	\end{rem}
Note that the dg category $\MCdg(C, \Omega C)$ is nonempty, since it contains the MC element of $\Hom(C,\Omega C)$ corresponding to the unit $C \to\check B \Omega C$ of the bar-cobar adjunction. In particular, if $f:C \to C'$ is any map of curved coalgebras, there is at least one curved algebra $A$ (namely $\check B \Omega C'$) such that $f^*:\MCdg(C',A)\to \MCdg(C,A)$ is a dg functor between nonempty dg categories.

\begin{prop}\label{scofibdict}
	Let $f:C \to C'$ be a a morphism of curved coalgebras.
	\begin{enumerate}
		\item The following are equivalent:
		\begin{enumerate}
			\item $f$ is a strong cofibration.
			\item $f$ is an injection, and every homotopy commutative diagram $$\begin{tikzcd} C \ar[r,"g"]\ar[d,"f"]& \check BA \\
				C'\ar[ur,"h",swap] & \end{tikzcd}$$ rectifies to a genuinely commutative diagram $$\begin{tikzcd} C \ar[r,"g"]\ar[d,"f"]& \check BA \\
				C'\ar[ur,"h' ",swap] & \end{tikzcd}$$ with $h$ homotopy equivalent to $h'$.
		\end{enumerate}
		\item The following are equivalent:
	\begin{enumerate}
		\item $f$ is an acyclic strong cofibration.
		\item $f$ is a strong cofibration and an MC equivalence.
		\item $f$ is an injection, and every solid diagram $$\begin{tikzcd} C \ar[r]\ar[d]& \check BA \\
			C'\ar[ur, dashed] & \end{tikzcd}$$admits an extension, unique up to homotopy.
		\item $f$ is an injective MC equivalence, and every solid diagram $$\begin{tikzcd} C \ar[r]\ar[d]& \check BA \\
			C'\ar[ur, dashed] & \end{tikzcd}$$admits an extension.
	\end{enumerate}
	\end{enumerate}
\end{prop}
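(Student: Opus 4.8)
The plan is to translate everything through the Dwyer--Kan model structure on $\dgcat$, using the dictionary recalled just before \ref{cor:dgcat1}: the objects of $\MCdg(C,A)$ are the maps $C\to\check BA$, the isomorphisms of $H^0\MCdg(C,A)$ are the relations $\sim_3$, and all the hom-complexes of $\MCdg(C,A)$ have underlying graded space $\Hom(C,A)^\#$, with $f^*$ acting through precomposition $f^*\colon\Hom(C',A)\to\Hom(C,A)$. I recall that a dg functor is a fibration exactly when it is surjective on each hom-complex and has the isomorphism-lifting property, and a trivial fibration exactly when it is surjective on objects and a surjective quasi-isomorphism on each hom-complex; the empty/zero edge cases are handled as in \ref{scdefrem}. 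For part (1), $f^*$ is surjective on hom-complexes for all $A$ iff $f^*\colon\Hom(C',A)\to\Hom(C,A)$ is degreewise surjective for all $A$, which (taking $A=\ground$ and using exactness of linear duality over a field, then $\hat\otimes A$) is exactly injectivity of $f$. Granting this, the isomorphism-lifting property translates verbatim into the rectification statement of (1)(b): an object $h$ of $\MCdg(C',A)$ with an isomorphism $f^*h\to g$ in $H^0\MCdg(C,A)$ is precisely a homotopy-commutative triangle $hf\sim_3 g$, and a lift is precisely a strict factorisation $h'f=g$ with $h\sim_3 h'$. That the lift realises the \emph{given} isomorphism, not just some $h\sim_3 h'$, is arranged from surjectivity on hom-complexes and is routine.

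For part (2), I would first observe that a trivial fibration is exactly a fibration that is a quasi-equivalence, and that by \ref{mccat2} the functors $f^*$ are quasi-equivalences for all $A$ iff $f$ is an MC equivalence; since the quantifier over $A$ distributes over the conjunction, this gives (a)$\Leftrightarrow$(b) at once. Then I would run the cycle (b)$\Rightarrow$(c)$\Rightarrow$(d)$\Rightarrow$(a). For (b)$\Rightarrow$(c): $f$ is injective by part (1); each $f^*$ is surjective on objects, which is exactly the existence of a strict extension; and each $H^0 f^*$ is an equivalence, so two strict extensions of a fixed $g$ map to the same object $g$ with the identity isomorphism, hence are isomorphic in $H^0\MCdg(C',A)$, i.e.\ $3$-homotopic. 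The implication (d)$\Rightarrow$(a) is formal: injectivity gives surjectivity of each $f^*$ on hom-complexes, the MC-equivalence hypothesis gives via \ref{mccat2} that each $f^*$ is a quasi-equivalence and hence a quasi-isomorphism on each hom-complex, and the extension hypothesis gives surjectivity on objects; together these are the defining properties of a trivial fibration.

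The only substantial step, and the one I expect to be the main obstacle, is (c)$\Rightarrow$(d), where I must upgrade ``unique up to homotopy'' to ``MC equivalence''. Surjectivity of $\MCmod(C',A)\to\MCmod(C,A)$ is immediate from existence of extensions, so the point is injectivity: given $h_1,h_2\colon C'\to\check BA$ with $h_1f\sim_3 h_2f$, I must produce $h_1\sim_3 h_2$. The trick I would use is to encode the homotopy as a map into a bar construction and extend \emph{that}: by \ref{inftyhtpyequiv} and \ref{prebcb} the relation $h_1f\sim_3 h_2f$ is witnessed by a single elementary $3$-homotopy, which is the same datum as a map $\tilde H\colon C\to\check B(A\otimes I^3)$ whose two evaluations $\mathrm{ev}_0,\mathrm{ev}_1\colon\check B(A\otimes I^3)\to\check BA$ return $h_1f$ and $h_2f$. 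Applying the extension property of (c) to $\tilde H$, crucially with test algebra $A\otimes I^3$, yields $\tilde h\colon C'\to\check B(A\otimes I^3)$ with $\tilde hf=\tilde H$. Then $\mathrm{ev}_i\tilde h$ and $h_i$ are both strict extensions of $h_if$, so uniqueness forces $\mathrm{ev}_i\tilde h\sim_3 h_i$, while $\mathrm{ev}_0\tilde h\sim_3\mathrm{ev}_1\tilde h$ since $\tilde h$ is itself a $3$-homotopy between its evaluations; concatenating gives $h_1\sim_3 h_2$. Hence $\MCmod(C',A)\to\MCmod(C,A)$ is a bijection for every $A$, so $f$ is an MC equivalence and (d) holds.

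The merit of this approach is that the homotopy-transport in (c)$\Rightarrow$(d) is carried out entirely by feeding the cylinder into the algebra variable $A\mapsto A\otimes I^3$, so it never needs closure (saturation) of strong cofibrations under pushouts or composition; this keeps the proposition self-contained and available as input to the saturation results that follow. The points requiring care, which I would treat as routine rather than grinding through, are the identification of elementary $3$-homotopies of coalgebra maps with maps into $\check B(A\otimes I^3)$ (already packaged in \ref{prebcb}) and the upgrade of ``an isomorphism exists'' to ``the given isomorphism lifts'' in part (1), both handled by surjectivity on hom-complexes together with the observation that $H^0f^*$ reflects isomorphisms when $f$ is an MC equivalence.
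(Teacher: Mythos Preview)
Your overall strategy matches the paper's: translate (a) through the Dwyer--Kan model structure on $\dgcat$, unwinding ``fibration'' and ``trivial fibration'' in terms of objects and hom-complexes of $\MCdg(C,A)$. There is one genuine gap and one genuine difference in route.

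\textbf{The gap (part (1)).} Your claim that ``$f^*$ surjective on hom-complexes for all $A$'' is equivalent to ``$\Hom(C',A)\to\Hom(C,A)$ degreewise surjective for all $A$'' is only true when $\MCdg(C',A)$ has objects, since otherwise the hom-complex condition is vacuous. Taking $A=\ground$ to extract injectivity therefore fails: an object of $\MCdg(C',\ground)$ is an MC element of $(C')^*$, equivalently a curved coalgebra map $C'\to\ground$, and a general curved coalgebra need not admit one. The paper avoids this by testing with $A=\Omega C'$, for which $\MCdg(C',\Omega C')$ is always nonempty (it contains the MC element corresponding to the unit $C'\to\check B\Omega C'$). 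Once you have one nonempty test, surjectivity of $\Hom(C',\Omega C')\to\Hom(C,\Omega C')$ forces $f$ to be injective. This is a small but necessary repair.

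\textbf{The different route (part (2)).} The paper establishes (a)$\Leftrightarrow$(b) via \ref{mccat2} and (a)$\Leftrightarrow$(c) directly from the characterisation of trivial fibrations, after which (c)$\Leftrightarrow$(d) drops out formally: (c)$\Rightarrow$(a)$\Rightarrow$(b) supplies the MC equivalence, and conversely the MC equivalence in (d) gives injectivity of $\MCmod(C',A)\to\MCmod(C,A)$, hence homotopy uniqueness. Your cycle (b)$\Rightarrow$(c)$\Rightarrow$(d)$\Rightarrow$(a) is longer because you prove (c)$\Rightarrow$(d) from scratch, but your argument---encoding a $3$-homotopy $h_1f\sim_3 h_2f$ as a map $C\to\check B(A\otimes I^3)$, extending it, and invoking uniqueness at the endpoints---is correct and pleasant. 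It buys you a self-contained proof of (c)$\Rightarrow$(d) that does not pass through the equivalence (a)$\Leftrightarrow$(c), at the cost of extra work the paper sidesteps.
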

\begin{proof}
We begin with (1). Recall that a map $F$ of dg categories is a fibration if and only if it is both surjective on hom-complexes and $H^0F$ is an isofibration. We first show that $f$ is an injection if and only if $\MCdg(f,A)$ is surjective on hom-complexes for all curved algebras $A$. If $f$ is an injection then $\Hom(f,A)$ is a surjection, and since the hom-complexes in $\MCdg(C,A)$ are all twists of $\Hom(C,A)$, the forwards implication holds. To see the backwards implication, take $A=\Omega C'$. We know that $\MCdg(C',A)$ is a nonempty dg category, so picking an object of it we get a morphism on hom-complexes which - upon restricting to the underlying graded vector spaces -  is a surjection $\Hom(C',A) \to \Hom(C,A)$. This can only be a surjection if $f$ was an injection. Secondly, one can see that $H^0\MCdg(f,A)$ is an isofibration if and only if the rectification property holds: this follows by unwinding the definition of what it means for $H^0\MCdg(f,A)$ to be an isofibration. So we are done.

For (2), to see that (a) is equivalent to (b) just use \ref{mccat2}: $f$ is an MC equivalence if and only if $\MCdg(f,A)$ is a quasi-equivalence for all $A$. To see that (a) is equivalent to (c), note that a map $F$ of dg categories is both a quasi-equivalence and an isofibration on $H^0$ if and only if $H^0(F)$ is an equivalence that is surjective on objects. So as before it suffices to show that $H^0\MCdg(f,A)$ is both an equivalence and surjective on objects if and only if the unique extension property holds. The surjectivity part corresponds to the existence of the extension and the equivalence part corresponds to the homotopy uniqueness, by \ref{prop:MCcharacterization1}(2). The equivalence of (c) and (d) is now clear.
\end{proof}

Note that by \ref{prop:MCcharacterization1}(4) and \ref{cor:Barcobar}(1), a morphism $f$ of curved coalgebras is an MC equivalence if and only if $\Omega f$ is an $\infty$-homotopy equivalence. 

\begin{lem}\label{scretract}
	Let $f:C \to C'$ be an acyclic strong cofibration.
	\begin{enumerate}
		\item $C\to \check B\Omega C$ factors through $f$.
		\item $\Omega f$ admits a retract which is an $\infty$-homotopy inverse.

		\end{enumerate}
	\end{lem}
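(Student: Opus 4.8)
The plan is to deduce both parts from the lifting characterisation of acyclic strong cofibrations in Proposition \ref{scofibdict}(2), together with the triangle identities of the bar-cobar adjunction. For (1), I would invoke characterisation \ref{scofibdict}(2)(c): an acyclic strong cofibration is in particular injective, and every solid square whose target has the form $\check B A$ extends along $f$. Taking $A=\Omega C$ and letting the horizontal arrow be the unit $\eta_C\colon C\to\check B\Omega C$ of the bar-cobar adjunction, the square
$$\begin{tikzcd} C \ar[r,"\eta_C"]\ar[d,"f",swap]& \check B\Omega C \\
C'\ar[ur,dashed,"s",swap] & \end{tikzcd}$$
admits a filler $s\colon C'\to\check B\Omega C$ with $s\circ f=\eta_C$. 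This is exactly a factorisation of $C\to\check B\Omega C$ through $f$, which proves (1).

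For (2), I would construct the retract directly from the filler $s$. Writing $\epsilon_{\Omega C}\colon\Omega\check B\Omega C\to\Omega C$ for the counit of the adjunction, set $r\coloneqq\epsilon_{\Omega C}\circ\Omega(s)\colon\Omega C'\to\Omega C$. Then, using $\Omega(s)\circ\Omega(f)=\Omega(s\circ f)=\Omega(\eta_C)$ together with the triangle identity $\epsilon_{\Omega C}\circ\Omega(\eta_C)=\id_{\Omega C}$, one computes $r\circ\Omega f=\id_{\Omega C}$, so that $r$ is a strict retraction of $\Omega f$.

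It then remains to upgrade $r$ to an $\infty$-homotopy inverse. The external input I would use is the remark preceding the lemma: since $f$ is an MC equivalence (being an acyclic strong cofibration, by \ref{scofibdict}(2)(b)), the map $\Omega f$ is an $\infty$-homotopy equivalence, hence admits \emph{some} two-sided $\infty$-homotopy inverse $g$ with $\Omega f\circ g\sim_\infty\id_{\Omega C'}$. I would then run the standard argument that a strict one-sided inverse of a homotopy equivalence is automatically a two-sided homotopy inverse: because $\sim_\infty$ is an equivalence relation (it is generated by zig-zags of elementary $\infty$-homotopies) and is compatible with composition on either side by Proposition \ref{prop:homotopyalg}(2), we obtain
$$r\sim_\infty r\circ(\Omega f\circ g)=(r\circ\Omega f)\circ g=g,$$
and therefore $\Omega f\circ r\sim_\infty\Omega f\circ g\sim_\infty\id_{\Omega C'}$. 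Combined with $r\circ\Omega f=\id_{\Omega C}$, this exhibits $r$ as the desired $\infty$-homotopy inverse.

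The one step that needs genuine care is this final abstract manipulation: elementary $\infty$-homotopy is by itself neither transitive nor a congruence, so I must work throughout with the generated relation $\sim_\infty$ and apply the two-sided composition compatibility of \ref{prop:homotopyalg}(2) rung by rung along each zig-zag. Beyond this bookkeeping I do not anticipate any serious obstacle, since the substantive content — the extension property of $f$ and the existence of an $\infty$-homotopy inverse for $\Omega f$ — is already delivered by the cited results, and the role of this lemma is simply to package them into the explicit retraction $r=\epsilon_{\Omega C}\circ\Omega(s)$.
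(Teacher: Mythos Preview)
Your proof is correct and follows essentially the same approach as the paper: for (1) you invoke the extension property against $\check B\Omega C$ exactly as the paper does, and for (2) your retract $r=\epsilon_{\Omega C}\circ\Omega(s)$ is precisely the adjunct map the paper denotes $g$, with the same ``one-sided inverse of an isomorphism in the $\infty$-homotopy category is a two-sided inverse'' argument. Your additional care about transitivity of $\sim_\infty$ is unnecessary since by definition $\sim_\infty$ is already the equivalence relation generated by elementary $\infty$-homotopies, but this does no harm.
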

\begin{proof}
	For (1), consider the commutative diagram  $$\begin{tikzcd} C \ar[r]\ar[d]& \check B\Omega C \\
		C'\ar[ur, dashed] & \end{tikzcd}$$which admits a lift because $f$ was an acyclic strong cofibration. For (2), note that the lift in the above diagram is adjunct to a lift in the diagram $$\begin{tikzcd} \Omega C \ar[r,"\id"]\ar[d]& \Omega C \\
		\Omega C'\ar[ur, dashed,"g", swap] & \end{tikzcd}$$which provides the desired retract $g$. Since $f$ was an MC equivalence, $\Omega f$ is necessarily an $\infty$-homotopy equivalence. Since $\Omega f$ is an isomorphism in the homotopy category $g$ must also be an isomorphism in the homotopy category; i.e.\ an $\infty$-homotopy inverse of $\Omega f$.
	\end{proof}

Say that a class of morphisms in a category is saturated if it is closed under pushouts, retracts, and transfinite composition. In a model category, the classes of cofibrations and acyclic cofibrations are saturated, since they are defined via lifting properties. Dually, say that a class of morphisms in a category is cosaturated if it is closed under pullbacks, retracts, and transfinite cocomposition (i.e.~ transfinite composition in the opposite category). In a model category, the classes of fibrations and acyclic fibrations are cosaturated.

	\begin{prop}\label{scsatd}
	Strong cofibrations and acyclic strong cofibrations are saturated.
	\end{prop}
\begin{proof}
	If $J$ is a pushout diagram of curved coalgebras then for every curved algebra $A$, the diagram $\MCdg(J,A)$ is a pullback diagram in $\dgcat'$ by \ref{mclimscor}. Fibrations in any model category are closed under pullbacks, since they are characterised by a right lifting property. It now follows that strong cofibrations are closed under pushout. Closure under transfinite composition follows from the fact that fibrations are closed under transfinite cocomposition, and closure under retracts follows from the closure of fibrations under retracts. Hence the class of strong cofibrations is saturated. The analogous result follows for acyclic strong cofibrations, since the class of acyclic fibrations in any model category is similarly cosaturated.
	\end{proof}
    
    Now we can show, using the structure theory results developed in Section \ref{section:structure} together with the MC element lifting results of Section \ref{section:mclifting}, that the class of strong cofibrations coincides with the class of injections:

\begin{theorem}\label{thm:strongcof}
	The injections of curved coalgebras are precisely the strong cofibrations.
\end{theorem}
\begin{proof}
A strong cofibration is an injection by \ref{scofibdict}, so we just need to prove that the converse holds. By \ref{CogStructThm}, an injection of curved coalgebras is a relative cell complex for maps of the following form:
\begin{enumerate}
    \item Cosquare zero extensions of finite dimensional curved coalgebras.
    \item Injections between finite dimensional curved cosemisimple coalgebras.
\end{enumerate}
Since strong cofibrations are saturated by \ref{scsatd}, it suffices to prove that both of these classes of maps are strong cofibrations. For case (1), let $C' \to C$ be a cosquare zero extension of finite dimensional curved coalgebras and let $X$ be a curved algebra. Since $C$ is finite dimensional we have $\Hom(C,X)\cong C^*\otimes X$. Moreover, $C^*\otimes X \to C'^*\otimes X$ is a square zero extension of curved algebras. So it will suffice to prove that a square zero extension $\pi:A \twoheadrightarrow B$ of curved algebras induces a fibration on $\MCdg$. But this is precisely \ref{squarezero1}. For case (2), observe that all such maps have retracts by \ref{CSSsection}, and are hence strong cofibrations.
\end{proof}

\section{Strong fibrations}\label{section:strongfib}
Dual to the notion of strong cofibration of coalgebras is the notion of strong fibration of algebras, which we develop in this section. Although a strong fibration is a surjection, we will see that the converse is not true. We obtain a characterisation of (acyclic) strong fibrations in terms of a lifting property against $\Omega$ of (acyclic) injections. We use these results to show that if $\mathcal{K}$ denotes the class of injective MC equivalences, every $\Omega(\mathcal{K})$-relative cell complex is an $\infty$-homotopy equivalence. This will be a key result in our construction of the MC model structures.

\begin{defi}
	Let $f$ be a morphism of curved algebras. Say that $f$ is a {strong fibration} if for all curved coalgebras $C$, the induced map $f_*:\MCdg(C,A) \to \MCdg(C,A')$ is a fibration of dg categories. Say that $f$ is an {acyclic strong fibration} if for all curved coalgebras $C$, the induced map $f_*:\MCdg(C,A) \to \MCdg(C,A')$ is an acyclic fibration of dg categories.
\end{defi}

As in \ref{scdefrem}, in the above definition it does not matter if we choose to test against $C\in \ccog$ or $C\in \ccogp$. We may also extend this definition to $\calgp$, and it is not hard to see that $\varnothing \to A$ is a strong fibration for any curved algebra $A$. Taking $C=\ground$ we see that if $A \to A'$ is a strong fibration then $\MCdg(A) \to \MCdg(A')$ is a fibration of dg categories.

\begin{lem}\label{sfchar}
	Let $f:A \to A'$ be a morphism of curved algebras. Then $f$ is a strong fibration if and only if it is a surjection, and every homotopy commutative diagram $$\begin{tikzcd} & A\ar[d,"f"] \\
		\Omega C\ar[ur,"h'"] \ar[r,"g"] & A'\end{tikzcd}$$ rectifies to a genuinely commutative diagram $$\begin{tikzcd} & A\ar[d,"f"] \\
		\Omega C\ar[ur,"h"] \ar[r,"g"] & A'\end{tikzcd}$$ with $h$ homotopy equivalent to $h'$.
\end{lem}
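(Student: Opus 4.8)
The plan is to mirror the proof of the strong cofibration dictionary \ref{scofibdict}(1), exploiting that a dg functor $F$ of dg categories is a fibration precisely when it is surjective on every hom-complex and $H^0F$ is an isofibration. Applying this criterion to $F=\MCdg(C,f)$ for each curved coalgebra $C$, the assertion of the lemma splits into two independent requirements: the surjectivity of $\MCdg(C,f)$ on hom-complexes (ranging over all $C$) should correspond to $f$ being a surjection, while the isofibration condition on $H^0\MCdg(C,f)$ (ranging over all $C$) should correspond exactly to the rectification property stated in the lemma. I would establish these two correspondences separately and then combine them.

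For the surjectivity on hom-complexes, the key observation is that the hom-complexes of $\MCdg(C,A)$ are two-sided twists of the convolution algebra $\Hom(C,A)$, so their underlying graded vector spaces are copies of $\Hom(C,A)^\#$, and similarly for $A'$; the graded map induced by $\MCdg(C,f)$ on any such hom-complex is just $\Hom(C,f)^\#$. Hence, as soon as $\MCdg(C,A)$ has an object, $\MCdg(C,f)$ is surjective on hom-complexes iff $\Hom(C,f)$ is surjective. If $f$ is a surjection then $\Hom(C,f)=\mathrm{id}_{C^*}\hat\otimes f$ is surjective for every $C$ (the relevant kernel tower has surjective transition maps, so it is Mittag--Leffler and surjectivity passes to the completed tensor product), giving the forward direction. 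For the converse I would test against $C=\check BA$: the dg category $\MCdg(\check BA,A)$ is nonempty, since it contains the object given by the counit $\Omega\check BA\to A$, so surjectivity forces $\Hom(\check BA,f)$ to be surjective. As $\check BA$ is coaugmented, the augmentation $(\check BA)^*\to\ground$, split by the unit, induces a retraction $\Hom(\check BA,A)\to A$, split and compatible with $f$; a short diagram chase through this splitting then recovers surjectivity of $f$ itself.

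For the second correspondence I would unwind what it means for $H^0\MCdg(C,f)$ to be an isofibration. The objects of $\MCdg(C,A)$ are the maps $\Omega C\to A$, and two of them are isomorphic in $H^0\MCdg(C,A)$ exactly when they are homotopy gauge equivalent, which by \cite[Lemma 5.3]{CHL21} together with \ref{prebcb}(1) is the same as being $3$-homotopic as maps $\Omega C\to A$. Consequently an isomorphism in $H^0\MCdg(C,A')$ from $f\circ h'$ to an object $g\colon\Omega C\to A'$ is precisely a $3$-homotopy $f\circ h'\simeq g$, that is, a homotopy commutative triangle of the shape displayed in the statement. The isofibration property then translates verbatim into: from an object $h'$ upstairs and such a homotopy $f\circ h'\simeq g$ downstairs, one produces an object $h\colon\Omega C\to A$ with $f\circ h=g$ on the nose, joined to $h'$ by a $3$-homotopy lifting the given one. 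Quantifying over all $C$, this is exactly the rectification property, and combining it with the surjectivity correspondence yields the lemma.

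The step I expect to be the main obstacle is the converse half of the surjectivity correspondence. Unlike the injectivity computation in the cofibration case, here a graded-level surjection is available only for the single test coalgebra $\check BA$, and it must be leveraged to recover surjectivity of $f$ across a completed tensor product; the cleanest route is the coaugmentation/unit splitting argument above rather than any naive exactness claim for $\hat\otimes$. A secondary point requiring care is the identification of $H^0$-isomorphisms in the $\MCdg$ categories with $3$-homotopies (as opposed to merely dg-isomorphisms), for which I would lean on the homotopy-gauge-equivalence characterisation of \cite[Lemma 5.3]{CHL21} and the transport of homotopies supplied by \ref{prebcb}(1).
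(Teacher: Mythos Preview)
Your approach is exactly the paper's---the paper simply says ``dual to the proof of \ref{scofibdict}(1)''---and your two-part decomposition (surjectivity on hom-complexes, isofibration on $H^0$) is the right one.

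There is, however, a gap in your converse argument for the surjectivity correspondence. You assert that $\check B A$ is coaugmented, but this fails for a general curved algebra: a coaugmentation $\ground \to \check B A$ is, by the bar--cobar adjunction, the same as a map $\Omega\ground\cong\ground \to A$, i.e.\ an MC element of $A$, and curved algebras need not have any. (This is precisely why $\calg$ lacks an initial object; cf.\ the discussion before \ref{algslice1}.) So the retraction $\Hom(\check B A, A)\to A$ you want does not exist in general.

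The fix is to drop the splitting entirely and argue exactly as in the coalgebra case. You already have the surjection $\Hom(\check B A, A)^\#\twoheadrightarrow\Hom(\check B A, A')^\#$. If $f$ were not surjective, pick $a'\in A'\setminus\im f$. Since $\check B A$ is a nonzero counital coalgebra, choose any nonzero $c\in\check B A$ and define a linear map $\phi:\check B A\to A'$ sending $c\mapsto a'$ and a complement of $c$ to zero. Any lift $\psi:\check B A\to A$ would force $f(\psi(c))=a'$, contradicting $a'\notin\im f$; hence $\phi$ is not in the image of $\Hom(\check B A, f)$, contradicting surjectivity. This is the honest dual of the paper's ``this can only be a surjection if $f$ was an injection'', and needs no (co)augmentation. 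Your Mittag--Leffler remark for the forward direction and your treatment of the isofibration/rectification correspondence are fine.
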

\begin{proof}
	Dual to the proof of \ref{scofibdict}(1).
\end{proof}
\begin{cor}\label{psfib}
	Let $A$ be a curved algebra and $3\leq n \leq \infty$. Let $\pi_0,\pi_1:A\hat\otimes I^n \to A$ be the endpoint projections. Then the map $\pi\coloneqq\pi_0\times \pi_1:A\hat\otimes I^n \to A \times A$ is a strong fibration.
\end{cor}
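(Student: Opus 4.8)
The plan is to apply the characterisation of strong fibrations in Lemma \ref{sfchar}: I must check that $\pi$ is surjective, and that every homotopy commutative triangle over $\pi$ rectifies to a strictly commutative one through a homotopic map. Surjectivity is immediate and I would dispatch it first: the combined evaluation $(\ev_0,\ev_1)\colon I^n\to\ground\times\ground$ sends the idempotents $e,f$ to $(1,0)$ and $(0,1)$ and is therefore surjective, so for $(a,a')\in A\times A$ the element $a\otimes e+a'\otimes f$ of $A\otimes I^n\subseteq A\hat\otimes I^n$ maps to $(a,a')$ under $\pi=\pi_0\times\pi_1$.

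Next I would set up two identifications, writing $E\coloneqq\Hom(C,A)$. Since the completed tensor product distributes over finite products and $\MCdg$ preserves products (Proposition \ref{mcdgadj}), one has $\Hom(C,A\times A)\cong E\times E$ and $\MCdg(C,A\times A)\cong\MCdg(C,A)\times\MCdg(C,A)$; thus a map $g\colon\Omega C\to A\times A$ is the same as a pair $(g_0,g_1)$ of maps $\Omega C\to A$, and $\MCdg(C,\pi)$ is the pair $(\MCdg(C,\pi_0),\MCdg(C,\pi_1))$. Secondly, $\Hom(C,A\hat\otimes I^n)\cong E\hat\otimes I^n$, and by Definition \ref{def:homotopyalg} combined with Proposition \ref{prebcb} a map $h'\colon\Omega C\to A\hat\otimes I^n$ is \emph{exactly} an elementary $n$-homotopy between its endpoints $\pi_0 h'$ and $\pi_1 h'$; in particular $\pi_0 h'\sim_n\pi_1 h'$.

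The rectification is then the heart of the argument. Suppose $\pi h'\simeq g$, i.e.\ $\pi_0 h'\simeq g_0$ and $\pi_1 h'\simeq g_1$ in $H^0\MCdg(C,A)$. By Corollary \ref{cor:Barcobar}, homotopy equivalence of such maps coincides with $n$-homotopy, so $\pi_0 h'\sim_n g_0$ and $\pi_1 h'\sim_n g_1$. Because $3\le n\le\infty$, Lemma \ref{inftyhtpyequiv} says elementary $n$-homotopy is already an equivalence relation on $\Hom(\Omega C,A)$, so chaining $g_0\sim_n\pi_0 h'\sim_n\pi_1 h'\sim_n g_1$ yields a \emph{single} elementary $n$-homotopy from $g_0$ to $g_1$. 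This elementary homotopy is a map $h\colon\Omega C\to A\hat\otimes I^n$ with $\pi_0 h=g_0$ and $\pi_1 h=g_1$, hence $\pi h=g$, producing the strictly commutative triangle. To finish I would verify $h\simeq h'$: by the analysis of $\MCdg(E\hat\otimes I^n)$ in the proof of Lemma \ref{endpointslem}, the isoclasses of $H^0\MCdg(E\hat\otimes I^n)$ are detected by the endpoint $\ev_0$ in $H^0\MCdg(E)=H^0\MCdg(C,A)$; since $\pi_0 h=g_0\simeq\pi_0 h'$, the objects $h$ and $h'$ lie in the same isoclass, i.e.\ they are homotopy equivalent.

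The hard part is precisely the rectification, and more specifically the passage from ``the two endpoints $g_0,g_1$ are homotopic'' to ``there is a single cylinder $h$ realising them as its endpoints''. This is exactly where the hypothesis $n\ge 3$ is indispensable: for $n=1,2$ elementary $n$-homotopy fails to be transitive (Proposition \ref{prop:homotopyalg}), the zig-zag cannot be collapsed, and $\pi$ is not a strong fibration. The same input, routed through Lemma \ref{endpointslem}, is what simultaneously guarantees that the constructed lift $h$ is homotopic to the given $h'$, so both halves of the rectification condition rest on the equivalence-relation property of $n$-homotopy for $3\le n\le\infty$.
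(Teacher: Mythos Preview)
Your proof is correct and follows essentially the same route as the paper: both invoke Lemma~\ref{sfchar}, dispatch surjectivity, identify maps $\Omega C\to A\hat\otimes I^n$ with $n$-homotopies in the convolution algebra $E=\Hom(C,A)$, and use the equivalence-relation property of elementary $n$-homotopy (Lemma~\ref{inftyhtpyequiv}) to concatenate the zig-zag $g_0\sim\pi_0 h'\sim\pi_1 h'\sim g_1$ into a single elementary homotopy. Your version is in fact slightly more careful than the paper's, which omits the explicit verification that the new lift is homotopic to the original one; your appeal to the endpoint argument from Lemma~\ref{endpointslem} fills that gap cleanly.
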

\begin{proof}
	We use \ref{sfchar}. The map $\pi$ is clearly a surjection, so we need to verify the rectification property. Suppose given a homotopy commutative diagram $$\begin{tikzcd} & A\hat\otimes I^n\ar[d,"\pi"] \\
		\Omega C\ar[ur,"h"] \ar[r,"g"] & A\times A\end{tikzcd}$$and let $H: h\pi \to g$ be an elementary $n$-homotopy witnessing this commutativity, which exists by \ref{inftyhtpyequiv}. Let $E\coloneqq \Hom(C,A)$ be the convolution algebra. From the above data we may recover\begin{itemize}
	\item A pair of MC elements $g_0,g_1 \in E$.
	\item An $n$-homotopy $h: g_0' \to g_1'$ between two MC elements of $E$.
	\item A pair of $n$-homotopies $H_i:g'_i \to g_i$.
\end{itemize}
By composition we hence obtain an $n$-homotopy $h':g_1 \to g_2$ between MC elements, which corresponds to the desired map rectifying the above commutative diagram. We remark that $\pi$ need not be an MC equivalence.
\end{proof}

Every strong fibration is a surjection, but the converse is false.

\begin{example}\label{sfcounter}
	Let $A$ be the graded algebra $$A\coloneqq \frac{\ground[x,y]\langle f\rangle}{(x^2,y^2,xy, fx-yf)}$$
	where $x,y$ are placed in cohomological degree $1$ and $f$ is placed in degree $0$. We regard $A$ as a curved algebra with zero differential and zero curvature. Let $B$ be the quotient $B\coloneqq A/(f^2-1)$, so that we have an obvious surjection $\pi:A \twoheadrightarrow B$. We claim that $\pi$ is not a strong fibration. It is clear that $x,y$ are two MC elements in $B$ and that $f$ is a gauge between them. Moreover $x\in A$ is an MC element that lifts $x\in B$. An element $\tilde y \in A$ is a lift of $y$ if and only if it is of the form $$\tilde y = \sum_{m,n\in \mathbb{N}}\lambda_{mn}f^{2m}yf^{2n} \quad\text{such that}\quad \sum_{m,n\in \mathbb{N}}\lambda_{mn}=1 $$where only finitely many of the $\lambda_{mn}$ are nonzero. But the only such lift of $y$ which is an MC element of $A$ is $y$ itself. Since $A$ has no elements in cohomological degree $-1$, its MC elements $x,y$ are gauge homotopy equivalent if and only if there is some invertible element $g \in A^0$ which is a gauge between them. But $A^0\cong k[f]$ has $k^\times$ as its group of units. Since $x$ and $y$ are not scalar multiples of each other, they cannot be gauge homotopy equivalent, and hence $\pi$ is not a strong fibration. 
\end{example}

\begin{lem}\label{magsqdg}
Let $$\begin{tikzcd}
X \ar[r,"u"]\ar[d,"v"] & Y \ar[d,"v'"]\\
Z \ar[r,"u'"] & W
\end{tikzcd}$$be a commutative square of curved algebras and let $P\coloneqq Y\times_W Z$ be the pullback, which admits a natural map $\psi: X \to P$. Note that $P\neq \varnothing$ since it admits a map from $X\neq \varnothing$. Suppose that:
\begin{enumerate}
	\item $\MCdg(v)$ and $ \MCdg(v')$ are fibrations.
	\item $\MCdg(u)$ and $\MCdg(u')$ are acyclic fibrations.
	\item The natural map $\psi: X \to P$ is a surjection.
	\end{enumerate}
Then the map $\MCdg(\psi): \MCdg(X)\to \MCdg(P)$ is an acyclic fibration.
	\end{lem}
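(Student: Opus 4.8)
The plan is to reduce the statement to a purely dg-categorical assertion about a pullback square and then verify the two defining conditions of an acyclic fibration. Since $\MCdg\colon\calgp\to\dgcat'$ is a right adjoint (Proposition \ref{mcdgadj}), it preserves all limits; in particular $\MCdg(P)=\MCdg(Y)\times_{\MCdg(W)}\MCdg(Z)$ and $\MCdg(\psi)$ is precisely the canonical comparison functor into this pullback. Writing $\mathcal X=\MCdg(X)$, and similarly $\mathcal Y,\mathcal Z,\mathcal W$, we are thus left to prove: in the resulting pullback square of dg categories, with $\MCdg(v),\MCdg(v')$ fibrations and $\MCdg(u),\MCdg(u')$ acyclic fibrations, the comparison map $\psi_*\colon\mathcal X\to\mathcal P:=\mathcal Y\times_{\mathcal W}\mathcal Z$ is an acyclic fibration.

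First I would record that the projection $\mathrm{pr}_{\mathcal Y}\colon\mathcal P\to\mathcal Y$ is a pullback of the acyclic fibration $\MCdg(u')$ and that $\mathrm{pr}_{\mathcal Z}\colon\mathcal P\to\mathcal Z$ is a pullback of the fibration $\MCdg(v')$; since (acyclic) fibrations of dg categories are characterised by a right lifting property they are stable under pullback, so $\mathrm{pr}_{\mathcal Y}$ is an acyclic fibration and $\mathrm{pr}_{\mathcal Z}$ a fibration. As $\MCdg(u)=\mathrm{pr}_{\mathcal Y}\circ\psi_*$ and both $\MCdg(u)$ and $\mathrm{pr}_{\mathcal Y}$ are weak equivalences, the two-out-of-three property shows $\psi_*$ is a quasi-equivalence; in particular it induces a quasi-isomorphism on every hom-complex.

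Next I would use the characterisation that a dg functor is an acyclic fibration exactly when it is surjective on objects and is a surjective quasi-isomorphism on every hom-complex. The quasi-isomorphism part is the previous paragraph. Surjectivity on hom-complexes is where hypothesis (3) enters: the hom-complexes of $\MCdg(X)$ are the two-sided twists ${}^{[y]}X^{[x]}$, whose underlying graded space is $X^{\#}$, and $\psi_*$ acts on them through the graded algebra map underlying $\psi$; hence $\psi$ being a surjection forces $\psi_*$ to be surjective on hom-complexes. Without (3) the four conditions on the square do not suffice, since a non-surjective $\psi$ need not induce a surjection on hom-complexes.

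It remains to prove surjectivity on objects, which I expect to be the main obstacle. Because $\psi_*$ is already a quasi-equivalence, $H^0\psi_*$ is essentially surjective, so it suffices to prove that $\psi_*$ is an isofibration on $H^0$; essential surjectivity together with the isofibration property then upgrades to genuine surjectivity on objects. To establish the isofibration property I would, given an object $x\in\mathcal X$ and an isomorphism $\alpha\colon\psi_*(x)\to(y,z)$ in $H^0\mathcal P$, first lift the $\mathcal Z$-component of $\alpha$ along the fibration $\MCdg(v)$ to an isomorphism $x\to x_1$ with $v(x_1)=z$. The $\mathcal Y$-component then matches $y$ only up to an isomorphism lying over the identity of $\mathcal W$, and the delicate point is to correct this without disturbing the $\mathcal Z$-component, since adjusting one factor of the pullback drags the other. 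This is exactly where the acyclicity of $\MCdg(u)$ and $\MCdg(u')$, not merely their being fibrations, is needed: an isomorphism over the identity of $\mathcal W$ is homotopically inessential, so the correction can be realised by lifting a suitable homotopy along the fibration $\MCdg(v)$ while keeping the $\MCdg(u)$-image fixed. Equivalently, and perhaps more cleanly, I would package the whole argument as a single right-lifting argument against the generating trivial cofibration $*\to\mathcal I$ into the walking isomorphism: lift the $\mathcal Y$-datum along the acyclic fibration $\MCdg(u)$, then homotopy-correct the $\mathcal Z$-datum using that $\MCdg(u')$ is an acyclic fibration.
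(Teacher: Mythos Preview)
Your reduction to dg categories, your proof that $\psi_*$ is a quasi-equivalence, and your use of hypothesis~(3) for hom-complex surjectivity all match the paper (which also first disposes of the degenerate cases $X=0$, $Y=0$, $Z=0$ so that the pullback is taken in $\dgcat$ rather than $\dgcat'$). You diverge at object-surjectivity, where your isofibration argument is genuinely incomplete. The ``delicate point'' you flag is real and neither packaging you propose resolves it: lifting the remaining iso along $\MCdg(u)$ will in general move the $\mathcal Z$-image away from $z$, with no evident mechanism to push it back without disturbing the $\mathcal Y$-image you just fixed; and lifting the morphism $(\alpha,\beta)$ elementwise via~(3) does produce an element of $X$, but gives no reason for that element to be homotopy-invertible in $\mathcal X$, so it does not furnish the required isomorphism.

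The paper's route here is different. Rather than prove isofibration, it argues fibrewise over $\mathcal Z$: writing $F=\MCdg(v)$, $F'=\MCdg(v')$, $G=\MCdg(u')$ and $\Phi=\mathrm{pr}_{\mathcal Z}$, one forms the ladder
\[
\begin{tikzcd}
\mathcal X \ar[r,"\psi_*"]\ar[d,"F"] & \mathcal P\ar[r] \ar[d,"\Phi"] & \mathcal Y\ar[d,"F'"] \\
\mathcal Z \ar[r,equals]& \mathcal Z \ar[r,"G"]& \mathcal W
\end{tikzcd}
\]
and, for each fixed $z$, passes to strict fibres $F^{-1}(z)\to\Phi^{-1}(z)\to F'^{-1}(Gz)$. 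The right-hand square being a pullback forces $\Phi^{-1}(z)\cong F'^{-1}(Gz)$, and the paper asserts (invoking~(2)) that the composite $F^{-1}(z)\to F'^{-1}(Gz)$ is an acyclic fibration; object-surjectivity of $\psi_*$ then follows since every object of $\mathcal P$ lies in some $\Phi^{-1}(z)$. The gain over your approach is that the $\mathcal Z$-coordinate is pinned from the outset, so there is no simultaneous lifting against two constraints---though you may find the paper's one-line appeal to~(2) at this step worth unpacking just as carefully as your own isofibration claim.
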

\begin{proof}
Write $\Psi\coloneqq \MCdg(\psi)$. If $X=0$ then $P=0$ and $\Psi$ is an isomorphism. If $Z=0$ then $P=Y$ and $\Psi=\MCdg(u)$, which is an acyclic fibration by assumption. By \ref{mcdgadj}, the diagram $$\begin{tikzcd}
		\MCdg(P) \ar[r]\ar[d]& \MCdg(Y)\ar[d, two heads]
		\\ \MCdg(Z) \ar[r, "\simeq"]& \MCdg(W)
	\end{tikzcd}$$is a pullback diagram in $\dgcat'$. The right-hand vertical map is a fibration by (1) and so this square is a homotopy pullback square in $\dgcat'$. The lower horizontal map is a quasi-equivalence by (2) and hence $	\MCdg(P) \to \MCdg(Y)$ is a quasi-equivalence. Hence $\MCdg(u)$ factors as $\Psi$ followed by a quasi-equivalence. By (2) it now follows that $\Psi$ is itself a quasi-equivalence. If $Y=0$ then $P=Z$ and $\Psi=\MCdg(v)$, which by assumption is a fibration and so we are done. So we may assume that $X,Y,Z$ are all nonzero, in which case the above diagram is a pullback diagram in $\dgcat$. By assumption $\Psi$ is surjective on hom-complexes, so to show that it is an acyclic fibration it will suffice to show that it is surjective on objects (since an equivalence of ordinary categories is an isofibration if and only if it is surjective on objects). To do this we will examine the fibres of $\Psi$; this is why we need to know that the above pullback diagram is actually a pullback diagram in $\dgcat$ and not $\dgcat'$ (where pullbacks are computed differently if some of the terms are $0$). Consider the commutative diagram of dg categories $$\begin{tikzcd}
	\MCdg(X) \ar[r,"\Psi"]\ar[d, "F", two heads]& \MCdg(P)\ar[r,"\simeq"] \ar[d, "\Phi", two heads]& \MCdg(Y)\ar[d, "F'", two heads] \\
	\MCdg(Z) \ar[r,equals]& \MCdg(Z) \ar[r,"G"]& \MCdg(W).
\end{tikzcd}$$Pick $z \in \MCdg(Z)$ and put $z'\coloneqq Gz$. We get induced morphisms between fibres $$F^{-1}(z) \to \Phi^{-1}(z) \to F'^{-1}(z')$$(note that these fibres also compute the homotopy fibres). Because the right-hand square is a pullback diagram, the map $\Phi^{-1}(z) \to F'^{-1}(z')$ is an isomorphism. Because the map $F^{-1}(z) \to F'^{-1}(z')$ is an acyclic fibration by (2), the map $F^{-1}(z) \to \Phi^{-1}(z) $ must be an acyclic fibration and in particular surjective on objects. In particular, take an element $p$ of $\MCdg(P)$ and put $z \coloneqq \Phi (p)$. Regarding $p$ as an element of $\Phi^{-1}(z) $ we see that there exists $x \in F^{-1}(z) $ with $\Psi (x) = p$. Hence $\Psi$ must be surjective on objects, as required.
	\end{proof}

\begin{cor}\label{magsqcor}
	Let $f:C \into C'$ be an injection of curved coalgebras and $g:A \to A'$ be a strong fibration of curved algebras. Suppose that at least one of $f$ or $g$ is an MC equivalence. Put $P\coloneqq \Hom(C',A') \times_{\Hom(C,A')} \Hom(C,A)$. Then the natural map $\MCdg(C',A) \to \MCdg(P)$ is an acyclic fibration.
	\end{cor}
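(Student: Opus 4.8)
The plan is to exhibit the map of the statement as $\MCdg(\psi)$ for a comparison map $\psi$ into a pullback, and then apply Lemma \ref{magsqdg}. Consider the commutative square of convolution algebras
$$\begin{tikzcd}
\Hom(C',A) \ar[r,"\Hom(C',g)"]\ar[d,"\Hom(f,A)"'] & \Hom(C',A') \ar[d,"\Hom(f,A')"]\\
\Hom(C,A) \ar[r,"\Hom(C,g)"'] & \Hom(C,A')
\end{tikzcd}$$
whose horizontal edges are induced by $g$ and whose vertical edges are induced by $f$. Its pullback is $P$ by definition, with comparison map $\psi\colon\Hom(C',A)\to P$; since $\MCdg(C',A)=\MCdg(\Hom(C',A))$ by definition, the map $\MCdg(\psi)$ is precisely the one in the statement. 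Because the forgetful functor to graded algebras preserves limits (Proposition \ref{limscolims}), the underlying graded vector space of $P$ is the pullback of the underlying graded vector spaces, a fact used only for surjectivity. As $A\neq\varnothing$ and $C'\neq *$ we have $\Hom(C',A)\neq\varnothing$, so Lemma \ref{magsqdg} applies once its three hypotheses are verified.

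Applying $\MCdg$ to the two vertical edges yields $f^{*}\colon\MCdg(C',A)\to\MCdg(C,A)$ and $f^{*}\colon\MCdg(C',A')\to\MCdg(C,A')$; since $f$ is an injection it is a strong cofibration by Theorem \ref{thm:strongcof}, so both are fibrations. Applying $\MCdg$ to the two horizontal edges yields $g_{*}\colon\MCdg(C',A)\to\MCdg(C',A')$ and $g_{*}\colon\MCdg(C,A)\to\MCdg(C,A')$; since $g$ is a strong fibration both are fibrations, and if moreover $g$ is an MC equivalence they are quasi-equivalences by Proposition \ref{mccat2}(2), hence acyclic fibrations. This checks hypotheses (1) and (2) of Lemma \ref{magsqdg} in the case that $g$ is the MC equivalence. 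If instead $f$ is the MC equivalence, I would transpose the square: the $f$-induced edges become the horizontal ones and are acyclic fibrations (strong cofibration together with Proposition \ref{mccat2}(1)), while the $g$-induced edges are the vertical fibrations. The pullback and the comparison map $\psi$ are unchanged up to canonical isomorphism, so the two cases yield the same conclusion.

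For hypothesis (3) I would check directly that $\psi$ is surjective, using that $g$ is a surjection (every strong fibration is, by Lemma \ref{sfchar}) and that $f$ is injective. On underlying graded vector spaces the vertical edge $\Hom(f,A)$ is surjective (precomposition along the split monomorphism $f$) and the horizontal edge $\Hom(C,g)$ is surjective (postcomposition along the epimorphism $g$); moreover $\Hom(C',g)$ carries $\ker\Hom(f,A)\cong\ker(f^{*})\hat\otimes A$ onto $\ker\Hom(f,A')\cong\ker(f^{*})\hat\otimes A'$ because $g$ is surjective. A routine diagram chase then produces, for any $(\beta,\gamma)\in P$, a lift $\alpha\in\Hom(C',A)$ with $\Hom(C',g)(\alpha)=\beta$ and $\Hom(f,A)(\alpha)=\gamma$, the compatibility being exactly the pullback relation $\beta\circ f=g\circ\gamma$.

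The work here is almost entirely bookkeeping rather than conceptual, and that is precisely where the main difficulty lies: one must track the contravariance of $\MCdg(-,A)$ in the coalgebra variable, confirm that $\psi$ is literally the map whose $\MCdg$-image is claimed to be an acyclic fibration, and organise the two orientations of the square forced by the asymmetric hypotheses of Lemma \ref{magsqdg}. A final subtlety to keep in mind is that $f$ and $g$ may genuinely be curved; since the (acyclic) fibration properties depend only on $f$ being an injection and $g$ a strong fibration, and surjectivity depends only on the underlying graded parts, no uncurving is actually required, but one should record that the underlying graded object of $P$ is the genuine pullback so that the surjectivity argument is valid.
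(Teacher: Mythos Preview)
Your proof is correct and follows essentially the same approach as the paper: set up the square of convolution algebras and apply Lemma \ref{magsqdg}, checking the (acyclic) fibration hypotheses via Theorem \ref{thm:strongcof}, the definition of strong fibration, and Proposition \ref{mccat2}, and checking surjectivity of $\psi$ via a kernel argument. The only cosmetic differences are that the paper orients the square with the $f$-induced maps horizontal and phrases the surjectivity step as a Four Lemma application after identifying the kernels of the two legs as $\Hom(W,A)$ and $\Hom(W,A')$ for $W=\coker(f)$, which is exactly your diagram chase in slightly more packaged form.
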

\begin{proof}
	We obtain a commutative square $$\begin{tikzcd}
		\Hom(C',A) \ar[r]\ar[d]& \Hom(C,A)\ar[d]
		\\ \Hom(C',A') \ar[r]& \Hom(C,A')
	\end{tikzcd}$$ of convolution algebras. Note that in the corresponding square of MC dg categories, all maps are fibrations. By the assumption on $f$ and $g$, either both horizontal maps or both vertical maps are acyclic fibrations. Hence, applying \ref{magsqdg}, we only need to check that the natural map $\psi:\Hom(C',A)\to P$ is a surjection. To do this, let $K$ be the kernel of $\Hom(C',A)\to\Hom(C,A)$ and $K'$ be the kernel of $P \to \Hom(C,A)$, so that we get the following commutative diagram with exact rows:
$$\begin{tikzcd}
0 \ar[r] \ar[d, equal]& K\ar[r]\ar[d] & \Hom(C',A)\ar[r]\ar[d,"\psi"] &\Hom(C,A)\ar[d,equal] 
\\0 \ar[r]& K'\ar[r] & P\ar[r] &\Hom(C,A)
\end{tikzcd}$$One can check that $K'$ is isomorphic to the kernel of the map $\Hom(C',A') \to\Hom(C,A')$, and hence setting $W\coloneqq \coker(C \to C')$ we have by exactness of Hom an isomorphism $K'\cong \Hom(W,A')$. Similarly by exactness we have an isomorphism $K\cong \Hom(W,A)$, and since $A \to A'$ was a surjection it follows that $K\to K'$ is a surjection. By the Four Lemma it now follows that $\psi$ is a surjection.
	\end{proof}

\begin{prop}\label{strongfibsareLfibs}\hfill
	\begin{enumerate}
		\item
	Strong fibrations of curved algebras have the right lifting property with respect to maps of the form $\Omega(f)$, where $f$ is an injective MC equivalence of curved coalgebras.
	\item Acyclic strong fibrations of curved algebras have the right lifting property with respect to maps of the form $\Omega(f)$, where $f$ is an injection of curved coalgebras.
	\end{enumerate}
\end{prop}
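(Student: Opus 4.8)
The plan is to translate the lifting problem into a statement about the bifunctor $\MCdg$ and then apply Corollary~\ref{magsqcor}. Consider a strong fibration $g\colon A\to A'$ and an injective MC equivalence $f\colon C\to C'$, together with a commutative square
\[
\begin{tikzcd}
\Omega C\ar[r,"a"]\ar[d,"\Omega f",swap]& A\ar[d,"g"]\\
\Omega C'\ar[r,"b"]& A'.
\end{tikzcd}
\]
Using the bar--cobar adjunction in the form $\Hom(\Omega C,A)\cong\MC\Hom(C,A)$, the maps $a$ and $b$ correspond to objects $x$ of $\MCdg(C,A)$ and $y$ of $\MCdg(C',A')$; commutativity of the square says exactly that $g_*x=f^*y$ in $\MCdg(C,A')$, where $g_*$ and $f^*$ are the functors induced by $g$ and $f$. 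A lift $\Omega C'\to A$ corresponds to an object $z$ of $\MCdg(C',A)$ with $f^*z=x$ and $g_*z=y$. Thus the statement reduces precisely to the claim that the natural functor $\MCdg(C',A)\to\MCdg(P)$ is surjective on objects, where $P\coloneqq\Hom(C',A')\times_{\Hom(C,A')}\Hom(C,A)$.

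Next I would invoke Corollary~\ref{magsqcor}. For part (1), $f$ is an injective MC equivalence and $g$ is a strong fibration, so the hypotheses of \ref{magsqcor} hold (with $f$ the MC equivalence), whence $\MCdg(C',A)\to\MCdg(P)$ is an acyclic fibration of dg categories and is therefore surjective on objects. It then remains to identify the objects of $\MCdg(P)$: the object set of $\MCdg(D)$ is $\MC(D)$, and $\MC=\Hom(\ground,-)$ preserves limits, so $\operatorname{Ob}\MCdg(P)\cong\MC\Hom(C',A')\times_{\MC\Hom(C,A')}\MC\Hom(C,A)$, which is exactly the set of compatible pairs $(y,x)$ satisfying $f^*y=g_*x$. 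Surjectivity on objects then furnishes the required $z$, and unwinding the adjunction gives the desired diagonal filler.

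For part (2), $f$ is only an injection while $g$ is an acyclic strong fibration; the single extra step is to observe that $g$ is then an MC equivalence. Indeed, by definition $g_*\colon\MCdg(C,A)\to\MCdg(C,A')$ is an acyclic fibration, hence a quasi-equivalence, for every $C$, so $g$ is an MC equivalence by \ref{mccat2}(2). As an acyclic strong fibration is in particular a strong fibration, $g$ now plays the role of the MC equivalence in \ref{magsqcor}, and the argument concludes exactly as in part (1). (The degenerate cases where one of the objects is $\varnothing$ or $*$ are handled as in the setup of \ref{magsqcor}.)

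Essentially all of the analytic difficulty is packaged into Corollary~\ref{magsqcor}, so once that is in hand the remaining work is the adjoint bookkeeping above. The one point that genuinely requires care is the identification of the objects of the pullback $\MCdg(P)$ with the pullback of the relevant MC-sets, which rests on $\MCdg$ (equivalently $\MC$) sending limits of curved algebras to limits; everything else is a formal consequence of the fact that an acyclic fibration of dg categories is surjective on objects.
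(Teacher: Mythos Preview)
Your proposal is correct and follows essentially the same route as the paper: both set up the lifting square, translate it via the bar--cobar adjunction to the question of surjectivity on objects of $\MCdg(C',A)\to\MCdg(P)$, and then invoke Corollary~\ref{magsqcor}. Your added justification that an acyclic strong fibration is an MC equivalence (via \ref{mccat2}(2)) is exactly what is needed to place part~(2) under the hypotheses of \ref{magsqcor}, and your remark on identifying the objects of $\MCdg(P)$ via $\MC$ preserving limits makes explicit what the paper leaves implicit.
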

\begin{proof}
		Let $$\begin{tikzcd}
		\Omega(C) \ar[r,"u"]\ar[d,"f"]& A\ar[d,"g"]
		\\ \Omega(C') \ar[r,"v"]& A'
	\end{tikzcd}$$ be a commutative square with $g$ a strong fibration and $f=\Omega f'$ with $f'$ an injection. Assuming that at least one of $f$ or $g$ is acyclic, we wish to prove that a lift $\Omega C' \to A$ exists. As before put $P\coloneqq \Hom(C',A') \times_{\Hom(C,A')} \Hom(C,A)$. Note that an object of $\MCdg(P)$ is a commutative square of the form $$\begin{tikzcd}
	\Omega(C) \ar[r]\ar[d,"f"]& A\ar[d,"g"]
	\\ \Omega(C') \ar[r]& A'
\end{tikzcd}$$ where the horizontal maps are not fixed. The natural map $\Psi:\MCdg(C',A) \to \MCdg(P)$ sends a morphism $\ell:\Omega C' \to A$ to the commutative diagram $$\begin{tikzcd}
\Omega(C) \ar[r,"\ell f"]\ar[d,"f"]& A\ar[d,"g"]
\\ \Omega(C') \ar[r,"g\ell"]& A'.
\end{tikzcd}$$By \ref{magsqcor}, $\Psi$ is surjective on objects, and hence a lift $\Omega C' \to A$ exists in the original diagram.
\end{proof}

\begin{lem}\label{llem}
	Let $f$ be a map of curved coalgebras and let $g$ be a map of curved algebras. Let $C$ be a curved coalgebra. Then $g$ lifts on the right against $\Omega(f\otimes C)$ if and only if $\Hom(C,g)$ lifts on the right against $\Omega f$.
\end{lem}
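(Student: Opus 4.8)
The plan is to deduce the whole statement formally from the natural isomorphism of Corollary \ref{htconv}. Writing $f\colon D\to D'$ for the map of curved coalgebras and $g\colon A\to A'$ for the map of curved algebras, that corollary supplies, for each curved algebra $A$ and each curved coalgebra $D$, a bijection
$$\Phi_{D,A}\colon\calg(\Omega(D\otimes C),A)\xrightarrow{\ \cong\ }\calg(\Omega D,\Hom(C,A))$$
natural both in the algebra variable $A$ and in the (first) coalgebra variable $D$. The strategy is simply to observe that $\Phi$ transports right lifting problems for $g$ against $\Omega(f\otimes C)$ to right lifting problems for $\Hom(C,g)$ against $\Omega f$, bijectively and carrying solutions to solutions.

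First I would unpack the two lifting problems. A lifting problem for $g$ against $\Omega(f\otimes C)$ is a commutative square with edges $u\colon\Omega(D\otimes C)\to A$ and $v\colon\Omega(D'\otimes C)\to A'$ satisfying $g\circ u=v\circ\Omega(f\otimes C)$, and a solution is a map $\ell\colon\Omega(D'\otimes C)\to A$ with $\ell\circ\Omega(f\otimes C)=u$ and $g\circ\ell=v$. A lifting problem for $\Hom(C,g)$ against $\Omega f$ is a commutative square with edges $\hat u\colon\Omega D\to\Hom(C,A)$ and $\hat v\colon\Omega D'\to\Hom(C,A')$, with solutions $\hat\ell\colon\Omega D'\to\Hom(C,A)$.

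The key step is to run the two naturalities of $\Phi$. Naturality in $A$ (here the right-hand functor is $A\mapsto\Hom(C,A)$, with postcomposition by $\Hom(C,g)$) intertwines postcomposition by $g$ with postcomposition by $\Hom(C,g)$; naturality in the coalgebra slot $D$ (a map $f\colon D\to D'$ inducing $f\otimes C$ on the tensor factor and $\Omega f$ on the right) intertwines precomposition by $\Omega(f\otimes C)$ with precomposition by $\Omega f$. Setting $\hat u=\Phi_{D,A}(u)$ and $\hat v=\Phi_{D',A'}(v)$ and applying the bijection $\Phi_{D,A'}$ to the equation $g\circ u=v\circ\Omega(f\otimes C)$ yields, via these two naturalities, exactly $\Hom(C,g)\circ\hat u=\hat v\circ\Omega f$; so commuting squares correspond to commuting squares. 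Likewise a solution $\ell$ maps under $\Phi_{D',A}$ to $\hat\ell$ satisfying $\hat\ell\circ\Omega f=\hat u$ and $\Hom(C,g)\circ\hat\ell=\hat v$ (apply $\Phi_{D,A}$ to $\ell\circ\Omega(f\otimes C)=u$ and $\Phi_{D',A'}$ to $g\circ\ell=v$), and $\Phi^{-1}$ runs the correspondence backwards. Since $\Phi$ sets up a bijection between the two collections of lifting problems under which solutions match solutions, $g$ has the right lifting property against $\Omega(f\otimes C)$ if and only if $\Hom(C,g)$ has the right lifting property against $\Omega f$.

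I do not anticipate a serious obstacle: the argument is purely a matter of transposing a square across an adjunction. The one point deserving care is verifying that the isomorphism of Corollary \ref{htconv} is natural in \emph{both} the algebra variable and the first coalgebra variable, so that all four instances of $\Phi$ used above fit together compatibly. This is immediate from the construction of $\Phi$ as a composite of the bar--cobar adjunction isomorphisms with the hom--tensor adjunction $\Hom(C\otimes D,A)\cong\Hom(C,\Hom(D,A))$ for convolution algebras, each of which is natural in the relevant variables.
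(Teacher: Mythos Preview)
Your proof is correct and takes essentially the same approach as the paper, which simply says this is a straightforward application of Corollary~\ref{htconv}. You have spelled out in detail the adjunction-transposition argument that the paper leaves implicit.
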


\begin{proof}
This is a straightforward application of \ref{htconv}.
	\end{proof}

\begin{prop}\label{Lfibsarestrongfibs}
	Let $g$ be a morphism of curved algebras with the right lifting property with respect to maps of the form $\Omega(f)$, where $f$ is an injective MC equivalence of curved coalgebras. Then $g$ is a strong fibration.
\end{prop}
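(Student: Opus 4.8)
The plan is to exploit the adjunction $\algmc\dashv\MCdg$ of Proposition \ref{mcdgadj} to convert the fibration condition on $\MCdg(C,g)$ into a lifting condition on $g$ itself, and then to feed in the hypothesis. By definition $g$ is a strong fibration precisely when $\MCdg(C,g)=\MCdg(\Hom(C,g))$ is a fibration of dg categories for every curved coalgebra $C$, and $\dgcat'$ carries a cofibrantly generated model structure with the same generating trivial cofibrations as $\dgcat$. So it suffices to check, for each $C$, that $\MCdg(\Hom(C,g))$ has the right lifting property against every generating trivial cofibration $j\colon\mathcal J\to\mathcal J'$ of $\dgcat'$.

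The technical heart is a lemma identifying $\algmc(j)\cong\Omega(f_j)$ for each such $j$, where $f_j\colon C_j\into C_j'$ is an \emph{injective MC equivalence} of curved coalgebras. For the generator $\{*\}\to\mathcal K$ detecting the isofibration condition (here $\mathcal K$ is the contractible interval groupoid), one has $\algmc(\{*\})=\ground=\Omega(\ground)$ and $\algmc(\mathcal K)\cong\Omega(I_3)$, so that $\algmc(\{*\}\to\mathcal K)$ is $\Omega(i_0)$ for the endpoint inclusion $i_0\colon\ground\to I_3$, which is an injective MC equivalence by Lemma \ref{endpointslem}. For the generators detecting surjectivity on hom-complexes in each degree, $\algmc$ produces $\Omega$ of an inclusion obtained by freely adjoining a ``contractible disc''; injectivity is clear, and the MC-equivalence property has to be read off directly from the explicit (co)algebra description. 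This last verification is the step I expect to be the main obstacle: unlike the interval generator it does not follow from Lemma \ref{endpointslem}, and one cannot yet appeal to a model structure on $\ccogp$, since that structure is precisely what the paper is building toward. Concretely, one expects these disc coalgebras to be the linear duals of the square-zero ``disc'' algebras $\ground\oplus(\ground\xrightarrow{\ \mathrm{id}\ }\ground)$ in the appropriate degrees, and the content is to show that adjoining such a piece does not change the MC moduli set tested against any curved algebra.

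Granting this lemma, fix $C$ and a generating trivial cofibration $j$. A lifting problem of $\MCdg(\Hom(C,g))$ against $j$ transposes, under $\algmc\dashv\MCdg$, to a lifting problem of $\Hom(C,g)$ against $\algmc(j)=\Omega(f_j)$. By Lemma \ref{llem} (which rests on Corollary \ref{htconv}), $\Hom(C,g)$ lifts on the right against $\Omega(f_j)$ if and only if $g$ lifts on the right against $\Omega(f_j\otimes C)$. Now $f_j\otimes C$ is again injective, and it is an MC equivalence by Corollary \ref{mccogtensor}; hence $g$ has the required lift by hypothesis. Thus $\MCdg(\Hom(C,g))$ lifts against every generating trivial cofibration, so it is a fibration, and since $C$ was arbitrary $g$ is a strong fibration.

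It is worth isolating where the genuine difficulty lies by comparing with the characterisation in Lemma \ref{sfchar}. The isofibration half of the fibration condition amounts exactly to the rectification statement there, and it can be proved by hand: given $x\colon\Omega C\to A$ and a $3$-homotopy $H\colon\Omega(C\otimes I_3)\to A'$ with $H\circ\Omega(i_0)=g\circ x$, the square with vertical maps $\Omega(i_0)$ and $g$ commutes, so the hypothesis (applied to the injective MC equivalence $i_0\colon C\to C\otimes I_3$ of Lemma \ref{endpointslem}) yields a lift $\tilde H$ rectifying $H$, and its right endpoint supplies the desired genuine factorisation up to $3$-homotopy. What this clean argument does \emph{not} deliver is the surjectivity of $g$ on underlying complexes (equivalently, surjectivity of $\MCdg(C,g)$ on hom-complexes whenever the source is nonempty), and that is precisely the content that the disc generators above are designed to force. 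I therefore expect the construction and MC-invariance of those disc coalgebras to be the crux of the proof.
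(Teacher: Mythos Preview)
Your strategy for the isofibration half matches the paper's: both transpose the lifting problem via the injective MC equivalence $C\otimes i_0:C\hookrightarrow C\otimes I_3$ (Lemma~\ref{endpointslem}) together with Lemma~\ref{llem}. The paper carries out the transposition by hand rather than through the $\algmc\dashv\MCdg$ adjunction: it observes directly that any lifting square of $\MCdg(C,g)$ against $\MCdg(\Omega i_0)$ is obtained by applying $\MCdg$ to a lifting square of $\Hom(C,g)$ against $\Omega i_0$, since maps out of $\MCdg(\Omega\ground)$ and $\MCdg(\Omega I_3)$ are exactly morphisms $\Omega C\to A$ and $3$-homotopies between such, respectively.

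The substantive divergence is in your treatment of hom-complex surjectivity. You correctly decompose the fibration condition on $\MCdg(C,g)$ into an isofibration part and a surjectivity part, and you flag the latter as the crux, proposing to realise the disc-type generating trivial cofibrations of $\dgcat'$ as $\Omega$ of explicit injective MC equivalences. The paper bypasses this entirely: it invokes from \cite{Tabuada04} that a morphism of dg categories is a fibration \emph{if and only if} it lifts against the \emph{single} morphism $\ground\to\mathcal K$, and then identifies this morphism with $\OCat(i_0)\cong\MCdg(\Omega i_0)$ using the explicit computation of $\OCat(I_3)$ in Proposition~\ref{prop:coderivedinfinity}. So the step you isolate as the main obstacle is, in the paper's account, simply absent---Tabuada's $\mathcal K$ is rich enough that lifting against $\ground\to\mathcal K$ already forces both halves of the fibration condition, and no separate disc argument is needed. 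Your route would presumably also go through, but it is longer and the disc verification you anticipate is exactly what the paper's appeal to Tabuada is designed to avoid.
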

\begin{proof}
	Let $i_0:\ground \to I_3$ denote the inclusion at the $0$ vertex. Fix a curved coalgebra $C$. By	\ref{endpointslem}, every map of the form $C\otimes i_0: C \to C\otimes I_3$ is an injective MC equivalence, and in particular $g$ lifts against $\Omega(C\otimes i_0)$. By \ref{llem}, the map $\Hom(C,g)$ lifts on the right against $\Omega i_0$. It follows that the morphism of dg categories $\MCdg(C,g)$ lifts on the right against $\MCdg(\Omega i_0)$: indeed $\MCdg(\Omega\ground) \cong \ground$, so a morphism $\MCdg(\Omega\ground) \to \MCdg(C,A)$ is the same as a morphism $\Omega C\to A$. Similarly a morphism $\MCdg(\Omega (I_3))\to \MCdg(C,A')$ is the same as a pair of morphisms $\Omega C \to A'$ and a $3$-homotopy between them. It follows that any commutative diagram of the form $$\begin{tikzcd}
		\MCdg(\Omega\ground) \ar[r]\ar[d,"\MCdg(\Omega i_0)", swap]& \MCdg(C,A) \ar[d,"\MCdg(C{,}g)"]
		\\ \MCdg(\Omega I_3) \ar[r]&  \MCdg(C,A')
	\end{tikzcd}$$ is obtained by applying $\MCdg$ to a commutative diagram of the form $$\begin{tikzcd}
		\Omega\ground \ar[r]\ar[d,"\Omega i_0", swap]& \Hom(C,A) \ar[d,"\Hom(C{,}g)"]
		\\ \Omega I_3 \ar[r]&  \Hom(C,A')
	\end{tikzcd}$$and hence if $\Hom(C,g)$ lifts on the right against $\Omega i_0$ then $\MCdg(C,g)$ lifts on the right against $\MCdg(\Omega i_0)$. Recalling the categorical cobar construction $\OCat$ from \cite{HL2020}, together with the computation of $\OCat(I_3)$ from \ref{prop:coderivedinfinity}, it is not hard to check that the morphism $\MCdg(\Omega i_0)$ is actually isomorphic to $\OCat(i_0)$. Recall from \cite{Tabuada04} that a morphism of dg categories is a fibration if and only if it lifts against a certain morphism $\ground \to \mathcal{K}$, where $\mathcal{K}$ is an explicitly defined dg category with two objects. Again using the explicit computation of $\OCat(I_3)$ from \ref{prop:coderivedinfinity}, one can easily see that $\ground \to \mathcal{K}$ is isomorphic to $\OCat(i_0)$. Hence, $\MCdg(C,g)$ is a fibration, which is the desired statement.
\end{proof}

\begin{lem}\label{sfibheq}
	Let $f:X \to X'$ be a map of curved coalgebras that has the right lifting property with respect to injections. Then $f$ is a $3$-homotopy equivalence.
\end{lem}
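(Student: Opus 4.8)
The plan is to realise $f$ as a section-split map up to $3$-homotopy: first I would extract an honest section $s$ of $f$ by lifting against the injection out of the initial object, and then produce an elementary $3$-homotopy from $\id_X$ to $s\circ f$ by solving a single lifting problem against a judiciously chosen injection. Since $f\circ s$ will be the identity on the nose, these two facts together give that $f$ is a $3$-homotopy equivalence in the sense of the definition in Section \ref{section:higher}.

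First I would construct the section. The zero coalgebra is initial in $\ccog$, and the unique map $0\into X'$ is an injection. Lifting $f$ against it in the square with $\id_{X'}$ along the bottom,
$$\begin{tikzcd} 0 \ar[r]\ar[d] & X \ar[d,"f"] \\ X'\ar[ur,"s",dashed] \ar[r,"\id"] & X' \end{tikzcd}$$
produces a curved coalgebra map $s:X'\to X$ with $f\circ s=\id_{X'}$; in particular $f\circ s\sim_3\id_{X'}$ trivially.

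Next I would produce a $3$-homotopy $\id_X\sim_3 s\circ f$. Let $i_0,i_1:X\into X\otimes I_3$ be the two endpoint inclusions and let $\iota:X\sqcup X\to X\otimes I_3$ be the map induced by $i_0$ and $i_1$ on the two summands. By \ref{limscolims} the coproduct $X\sqcup X$ is created on underlying graded coalgebras, where it is the direct sum $X\oplus X$, so on underlying spaces $\iota$ is the injection $(a,b)\mapsto a\otimes v_0+b\otimes v_1$, the $v_j\in I_3$ being the two linearly independent grouplike vertices; hence $\iota$ is an injection. Let $p\coloneqq \id_X\otimes\epsilon:X\otimes I_3\to X$ be the projection induced by the counit of $I_3$, so that $p\circ i_j=\id_X$. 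I claim the square
$$\begin{tikzcd} X\sqcup X \ar[r,"{(\id,sf)}"]\ar[d,"\iota", swap] & X \ar[d,"f"] \\ X\otimes I_3 \ar[r,"f\circ p", swap] & X' \end{tikzcd}$$
(with top map the one induced by $\id_X$ and $s\circ f$) commutes: on the first summand both composites equal $f$, and on the second summand the top–right composite is $f\circ s\circ f=f$ by $f\circ s=\id_{X'}$, while the bottom–left composite is $f\circ p\circ i_1=f$. Since $\iota$ is an injection, $f$ lifts against it, yielding $H:X\otimes I_3\to X$ with $H\circ i_0=\id_X$ and $H\circ i_1=s\circ f$. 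This $H$ is exactly an elementary $3$-homotopy $\id_X\to s\circ f$, so $s\circ f\sim_3\id_X$; combined with $f\circ s=\id_{X'}$, this exhibits $f$ as a $3$-homotopy equivalence.

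The argument is essentially formal once the correct two injections are chosen, so the main thing requiring care is the construction in the third paragraph: verifying that $\iota$ is genuinely injective (which rests on \ref{limscolims} and the linear independence of the two vertices of $I_3$) and checking that the \emph{constant} homotopy $f\circ p$ makes the lifting square commute, where the identity $f\circ s=\id_{X'}$ is exactly what is needed. No curvature subtleties arise, since both injectivity and commutativity are detected on underlying graded spaces.
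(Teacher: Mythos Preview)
Your proof is correct and follows essentially the same two-step approach as the paper: first lift against $0\into X'$ to obtain a section, then lift against the injection $X\sqcup X\into X\otimes I_3$ with the constant homotopy on $f$ along the bottom to produce the required $3$-homotopy. Your version is slightly more detailed in verifying that $\iota$ is injective and that the square commutes, but the argument is identical in substance.
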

\begin{proof}
	The left-hand vertical map in the commutative diagram $$\begin{tikzcd}
		0 \ar[r]\ar[d]& X\ar[d,"f"]
		\\ X' \ar[r]& X'
	\end{tikzcd}$$is an injection and hence we get a lift $g:X' \to X$ exhibiting $X'$ as a retract of $X$. It will suffice to show that $gf$ is $3$-homotopic to $\id_{X}$. Consider the commutative diagram $$\begin{tikzcd}
		X\sqcup X \ar[r,"u"]\ar[d,"i"]& X\ar[d,"f"]
		\\ X\otimes I_3\ar[r,"v"]& X'
	\end{tikzcd}$$where $u=\id_X \sqcup gf$ and $v$ is the constant homotopy on $f$. This diagram admits a lift $H$ because $i$ is an injection, and $H$ is the desired homotopy.	
\end{proof}

\begin{prop}\label{asfalg}
	Let $f$ be a morphism of curved algebras.
\begin{enumerate}
	\item The following are equivalent:	\begin{enumerate}
		\item $f$ is a strong fibration.
		\item $f$ has the right lifting property with respect to morphisms of the form $\Omega g$, where $g$ is an injective MC equivalence of curved coalgebras.
	\end{enumerate}
	\item The following are equivalent:	\begin{enumerate}
	\item $f$ is an acyclic strong fibration.
\item $f$ has the right lifting property with respect to morphisms of the form $\Omega g$, where $g$ is an injection of curved coalgebras.
\end{enumerate}
		\end{enumerate}
	\end{prop}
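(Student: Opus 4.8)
The plan is to establish the four implications, three of which fall out immediately from results proved earlier in the section. For part (1), the implication (a)$\Rightarrow$(b) is precisely \ref{strongfibsareLfibs}(1) and (b)$\Rightarrow$(a) is precisely \ref{Lfibsarestrongfibs}, so part (1) needs no new argument. For part (2), the implication (a)$\Rightarrow$(b) is \ref{strongfibsareLfibs}(2). The only substantial content is therefore (2)(b)$\Rightarrow$(a). Here I would first record the bookkeeping observation that $f$ is an acyclic strong fibration exactly when it is simultaneously a strong fibration and an MC equivalence: unwinding the definition, $f$ is an acyclic strong fibration iff $\MCdg(C,f)$ is both a fibration and a quasi-equivalence for every curved coalgebra $C$, and by \ref{mccat2}(2) the latter condition, demanded of all $C$, is equivalent to $f$ being an MC equivalence. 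So from the hypothesis that $f$ has the right lifting property against $\Omega g$ for every injection $g$ of curved coalgebras, I must extract both properties.

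The strong fibration property is immediate: every injective MC equivalence is in particular an injection, so $f$ lifts on the right against $\Omega g$ for every injective MC equivalence $g$, whence $f$ is a strong fibration by \ref{Lfibsarestrongfibs}. The crux is the MC equivalence, and the key idea is to transport the lifting hypothesis across the bar--cobar adjunction to the coalgebra side. Given any injection $g\colon C\into C'$ of curved coalgebras, a lifting problem for $\check B f$ against $g$,
\[
\begin{tikzcd}
C \ar[r]\ar[d,"g"'] & \check B A \ar[d,"\check B f"]\\
C' \ar[r] & \check B A',
\end{tikzcd}
\]
is adjunct under $\Omega\dashv\check B$ to a lifting problem for $f$ against $\Omega g$, and the two problems are solvable simultaneously. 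Since $g$ is an injection, the hypothesis supplies a lift of the adjunct problem, hence of the original. Thus $\check B f$ has the right lifting property against all injections of curved coalgebras, so by \ref{sfibheq} it is a $3$-homotopy equivalence, and therefore $f$ is an MC equivalence by \ref{prop:MCcharacterization2}(4).

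Assembling these, $f$ is a strong fibration and an MC equivalence, so for every $C$ the functor $\MCdg(C,f)$ is a fibration which, via \ref{mccat2}(2), is also a quasi-equivalence, i.e.\ an acyclic fibration; this is exactly the assertion that $f$ is an acyclic strong fibration. I do not expect a serious obstacle along this route, since all the difficulty has been pushed into the earlier lemmas; the single point requiring care is recognising that the coalgebra-level rigidity result \ref{sfibheq}, reached through the adjunction, is the right tool, rather than attempting to verify directly that $\MCdg(C,f)$ lifts against the generating cofibrations of $\dgcat$. That more pedestrian alternative would force one to identify $\algmc$ of each generating cofibration of dg categories with the cobar construction of an injection of curved coalgebras, which is considerably more laborious and is the path I would avoid.
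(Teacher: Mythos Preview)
Your proposal is correct and follows essentially the same approach as the paper. The paper's proof is terser but uses precisely the same ingredients in the same order: part (1) is dispatched via \ref{strongfibsareLfibs}(1) and \ref{Lfibsarestrongfibs}, part (2)(a)$\Rightarrow$(b) via \ref{strongfibsareLfibs}(2), and for (2)(b)$\Rightarrow$(a) one passes across the adjunction to see that $\check B f$ lifts against all injections, invokes \ref{sfibheq} to conclude it is a $3$-homotopy equivalence, deduces that $f$ is an MC equivalence, and combines this with part (1) to get the acyclic strong fibration.
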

\begin{proof}
We begin with claim (1). The implication (a)$\implies$(b) is \ref{strongfibsareLfibs}(1) and the implication (b)$\implies$(a) is \ref{Lfibsarestrongfibs}. For claim (2), the implication (a)$\implies$(b) is \ref{strongfibsareLfibs}(2). If (b) holds, then it follows by adjunction that $\check B f$ lifts against all injections. So by \ref{sfibheq} the map $\check Bf$ is a $3$-homotopy equivalence, and so $f$ is an MC equivalence. It is a strong fibration by claim (1), and hence an acyclic strong fibration.
	\end{proof}

We can use the previous results to obtain a strong version of \ref{scretract}. Recall from \ref{scretract}(2) that if $f:C \to C'$ is an injective MC equivalence of curved coalgebras, then $\Omega f$ admits a retract $g:\Omega C \to \Omega C'$ that is an $\infty$-homotopy inverse.
\begin{lem}\label{scahext}
	Let $f:C \to C'$ be an injective MC equivalence of curved coalgebras and $g$ a retract of $\Omega f$. Then for $3\leq n \leq \infty$, there exists an $n$-homotopy $H:(\Omega f)g\simeq \id_{\Omega C'}$ which restricts to the constant homotopy on $\id_{\Omega C}$.
\end{lem}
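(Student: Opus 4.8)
The plan is to realise the homotopy $H$ directly as a lift in a commutative square, so that nothing need be written down by hand. Fix $n$ with $3\le n\le\infty$ and form the path object $\Omega C'\hat\otimes I^n$ together with its endpoint projection $\pi\coloneqq\pi_0\times\pi_1:\Omega C'\hat\otimes I^n\to \Omega C'\times\Omega C'$, which is a strong fibration by Corollary \ref{psfib}. (For finite $n$ we read $\hat\otimes$ as the ordinary tensor product.) I would then consider the square
$$\begin{tikzcd}
\Omega C \ar[r,"\mathrm{const}"]\ar[d,"\Omega f"',]& \Omega C'\hat\otimes I^n \ar[d,"\pi"]\\
\Omega C' \ar[r,"((\Omega f)g{,}\,\id)"']\ar[ur,dashed,"H"]& \Omega C'\times \Omega C'
\end{tikzcd}$$
in which $\mathrm{const}\colon x\mapsto \Omega f(x)\otimes 1$ is the constant homotopy on the map $\Omega f$.

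The first step is to check that this square commutes. Since each $\ev_i:I^n\to\ground$ is a unital map we have $\ev_i(1)=1$, so $\pi\circ\mathrm{const}=(\Omega f,\Omega f)$; on the other hand $((\Omega f)g,\id)\circ\Omega f=\bigl((\Omega f)(g\circ\Omega f),\,\Omega f\bigr)=(\Omega f,\Omega f)$, using the retract identity $g\circ\Omega f=\id_{\Omega C}$. As $f$ is by hypothesis an injective MC equivalence, the left-hand vertical arrow is $\Omega$ applied to an injective MC equivalence, and $\pi$ is a strong fibration; hence Proposition \ref{strongfibsareLfibs}(1) supplies the dashed lift $H$. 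By commutativity of the lower triangle, $(\Omega C'\otimes\ev_0)H=(\Omega f)g$ and $(\Omega C'\otimes\ev_1)H=\id_{\Omega C'}$, so $H$ is an elementary $n$-homotopy $(\Omega f)g\simeq\id_{\Omega C'}$. Commutativity of the upper triangle, $H\circ\Omega f=\mathrm{const}$, says precisely that $H$ restricts along $\Omega f$ to the constant homotopy, i.e. to the constant homotopy on $\id_{\Omega C}$ transported across $\Omega f$, which is the required additional property.

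The conceptual content is therefore entirely absorbed into Corollary \ref{psfib} and Proposition \ref{strongfibsareLfibs}(1), and the argument is uniform in $n$ for $3\le n\le\infty$. The one point that I expect to need genuine care, rather than routine bookkeeping, is that we are invoking the \emph{non-acyclic} lifting property: $\pi$ is only a strong fibration and need not be an MC equivalence, so the lift against $\Omega f$ is available only because $f$ is an MC equivalence and not merely injective. This is exactly why the hypothesis that $f$ be an MC equivalence (and not just the retract $g$ of \ref{scretract}) is essential. A final minor check is that $\mathrm{const}$ is a bona fide morphism of curved algebras; this is immediate, since $I^n$ is uncurved and unital, so $x\mapsto \Omega f(x)\otimes 1$ respects the differential, product and curvature.
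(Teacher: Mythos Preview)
Your proof is correct and is essentially identical to the paper's own argument: the paper constructs the same lifting square with top arrow the constant homotopy on $\Omega f$, bottom arrow $(\Omega f)g\times\id$, and invokes Corollary~\ref{psfib} together with Proposition~\ref{strongfibsareLfibs}(1) to obtain $H$. Your additional remarks (checking commutativity explicitly, noting why the MC-equivalence hypothesis on $f$ is needed, and verifying that $\mathrm{const}$ is a curved algebra map) are all accurate elaborations of details the paper leaves implicit.
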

\begin{proof}For brevity put $A\coloneqq \Omega C$ and $A'\coloneqq \Omega C'$. Consider the commutative diagram 
	$$\begin{tikzcd} 
		A \ar[d,"\Omega f", swap]\ar[r]& A'\hat\otimes I^n \ar[d,"\pi"]\\
		A' \ar[r,"(\Omega f)g \times \id"]& A'\times A'  
	\end{tikzcd}$$where the unlabelled arrow is the composition of the constant homotopy $A \to A\hat\otimes I^n$ with the map $\Omega(f)\hat\otimes I^n$. By \ref{psfib}, the map $\pi$ is a strong fibration, and so by \ref{strongfibsareLfibs}(1) we obtain a lift $H:A' \to A'\hat\otimes I^n$ in the above diagram. Commutativity of the lower right triangle says that $H$ is a homotopy $\Omega(f)g \simeq \id_{A'}$. Commutativity of the upper left triangle says that $H$ restricts to the constant homotopy on $\id_A$, as desired.
\end{proof}

\begin{prop}\label{cellargument}
	If $\mathcal{K}$ denotes the class of injective MC equivalences of curved coalgebras, then every morphism in $\mathrm{Cell}(\Omega\mathcal{K})$ is an $\infty$-homotopy equivalence.
\end{prop}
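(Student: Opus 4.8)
The plan is to prove that $\phi\colon X_0\to X=\varinjlim_{\alpha<\lambda}X_\alpha$ in $\mathrm{Cell}(\mathcal K)$ is an $\infty$-homotopy equivalence by exhibiting it, up to homotopy, as a strong deformation retract, with the inverse assembled by transfinite induction along the given cell presentation. Two preliminary observations set this up. First, since strong cofibrations are exactly injections and are saturated (\ref{thm:strongcof}, \ref{scsatd}), and since coproducts, pushouts and transfinite composites of injections are again injections, $\phi$ is injective. Second, since $\Omega$ is a left adjoint it preserves the cell presentation, so $\Omega\phi\in\mathrm{Cell}(\Omega\mathcal K)$ with attaching maps of the form $\Omega(\coprod_i k_i)=\coprod_i\Omega k_i$ for $k_i\in\mathcal K$.

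The engine of the induction is \ref{scahext}. For each generating map $k\colon Y\hookrightarrow Y'$ of $\mathcal K$, \ref{scretract}(2) provides a retract $g$ of $\Omega k$ that is an $\infty$-homotopy inverse, and \ref{scahext} provides, for each $3\le n\le\infty$, an $n$-homotopy $(\Omega k)g\sim_\infty\id_{\Omega Y'}$ which \emph{restricts to the constant homotopy} on $\id_{\Omega Y}$. I would build by induction an algebra retraction $G_\alpha\colon\Omega X_\alpha\to\Omega X_0$ with $G_\alpha\,\Omega\phi_\alpha=\id_{\Omega X_0}$ (writing $\phi_\alpha\colon X_0\to X_\alpha$), together with an $\infty$-homotopy $\mathcal H_\alpha\colon(\Omega\phi_\alpha)G_\alpha\sim_\infty\id_{\Omega X_\alpha}$ that is constant on the image of $\Omega X_\beta$ for every $\beta<\alpha$. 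At a successor ordinal I extend $(G_\alpha,\mathcal H_\alpha)$ over the pushout: $\Omega$ turns the attaching pushout into a pushout of a coproduct of maps $\Omega k_i$, on which the retracts $g_i$ and the rel-endpoint homotopies of \ref{scahext} combine by the universal property of the free product, and the homotopy is carried over the new cells by lifting against the path-object strong fibration $\Omega X_{\alpha+1}\hat\otimes I^\infty\to\Omega X_{\alpha+1}\times\Omega X_{\alpha+1}$ of \ref{psfib}, using that strong fibrations lift against cobars of injective MC equivalences (\ref{strongfibsareLfibs}(1)).

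At a limit ordinal I would pass to the colimit. Because each $\mathcal H_\alpha$ is normalised to be constant on all earlier stages, the partial data are mutually compatible; and because an $\infty$-homotopy is precisely a compatible system of $n$-homotopies (recall $I^\infty=\varprojlim_n I^n$), the $(G_\alpha,\mathcal H_\alpha)$ assemble into a retraction $G\colon\Omega X\to\Omega X_0$ and an $\infty$-homotopy $(\Omega\phi)G\sim_\infty\id_{\Omega X}$. This shows $\Omega\phi$ is an $\infty$-homotopy equivalence of curved algebras, so in particular $\phi$ is an MC equivalence by \ref{prop:MCcharacterization1}. To reach the \emph{coalgebra} conclusion I transport this deformation-retract data across the homotopy correspondence of \ref{prebcb}, using that $\phi$ is injective and that the endpoint inclusions $X_0\to X_0\otimes I_\infty$ are themselves strong deformation retracts (they are $\mathcal K$-maps by \ref{endpointslem}, retracted by $\id\otimes\varepsilon$ and contracted using the $\infty$-contractibility of $I_\infty$ from \ref{prop:homotopycoalg}(6) together with \ref{mccogtensor}); comparing $\phi$ against the cylinder inclusion $X_0\to X_0\otimes I_\infty$ then yields an $\infty$-homotopy inverse of $\phi$ in $\ccog$.

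The main obstacle is the coherent bookkeeping at limit stages: the successor construction must produce homotopies constant not merely on the immediately preceding stage but on \emph{all} previously attached cells, so that the transfinite family glues to a single well-defined $\infty$-homotopy on the colimit. This is exactly the purpose of the ``restricts to the constant homotopy'' clause of \ref{scahext}, but propagating that normalisation through the pushouts and through the lifts against the path object of \ref{psfib} is the delicate point. A second difficulty, and the place where injectivity of $\phi$ is indispensable, is the final transport step: a general MC equivalence of coalgebras need not be an $\infty$-homotopy equivalence (only its cobar is), and it is precisely the cofibrancy expressed by injectivity, mediated through the cylinder $X_0\otimes I_\infty$, that is expected to upgrade the algebra-level deformation retraction of $\Omega\phi$ to a genuine coalgebra-level homotopy inverse of $\phi$.
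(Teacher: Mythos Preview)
You have misread the target of the statement. Despite the literal wording, the paper's proof (and its only application, in Theorem~\ref{mcmostralg}) concerns $\Omega\mathcal{K}$-relative cell complexes in the category of \emph{curved algebras}, not $\mathcal{K}$-cell complexes of coalgebras: the very first line of the paper's argument takes an $\Omega\mathcal{K}$-relative $\lambda$-cell complex $c\colon X_0\to\varinjlim X_\alpha$ with transition maps obtained as algebra pushouts along maps $\Omega f_\alpha$, and the conclusion is that each $c_\alpha$ is an $\infty$-homotopy equivalence of curved algebras. Your entire ``transport back to coalgebras'' step is therefore unnecessary for what is actually being claimed, and moreover it is not justified: Proposition~\ref{prebcb} only transfers homotopies of maps $\Omega C\to A$ to homotopies of maps $C\to\check BA$, and your $\phi\colon X_0\to X$ has no reason to have fibrant target, so there is no mechanism for upgrading ``$\Omega\phi$ is an $\infty$-homotopy equivalence'' to ``$\phi$ is an $\infty$-homotopy equivalence of coalgebras''. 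Indeed an MC equivalence of coalgebras need not be a $3$-homotopy equivalence.

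A second, smaller issue: by starting from a coalgebra cell presentation and applying $\Omega$, you only treat those $\Omega\mathcal{K}$-cell complexes of the special form $\Omega(\text{coalgebra cell complex})$. In a general $\Omega\mathcal{K}$-cell complex the attaching maps $\Omega C_\alpha\to X_\alpha$ are arbitrary algebra maps, not of the form $\Omega(\text{something})$. Fortunately your inductive argument does not actually use the special form: the key ingredients, namely \ref{scretract}(2) and \ref{scahext}, apply to each attaching map individually, and the gluing at successor stages and the passage to colimits at limit stages go through verbatim. So once you drop the coalgebra interpretation, take an arbitrary $\Omega\mathcal{K}$-cell complex of algebras, and build the retractions $w_\alpha$ and rel-endpoint homotopies $H_\alpha$ by transfinite induction exactly as you describe, you recover the paper's proof essentially verbatim. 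The paper's version makes explicit the normalisation needed for the limit step: the homotopy $H_\alpha$ restricts to the constant homotopy on each $E_\beta=\Omega C_\beta$ for $\beta<\alpha$, and at the end the $m$-homotopies for all $m\ge3$ are produced compatibly from a single choice of $\infty$-homotopies on the cells so that they assemble into an $\infty$-homotopy on the colimit.
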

\begin{proof}
	Before we begin the proof proper we give a quick sketch of the idea in order to orient the reader. Given a cell complex $c:X\to Y$ we induct up the (possibly transfinite) tower of morphisms $X=X_0 \to  X_1 \to X_2 \to \cdots \to Y$ to show that each morphism in the tower is an $\infty$-homotopy equivalence. The induction step is not terribly hard at successor ordinals, but limit ordinals are more difficult: if $\alpha$ is a limit ordinal, then in order to assemble a collection of homotopies $X_\beta \to X_\beta \hat\otimes I^\infty \to X_\alpha \hat\otimes I^\infty$ into a homotopy $X_\alpha \to X_\alpha \hat\otimes I^\infty$ we need to know that the individual homotopies are compatible with the transition maps in the cell complex. For this reason, we will carefully construct each homotopy inductively rather than choosing an arbitrary one at each level. In more detail, at the successor ordinal step we will glue together two known homotopies: one will be obtained from the induction hypothesis and one will be obtained from \ref{scahext}. In order to glue them we will need a compatibility condition, which we will also assume in the induction hypothesis.
	
	Now let us start the proof in earnest. Let $c:X_0 \to \varinjlim_{\alpha \in  \lambda} X_\alpha$ be an $\Omega\mathcal{K}$-relative $\lambda$-cell complex, with transition maps $u_\alpha:X_\alpha \to X_{\alpha+1}$. We will denote by $c_\alpha:X_0 \to X_\alpha$ the natural composition map. It is enough to check that each $c_\alpha$ is an $\infty$-homotopy equivalence, since it will then follow that $c$ is an $\infty$-homotopy equivalence.
	
	Note that if $f_i:C_i \to C_i'$ is a collection of morphisms of coalgebras indexed by a set $I$, then $\sqcup_i \Omega f_i \cong \Omega(\sqcup_i f_i)$ because $\Omega$ is a left adjoint. Moreover, if each $f_i$ is an injective MC equivalence then so is $\sqcup_i f_i$, by \ref{scsatd}(2). Hence for each $\alpha \in \lambda$ we have a pushout diagram  $$\begin{tikzcd}
		X_\alpha \ar[r,"u_\alpha"]& X_{\alpha+1}
		\\ \Omega C_\alpha\ar[u]\ar[r,"\Omega f_\alpha"]& \Omega C'_\alpha\ar[u]
	\end{tikzcd}$$where each $f_\alpha$ is an injective MC equivalence of coalgebras. For brevity we will write $E_\alpha\coloneqq \Omega C_\alpha$, and similarly for $E_\alpha'$. Fix an integer $m\geq 3$. By \ref{scretract}, each $\Omega f_\alpha$ admits a retract $g_\alpha$ which is an $m$-homotopy inverse. 	Taking the pushout of $g_\alpha$ yields a retract $v_\alpha$ of $u_\alpha$. 
 
 We define, by transfinite induction, morphisms $w_\alpha:X_\alpha \to X_0$. First set $w_0\coloneqq\id_{X_0}$. For successor ordinals, set  $w_{\alpha+1}\coloneqq w_\alpha \circ v_\alpha$. Finally if $\alpha = \varinjlim_{\beta\in \alpha}\beta$ is a limit ordinal, observe that each of the $w_\beta$ for $\beta\in \alpha$ assemble to give a morphism $X_\alpha =\varinjlim_{\beta \in \alpha} X_\beta \to X_0$. It is not hard to see that $w_\alpha$ is a retract of the transfinite composition map $c_\alpha$. We will now prove the following statement by transfinite induction on $\alpha$:
 
 \textbf{For each $\alpha \in \lambda$, there is a homotopy $H_\alpha: X_\alpha \to X_\alpha \otimes I^m$ from $c_\alpha w_\alpha$ to $\id_{X_\alpha}$ such that for each $\beta \in \alpha$, the restriction of $H_\alpha$ to $E_\beta$ is the constant homotopy on $\id_{E_\beta}$. }
 
To start the induction we take $H_0$ to be the constant homotopy on $\id_{X_0}$. 
	
	Suppose that $\alpha+1$ is a successor ordinal and that we have a homotopy $H_\alpha$ satisfying the desired properties. By \ref{scahext}, we have an $m$-homotopy $H'_{\alpha}: E_{\alpha}' \to E_{\alpha}' \otimes I^m$ from $\Omega(f_{\alpha})g_\alpha$ to $\id_{E_{\alpha}'}$, and moreover we may assume that $H'_{\alpha}$ restricts to the constant homotopy on $\id_{E_{\alpha}}$. Recall that $X_{\alpha+1}$ is the pushout $X_{\alpha}\sqcup_{E_{\alpha}}E'_{\alpha}$. Let $h_1: X_{\alpha} \to X_{\alpha+1} \otimes I^m$ be the composition of the homotopy $H_{\alpha}$ with the inclusion $X_{\alpha} \otimes I^m \to X_{\alpha+1} \otimes I^m$. Let $h_2:E'_{\alpha} \to  X_{\alpha+1} \otimes I^m$ be the composition of the homotopy $H'_{\alpha}: E_{\alpha}' \to E_{\alpha}' \otimes I^m$ with the map $E_{\alpha}' \otimes I^m \to X_{\alpha+1} \otimes I^m$. By construction, $h_1$ and $h_2$ agree on $E_{\alpha}$ so they give a morphism $H_{\alpha+1}:X_{\alpha+1} \to X_{\alpha+1}\otimes I^m$, which is a homotopy from $c_{\alpha}w_{\alpha}$ to $\id_{X_{\alpha+1}}$. Clearly $H_{\alpha+1}$ restricts to the constant homotopy on $\id_{E_{\alpha}}$. Moreover because $H_\alpha$ restricts to the constant homotopy on $\id_{E_{\beta}}$ for all $\beta \in \alpha$, we see that $H_{\alpha+1}$ restricts to the constant homotopy on $\id_{E_{\beta}}$ for all $\beta \in \alpha+1$, as required.
	
	Suppose finally that $\alpha$ is a limit ordinal. We have by the induction hypothesis a collection of homotopies $H_{\beta}:X_{\beta} \to X_{\beta} \otimes I^m$, one for each $\beta \in\alpha$. By composition with the inclusion $X_\beta \to X_\alpha$ this yields a system $X_{\beta} \to X_\alpha \otimes I^m$. By construction, these homotopies are all compatible with the maps $u_{\alpha}$, and hence give a morphism $H_\alpha: X_\alpha \to X_\alpha \otimes I^m$, which is the desired $m$-homotopy from $c_\alpha w_\alpha$ to $\id_{X_\alpha}$. It is easy to see that $H_\alpha$ satisfies the restriction property.
	
	To lift these $m$-homotopy equivalences to an $\infty$-homotopy equivalence, let's suppose we began the induction by taking $\infty$-homotopies $H'^\infty_{\alpha}: E_{\alpha}' \to E_{\alpha}' \hat\otimes I^\infty$ from $\Omega(f_{\alpha})g_\alpha$ to $\id_{E_{\alpha}'}$ such that $H'^\infty_{\alpha}$ restricts to the constant homotopy on $\id_{E_{\alpha}}$. By composition with the projection maps $E_{\alpha}' \hat\otimes I^\infty \to E_{\alpha}' \otimes I^m $ we obtain a coherent system of $m$-homotopies $H'^m_{\alpha}:E_{\alpha}'\to E_{\alpha}' \otimes I^m $, one for each $m$. Following the above induction, for each $\alpha$ we obtain a coherent system of $m$-homotopies $H_\alpha^m: X_\alpha \to X_\alpha \hat\otimes I^m$ which assemble, by taking the inverse limit, into the desired $\infty$-homotopy $H_\alpha^\infty: X_\alpha \to X_\alpha \hat\otimes I^\infty$. Since $w_\alpha$ is a retract of $c_\alpha$, it now follows that $c_\alpha$ is an $\infty$-homotopy equivalence, as desired.
\end{proof}

\section{The MC model structures}\label{section:MCmodel}

In this section we prove the main result. We first show that the categories $\calgp$ and $\ccogp$ admit MC model structures, for which the bar-cobar adjunction is a Quillen equivalence. We then show that $\ccogp$ is a monoidal model category and that $\calgp$ is a $\ccogp$-enriched model category. We use this to exhibit some small sets of generating cofibrations for $\calgp$. Finally we compare our MC model structures with the usual model structures for conilpotent Koszul duality, and the model structures for categorical Koszul duality from \cite{HL2020}.

\subsection{Proof of the main result}

\begin{lem}\label{pushoutlem}
	Let $C \to C'$ be an injection of curved coalgebras and let $P\coloneqq C'\sqcup_C (C\otimes I_3)$ be the pushout. The natural map $\theta:P \to C'\otimes I_3$ is an injective MC equivalence.
	
\end{lem}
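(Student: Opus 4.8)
The plan is to realise $\theta$ as an injective MC equivalence by factoring the endpoint inclusion of $C'$ through it and then invoking saturation together with two-out-of-three. Write $i_0\colon\ground\into I_3$ for the vertex used to form the cylinder, so that the map $C\to C\otimes I_3$ glued in the pushout is $\id_C\otimes i_0$, and let $\iota\colon C'\to P$ and $j\colon C\otimes I_3\to P$ be the two structure maps of the pushout. The natural map $\theta$ is the one induced by the endpoint inclusion $i_0^{C'}\coloneqq\id_{C'}\otimes i_0\colon C'\to C'\otimes I_3$ together with $f\otimes\id_{I_3}\colon C\otimes I_3\to C'\otimes I_3$, where $f\colon C\to C'$ is the given injection; these two maps agree on $C$, so $\theta$ is well defined and satisfies $\theta\circ\iota=i_0^{C'}$ and $\theta\circ j=f\otimes\id_{I_3}$.

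For the MC equivalence claim I would argue as follows. By \ref{endpointslem} the map $i_0\colon C\to C\otimes I_3$ is an MC equivalence, and it is injective because $\ground\into I_3$ is a split monomorphism of graded vector spaces. Thus $i_0$ is an injective MC equivalence, which by \ref{thm:strongcof} together with \ref{scofibdict}(2) is precisely an acyclic strong cofibration. Since $\iota$ is a pushout of $i_0$ and acyclic strong cofibrations are saturated by \ref{scsatd}, the map $\iota\colon C'\to P$ is itself an acyclic strong cofibration, in particular an MC equivalence. By \ref{endpointslem} again, $\theta\circ\iota=i_0^{C'}$ is an MC equivalence, so two-out-of-three for MC equivalences forces $\theta$ to be an MC equivalence.

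The remaining task, and the step needing the most care, is injectivity of $\theta$, which I would verify on underlying graded vector spaces. By \ref{limscolims} colimits of curved coalgebras are computed on underlying graded coalgebras, and colimits of graded coalgebras over a field are in turn created by the forgetful functor to graded vector spaces since $\otimes_\ground$ is cocontinuous; hence the pushout $P$ may be computed at the level of graded vector spaces. Choosing a graded complement $K$ with $I_3^\#\cong\ground\oplus K$, the inclusion $\id_C\otimes i_0$ identifies $(C\otimes I_3)^\#$ with $C^\#\oplus(C^\#\otimes K)$ as the inclusion of a direct summand, so the pushout along it yields $P^\#\cong(C')^\#\oplus(C^\#\otimes K)$. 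Under the parallel decomposition $(C'\otimes I_3)^\#\cong(C')^\#\oplus((C')^\#\otimes K)$ the map $\theta^\#$ becomes $\id_{(C')^\#}\oplus(f^\#\otimes\id_K)$; as $f$ is injective and $-\otimes_\ground K$ is exact, $\theta^\#$ is injective, and therefore so is $\theta$.

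The genuinely delicate point is this last bookkeeping: one must confirm that the pushout of curved coalgebras is indeed computed underlying and that the resulting differential and curvature on $P$ leave the summand $C^\#\otimes K$ intact, so that $\theta^\#$ has the stated block form. Granting this, the statement follows formally from \ref{endpointslem}, \ref{scofibdict}, \ref{scsatd}, and the two-out-of-three property.
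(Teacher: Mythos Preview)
Your proof is correct and follows essentially the same route as the paper. The MC equivalence argument is identical: use \ref{endpointslem} for the endpoint inclusions, push out via saturation (\ref{scsatd}), and apply two-out-of-three. For injectivity you give an explicit computation on underlying graded vector spaces, whereas the paper simply asserts that $P$ sits naturally as a sub-curved coalgebra of $C'\otimes I_3$ (and in a subsequent remark constructs a coalgebra retract of $\theta$ from a linear complement to $C$ in $C'$).

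Two minor comments. First, your justification that pushouts of graded coalgebras are computed on vector spaces should invoke that the forgetful functor to graded vector spaces has a right adjoint (the cofree coalgebra) and hence preserves colimits; the phrase ``since $\otimes_\ground$ is cocontinuous'' is not quite the right reason. Second, your closing worry about the differential and curvature leaving the summand $C^\#\otimes K$ intact is unnecessary: injectivity of a morphism of curved coalgebras is a property of the underlying map of graded vector spaces, so once $\theta^\#$ is injective you are done, regardless of how $d$ and $h$ interact with the splitting.
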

\begin{proof}
	First observe that $P$ is not the final object $*$, since it has a natural description as a sub-curved coalgebra of $C' \otimes I_3$, via the map $\theta$. In particular $\theta$ is injective. The map $C \to C\otimes I_3$ is an injective MC equivalence by \ref{endpointslem}, and hence so is its pushout $C' \to P$ by \ref{scsatd}. Moreover, $C' \to C'\otimes I_3$ is an MC equivalence by the same logic, so by 2-out-of-3 for MC equivalences, we conclude that $P \to C'\otimes I_3$ is an MC equivalence, as desired.
\end{proof}

\begin{rem}
	In fact, $\theta$ has a retract: picking a linear complement to $C$ inside $C'$ gives a linear retract of $\theta$, and one can check that this is a coalgebra morphism.
\end{rem}

\begin{prop}\label{maypontoish}
	Let $p:A \to A'$ be an MC equivalence of curved algebras. If $\check Bp$ has the right lifting property with respect to all injective MC equivalences between finite dimensional curved coalgebras, then it has the right lifting property with respect to all injections.
	
\end{prop}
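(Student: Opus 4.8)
The plan is to treat $q\coloneqq\check B p\colon\check B A\to\check B A'$ as a trivial fibration: since $p$ is an MC equivalence, $q$ is an MC equivalence and hence a $3$-homotopy equivalence by \ref{prop:MCcharacterization2}. I would produce a strict section $s\colon\check B A'\to\check B A$ with $qs=\id$ together with a \emph{fibrewise} $3$-homotopy $G\colon\check B A\otimes I_3\to\check B A$ from $sq$ to $\id_{\check B A}$, meaning $qG=q\circ c$ where $c$ is the collapse (counit) map; such a ``shrinkable structure'' then yields all the required lifts formally.

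Before producing $s$ and $G$ I would upgrade the hypothesis to the statement that $q$ has the right lifting property against \emph{all} injective MC equivalences, not merely the finite dimensional ones. The class of maps against which $q$ lifts on the right is saturated, so it suffices to show that every injective MC equivalence is a relative cell complex built from finite dimensional injective MC equivalences. This is the MC-equivalence-tracking refinement of \ref{injgen} and \ref{cnlem1}; the key point is that for any curved coalgebra $X$ with finite dimensional subcoalgebras $X_\mu$ one has $X\otimes I_3=\colim_\mu(X_\mu\otimes I_3)$, and the endpoint inclusions $X_\mu\to X_\mu\otimes I_3$ are finite dimensional injective MC equivalences by \ref{endpointslem}, so the cylinder inclusions needed below are themselves such cell complexes; the general case follows by combining this with \ref{scsatd}.

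Granting the shrinkable structure, the lift against an arbitrary injection $f\colon C\to C'$ is obtained using \ref{pushoutlem}. Given a commutative square with top $a\colon C\to\check B A$ and bottom $b\colon C'\to\check B A'$, form $P\coloneqq C'\sqcup_C(C\otimes I_3)$ and $\theta\colon P\to C'\otimes I_3$. The maps $sb\colon C'\to\check B A$ and $G\circ(a\otimes\id_{I_3})\colon C\otimes I_3\to\check B A$ agree on $C$ (both restrict to $sqa$), so they glue to a map $\Phi\colon P\to\check B A$. Fibrewiseness of $G$ forces $qG(a\otimes\id_{I_3})$ to be the constant homotopy on $bf$, whence the square with top $\Phi$, left $\theta$ and bottom $b\circ c$ commutes; since $\theta$ is an injective MC equivalence (\ref{pushoutlem}) we obtain a lift $\Gamma\colon C'\otimes I_3\to\check B A$ with $\Gamma\theta=\Phi$, and one checks directly that $\Gamma$ restricted to the $1$-endpoint $C'\to C'\otimes I_3$ is a genuine filler $\ell$ with $\ell f=a$ and $q\ell=b$.

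The hard part will be manufacturing the fibrewise homotopy $G$. A strict section $s$ comes from a single lift against the (now admissible) injective MC equivalence $\check B A'\to\check B A'\otimes I_3$, but verticalising an arbitrary homotopy $sq\simeq\id$ cannot be done by one lift: the natural lifting problem has left-hand leg $\check B A\sqcup\check B A\to\check B A\otimes I_3$, which is not an MC equivalence, and the usual topological remedy of concatenating $sq\simeq\id$ with a lift of the loop $q\Rightarrow q$ is blocked because finite homotopies are not transitive (\ref{prop:homotopyalg}(1)). I expect to resolve this by working with $\infty$-homotopies, using the bialgebra structure on $I^\infty$ (\ref{prop:bialg}) to compose homotopies in the absence of strict concatenation, together with the ``restricts to the constant homotopy'' mechanism of \ref{scahext} and the transfinite construction in \ref{cellargument}, which are exactly engineered to produce homotopies that are constant along a prescribed subobject, i.e.\ fibrewise in the required sense.
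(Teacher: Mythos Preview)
Your overall architecture --- produce a section and a fibrewise cylinder retraction for $q=\check B p$, then extract lifts via the pushout $P\to C'\otimes I_3$ --- is a reasonable topological instinct, and your final lifting paragraph does share the geometric core with the paper's argument. But there are two genuine gaps that prevent this from going through as written.

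First, your ``upgrade'' step is not available at this point in the development and your sketch does not prove it. You want to promote the hypothesis from lifting against $\mathcal{W}\mathrm{inj}$ to lifting against all of $\mathcal{W}\mathrm{Inj}$, which amounts to showing that every injective MC equivalence lies in the saturated closure of the finite dimensional ones. Your argument only handles cylinder inclusions $X\hookrightarrow X\otimes I_3$; it says nothing about an arbitrary injective MC equivalence $C\hookrightarrow C'$, and invoking \ref{scsatd} does not help since that lemma concerns closure properties, not cellular decompositions. In fact the equality $\overline{\mathcal{W}\mathrm{inj}}=\mathcal{W}\mathrm{Inj}$ is established in the paper only \emph{after} the model structure is in place (see the remark following \ref{modeltheoreticfibs}), and the model structure is built using the very proposition you are trying to prove. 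So this step is essentially circular. Second, as you yourself note, the construction of a genuinely fibrewise homotopy $G$ is left open; the appeals to \ref{prop:bialg}, \ref{scahext} and \ref{cellargument} are suggestive but do not add up to a proof, and the obstruction you identify (non-transitivity of finite homotopies, the wrong left leg in the obvious lifting square) is real.

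The paper's proof sidesteps both problems by never leaving the finite dimensional world. It first reduces, via \ref{injgen}, to injections $g\colon C\hookrightarrow C'$ between \emph{finite dimensional} coalgebras. For such a square it uses a $3$-homotopy inverse of $q$ to manufacture an approximate diagonal $h\colon C'\to\check B A$ and then --- this is the step you are missing --- invokes the strong cofibration property of $g$ (\ref{scofibdict}(1)(b), ultimately Theorem~\ref{thm:strongcof}) to rectify the upper triangle so that $hg=u$ on the nose. With that strict equality, the map $P\to\check B A$ is built from $h$ and the \emph{constant} homotopy on $u$, so no fibrewise structure on $q$ is needed. The resulting lifting problem against $\theta\colon P\to C'\otimes I_3$ is then between finite dimensional coalgebras (since $C$, $C'$, $I_3$ all are), so the hypothesis applies directly. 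The moral: rather than globalising $q$ into a shrinkable map, localise the problem and use rectification along the small injection $g$.
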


\begin{proof}
	Since injections between finite dimensional curved coalgebras generate all injections, it is enough to lift $\check Bp$ against injections of finite dimensional curved coalgebras. Fix a commutative square $$\begin{tikzcd}
		C\ar[r,"u"]\ar[d,"g", swap]& \check BA\ar[d,"\check Bp"]
		\\ C' \ar[r, swap,"v"] &\check B A' 
	\end{tikzcd}$$with $g$ an injection and $C,C'$ finite dimensional. We wish to show that the above square admits a lift. Because $p$ is an MC equivalence, it follows that $\check Bp$ is a $3$-homotopy equivalence, so admits a homotopy inverse $\check BA' \to \check BA$. Composing this with the map $C' \to \check B A'$ yields a \textit{homotopy} commutative diagram $$\begin{tikzcd}
		C\ar[r,"u"]\ar[d,"g", swap]& \check BA\ar[d,"\check Bp"]
		\\ C' \ar[r,"v", swap] \ar[ur,"h"]&\check B A'
	\end{tikzcd}$$ of coalgebras, where by $\ref{scofibdict}(1)(b)$ we may assume without loss of generality that the upper triangle is strictly commutative; i.e.\ $hg=u$.  By \ref{inftyhtpyequiv} we may choose a $3$-homotopy $W:C' \otimes I_3 \to BA'$ from $v$ to $(\check B p )h$ witnessing the homotopy commutativity of the lower triangle. Let $P$ be the pushout of the span $C\otimes I_3 \xleftarrow{i_i} C \xrightarrow{g} C'$. The constant homotopy on $hg=u$ fits into a commutative square $$\begin{tikzcd}
		C \ar[r,"i_1"]\ar[d,"g"]& C\otimes I_3\ar[d]
		\\ C'\ar[r,"h"]& \check BA
	\end{tikzcd}$$which defines a map $P \to \check B A$, which in turn fits into a commutative square $$\begin{tikzcd}
		P \ar[r]\ar[d,"\theta"]& \check BA\ar[d,"\check Bp"]
		\\ C'\otimes I_3\ar[r,"W"]& \check BA'
	\end{tikzcd}$$where $\theta$ is the natural map. By \ref{pushoutlem}, the map $\theta$ is an injective MC equivalence, and clearly both $P$ and $C'\otimes I_3$ are finite dimensional. Hence the above square admits a lift $H:C'\otimes I_3 \to \check BA$. One can check that $H\circ i_0$ is the desired lift in the original diagram.
\end{proof}
\begin{rem}
	In the above proof, it is important to choose a lift that extends both $g$ and the constant homotopy on $u$ by using the pushout. One can use that $C'\to C'\otimes I_3$ is an acyclic very strong cofibration to lift $W$ to a map $W': C'\otimes I_3 \to \check B A$ under $C'$, but neither end of this cylinder will produce a lift in the original diagram.
\end{rem}

We next recall some facts about lifting properties. Let $\mathcal{K}$ be any class of morphisms in a category. We denote by $\mathcal{K}^\boxslash$ the set of morphisms that lift on the right against $\mathcal{K}$, and similarly ${}^\boxslash\mathcal{K}$ those which lift on the left. The {orthogonal closure} of $\mathcal{K}$ is $\bar{\mathcal{K}}\coloneqq {}^\boxslash (\mathcal{K}^\boxslash)$. The {saturated closure} of $\mathcal{K}$, denoted $\mathrm{sat}(\mathcal{K})$, is the closure of $\mathcal K$ under pushouts, transfinite composition, and retracts. It's not hard to see that $\mathrm{sat}(\mathcal{K}) \subseteq \bar{\mathcal{K}}$. If $L$ is a cocontinuous functor and $\mathcal{K}$ any class of morphisms, we have $L(\mathrm{sat}(\mathcal{K}))\subseteq\mathrm{sat}(L(\mathcal{K}))$. This inclusion is almost never an equality: although $L(\mathrm{sat}(\mathcal{K}))$ is closed under transfinite composition it need not be closed under pushouts or retracts, unless $L$ is full and essentially surjective. Even if $Lf \in \mathrm{sat}(L(\mathcal{K}))$, it need not be the case that $f \in \mathrm{sat}(\mathcal{K})$, unless $L$ is fully faithful. 

If $L\dashv R$ is an adjunction, then $Lf \boxslash g$ if and only if $f \boxslash Rg$ and hence $R$ gives a bijection between $(L\mathcal{K})^\boxslash$ and $\mathcal{K}^\boxslash \cap \im(R)$. Similarly $L$ gives a bijection between $^\boxslash(R\mathcal{K})$ and $^\boxslash\mathcal{K}\cap \im(L)$.

If $\mathcal{K}$ is a set and the domains of every morphism in $\mathcal{K}$ are small, then by Quillen's small object argument we have an equality $\mathrm{sat}(\mathcal{K}) = \bar{\mathcal{K}}$, and it follows that $\mathcal{K}^\boxslash = \bar{\mathcal{K}}^\boxslash$, which we will freely make use of. In particular, every curved algebra and every curved coalgebra is small, and we will implicitly use this fact in the following.

Our model structures will be cofibrantly generated, and we first define our sets of generating cofibrations.
\begin{defi}\hfill
	\begin{itemize}
		\item Let $\mathrm{inj}$ denote the set of injections between finite dimensional curved coalgebras. 
		\item Let $\mathrm{Inj}$ denote the class of injections of curved coalgebras.
		\item Let $\mathcal{W}$ denote the class of MC equivalences of curved coalgebras. We will abusively also use $\mathcal{W}$ to denote the class of MC equivalences of curved algebras; it will be clear from context what is meant.
		\item We write $\mathcal{W}\mathrm{inj}\coloneqq \mathcal{W}\cap\mathrm{inj}$ and $\mathcal{W}\mathrm{Inj}\coloneqq \mathcal{W}\cap\mathrm{Inj}$ for the class of injective MC equivalences.
	\end{itemize}
\end{defi}
We have $\overline{\mathrm{inj}}=\mathrm{Inj}$; clearly an $\mathrm{inj}$-cell complex is an injection, and the converse holds by \ref{injgen}. We also have $\overline{\mathcal{W}\mathrm{inj}}\subseteq \mathcal{W}\mathrm{Inj}$ since injective MC equivalences are a saturated class.

\begin{theorem}\label{mcmostralg}
	The category $\calgp$ of initialised curved algebras admits a combinatorial model structure, the {MC model structure}, where the weak equivalences are the MC equivalences, the generating cofibrations are $\Omega(\mathrm{inj})$, and the generating acyclic cofibrations are $\Omega(\mathcal{W}\mathrm{inj})$. Every algebra of the form $\Omega C$ is cofibrant.
\end{theorem}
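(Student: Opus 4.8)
The plan is to apply the recognition theorem for cofibrantly generated model categories (Kan's theorem), with generating cofibrations $I\coloneqq\Omega(\mathrm{inj})$, generating acyclic cofibrations $J\coloneqq\Omega(\mathcal{W}\mathrm{inj})$, and weak equivalences the class $\mathcal{W}$ of MC equivalences. Both $I$ and $J$ are sets, as finite dimensional curved coalgebras form a set up to isomorphism. The category $\calgp$ is complete and cocomplete by \ref{limscolims} and is locally presentable, so every object is small and both $I$ and $J$ permit the small object argument; combinatoriality of the resulting structure is then automatic. The class $\mathcal{W}$ satisfies two-out-of-three and, being defined by the bijectivity of the induced maps on $\MCmod(C,-)$ for all $C$, is closed under retracts. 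What remains is to check the compatibility conditions of the recognition theorem: $I^\boxslash\subseteq\mathcal{W}\cap J^\boxslash$; $\mathrm{sat}(J)\subseteq\mathcal{W}\cap\mathrm{sat}(I)$; and $\mathcal{W}\cap J^\boxslash\subseteq I^\boxslash$.

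First I would dispatch the adjunction bookkeeping. Since $\mathrm{inj}$ generates all injections, $\overline{\mathrm{inj}}=\mathrm{Inj}$, and since $\Omega$ is cocontinuous we have $\Omega(\mathrm{Inj})\subseteq\mathrm{sat}(\Omega(\mathrm{inj}))$; hence $I^\boxslash=\Omega(\mathrm{inj})^\boxslash=\Omega(\mathrm{Inj})^\boxslash$, which by the adjunction $\Omega\dashv\check{B}$ is the class of maps $f$ such that $\check{B}f$ lifts on the right against every injection, that is, the acyclic strong fibrations by \ref{asfalg}(2). Any such $f$ is an MC equivalence: by \ref{mccat2} a map inducing an acyclic fibration, and hence a quasi-equivalence, on $\MCdg(D,-)$ for all $D$ is an MC equivalence. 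This gives $I^\boxslash\subseteq\mathcal{W}$, while $I^\boxslash\subseteq J^\boxslash$ is formal from $J\subseteq\mathrm{sat}(I)$.

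The substantive conditions are the remaining two. For $\mathrm{sat}(J)\subseteq\mathcal{W}\cap\mathrm{sat}(I)$, the inclusion into $\mathrm{sat}(I)$ is immediate from $\mathcal{W}\mathrm{inj}\subseteq\mathrm{inj}$, while $\mathrm{sat}(J)\subseteq\mathcal{W}$ is exactly what \ref{cellargument} supplies: every member of $\mathrm{sat}(\Omega(\mathcal{W}\mathrm{inj}))$ is a retract of an $\Omega(\mathcal{W}\mathrm{inj})$-relative cell complex, which \ref{cellargument} shows to be an $\infty$-homotopy equivalence and so an MC equivalence, and $\mathcal{W}$ is closed under retracts. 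For $\mathcal{W}\cap J^\boxslash\subseteq I^\boxslash$, take $f\in\mathcal{W}\cap J^\boxslash$; by the adjunction, $f\in J^\boxslash$ says precisely that $\check{B}f$ lifts on the right against all injective MC equivalences between finite dimensional curved coalgebras, and since $f$ is an MC equivalence, \ref{maypontoish} upgrades this to $\check{B}f$ lifting against all injections, i.e. $f\in I^\boxslash$. The recognition theorem then produces the MC model structure with the stated weak equivalences and generating (acyclic) cofibrations.

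Finally, to see that each $\Omega C$ is cofibrant I would observe that the map $0\to C$ is an injection of curved coalgebras and so lies in $\mathrm{Inj}=\mathrm{sat}(\mathrm{inj})$; applying the cocontinuous functor $\Omega$ gives $\varnothing=\Omega(0)\to\Omega C$ in $\mathrm{sat}(\Omega(\mathrm{inj}))=\mathrm{sat}(I)$, the class of cofibrations. The only genuinely non-formal inputs are the second and fourth conditions, and these have been discharged in advance into \ref{cellargument} and \ref{maypontoish} respectively; thus within the present argument the main obstacle is organisational, namely the saturation-and-adjunction bookkeeping required to align the hypotheses of those two results with the conditions of the recognition theorem.
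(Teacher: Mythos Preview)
Your proof is correct and follows essentially the same approach as the paper: both apply Kan's recognition theorem (Hovey 2.1.19), use \ref{cellargument} to show $J$-cell complexes are weak equivalences, and use \ref{maypontoish} (via the $\Omega\dashv\check B$ adjunction) for $\mathcal{W}\cap J^\boxslash\subseteq I^\boxslash$. The only cosmetic difference is that for $I^\boxslash\subseteq\mathcal{W}$ you invoke \ref{asfalg}(2) and \ref{mccat2} while the paper cites \ref{sfibheq} directly; since the proof of \ref{asfalg}(2) itself goes through \ref{sfibheq}, this is the same argument packaged slightly differently.
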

\begin{proof}
	For brevity put $\mathcal{I}\coloneqq\Omega(\mathrm{inj})$ and $\mathcal{J}\coloneqq\Omega(\mathcal{W}\mathrm{inj})$. We will apply \cite[2.1.19]{Hov99}. It is clear that MC equivalences are closed under retracts and satisfy two-out-of-three. Moreover, all algebras are small, so we are left to verify the following two conditions:
	\begin{itemize}
		\item $\mathrm{Cell}(\mathcal{J})\subseteq \mathcal{W} \cap \mathrm{Cof}(\mathcal{I})$. \quad To see this, first observe that since $\mathcal{J}\subseteq \mathcal{I}$ we certainly have $\mathrm{Cell}(\mathcal{J})\subseteq  \mathrm{Cof}(\mathcal{I})$. Hence we just need to prove that  $\mathrm{Cell}(\mathcal{J})\subseteq \mathcal{W} $, which is a straightforward application of \ref{cellargument}.
		\item $\mathcal{I}^\boxslash = \mathcal{W} \cap \mathcal{J}^\boxslash$. \quad The fact that $\mathcal{W} \cap \mathcal{J}^\boxslash \subseteq \mathcal{I}^\boxslash$ follows from \ref{maypontoish}, since a morphism $f$ of algebras lifts against $\Omega g$ if and only if $\check B f$ lifts against $g$. For the converse inclusion, certainly $\mathcal{J}\subseteq \mathcal{I}$ so $\mathcal{I}^\boxslash \subseteq \mathcal{J}^\boxslash$, so it suffices to check that $\mathcal{I}^\boxslash \subseteq \mathcal{W} $. But this is \ref{sfibheq}.
	\end{itemize}
	Hence \cite[2.1.19]{Hov99} yields a cofibrantly generated model structure. The category of curved algebras is locally presentable, hence this model structure is combinatorial. If $C$ is a curved coalgebra, the natural morphism $\varnothing \to \Omega C$ is the image of the natural morphism $0\to C$ under $\Omega$. Since this map is an injection, $\varnothing \to \Omega C$ is a cofibration.
\end{proof}
\begin{lem}\label{rightproperlem}\hfill 
	\begin{enumerate}
		\item A map $f:A \to A'$ of curved algebras is an acyclic strong fibration if and only if it is an acyclic fibration in the MC model structure.
\item If $f:A \to A'$ is a strong fibration of curved algebras, then it is a fibration in the MC model structure.
\item Every curved algebra is a fibrant object in the MC model structure.
\item The MC model structure on curved algebras is right proper.
	\end{enumerate}
	\end{lem}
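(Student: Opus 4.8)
The plan is to prove the four claims of Lemma \ref{rightproperlem} in order, leveraging the characterisation of (acyclic) strong fibrations via lifting properties already established in \ref{asfalg}, together with the generating (acyclic) cofibrations identified in \ref{mcmostralg}.

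For (1), recall that by \ref{asfalg}(2), $f$ is an acyclic strong fibration if and only if it has the right lifting property against $\Omega(\mathrm{Inj})$, equivalently against $\Omega(\mathrm{inj})=\mathcal{I}$ (since injections between arbitrary curved coalgebras are cell complexes of finite-dimensional ones by \ref{injgen}, and lifting against a class is equivalent to lifting against its saturation). But by construction in \ref{mcmostralg}, $\mathcal{I}=\Omega(\mathrm{inj})$ is precisely the set of generating cofibrations, so $\mathcal{I}^\boxslash$ is exactly the class of acyclic fibrations in the MC model structure. This gives the equivalence directly. For (2), I would argue similarly: by \ref{asfalg}(1), a strong fibration has the right lifting property against $\Omega(\mathcal{W}\mathrm{inj})=\mathcal{J}$, the generating acyclic cofibrations, and $\mathcal{J}^\boxslash$ is by definition the class of fibrations. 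Hence every strong fibration is a fibration. (Note the converse fails, consistent with the asymmetry flagged in the introduction.)

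For (3), fibrancy of an object $A$ means the terminal map $A \to 0$ is a fibration, where $0$ is the final curved algebra. I would show this map is in fact a strong fibration and then invoke (2). Concretely, $\MCdg(C,0)=\MCdg(\Hom(C,0))$ is the zero dg category for every curved coalgebra $C$ (since $\Hom(C,0)=0$), so $f_*:\MCdg(C,A)\to\MCdg(C,0)$ has the zero dg category as target; any dg functor into the zero category is automatically a fibration. Thus $A\to 0$ is a strong fibration, hence a fibration by (2), so $A$ is fibrant.

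For (4), right properness asks that MC equivalences are stable under pullback along fibrations. The cleanest route is to reduce to strong fibrations. Given a pullback square with $p:A\to A'$ a fibration and $w:B\to A'$ an MC equivalence, forming $A\times_{A'}B$, I want the base-changed map to $B$ to be an MC equivalence. The key observation is that $\Dcoderivedc$ detects MC equivalences (by \ref{MCisDII}, an MC equivalence induces a triangle equivalence on $\Dcoderivedc$, and conversely one can test MC equivalence through the invariance machinery of Section \ref{section:twisted}), and that pullbacks of curved algebras are computed on underlying graded algebras by \ref{limscolims}, so the convolution/twisted-module description behaves well under the pullback. The main obstacle is precisely here: unlike (1)--(3), which are formal consequences of the lifting-property dictionary, right properness requires genuinely controlling how MC equivalence interacts with the limit $A\times_{A'}B$. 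I expect the argument to proceed by testing against a curved coalgebra $C$, using that $\Hom(C,-)$ preserves the relevant pullbacks (it sends colimits of coalgebras to limits by \ref{convlimits}, and one checks it preserves these particular limits of algebras), and then applying \ref{magsqdg} or a variant to the resulting square of $\MCdg$ categories to conclude that $\MCdg(C,A\times_{A'}B)\to\MCdg(C,B)$ is a quasi-equivalence for all $C$, whence the pulled-back map is an MC equivalence by \ref{mccat2}(2). Verifying that the hypotheses of \ref{magsqdg} are met after base change—in particular the surjectivity and fibration conditions—will be the technical heart of this final part.
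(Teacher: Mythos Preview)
Your arguments for (1)--(3) are essentially the paper's: the same lifting-property translation via \ref{asfalg} for (1) and (2), and the same observation that $\MCdg(C,0)$ is terminal for (3).

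For (4), however, you have overlooked a one-line argument and replaced it with something both unnecessarily elaborate and not quite correct. It is a standard fact that any model category in which every object is fibrant is automatically right proper (dually, all objects cofibrant implies left proper). Thus (4) follows immediately from (3); this is exactly what the paper does. Your proposed route has genuine problems beyond its length: you assert that $\Dcoderivedc$ \emph{detects} MC equivalences, but \ref{MCisDII} only gives one implication (MC equivalence $\Rightarrow$ triangle equivalence on $\Dcoderivedc$), and no converse is established anywhere in the paper. Moreover, to apply \ref{magsqdg} you would need to know that model-theoretic fibrations are strong fibrations, which at this point in the paper is not yet available (it is only proved later, in \ref{modeltheoreticfibs}, using the monoidal machinery of Section~\ref{monoidalsection}). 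So the ``technical heart'' you anticipate is both circular in its dependencies and unnecessary.
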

\begin{proof}
We keep the notation used in the proof of \ref{mcmostralg}. For (1), since $\overline{\mathrm{inj}}=\mathrm{Inj}$, the acyclic fibrations in the MC model structure are $\mathcal{I}^\boxslash \coloneqq\Omega(\mathrm{inj})^\boxslash = \overline{\Omega(\mathrm{inj})}^\boxslash = \overline{\Omega(\mathrm{Inj})}^\boxslash =\Omega(\mathrm{Inj})^\boxslash$. Now \ref{asfalg}(2) tells us that $\Omega(\mathrm{Inj})^\boxslash$ is exactly the class of acyclic strong fibrations. The proof of (2) is similar since $\overline{\mathcal{W}\mathrm{inj}}\subseteq \mathcal{W}\mathrm{Inj}$ which gives us an inclusion $\Omega(\mathcal{W}\mathrm{Inj})^\boxslash \subseteq \mathcal{J}^\boxslash$. For (3), to see that every curved algebra is fibrant, we need to check that $A\to 0$ is a fibration. In fact it is a strong fibration, since $\MCdg(C,0)\cong 0$ is the final dg category for every curved coalgebra $C$, and all dg categories are fibrant. Claim (4) follows immediately from (3).
	\end{proof}

Observe that the class of MC equivalences of curved algebras is accessible, since it is the class of weak equivalences of a combinatorial model category. We use this fact, together with Jeff Smith's theorem, to show that the category of curved coalgebras admits a similar model structure.

\begin{theorem}\label{thm:coalgebrasmodel}
	The category $\ccogp$ of finalised curved coalgebras admits a left proper combinatorial model structure, the {MC model structure}, where the weak equivalences are the MC equivalences and the cofibrations are the injective maps. Every coalgebra is cofibrant. Every coalgebra of the form $\check B A$ is fibrant.
\end{theorem}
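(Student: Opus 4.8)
The plan is to obtain the MC model structure on $\ccogp$ from Jeff Smith's recognition theorem for combinatorial model categories, taking $\mathcal{W}$ (the MC equivalences) as weak equivalences and the set $\mathrm{inj}$ as generating cofibrations. Since $\ccogp$ is locally presentable and $\overline{\mathrm{inj}}=\mathrm{Inj}$, the cofibrations will then be exactly the injections. The conditions to verify are: (i) $\mathcal{W}$ is an accessible, accessibly embedded subcategory of the arrow category $\ccogp^{\to}$ that satisfies two-out-of-three and is closed under retracts; (ii) $\mathrm{inj}^{\boxslash}\subseteq\mathcal{W}$; and (iii) the class $\mathrm{Inj}\cap\mathcal{W}$ of injective MC equivalences is closed under pushout and transfinite composition.

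For (i), Proposition \ref{prop:MCcharacterization1} shows that $g$ is an MC equivalence of coalgebras if and only if $\Omega g$ is an MC equivalence of algebras, so $\mathcal{W}$ is the preimage of the class of algebra MC equivalences under the functor $\Omega^{\to}\colon\ccogp^{\to}\to\calgp^{\to}$. By Theorem \ref{mcmostralg} the latter class is the class of weak equivalences of a combinatorial model category, hence accessible and accessibly embedded, while $\Omega^{\to}$ is an accessible functor between locally presentable categories; since accessible subcategories are stable under pseudopullback (Makkai--Par\'e), $\mathcal{W}$ inherits accessibility. Two-out-of-three was recorded when MC equivalences were defined, and retract-closure is clear because a retract of a bijection of MC moduli sets is again a bijection. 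Condition (ii) follows from Lemma \ref{sfibheq}: because $\mathrm{inj}$ consists of small objects one has $\mathrm{inj}^{\boxslash}=\mathrm{Inj}^{\boxslash}$, and any map with the right lifting property against all injections is a $3$-homotopy equivalence, hence an MC equivalence.

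Condition (iii) is where the real work lies. Injections are closed under pushout and transfinite composition, so only the preservation of MC equivalence needs checking. If $g$ arises from an injective MC equivalence $f$ by pushout or transfinite composition, then by cocontinuity of $\Omega$ the map $\Omega g$ is the corresponding pushout or transfinite composite of $\Omega f$, i.e.\ a morphism of $\mathrm{Cell}(\Omega\mathcal{K})$ in the notation of Proposition \ref{cellargument}; that proposition makes it an $\infty$-homotopy equivalence, in particular an MC equivalence, whence $g$ is an MC equivalence by \ref{prop:MCcharacterization1}. Jeff Smith's theorem now produces the combinatorial model structure, with fibrations the maps right-orthogonal to injective MC equivalences. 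Since $0$ is initial in $\ccogp$ and every map out of it is injective, every coalgebra is cofibrant; as all objects are cofibrant, the model structure is automatically left proper.

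Finally, to show $\check B A$ is fibrant I lift $\check B A\to *$ against an arbitrary injective MC equivalence $f\colon C\to C'$; transposing across the adjunction $\Omega\dashv\check B$ (recalling $\Omega(*)=0$, the terminal algebra) turns this into a lifting problem of $A\to 0$ against $\Omega f$. Here $\Omega f$ is an acyclic cofibration of $\calgp$ (a cofibration since $\Omega$ is cocontinuous and $f\in\mathrm{Inj}=\overline{\mathrm{inj}}$, and an MC equivalence), while $A\to 0$ is a fibration because every curved algebra is fibrant by Lemma \ref{rightproperlem}; the lift therefore exists. The principal obstacle throughout is condition (iii): the saturation of injections is formal, but the fact that MC equivalences survive cell attachment is delicate and rests squarely on the strong-fibration machinery of the previous section distilled in Proposition \ref{cellargument}. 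The accessibility transfer in (i) is the other point demanding care.
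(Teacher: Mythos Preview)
Your proof is correct and follows the same overall approach as the paper: Jeff Smith's theorem with generating cofibrations $\mathrm{inj}$ and weak equivalences $\mathcal{W}$, with conditions (i) and (ii) verified in the identical manner. The only difference is in condition (iii): the paper invokes Proposition~\ref{scsatd} directly (the class of acyclic strong cofibrations is saturated, and by \ref{thm:strongcof} together with \ref{scofibdict}(2) this class equals $\mathrm{Inj}\cap\mathcal{W}$), whereas you push the question through $\Omega$ and appeal to Proposition~\ref{cellargument}. Both arguments work and rest on the same underlying machinery; the paper's route is marginally shorter since \ref{scsatd} already packages the saturation statement in exactly the form needed, while your route re-derives it from the cell-complex analysis on the algebra side. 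For the fibrancy of $\check B A$, your argument via \ref{rightproperlem}(3) and the paper's via \ref{asfalg}(1) are equivalent formulations of the same lift.
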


\begin{proof}
For the existence of the model structure, we apply Jeff Smith's Theorem \cite{Beke00,Barwick10}. Our set of generating cofibrations will be $\mathrm{inj}$, and our weak equivalences will be be $\mathcal{W}$. The category of curved coalgebras is locally presentable, and $\mathcal{W}$ clearly satisfies two-out-of-three, so it remains to check the following three conditions:
	\begin{itemize}
		\item $\mathcal{W}$ is an accessible and accessibly embedded subcategory of the arrow category. \quad To see this, first observe that if (for clarity) $\mathcal{W}'$ denotes the class of MC equivalences of curved algebras, then we have $\mathcal{W}=\Omega^{-1}(\mathcal{W}')$ by 
		\ref{prop:MCcharacterization1}(4). Since $\mathcal{W}'$ is the class of weak equivalences of a combinatorial model category by \ref{mcmostralg}, it is a theorem of Smith \cite[2.5]{Barwick10} that it is accessible and accessibly embedded. Moreover, $\Omega$ is certainly an accessible functor, since it is cocontinuous, and now the claim follows from \cite[1.18]{Beke00}.
		\item $\mathrm{inj}^\boxslash \subseteq\mathcal{W}$. \quad This follows from \ref{sfibheq}, since $\mathrm{inj}^\boxslash=\mathrm{Inj}^\boxslash$.
		\item The class $\mathrm{cof}(\mathrm{inj})\cap \mathcal{W}$ is closed under pushouts and transfinite composition. \quad Since $\mathrm{cof}(\mathrm{inj})=\mathrm{Inj}$, the class in question is the class of injective MC equivalences, which is saturated by \ref{scsatd}.
	\end{itemize}
	Hence Jeff Smith's Theorem provides us with the structure of a combinatorial model category where the weak equivalences are the MC equivalences and the cofibrations are injections. Every curved coalgebra is clearly cofibrant, and hence the model structure is left proper. If $A$ is a curved algebra, lifting $\check B A \to *$ against acyclic cofibrations is equivalent to lifting $ A \to 0$ against morphisms from $\Omega(\mathcal{W}\mathrm{Inj})$, which holds by \ref{asfalg}(1) since $A \to 0$ is a strong fibration. Hence all coalgebras of the form $\check B A$ are fibrant.
\end{proof}

\begin{theorem}\label{thm:barcobarQequivalence}
	The bar-cobar adjunction is a Quillen equivalence.
\end{theorem}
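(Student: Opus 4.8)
The plan is to deduce this from the two MC model structures and the homotopical properties of the unit and counit established earlier; since the substantive work has already been carried out, the argument should be essentially formal. First I would verify that $(\Omega,\check B)$ is a Quillen adjunction, and then upgrade this to a Quillen equivalence via the derived unit/counit criterion.

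For the adjunction, since $\Omega$ is a left adjoint it suffices to check that it preserves cofibrations and acyclic cofibrations. By Theorem \ref{thm:coalgebrasmodel} the cofibrations of $\ccogp$ are exactly the injections, i.e.\ $\mathrm{Inj}=\overline{\mathrm{inj}}=\mathrm{sat}(\mathrm{inj})$. As $\Omega$ is cocontinuous we have $\Omega(\mathrm{sat}(\mathrm{inj}))\subseteq \mathrm{sat}(\Omega(\mathrm{inj}))$, and $\Omega(\mathrm{inj})$ is by definition the set of generating cofibrations of $\calgp$ (Theorem \ref{mcmostralg}); hence $\Omega$ sends cofibrations to cofibrations. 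If $f$ is moreover an acyclic cofibration, i.e.\ an injective MC equivalence, then $\Omega f$ is a cofibration by the previous sentence and an MC equivalence by Proposition \ref{prop:MCcharacterization1}, hence an acyclic cofibration. So $\Omega$ is left Quillen.

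To promote this to a Quillen equivalence I would invoke the standard criterion that a Quillen adjunction is a Quillen equivalence precisely when its derived unit is a weak equivalence on every cofibrant object and its derived counit is a weak equivalence on every fibrant object (cf.\ \cite[\S1.3]{Hov99}). The key simplification is that by Theorem \ref{thm:coalgebrasmodel} every curved coalgebra is cofibrant and by Lemma \ref{rightproperlem}(3) every curved algebra is fibrant. Consequently, for a coalgebra $C$ the algebra $\Omega C$ is already fibrant, so no fibrant replacement is required and the derived unit reduces to the strict unit $\eta\colon C\to\check B\Omega C$, which is an MC equivalence by Theorem \ref{thm:barcobarequiv}(1). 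Dually, for an algebra $A$ the coalgebra $\check B A$ is already cofibrant, so the derived counit reduces to $\epsilon\colon\Omega\check B A\to A$, an MC equivalence by Theorem \ref{thm:barcobarequiv}(2). Both derived adjunction morphisms are therefore weak equivalences, yielding the Quillen equivalence.

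The proof is short because the genuine content sits in the earlier results: the existence of the two MC model structures, the facts that every object is cofibrant (resp.\ fibrant) on the appropriate side, and above all Theorem \ref{thm:barcobarequiv} on the invariance of the unit and counit. The only points requiring care are bookkeeping ones: confirming that $\Omega$ really does carry the generating cofibrations $\mathrm{inj}$ into the cofibrations of $\calgp$, and checking the boundary objects $\varnothing$ and $*$, on which $\eta$ and $\epsilon$ are identity maps and hence trivially weak equivalences. I do not anticipate a genuine obstacle here; the difficulty has been front-loaded into the construction of the model structures and the bar--cobar invariance theorem.
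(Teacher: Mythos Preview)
Your proposal is correct and follows essentially the same line as the paper: both verify that $\Omega$ is left Quillen by noting it sends generating cofibrations to generating cofibrations, and both deduce the Quillen equivalence from Theorem \ref{thm:barcobarequiv}. The only cosmetic difference is the final packaging: you invoke the derived unit/counit criterion (exploiting that every coalgebra is cofibrant and every algebra is fibrant, so the derived (co)unit coincides with the strict one), whereas the paper instead records that $\Omega$ preserves and reflects weak equivalences and then checks directly that the induced functor on homotopy categories is essentially surjective and fully faithful via $[\Omega C,\Omega C']\cong[C,\check B\Omega C']\cong[C,C']$. Both arguments use exactly the same ingredients.
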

\begin{proof}
	The functor $\Omega$ preserves and reflects weak equivalences by \ref{prop:MCcharacterization1}(4). It sends generating cofibrations to (generating) cofibrations, and hence preserves all cofibrations and so is left Quillen. It is homotopy essentially surjective by \ref{thm:barcobarequiv}(2), since for any algebra $A$ the counit map $\Omega \check BA \to A$ is an MC equivalence. It is homotopy fully faithful, since if $C,C'$ are curved coalgebras we have natural isomorphisms of sets $$[\Omega C, \Omega C']\cong [C, \check B \Omega C'] \cong [C,C']$$using this time that the unit $C' \to \check B \Omega C'$ is an MC equivalence by \ref{thm:barcobarequiv}(1).
\end{proof}

\subsection{Monoidal properties}\label{monoidalsection}
Recall from \cite{AnelJoyal} that the category of dg coalgebras is closed symmetric monoidal, and the category of dg algebras is enriched, tensored, and cotensored over dg coalgebras. The cotensor of $A$ by $C$ is the convolution algebra $\Hom(C,A)$ and the tensor of $A$ by $C$ is an algebra $C\sw A$ called the Sweedler product of $C$ and $A$. In \cite{HL2022} a model-categorical version of this was proved: the (finalised) category of pointed curved coalgebras $\pcog_*$ is a closed symmetric monoidal model category, and $\dgcat'$ is a $\pcog_*$-enriched model category. In this section we give an analogue of this in our setting: we show that $\ccogp$ is closed symmetric monoidal, and that $\calgp$ is a $\ccogp$-enriched model category. For the existence of the enrichment we will adapt the proofs given in \cite{HL2022}. We begin with the monoidal structure on coalgebras. We will employ a particularly useful adjoint functor theorem which we record here for future reference.

\begin{prop}\label{saft}
	Suppose that $\mathcal{C}$ is a locally presentable category and that $F:\mathcal{C} \to \mathcal{D}$ is a functor that preserves coproducts and coequalisers. Then $F$ is a left adjoint.
\end{prop}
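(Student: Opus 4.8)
The statement to prove is a special adjoint functor theorem: a coproduct- and coequaliser-preserving functor out of a locally presentable category is a left adjoint.

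The plan is to verify the hypotheses of the General Adjoint Functor Theorem (or better, to exploit the strong structural properties of locally presentable categories directly). First I would recall that any functor preserving coproducts and coequalisers preserves \emph{all} colimits: every colimit can be built from coproducts and coequalisers, since a general colimit $\colim_J X$ is computed as the coequaliser of a pair of maps between coproducts
\[
\coprod_{(u\colon j\to j')} X_j \rightrightarrows \coprod_{j} X_j,
\]
one map induced by the morphisms $X_u$ and the other by the identities. Hence $F$ is cocontinuous. So the content of the statement is really the assertion that a \emph{cocontinuous} functor out of a locally presentable category is a left adjoint.

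Next I would invoke the defining feature of local presentability. A locally presentable category $\mathcal{C}$ is, by definition, cocomplete and has a strong generator consisting of a set (not a proper class) of $\kappa$-presentable objects for some regular cardinal $\kappa$; equivalently it is the category of models of a limit sketch, or a reflective localisation of a presheaf category. The key consequence I would use is that locally presentable categories satisfy the \emph{solution set condition} automatically: for cocomplete $\mathcal{C}$ with a small dense (or strong) generator, every object of the target admits a solution set for $F$. Concretely, one checks the hypotheses of Freyd's General Adjoint Functor Theorem --- $\mathcal{C}$ is cocomplete, $F$ preserves all colimits, and the solution set condition holds because $\mathcal{C}$ has a small generating set and all objects are $\kappa$-presentable for a suitable $\kappa$, so that maps out of generators into a fixed object $d \in \mathcal{D}$ factor through a bounded subclass of $\mathcal{C}$. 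This produces the right adjoint $R$ with $\mathcal{D}(Fc,d)\cong\mathcal{C}(c,Rd)$ naturally.

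The main obstacle --- and the only genuinely non-formal point --- is verifying the solution set condition, since cocontinuity and cocompleteness alone are not enough (one needs some smallness input to control the proper class of potential factorisations). The cleanest route, which I would favour in writing the actual proof, is to bypass Freyd entirely and cite the standard result that for locally presentable categories a functor is a left adjoint \emph{if and only if} it preserves all small colimits; this is precisely \cite[1.66]{AdamekRosicky} (Ad\'amek--Rosick\'y, \emph{Locally Presentable and Accessible Categories}), whose proof is exactly the solution-set argument sketched above using $\kappa$-presentability. Thus the proof reduces to two observations: that preservation of coproducts and coequalisers implies preservation of all colimits, and that cocontinuous functors on locally presentable categories are left adjoints. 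I would present it in that order, keeping the colimit-decomposition step explicit and citing the adjoint functor theorem for the second step.
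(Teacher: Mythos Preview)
Your proposal is correct and follows essentially the same route as the paper: observe that coproducts and coequalisers generate all colimits, then invoke an adjoint functor theorem appropriate to locally presentable categories. The only difference is in the specific AFT cited --- the paper notes that $\mathcal{C}$ is cowellpowered by \cite[1.58]{AdamekRosicky} and applies the Special Adjoint Functor Theorem directly, whereas you prefer to cite \cite[1.66]{AdamekRosicky} (or argue via the solution set condition for GAFT). These are equivalent packaging of the same smallness input, so either is fine.
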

\begin{proof}
	The category $\mathcal{C}$ is cowellpowered by \cite[1.58]{AdamekRosicky}. Since coproducts and coequalisers generate all colimits, $F$ preserves all colimits, and now the claim follows from the Special Adjoint Functor Theorem.
\end{proof}

\begin{theorem}\label{cogmonoidal}
	The category $\ccogp$, equipped with the MC model structure and the tensor product, is a monoidal model category.
\end{theorem}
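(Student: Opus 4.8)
The plan is to verify the two conditions defining a monoidal model category: the pushout-product axiom and the unit axiom. The unit axiom is immediate. The monoidal unit is $\ground\in\ccog\subseteq\ccogp$, and by Theorem \ref{thm:coalgebrasmodel} every object of $\ccogp$ is cofibrant; in particular $\ground$ is its own cofibrant replacement, so the unit coherence maps $\ground\otimes C\xrightarrow{\sim} C$ are isomorphisms and hence weak equivalences. It therefore remains to establish the pushout-product axiom: for injections $f\colon A\to B$ and $g\colon C\to D$ of curved coalgebras, the pushout-product
$$
f\square g\colon (B\otimes C)\sqcup_{A\otimes C}(A\otimes D)\longrightarrow B\otimes D
$$
is again an injection, and is moreover an MC equivalence whenever at least one of $f,g$ is.

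First I would check that $f\square g$ is always a cofibration. Let $U$ be the underlying graded vector space functor, obtained as the composite of the colimit-preserving forgetful functor $\ccog\to\mathbf{grCog}_\ground$ of Proposition \ref{limscolims} with the forgetful functor $\mathbf{grCog}_\ground\to\operatorname{grVect}_\ground$, the latter being a left adjoint (to the cofree coalgebra functor) and hence colimit-preserving. Thus $U$ preserves colimits, and it is symmetric monoidal with $U(C\otimes D)\cong U(C)\otimes U(D)$. Applying $U$ to the defining pushout therefore identifies $U(f\square g)$ with the pushout-product $U(f)\square U(g)$ in $\operatorname{grVect}_\ground$. Over a field every injection splits, so this latter map is the inclusion of a direct summand, with cokernel $\coker(Uf)\otimes\coker(Ug)$, and is in particular injective. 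As cofibrations in $\ccogp$ are precisely the maps injective on underlying graded vector spaces, $f\square g$ is a cofibration.

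For the acyclicity clause, suppose $f$ is an injective MC equivalence; the case where $g$ is acyclic then follows from the symmetry of $\otimes$. Consider the pushout square
$$
\begin{tikzcd}
A\otimes C\ar[r,"f\otimes C"]\ar[d,"A\otimes g",swap]& B\otimes C\ar[d]\\
A\otimes D\ar[r,"j",swap]& P
\end{tikzcd}
$$
defining $P=(B\otimes C)\sqcup_{A\otimes C}(A\otimes D)$, for which the pushout-product map satisfies $(f\square g)\circ j=f\otimes D$. By Corollary \ref{mccogtensor} both $f\otimes C$ and $f\otimes D$ are MC equivalences, and by the previous paragraph they are injections, hence acyclic cofibrations. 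The acyclic cofibrations of $\ccogp$ are exactly the injective MC equivalences, which are saturated by Proposition \ref{scsatd} and in particular closed under pushout; thus $j$ is an acyclic cofibration, and in particular an MC equivalence. Since $f\otimes D=(f\square g)\circ j$ and $j$ are both MC equivalences, the two-out-of-three property forces $f\square g$ to be an MC equivalence. Combined with the preceding paragraph, $f\square g$ is an acyclic cofibration, completing the pushout-product axiom.

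The only genuinely delicate point is the bookkeeping around the adjoined final object $*$: one must fix the value of $C\otimes *$ in accordance with the conventions of Section \ref{section:bar} and check directly that the pushout-products involving $*$ (and the initial object $0$) degenerate to isomorphisms or to instances already treated above. I expect this routine case analysis, rather than any of the substantive steps, to be the only fiddly part, since the real content of the theorem is carried entirely by Corollary \ref{mccogtensor} together with the saturation of injective MC equivalences.
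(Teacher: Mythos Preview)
Your proposal is correct and takes essentially the same approach as the paper: both verify the pushout-product axiom by applying Corollary \ref{mccogtensor} to see that $f\otimes C$ and $f\otimes D$ are acyclic cofibrations, then use a two-out-of-three argument through the pushout to conclude. A cosmetic difference is that where you invoke saturation of injective MC equivalences (Proposition \ref{scsatd}) to see the pushout leg is acyclic, the paper instead cites left properness; these amount to the same thing here.

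One point you have not addressed: the paper works with Hovey's convention, in which a monoidal model category is by definition a \emph{closed} monoidal category, and accordingly spends a line verifying that $C\otimes-$ has a right adjoint (by checking it preserves coproducts and coequalisers and appealing to the adjoint functor theorem \ref{saft}). You should include this, or at least flag which convention you are using.
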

\begin{proof}
	It is not hard to see that $(\ccogp,\otimes,\ground)$ is a symmetric monoidal category (recall that the zero coalgebra is an absorbing element and $C\otimes * \cong *$ for $C\neq 0$). To see that this monoidal structure is closed, by \ref{saft} it suffices to show that $C\otimes-$ preserves coproducts and coequalisers. But this can be proved in the exact same manner as \ref{convlimits}. Since every object in $\ccogp$ is cofibrant, the unit axiom is satisfied. We verify that the pushout-product axiom holds. Take a pair of injections $X\into X'$ and $Y\into Y'$ and let $P$ be the pushout of the span $X\otimes Y'\leftarrow X\otimes Y\to X' \otimes Y$, so that we obtain a commutative diagram
	$$\begin{tikzcd}X\otimes Y \ar[r,hook]\ar[d,hook]& X'\otimes Y \ar[d,hook]\ar[ddr,hook, bend left=30]& 
		\\X\otimes Y'\ar[r,hook]\ar[drr,hook, bend right=20] & P\ar[dr,hook] &
		\\&& X'\otimes Y'.
	\end{tikzcd}$$The natural map $P\to X'\otimes Y'$ is an injection, so we need to verify the acyclicity part of the axiom. Suppose that $X \into X'$ was acyclic. By \ref{mccogtensor}, the natural map $X\otimes Y \into X'\otimes Y$ is also acyclic, and by left properness so is its pushout $X\otimes Y' \to P$. By \ref{mccogtensor} again, $X\otimes Y' \into X'\otimes Y'$ is also acyclic, and hence by two-out-of-three $P \to X'\otimes Y'$ is also acyclic.
\end{proof}
We next turn our attention to the enrichment of $\calgp$ over $\ccogp$. Before we begin we will need some recollections on enriched categories. Suppose that $\mathcal{V}$ is a monoidal category and $\mathcal{C}$ is a $\mathcal{V}$-enriched category, with enrichment $\{-,-\}:\mathcal{C}^\mathrm{op}\times \mathcal{C} \to \mathcal{V}$. Recall that a tensoring of $\mathcal{C}$ over $\mathcal{V}$ is a functor $\sw:\mathcal{V}\times \mathcal{C} \to \mathcal{C}$ such that there is a natural isomorphism $\mathcal{V}(v,\{c,c'\})\cong \mathcal{C}(v\sw c, c')$. Similarly a cotensoring of $\mathcal{C}$ over $\mathcal{V}$ is a functor $[-,-]:\mathcal{V}^\mathrm{op}\times \mathcal{C} \to \mathcal{C}$ such that there is a natural isomorphism $\mathcal{V}(v,\{c,c'\})\cong \mathcal{C}(c, [v,c'])$. Recall from \cite[Chapter 4]{Hov99} the concept of an adjunction of two variables and a module over a monoidal category.
\begin{prop}\label{adjmoduleenrich}
	Let $(\mathcal{V},\otimes,I)$ be a monoidal category. Suppose that there is an adjunction of two variables $(\sw, [-,-],\{-,-\}):\mathcal{V}\times \mathcal{C} \to \mathcal{C}$ such that $\sw$ makes $\mathcal{C}$ into a right $\mathcal{V}$-module. Then $\mathcal{C}$ is enriched, tensored, and cotensored over $\mathcal{V}$.
\end{prop}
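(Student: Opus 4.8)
The plan is to observe that, of the three asserted structures, the tensoring and the cotensoring are handed to us for free, so that the entire content of the proposition is the promotion of the bifunctor $\{-,-\}$ to an honest $\mathcal V$-enrichment compatible with the action $\sw$. Writing the defining natural isomorphisms of the two-variable adjunction as
\[
\mathcal{C}(v\sw c,c')\cong \mathcal{V}(v,\{c,c'\})\cong \mathcal{C}(c,[v,c']),
\]
the leftmost isomorphism exhibits $\mathcal{C}$ as tensored by $\sw$ and the rightmost exhibits it as cotensored by $[-,-]$, both with enriched hom $\{-,-\}$. Hence, once $\{-,-\}$ is shown to be a genuine enrichment, $\mathcal{C}$ is automatically tensored and cotensored, and the three structures are mutually compatible by construction.

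First I would unpack the right $\mathcal V$-module structure: since $\mathcal V=\ccogp$ is symmetric monoidal I may treat the action as left- or right-handed interchangeably, so I record the coherent associator $a_{v,w,c}\colon (v\otimes w)\sw c\xrightarrow{\sim} v\sw(w\sw c)$ and the unitor $\ell_c\colon I\sw c\xrightarrow{\sim} c$, subject to the pentagon and triangle axioms. I then define the structure maps of the enrichment by transposition across the adjunction. The evaluation $\mathrm{ev}_{c,c'}\colon\{c,c'\}\sw c\to c'$ is the counit, i.e.\ the image of $\mathrm{id}_{\{c,c'\}}$ under $\mathcal V(\{c,c'\},\{c,c'\})\cong\mathcal C(\{c,c'\}\sw c,c')$. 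Composition $\{c',c''\}\otimes\{c,c'\}\to\{c,c''\}$ is defined as the transpose of
\[
(\{c',c''\}\otimes\{c,c'\})\sw c\xrightarrow{a}\{c',c''\}\sw(\{c,c'\}\sw c)\xrightarrow{\,\{c',c''\}\sw\mathrm{ev}\,}\{c',c''\}\sw c'\xrightarrow{\mathrm{ev}} c'',
\]
and the identity $j_c\colon I\to\{c,c\}$ is the transpose of $\ell_c$.

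Next I would verify the enriched-category axioms. The governing principle is that two morphisms with codomain $\{c,c''\}$ (for suitable source) coincide as soon as their transposes, which are morphisms out of an object of the form $(\cdots)\sw c$, coincide, since transposition is a bijection natural in every variable. Under this reduction the unit axioms collapse to the triangle identity of the module action together with naturality of $\mathrm{ev}$, and are routine. The associativity of composition, after transposition, becomes the commutativity of a hexagonal diagram assembled from two instances of the associator $a$ and three evaluations; this is exactly the pentagon coherence of the module action combined with naturality of $a$ and of $\mathrm{ev}$. I expect this associativity chase to be the only nontrivial computation and therefore the main obstacle, though it is entirely formal and I would merely indicate it rather than write out every face of the diagram.

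Finally I would conclude. Having equipped $\{-,-\}$ with composition and units satisfying the coherence axioms, $\mathcal C$ is a $\mathcal V$-category, and its underlying ordinary category is recovered via $\mathcal V(I,\{c,c'\})\cong\mathcal C(I\sw c,c')\cong\mathcal C(c,c')$, the last isomorphism being induced by $\ell$. Together with the two remaining adjunction isomorphisms this exhibits $\mathcal C$ as enriched, tensored, and cotensored over $\mathcal V$, as claimed. This is essentially the standard fact that a closed module over a monoidal category is canonically enriched over it, and I would present the argument at the level of detail above, leaving the pentagon verification to the reader.
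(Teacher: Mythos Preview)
Your proposal is correct and follows essentially the same route as the paper: both define the evaluation map as the adjunct of the identity, build composition as the transpose of the composite $(\{c',c''\}\otimes\{c,c'\})\sw c\xrightarrow{a}\{c',c''\}\sw(\{c,c'\}\sw c)\xrightarrow{\id\sw e}\{c',c''\}\sw c'\xrightarrow{e}c''$, take the unit as the adjunct of the unitor, and then read off associativity and unitality from the pentagon and triangle axioms of the module action. The only minor slip is your appeal to $\mathcal V=\ccogp$ being symmetric monoidal---the proposition is stated for an arbitrary monoidal $\mathcal V$, and neither you nor the paper actually need symmetry, so that remark should be dropped.
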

\begin{proof}
	The enrichment will be given by $\{-,-\}$. The adjunction of two variables property will ensure that $\mathcal{C}$ is tensored and cotensored, so we need only check that $\{-,-\}$ admits an associative and unital composition morphism. Recall that a functor $\sw$ makes $\mathcal{C}$ into a right $\mathcal{V}$-module if there are natural isomorphisms
	\begin{itemize}\item $a:(v\otimes v')\sw c \to v\sw(v'\sw c)$
		\item $r:I\sw c \to c$
	\end{itemize}
	satisfying fourfold associativity for $a$ and compatibility of $r$ with the unit isomorphisms in $\mathcal{V}$. Observe that for all $c,c'$ in $\mathcal{C}$ we have a universal map $e:\{c,c'\}\sw c\to c'$ obtained as the adjunct of $\id_{\{c,c'\}}$. By composition this gives us a morphism $$\left(\{c',c''\}\otimes \{c,c'\}\right)\sw c \xrightarrow{a}\{c',c''\}\sw \left(\{c,c'\}\sw c\right) \xrightarrow{\id\sw e} \{c',c''\}\sw c' \xrightarrow{e} c''$$which will be adjunct to our desired composition morphism $$\mu:\{c',c''\}\otimes \{c,c'\} \to \{c,c''\}.$$The associativity condition on $a$ translates precisely into associativity for $\mu$. As for units, we obtain a natural map $i:I \to \{c,c\}$ obtained as the adjunct of $r:I\sw c \to c$. Unitality of $\mu$ is then ensured by the compatibility of $r$ with the unit isomorphisms in $\mathcal{V}$.
\end{proof}
\begin{prop}\label{enrtenscotens}
	The category $\calgp$ is enriched, tensored, and cotensored over $\ccogp$.
\end{prop}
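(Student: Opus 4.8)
By Proposition \ref{adjmoduleenrich}, it suffices to produce an adjunction of two variables $(\sw, [-,-], \{-,-\})\colon \ccogp \times \calgp \to \calgp$ in which $\sw$ equips $\calgp$ with the structure of a right $\ccogp$-module. The plan is to take the cotensor to be the convolution algebra, $[C,A] \coloneqq \Hom(C,A)$, and to \emph{define} the other two functors via adjoint functor theorems, rather than by explicit construction. The enrichment $\{-,-\}$ is the mapping object and the tensor $\sw$ is the Sweedler product, following the pattern of \cite{AnelJoyal, HL2022}.

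First I would check the two-variable adjunction. The hom-tensor adjunction for convolution algebras (the displayed isomorphism $\Hom(C\otimes D,A)\cong \Hom(C,\Hom(D,A))$ established in Section \ref{section:bar}, together with Corollary \ref{htconv}) already gives the cotensor half of the data, and functoriality in both variables is clear. The key structural input is that the bifunctor $\Hom(-,-)\colon \ccogp^{\mathrm{op}} \times \calgp \to \calgp$ sends colimits in the coalgebra variable to limits: this is exactly Lemma \ref{convlimits}, and in the algebra variable $\Hom(C,-)$ preserves limits since limits of curved algebras are created underlying by Proposition \ref{limscolims}. Thus for fixed $A$ the functor $\Hom(-,A)\colon \ccogp^{\mathrm{op}}\to \calgp$ sends colimits to limits, and for fixed $C$ the functor $\Hom(C,-)$ preserves limits and (by the same argument as Lemma \ref{convlimits}, now in the algebra variable) filtered colimits. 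Since $\calgp$ and $\ccogp$ are locally presentable (Proposition following \ref{limscolims}), I would invoke the adjoint functor theorem in the form of Proposition \ref{saft}: for fixed $A$, the functor $\Hom(-,A)$ viewed as a functor $\ccogp \to \calgp^{\mathrm{op}}$ preserves colimits, hence has a right adjoint, which is the mapping-object functor $\{-,A\}$; and for fixed $C$, the colimit-preserving functor $\ccogp\to \calgp$ adjoint to the tensoring produces $C \sw -$ as the left adjoint to $\Hom(C,-)$. Assembling these pointwise adjoints into honest bifunctors, and verifying the compatibility that makes $(\sw,[-,-],\{-,-\})$ a genuine adjunction of two variables, is the formal content of the argument; this is precisely the mechanism used in \cite[Proposition 4.17 et seq.]{HL2022}, which I would adapt.

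The main obstacle is establishing the right $\ccogp$-module structure on $\calgp$, i.e.\ producing the coherent associativity isomorphism $(C\otimes D)\sw A \cong C\sw(D\sw A)$ and the unit isomorphism $\ground \sw A \cong A$, and checking the coherence axioms demanded by Proposition \ref{adjmoduleenrich}. These do not follow formally from the two-variable adjunction alone; rather, they are dual to the associativity and unit constraints of the \emph{cotensor}. Concretely, the associativity constraint on $\sw$ is obtained by transposing, through the defining adjunctions, the canonical isomorphism
$$\Hom(C\otimes D, A)\cong \Hom(C,\Hom(D,A)),$$
which is Corollary \ref{htconv}, while the unit constraint transposes the natural isomorphism $\Hom(\ground,A)\cong A$. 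I would verify associativity coherence (the fourfold/pentagon condition) by checking it after applying the fully faithful Yoneda-type embedding into $\ccogp(-, \{A,A'\})$, where it reduces to the already-known associativity of the symmetric monoidal product on $\ccogp$ from Theorem \ref{cogmonoidal}; similarly the triangle identity relating $r$ and $a$ follows from the monoidal unit coherence in $\ccogp$. Once these module-structure isomorphisms and their coherences are in place, Proposition \ref{adjmoduleenrich} applies verbatim and yields the enrichment, tensoring, and cotensoring, completing the proof.
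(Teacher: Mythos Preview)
Your approach differs from the paper's. The paper constructs $\sw$ and $\{-,-\}$ explicitly as left Kan extensions along the dense inclusion $i:\Omega(\ccogp)\hookrightarrow\calgp$ (Lemma \ref{denselem}): it sets $C\sw A\coloneqq\colim_{\Omega D\to A}\Omega(C\otimes D)$ and $\{A',A\}\coloneqq\lim_{\Omega D\to A'}\check B\Hom(D,A)$, and then verifies the adjunction identities directly using the pointwise Kan extension formula together with density to identify $\colim_{\Omega D\to A'}\Omega D\cong A'$. You instead want to manufacture both functors abstractly via adjoint functor theorems. The module-structure verification at the end (transposing the associator through the adjunction to reduce it to the hom--tensor isomorphism for convolution algebras, and hence to associativity of $\otimes$ on $\ccogp$) is essentially the same in both approaches.

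There is, however, a genuine gap in your argument. The claim that $\Hom(C,-):\calgp\to\calgp$ preserves filtered colimits is false when $C$ is infinite-dimensional. For example, if $C^*=\ground[[x]]$ then $\Hom(C,A)\cong A[[x]]$, and $A\mapsto A[[x]]$ does not commute with filtered colimits: a power series whose coefficients lie in successively larger stages of a filtered system need not come from any single stage. So neither Proposition \ref{saft} (which requires preservation of \emph{all} colimits) nor the ``same argument as Lemma \ref{convlimits}'' applies here. The fix is to use the accessible adjoint functor theorem at a larger cardinal: writing $\Hom(C,-)=\varprojlim_i(C_i^*\otimes-)$ as a $\lambda$-small limit of cocontinuous functors (for $\lambda$ depending on $C$) shows that $\Hom(C,-)$ is $\lambda$-accessible and limit-preserving, hence admits a left adjoint. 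Once this is repaired your strategy goes through, with the trade-off that the paper's Kan-extension construction yields an explicit formula for $C\sw A$ (used later, e.g.\ in the proof of Theorem \ref{modelenrich}), whereas the bare adjoint-functor approach does not.
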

To prove this, our strategy will be to apply \ref{adjmoduleenrich}, for which we will need to produce tensors and cotensors. The cotensoring of an algebra by a coalgebra will be given by the convolution algebra. The tensoring and enrichment will be first defined on algebras of the form $\Omega C$ and then Kan extended to all algebras. In order to prove that the various adjunction properties hold, we will need to know that algebras of the form $\Omega C$ generate all algebras, in the following sense. Recall that a functor $i:\mathcal{C} \to \mathcal{D}$ is dense if there is a natural isomorphism $\mathrm{Lan}_ii \cong \id_\mathcal{D}$. We denote by $\Omega(\ccogp)$ the full subcategory of $\calgp$ spanned by the objects of the form $\Omega C$, for $C\in \ccogp$. Equivalently, $\Omega(\ccogp)$ is the category of curved coalgebras with morphisms the $\infty$-morphisms. Let $i:\Omega(\ccogp) \to \calgp$ be the fully faithful inclusion functor. 

\begin{lem}\label{denselem}
	The functor $i:\Omega(\ccogp) \to \calgp$ is dense.
\end{lem}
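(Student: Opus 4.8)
The plan is to prove density directly from the definition: we must exhibit a natural isomorphism $\mathrm{Lan}_i i \cong \id_{\calgp}$. Equivalently, recalling the pointwise formula for the left Kan extension as a colimit over a comma category, for every curved algebra $A$ we must show that $A$ is the colimit of the diagram sending each object $(\Omega C \to A)$ of the comma category $(i \downarrow A)$ to $\Omega C$. Concretely, density of $i$ amounts to the statement that the canonical map
\[
\colim_{(\Omega C \to A) \in (i\downarrow A)} \Omega C \longrightarrow A
\]
is an isomorphism, naturally in $A$. So the heart of the matter is to identify this colimit with $A$ itself.

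First I would invoke the fact, established earlier in the paper, that every curved algebra $A$ carries a canonical counit map $\epsilon \colon \Omega\check B A \to A$ from the bar-cobar adjunction; in particular $A$ always admits at least one map out of an object of the form $\Omega C$ (take $C = \check B A$), so the comma category $(i\downarrow A)$ is nonempty. The key structural observation is that $\epsilon$ is a split epimorphism in a suitable sense after passing to the comma-category colimit: the identity object $(\epsilon\colon \Omega \check B A \to A)$ together with the naturality of the cocone forces the comparison map to be surjective. The cleaner route, however, is to exploit that $i$ is fully faithful (this is exactly the definition of $\Omega(\ccogp)$ as the category of curved coalgebras with $\infty$-morphisms, together with Proposition \ref{prop:MCcharacterization1} identifying $\Hom$-sets after applying $\Omega$). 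For a fully faithful functor, density is equivalent to the assertion that the restricted Yoneda embedding $A \mapsto \calgp(i(-), A)$ is fully faithful, i.e. that a curved algebra is determined by the presheaf of maps out of cobar constructions into it.

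Thus I would reformulate the goal: show that for curved algebras $A, A'$, the map
\[
\calgp(A, A') \longrightarrow \mathrm{Nat}\bigl(\calgp(i-,A), \calgp(i-,A')\bigr)
\]
is a bijection. Surjectivity is the substantive point and is where I expect the main obstacle to lie: given a natural transformation of presheaves, one must reconstruct a genuine algebra map $A \to A'$. The strategy is to evaluate the natural transformation at the universal object $\Omega\check B A \to A$ (the counit), obtaining a compatible map $\Omega\check B A \to A'$, and then use naturality against the comultiplication-type maps and the coaugmentation structure of $\check B A$ to show this factors through $\epsilon\colon \Omega\check B A \to A$, yielding the desired map $A \to A'$. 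Injectivity follows because $\epsilon$ is an epimorphism (indeed an MC equivalence by Theorem \ref{thm:barcobarequiv}(2)) and any two algebra maps agreeing after precomposition with all $\Omega C \to A$ agree in particular after precomposition with $\epsilon$, hence are equal. The delicate part is verifying that the reconstructed map respects the curvature and differential data — i.e. that naturality of the presheaf transformation against all cobar constructions is strong enough to pin down a curved algebra morphism and not merely a map of underlying graded algebras. I would handle this by testing against the generating cobar algebras $\Omega C$ with $C$ ranging over finite dimensional curved coalgebras, using that these generate $\ccogp$ under colimits (Proposition \ref{limscolims} and the local presentability established earlier) and that $\Omega$ is cocontinuous, so that naturality on this generating family suffices.
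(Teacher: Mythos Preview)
Your approach has a genuine gap in both halves of the restricted-Yoneda argument.

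For injectivity, you claim that $\epsilon:\Omega\check B A\to A$ is an epimorphism because it is an MC equivalence. This inference is invalid: MC equivalences are weak equivalences in a model structure and have no reason to be epimorphisms in the underlying $1$-category. (The counit \emph{is} in fact surjective, hence an epimorphism, but that requires a separate argument about the underlying graded algebras.)

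For surjectivity, the step ``use naturality against the comultiplication-type maps and the coaugmentation structure of $\check B A$ to show this factors through $\epsilon$'' is precisely where the content lies, and you have not supplied it. To produce a factorisation $\Omega\check B A \to A \to A'$ you need to know that $A$ is a coequaliser, in $\calgp$, of a parallel pair whose source and target lie in $\Omega(\ccogp)$; but that is exactly the density statement you are trying to prove. The appeal to finite-dimensional coalgebras at the end does not help: cocontinuity of $\Omega$ lets you reduce statements about $\ccogp$ to finite-dimensional coalgebras, not statements about arbitrary objects of $\calgp$.

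The paper avoids this circularity by a different mechanism. It observes that the composite free--forgetful adjunction $\mathbf{grVect}\leftrightarrow \mathbf{grAlg}\leftrightarrow \calg'$ (strict morphisms) is monadic, so every curved algebra is an \emph{absolute} coequaliser of free curved algebras $HT(U)$. One then checks directly that each $HT(U)$ is isomorphic to $\Omega(C)$ for an explicit cosquare-zero coalgebra $C$. Thus $\calgp$ is the closure of $\Omega(\ccogp)$ under absolute coequalisers, and since $i$ is fully faithful, Kelly's criterion \cite[5.19]{kelly} gives density immediately. The point is that absolute coequalisers are preserved by \emph{every} functor, which is exactly what makes the colimit comparison go through without further argument.
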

\begin{proof}
	Let $\calg'$ denote the non-full subcategory of $\calg$ with the same objects and whose morphisms are strict morphisms. As in \cite[2.4]{HL2022}, there is a monadic free-forgetful adjunction $H:\mathbf{grAlg} \leftrightarrow \calg':V$ where $V$ is the functor which forgets	the curvature and differential. Similarly, as in \cite[2.3]{HL2022}, there is a monadic free-forgetful adjunction $\mathbf{grVect}\leftrightarrow\mathbf{grAlg}$. The composition yields an adjunction $\mathbf{grVect}\leftrightarrow \calg'$ which - as in the proof of \cite[2.5]{HL2022} - is monadic because $V$ preserves coequalisers. This exhibits every curved algebra as the absolute coequaliser of a diagram $A \rightrightarrows A'$ where both $A$ and $A'$ are free curved algebras, i.e. curved algebras of the form $HT(U)$ where $U$ is a graded vector space and $T$ is the tensor algebra functor. As in \cite[2.6]{HL2022}, the algebra $HT(U)$ is the cobar construction on a dg coalgebra whose underlying graded coalgebra is a cosquare zero extension of $\ground$. Since $\Omega(0)\cong \varnothing$, we see that $\calgp$ is the closure of $\Omega(\ccogp)$ by absolute coequalisers. Since $i$ is fully faithful by definition, it now follows from \cite[5.19]{kelly} that $i$ is dense.
\end{proof}
\begin{rem}
	A similar proof, along the lines of that of \cite[2.7]{HL2022}, shows that the inclusion $j:\check B(\calgp) \to \ccogp$ is a codense functor, i.e.\ the right Kan extension $\mathrm{Ran}_{j}j$ is naturally isomorphic to the identity.
\end{rem}
\begin{rem}
	The functor $\Omega$ is not dense. Indeed $\mathrm{Lan}_\Omega\Omega$ can be computed as  $\Omega\mathrm{Lan}_\Omega\id$, and there is a natural isomorphism $\mathrm{Lan}_\Omega\id\cong \check B$. Hence the density comonad of $\Omega$ is precisely the cobar-bar resolution monad. Similarly, $\mathrm{Ran}_{\check B}\check B$ is $\check B \Omega$.
\end{rem}
Fix a curved coalgebra $C$. If $D$ is another curved coalgebra, observe that the hom-tensor adjunction for convolution algebras gives a universal algebra morphism \mbox{$\Omega D \to \Hom(C,\Omega(C\otimes D))$}. An algebra morphism $\Omega D' \to \Omega D$ hence gives a morphism \mbox{$\Omega D' \to \Hom(C,\Omega(C\otimes D))$}, which corresponds in the same way to a morphism $\Omega(C\otimes D')\to \Omega(C\otimes D)$. In other words, the assignment $D \mapsto \Omega(C\otimes D)$ defines a functor $\Omega(\ccogp) \to \ccogp$. Define a functor $C\sw-:\calgp \to \ccogp$ as the left Kan extension of the functor $D \mapsto \Omega(C\otimes D)$ along $i$. Via the pointwise description of left Kan extensions as colimits, we obtain an isomorphism $$C\sw A \cong\colim_{\Omega D \to A}\Omega(C\otimes D)$$which shows that $\sw$ is a functor in both variables.

Similarly, fix a curved algebra $A$. If $D$ is a curved coalgebra then the hom-tensor adjunction yields a universal algebra morphism $\Omega(D) \to \Hom(\check B \Hom(D,A),A)$. If $\Omega(D') \to \Omega(D)$ is an algebra morphism we hence obtain an algebra morphism $\Omega(D')\to \Hom(\check B \Hom(D,A),A)$, which corresponds to a morphism $\check B \Hom(D,A) \to \check B \Hom(D',A)$. Hence the assignment $D \mapsto \check B\Hom(D,A)$ defines a functor $\Omega(\ccogp) \to \ccogp^\mathrm{op}$. We define a functor $\{-,A\}:\calgp \to \ccogp^\mathrm{op}$ as the left Kan extension of the functor $D \mapsto \check B \Hom(D,A)$ along $i$. As before, the pointwise description of left Kan extensions gives an isomorphism $$\{A',A\}\cong \lim_{\Omega D \to A'}\check B \Hom(D,A)$$which shows that $\{-,-\}$ is a bifunctor.

\begin{proof}[Proof of \ref{enrtenscotens}]
	The enrichment will be given by $\{-,-\}$, the tensor by $\sw$, and the cotensor by the convolution algebra. If $C,D$ are curved coalgebras and $A$ is a curved algebra then we have natural isomorphisms 
	\begin{align*}
		\ccogp(C,\check B \Hom(D,A))&\cong \MC\Hom(C,\Hom(D,A))\\
		&\cong \MC\Hom(C\otimes D,A)\\
		&\cong \calgp(\Omega(C\otimes D),A)
	\end{align*}where in the middle we use the hom-tensor adjunction for convolution algebras. In particular, if $A'$ is a curved algebra we have natural isomorphisms \begin{align*}
		\ccogp(C,\{A',A\})&\cong \lim_{\Omega D \to A'}\ccogp(C,\check B \Hom(D,A))\\
		&\cong \lim_{\Omega D \to A'}\calgp(\Omega(C\otimes D),A)\\
		&\cong \calgp(C\sw A',A)
	\end{align*}
	which proves that $\sw$ is a tensor. Similarly, since $\otimes$ is symmetric monoidal, we have natural isomorphisms \begin{align*}
		\ccogp(C,\check B \Hom(D,A))&\cong \ccogp(D,\check B \Hom(C,A))\\
		&\cong \calgp(\Omega(D),\Hom(C,A))
	\end{align*}
	which give natural isomorphisms \begin{align*}
		\ccogp(C,\{A',A\})&\cong \lim_{\Omega D \to A'}\ccogp(C,\check B \Hom(D,A))\\
		&\cong \lim_{\Omega D \to A'}\calgp(\Omega(D),\Hom(C,A))\\
		&\cong\calgp(\colim_{\Omega D \to A'}\Omega(D),\Hom(C,A)).
	\end{align*}
	Note that $\colim_{\Omega D \to A'}\Omega(D)$ is the value of the left Kan extension $\mathrm{Lan}_ii$ on the object $A'$. Since $\Omega$ is dense by \ref{denselem}, this is naturally isomorphic to $A'$ and we hence have a natural isomorphism  \begin{align*}
		\ccogp(C,\{A',A\})&\cong\calgp(A',\Hom(C,A))
	\end{align*}which proves that the convolution algebra is a cotensor. So we obtain an adjunction of two variables, and hence by \ref{adjmoduleenrich} it is enough to prove that $\sw$ makes $\calgp$ into a $\ccogp$-module. To do this we will use the convolution algebra. Fix curved algebras $A,A'$ and curved coalgebras $C,C'$. By adjunction we have natural isomorphisms 
	\begin{align*}
		\calgp((C\otimes C')\sw A,A')&\cong	\calgp(A,\Hom(C\otimes C',A'))\\
		&\cong	\calgp(A,\Hom(C',\Hom(C,A')))\\
		&\cong	\calgp(C'\sw A,\Hom(C,A'))\\
		&\cong\calgp(C\sw(C'\sw A),A')
	\end{align*}
	and hence the Yoneda lemma gives us a natural isomorphism $(C\otimes C')\sw A\cong C\sw(C'\sw A)$. Using the associativity of $\otimes$ one can see that this satisfies the associativity property. For the unit isomorphisms, just observe that we have $\ground \sw A\cong \colim_{\Omega D \to A}\Omega(\ground \otimes D)\cong \mathrm{Lan}_ii (A)\cong A$. One can also check using the convolution algebra that this satisfies the unitality axioms for a $\ccogp$-module.
\end{proof}

\begin{rem}\label{coginternal}
	As in \cite{HL2022}, via similar reasoning one can show that the internal hom functor $[C,-]$ of $\ccogp$ is given by the right Kan extension of $A\mapsto \check B\Hom(C,A)$ along the inclusion $\check B(\calgp) \to \ccogp$. It is easy to check using this description that the bar-cobar adjunction is a $\ccogp$-enriched adjunction.
\end{rem}

Let $\mathcal{V}$ be a monoidal model category. Recall that a model category $\mathcal{C}$ is said to be a $\mathcal{V}$-enriched model category if it is enriched, tensored, and cotensored over $\mathcal{V}$, and any of the following equivalent conditions holds:
\begin{itemize}
	\item  $\mathcal{C}$ satisfies the pullback-power axiom.
	\item The tensor is a left Quillen bifunctor.
	\item The cotensor is a right Quillen bifunctor.
\end{itemize}
(for the equivalence, see \cite[4.2.2]{Hov99}).

\begin{theorem}\label{modelenrich}
	$\calgp$ is a $\ccogp$-enriched model category.
\end{theorem}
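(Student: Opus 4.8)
The plan is to invoke the definition recorded immediately before the statement: since $\ccogp$ is a monoidal model category (Theorem \ref{cogmonoidal}) and $\calgp$ is enriched, tensored and cotensored over it (Proposition \ref{enrtenscotens}), it remains only to verify one of the three equivalent conditions of \cite[4.2.2]{Hov99}. I would verify that the tensor $\sw\colon\ccogp\times\calgp\to\calgp$ is a left Quillen bifunctor. By \cite[4.2.5]{Hov99}, and because every curved (co)algebra is small, it suffices to check the pushout-product axiom on generators; moreover, since the \emph{full} pushout-product axiom was established in $\ccogp$ (Theorem \ref{cogmonoidal}), I may let the coalgebra variable range over all cofibrations (injections) and reduce only the algebra variable to the generating cofibrations $\Omega(\mathrm{inj})$ and generating acyclic cofibrations $\Omega(\mathcal{W}\mathrm{inj})$ supplied by Theorem \ref{mcmostralg}. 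The usual saturation argument (the class of algebra maps $j$ for which $i\square j$ is a cofibration for all cofibrations $i$, and its acyclic variants, are closed under pushout, transfinite composition and retracts) then extends the conclusion from generators to all cofibrations.

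The crux is that on these generators the pushout-product is computed by $\Omega$. Indeed, by the construction of the tensor preceding Proposition \ref{enrtenscotens}, together with the density of $i\colon\Omega(\ccogp)\to\calgp$ (Lemma \ref{denselem}), there is a natural isomorphism $C\sw\Omega D\cong\Omega(C\otimes D)$. Given an injection $i\colon C\to C'$ of curved coalgebras and a map $g\colon D\to D'$, put $j\coloneqq\Omega g$. Since $\Omega$ is a left adjoint it preserves the relevant pushouts, so the pushout-product of $i$ and $j$ is
\[
\Omega\big((C'\otimes D)\sqcup_{C\otimes D}(C\otimes D')\big)\longrightarrow\Omega(C'\otimes D'),
\]
which is exactly $\Omega$ applied to the pushout-product map $i\square g$ taken in the monoidal category $(\ccogp,\otimes)$. (The degenerate cases involving $\varnothing$, $0$ and $*$ are handled by the absorbing conventions and checked directly.)

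With this identification the verification is immediate. By the pushout-product axiom of Theorem \ref{cogmonoidal}, the map $i\square g$ is an injection in $\ccogp$, and it is an injective MC equivalence as soon as either $i$ or $g$ is acyclic. Now $\Omega$ is left Quillen by Theorem \ref{thm:barcobarQequivalence}, so it carries cofibrations to cofibrations, and since it preserves MC equivalences it carries acyclic cofibrations to acyclic cofibrations. Hence $\Omega(i\square g)$ is a cofibration of $\calgp$, acyclic whenever $i$ or $g$ is, which is precisely the pushout-product axiom on generators. Therefore $\sw$ is a left Quillen bifunctor and $\calgp$ is a $\ccogp$-enriched model category.

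The main obstacle is purely the bookkeeping of the first two paragraphs: pinning down the reduction to generators while exploiting the full pushout-product axiom in $\ccogp$, and establishing the clean formula $i\square(\Omega g)\cong\Omega(i\square g)$. Once this is in place, the monoidal model structure on coalgebras together with the left Quillenness of $\Omega$ does all the work, and no fresh analysis of strong (co)fibrations (nor any appeal to Corollary \ref{magsqcor}) is required.
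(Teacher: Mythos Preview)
Your proposal is correct and follows essentially the same route as the paper: verify that $\sw$ is a left Quillen bifunctor by checking the pushout-product axiom on generators, use the identification $C\sw\Omega D\cong\Omega(C\otimes D)$ together with cocontinuity of $\Omega$ to rewrite $i\square(\Omega g)$ as $\Omega(i\square g)$, then appeal to Theorem~\ref{cogmonoidal} and the left Quillenness of $\Omega$. The only cosmetic difference is that the paper reduces \emph{both} variables to generating cofibrations (finite-dimensional injections on each side), whereas you reduce only the algebra variable and allow arbitrary injections $i$ in the coalgebra variable; since Theorem~\ref{cogmonoidal} gives the full pushout-product axiom in $\ccogp$, either reduction works.
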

\begin{proof}
	We check that the Sweedler product is a left Quillen bifunctor. To do this it suffices to check on generating cofibrations. So let $C\into C'$ and $D\into D'$ be injections between finite dimensional curved coalgebras and let $P$ be the pushout of the span $C\sw \Omega D'\leftarrow C\sw \Omega D\to C' \sw \Omega D$ of algebras. By definition of the Sweedler product, $P$ is naturally isomorphic to the pushout of the span $\Omega(C\otimes D')\leftarrow \Omega(C\otimes D)\to \Omega(C' \otimes D)$. Since $\Omega$ is a left adjoint, $P$ is naturally isomorphic to $\Omega(P')$, where $P'$ is the pushout of the coalgebra span $C\otimes D'\leftarrow C\otimes D\to C' \otimes D$. By \ref{cogmonoidal}, the natural map $P'\to C'\otimes D'$ is a cofibration, and hence the natural map $P\to \Omega(C'\otimes D')$ is a cofibration. But this is the natural map $P\to C'\sw \Omega D'$ by the above arguments. Since $D\into D'$ is acyclic if and only if $\Omega D \to \Omega D'$ is, the acyclicity part of the axiom for $\sw$ to be a Quillen bifunctor also holds.
\end{proof}

\subsection{Fibrations and generating sets}
We use the enrichment of $\calgp$ over $\ccogp$, together with properties of $\MCdg$, to deduce that every model-theoretic fibration of algebras is a strong fibration. Similar ideas allow us to produce small generating sets for the MC model structure on $\calgp$.
	\begin{prop}\label{mcdgquillen}
		The functor $\MCdg:\calgp \to \dgcat'$ is right Quillen.
	\end{prop}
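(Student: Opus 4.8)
The plan is to exploit the adjunction $\algmc\dashv\MCdg$ of Proposition \ref{mcdgadj} and verify the Quillen condition by checking that $\MCdg$ preserves fibrations and acyclic fibrations. The acyclic fibrations are immediate: if $f\colon A\to A'$ is an acyclic fibration in $\calgp$, then by Lemma \ref{rightproperlem}(1) it is an acyclic strong fibration, so $\MCdg(C,f)$ is an acyclic fibration of dg categories for every curved coalgebra $C$. Taking $C=\ground$ and using $\Hom(\ground,A)\cong A$ gives $\MCdg(\ground,f)\cong\MCdg(f)$, whence $\MCdg(f)$ is an acyclic fibration. So $\MCdg$ preserves acyclic fibrations with no further work.

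The content is in showing that $\MCdg$ preserves fibrations. Since $\dgcat'$ is cofibrantly generated by Tabuada's generators, $\MCdg(f)$ is a fibration precisely when it has the right lifting property against each generating trivial cofibration $j$ of $\dgcat$; by the adjunction $\algmc\dashv\MCdg$, this is equivalent to $f$ having the right lifting property against $\algmc(j)$. I would therefore show that $\algmc$ carries each generating trivial cofibration of $\dgcat$ to an acyclic cofibration of $\calgp$. Granting this, any fibration $f$ lifts against $\algmc(j)$ (fibrations have the right lifting property against all acyclic cofibrations), and so $\MCdg(f)$ lifts against $j$, i.e.\ is a fibration.

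The crucial instance is the interval inclusion $\underline\ground\to\mathcal K$ detecting the isofibration condition. Using the explicit presentation of $\mathcal K=\OCat(I_3)$ from Proposition \ref{prop:coderivedinfinity} together with the identification $\MCdg(\Omega i_0)\cong\OCat(i_0)$ from the proof of Proposition \ref{Lfibsarestrongfibs}, one checks, by comparing generators (i.e.\ via $\algmc\circ\OCat\cong\Omega$), that $\algmc(\underline\ground\to\mathcal K)$ is isomorphic to $\Omega(i_0)\colon\Omega\ground\to\Omega I_3$. Since $i_0\colon\ground\to I_3$ is an injective MC equivalence of finite-dimensional curved coalgebras by Lemma \ref{endpointslem}, the map $\Omega(i_0)$ lies in $\Omega(\mathcal W\mathrm{inj})$ and is thus a generating acyclic cofibration of the MC model structure of Theorem \ref{mcmostralg}. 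The remaining generating trivial cofibrations — those of the form $\mathcal I_2\to\mathcal D^n$ (and their one-object analogues) forcing surjectivity on hom-complexes — are handled the same way: $\algmc$ sends each to a cobar construction $\Omega(g)$ of an injective map $g$ of finite-dimensional curved coalgebras, and one verifies that $g$ is an MC equivalence, so that $\Omega(g)$ is again an acyclic cofibration.

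The main obstacle is precisely this explicit computation of $\algmc$ on Tabuada's trivial cofibrations: identifying the two-object free-on-a-disk dg categories with cobar constructions of concrete finite-dimensional curved coalgebras, and confirming that the resulting coalgebra maps are injective MC equivalences. The clean identification $\algmc\circ\OCat\cong\Omega$, read off by matching the generators-and-relations presentation of $\algmc$ in Proposition \ref{mcdgadj} against the cobar presentation used in Proposition \ref{prop:coderivedinfinity}, is what makes all of these computations uniform, with the interval case as the representative one.
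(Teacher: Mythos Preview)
Your approach is correct, and for the fibration part it is essentially the same as the paper's: both reduce to the identification of $\ground\to\mathcal K$ with $\MCdg(\Omega i_0)\cong\OCat(i_0)$, so that lifting $f$ against the generating acyclic cofibration $\Omega(i_0)$ forces $\MCdg(f)$ to be a fibration.

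The paper's argument is more economical in two respects. First, rather than treating acyclic fibrations via Lemma~\ref{rightproperlem}(1), it shows directly that $\MCdg$ preserves \emph{all} weak equivalences: if $f$ is an MC equivalence then $\MCdg(f)\cong\MCdg(\ground,f)$ is a quasi-equivalence by Proposition~\ref{mccat2}(2). Second, for fibrations the paper invokes the fact (attributed to Tabuada in the proof of Proposition~\ref{Lfibsarestrongfibs}) that a map of dg categories is a fibration if and only if it lifts against the single map $\ground\to\mathcal K$; hence only $\Omega(i_0)$ needs to be considered. Your discussion of ``remaining generating trivial cofibrations of the form $\mathcal I_2\to\mathcal D^n$'' is therefore unnecessary, and the assertion that $\algmc$ carries each such map to $\Omega(g)$ for some injective MC equivalence $g$ of finite-dimensional coalgebras is left as an unproved claim. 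The paper's route avoids this computation entirely.
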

	\begin{proof}
		$\MCdg$ is a right adjoint by \ref{mcdgadj}. If $f$ is a fibration of curved algebras, observe that it must lift against the generating acyclic cofibration $\Omega(\ground) \to \Omega(I_3)$. As in the proof of \ref{Lfibsarestrongfibs}, it follows that $\MCdg(f)$ is a fibration of dg categories. If $f$ is an MC equivalence of curved algebras, then $\MCdg(f)\cong \MCdg(\ground ,f)$ is a quasi-equivalence by \ref{mccat2}(2). 
	\end{proof}
	
	Since the composition of a right Quillen functor with a right Quillen bifunctor is again a right Quillen bifunctor, we can deduce the following.
	\begin{cor}\label{mcdgquillenbi}
	$\MCdg:\ccogp^\mathrm{op} \times \calgp \to \dgcat'$ is a right Quillen bifunctor.
		\end{cor}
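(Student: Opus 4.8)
The plan is to observe that the bifunctor in question is literally a composite of two maps whose Quillen properties have already been established, and then to check that such a composition preserves the right Quillen bifunctor property. By definition $\MCdg(C,A)=\MCdg(\Hom(C,A))$, and the convolution algebra $\Hom(-,-)$ is precisely the cotensor of the $\ccogp$-enrichment of $\calgp$ from \ref{enrtenscotens}. Thus the bifunctor $\MCdg(-,-)\colon\ccogp^{\mathrm{op}}\times\calgp\to\dgcat'$ factors as $\MCdg\circ\Hom(-,-)$, where $\Hom(-,-)\colon\ccogp^{\mathrm{op}}\times\calgp\to\calgp$ is a right Quillen bifunctor by \ref{modelenrich} (recall that $\calgp$ being a $\ccogp$-enriched model category is, by \cite[4.2.2]{Hov99}, equivalent to the cotensor being a right Quillen bifunctor), and $\MCdg\colon\calgp\to\dgcat'$ is right Quillen by \ref{mcdgquillen}.

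First I would make precise the general fact alluded to in the text: if $G\colon\mathcal V^{\mathrm{op}}\times\mathcal C\to\mathcal D$ is a right Quillen bifunctor and $F\colon\mathcal D\to\mathcal E$ is right Quillen, then $F\circ G$ is a right Quillen bifunctor. To see this, fix a cofibration $i\colon C\to C'$ in $\mathcal V$ and a fibration $p\colon A\to A'$ in $\mathcal C$, and consider the pullback-power map
$$
FG(C',A)\longrightarrow FG(C,A)\times_{FG(C,A')}FG(C',A').
$$
The crucial point is that $F$, being a right Quillen functor, is in particular a right adjoint and hence preserves all limits; so the target of this map is canonically $F$ applied to the pullback $G(C,A)\times_{G(C,A')}G(C',A')$, and the map itself is $F$ applied to the pullback-power map of $G$. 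Since $G$ is a right Quillen bifunctor, the latter is a fibration in $\mathcal D$, acyclic whenever $i$ or $p$ is; and $F$, being right Quillen, preserves fibrations and acyclic fibrations. Hence $F\circ G$ is a right Quillen bifunctor.

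It then remains only to apply this with $G=\Hom(-,-)$ and $F=\MCdg$. The one hypothesis that requires a word of justification is the limit-preservation of $\MCdg$: this holds because $\MCdg$ is a right adjoint by \ref{mcdgadj}, and the remark following that proposition records explicitly that $\MCdg$ preserves products and equalisers, and hence all limits (computed in $\dgcat'$). This is the only genuinely load-bearing step, and since it is already in hand there is no real obstacle; the remainder is the formal verification above. I would conclude that the pullback-power map for $\MCdg\circ\Hom(-,-)$ is a fibration of dg categories, acyclic whenever the given injection of curved coalgebras or the given fibration of curved algebras is acyclic, which is exactly the assertion that $\MCdg\colon\ccogp^{\mathrm{op}}\times\calgp\to\dgcat'$ is a right Quillen bifunctor.
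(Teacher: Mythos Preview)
Your proof is correct and follows essentially the same approach as the paper: both argue that $\MCdg(-,-)$ factors as the right Quillen functor $\MCdg$ (from \ref{mcdgquillen}) composed with the right Quillen bifunctor $\Hom(-,-)$ (from \ref{modelenrich}), and that such a composition is again a right Quillen bifunctor. You have simply spelled out the verification of this composition fact (using that the right adjoint $\MCdg$ preserves pullbacks), which the paper states without proof in the sentence preceding the corollary.
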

	\begin{proof}
		By \ref{modelenrich}, the convolution algebra $\Hom(C,A)$ is a right Quillen bifunctor, and hence by \ref{mcdgquillen} the composition $\MCdg(C,A)\coloneqq (\MCdg \circ \Hom)(C,A)$ is also a right Quillen bifunctor.
		\end{proof}
\begin{prop}\label{modeltheoreticfibs}
	Let $f:A \to A'$ be a morphism of curved algebras. The following are equivalent:
	\begin{enumerate}
		\item $f$ is a fibration in the MC model structure.
		\item $f$ is a strong fibration.
	\end{enumerate}
\end{prop}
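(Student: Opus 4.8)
The plan is to prove the two implications separately. The implication $(2)\implies(1)$ is already available: Lemma \ref{rightproperlem}(2) states precisely that every strong fibration is a fibration in the MC model structure, so nothing further is needed there. The substance of the proposition is therefore the converse $(1)\implies(2)$, showing that an abstract model-theoretic fibration is in fact a strong fibration, i.e.\ that $\MCdg(C,f)$ is a fibration of dg categories for \emph{every} curved coalgebra $C$.

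For $(1)\implies(2)$, the first idea is to feed $f$ into the right Quillen bifunctor $\MCdg:\ccogp^{\mathrm{op}}\times\calgp\to\dgcat'$ established in Corollary \ref{mcdgquillenbi}. Being a right Quillen bifunctor means $\MCdg$ satisfies the pullback-power (dual pushout-product) axiom with respect to the model structures on $\ccogp$, $\calgp$ and $\dgcat'$. Concretely, if $i:C'\to C$ is a cofibration of curved coalgebras and $f:A\to A'$ is a fibration of curved algebras, then the induced map
\[
\MCdg(C,A)\longrightarrow \MCdg(C',A)\times_{\MCdg(C',A')}\MCdg(C,A')
\]
is a fibration of dg categories, which is moreover acyclic if either $i$ or $f$ is. The plan is to specialise this to the initial cofibration $i:0\to C$ of curved coalgebras (which is a cofibration since \emph{every} object of $\ccogp$ is cofibrant by Theorem \ref{thm:coalgebrasmodel}, all cofibrations being injections). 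Since $\MCdg(0,-)$ is the terminal dg category (as $\Hom(0,A)=0$ and $\MCdg(0)\cong 0$), the fibre product on the right degenerates to $\MCdg(C,A')$, and the pullback-power map becomes simply $\MCdg(C,f):\MCdg(C,A)\to\MCdg(C,A')$. Thus this map is a fibration of dg categories for every curved coalgebra $C$, which is exactly the assertion that $f$ is a strong fibration.

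The step I expect to require the most care is verifying that the pullback-power map genuinely degenerates as claimed when $C'=0$. One must check that $\MCdg(0,A')$ is terminal (equivalently that $\Hom(0,A')=0$ and hence $\MCdg(0,A')$ is the zero dg category), so that the pullback $\MCdg(0,A)\times_{\MCdg(0,A')}\MCdg(C,A')$ is computed in $\dgcat'$ and collapses to $\MCdg(C,A')$; here the subtlety flagged throughout the paper about how pullbacks in $\dgcat'$ are computed when some terms are the zero category (cf.\ the argument in Lemma \ref{magsqdg}) must be handled correctly. Provided this degeneration holds, the conclusion is immediate from Corollary \ref{mcdgquillenbi}.

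An alternative, slightly more self-contained route avoids the bifunctor formalism: by the characterisation of fibrations in the MC model structure, $f$ is a fibration iff it has the right lifting property against the generating acyclic cofibrations $\Omega(\mathcal{W}\mathrm{inj})$, i.e.\ against $\Omega g$ for $g$ an injective MC equivalence of curved coalgebras. But Proposition \ref{asfalg}(1) states that $f$ is a strong fibration \emph{iff} it has the right lifting property against exactly these maps. Hence $(1)$ and $(2)$ are literally the same lifting property, and the equivalence is immediate. This second argument is cleaner and I would present it as the main proof, relegating the Quillen-bifunctor perspective to a remark, since it sidesteps the degeneration check entirely.
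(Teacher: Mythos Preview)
Your first approach via the Quillen bifunctor with $C'=0$ is correct, and in fact streamlines the paper's own argument. The paper also invokes Corollary \ref{mcdgquillenbi}, but rather than applying the pullback-power axiom once to $i:0\to C$, it writes $C$ as a transfinite composition $0=C_0\to C_1\to\cdots$ of injections, identifies $\MCdg(C,f)$ with the inverse limit of the tower $\MCdg(C_\alpha,f)$, and then applies the pullback-power axiom at each successor stage $C_\alpha\to C_{\alpha+1}$ to verify the Reedy fibration condition on the tower. Your observation that a single application to the cofibration $0\to C$ already does the job is more direct; the degeneration $\MCdg(0,A)\times_{\MCdg(0,A')}\MCdg(C,A')\cong\MCdg(C,A')$ holds because $\MCdg(0,A)=\MCdg(0)$ is the zero dg category, which is terminal in $\dgcat'$, so no subtlety arises there.

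Your second approach, however, has a real gap as written. The generating acyclic cofibrations $\Omega(\mathcal{W}\mathrm{inj})$ are images of injective MC equivalences \emph{between finite-dimensional} coalgebras, whereas Proposition \ref{asfalg}(1) characterises strong fibrations via the RLP against $\Omega g$ for \emph{all} injective MC equivalences $g\in\mathcal{W}\mathrm{Inj}$. These are not ``literally the same lifting property''; indeed the remark immediately following the proposition in the paper records the equality $\Omega(\mathcal{W}\mathrm{inj})^\boxslash=\Omega(\mathcal{W}\mathrm{Inj})^\boxslash$ as a \emph{consequence} of \ref{modeltheoreticfibs}, not an input. The fix is short: since $\Omega$ is left Quillen (Theorem \ref{thm:barcobarQequivalence}), it sends each $g\in\mathcal{W}\mathrm{Inj}$ (an acyclic cofibration in $\ccogp$) to an acyclic cofibration $\Omega g$ in $\calgp$, and every model-theoretic fibration lifts against those. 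With that sentence added, your second argument becomes valid and is arguably the most elementary route, since it avoids the enriched model structure entirely.
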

\begin{proof}
	We have already observed in \ref{rightproperlem} that $(2)\implies (1)$, so we just need to prove that the converse holds. Take a model-theoretic fibration $f:A \to A'$; we wish to prove that it is a strong fibration. To do this, take an arbitrary curved coalgebra $C$. By \ref{injgen} we may write $C$ as a direct limit $\varinjlim_\alpha C_\alpha$, where $C_0 \cong 0$ and each $C_\alpha \to C_{\alpha+1}$ is an injection. Because $\MCdg(-,A)$ sends colimits to limits by \ref{mclimscor}, we may view $\MCdg(C,A)$ as the inverse limit of the tower $\MCdg(C_\alpha,A)$. Moreover, since this description is functorial, this exhibits $\MCdg(C,f)$ as the limit of the map of towers $\MCdg(C_\alpha,f)$. Hence to prove that $\MCdg(C,f)$ is a fibration, it suffices to show that the map $\MCdg(C_\alpha,f)$ is a fibration in the Reedy model structure on towers. Certainly $\MCdg(C_0,f)$ is a fibration, since it is the identity map on the zero dg category. So we just need to check that for all $\alpha$, the natural map $$\MCdg(C_{\alpha+1},A) \to \MCdg(C_{\alpha},A)\times_{\MCdg(C_{\alpha},A')}\MCdg(C_{\alpha},A')$$is a fibration. But this is precisely the pullback-power axiom for $\MCdg$, which holds since $\MCdg$ is a right Quillen bifunctor by \ref{mcdgquillenbi}.
	\end{proof}
\begin{rem}\label{mcdgl}
	In the above proof, since $\MCdg$ is Quillen in its coalgebra argument each map $\MCdg(C_{\alpha+1},A) \to \MCdg(C_{\alpha},A)$ is a fibration. In particular $\MCdg(C,A)$ is the homotopy inverse limit of the tower $\MCdg(C_\alpha,A)$.
\end{rem}
\begin{rem}
	Since strong fibrations are precisely those maps that lift against $\Omega(\mathcal{W}\mathrm{Inj})$, the above shows that we have an equality $\Omega(\mathcal{W}\mathrm{inj})^\boxslash = \Omega(\mathcal{W}\mathrm{Inj})^\boxslash$. In fact this should already hold on the level of coalgebras: if $i:C\into C'$ is an MC equivalence of coalgebras that is also a conilpotent extension, then it is a filtered quasi-isomorphism and the corresponding filtration exhibits $i$ as a $\mathcal{W}\mathrm{inj}$-cell complex. The general case of an injection reduces to the conilpotent case by an argument similar to the proof of \ref{thm:strongcof}. One should also be able to prove a `Goldman-Millson type theorem' along the lines of \cite{GoldmanMillson} stating that filtered quasi-isomorphisms between appropriate curved co/algebras are MC equivalences.
	\end{rem}

We now turn to our small generating sets, which will be defined as follows:
\begin{defi}\hfill
	\begin{itemize}
		\item Let $\mathcal{J'}$ be the set of morphisms of coalgebras of the form $i_0:C \into C \otimes I_3$ where $C$ is finite dimensional, and let $\mathcal{J}\coloneqq \Omega(\mathcal{J'})$.
		\item Let $\mathcal{I'}$ be the set of morphisms of coalgebras of the form $0 \to C$ or $C\sqcup C \to C\otimes I_3$, where $C$ is finite dimensional. Let $\mathcal{I}\coloneqq \Omega(\mathcal{I'}) \cup \mathcal{J} =\Omega(\mathcal{I'}\cup \mathcal{J}' )$.
	\end{itemize}
\end{defi}

\begin{prop}\label{sffinlem}
	Let $f:A \to A'$ be a morphism of algebras. The following are equivalent:
	\begin{enumerate}
		\item $f$ is a fibration in the MC model structure.
		\item $f$ has the right lifting property with respect to all maps from $\mathcal{J}$.
		\item For all finite dimensional curved coalgebras $C$, the dg functor $\MCdg(C,f)$ is a fibration.
		\end{enumerate}
	\end{prop}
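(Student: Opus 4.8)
The plan is to prove the cycle of implications $(1)\Rightarrow(2)\Rightarrow(3)\Rightarrow(1)$. The implication $(1)\Rightarrow(2)$ is immediate: by \ref{endpointslem} every endpoint inclusion $C\into C\otimes I_3$ with $C$ finite dimensional is an injective MC equivalence between finite dimensional coalgebras, so $\mathcal J=\Omega(\mathcal J')\subseteq \Omega(\mathcal W\mathrm{inj})$. Since $\Omega(\mathcal W\mathrm{inj})$ is the set of generating acyclic cofibrations of the MC model structure (\ref{mcmostralg}), the fibrations are exactly $\Omega(\mathcal W\mathrm{inj})^\boxslash\subseteq\mathcal J^\boxslash$, which is $(2)$.

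For $(2)\Rightarrow(3)$ I would recycle the mechanism in the proof of \ref{Lfibsarestrongfibs}, now run for a single finite dimensional $C$ rather than for all $C$ at once. Fixing finite dimensional $C$, hypothesis $(2)$ says $f$ lifts on the right against $\Omega(C\otimes i_0)$, where $i_0\colon\ground\to I_3$; by \ref{llem} this is equivalent to $\Hom(C,f)$ lifting against $\Omega i_0$, and applying $\MCdg$ together with its adjunction turns this into the statement that $\MCdg(C,f)$ lifts against $\MCdg(\Omega i_0)$. As computed in \ref{Lfibsarestrongfibs} using \ref{prop:coderivedinfinity}, the map $\MCdg(\Omega i_0)$ is isomorphic to $\OCat(i_0)$, which in turn is the generating fibration $\ground\to\mathcal K$ of dg categories of \cite{Tabuada04}. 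Hence $\MCdg(C,f)$ lifts against $\ground\to\mathcal K$ and is therefore a fibration of dg categories, giving $(3)$.

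The heart is $(3)\Rightarrow(1)$, where I would check directly that $f$ has the right lifting property against the generating acyclic cofibrations $\Omega(\mathcal W\mathrm{inj})$. So let $g\colon D\into D'$ be a finite dimensional injective MC equivalence and consider a lifting problem for $\Omega g$ against $f$ with top map $a\colon\Omega D\to A$ and bottom map $b\colon\Omega D'\to A'$. Passing to convolution algebras produces the commutative square with corners $\Hom(D',A),\Hom(D,A),\Hom(D',A'),\Hom(D,A')$, whose horizontal maps are induced by $f$ and whose vertical maps by $g$; writing $P\coloneqq\Hom(D,A)\times_{\Hom(D,A')}\Hom(D',A')$ and $\psi\colon\Hom(D',A)\to P$ for the comparison map, a solution to the lifting problem is exactly a preimage, under the object map of $R\coloneqq\MCdg(\psi)\colon\MCdg(D',A)\to \MCdg(P)\cong\MCdg(D,A)\times_{\MCdg(D,A')}\MCdg(D',A')$, of the object $(\bar a,\bar b)$ determined by $a$ and $b$. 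I would produce this preimage by showing $R$ is an acyclic fibration via \ref{magsqdg}: the maps induced by $f$ give fibrations on $\MCdg$ by $(3)$ (as $D,D'$ are finite dimensional), while the maps induced by $g$ give acyclic fibrations on $\MCdg$ because $g$ is an acyclic strong cofibration (\ref{thm:strongcof} and \ref{scofibdict}(2)); thus surjectivity of $R$ on objects follows once the remaining hypothesis that $\psi$ be surjective is verified.

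The main obstacle, and the only genuinely new point, is exactly this surjectivity of $\psi$, which by the Four Lemma computation in \ref{magsqcor} holds as soon as $f$ itself is surjective --- something not assumed. I would deduce it from nonemptiness: the presence of the top map $a$ makes $\MCdg(D,A)$ nonempty, and since $\MCdg(g,A)$ is an acyclic fibration it is surjective on objects, so $\MCdg(D',A)$ is nonempty and in particular $D'\neq 0$. Then the fibration $\MCdg(D',f)$ is surjective on hom-complexes; evaluating at the endomorphism complex of an object, its underlying graded map is $\mathrm{id}_{(D')^*}\otimes f\colon (D')^*\otimes A\to (D')^*\otimes A'$, whose surjectivity forces $f$ surjective because $(D')^*\neq 0$. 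With $f$ surjective, $\psi$ is surjective, \ref{magsqdg} applies, $R$ is surjective on objects, and the required lift $\Omega D'\to A$ exists; hence $f\in\Omega(\mathcal W\mathrm{inj})^\boxslash$ is a fibration, closing the cycle.
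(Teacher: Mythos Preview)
Your argument is correct and follows the same route as the paper: the paper proves $(1)\Rightarrow(2)$ by containment in the generating acyclic cofibrations, $(2)\Rightarrow(3)$ by invoking the proof of \ref{Lfibsarestrongfibs} (exactly as you do), and $(3)\Rightarrow(1)$ by invoking the proof of \ref{strongfibsareLfibs}(1), i.e.\ by applying \ref{magsqcor}/\ref{magsqdg} to the convolution square---again exactly your strategy. Where you add value is in making explicit the step the paper leaves to the reader: \ref{magsqcor} needs $f$ surjective, and under hypothesis $(3)$ alone (rather than the full strong fibration hypothesis) this must be extracted from the specific lifting problem. Your extraction via nonemptiness of $\MCdg(D',A)$ and surjectivity on hom-complexes is the right idea.

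One small slip: ``$\MCdg(D',A)$ nonempty, in particular $D'\neq 0$'' is not a valid inference, since $\MCdg(0,A)$ is the zero dg category, which has an object. The conclusion $D'\neq 0$ is nevertheless correct, but for a different reason: a map $g\in\mathcal W\mathrm{inj}$ with $D=0$ forces $D'=0$ (else $0\to D'$ would be an MC equivalence, impossible for $D'\neq 0$), and in that case $\Omega g=\id_\varnothing$ and the lifting is trivial. So in the nontrivial case $D\neq 0$, hence $D'\neq 0$ by injectivity of $g$, and your hom-complex argument then goes through verbatim.
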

\begin{proof}
	To show that $(1)\implies (2)$, just observe that $\mathcal{J}$ is a subset of the acyclic generating cofibrations for the MC model structure, and in particular fibrations must lift against $\mathcal{J}$. The proof of \ref{Lfibsarestrongfibs} (cf.\ also the proof of \ref{mcdgquillen}) shows that $(2)\implies (3)$. The proof of \ref{strongfibsareLfibs}(1) shows that if $f$ satisfies $(3)$, then it lifts against all generating acyclic cofibrations, and hence satisfies $(1)$.
	\end{proof}
\begin{prop}
	Let $f:A \to A'$ be a morphism of algebras. The following are equivalent:
	\begin{enumerate}
		\item $f$ is an acyclic fibration in the MC model structure.
		\item $f$ has the right lifting property with respect to all maps from $\mathcal{I}$.
		\item For all finite dimensional curved coalgebras $C$, the dg functor $\MCdg(C,f)$ is an acyclic fibration.
	\end{enumerate}
\end{prop}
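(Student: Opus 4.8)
The plan is to mirror the structure of the immediately preceding Proposition \ref{sffinlem}, which characterised fibrations, now specialising to \emph{acyclic} fibrations. The three conditions should be shown equivalent by the cycle $(1)\Rightarrow(2)\Rightarrow(3)\Rightarrow(1)$, exactly as in \ref{sffinlem}, but using the generating cofibrations $\mathcal{I}$ in place of the generating acyclic cofibrations $\mathcal{J}$.

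First I would establish $(1)\Rightarrow(2)$. By \ref{mcmostralg} the generating cofibrations of the MC model structure are $\Omega(\mathrm{inj})$, and an acyclic fibration is by definition a map with the right lifting property against all cofibrations, hence in particular against $\Omega(\mathrm{inj})$. It therefore suffices to observe that $\mathcal{I}=\Omega(\mathcal{I}'\cup\mathcal{J}')$ consists of images under $\Omega$ of injections of finite dimensional coalgebras (the maps $0\to C$ are injections, and $C\sqcup C\to C\otimes I_3$ is injective since $I_3$ has the two vertices as a basis summand, as is $i_0:C\into C\otimes I_3$), so $\mathcal{I}\subseteq \Omega(\mathrm{inj})$ and every acyclic fibration lifts against $\mathcal{I}$.

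Next, for $(2)\Rightarrow(3)$, I would argue that lifting against $\mathcal{I}$ forces $\MCdg(C,f)$ to be an acyclic fibration of dg categories for every finite dimensional $C$. A dg functor is an acyclic fibration precisely when it is surjective on hom-complexes and $H^0$ is an equivalence that is surjective on objects. Surjectivity on hom-complexes and surjectivity on objects of $H^0$ should follow from lifting against the maps $\Omega(0\to C)$ and $\Omega(C\sqcup C\to C\otimes I_3)$ respectively, by adjunction and the description of $\MCdg(\Omega D,-)$ via $\Hom$ exactly as in the proof of \ref{strongfibsareLfibs}(1) and \ref{Lfibsarestrongfibs}; lifting against $\Omega(i_0:C\into C\otimes I_3)$ supplies the fibrancy/path-lifting needed to upgrade this to $H^0$ being an equivalence (this is where $\mathcal{J}\subseteq\mathcal{I}$ is used). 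Concretely, I would translate each lifting problem against a member of $\mathcal{I}$ through the adjunction isomorphism $\calgp(\Omega D,A)\cong\MC\Hom(D,A)$ and the computation $\MCdg(\Omega i_0)\cong\OCat(i_0)$ from the proof of \ref{Lfibsarestrongfibs}, so that lifting of $f$ becomes exactly the two lifting conditions characterising acyclic fibrations of dg categories.

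Finally, $(3)\Rightarrow(1)$ should follow by combining \ref{rightproperlem}(1) with a colimit/Reedy argument identical to the proof of \ref{modeltheoreticfibs}. By \ref{rightproperlem}(1), $f$ is an acyclic fibration in the MC model structure if and only if it is an acyclic strong fibration, i.e.\ $\MCdg(C,f)$ is an acyclic fibration of dg categories for \emph{all} curved coalgebras $C$, not merely finite dimensional ones. To pass from finite dimensional $C$ to arbitrary $C$, I would write $C=\varinjlim_\alpha C_\alpha$ as in \ref{modeltheoreticfibs}, with $C_0\cong 0$ and each $C_\alpha\into C_{\alpha+1}$ an injection of finite dimensional coalgebras (using \ref{injgen}), and use that $\MCdg(-,A)$ sends colimits to limits (\ref{mclimscor}) to express $\MCdg(C,f)$ as a limit of the tower $\MCdg(C_\alpha,f)$. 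The pullback-power property of the right Quillen bifunctor $\MCdg$ (\ref{mcdgquillenbi}) then shows the tower map is a Reedy acyclic fibration, hence its limit $\MCdg(C,f)$ is an acyclic fibration. The main obstacle I anticipate is the bookkeeping in $(2)\Rightarrow(3)$: verifying that the two small generating maps $0\to C$ and $C\sqcup C\to C\otimes I_3$ really capture surjectivity-on-homs and essential-surjectivity-on-objects, and that adding the $\mathcal{J}$-maps then yields the full \emph{acyclicity} of $H^0\MCdg(C,f)$ rather than just an isofibration, so that the acyclic-fibration characterisation of dg functors is matched exactly.
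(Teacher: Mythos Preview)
Your implications $(1)\Rightarrow(2)$ and $(2)\Rightarrow(3)$ are essentially right, though in $(2)\Rightarrow(3)$ you have the roles of the two generating maps reversed: lifting against $\Omega(0\to C)$ gives surjectivity on \emph{objects} (any $\Omega C\to A'$ lifts to $A$), while lifting against $\Omega(C\sqcup C\to C\otimes I_3)$ gives homotopy \emph{uniqueness} of such lifts; surjectivity on hom-complexes comes from the fibration property already obtained from $\mathcal{J}\subseteq\mathcal{I}$ via Proposition~\ref{sffinlem}. This is a minor bookkeeping slip.

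The genuine gap is in $(3)\Rightarrow(1)$. You invoke the pullback-power axiom for the right Quillen bifunctor $\MCdg$ (Corollary~\ref{mcdgquillenbi}) to conclude that the Reedy latching maps
\[
\MCdg(C_{\alpha+1},A)\longrightarrow \MCdg(C_{\alpha},A)\times_{\MCdg(C_{\alpha},A')}\MCdg(C_{\alpha+1},A')
\]
are \emph{acyclic} fibrations. But that axiom only makes this map a fibration; it is acyclic provided either $C_\alpha\into C_{\alpha+1}$ or $f$ is a weak equivalence. The former is merely a cofibration, and the latter is precisely what you are trying to establish, so the argument is circular. Relatedly, the stages $C_\alpha$ in the cell decomposition of $C$ from Remark~\ref{injgen} are in general \emph{not} finite dimensional (only the attaching cells $D_\alpha\to D'_\alpha$ are), so you cannot apply hypothesis (3) to $C_\alpha$ directly either.

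The paper circumvents this as follows. By Proposition~\ref{sffinlem}, $f$ is already a fibration; one only needs to show it is an MC equivalence. Writing $C$ as a cell complex with finite-dimensional attaching cells $D_\alpha\to D'_\alpha$, hypothesis (3) gives that $\MCdg(D_\alpha,f)$ and $\MCdg(D'_\alpha,f)$ are acyclic fibrations. Since $\MCdg(-,A)$ sends the pushout $C_{\alpha+1}=C_\alpha\sqcup_{D_\alpha}D'_\alpha$ to a pullback, each $\MCdg(C_{\alpha+1},f)$ is obtained from $\MCdg(C_\alpha,f)$ and these acyclic fibrations by an iterated (homotopy) pullback; one then concludes inductively that every $\MCdg(C_\alpha,f)$ is an acyclic fibration, and passes to the homotopy limit to get the quasi-equivalence for $\MCdg(C,f)$. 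The key point you are missing is that acyclicity must be propagated from the finite-dimensional \emph{cells} via pullback stability of acyclic fibrations, not extracted from the pullback-power axiom applied to $f$ itself.
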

\begin{proof}
	To see that $(1)\implies (2)$, just observe that $\mathcal{I}$ is a subset of the generating cofibrations for the MC model structure, and in particular acyclic fibrations lift against it. 	To see that $(2)\implies (3)$, suppose that $f$ lifts against $\mathcal{I}$ and take a finite dimensional curved coalgebra $C$. Since $\mathcal{J}\subseteq \mathcal{I}$, it follows from \ref{sffinlem} that $\MCdg(C,f)$ is a fibration of dg categories, so we just need to show that it is an acyclic fibration. This is similar to \ref{sfibheq}. Since $0\to C$ is in $\mathcal{I'}$, it follows that any morphism $\Omega C \to A'$ extends to a morphism $\Omega C \to A$. In particular, if $\MCdg(C,A)$ is empty then so is $\MCdg(C,A')$. So without loss of generality we can assume that both $\MCdg(C,A)$ and $\MCdg(C,A')$ are nonempty; it follows that $\Hom(C,A) \to \Hom(C,A')$ is surjective. We just need to check that any commutative diagram of the form $$\begin{tikzcd} & A\ar[d,"f"] \\
		\Omega C \ar[r,"g"] & A'\end{tikzcd}$$ admits a homotopy unique lift. We have already proved existence, so let $h,h':\Omega C \to A$ be two different lifts of $g$. These fit into a commutative diagram $$\begin{tikzcd} \Omega(C\sqcup C) \ar[r,"h\sqcup h'"] \ar[d]& A\ar[d,"f"] \\
		\Omega (C\otimes I_3) \ar[r,"\mathrm{const}_g"] & A'\end{tikzcd}$$where the left hand morphism is in $\mathcal{I}$. A lift in the above diagram corresponds to a $3$-homotopy $h \simeq h'$. Hence $\MCdg(C,f)$ is an acyclic fibration, as required. Finally we need to show that $(3)\implies (1)$ holds. Suppose that $f$ is a morphism satisfying $(3)$. It follows from \ref{sffinlem} that $f$ is a fibration in the MC model structure, so we need only show that it is an MC equivalence. To do this, take an arbitrary curved coalgebra $C$. Following the proof of \ref{modeltheoreticfibs}, write $C\cong\varinjlim_\alpha C_\alpha$, where $C_0 \cong 0$ and each $C_\alpha \to C_{\alpha+1}$ is a pushout of a strong cofibration $D_\alpha \to D'_\alpha$ between finite dimensional coalgebras. As in \ref{mcdgl}, we have a quasi-equivalence $\MCdg(C,f)\simeq \hoilim_\alpha\MCdg(C_\alpha,f)$. Since $\MCdg(C_\alpha,f)$ is a pullback of the acyclic fibration $\MCdg(D_\alpha,f)$, it is an acyclic fibration. In particular $\MCdg(C,f)$ is a homotopy limit of quasi-equivalences, and hence itself a quasi-equivalence.
\end{proof}

\subsection{Sliced model structures}
We can obtain MC model structures on dg coalgebras by slicing, since the category of dg coalgebras is the overcategory $(\ccogp)_{/\ground}$, and similarly for algebras. We start by recording the necessary facts we will need about sliced model structures.

\begin{theorem}\label{slicethm}
	Let $\mathcal{C}$ be a left (resp. right) proper combinatorial model category and $c\in \mathcal{C}$ an object. Then the slice categories $\mathcal{C}_{/c}$ and $\mathcal{C}_{c/}$ are left (resp. right) proper combinatorial model categories, with (co)fibrations and weak equivalences created by the forgetful functor to $\mathcal{C}$. The projection functors $\mathcal{C}_{/c} \to \mathcal{C}$ and $\mathcal{C}_{c/} \to \mathcal{C}$ are left (resp. right) Quillen. Moreover, if $$L:\mathcal{C} \longleftrightarrow \mathcal{D}:R$$ is a Quillen equivalence and $c\in \mathcal{C}$ is cofibrant and $d \in \mathcal{D}$ is fibrant, then the sliced adjunctions $$L:\mathcal{C}_{c/} \longleftrightarrow \mathcal{D}_{Lc/}:R$$
	$$L:\mathcal{C}_{/Rd} \longleftrightarrow \mathcal{D}_{/d}:R$$are Quillen equivalences.
	\end{theorem}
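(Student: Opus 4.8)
The plan is to treat the two assertions separately: first the existence and basic properties of the sliced model structures, then the Quillen equivalence. For the first part I would appeal to the classical fact that for any model category $\mathcal{C}$ and object $c$, both $\mathcal{C}_{/c}$ and $\mathcal{C}_{c/}$ carry model structures in which cofibrations, fibrations and weak equivalences are precisely those maps whose underlying maps in $\mathcal{C}$ are such; equivalently, these classes are created by the projection functors $U_{/c}\colon\mathcal{C}_{/c}\to\mathcal{C}$ and $U_{c/}\colon\mathcal{C}_{c/}\to\mathcal{C}$. Combinatoriality is inherited: slices of locally presentable categories are locally presentable, and explicit generating (acyclic) cofibrations for the slices are produced from those of $\mathcal{C}$ (applying the left adjoint $x\mapsto c\sqcup x$ in the undercategory case, and decorating the codomains of the generators with all maps to $c$ in the overcategory case). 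For properness, the key observation is that $U_{c/}$ creates all limits and connected colimits, while $U_{/c}$ creates all colimits and connected limits; hence a pushout along a cofibration or a pullback along a fibration in either slice is computed in $\mathcal{C}$, so left- (resp.\ right-) properness transfers directly from $\mathcal{C}$, since weak equivalences and (co)fibrations are created. Finally, $U_{/c}$ has the right adjoint $x\mapsto(x\times c\to c)$ and preserves (acyclic) cofibrations, so it is left Quillen; dually $U_{c/}$ has the left adjoint $\pi^{!}$ described earlier and preserves (acyclic) fibrations, so it is right Quillen.

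For the Quillen equivalence I would work out the undercategory case in detail and then dualise. The induced functors are $L(c\to x)=(Lc\to Lx)$ and $R(Lc\to y)=(c\to RLc\to Ry)$, and the first thing to check is that they form an adjunction whose transpose bijection agrees, on underlying morphisms, with that of $L\dashv R$. Since $U_{Lc/}\circ L=L\circ U_{c/}$ and the (co)fibrations are created by the projections, the sliced $L$ preserves cofibrations and acyclic cofibrations, so the sliced pair is automatically a Quillen adjunction. To upgrade it to a Quillen equivalence I would use the criterion that it suffices to verify, for $(c\to x)$ cofibrant and $(Lc\to y)$ fibrant, that $L(c\to x)\to(Lc\to y)$ is a weak equivalence if and only if its adjoint $(c\to x)\to R(Lc\to y)$ is. Here $(c\to x)$ cofibrant means $c\to x$ is a cofibration, whence $x$ is cofibrant because $c$ is; and $(Lc\to y)$ fibrant means $y$ is fibrant. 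The two maps in question are weak equivalences precisely when their underlying maps $Lx\to y$ and $x\to Ry$ are, and these are adjoint under $L\dashv R$; since $x$ is cofibrant and $y$ fibrant, the original Quillen-equivalence criterion gives exactly the desired equivalence.

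The overcategory statement $L\colon\mathcal{C}_{/Rd}\leftrightarrow\mathcal{D}_{/d}\colon R$ is formally dual, with $L(x\to Rd)=(Lx\to LRd\to d)$ via the counit and $R(y\to d)=(Ry\to Rd)$. Now $(x\to Rd)$ is cofibrant exactly when $x$ is cofibrant, and $(y\to d)$ is fibrant exactly when $y\to d$ is a fibration, whence $y$ is fibrant because $d$ is; the hypothesis that $d$ is fibrant also ensures $Rd$ is fibrant, so the fibrant objects of $\mathcal{C}_{/Rd}$ relevant to the criterion have fibrant total space. The same reduction to the adjoint pair $Lx\to y$, $x\to Ry$ then applies verbatim.

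I expect the only genuinely delicate points to be bookkeeping rather than conceptual: confirming that the transpose of the sliced adjunction coincides on underlying maps with the transpose of $L\dashv R$, so that ``weak equivalence in the slice'' and ``weak equivalence in $\mathcal{C}$ or $\mathcal{D}$'' match up across the adjunction, and checking that the hypotheses ``$c$ cofibrant'' and ``$d$ fibrant'' are exactly what force $x$ to be cofibrant and $y$ to be fibrant in the ambient categories. Once these are in place, everything reduces cleanly to the fact that the three classes of maps in each slice are created by the projection functors.
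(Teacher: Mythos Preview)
Your argument is correct and self-contained, whereas the paper's proof simply cites external references: it invokes \cite[15.3.6]{MayPontoMore} for the existence of the cofibrantly generated proper model structures on slices, notes that slices of locally presentable categories are again locally presentable (hence combinatoriality), says the Quillen property of the projections is ``easy to see,'' and defers the Quillen equivalence claim entirely to \cite[3.1]{LiSlice}. So the paper treats this as a compilation of known facts and outsources the verification.

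Your approach has the advantage of being explicit and not relying on the reader tracking down two separate references; in particular, your reduction of the Quillen-equivalence criterion to the ambient one via the observation that cofibrancy of $c$ (resp.\ fibrancy of $d$) forces cofibrancy of $x$ (resp.\ fibrancy of $y$) is exactly the content of the cited result of Li, spelled out. The paper's approach buys brevity and avoids reproving standard material. One minor point: your description of generating cofibrations for the overcategory (``decorating the codomains of the generators with all maps to $c$'') is correct but deserves one more sentence confirming that this yields a \emph{set}, which follows since the codomains of the generators are small in a locally presentable category.
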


\begin{proof}The fact that the slice categories are cofibrantly generated proper model categories is \cite[15.3.6]{MayPontoMore}. Slice categories of locally presentable categories are again locally presentable and hence the slice model categories are combinatorial. It is easy to see that the projection functors are Quillen. The claim about Quillen equivalences is \cite[3.1]{LiSlice}.
	\end{proof}
 
 \begin{cor}\label{sliceadjns}
 	The categories $\accog$, $\cog$, and $\acog$ all admit left proper combinatorial model structures defined via the forgetful functor to $\ccogp$. The categories $\acalg$, $\alg$, and $\aalg$ all admit right proper combinatorial model structures defined via the forgetful functor to $\calgp$. There is a diagram of Quillen adjunctions
 	\end{cor}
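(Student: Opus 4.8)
The plan is to deduce the entire statement from Theorem \ref{slicethm} applied to the two MC model structures, reading off the named categories as slices via the identifications of Section \ref{section:bar}. First I would record the inputs: by Theorem \ref{thm:coalgebrasmodel} the category $\ccogp$ carries a left proper combinatorial model structure, and by Theorem \ref{mcmostralg} together with Lemma \ref{rightproperlem} the category $\calgp$ carries a right proper one. Recall the slice identifications $\cog \simeq (\ccogp)_{/\ground}$ and $\accog \simeq (\ccogp)_{\ground/}$ on the coalgebra side, and $\alg \simeq (\calgp)_{\ground/}$ and $\acalg \simeq (\calgp)_{/\ground}$ on the algebra side (Lemmas \ref{cogslicelem}, \ref{algslice1} and the discussion following them). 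Theorem \ref{slicethm} then immediately equips each of these four slices with a combinatorial model structure whose (co)fibrations and weak equivalences are created by the forgetful functor, left proper for the coalgebra slices and right proper for the algebra slices, and makes the projection functors to $\ccogp$, respectively $\calgp$, into Quillen functors. For the two remaining categories I would iterate: since $\acog \simeq \cog_{\ground/}$ and $\aalg \simeq \alg_{/\ground}$ are single slices of categories already carrying model structures, a second application of Theorem \ref{slicethm} produces the desired proper combinatorial model structures. The object $\ground$ is cofibrant in every coalgebra slice (every curved coalgebra is cofibrant by Theorem \ref{thm:coalgebrasmodel}) and fibrant in every algebra slice (every curved algebra is fibrant by Lemma \ref{rightproperlem}), and these properties are preserved by the forgetful functors, so the iterated slices are legitimate.

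For the Quillen adjunctions I would slice the bar-cobar Quillen equivalence of Theorem \ref{thm:barcobarQequivalence}. The two computations needed are that $\Omega\ground \cong \ground$ and $\check B\ground \cong \ground$: the coaugmentation coideal of the curved coalgebra $\ground$ and the augmentation ideal of the curved algebra $\ground$ both vanish, so the (co)free (co)algebras on them are just $\ground$. Taking $c = \ground \in \ccogp$, which is cofibrant with $\Omega c \cong \ground$, the undercategory half of Theorem \ref{slicethm} yields a Quillen equivalence $\Omega: \accog \leftrightarrow \alg : \check B$; taking $d = \ground \in \calgp$, which is fibrant with $\check B d \cong \ground$, the overcategory half yields a Quillen equivalence $\Omega: \cog \leftrightarrow \acalg : \check B$. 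A further application to either of these, slicing by $\ground$ on the appropriate side, produces the bottom Quillen equivalence $\Omega: \acog \leftrightarrow \aalg : \check B$, recovering the three sliced adjunctions of Section \ref{section:bar} and assembling, together with the projection Quillen adjunctions above, into the asserted diagram.

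The only genuine work is bookkeeping rather than mathematics: one must keep track of the formally adjoined objects $\varnothing$ and $*$ and check that the slice categories of $\ccogp$ and $\calgp$ agree on the nose with the classically defined categories $\cog$, $\alg$, $\acalg$, $\accog$, $\acog$, $\aalg$, which is exactly the content of the equivalences already established in Section \ref{section:bar}. The one point requiring a moment's care is compatibility of the two slicings with bar-cobar, namely that the images $\Omega\ground$ and $\check B\ground$ used to identify the target slices really are $\ground$; this is the pair of trivial computations above, after which everything follows formally from Theorem \ref{slicethm}.
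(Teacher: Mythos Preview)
Your proposal is correct and follows essentially the same route as the paper: both identify the named categories as iterated slices of $\ccogp$ and $\calgp$ over or under $\ground$, invoke Theorem~\ref{slicethm} for the model structures and projection adjunctions, and use $\Omega\ground\cong\ground$, $\check B\ground\cong\ground$ together with the fact that every coalgebra is cofibrant and every algebra is fibrant to slice the bar-cobar Quillen equivalence. The paper additionally remarks explicitly on why the squares of left (and hence right) adjoints commute, which you fold into ``assembling into the asserted diagram''; this is a minor expository difference rather than a gap.
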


 $$
 \begin{tikzcd}	[row sep={2em,between origins},]
 	\acog \ar[rr,"\Omega", bend left=8]\ar[dd,bend right=15]&\bot& \hphantom{f} \aalg\hphantom{f}  \ar[ll,"\check B", bend left=6] \ar[dd,bend right=15]\\
 		\dashv && \dashv \\
 	 	\accog \ar[rr,"\Omega", bend left=8]\ar[uu,bend right=15]\ar[dd,bend left=15]&\bot& \hphantom{fff}\alg\hphantom{fff} \ar[ll,"\check B", bend left=6] \ar[uu,bend right=15]\ar[dd,bend left=15]\\
 	\dashv && \dashv \\
 	\ccogp \ar[rr,"\Omega", bend left=8]\ar[dd,bend left=15]\ar[uu,bend left=15]&\bot& \calgp \ar[ll,"\check B", bend left=6] \ar[dd,bend left=15]\ar[uu,bend left=15]\\
 	\dashv && \dashv \\
 	\hphantom{fi} \cog\hphantom{fi}  \ar[rr,"\Omega", bend left=8]\ar[uu,bend left=15]\ar[dd,bend right=15]&\bot& \acalg \ar[ll,"\check B", bend left=6] \ar[uu,bend left=15]\ar[dd,bend right=15]\\
 		\dashv && \dashv \\
 		\acog \ar[rr,"\Omega", bend left=8]\ar[uu,bend right=15]&\bot& \hphantom{f} \aalg\hphantom{f}  \ar[ll,"\check B", bend left=6] \ar[uu,bend right=15]
 	\end{tikzcd}
 $$
where the diagrams of left (resp. right) adjoints commute and the bar-cobar adjunctions are Quillen equivalences.
\begin{proof}
	Recall that there are equivalences $\accog\cong (\ccogp)_{\ground/}$, $\cog\cong (\ccogp)_{/\ground}$, and $\acog\cong (\ccogp)_{\ground/\ground}$. Hence starting with the model structure on $\ccogp$ and repeatedly applying \ref{slicethm} gives us the existence of the model structures on the categories of coalgebras together with the Quillen adjunctions on the left hand side of the diagram. A similar argument works for algebras. The fact that the sliced bar-cobar adjunctions remain Quillen equivalences follows from \ref{slicethm}, since $\Omega(\ground)\cong \ground$ and $\check B(\ground)\cong \ground$, and every curved coalgebra (resp. curved algebra) is cofibrant (resp. fibrant). The topmost square of left adjoints commutes because the upper pair of bar and cobar functors are defined through the forgetful functors. Hence the topmost square of left adjoints also commutes. A similar argument works to show the commutativity of the other squares of adjoints and hence the whole diagram.
	\end{proof}

\begin{rem}
	If $(\mathcal{C},\otimes,I)$ is a monoidal category and $X\in \mathcal{C}$ is a monoid, then the slice category $\mathcal{C}_{/X}$ admits a monoidal structure given by the composition
	$$(C \to X)\otimes (C'\to X) \coloneqq (C\otimes C' \to X\otimes X \xrightarrow{\mu} X)$$and dually, if $Y$ is a comonoid then $\mathcal{C}_{Y/}$ is monoidal. This is a well-known folk theorem in category theory; cf.\ \cite{campbell}. If $Y$ is a comonoid and $X$ is a monoid, the set $\Hom(Y,X)$ becomes a monoid under convolution. If $f:Y \to X$ is such that $f^2=f$ in the convolution monoid, then one can check that the double slice category $\mathcal{C}_{Y/X}$ is also monoidal, with product inherited from $\mathcal{C}$. In particular if $Y=X=I$ and $f=\id$ then these conditions are satisfied and so all of $\mathcal{C}_{I/}$, $\mathcal{C}_{/I}$ and $\mathcal{C}_{I/I}$ are monoidal categories. Hence all of the categories appearing in \ref{sliceadjns} are monoidal categories with respect to the tensor product. 
	\end{rem}

We next examine how the sliced MC model structures interact with the usual model structures for conilpotent Koszul duality. If $C$ is a coaugmented curved coalgebra, then it has a maximal conilpotent subcoalgebra $\mathrm{nil}C$. Since the image of a conilpotent coalgebra is again conilpotent, $\mathrm{nil}$ is functorial: given a morphism $f:C \to C'$, the image of $\mathrm{nil}C$ necessarily is a subcoalgebra of $\mathrm{nil}C'$, which defines the desired map $\mathrm{nil}f$. It is easy to see that the $\mathrm{nil}$ functor is right adjoint to the inclusion functor $\iota: \mathbf{cuCog}^\mathrm{conil} \into \accog $. Alternately one can abstractly deduce the existence of a right adjoint to $\iota$ by appealing to \ref{saft}. Using \ref{mcconil} it is easy to see that $\iota$ is left Quillen.

Let $\algqim$ denote the category of dg algebras, equipped with the usual model structure where weak equivalences are quasi-isomorphisms and fibrations are surjections. Let $\algmcm$ denote the category of dg algebras, equipped with our MC model structure.
\begin{prop}	
The identity functor $\algmcm \to \algqim$ is right Quillen. 
	\end{prop}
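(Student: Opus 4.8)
The plan is to exploit the fact that the identity functor on a fixed underlying category is its own adjoint, so that $\mathrm{id}\colon\algmcm\to\algqim$ is automatically a right adjoint, its left adjoint being $\mathrm{id}\colon\algqim\to\algmcm$. Consequently it is right Quillen as soon as it preserves fibrations and acyclic fibrations. Since both model structures are carried by the \emph{same} category of dg algebras and the functor is the identity on morphisms, this amounts to two inclusions of classes: every fibration of $\algmcm$ is a fibration of $\algqim$, and every acyclic fibration of $\algmcm$ is an acyclic fibration of $\algqim$.

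First I would identify the fibrations and weak equivalences of $\algmcm$. Recall that $\algmcm$ is the MC model structure on $\alg\simeq(\calgp)_{\ground/}$ obtained by slicing the MC model structure on $\calgp$; by \ref{slicethm} (applied as in \ref{sliceadjns}) its fibrations and weak equivalences are created by the projection to $\calgp$. Thus a fibration of $\algmcm$ is precisely a morphism of dg algebras which is a fibration in $\calgp$, and by \ref{modeltheoreticfibs} these are exactly the strong fibrations. Every strong fibration is a surjection, and surjections are by definition the fibrations of $\algqim$; hence fibrations are preserved.

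For acyclic fibrations, an acyclic fibration of $\algmcm$ is a morphism $f\colon A\to A'$ of dg algebras that is simultaneously a strong fibration and an MC equivalence. As above, $f$ is a surjection. Moreover, being an MC equivalence between dg algebras, $f$ is a quasi-isomorphism by \ref{mcisqi}(1). Therefore $f$ is a surjective quasi-isomorphism, i.e.\ an acyclic fibration of $\algqim$, so acyclic fibrations are preserved as well.

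Combining these two inclusions, the identity functor $\algmcm\to\algqim$ preserves fibrations and acyclic fibrations, and being a right adjoint it is right Quillen. There is no serious obstacle here: the content is entirely imported from the earlier identification of model-theoretic fibrations with strong fibrations (\ref{modeltheoreticfibs}), the elementary fact that strong fibrations are surjective, and the comparison \ref{mcisqi} between MC equivalences and quasi-isomorphisms for dg algebras. The only point requiring a little care is to note that slicing leaves the fibrations and weak equivalences unchanged (\ref{slicethm}), so that the classes computed in $\calgp$ transfer verbatim to $\algmcm$.
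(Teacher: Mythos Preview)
Your proof is correct. You establish the result by invoking the characterisation of MC-fibrations as strong fibrations (\ref{modeltheoreticfibs}), the elementary fact that strong fibrations are surjective (\ref{sfchar}), and \ref{mcisqi}(1) to handle the acyclic case.

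The paper takes a slightly different and more compressed route. Rather than appealing to \ref{modeltheoreticfibs}, it observes directly that for a dg algebra morphism $f$, the map $f$ itself appears as the component of the dg functor $\MCdg(f)$ at the pair of zero MC elements. Since $\MCdg$ is right Quillen (\ref{mcdgquillen}), an MC-fibration $f$ gives a fibration $\MCdg(f)$ of dg categories, and fibrations of dg categories are surjective on hom-complexes; hence $f$ is surjective. The acyclic case is left implicit, relying (as you do) on \ref{mcisqi}(1). Your approach is a little more explicit in treating the two cases separately and in tracking how the sliced model structure inherits its fibrations from $\calgp$; the paper's approach trades this for a single invocation of the right Quillen property of $\MCdg$.
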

\begin{proof}
If $f:A\to A'$ is any dg algebra morphism, note that $f$ is identified with the component of the dg functor $\MCdg(f)$ at the pair of MC elements $(0,0)$. If $f$ is a fibration then $\MCdg(f)$ is a fibration by \ref{mcdgquillen} and hence $f$ is a surjection, as desired.
	\end{proof}

Hence we have a diagram of Quillen adjunctions
$$\begin{tikzcd}	
		\mathbf{cuCog}^\mathrm{conil} \ar[rr,"\iota", bend left = 10] \ar[rr, phantom, "\perp"]\ar[dd,"\Omega", bend right=12, swap]\ar[dd, phantom, "\dashv"]&& \accog \ar[ll,"\mathrm{nil}", bend left = 10]\ar[dd,"\Omega", bend right=12, swap]\ar[dd, phantom, "\dashv"]
	 \\ &&\\
	\algqim \ar[rr,"\id", bend left = 10] \ar[rr, phantom, "\perp"] \ar[uu,"B",bend right=12, swap]&& \algmcm \ar[ll,"\id", bend left = 10]\ar[uu,"\check B",bend right=12, swap]
	\end{tikzcd}$$
The diagram of left adjoints is clearly commutative, and it hence follows that the diagram of right adjoints is commutative, i.e.\ there is a natural isomorphism $\mathrm{nil}\check B A \cong BA$.

Recall that a Quillen adjunction $L\dashv R$ with total derived adjunction $\mathbb{L}\dashv \mathbb{R}$  is called a {Quillen coreflection} if the derived unit $\id \to \mathbb{R}\mathbb{L}$ is an objectwise weak equivalence. This is equivalent to $\mathbb{L}$ being the inclusion of a coreflective subcategory, with coreflection $\mathbb{R}$. If $R$ is a right Bousfield localisation, then $L\dashv R$ is a Quillen coreflection, but the converse is not true (we will see a counterexample shortly, in \ref{bousfieldrem}).

\begin{prop}\label{qiMC}\hfill
\begin{enumerate}
	\item The identity adjunction $\algqim \leftrightarrow \algmcm$ is a Quillen coreflection.
	\item The inclusion adjunction $\iota: \mathbf{cuCog}^\mathrm{conil} \leftrightarrow\accog $ is a Quillen coreflection.
	\end{enumerate}

	\end{prop}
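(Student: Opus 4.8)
The plan is to prove both claims by combining the abstract characterisation of a Quillen coreflection—namely that the derived unit $\mathrm{id}\to\mathbb{R}\mathbb{L}$ is an objectwise weak equivalence—with the concrete knowledge of cofibrant/fibrant objects established earlier. For part (1), the relevant adjunction is $\mathbb{L}\dashv\mathbb{R}$ where $\mathbb{L}$ is the derived identity $\algqim\to\algmcm$ (left Quillen) and $\mathbb{R}$ its right derived partner. First I would recall that fibrant replacement in $\algmcm$ is trivial, since by Lemma \ref{rightproperlem}(3) every curved algebra, and in particular every dg algebra, is fibrant in the MC model structure; dually, cofibrant replacement in $\algqim$ is the usual one. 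The derived unit at a $\algqim$-cofibrant object $A$ is then computed by applying the identity functor and taking fibrant replacement in $\algmcm$, which does nothing. So the derived unit is the identity map of $A$, viewed as a morphism which must be checked to be a weak equivalence in $\algqim$—but it is literally an isomorphism, hence certainly a quasi-isomorphism. The only subtlety is ensuring we compute the derived unit on a cofibrant object: for a cofibrant dg algebra $A$ the unit $A\to \mathbb{R}\mathbb{L}A$ is an isomorphism, so it is a weak equivalence, which is exactly what is required.

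For part (2) the structure is parallel but the roles of fibrancy and cofibrancy are reversed, matching the fact that $\iota$ is a \emph{left} Quillen functor (as noted just before the proposition, using \ref{mcconil}). Here $\mathbb{L}=\iota$ on the homotopy category and $\mathbb{R}=\mathrm{nil}$ derived. The derived unit at a cofibrant conilpotent curved coalgebra $C$ is $C\to \mathrm{nil}(RC')$, where $C'$ is a fibrant replacement of $\iota C$ in $\accog$. I would first observe that every object of $\mathbf{cuCog}^{\mathrm{conil}}$ is cofibrant (cofibrations are injections in Positselski's model structure, so $0\to C$ is a cofibration), and similarly every object of $\accog$ is cofibrant in the sliced MC model structure by Theorem \ref{thm:coalgebrasmodel} via \ref{slicethm}. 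The key computation is to identify the derived unit with the counit-type comparison $\iota\,\mathrm{nil}\to\mathrm{id}$, or rather to show that the unit $C\to\mathrm{nil}\,R\iota C$ is an MC equivalence after deriving. The cleanest route is to translate to algebras via the commuting square of left adjoints: since $\Omega\iota=\Omega$ (both computed through the forgetful functor) and $\Omega\,\mathrm{nil}\cong\Omega$ on conilpotent coalgebras up to the relevant weak equivalences, one reduces the claim to the already-established fact that MC equivalence and weak equivalence coincide on conilpotent coalgebras (\ref{mcconil}(2)), combined with the Quillen equivalence of conilpotent Koszul duality.

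The main obstacle will be correctly matching up the derived unit with a comparison map that is visibly an MC equivalence, rather than getting lost in which replacements are needed. The clean reduction is this: because $\iota$ preserves all weak equivalences (an MC equivalence of conilpotent coalgebras is a weak equivalence by \ref{mcconil}(2), and $\iota$ sends Positselski weak equivalences, i.e.\ maps $f$ with $\Omega f$ a quasi-isomorphism, to MC equivalences since $\Omega\iota f=\Omega f$ is then a $3$-homotopy equivalence between cofibrant dg algebras by \ref{mcconil}(1)), we may use $\iota C$ itself without fibrant replacement when computing the derived right adjoint up to weak equivalence. Then the derived unit is represented by the ordinary unit $C\to\mathrm{nil}\,\iota C=C$ of the adjunction $\iota\dashv\mathrm{nil}$, which is an isomorphism because $C$ is already conilpotent so $\mathrm{nil}\,\iota C=C$. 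Hence the derived unit is an objectwise weak equivalence, establishing that $\iota\dashv\mathrm{nil}$ is a Quillen coreflection. I would take care to justify that no nontrivial fibrant replacement is needed: this follows because $\mathrm{nil}$ preserves the relevant weak equivalences between the objects in play, so a fibrant replacement $\iota C\to (\iota C)^{\mathrm{fib}}$ maps under $\mathrm{nil}$ to a weak equivalence $C=\mathrm{nil}\,\iota C\to\mathrm{nil}\,(\iota C)^{\mathrm{fib}}$, and the derived unit factors through this. The analogous remark for part (1) is that $\mathbb{R}$ can be computed without nontrivial replacement since all dg algebras are MC-fibrant, which is the crux that makes the identity adjunction a coreflection despite MC equivalence being strictly finer than quasi-isomorphism.
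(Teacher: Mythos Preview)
Your argument for part (1) is correct and essentially identical to the paper's: since every dg algebra is MC-fibrant, the derived unit at a q.i.-cofibrant $A$ is literally $\id_A$, hence a quasi-isomorphism.

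For part (2), however, there is a genuine gap. Your claim that ``no nontrivial fibrant replacement is needed'' because ``$\mathrm{nil}$ preserves the relevant weak equivalences'' is circular. The derived unit at $C$ is $C\to \mathrm{nil}\bigl((\iota C)^{\mathrm{fib}}\bigr)$, and since $\mathrm{nil}\,\iota C=C$, saying this is a weak equivalence is \emph{exactly} the assertion that $\mathrm{nil}$ takes the fibrant replacement map $\iota C\to(\iota C)^{\mathrm{fib}}$ to a weak equivalence. You have simply restated the conclusion. The right Quillen functor $\mathrm{nil}$ is only guaranteed (by Ken Brown) to preserve weak equivalences between \emph{fibrant} objects, and $\iota C$ is typically not fibrant in $\accog$. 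Your earlier observation that $\iota$ preserves all weak equivalences is correct but only handles the \emph{left} side of the derived adjunction; it does nothing for $\mathbb{R}\mathrm{nil}$.

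The paper avoids this by choosing the explicit fibrant replacement $\check B\Omega$ in $\accog$ (every coalgebra of the form $\check B A$ is fibrant by Theorem~\ref{thm:coalgebrasmodel}) and then invoking the identification $\mathrm{nil}\,\check B\cong B$ established just above the proposition. The derived unit becomes the unit map $C\to B\Omega C$ of the \emph{conilpotent} bar-cobar adjunction, which is a weak equivalence in $\mathbf{cuCog}^{\mathrm{conil}}$ by Positselski's Quillen equivalence. This is the missing idea: one must actually compute $\mathrm{nil}$ on a concrete fibrant model and recognise the answer as the ordinary bar construction.
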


\begin{proof}
	We first recall that the derived unit of a Quillen adjunction $L\dashv R$ is modelled at an object $x$ by a morphism of the form $Qx \to RPLQx$, where $Q$ is a cofibrant replacement functor and $P$ is a fibrant replacement functor. In the case of (1), both $L$ and $R$ are the identity. Moreover, every algebra is fibrant in the MC model structure, so the derived unit at $A$ is simply the identity map $QA \to QA$, where $Q$ is a cofibrant replacement functor in the usual model structure. This is certainly a quasi-isomorphism. For (2), because every conilpotent coalgebra is cofibrant, the derived unit at a conilpotent coalgebra $C$ is of the form $C \to \mathrm{nil}PC$ where $P$ is a fibrant replacement in the category of coaugmented coalgebras. We may take $P$ to be the extended cobar-bar resolution $\check B \Omega$, and the derived unit becomes the natural map $C \to \mathrm{nil}\check B \Omega C\cong B\Omega C$ which is a weak equivalence in the category of conilpotent coalgebras.
	\end{proof}

Hence we have a diagram of homotopy categories $$\begin{tikzcd}	
	\Ho(\mathbf{cuCog}^\mathrm{conil}) \ar[rr,"\iota", bend left = 10, hook'] \ar[rr, phantom, "\perp"]\ar[dd,"\Omega", bend right=14, swap]\ar[dd, phantom, "\simeq"]&& \Ho(\accog) \ar[ll,"\mathbb{R}\mathrm{nil}", bend left = 10]\ar[dd,"\Omega", bend right=14, swap]\ar[dd, phantom, "\simeq"]
	\\ &&\\
	\Ho(\algqim) \ar[rr,"\mathbb{L}\kern -1.3pt\id", bend left = 10, hook'] \ar[rr, phantom, "\perp"] \ar[uu,"B",bend right=14, swap]&& \Ho(\algmcm) \ar[ll,"\id", bend left = 10]\ar[uu,"\check B",bend right=14, swap]
\end{tikzcd}$$ where the vertical maps are equivalences, the maps running to the right are inclusions of coreflective subcategories, and the maps running to the left are the corresponding coreflectors. The diagrams of left adjoints and right adjoints commute separately. It follows that any two parallel compositions beginning in the left hand column are isomorphic.

\begin{rem}
	The identity adjunction $\algqim \leftrightarrow \algmcm$ is not a Quillen reflection, since the derived counit at $A$ is the map $\Omega BA \to A$, which is a quasi-isomorphism but in general fails to be an MC equivalence. Similarly $\iota \dashv \mathrm{nil}$ is not a Quillen reflection since its derived counit at $C$ is $B\Omega C \to \check B \Omega C$, which need not be an MC equivalence.
\end{rem}

\begin{rem}\label{bousfieldrem}
	The identity functor $\algmcm \to \algqim$ is not a right Bousfield localisation, since by \ref{sfcounter} it does not reflect fibrations. However, the model category $\algmcm$ is combinatorial and right proper, so does admit a Bousfield localisation at the quasi-isomorphisms. Call this localisation the exotic model structure on dg algebras and write it as $\algexm$. Since we have a right Quillen functor $\algmcm \to \algqim$, the universal property of Bousfield localisation yields a right Quillen functor $\algexm \to \algqim$ which is necessarily a Quillen equivalence. There is no contradiction here since the property of being a (right) Bousfield localisation is not invariant under Quillen equivalences. 
	\end{rem}

\begin{rem}
One can repeat the above arguments verbatim in the setting of augmented dg algebras and conilpotent dg coalgebras.
	\end{rem}

\subsection{Categorical Koszul duality}
We next study how our MC model structure interacts with the model structure on pointed coalgebras for categorical Koszul duality constructed in \cite{HL2020}. This will be formally similar to the previous section; indeed a pointed curved coalgebra is conilpotent with respect to its coradical. We begin by reviewing some results from \cite{HL2020}.

A pointed curved coalgebra is a curved coalgebra $C$ such that:
\begin{itemize}
	\item The coradical $R$ of $C^\#$ is a direct sum of copies of $\ground$.
	\item The restriction of $d$ to $R$ is the zero map.
	\item $C$ is equipped with a coalgebra retract $\epsilon:C \to R$ of the inclusion map $R \into C$.
\end{itemize}

Note that the retract $\epsilon$ is considered as part of the data; in particular morphisms of pointed curved coalgebras must respect the retraction. We denote the category of pointed curved coalgebras by $\pcog$ and its finalisation by $\pcog_*$. There is a categorical cobar construction $\OCat:\pcog_* \to \dgcat'$. It has a right adjoint $B$, the bar construction of a dg category.

\begin{theorem}[\kern -5pt{\cite{HL2020}}]
	The category $\pcog_*$ admits a left proper combinatorial model structure. Weak equivalences are the maps which $\OCat$ sends to quasi-equivalences, and cofibrations are generated by injections between finite dimensional pointed curved coalgebras. The categorical bar-cobar adjunction is a Quillen equivalence.
\end{theorem}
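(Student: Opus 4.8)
The plan is to follow the same two-stage strategy used for the global MC model structures in Theorems~\ref{thm:coalgebrasmodel} and~\ref{thm:barcobarQequivalence}, replacing the bar--cobar pair $(\Omega,\check B)$ by the categorical pair $(\OCat,B)$ and the compactly generated coderived categories by their dg-categorical counterparts. The two things to establish are the existence of the model structure on $\pcog_*$ and the fact that the adjunction $\OCat \dashv B$ is a Quillen equivalence.

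First I would produce the model structure by an application of Jeff Smith's theorem, exactly as in the proof of Theorem~\ref{thm:coalgebrasmodel}. The category $\pcog_*$ is locally presentable (a pointed variant of the presentability of $\ccogp$), and the generating cofibrations are the injections between finite dimensional pointed curved coalgebras. The three hypotheses of Smith's theorem to verify are: (i) the class $\mathcal W$ of maps that $\OCat$ sends to quasi-equivalences is accessible and accessibly embedded in the arrow category — this follows because the Dwyer--Kan/Tabuada model structure on $\dgcat$ is combinatorial, so its weak equivalences are accessible by \cite[2.5]{Barwick10}, while $\OCat$ is cocontinuous hence accessible, and accessibility is preserved under the relevant preimage by \cite[1.18]{Beke00}; (ii) every map with the right lifting property against all generating injections lies in $\mathcal W$ — I would prove this by the homotopy-retract argument of Lemma~\ref{sfibheq}, exhibiting such a map as a pointed homotopy equivalence (using an interval object of $I_3$-type for pointed coalgebras) and hence a map that $\OCat$ carries to a quasi-equivalence; and (iii) the injective members of $\mathcal W$ are closed under pushout and transfinite composition, which is the saturation statement analogous to Proposition~\ref{scsatd}. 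Left properness is immediate since every object is cofibrant.

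For the Quillen equivalence I would argue as in Theorem~\ref{thm:barcobarQequivalence}. By construction $\OCat$ preserves and reflects weak equivalences, and it carries the generating injections to cofibrations of dg categories, so it is left Quillen. It then remains to check that the derived unit and counit are weak equivalences. Since every pointed coalgebra is cofibrant and every dg category is fibrant, this amounts to showing that the counit $\OCat B\mathcal D \to \mathcal D$ is a quasi-equivalence for every dg category $\mathcal D$ (giving homotopy essential surjectivity) and that the unit $C \to B\OCat C$ is a weak equivalence, which yields natural bijections $[\OCat C,\OCat C'] \cong [C,B\OCat C'] \cong [C,C']$ on morphisms in the homotopy categories (homotopy full faithfulness); the total derived functor of $\OCat$ is then an equivalence.

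The main obstacle is the categorical analogue of Theorem~\ref{thm:barcobarequiv}: proving that the unit $C \to B\OCat C$ and counit $\OCat B\mathcal D \to \mathcal D$ are weak equivalences. In the MC setting this rested on the invariance of the coderived category of the second kind under the (co)bar resolution together with Lemma~\ref{lem:MCequiv}; here one needs the corresponding statement that $\OCat$ and $B$ realise the coderived category of a pointed curved coalgebra as the derived category of its categorical cobar construction — the phenomenon already visible in the identification of $\OCat(I_3)$ with Drinfeld's resolution of $\C$ and the resolution tower of Proposition~\ref{prop:coderivedinfinity}. Establishing this Koszul-duality equivalence at the level of dg categories rather than single algebras is where the genuine work lies, with the verification of hypothesis (ii) of Smith's theorem the other delicate point.
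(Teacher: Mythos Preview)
The paper does not prove this theorem: it is stated as a cited result from \cite{HL2020} and appears without any proof in the present paper. Your proposal sketches a plausible argument along the lines of Theorems~\ref{thm:coalgebrasmodel} and~\ref{thm:barcobarQequivalence}, but there is no proof here to compare it against; the present paper uses this result as input rather than establishing it.
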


Recall from \ref{mcdgadj} the construction of the reduced MC algebra $\algmc(\mathcal{D})$ of a dg category $\mathcal{D}$.

\begin{lem}\label{algocatlem}
	Let $C$ be a pointed curved coalgebra. There is a natural isomorphism $$\algmc(\OCat(C))\cong \Omega(C).$$
\end{lem}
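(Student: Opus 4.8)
The plan is to prove the isomorphism by comparing the functors $\algmc\circ\OCat$ and $\Omega|_{\pcog_*}$ through their universal properties rather than by manipulating presentations directly. Both are functors $\pcog_*\to\calgp$, so by the Yoneda lemma it suffices to produce a natural bijection $\Hom_{\calgp}(\algmc\OCat(C),A)\cong\Hom_{\calgp}(\Omega C,A)$ for all curved algebras $A$ (together with the degenerate cases $A=\varnothing$ and $C=0,*$, which are immediate). On the cobar side the bar--cobar adjunction gives $\Hom_{\calgp}(\Omega C,A)\cong\MC\Hom(C,A)$. On the other side, the adjunction $\algmc\dashv\MCdg$ of Proposition~\ref{mcdgadj} gives $\Hom_{\calgp}(\algmc\OCat(C),A)\cong\Hom_{\dgcat'}(\OCat(C),\MCdg(A))$. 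Thus everything reduces to the single natural bijection
\[
\Hom_{\dgcat'}(\OCat(C),\MCdg(A))\cong\MC\Hom(C,A),
\]
which is nothing but the multi-object form of the bar--cobar adjunction.

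To establish this, I would first unwind the left-hand side. The categorical cobar construction $\OCat(C)$ is quasi-free: its objects are the points $e_i$ of the coradical $R$ of $C$, and as a graded category it is freely generated over $R$ by the desuspension $\Sigma^{-1}\bar C$ of the reduced part $\bar C$ (the cokernel of $R\hookrightarrow C$), equipped with the cobar differential built from $d_C$ and the reduced comultiplication. Consequently a dg functor $F\colon\OCat(C)\to\MCdg(A)$ is determined by (i) a choice of object $F(e_i)\in\MCdg(A)$ for each point, i.e.\ an MC element $a_i\in A$, and (ii) a degree-preserving linear map out of the generators $\Sigma^{-1}\bar C$ into the morphism complexes of $\MCdg(A)$, which are the two-sided twists ${}^{[a_i]}A^{[a_j]}$; compatibility of $F$ with the differentials is then a single quadratic equation in these data.

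The heart of the argument is to recognise this package of data as exactly an MC element of the convolution algebra $\Hom(C,A)=C^*\hat\otimes A$. Here I would use the pointed structure to split an element $\xi\in\Hom(C,A)^1$ into its coradical component $\xi|_R$, which records the tuple $(a_i)$ of images of the objects, and its reduced component in $\Hom(\bar C,A)$, which records the map on generating morphisms. A direct comparison then shows that the Maurer--Cartan equation $h+d\xi+\xi^2=0$ in $\Hom(C,A)$ decomposes into precisely the conditions that each $a_i$ be an MC element of $A$ and that the reduced component define a differential-compatible functor on $\OCat(C)$. This yields the required bijection, visibly natural in both variables, and the proof concludes by Yoneda.

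The main obstacle is the bookkeeping in this last step: one must track carefully how the idempotents $e_i\in R$ become Maurer--Cartan elements after applying $\algmc$ (the reduced MC algebra discards one object-generator, which is exactly what makes the target curved rather than dg, and which is already absorbed into the $\algmc\dashv\MCdg$ adjunction of Proposition~\ref{mcdgadj}), and how the curvature functional of $C$ matches the curvature of $\Omega C$. A useful sanity check, and an alternative route should the adjunction bookkeeping prove awkward, is to compare the two explicit presentations directly: match the object-generators $\bar e_i$ of $\algmc\OCat(C)$ with the coradical idempotents of $\Omega C$ and the morphism-generators with $\Sigma^{-1}\bar C$, and verify that the two cobar-type differentials agree on generators.
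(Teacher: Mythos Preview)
Your approach is correct and genuinely different from the paper's. The paper proceeds by direct comparison of presentations: it lists the generators of $\algmc(\OCat(C))$ coming from objects and from morphisms, observes that since composition is sent to multiplication one may cut the morphism generators down to those in $\Sigma^{-1}\bar C$, matches this generating set with that of $\Omega(C)$, and then checks on generators that the two differentials agree. Your route instead establishes the isomorphism representably, via the chain of adjunctions $\algmc\dashv\MCdg$ and $\Omega\dashv\check B$, reducing everything to the bijection $\Hom_{\dgcat'}(\OCat(C),\MCdg(A))\cong\MC\Hom(C,A)$. This last bijection is not stated as such in the paper or cited from \cite{HL2020}, so you do have to prove it by the unwinding you describe; that unwinding---splitting $\xi\in\Hom(C,A)$ along $C\cong R\oplus\bar C$ and matching the MC equation on $R$ with ``each $a_i$ is an MC element'' and on $\bar C$ with differential compatibility of the functor---is essentially the dual of the paper's differential check, carried out inside a test algebra rather than on abstract generators. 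Your framing makes naturality in $C$ immediate via Yoneda, whereas in the paper's proof naturality is implicit in the explicit formula $c\mapsto\bar c$; on the other hand the paper's presentation comparison is shorter and avoids the convolution-algebra bookkeeping you flag as the main obstacle. The ``alternative route'' you mention at the end is exactly the paper's argument.
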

\begin{proof}
	Let $\bar C$ be the cokernel of $R\into C$, so that we have a linear splitting $C\cong R\oplus \bar C$. The objects of $\OCat(C)$ are the irreducible coalgebra summands of $R$, and the set of maps of $\OCat(C)$ is given by the cotensor algebra $T_R(\bar C[-1])$. Choose a summand $\ground$ of $R$ and let $R'$ be the complement, so that we have $R\cong \ground \oplus R'$. Then the generators of the reduced MC algebra of $\OCat(C)$ are given by
	
	\begin{itemize}
		\item $\bar r$ of cohomological degree $1$, for every irreducible summand $r$ of $R'$.
		\item $\bar g$ of cohomological degree $n$, for every $g \in T_R(\bar C[-1])^n$.
	\end{itemize}
	Since composition in $\OCat(C)$ is sent to multiplication in the MC algebra, a smaller set of generators is
	\begin{itemize}
		\item $\bar r$ of cohomological degree $1$, for every irreducible summand $r$ of $R'$.
		\item $\bar g$ of degree $n$, for every $g \in \bar C^{n-1}$.
	\end{itemize}
	Observe that (since $g\mapsto \bar g$ is linear) this is precisely the space of generators of $\Omega C$. In particular, there is a natural algebra isomorphism $F:\Omega(C)^\#\to\left(\algmc(\OCat(C))\right)^\#$ which sends a generator $c\in C[-1]$ to $\bar c$. We wish to prove that $F$ is compatible with the differential, but this follows from the following additional equations imposed in the MC algebra:
	\begin{enumerate}
		\item $d(\bar r) = {\bar r} ^2$; note that this is usual cobar differential.
		\item If $g: r \to s$ then $\overline{dg} = d(\bar g) + \bar s\bar g - \tilde{\bar g} \bar r$. Again, this is the usual cobar differential.
	\end{enumerate}
	Hence $F$ is an isomorphism of dg (in particular curved) algebras, as required.
\end{proof}

\begin{prop}\label{algmciscoref}
	The adjunction $$\algmc:\dgcat' \longleftrightarrow \calgp: \MCdg$$ is a Quillen coreflection. 
\end{prop}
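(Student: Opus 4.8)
The plan is to show that the adjunction $\algmc \dashv \MCdg$ is both a Quillen adjunction and a Quillen coreflection, i.e.\ that the derived unit is an objectwise weak equivalence. First I would verify the Quillen property: by \ref{mcdgquillen} we already know $\MCdg:\calgp \to \dgcat'$ is right Quillen, so $\algmc$ is automatically left Quillen and no further work is needed here. Thus the content of the proposition is entirely the coreflection statement, namely that for every dg category $\mathcal{D}$ the derived unit $\mathcal{D} \to \MCdg(\algmc(\mathcal{D}))$ is a quasi-equivalence after cofibrant replacement of $\mathcal{D}$.

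To model the derived unit, recall that it is computed at $\mathcal{D}$ by a morphism $Q\mathcal{D} \to \MCdg\, P\, \algmc(Q\mathcal{D})$, where $Q$ is a cofibrant replacement functor in $\dgcat'$ and $P$ is a fibrant replacement functor in $\calgp$. Since every curved algebra is fibrant in the MC model structure by \ref{rightproperlem}(3), we may take $P = \id$, so the derived unit at $\mathcal{D}$ is simply the ordinary unit $Q\mathcal{D} \to \MCdg(\algmc(Q\mathcal{D}))$ evaluated at a cofibrant resolution. The key observation is that a cofibrant dg category $Q\mathcal{D}$ can be presented as $\OCat(C)$ for a suitable pointed curved coalgebra $C$: indeed, the categorical bar-cobar adjunction is a Quillen equivalence, cofibrant objects of $\pcog_*$ are sent by $\OCat$ to cofibrant dg categories, and $\OCat\, B\mathcal{D}$ provides a canonical cofibrant resolution. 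So it suffices to compute the unit at dg categories of the form $\OCat(C)$.

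Here is where \ref{algocatlem} does the decisive work: it gives a natural isomorphism $\algmc(\OCat(C)) \cong \Omega(C)$. Hence the unit $\OCat(C) \to \MCdg(\algmc(\OCat(C)))$ becomes $\OCat(C) \to \MCdg(\Omega C)$, and I would identify this with the natural comparison map arising from the adjunction $\OCat \dashv B$ together with the bar-cobar adjunction $\Omega \dashv \check B$. Concretely, objects of $\MCdg(\Omega C)$ are the MC elements of $\Omega C$, equivalently the maps $\ground \to \check B \Omega C$, while the morphism spaces are the two-sided twists of $\Omega C$; one checks that the unit map factors through the categorical unit $C \to B\OCat(C)$ and the bar-cobar unit $C \to \check B\Omega C$. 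By \ref{thm:barcobarequiv}(1) the latter is an MC equivalence, and by \ref{MCisDII} and \ref{pretrqff} this controls the relevant quasi-equivalence at the level of the pretriangulated envelopes into which $\MCdg$ embeds.

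The main obstacle will be the last step: relating the categorical-Koszul unit $C \to B\OCat(C)$ to the associative bar-cobar unit and extracting from \ref{thm:barcobarequiv} precisely a quasi-\emph{equivalence} of dg categories $\OCat(C) \to \MCdg(\Omega C)$, rather than merely an equivalence on homotopy categories or an MC equivalence of the ambient algebras. I expect one must check that $\OCat(C) \to \MCdg(\Omega C)$ is quasi-fully faithful by comparing hom-complexes directly, using that the morphism complex of $\OCat(C)$ from $r$ to $s$ is computed by the cotensor algebra $T_R(\bar C[-1])$ while the corresponding hom-complex in $\MCdg(\Omega C)$ is a two-sided twist of $\Omega C \cong \algmc(\OCat(C))$, and that quasi-essential surjectivity follows because every MC element of $\Omega C$ is $3$-homotopic to one coming from an object of $\OCat(C)$. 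Packaging these checks cleanly, while correctly invoking the cofibrancy needed to present $Q\mathcal{D}$ as $\OCat(C)$, is the delicate part; the rest is formal manipulation of adjunctions already established in the excerpt.
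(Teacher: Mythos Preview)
Your setup matches the paper exactly: invoke \ref{mcdgquillen} for the Quillen property, use fibrancy of all curved algebras to drop $P$, take $Q\mathcal{D}=\OCat(B\mathcal{D})$, and apply \ref{algocatlem} to reduce to showing that $\OCat(C)\to\MCdg(\Omega C)$ is a quasi-equivalence for $C=B\mathcal{D}$. The divergence is entirely in how this last quasi-equivalence is established.

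The paper does not attempt a direct comparison of hom-complexes or a 3-homotopy argument for MC elements. Instead it routes the map through module-comodule Koszul duality: one factors
\[
\OCat(C)\;\hookrightarrow\;\OCat(C)\text{-}\Mod\;\simeq\;C\text{-}\Comod\;\simeq\;\Omega(C)\text{-}\Mod,
\]
where the first arrow is Yoneda, the second is the categorical Koszul duality of \cite{HL2020}, and the third is Positselski's module-comodule duality. By \cite[3.44]{HL2020} the representable $\OCat(C)$-modules correspond to the one-dimensional $C$-comodules, and ordinary Koszul duality identifies the full dg subcategory of one-dimensional $C$-comodules with $\MCdg(\Omega C)$. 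This gives the quasi-equivalence cleanly, with both quasi-fully-faithfulness and quasi-essential-surjectivity coming for free from the ambient equivalences.

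Your proposed route has a gap at exactly this point. The references \ref{thm:barcobarequiv}(1), \ref{MCisDII}, and \ref{pretrqff} do not apply as you suggest: \ref{thm:barcobarequiv}(1) concerns the coalgebra unit $C\to\check B\Omega C$, not the dg functor $\OCat(C)\to\MCdg(\Omega C)$; \ref{pretrqff} requires pretriangulated sources and targets, which $\OCat(C)$ and $\MCdg(\Omega C)$ are not. Your fallback plan---comparing $e_sT_R(\bar C[-1])e_r$ with the two-sided twist $^{[\bar s]}(\Omega C)^{[\bar r]}$ by hand, and arguing 3-homotopy surjectivity for MC elements---is plausible in spirit but is itself a Koszul-duality statement in disguise, and you have not indicated how to carry it out. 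The paper's approach packages precisely that content by embedding both sides into already-known equivalent module categories.
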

\begin{proof}
	By \ref{mcdgquillen}, $\MCdg$ is right Quillen. For coreflectivity, since every curved algebra is fibrant, the derived unit at a dg category $\mathcal{D}$ is of the form $Q\mathcal{D} \to \MCdg \algmc\mathcal{D}$, where $Q$ is a cofibrant replacement functor. Letting $C$ be the pointed curved coalgebra $B\mathcal{D}$, we may take $Q\mathcal{D}$ to be $\OCat C$. Using \ref{algocatlem}, the derived unit then becomes the natural morphism $\OCat C \to \MCdg(\Omega C)$. We wish to show that this map is a quasi-equivalence. To do this we will use module-comodule Koszul duality, both for curved algebras and dg categories. The morphism in question factors as a composite
	
	$$\OCat (C) \longrightarrow \OCat (C)\mathbf{\text{-}Mod} \longrightarrow C\mathbf{\text{-}Comod} \longrightarrow \Omega (C)\mathbf{\text{-}Mod}$$where the first arrow is the Yoneda embedding, the second arrow is the Koszul duality equivalence of \cite{HL2020}, and the third arrow is given by the usual Koszul duality equivalence of \cite{Positselski11}. Note that the morphism $ \OCat (C)\mathbf{\text{-}Mod} \longrightarrow C\mathbf{\text{-}Comod} $ itself factors as the composite of two quasi-equivalences
	$$\OCat (C)\mathbf{\text{-}Mod} \longrightarrow B\OCat (C)\mathbf{\text{-}Comod} \longrightarrow C\mathbf{\text{-}Comod} $$where the first is again Koszul duality and the second is induced by the unit $C \to B\OCat C$ of the categorical bar-cobar adjunction. By \cite[3.44]{HL2020}, the representable $\OCat(C)$-modules are identified with the one-dimensional $B\OCat (C)$-comodules, and hence with the one-dimensional $C$-comodules. But usual module-comodule Koszul duality identifies the full dg subcategory of one-dimensional $C$-comodules with $\MCdg(\Omega C)$. Hence the composite map $\OCat(C) \to \MCdg(\Omega C)$ is a quasi-equivalence, as desired.
\end{proof}

In particular, there is a fully faithful inclusion of a coreflective subcategory $$\mathbb{L}\algmc:\Ho(\dgcat') \into \Ho(\calgp) $$with coreflector given by $\MCdg$. Composing with the equivalence $\Ho(\dgcat)  \xrightarrow{\simeq} \Ho(\dgcat')$, we get a fully faithful functor $\Ho(\dgcat) \into \Ho(\calgp)$. So the homotopy theory of dg categories embeds fully faithfully into the homotopy theory of curved algebras.

\begin{rem}
	Let $\mathcal{E}$ be a dg category and $C\coloneqq B\mathcal{E}$ its bar construction. There is a natural quasi-equivalence $\OCat(C) \to \mathcal{E}$, and hence applying the derived MC algebra functor we obtain an MC equivalence $\mathbb{L}\algmc\OCat(C) \to \mathbb{L}\algmc\mathcal{E}$. Since $\OCat(C)$ is cofibrant, the source is $\algmc\OCat (C) \cong \Omega(C)$. By \ref{MCisDII}, we obtain an equivalence $\Dcoderivedc(\Omega C) \simeq \Dcoderivedc(\mathbb{L}\algmc\mathcal{E})$. The proof of \ref{algmciscoref} shows that if $C$ is any pointed curved coalgebra then there is a Koszul duality equivalence $ \Dcoderivedc(\Omega C)\simeq D(\OCat C)$, so we obtain an equivalence $D(\OCat C)\simeq \Dcoderivedc(\mathbb{L}\algmc\mathcal{E})$. But $\OCat C$ is quasi-equivalent to $\mathcal{E}$, and since the usual derived category is invariant under quasi-equivalences, we obtain an equivalence $D(\mathcal{E}) \simeq \Dcoderivedc(\mathbb{L}\algmc\mathcal{E})$. In particular if $\mathcal{E}$ is a cofibrant dg algebra, we obtain an equivalence $\Dcoderivedc(\mathcal{E}) \simeq \Dcoderivedc(\algmc\mathcal{E})$. In this sense, the (derived) MC algebra can be thought of as a `category algebra of the second kind'. Unlike the usual category algebra of a dg category, it is a functorial construction. 
\end{rem}

Let $\iota: \pcog_* \into \ccog_*$ be the inclusion. 
\begin{theorem}\label{categoricalcorefl}
	There is a square of Quillen adjunctions 
	
	$$
	\begin{tikzcd}
		\pcog_* \ar[rr,"\iota", bend left=8]\ar[dd,"\OCat", bend right=10, swap]& \bot&\ccog_*\ar[ll, bend left=7] \ar[dd,"\Omega", bend right=10, swap]\\
		\dashv & &\dashv\\
		\dgcat' \ar[rr,"\algmc", bend left=8]\ar[uu,"B", bend right=10, swap]& \bot &\calgp\ar[ll, "\MCdg",bend left=7]\ar[uu,"\check B", bend right=10, swap]
	\end{tikzcd}.
	$$
	The square of left (resp. right) adjoints commutes. The maps running vertically are Quillen equivalences and the maps running horizontally are Quillen coreflections.
\end{theorem}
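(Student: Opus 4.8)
The strategy is to assemble the square from the two vertical Quillen equivalences (bar--cobar) and the two horizontal Quillen coreflections that have already been established, and then to verify the single genuinely new claim: commutativity of the square of left adjoints. The four adjunctions themselves are in hand: the left vertical bar--cobar adjunction $\OCat \dashv B$ between $\pcog_*$ and $\dgcat'$ is a Quillen equivalence by the cited theorem of \cite{HL2020}; the right vertical bar--cobar adjunction $\Omega \dashv \check B$ between $\ccog_*$ and $\calgp$ is a Quillen equivalence by Theorem \ref{thm:barcobarQequivalence} (one passes to the pointed/finalised slices, but the same proof applies, or one invokes the slicing machinery of Theorem \ref{slicethm}); the bottom horizontal adjunction $\algmc \dashv \MCdg$ is a Quillen coreflection by Proposition \ref{algmciscoref}; and the top horizontal adjunction $\iota \dashv (\text{right adjoint})$ is a Quillen coreflection whose proof is formally parallel to Proposition \ref{qiMC}(2), using that a pointed curved coalgebra is conilpotent with respect to its coradical so that the relevant derived unit is a weak equivalence.

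\textbf{Commutativity of the left adjoints.} The heart of the matter is the natural isomorphism $\algmc \circ \OCat \cong \Omega \circ \iota$ of functors $\pcog_* \to \calgp$. This is exactly the content of Lemma \ref{algocatlem}, which provides, for every pointed curved coalgebra $C$, a natural isomorphism $\algmc(\OCat(C)) \cong \Omega(C)$ (and one checks it respects the finalisation, sending $*$ to $*$ under $\OCat$ and thence to $\varnothing$). Since $\iota$ is the inclusion viewing a pointed curved coalgebra inside $\ccog_*$ and $\Omega$ does not see the pointing, $\Omega \circ \iota$ is just $\Omega$ applied to $C$, so Lemma \ref{algocatlem} gives precisely the desired square of left adjoints commuting up to natural isomorphism. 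The commutativity of the square of \emph{right} adjoints, i.e.\ $\MCdg \circ \check B \cong B \circ (\text{nil-type right adjoint})$, is then a formal consequence: passing to the associated contravariant hom-adjunctions, a commuting square of left adjoints forces the mated square of right adjoints to commute up to natural isomorphism, by the uniqueness of mates (or equivalently by a direct Yoneda argument comparing the two composite right adjoints through their defining adjunction isomorphisms).

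\textbf{Verifying the Quillen properties.} With the adjunctions and commutativity in place, it remains only to record that each arrow has the asserted homotopical strength. The two vertical adjunctions are Quillen equivalences as noted. For the horizontal Quillen coreflections one must check that the derived units are weak equivalences: for the bottom arrow this is Proposition \ref{algmciscoref}, and for the top arrow the argument mirrors Proposition \ref{qiMC}(2), where one models the derived unit at a pointed coalgebra $C$ by $C \to \mathrm{nil}\,\check B\,\OCat(C) \cong B\,\OCat(C)$ (using a fibrant replacement via the categorical bar--cobar resolution) and invokes that this is a weak equivalence in $\pcog_*$. The functors $\iota$ and $\algmc$ are left Quillen because they send generating cofibrations (injections between finite dimensional objects, respectively the images under $\algmc$ of the generating cofibrations of $\dgcat'$) to cofibrations, which can be checked directly on generators as in the analogous verifications for Corollary \ref{sliceadjns} and Theorem \ref{mcmostralg}.

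\textbf{The main obstacle.} The only step requiring real content is Lemma \ref{algocatlem}, and that lemma is already proved in the excerpt; the remaining work is the formal bookkeeping of mates and the routine checks that the four functors are Quillen. Thus the anticipated difficulty is not a deep computation but rather ensuring that the finalisation (the adjoined objects $*$ and $\varnothing$) is handled coherently across all four corners and that the coreflection on the pointed side is stated and proved with the correct right adjoint; once the pointed-coalgebra analogue of Proposition \ref{qiMC}(2) is in place, the square closes up automatically.
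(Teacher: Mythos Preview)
Your proposal has the right architecture and correctly identifies Lemma \ref{algocatlem} as the key input for commutativity of the left adjoints. However, there are two genuine gaps in the top horizontal adjunction that the paper handles and you do not.

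First, you treat the existence of a right adjoint to $\iota:\pcog_*\to\ccog_*$ as given, but this is not automatic: $\pcog_*$ is not a slice of $\ccog_*$, and the forgetful functor discards the retraction $\epsilon:C\to R$ which is part of the data of a pointed coalgebra. The paper proves existence by invoking the adjoint functor theorem (Proposition \ref{saft}), checking that $\iota$ preserves coproducts and coequalisers; the coequaliser check requires comparing the explicit descriptions of coequalisers in $\pcog_*$ and $\ccog_*$. Your description of the derived unit as ``$C\to\mathrm{nil}\,\check B\,\OCat(C)$'' presupposes an explicit nil-type right adjoint that has not been constructed here.

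Second, your argument that $\iota$ is left Quillen only verifies preservation of cofibrations. You must also show that $\iota$ preserves weak equivalences (or acyclic cofibrations), and this is where the paper actually uses Lemma \ref{algocatlem}: if $\OCat(f)$ is a quasi-equivalence then, since $\algmc$ is left Quillen, Ken Brown's Lemma gives that $\algmc(\OCat(f))\cong\Omega(f)$ is an MC equivalence, whence $f$ is an MC equivalence in $\ccog_*$. Once this is in place, the paper's route to the top coreflection is also cleaner than yours: rather than computing the derived unit directly, one observes that the square of left adjoints commutes, the verticals are Quillen equivalences, and the bottom is a coreflection (Proposition \ref{algmciscoref}), so the induced square on homotopy categories forces the top to be a coreflection as well. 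This sidesteps the need for any explicit description of the right adjoint to $\iota$.
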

\begin{proof}
	We begin by showing that $\iota$ is a left adjoint. By \ref{saft} it suffices to check that $\iota$ preserves coproducts and coequalisers. Coproducts in both $\pcog_*$ and $\ccog_*$ are created by the forgetful functor to graded vector spaces, so $\iota$ preserves coproducts. The proof of \cite[3.30]{HL2020} gives an explicit description of coequalisers in $\pcog_*$, and the proof of \ref{convlimits} gives an explicit description of coequalisers in $\ccog_*$. Using this one can check that $\iota$ preserves coequalisers and hence admits a right adjoint. The key observation is that coequalisers of pointed coalgebras, computed in $\ccog_*$, are naturally already pointed.
	
	It is clear that $\iota$ preserves cofibrations. To show that it is left Quillen, it is enough to check that it preserves weak equivalences. Let $f:C \to C'$ be a morphism of pointed curved coalgebras such that $\OCat(f)$ is a quasi-equivalence. By Ken Brown's Lemma combined with \ref{algocatlem}, $\algmc(\OCat(f))\cong \Omega(f)$ is an MC equivalence of curved algebras. By \ref{prop:MCcharacterization1}, the map $f$ is hence an MC equivalence of curved coalgebras.
	
	So we obtain a square of Quillen adjunctions, with vertical maps Quillen equivalences. The square of left adjoints commutes by \ref{algocatlem}, and hence the square of right adjoints also commutes. Since the bottom adjunction is a Quillen coreflection by \ref{algmciscoref}, the top adjunction must also be a Quillen coreflection, since one can test this on homotopy categories.
\end{proof}
If $K$ is a simplicial set, recall that $C_*(K)$ denotes the curved coalgebra of unnormalised simplicial chains on $K$. This construction is functorial and by applying it to the standard cosimplicial space we hence obtain a cosimplicial curved coalgebra $C_*(\Delta^\bullet)$. Similarly, $C^*(K)$ denotes the curved algebra of unnormalised cochains on $K$, and $C^*(\Delta^\bullet)$ is a simplicial curved algebra. If $A$ is a curved algebra, we define its Deligne category $\underline{\MC}(A)$ to be the simplicial set $\MC(C^*(\Delta^\bullet)\otimes A)$. This is named by analogy with the classical Deligne groupoid from deformation theory. The following result answers a question put to the authors by Sebastian Opper:

\begin{prop}\label{deligneprop}
    Let $A$ be a curved algebra. Then there is a natural isomorphism
    $$\underline{\MC}(A)\cong N_\mathrm{dg}(\MCdg(A))$$where $N_\mathrm{dg}$ denotes the dg nerve. In particular, $\underline{\MC}(A)$ is a quasicategory.
\end{prop}
\begin{proof}
    As in the proof of \cite[Lemma 4.11]{HL2020} (which is a standard argument using the category of simplices) the functor $C_*$ is left adjoint to the realisation functor $R\coloneqq \Hom_{\ccog}(C_*(\Delta^\bullet),-)$. Hence the composition $\Omega C_*$ is left adjoint to $R\check B$. But we have natural isomorphisms $$R\check B(A)=  \Hom_{\ccog}(C_*(\Delta^\bullet),\check BA)\cong \MC\Hom(C_*(\Delta^\bullet),A)\cong \MC(C^*(\Delta^\bullet)\otimes A) \cong \underline\MC(A)$$since $C_*(\Delta^n)$ is finite dimensional. So it suffices to check that $\Omega C_*$ is left adjoint to $N_{\mathrm{dg}}\MCdg$. To see this, first use \ref{algocatlem} to see that $\Omega C_*$ factors as $\algmc \OCat C_*$. Since $\algmc$ is left adjoint to $\MCdg$, and $\OCat C_*$ is left adjoint to $N_\mathrm{dg}$ by \cite{HL2020}, we are done.
\end{proof}

\subsection{Enrichment over dg categories}
In this section, we show that modified versions of the MC dg category can be used to detect the weak equivalences in the sliced model structures of \ref{sliceadjns}. Our method will be to show that the MC dg category is actually a sort of external hom valued in dg categories. Since $\dgcat'$ is not a monoidal model category, we will work only with homotopy categories. Recall from \cite{Toen06} that the homotopy category $\Ho(\dgcat)$ is closed symmetric monoidal with respect to the derived tensor product of dg categories. The natural equivalence $\Ho(\dgcat') \to \Ho(\dgcat)$ provides a closed symmetric monoidal structure on $\Ho(\dgcat')$, with product given again by the derived tensor product. We note that neither $\dgcat$ nor $\dgcat'$ are monoidal model categories with respect to the tensor product.

Let $C,C'$ be curved coalgebras and consider the functor sending $(C,C')$ to $ \MCdg(C,\Omega C')$. It is clear that this sends MC equivalences in both variables to quasi-equivalences of dg categories, and hence descends to a functor $\Ho(\ccogp)^\mathrm{op}\times \Ho(\ccogp) \to \Ho(\dgcat')$ which we denote by $\{C,C'\}$. Similarly, if $\mathcal{D}$ is a dg category, the functor $(\mathcal{D},C)\mapsto B(\mathcal{D})\otimes C$ preserves MC equivalences in the coalgebra variable by \ref{mccogtensor}, and sends quasi-equivalences to MC equivalences by \ref{categoricalcorefl}. Hence it descends to a functor $\Ho(\dgcat')\times \Ho(\ccogp) \to \Ho(\ccogp)$ which we denote by $\mathcal{D}\sw C$. Finally, consider the functor $(\mathcal{D},C)\mapsto \check{B}\Hom(B\mathcal{D},\Omega C)$, which preserves weak equivalences in both variables by \ref{mcequivtensor}. As before it descends to a functor $\Ho(\dgcat')^\mathrm{op}\times \Ho(\ccogp) \to \Ho(\ccogp)$ which we denote by $\underline{\Hom}(\mathcal{D},C)$.

\begin{theorem}
	The category $\Ho(\ccogp)$ is enriched, tensored, and cotensored over $\Ho(\dgcat')$.
\end{theorem}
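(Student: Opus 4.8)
The plan is to apply Proposition \ref{adjmoduleenrich}, exactly as in the proof of Theorem \ref{modelenrich}, but now at the level of homotopy categories with $\mathcal{V}=\Ho(\dgcat')$, $\mathcal{C}=\Ho(\ccogp)$, enrichment $\{-,-\}$, tensor $\sw$, and cotensor $\underline{\Hom}$. By Proposition \ref{adjmoduleenrich} it suffices to exhibit an adjunction of two variables $(\sw, \underline{\Hom}, \{-,-\})$ together with a $\Ho(\dgcat')$-module structure on $\Ho(\ccogp)$ furnished by $\sw$. Since all three functors have already been shown (in the paragraph preceding the statement) to descend to the homotopy categories, the content is entirely in verifying the natural isomorphisms of hom-sets and the associativity and unit coherences.

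First I would establish the fundamental adjunction isomorphisms, working in homotopy categories and freely using that every curved coalgebra is cofibrant and every curved algebra is fibrant, so that derived homs are computed on the nose by $3$-homotopy classes. For dg categories $\mathcal{D}$ and curved coalgebras $C,C'$ the chain of natural isomorphisms
\begin{align*}
	\Ho(\dgcat')(\mathcal{D},\{C,C'\})&\cong \Ho(\dgcat')(\mathcal{D},\MCdg(C,\Omega C'))\\
	&\cong \Ho(\calgp)(\algmc\mathcal{D},\Hom(C,\Omega C'))\\
	&\cong \Ho(\ccogp)(B\mathcal{D}\otimes C,C')
\end{align*}
should do the job: the first step uses Proposition \ref{algmciscoref} (the $\algmc\dashv\MCdg$ Quillen coreflection) together with the identification $\MCdg(C,\Omega C')\cong\MCdg(\Hom(C,\Omega C'))$; the second uses the hom-tensor adjunction \ref{htconv} for convolution algebras combined with Lemma \ref{algocatlem} to rewrite $\algmc\mathcal{D}$ as $\Omega B\mathcal{D}$ and the bar-cobar adjunction; and the target is $\mathcal{D}\sw C=B\mathcal{D}\otimes C$. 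An entirely parallel computation, transposing $\mathcal{D}$ and the cotensor, gives $\Ho(\dgcat')(\mathcal{D},\{C,C'\})\cong \Ho(\ccogp)(C,\underline{\Hom}(\mathcal{D},C'))$, using symmetry of $\otimes$ on coalgebras and the definition $\underline{\Hom}(\mathcal{D},C')=\check B\Hom(B\mathcal{D},\Omega C')$. Together these two families of isomorphisms constitute the adjunction of two variables.

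Next I would supply the $\Ho(\dgcat')$-module structure, i.e.\ natural associativity isomorphisms $(\mathcal{D}\otimes\mathcal{D}')\sw C\cong \mathcal{D}\sw(\mathcal{D}'\sw C)$ and a unit isomorphism $\ground\sw C\cong C$, satisfying the pentagon and triangle coherences. For associativity one reduces, via Yoneda in $\Ho(\ccogp)$, to the isomorphism on representable functors; since $\mathcal{D}\sw C=B\mathcal{D}\otimes C$, the claim becomes a compatibility between the lax monoidal structure of the bar functor $B$ (the Alexander--Whitney/Eilenberg--Zilber comparison $B(\mathcal{D}\otimes \mathcal{D}')\simeq B\mathcal{D}\otimes B\mathcal{D}'$ in $\Ho(\ccogp)$) and associativity of $\otimes$ from Theorem \ref{cogmonoidal}. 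The unit isomorphism follows because $B(\ground)\cong\ground$ and $\ground$ is the monoidal unit of $\ccogp$. Then Proposition \ref{adjmoduleenrich} packages everything into the desired enrichment, tensoring, and cotensoring.

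The main obstacle I anticipate is the associativity coherence, precisely because it hinges on $B$ being homotopy (op)lax monoidal in the derived sense: the comparison map $B\mathcal{D}\otimes B\mathcal{D}'\to B(\mathcal{D}\otimes\mathcal{D}')$ need not be an isomorphism before passing to homotopy categories, so one must argue that it becomes a weak equivalence and that the resulting isomorphism satisfies the pentagon. The cleanest route is probably to transport the whole question through the Quillen equivalence $\OCat\dashv B$ of Theorem \ref{categoricalcorefl} and the monoidal structure on $\Ho(\dgcat)$ from \cite{Toen06}: since $\Ho(\ccogp)\simeq\Ho(\dgcat')$ only as the coreflective image, one works instead with the derived tensor product on $\Ho(\dgcat)$, for which associativity and unit coherences are already known, and then checks that $\mathcal{D}\sw C$ corresponds to this structure under the equivalence. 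I would prove the associativity isomorphism by the Yoneda argument above, deriving it formally from the two-variable adjunction (as in the proof of Theorem \ref{modelenrich}, where $(C\otimes C')\sw A\cong C\sw(C'\sw A)$ was obtained purely from adjunction and associativity of $\otimes$), thereby sidestepping any explicit analysis of the Eilenberg--Zilber map and reducing all coherence checks to the already-established monoidal coherence of $(\Ho(\ccogp),\otimes,\ground)$.
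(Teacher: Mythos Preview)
Your overall strategy matches the paper's: apply Proposition \ref{adjmoduleenrich}, establish the two-variable adjunction via the chain of isomorphisms you wrote (the paper does the same computation, first at the point-set level with $\OCat B\mathcal{D}$ in place of $\mathcal{D}$ and then passes to homotopy categories), and then verify the $\Ho(\dgcat')$-module axioms for $\sw$.

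There is one genuine confusion in your last paragraph. You correctly identify that associativity hinges on $B$ being homotopy strong monoidal, i.e.\ $B(\mathcal{D}\otimes\mathcal{D}')\simeq B\mathcal{D}\otimes B\mathcal{D}'$ in $\Ho(\ccogp)$, but then suggest this can be sidestepped by a Yoneda argument as in the proof of \ref{enrtenscotens}. It cannot. In \ref{enrtenscotens} the key step was the point-set isomorphism $\Hom(C\otimes C',A)\cong\Hom(C,\Hom(C',A))$ for the convolution algebra; the analogous step here would be $\underline{\Hom}(\mathcal{D}\otimes\mathcal{D}',C')\simeq\underline{\Hom}(\mathcal{D},\underline{\Hom}(\mathcal{D}',C'))$, and unwinding the definition $\underline{\Hom}(\mathcal{D},C')=\check B\Hom(B\mathcal{D},\Omega C')$ together with $\Omega\check B\simeq\id$ reduces this exactly to $B(\mathcal{D}\otimes\mathcal{D}')\simeq B\mathcal{D}\otimes B\mathcal{D}'$. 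So the Yoneda route is circular.

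The paper handles this by directly invoking \cite[5.1]{HL2022}: the total derived functor $\mathbb{L}\OCat$ is strong monoidal, hence so is its quasi-inverse $\mathbb{R}B$, and since every dg category is fibrant this gives the required isomorphism $B(\mathcal{D}\otimes\mathcal{D}')\cong B\mathcal{D}\otimes B\mathcal{D}'$ in $\Ho(\ccogp)$. The associativity and unit coherences for $\sw$ then follow from those of $\otimes$ on $\ccogp$, exactly as you say. So your earlier instinct (transport through the monoidal Quillen equivalence $\OCat\dashv B$) is the right one and is precisely what the paper does; the external input \cite[5.1]{HL2022} is essential and is not derivable from the two-variable adjunction alone.
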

\begin{proof}
	The external hom will be given by $\{C,C'\}$, the tensoring by $\mathcal{D}\sw C$, and the cotensoring by $\underline{\Hom}(\mathcal{D},C)$. We will apply \ref{adjmoduleenrich}. To begin, there are natural isomorphisms
	\begin{align*}
		\dgcat'(\OCat B\mathcal{D}, \MCdg(C,\Omega C'))&\cong \calgp(\algmc\OCat B\mathcal{D}, \Hom(C,\Omega C'))& \text{by \ref{mcdgadj}}\\
		&\cong \calgp(\Omega B\mathcal{D}, \Hom(C,\Omega C'))& \text{by \ref{algocatlem}}\\
		&\cong \ccogp(B\mathcal{D}\otimes C, \check B\Omega C')& \text{by hom-tensor}
	\end{align*}
	As a consequence, since $\OCat B\mathcal{D} \to \mathcal{D}$ and $C \to \check B \Omega C$ are natural isomorphisms in their respective homotopy categories, we obtain a natural isomorphism
	$$\Ho(\dgcat')(\mathcal{D}, \{C, C'\})\cong \Ho(\ccogp)(\mathcal{D}\sw C, C')$$showing that $\sw$ is indeed a tensor. Similarly, there are natural isomorphisms
	\begin{align*}
		\dgcat'(\OCat B\mathcal{D}, \MCdg(C,\Omega C')) &\cong \ccogp(B(\mathcal{D})\otimes C, \check B\Omega C')& \text{as above}\\
		&\cong \ccogp(C, \check B\Hom(B\mathcal{D},\Omega C'))& \text{by hom-tensor}
	\end{align*}
	and as before, this gives us a natural isomorphism $$\Ho(\dgcat')(\mathcal{D}, \{C, C'\})\cong \Ho(\ccogp)(C,\underline{\Hom}(\mathcal{D}, C'))$$showing that $\underline{\Hom}$ is a cotensor. So to finish we just need to check that $\sw$ makes $\Ho(\ccogp)$ into a $\Ho(\dgcat')$-module. By \cite[5.1]{HL2022}, the total derived functor $\mathbb{L}\OCat$ is strong monoidal, and hence so is its inverse $\mathbb{R}B$. Since every dg category is fibrant, it follows that if $\mathcal{D},\mathcal{D}'$ are dg categories then there is a natural isomorphism $B(\mathcal{D}\otimes \mathcal{D}')\cong B\mathcal{D} \otimes B\mathcal{D}'$ in $\Ho(\ccogp)$. In particular this yields natural isomorphisms in $\Ho(\ccogp)$
	\begin{align*}(\mathcal{D} \otimes \mathcal{D} ') \sw C&\cong B(\mathcal{D} \otimes \mathcal{D} ')\otimes C\\
		&\cong B\mathcal{D} \otimes B\mathcal{D} '\otimes C\\
		&\cong B\mathcal{D} \otimes (\mathcal{D} '\sw C)\\
		&\cong \mathcal{D} \sw (\mathcal{D} '\sw C)
	\end{align*}which satisfy the associativity condition because the tensor product of curved coalgebras is associative. Since $B$ applied to the one-object dg category $\ground$ is the curved coalgebra $\ground$, we have a natural isomorphism $\ground \sw C \cong C$ which satisfies the required unitality conditions.
\end{proof}

\begin{cor}
		The category $\Ho(\calgp)$ is enriched, tensored, and cotensored over $\Ho(\dgcat')$.
	\end{cor}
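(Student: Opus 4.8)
The plan is to transport the enrichment of $\Ho(\ccogp)$ over $\Ho(\dgcat')$ across the derived bar-cobar equivalence. By Theorem \ref{thm:barcobarQequivalence}, the bar-cobar adjunction is a Quillen equivalence, so the total derived functors induce an equivalence of homotopy categories $\mathbb{L}\Omega:\Ho(\ccogp)\simeq\Ho(\calgp):\mathbb{R}\check B$. An equivalence of categories transports any enrichment, tensoring, and cotensoring over a fixed monoidal category: if $\mathcal{C}\simeq\mathcal{C}'$ via $F\dashv G$ with $F,G$ mutually inverse equivalences, and $\mathcal{C}$ is enriched, tensored, and cotensored over $\mathcal{V}$, then $\mathcal{C}'$ inherits these structures by conjugating with $F$ and $G$.

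Concretely, I would define the external hom on $\Ho(\calgp)$ by $\{A,A'\}\coloneqq\{\mathbb{R}\check B A,\mathbb{R}\check B A'\}$, the tensoring by $\mathcal{D}\sw A\coloneqq\mathbb{L}\Omega(\mathcal{D}\sw\mathbb{R}\check B A)$, and the cotensoring by $\underline{\Hom}(\mathcal{D},A)\coloneqq\mathbb{L}\Omega\,\underline{\Hom}(\mathcal{D},\mathbb{R}\check B A)$, where the right-hand structures are those of the preceding theorem on $\Ho(\ccogp)$. The required adjunction isomorphisms then follow immediately: for instance
\[
\Ho(\dgcat')(\mathcal{D},\{A,A'\})\cong\Ho(\ccogp)(\mathcal{D}\sw\mathbb{R}\check B A,\mathbb{R}\check B A')\cong\Ho(\calgp)(\mathbb{L}\Omega(\mathcal{D}\sw\mathbb{R}\check B A),A'),
\]
using the enrichment on $\Ho(\ccogp)$ for the first isomorphism and the derived adjunction equivalence for the second. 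The analogous computation handles the cotensoring. The module structure of $\sw$ over $\Ho(\dgcat')$ is transported verbatim, since $\mathbb{L}\Omega$ and $\mathbb{R}\check B$ are inverse equivalences compatible with the structural isomorphisms. One then invokes Proposition \ref{adjmoduleenrich} to assemble these data into a genuine enrichment.

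The main subtlety—and the only real obstacle—is checking that the transported structures are genuinely bifunctorial on the homotopy categories, i.e.\ that the composites of derived functors respect weak equivalences in both variables and that the associativity and unit coherences survive the transport. This is not automatic at the pointset level, but it is guaranteed here because $\mathbb{L}\Omega$ and $\mathbb{R}\check B$ are an adjoint equivalence of categories: coherence isomorphisms for a module structure are preserved under any equivalence, and the natural isomorphisms defining the $\Ho(\dgcat')$-module structure on $\Ho(\ccogp)$ pull back to natural isomorphisms on $\Ho(\calgp)$ satisfying the same pentagon and unit diagrams. Since all the functors involved already descend to the homotopy categories (established in the proof of the preceding theorem via \ref{mccogtensor}, \ref{mcequivtensor}, and \ref{categoricalcorefl}), the bifunctoriality is inherited directly. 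Thus the corollary follows formally from the theorem together with Theorem \ref{thm:barcobarQequivalence}.
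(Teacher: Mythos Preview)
Your approach is correct and is essentially the same as the paper's: both transport the $\Ho(\dgcat')$-enrichment from $\Ho(\ccogp)$ to $\Ho(\calgp)$ via the bar-cobar Quillen equivalence. Your transported formulas agree with the paper's concrete descriptions $\{A,A'\}=\MCdg(\check B A,A')$, $\mathcal{D}\sw A=\Omega(B\mathcal{D}\otimes\check B A)$, and $\underline{\Hom}(\mathcal{D},A)=\Hom(B\mathcal{D},A)$ once you use that every curved algebra is fibrant (so $\mathbb{R}\check B=\check B$) and that $\Omega\check B A\to A$ is an equivalence.
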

\begin{proof}Simply use the equivalence $\Ho(\calgp) \simeq \Ho(\ccogp)$ provided by Koszul duality. Concretely, the external hom is given by $\{A,A'\}\coloneqq \MCdg(\check B A, A')$, the tensoring by $\mathcal{D}\sw A \coloneqq \Omega (B\mathcal{D}\otimes \check B A)$ and the cotensoring by $\underline{\Hom}(\mathcal{D},A)\coloneqq\Hom(B\mathcal{D},A)$.
	\end{proof}

\begin{rem}
	The category $\pcog_*$ of finalised pointed curved coalgebras is a symmetric monoidal model category, Quillen equivalent to $\dgcat'$. As in the proof of \ref{enrtenscotens}, one can construct a model enrichment of $\ccogp$ over $\pcog_*$ using Kan extensions.
\end{rem}

	Let $A$ be a dg algebra and $\ground \to C$ be a coaugmented curved coalgebra. Since $A$ is a dg algebra, there is a canonical map $\ground \to \MCdg(A)\cong\MCdg(\ground,A)$ picking out the MC element $0$. We define $\MCdg(\bar C,A)$ to be the fibre of the natural map $\MCdg(C,A) \to \MCdg(\ground,A)$. Observe that this is a functor in both variables. Since $\ground \to C$ is an injection, $\MCdg(\bar C,A)$ can also be computed as the homotopy fibre. In particular, $\MCdg(\bar C,A)$ preserves MC equivalences in both variables and hence descends to a functor $\Ho(\accog) \times \Ho(\alg) \to \Ho(\dgcat')$. 
	
	Similarly, if $(A\to \ground) \in \acalg$ and $C \in \cog$ then we may define $\MCdg(C,\bar A)$ to be the fibre of $\MCdg(C,A) \to \MCdg(C,\ground)$. This descends to homotopy categories because $A \to \ground$ admits a section and is hence a fibration in the MC model structure.
	
	Finally, if $(A\to \ground) \in \aalg$ and $(\ground \to C) \in \acog$ then we may define $\MCdg(\bar C, \bar A)$ to be the fibre of $\MCdg(\bar C ,A) \to \MCdg(\bar C, \ground)$; equivalently it is also the fibre of $\MCdg(C , \bar A) \to \MCdg(\ground, \bar A)$. As before this descends to homotopy categories.
	\begin{theorem}\label{dgcatbars}\phantom{}\hfill
		\begin{enumerate}
			\item A map $f:A \to A'$ in $\alg$ is a weak equivalence in the sliced model structure if and only if for all $C \in \accog$ the natural map $\MCdg(\bar C,f)$ is a quasi-equivalence. A map $g:C \to C'$ in $\accog$ is a weak equivalence in the sliced model structure if and only if for all $A \in \alg$ the natural map $\MCdg(\bar g, A)$ is a quasi-equivalence.
			\item A map $f:A \to A'$ in $\acalg$ is a weak equivalence in the sliced model structure if and only if for all $C \in \cog$ the natural map $\MCdg(C,\bar f)$ is a quasi-equivalence. A map $g:C \to C'$ in $\cog$ is a weak equivalence in the sliced model structure if and only if for all $A \in \acalg$ the natural map $\MCdg(g,\bar A)$ is a quasi-equivalence.
			\item A map $f:A \to A'$ in $\aalg$ is a weak equivalence in the sliced model structure if and only if for all $C \in \acog$ the natural map $\MCdg(\bar C,\bar f)$ is a quasi-equivalence. A map $g:C \to C'$ in $\acog$ is a weak equivalence in the sliced model structure if and only if for all $A \in \aalg$ the natural map $\MCdg(\bar g, A)$ is a quasi-equivalence.
			\end{enumerate}
		\end{theorem}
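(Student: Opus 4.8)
The plan is to prove all three parts, and both implications in each, by a single uniform argument that reduces everything to two facts already in place: the homotopy‑functoriality of the reduced Maurer--Cartan dg categories, and the sliced bar--cobar Quillen equivalences of \ref{sliceadjns}. Throughout I use that, by \ref{slicethm}, a morphism of one of the slice categories is a weak equivalence precisely when its underlying map is an MC equivalence.

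The forward implications are essentially immediate. As recorded just before the statement, each of $\MCdg(\bar C,A)$, $\MCdg(C,\bar A)$ and $\MCdg(\bar C,\bar A)$ is a homotopy fibre of a fibration of dg categories, and hence preserves MC equivalences in both variables, so it descends to a functor out of the relevant product of sliced homotopy categories into $\Ho(\dgcat')$. A weak equivalence is by definition an MC equivalence, i.e.\ an isomorphism in the sliced homotopy category, and a functor sends isomorphisms to isomorphisms; thus it is carried to an isomorphism of $\Ho(\dgcat')$, that is, to a quasi‑equivalence. This disposes of the ``only if'' direction in each of (1)--(3), in the algebra and the coalgebra variables simultaneously.

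For the converse I would first establish the key identification that the set of isomorphism classes of the reduced MC dg category computes a hom‑set of the sliced homotopy category: for instance $\pi_0\MCdg(\bar C,A)\cong \Ho(\alg)(\Omega C,A)$ in the situation of (1), and analogously $\pi_0\MCdg(C,\bar A)\cong\Ho(\acalg)(\Omega C,A)$ and $\pi_0\MCdg(\bar C,\bar A)\cong\Ho(\aalg)(\Omega C,A)$ for (2) and (3). Granting this, a quasi‑equivalence induces a bijection on isomorphism classes, so the hypothesis of, say, the algebra half of (1) says exactly that postcomposition with $f$ gives a bijection $\Ho(\alg)(\Omega C,A)\to\Ho(\alg)(\Omega C,A')$ for every $C\in\accog$. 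By the sliced Quillen equivalence $\Omega:\accog\leftrightarrow\alg:\check B$ of \ref{sliceadjns}, the total derived functor of $\Omega$ is an equivalence of homotopy categories, and since every curved coalgebra is cofibrant, every object of $\Ho(\alg)$ is isomorphic to some $\Omega C$. Hence $f_*\colon\Ho(\alg)(Z,A)\to\Ho(\alg)(Z,A')$ is bijective for all $Z$, and the Yoneda lemma in $\Ho(\alg)$ forces $f$ to be an isomorphism there, that is, a weak equivalence. The coalgebra halves are identical after replacing $\Omega$ by its right adjoint $\check B$: one uses that the fibrant objects $\check B A$ exhaust $\Ho(\accog)$ (resp.\ $\Ho(\cog)$, $\Ho(\acog)$) and applies the contravariant Yoneda lemma to conclude that $g$ is an isomorphism in the sliced homotopy category.

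The technical heart, and the step I expect to be the main obstacle, is the identification used above. The objects of the homotopy fibre $\MCdg(\bar C,A)$ are the MC elements of $\Hom(C,A)$ lying over the basepoint $0\in\MCdg(A)$, which are exactly the maps $\Omega C\to A$ respecting the (co)augmentation, i.e.\ the morphisms of the slice; two of them are isomorphic in $H^0$ of the fibre precisely when they are joined by a \emph{based} $3$‑homotopy. What must be checked is that this relative $3$‑homotopy agrees with the model‑categorical homotopy relation in the slice, which amounts to verifying that the cylinder and path objects built from $I_3$ and $I^3$ remain good after slicing, together with the fact that $\Omega C$ is cofibrant and $A$ fibrant in the sliced MC model structure. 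The doubly reduced case $\MCdg(\bar C,\bar A)$ of part (3) requires treating this as an iterated homotopy fibre and matching its components with morphisms of the double slice $\calg_{\ground/\ground}$; this bookkeeping, rather than any genuinely new homotopy‑theoretic input, is where the care is needed.
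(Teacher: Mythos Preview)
Your argument is correct in outline, and the forward direction is essentially identical to the paper's, but the converse is organised differently and trades one technical verification for another.

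The paper does not pass through the identification $\pi_0\MCdg(\bar C,A)\cong\Ho(\alg)(\Omega C,A)$ at all. Instead it works in the \emph{slice of the homotopy category} $\Ho(\calgp)_{\ground/}$ rather than in $\Ho(\alg)$: there is a comparison functor $F\colon\Ho(\alg)\to\Ho(\calgp)_{\ground/}$ which, though not an equivalence, reflects isomorphisms. The paper then invokes a general categorical fact: if $\mathcal{C}$ is $\mathcal{V}$-enriched and $\mathcal{V}$ has the relevant pullbacks, then $\mathcal{C}_{c/}$ is again $\mathcal{V}$-enriched with hom-objects given by fibres. Applied with $\mathcal{C}=\Ho(\calgp)$, $\mathcal{V}=\Ho(\dgcat')$, and $c=\ground$, this says that the reduced MC category $\MCdg(\bar{\check B}A,A')$ is \emph{literally} the enriched hom of the slice. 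The enriched Yoneda lemma then gives the result directly, with the quasi-equivalence condition appearing as ``isomorphism in $\Ho(\dgcat')$'' rather than via a $\pi_0$ argument.

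What each approach buys: the paper's route is slicker because it replaces your flagged ``technical heart''---matching based $3$-homotopy with the model-categorical homotopy relation in the slice and checking that the $I^3$/$I_3$ interval objects remain good after slicing---with a one-line appeal to enriched category theory, and it uses the full strength of the quasi-equivalence hypothesis rather than only $\pi_0$. Your route is more concrete and closer to the unsliced argument of \ref{prop:MCcharacterization2}, but you would genuinely need to prove the based analogue of the identification $[\Omega C,A]_3\cong\MCmod(C,A)$, including verifying that an isomorphism in $H^0$ of the strict fibre (i.e.\ a homotopy-invertible closed degree-zero element restricting to $1\in A$) corresponds to a pointed $3$-homotopy. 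This is plausible but is exactly the bookkeeping that the paper's enriched-Yoneda packaging is designed to avoid.
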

	\begin{proof}
		We prove (1); the other claims are similar. There is a functor $F:\Ho(\alg) \to \Ho(\calgp)_{\ground /}$ which is the identity on objects and morphisms (although not itself an equivalence). By the definition of the slice model structure, $F$ reflects isomorphisms, so a map $f:A \to A'$ is an MC equivalence if and only if $F[f]$ is an isomorphism.
		
		If $\mathcal{V}$ is a monoidal category with enough pullbacks, $\mathcal{C}$ is a $\mathcal{V}$-enriched category, and $c\in \mathcal{C}$, then the slice category $\mathcal{C}_{c/}$ is also a $\mathcal{V}$-enriched category, with hom-objects given by taking fibres. Since $\MCdg(C,A) \to \MCdg(\ground,A)$ has a section, its fibre taken in $\Ho(\dgcat')$ exists and is the homotopy class of its usual fibre. Hence $\Ho(\calgp)_{\ground/}$ is also a $\Ho(\dgcat')$-enriched category, with enrichment given by $\{A,A'\}\coloneqq \MCdg(\bar{\check B}A,A').$ In $\Ho(\accog)$ every object is of the form $\check BA$, so the enriched Yoneda lemma tells us that a map $f':A \to A'$ in $\Ho(\calgp)_{\ground/}$ is an isomorphism if and only if for every $C \in \accog$ the map $\MCdg(\bar C,f')$ is an isomorphism in $\Ho(\dgcat')$. In particular, $F[f]$ is an isomorphism if and only if $\MCdg(\bar C, f)$ is a weak equivalence, which gives the first half of $(1)$. The second half is similar and uses that $\Ho(\ccogp)_{\ground/}$ is enriched over $\Ho(\dgcat')$, with external homs given by the expression $\MCdg(\bar{C},\Omega C').$ 
		\end{proof}

\section{Moduli problems}\label{section:moduli}
We apply our results on global Koszul duality to the study of moduli problems. Lurie defines an $\mathbb{E}_1$-formal moduli problem - an $\infty$-categorical analogue of a noncommutative deformation functor - as a certain kind of limit-preserving $\infty$-functor from connective Artinian local dg algebras to simplicial sets \cite{luriedagx}. Our MC model structures allow us to give an analogous definition where connective Artinian local dg algebras are now replaced with curved algebras. Even in the uncurved world, such deformation functors are new: they correspond to moduli problems defined on all finite dimensional dg algebras, rather than the connective local ones. Geometrically, these are global moduli problems, since finite dimensional algebras may have many different closed points; the prototypical examples are given by pseudocompact completions rather than completions at maximal ideals. When restricted to Artinian local dg algebras, our moduli functors are a nonconnective version of Lurie's.

Our main results here consist of prorepresentability theorems for global moduli problems. Such results are naturally obtained in the nonconnective setting, where every such functor is prorepresentable, in contrast to moduli problems defined on connective algebras. Since pro-objects correspond to left exact functors, the category $\cog$ of dg coalgebras is equivalent to the category $\mathrm{Lex}(\alg^{\mathrm{fd}},\mathbf{Set})$ of left exact functors from finite dimensional dg algebras to sets. This is an old observation; for an example in the commutative case see \cite{demazure}. Our representability theorem for MC stacks can be considered a derived version of this result.

\subsection{MC stacks}
Let $\fdccog_*$ be the category of finite dimensional curved coalgebras, along with the curved coalgebra $*$. Observe that $\fdccog_*$ has finite colimits. There is a natural equivalence $\mathbf{ind}(\fdccog_*)\simeq \ccog_*$ given by sending an ind-object to its colimit.

Regarding $\ccogp$ as an $\infty$-category, we may regard $\fdccog_*$ as a full $\infty$-subcategory. Since an injection of curved coalgebras is a cofibration, every curved coalgebra is a filtered homotopy colimit of finite dimensional curved coalgebras, and it follows that there is an equivalence of $\infty$-categories $\mathbf{ind}(\fdccog_*)\simeq \ccog_*$. 

Dually, let $\fdcalg_\varnothing$ be the category of finite dimensional curved algebras, which is finitely complete. If $\pccalg_\varnothing\simeq \ccogp^\mathrm{op}$ denotes the initialised category of pseudocompact curved algebras, there is an equivalence $\mathbf{pro}(\fdcalg_\varnothing)\simeq \pccalg_\varnothing$ given by sending a pro-object to its limit. We may regard $\fdcalg_\varnothing$ as an $\infty$-category, and we have an equivalence of $\infty$-categories $\mathbf{pro}(\fdcalg_\varnothing)\simeq \pccalg_\varnothing$.

\begin{prop}\label{finlims}
	The $\infty$-category $\fdcalg_\varnothing$ has finite limits.
	\end{prop}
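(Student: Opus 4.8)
The plan is to verify that $\fdcalg_\varnothing$, viewed as the full subcategory on finite dimensional objects of the $\infty$-category presented by the MC model structure on $\calgp$, admits a terminal object and pullbacks; recall that an $\infty$-category has all finite limits as soon as it has a terminal object and pullbacks, so this suffices. The terminal object is immediate, since the zero curved algebra $0$ is finite dimensional and is terminal in $\calgp$. The entire content is therefore to show that the homotopy pullback in $\calgp$ of a cospan $A\to C\leftarrow B$ of finite dimensional curved algebras is again (weakly equivalent to) a finite dimensional one.

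First I would pass to coalgebras by linear duality. The cospan dualises to a span $A^*\leftarrow C^*\to B^*$ of finite dimensional curved coalgebras, and I claim its homotopy pushout may be computed inside $\fdccog_*$. Indeed, every curved coalgebra is cofibrant and the cofibrations are the injections, so it is enough to replace one leg by an injection; using the mapping cylinder built from the finite dimensional interval $I_3$, whose endpoint inclusions $C^*\hookrightarrow C^*\otimes I_3$ are MC equivalences by Lemma \ref{endpointslem}, together with the fact that finite colimits of curved coalgebras are created on underlying graded coalgebras (Proposition \ref{limscolims}), one obtains a finite dimensional homotopy pushout $P\coloneqq A^*\sqcup^{h}_{C^*}B^*$. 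My candidate for the pullback is then the finite dimensional curved algebra $P^*$, equipped with the natural comparison map $\phi\colon P^*\to A\times^{h}_C B$ into the genuine homotopy pullback in $\calgp$.

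It then remains to prove that $\phi$ is an MC equivalence. Here I would use the algebra-side detection of MC equivalences: by Proposition \ref{mccat2}(2) it is enough that $\MCdg(D,\phi)$ be a quasi-equivalence for all curved coalgebras $D$, and writing $D$ as the filtered colimit of its finite dimensional subcoalgebras this reduces, via Corollary \ref{mclimscor} and Remark \ref{mcdgl} (which exhibit $\MCdg(D,-)$ as the homotopy limit of a tower of fibrations $\MCdg(D_\alpha,-)$), to the case of finite dimensional $D$. For finite dimensional $D$ and $Y$ the symmetry of the tensor product gives $\Hom(D,Y^*)\cong D^*\otimes Y^*\cong Y^*\otimes D^*\cong\Hom(Y,D^*)$ and hence a natural isomorphism $\MCdg(D,Y^*)\cong\MCdg(Y,D^*)$. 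Applying this with $Y$ running over $P,A^*,B^*,C^*$ turns the maps $\MCdg(D,-)$ of the pullback square of algebras into the maps $\MCdg(-,D^*)$ of the pushout square of coalgebras. Since $\MCdg$ is a right Quillen bifunctor (Corollary \ref{mcdgquillenbi}), the contravariant right Quillen functor $\MCdg(-,D^*)$ carries the homotopy pushout $P$ to a homotopy pullback in $\dgcat'$, so $\MCdg(D,P^*)\simeq\MCdg(D,A)\times^{h}_{\MCdg(D,C)}\MCdg(D,B)\simeq\MCdg(D,A\times^{h}_C B)$, the last equivalence because $\MCdg(D,-)$ is right Quillen and preserves homotopy pullbacks. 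Thus $\MCdg(D,\phi)$ is a quasi-equivalence for all finite dimensional $D$, hence for all $D$, so $\phi$ is an MC equivalence and $P^*\in\fdcalg_\varnothing$ is the desired pullback.

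The main obstacle is precisely the step of transferring the coalgebra homotopy pushout to an algebra homotopy pullback: one cannot simply dualise, because linear duality does not intertwine the two notions of MC equivalence in any transparent way (for instance $\Dcoderivedc(X)$ and $\Dcoderivedc(X^*)$ need not be equivalent, cf.\ the earlier remark). The device that avoids this difficulty is to detect the weak equivalence $\phi$ through $\MCdg(D,-)$ \emph{on the algebra side}, where reduction to finite dimensional test coalgebras is legitimate, and only then to invoke duality through the purely finite dimensional pairing $\MCdg(D,Y^*)\cong\MCdg(Y,D^*)$; in this way one never needs to test a coalgebra MC equivalence against all curved algebras, which is exactly the reduction that is \emph{not} available. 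Finally, one must check the degenerate cases in which some entry of the cospan is $\varnothing$, or in which the dual span meets the final coalgebra $*$, but these reduce to trivial cospans and are dealt with directly.
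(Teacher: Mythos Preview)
Your argument is correct, but it does considerably more work than the paper's. The discrepancy stems from how $\fdcalg_\varnothing$ acquires its $\infty$-category structure. You view it as a full subcategory of $\calgp$ under the MC model structure on curved algebras; the paper, by contrast, defines it dually as $(\fdccog_*)^{\mathrm{op}}$, where $\fdccog_*$ sits fully faithfully in the MC model category $\ccogp$ (this is the content of the paragraph immediately preceding the proposition, and is what makes the equivalence $\mathbf{pro}(\fdcalg_\varnothing)\simeq\pccalg_\varnothing\simeq\ccogp^{\mathrm{op}}$ work). With that convention, finite limits in $\fdcalg_\varnothing$ are \emph{by definition} finite colimits in $\fdccog_*$, and the entire proof is your first step: compute the homotopy pushout of a span $D\leftarrow C\to E$ of finite dimensional curved coalgebras as the ordinary pushout of $D\sqcup E\leftarrow C\sqcup C\to C\otimes I_3$, which is manifestly finite dimensional because $I_3$ is.

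What your extra steps (the comparison map $\phi$ and the $\MCdg$ argument) actually establish is that the dual $P^*$ of this coalgebra pushout also computes the homotopy pullback in $\calgp$. This is a genuine compatibility statement between the two a priori different $\infty$-category structures on finite dimensional curved algebras, and your proof of it---reducing to finite dimensional test coalgebras via Remark~\ref{mcdgl}, then using the finite dimensional symmetry $\MCdg(D,Y^*)\cong\MCdg(Y,D^*)$ together with the right Quillen bifunctor property of $\MCdg$---is clean and correct. But it is not needed for the proposition as stated and used in the paper: all subsequent arguments about MC stacks work through $\pccalg_\varnothing$, not $\calgp$.
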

\begin{proof}
	It is enough to check that $\fdcalg$ is closed under finite homotopy limits in the model category $\calgp$. Since homotopy products are usual products, it is closed under homotopy products. So we just need to show that it is closed under homotopy pullbacks. Equivalently, we need to show that the homotopy pushout of a diagram in $\fdccog_*$ remains an object of $\fdccog_*$. To do this, recall that we may compute homotopy pushouts in terms of cylinder objects: since every coalgebra is cofibrant, the homotopy pushout of a diagram $D \leftarrow C \to E$ of coalgebras may be computed as the usual pushout of the diagram $D \sqcup E \leftarrow C \sqcup C \to C\cdot I$ where $C\cdot I$ is a cylinder object for $C$. Since coalgebras are a monoidal model category, if $I$ is an interval object in coalgebras then $C\otimes I$ is an interval object for $C$. But the finite dimensional coalgebra $I_3$ is an interval object, and hence the above homotopy pushout can be computed as the pushout of $D \sqcup E \leftarrow C \sqcup C \to C\otimes I_3$. This pushout is finite dimensional.
	\end{proof}

\begin{defi}Let $\mathcal{D}$ be a finitely complete $\infty$-category. An MC stack with values in $\mathcal{D}$ is a pullback-preserving $\infty$-functor $X:\fdcalg_\varnothing\to \mathcal{D}$ such that $X(0)$ is the terminal object of $\mathcal{D}$.
\end{defi}

	One can identify the pullback preservation condition in simpler terms:

\begin{prop}\label{mcstackeasy}
	Let $\mathcal{D}$ be a finitely complete $\infty$-category and $X:\fdcalg_\varnothing \to \mathcal{D}$ an $\infty$-functor which sends $0$ to the terminal object. Then $X$ is an MC stack if and only if it preserves pullbacks along the following two types of morphisms:
	\begin{enumerate}
		\item square zero extensions.
		\item surjections of curved semisimple algebras.
	\end{enumerate}
\end{prop}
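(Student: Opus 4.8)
The plan is to show that a functor preserving the two specified classes of pullbacks automatically preserves all pullbacks in $\fdcalg_\varnothing$. The key structural input is the classification of finite dimensional curved algebras: every such algebra $A$ fits into a nilpotent extension $A \to A/J_-$ where $J_-$ is the internal curved radical (introduced before Proposition \ref{cssdef}), and $A/J_-$ is curved semisimple, hence by Proposition \ref{structuretheoremCSS} a finite product of curved simple algebras. The nilpotent extension $A \to A/J_-$ factors as a finite tower of square zero extensions by the dualisation of Lemma \ref{cnlem2} (applied to the dual pseudocompact algebra, or directly using the powers of the nilpotent ideal $J_-$). So every finite dimensional curved algebra is built from curved semisimple ones by finitely many square zero extensions.

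The forward direction is trivial, since an MC stack preserves all pullbacks by definition, and square zero extensions and surjections of curved semisimple algebras are maps in $\fdcalg_\varnothing$. For the converse, suppose $X$ preserves pullbacks along maps of types (1) and (2). First I would reduce an arbitrary pullback in $\fdcalg_\varnothing$ to pullbacks along a single map. Recall that a pullback $B \times_D C$ with legs $B \to D \leftarrow C$ can be computed by the standard two-step argument: one factors and uses that preservation of pullbacks along a fixed map $C \to D$ for all $B \to D$ is what must be checked. The essential point is the usual one (as in the proof of Proposition \ref{finlims}, which dualises pullbacks of algebras to pushouts of coalgebras): it suffices to show $X$ preserves pullbacks along an arbitrary morphism $f: C \to D$ of finite dimensional curved algebras, i.e.\ for every $g: B \to D$ the canonical map $X(B\times_D C) \to X(B)\times_{X(D)} X(C)$ is an equivalence.

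The main step is then a devissage on the morphism $f: C \to D$. Any morphism of finite dimensional curved algebras factors, and I would argue that it suffices to treat two cases separately: when $f$ is \emph{injective} and when $f$ is \emph{surjective}. A surjection factors as a tower of square zero extensions followed by a surjection of curved semisimple algebras (using the structure theory above applied to source and target, together with Corollary \ref{CSSsection} which guarantees surjections of semisimple curved algebras split), and each factor is of type (1) or (2); since pullbacks compose and $X$ preserves each factor's pullbacks, $X$ preserves pullbacks along any surjection. For injections, I would use that $f: C \to D$ injective can be complemented: writing $D$ as built from $C$ by square zero extensions via the coradical filtration / internal radical, one reduces again to type (1) maps. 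Here Corollary \ref{CSSsection} and the splitting $B^\# \cong C^\# \oplus (J_+)^\#$ of Proposition \ref{structuretheoremCSS} are the tools that let one peel off one square zero layer at a time while keeping everything finite dimensional.

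The hard part will be organising the devissage so that the pullback $B \times_D C$ really is computed layer-by-layer and each intermediate pullback stays within $\fdcalg_\varnothing$ and involves only maps of types (1) and (2); in particular one must verify that pulling back a square zero extension along an arbitrary $g$ yields again a square zero extension (which is straightforward, as the kernel and its square-zero condition are preserved under base change), and that pulling back a surjection of curved semisimple algebras behaves well. A subtle point is that the base change of a type (2) map need not be type (2), so the cleanest route is probably to reduce \emph{every} morphism to a composite of square zero extensions and a single semisimple surjection \emph{at the level of the map $f$ being pulled back}, and then use that $X$ preserves pullbacks along each piece for \emph{all} choices of the other leg $g$, invoking the pasting law for pullbacks in $\mathcal{D}$. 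Once the factorisation of $f$ is fixed, the pasting law reduces preservation of the total pullback to preservation along each factor, which is exactly the hypothesis.
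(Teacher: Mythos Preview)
Your proposal has the right spirit but contains a genuine gap and misses the key reduction step that makes the paper's argument work.

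First, the reduction. Pullbacks in the $\infty$-category $\fdcalg_\varnothing$ are \emph{homotopy} pullbacks in the model category $\calgp$, not $1$-categorical pullbacks. The paper's opening move is to observe that $\calgp$ is right proper and that every surjection of finite dimensional curved algebras is a fibration (by the dual of Proposition~\ref{injfdsc}), so homotopy pullbacks along surjections are computed as ordinary pullbacks, and it suffices to check that $X$ preserves pullbacks along surjections. This eliminates your injective case entirely; your attempt to handle injections by ``complementing'' via the coradical filtration is both unnecessary and not clearly workable at the $\infty$-categorical level.

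Second, your factorisation claim for surjections is false as stated. You assert that a surjection $A' \twoheadrightarrow A$ factors as a tower of square zero extensions followed by a surjection of curved semisimple algebras. But the target of a surjection of curved semisimple algebras must itself be semisimple, and $A$ is arbitrary. The paper's actual factorisation is more subtle: letting $R,R'$ be the curved semisimple quotients of $A,A'$ and $A''\coloneqq A\times_R R'$, one has $A' \to A'' \to A$ where $A'\to A''$ is a nilpotent extension (hence a composite of type~(1) maps) and $A''\to A$ is the \emph{pullback} of the type~(2) map $R'\twoheadrightarrow R$. The map $A''\to A$ is not itself of type~(2). One then uses the pasting law twice: once for the square involving $R',R$ (to which the type~(2) hypothesis applies directly), and once to paste with the pullback along the nilpotent extension $A'\to A''$. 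Your later paragraph gestures at the pasting law but still insists on a factorisation into type~(1) and type~(2) maps, which does not exist.
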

\begin{proof}
	The forward direction is clear, so assume that $X$ satisfies the two conditions. First observe that pullbacks in the $\infty$-category $\fdcalg_\varnothing$ can be computed as homotopy pullbacks in the model category $\calgp$. Since $\calgp$ is right proper, and every surjection of finite dimensional curved algebras is a fibration by \ref{thm:strongcof}, $X$ preserves pullbacks if and only if it preserves pullbacks along surjections. The proof of \ref{CogStructThm} shows that if $A'\twoheadrightarrow A$ is a surjection of finite dimensional curved algebras, then it fits into a diagram of the form

    $$\begin{tikzcd}
		A' \ar[r, "f"] & A'' \ar[r]\ar[d] \arrow[dr, phantom, "\lrcorner", very near start]& A \ar[d]\\
		& R' \ar[r, two heads]& R
	\end{tikzcd}$$where $R$ and $R'$ are curved semisimple and $f$ is a nilpotent extension. In particular if $B \to A$ is a map of curved algebras, then letting $B'' \to A''$ and $B' \to A'$ be its pullbacks, we obtain a commutative diagram $$\begin{tikzcd}
		B' \ar[r]\ar[d]\arrow[dr, phantom, "\lrcorner", very near start]& B''\ar[r] \ar[d]\arrow[dr, phantom, "\lrcorner", very near start]& B \ar[d] \\
		A' \ar[r, "f"] & A'' \ar[r]\ar[d] \arrow[dr, phantom, "\lrcorner", very near start]& A \ar[d]\\
		& R' \ar[r, two heads]& R.
	\end{tikzcd}$$Since every nilpotent extension is a composition of square zero extensions, $X$ preserves pullbacks along nilpotent extensions. Hence applying $X$ to the above diagram, we obtain a diagram 
	$$\begin{tikzcd}
		XB' \ar[r]\ar[d]\arrow[dr, phantom, "\lrcorner", very near start]& XB''\ar[r] \ar[d]& XB \ar[d] \\
		XA' \ar[r,] & XA'' \ar[r]\ar[d] \arrow[dr, phantom, "\lrcorner", very near start]& XA \ar[d]\\
		& XR' \ar[r]& XR
	\end{tikzcd}$$ in $\mathcal{D}$. The square $$\begin{tikzcd}
		XB''\ar[r] \ar[d]& XB \ar[d] \\
		XR' \ar[r]& XR
	\end{tikzcd}$$ is a pullback square in $\mathcal{D}$, so by pasting we conclude that the square $$\begin{tikzcd}
		XB'\ar[r] \ar[d]& XB \ar[d] \\
		XA' \ar[r]& XA
	\end{tikzcd}$$ is a pullback square, as desired.
\end{proof}

There is a natural $\infty$-category $\mathbf{St}_\mathrm{MC}(\mathcal{D})$ of $\mathcal{D}$-valued MC stacks, defined as a full subcategory of the functor category $\mathbf{Fun}(\fdcalg_\varnothing,\mathcal{D})$. In fact, since pullbacks and the terminal object generate all finite limits, the $\infty$-category $\mathbf{St}_\mathrm{MC}(\mathcal{D})$ is precisely the $\infty$-category $\mathrm{Lex}(\fdcalg_\varnothing,\mathcal{D})$ of left exact functors from $\fdcalg_\varnothing$ to $\mathcal{D}$ (recall that a functor is left exact if it preserves finite limits).

	\begin{prop}\label{mcstackadj}Let $X:\fdcalg_\varnothing\to \mathcal{D}$ be an MC stack. If $\mathcal{D}$ is complete then $X$ admits a continuous extension $\hat X: \pccalg_\varnothing \to \mathcal{D}$ which has a left adjoint.
		\end{prop}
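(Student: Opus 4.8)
The plan is to construct $\hat X$ as the right Kan extension of $X$ along the inclusion $j\colon\fdcalg_\varnothing\into\pccalg_\varnothing$, verify that it is continuous, and then exhibit its left adjoint using the description of $\pccalg_\varnothing$ as a pro-category. Since $\pccalg_\varnothing\simeq\mathbf{pro}(\fdcalg_\varnothing)$, every object $P\in\pccalg_\varnothing$ is a cofiltered limit $P=\varprojlim_i A_i$ of finite dimensional curved algebras, and as $\mathcal{D}$ is complete I would set $\hat X(P)\coloneqq\varprojlim_i X(A_i)$. This is precisely $\mathrm{Ran}_j X$; fully faithfulness of $j$ ensures that $\hat X$ genuinely extends $X$, and by construction $\hat X$ preserves cofiltered limits.

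Next I would check continuity. By \ref{finlims} the category $\fdcalg_\varnothing$ has finite limits, and finite limits of pro-objects are computed levelwise from those of $\fdcalg_\varnothing$; since $X$, being an MC stack, is left exact, it follows that $\hat X$ preserves finite limits. As every small limit is built out of finite limits and cofiltered limits, $\hat X$ then preserves all small limits, i.e.\ it is continuous.

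Finally I would produce the left adjoint. Here I would use the classical identification $\pccalg_\varnothing\simeq\mathbf{pro}(\fdcalg_\varnothing)\simeq\mathrm{Lex}(\fdcalg_\varnothing,\mathcal{S})^{\mathrm{op}}$, where $\mathcal{S}$ is the $\infty$-category of spaces and $\mathrm{Lex}$ denotes finite-limit-preserving functors, valid because $\fdcalg_\varnothing$ is essentially small with finite limits. Given $d\in\mathcal{D}$, the functor $\mathrm{Map}_{\mathcal{D}}(d,X(-))\colon\fdcalg_\varnothing\to\mathcal{S}$ is a composite of the left exact functors $X$ and $\mathrm{Map}_{\mathcal{D}}(d,-)$, hence is itself left exact and so corresponds to a pro-object $L(d)\in\pccalg_\varnothing$. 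Using the pointwise formula for $\mathrm{Ran}_j X$ together with the Yoneda lemma, one computes for $P=\varprojlim_i A_i$ that
\[
\mathrm{Map}_{\pccalg_\varnothing}(L(d),P)\simeq\varprojlim_i\mathrm{Map}_{\mathcal{D}}(d,X(A_i))\simeq\mathrm{Map}_{\mathcal{D}}\bigl(d,\varprojlim_i X(A_i)\bigr)\simeq\mathrm{Map}_{\mathcal{D}}(d,\hat X(P)),
\]
naturally in $d$ and $P$, which exhibits $L$ as left adjoint to $\hat X$.

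The main obstacle is making the last displayed chain precise for \emph{all} pro-objects $P$, not merely the representable ones: one must commute $\mathrm{Map}_{\mathcal{D}}(d,-)$ past the defining cofiltered limit of $\hat X(P)$ and match the resulting end with the Yoneda computation coming from the equivalence $\pccalg_\varnothing\simeq\mathrm{Lex}(\fdcalg_\varnothing,\mathcal{S})^{\mathrm{op}}$. Some care is also needed to ensure that $L(d)$ really defines a pro-object, which rests on the essential smallness of $\fdcalg_\varnothing$. Working concretely with pro-objects in this way has the advantage of sidestepping the fact that $\pccalg_\varnothing$ itself is not presentable, so that the $\infty$-categorical adjoint functor theorem is not directly available for a limit-preserving functor out of it.
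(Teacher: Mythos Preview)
Your proposal is correct, and the overall architecture---extend $X$ via right Kan extension along $j$, verify continuity, then produce the adjoint---parallels the paper's. The two arguments diverge mainly in how the adjoint is obtained.

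The paper dualises to coalgebras and ind-categories, checks that the extension $\hat Y$ is cocontinuous by explicit computations for arbitrary coproducts and for pushouts (this is precisely your ``finite limits of pro-objects are computed levelwise'' claim, spelled out in detail, including the reindexing that lets one present a finite diagram of ind-objects by a single filtered system of finite diagrams), and then invokes an adjoint functor theorem using a colimit-dense subcategory to conclude that a right adjoint exists. You instead construct the left adjoint directly, via the identification $\pccalg_\varnothing\simeq\mathrm{Lex}(\fdcalg_\varnothing,\mathcal{S})^{\mathrm{op}}$: sending $d\in\mathcal{D}$ to the pro-object corresponding to the left exact functor $\mathrm{Map}_{\mathcal{D}}(d,X(-))$.

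Your route buys an explicit formula for $L$ and, as you note, sidesteps any appeal to an adjoint functor theorem, which is genuinely delicate here since $\pccalg_\varnothing$ is not presentable. The paper's route is more careful about the reindexing step that underlies your levelwise continuity assertion, and does not need to invoke the $\mathrm{Lex}$ description of pro-categories. If you write this up, the place to add a reference or a sentence of justification is the claim that a finite diagram in $\mathbf{pro}(\fdcalg_\varnothing)$ can be presented by a single cofiltered system of finite diagrams in $\fdcalg_\varnothing$; once that is granted, your argument goes through cleanly.
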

	\begin{proof}
		We pass to opposite categories and consider the functor $Y\coloneqq X^\mathrm{op}:\fdccog_* \to \mathcal{E}\coloneqq \mathcal{D}^\mathrm{op}$. By \cite[5.3.5.8]{Lurie11a} we may extend $Y$ to a filtered colimit preserving functor $\hat Y: \ccogp \to \mathcal{E}$; note that this extends $Y$ since the restricted Yoneda embedding $j:\mathcal{C} \to \mathbf{ind}\mathcal{C}$ is fully faithful. It suffices to check that $\hat Y$ is cocontinuous and has a right adjoint. The latter follows from the former by an application of \cite[4.13]{NRS} with the colimit-dense subcategory $\fdccog_*$. To check that $Y$ is cocontinuous, we need only check that it preserves coproducts and pushouts \cite[4.4.2.7]{Lurie11a}. To do this, we will use that $j$ preserves finite colimits \cite[5.3.5.14]{Lurie11a}. Note that by assumption the functor $Y$ preserves finite colimits. Let $I$ be a set and $\{C_i\}_{i\in I}$ a collection of curved coalgebras. Let $I'$ be the filtered set of finite subsets of $I$, so that $\coprod_{i\in I}C_i\simeq \varinjlim_{J\in I'}\coprod_{i\in J}C_i$. Choose a filtered set $L$ such that each $C_i$ can be written as a filtered colimit of the form $\varinjlim_{l\in L}C_i^l$ with each $C_i^l$ finite dimensional. Putting this together we get an equivalence $\coprod_{i\in I}C_i\simeq \varinjlim_{J\in I'}\coprod_{i\in J}\varinjlim_{l\in L}C_i^l$. Since we chose $L$ independently of $i$ we may pass the inner colimit through the coproduct. Putting $S\coloneqq I'\times L$, which is filtered, we hence have an equivalence $\coprod_{i\in I}C_i\simeq \varinjlim_{(J,l)\in S}\coprod_{i\in J}C_i^l$. Note that for a fixed $(J,l)$ the coalgebra $\coprod_{i\in J}C_i^l$ is finite dimensional. We hence have equivalences 
		\begin{align*}
			\hat Y (\coprod_{i\in I}C_i) &\simeq \hat Y(\varinjlim_{(J,l)\in S}\coprod_{i\in J}C_i^l)&\\
&\simeq	 \varinjlim_{(J,l)\in S}\hat Y(\coprod_{i\in J}C_i^l)&\text{since }\hat Y\text{ preserves filtered colimits}\\
&\simeq	 \varinjlim_{(J,l)\in S}Y(\coprod_{i\in J}C_i^l)&\text{since }\hat Y\text{ extends }Y\\
&\simeq	 \varinjlim_{(J,l)\in S}\coprod_{i\in J}Y(C_i^l)&\text{since } Y\text{ preserves finite coproducts}\\
&\simeq	 \varinjlim_{J\in I'}\coprod_{i\in J}\varinjlim_{l\in L}{Y}(C_i^l)&\text{passing the $L$-colimit back into the coproduct}\\
&\simeq	 \varinjlim_{J\in I'}\coprod_{i\in J}{\hat Y}(C_i)&\text{since }\hat Y\text{ preserves filtered colimits}\\
&\simeq	 \coprod_{i\in I}{\hat Y}(C_i)
			\end{align*}
		So $\hat Y$ preserves coproducts. Checking that $\hat Y$ preserves pushouts is similar, but easier since a pushout diagram is finite. Let $C' \leftarrow C \to C''$ be a span in $\ccogp$ with pushout $P$. Let $L$ be a filtered set such that $C' \leftarrow C \to C''$ is the colimit over $l \in L$ of diagrams of finite dimensional curved coalgebras of the form $C'_l \leftarrow C_l \to C''_l$. Let $P_l$ be the pushout of this diagram; it is a finite dimensional curved coalgebra. Since colimits commute, $P$ is the filtered colimit of the $P_l$. So $\hat Y(P)$ is the filtered colimit of the $Y(P_l)$. By assumption each $Y(P_l)$ is the pushout of the diagram $YC'_l \leftarrow YC_l \to YC''_l$, and hence $\hat Y(P)$ is the pushout of the diagram $\hat YC'_l \leftarrow \hat YC_l \to \hat  YC''_l$, as required.
		\end{proof}
		\begin{cor}\label{stackrepcor}
			Let $X:\fdcalg_\varnothing\to \sSet $ be an MC stack valued in simplicial sets. Then $X$ is prorepresentable: there is a pseudocompact curved algebra $A_X$ and a natural equivalence $$X(R)\simeq \mathrm{Map}_{\pccalg_\varnothing}(A_X,R).$$
			\end{cor}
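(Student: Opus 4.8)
The plan is to deduce this immediately from Proposition \ref{mcstackadj} by taking $\mathcal{D}=\sSet$ and recognising the promised representing object as the value of the left adjoint produced there. First I would note that $\sSet$ is complete, so Proposition \ref{mcstackadj} applies and yields a continuous extension $\hat X:\pccalg_\varnothing\to\sSet$ together with a left adjoint $F:\sSet\to\pccalg_\varnothing$. The key observation is then that for any pseudocompact curved algebra $R$ there is a chain of natural equivalences
\begin{align*}
	X(R)&\simeq \hat X(R)\\
	&\simeq \operatorname{Map}_{\sSet}(\Delta^0,\hat X(R))\\
	&\simeq \operatorname{Map}_{\pccalg_\varnothing}(F(\Delta^0),R),
\end{align*}
where the first equivalence holds because $\hat X$ extends $X$ on the finite dimensional (hence all pseudocompact, by continuity) objects, and the last is the adjunction $F\dashv \hat X$. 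Setting $A_X\coloneqq F(\Delta^0)$ then gives exactly the claimed prorepresentability.

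The steps, in order, are: (1) verify that $\sSet$ is a complete $\infty$-category so that the hypotheses of \ref{mcstackadj} are met, and record the adjoint pair $F\dashv \hat X$; (2) identify $A_X$ as the image under the left adjoint $F$ of the point $\Delta^0$ (the terminal, equivalently the unit, object of $\sSet$); (3) assemble the Yoneda-style chain of natural equivalences above, checking that naturality in $R\in\fdcalg_\varnothing$ is preserved throughout; and (4) observe that since $X$ was only defined on $\fdcalg_\varnothing$ but $\hat X$ agrees with $X$ there, the displayed formula recovers $X$ on the nose, while simultaneously exhibiting its canonical extension to all of $\pccalg_\varnothing$.

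The one point requiring a little care — and the closest thing to an obstacle — is the very first equivalence $X(R)\simeq\hat X(R)$ for \emph{finite dimensional} $R$, i.e.\ the assertion that $\hat X$ genuinely extends $X$. In the proof of \ref{mcstackadj} this is guaranteed because the restricted Yoneda embedding $j:\fdccog_*\to\mathbf{ind}(\fdccog_*)\simeq\ccogp$ is fully faithful, so that the Kan extension used to build $\hat Y=\hat X^{\mathrm{op}}$ restricts back to $Y$ on the finite dimensional objects; I would simply invoke this. Beyond that, everything is a formal consequence of the adjunction, and the result follows. An entirely analogous argument, replacing $\sSet$ by $\dgcat$ (which is likewise complete), will handle the $\dgcat$-valued case giving the noncommutative moduli space representability alluded to in the introduction.
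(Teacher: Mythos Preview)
Your proposal is correct and is essentially identical to the paper's proof: the paper also invokes Proposition \ref{mcstackadj} to obtain a left adjoint (called $G$ there) to the continuous extension $\hat X$, then sets $A_X\coloneqq G(*)$ and uses $\mathrm{Map}(*,\hat X R)\simeq \hat X(R)\simeq X(R)$. The only cosmetic differences are notational ($G$ versus your $F$, $*$ versus your $\Delta^0$).
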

		\begin{proof}
			Let $G$ be the left adjoint of $\hat X$, so that for a simplicial set $K$ we have natural equivalences $ \mathrm{Map}(GK,R)\simeq \mathrm{Map}(K,\hat XR)$. Putting $K=*$ we get an equivalence $\hat X(R)\simeq  \mathrm{Map}(G(*),R)$. Since $\hat X$ extends $X$, we have an equivalence $X(R)\simeq \hat X(R)$ and hence we may take the algebra $A_X$ to be $G(*)$.
			\end{proof}
		
		\begin{rem}
			Writing $A_X$ as a cofiltered limit $\varprojlim_i A^i_X$, there is a natural equivalence $$\mathrm{Map}_{\pccalg_\varnothing}(A_X,A)\simeq \varinjlim_i \mathrm{Map}_{\fdcalg_\varnothing}(A^i_X,A).$$
			\end{rem}
		Observe that the functor $\pccalg_\varnothing \to \calgp$ given by $A \mapsto \Omega(A^*)$ is a contravariant equivalence. We denote its inverse by $A^!\coloneqq (\check B A)^*$. The following is a global, curved and nonconnective analogue of the main result of \cite[\S3]{luriedagx}. 
		\begin{prop}\label{lurierem}
			The $\infty$-functor $\Psi:\calgp \to \mathbf{St}_\mathrm{MC}(\sSet)$ defined by
			$$\Psi(A)(R) \coloneqq \mathrm{Map}_{\calgp}(\Omega(R^*), A)$$
			is an equivalence from $\calgp$ to the $\infty$-category of MC stacks in simplicial sets.
			\end{prop}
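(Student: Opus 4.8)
The plan is to exhibit $\Psi$ as the composite of two equivalences: first the contravariant equivalence $A \mapsto A^! = (\check B A)^*$ between $\calgp$ and $\pccalg_\varnothing$, and then the equivalence between $\pccalg_\varnothing$ and the $\infty$-category of MC stacks valued in simplicial sets. The second equivalence is the content of the prorepresentability result, so the main work is to unwind definitions and assemble the pieces already proved. First I would observe that by the Yoneda-style reasoning of Corollary \ref{stackrepcor} and the remark following it, every MC stack $X$ is prorepresented by a pseudocompact curved algebra $A_X = G(*)$, giving a natural equivalence $X(R) \simeq \mathrm{Map}_{\pccalg_\varnothing}(A_X, R)$. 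This produces a candidate inverse functor $\mathbf{St}_\mathrm{MC}(\sSet) \to \pccalg_\varnothing$ sending $X$ to $A_X$.

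\textbf{Key steps.} The argument would proceed as follows. Step one: identify the formula for $\Psi$ in pseudocompact terms. Using the contravariant equivalence $\pccalg_\varnothing \simeq \calgp$, $R \mapsto \Omega(R^*)$, together with bar-cobar adjunction, we have natural equivalences
\[
\mathrm{Map}_{\calgp}(\Omega(R^*), A) \simeq \mathrm{Map}_{\ccogp}(R^*, \check B A) \simeq \mathrm{Map}_{\pccalg_\varnothing}((\check B A)^*, R) = \mathrm{Map}_{\pccalg_\varnothing}(A^!, R),
\]
where the middle step uses that linear duality is a contravariant equivalence between $\ccogp$ and $\pccalg_\varnothing$ and that the mapping spaces are computed via the simplicial enrichment coming from the cylinder object $I_3$ (equivalently, via $3$-homotopy classes, cf.\ Proposition \ref{prop:MCcharacterization1} and the discussion of $\MCdg$). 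Thus $\Psi(A)$ is exactly the functor prorepresented by the pseudocompact curved algebra $A^!$. Step two: verify that $\Psi(A)$ is genuinely an MC stack, i.e.\ that it preserves pullbacks and sends $0$ to the terminal object; by Proposition \ref{mcstackeasy} it suffices to check preservation of pullbacks along square zero extensions and along surjections of curved semisimple algebras, which follows because the representable functor $\mathrm{Map}_{\pccalg_\varnothing}(A^!,-)$ preserves all limits and because $\fdcalg_\varnothing$ has finite limits by Proposition \ref{finlims}. Step three: establish essential surjectivity and full faithfulness. Essential surjectivity is precisely Corollary \ref{stackrepcor}: given $X$, take $A \coloneqq (A_X)^!{}^{-1}$, that is the curved algebra with $A^! \simeq A_X$, and then $\Psi(A) \simeq X$. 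Full faithfulness follows from full faithfulness of the equivalence $A \mapsto A^!$ combined with the (contravariant) Yoneda lemma identifying maps of prorepresentable functors with maps of the representing pseudocompact algebras.

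\textbf{Main obstacle.} The delicate point is the homotopy-coherent bookkeeping: all of the above equivalences must be assembled as equivalences of $\infty$-categories, not merely of homotopy categories, and the mapping-space identifications must be natural at the $\infty$-categorical level. Concretely, the hardest part will be justifying that the extension $\hat X$ of an MC stack from $\fdcalg_\varnothing$ to all of $\pccalg_\varnothing$ (Proposition \ref{mcstackadj}) is compatible with the simplicial mapping spaces $\mathrm{Map}_{\pccalg_\varnothing}(-,-)$ arising from the model structure on $\ccogp$, so that the adjoint functor $G$ of Proposition \ref{mcstackadj} really does produce a prorepresenting object functorially in $X$. Once this coherence is in place, the verification that $\Psi$ and $X \mapsto A_X$ are mutually inverse is formal. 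I would handle the coherence by working throughout with the $\infty$-categorical (quasicategorical) enrichment induced by the interval object $I_3$, invoking the cited results of Lurie on ind/pro-extensions \cite[5.3.5.8, 5.3.5.14]{Lurie11a} and the adjoint functor criterion \cite[4.13]{NRS}, exactly as in the proof of Proposition \ref{mcstackadj}, so that naturality is automatic rather than checked by hand.
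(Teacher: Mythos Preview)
Your proposal is correct and follows essentially the same route as the paper: use the contravariant equivalence $R \mapsto \Omega(R^*)$ between $\fdcalg_\varnothing$ and $\fdccog_*$ to identify $\Psi(A)$ with a (pro)representable functor, invoke Corollary~\ref{stackrepcor} for essential surjectivity, and invoke the Yoneda lemma for full faithfulness. The paper's proof is terser---it does not explicitly verify that $\Psi(A)$ lands in MC stacks, nor does it dwell on the $\infty$-categorical coherence you flag as the main obstacle---but the underlying argument is the same.
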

		
		\begin{proof}
		$\Psi$ is fully faithful by the Yoneda lemma. By \ref{stackrepcor}, every MC stack $X$ in simplicial sets is equivalent to $\mathrm{Map}_{\pccalg_\varnothing}(A_X,A)$ for some pseudocompact $A$. Since $R \mapsto \Omega(R^*)$ is a contravariant equivalence we have $X(R)\simeq \mathrm{Map}_{\calgp}(\Omega(R^*),\Omega(A_X^*))\simeq \Psi(\Omega(A_X^*))(R)$. Hence $\Psi$ is essentially surjective.
			\end{proof}

		\subsection{MC stacks over different bases}

Let $\fdaalg$, $\alg^\mathrm{fd}$ and $\acalg_\mathrm{fd}$ be the $\infty$-categories of finite dimensional augmented dg algebras, dg algebras, and augmented curved dg algebras respectively. Exactly as in \ref{finlims} each of these $\infty$-categories has finite limits. If $\mathcal{D}$ is an $\infty$-category with finite limits, we define $\infty$-categories of MC stacks
\begin{align*}\mathbf{St}^\mathrm{aug,dg}_\mathrm{MC}(\mathcal{D})&\coloneqq \mathrm{Lex}(\fdaalg,\mathcal{D})\\
	\mathbf{St}^\mathrm{dg}_\mathrm{MC}(\mathcal{D})&\coloneqq \mathrm{Lex}(\alg^\mathrm{fd},\mathcal{D})\\
	\mathbf{St}^\mathrm{aug}_\mathrm{MC}(\mathcal{D})&\coloneqq \mathrm{Lex}(\acalg_\mathrm{fd} ,\mathcal{D}).
\end{align*}
Let $\aalg$, $\alg$, and $\acalg$ be the $\infty$-categories of augmented dg algebras, dg algebras, and augmented curved algebras respectively, all viewed up to MC equivalence. Exactly as in \ref{lurierem}, the $\infty$-functors
		\begin{align*}\Psi^{\mathrm{aug}}_\mathrm{dg}: \aalg_\mathrm{MC} &\longrightarrow \mathbf{St}^\mathrm{aug,dg}_\mathrm{MC}(\sSet)\\
			\Psi^{\mathrm{aug}}: \acalg&\longrightarrow \mathbf{St}^\mathrm{dg}_\mathrm{MC}(\sSet)\\
			\Psi_\mathrm{dg}: \alg&\longrightarrow\mathbf{St}^\mathrm{aug}_\mathrm{MC}(\sSet)
		\end{align*}
		are all equivalences. If $R$ is a (curved) algebra then let $pR\coloneqq R\oplus \ground$ be the associated (curved) augmented algebra. The commutative diagram of right adjoints
			$$\begin{tikzcd}
				\aalg  \ar[r,"incl"] & \acalg\\
				\alg \ar[u,"p"] \ar[r,"incl"] & \calg_\varnothing \ar[u,"p",swap]
			\end{tikzcd}$$from \ref{sliceadjns} preserves finite dimensional algebras: the horizontal maps are inclusions, and the vertical maps are adjoining a unit. Note that this is true as $1$-categories or as $\infty$-categories, since every algebra is fibrant. Hence by restricting we obtain a commutative diagram of $\infty$-categories and finite limit preserving functors
			$$\begin{tikzcd}
				\fdaalg  \ar[r,"incl"] & \acalg_\mathrm{fd}\\
				\alg^\mathrm{fd} \ar[u,"p"] \ar[r,"incl"]& \fdcalg_\varnothing .\ar[u,"p", swap]
			\end{tikzcd}$$If $\mathcal{D}$ is a finitely complete $\infty$-category, by taking MC stacks we obtain a commutative diagram
			$$\begin{tikzcd}
				\mathbf{St}^\mathrm{aug,dg}_\mathrm{MC}(\mathcal{D})  \ar[d,"p^*",swap] & 	\mathbf{St}^\mathrm{aug}_\mathrm{MC}(\mathcal{D}) \ar[l,"res", swap]\ar[d,"p^*"]\\
				\mathbf{St}^\mathrm{dg}_\mathrm{MC}(\mathcal{D}) & \mathbf{St}_\mathrm{MC}(\mathcal{D}) \ar[l,"res", swap]
			\end{tikzcd}$$of categories of left exact functors. The maps running horizontally are the restriction functors and we have $(p^*X)(R)\simeq X(pR)$. If $\mathcal{D}$ is $\sSet$ then the above commutative diagram corresponds along the various $\Psi$ equivalences to the transposed commutative diagram of right adjoints
			$$\begin{tikzcd}
				\aalg \ar[d,"incl",swap] & \alg \ar[l,"p",swap] \ar[d,"incl"]\\
				\acalg & \calg_\varnothing \ar[l,"p"].
			\end{tikzcd}$$
			In particular, given $A\in \alg$, we can view it as controlling four different moduli problems, defined on the four categories $\fdaalg$, $\acalg_\mathrm{fd}$, $\alg^\mathrm{fd}$, or $\fdcalg_\varnothing$. Note that this above diagram has left adjoints, so in the special case $\mathcal{D}=\sSet$ we obtain the commutative diagram
			$$\begin{tikzcd}
				\mathbf{St}^\mathrm{aug,dg}_\mathrm{MC}(\sSet)  \ar[dd,"p^*", {name=pl}, bend left=10]\ar[rr, "incl_!", bend left=8] &\bot&	\mathbf{St}^\mathrm{aug}_\mathrm{MC}(\sSet) \ar[ll,"res", bend left=7]\ar[dd,"p^*", bend left=10] \\
				\dashv&&\dashv\\
				\mathbf{St}^\mathrm{dg}_\mathrm{MC}(\sSet) \ar[uu, bend left=10, "H_!"]\ar[rr, "incl_!", bend left=8] &\bot& \mathbf{St}_\mathrm{MC}(\sSet) \ar[ll,"res", bend left=7]\ar[uu, bend left=10, "H_!"]
			\end{tikzcd}$$
			where $F_!(\Psi(A)) \coloneqq \Psi(FA)$.
			
			\subsection{Deformation theory}
			We discuss the interaction of our MC stacks with the deformation functors of \cite{pridhamunifying, luriedagx}. Say that a dg $\ground$-algebra $\Gamma$ is Artinian local if it is finite dimensional and augmented over $\ground$, with nilpotent augmentation ideal. This is equivalent to being finite dimensional and having a unique two-sided maximal dg ideal $\mathfrak{m}$ with $\Gamma/\mathfrak{m}\cong \ground$. Observe that a nontrivial field extension of $\ground$ is never Artinian local in this sense, although it is Artinian as an abstract ring. Let $\mathbf{Art}$ denote the category of Artinian local dg algebras and $\mathbf{proArt}$ its procategory. Since the opposite category $\mathbf{Art}^\mathrm{op}$ is equivalent to $\cncog_\mathrm{fd}$, the category of finite dimensional conilpotent dg coalgebras, there is an equivalence $\mathbf{proArt}^\mathrm{op}\simeq \cncog$. One can transfer the model structure on $\cncog$ to $\mathbf{proArt}$; this yields a model structure where a weak equivalence $f$ is a map such that $\Omega(f^*)$ is a quasi-isomorphism of dg algebras. A fibration is a map $f$ such that $\varinjlim f$ is a degreewise surjection.
			
			The category $\mathbf{proArt}^{\leq 0}$ of connective pro-Artinian local dg algebras is also a model category, with weak equivalences the quasi-isomorphisms. The inclusion functor $\mathbf{proArt}^{\leq 0} \to \mathbf{proArt}$ is right Quillen, with left adjoint the connective cover functor $\tau_{\leq 0}:\mathbf{Art} \to \mathbf{Art}^{\leq 0}$ \cite[\S3.3]{B22}.
			
			Let $\iota:\cncog \to \acog$ be the forgetful functor. Exactly as in the curved case, this is left adjoint to the $\mathrm{nil}$ functor, which sends a coaugmented dg coalgebra to its maximal conilpotent subcoalgebra. The functor $\iota$ is left Quillen by \ref{mcconil}, and as in \ref{qiMC} is part of a Quillen coreflection. Dualising, we obtain an inclusion $\mathbf{proArt} \to \pcaalg$ which is part of a Quillen reflection.

			We view $\mathbf{proArt}$ as an $\infty$-category. It is equivalent to the procategory of the $\infty$-category $\mathbf{Art}$ of Artinian local dg algebras regarded up to weak equivalence. Similarly, $\mathbf{proArt}^{\leq 0}$ is an $\infty$-category, equivalent to the procategory of $\mathbf{Art}^{\leq 0}$, connective Artinian local dg algebras viewed up to quasi-isomorphism. Moreover $\pcaalg\simeq \mathbf{pro}\fdaalg$ is an $\infty$-category under the MC equivalences. By the above we have a sequence of reflective inclusions of $\infty$-categories $$\mathbf{proArt}^{\leq 0} \into \mathbf{proArt} \into \pcaalg.$$
			Since reflective functors create limits, we may compute limits in any of these categories as limits in $\pcaalg$. Restricting to finite dimensional algebras, we obtain a similar sequence of inclusions of $\infty$-categories
			$$\mathbf{Art}^{\leq 0} \into \mathbf{Art} \into \fdaalg.$$Since the $\infty$-category $\fdaalg$ has finite limits, the $\infty$-categories $\mathbf{Art}^{\leq 0} $ and $\mathbf{Art} $ also have finite limits, and the inclusions in the above sequence preserve these limits.

			\begin{defi}
				Let $\mathcal{D}$ be a finitely complete $\infty$-category. A formal moduli problem with values in $\mathcal{D}$ is a pullback-preserving $\infty$-functor $X:\mathbf{Art}^{\leq0} \to \mathcal{D}$ with $X(\ground)$ the terminal object.  A nonconnective formal moduli problem with values in $\mathcal{D}$ is a pullback-preserving $\infty$-functor $X:\mathbf{Art}\to \mathcal{D}$ with $X(\ground)$ the terminal object.
			\end{defi}
			We denote the $\infty$-category of formal moduli problems by $\mathbf{Def}^{\leq 0}(\mathcal{D})$ and the $\infty$-category of nonconnective formal moduli problems valued in $\mathcal{D}$ by $\mathbf{Def}(\mathcal{D})$. The inclusions $$\mathbf{Art}^{\leq 0} \into \mathbf{Art} \into \fdaalg$$preserve finite limits, and pullback along them hence induces maps $$\mathbf{St}^\mathrm{aug,dg}_\mathrm{MC}(\mathcal{D}) \to \mathbf{Def}(\mathcal{D})\to\mathbf{Def}^{\leq 0}(\mathcal{D})$$between categories of formal moduli problems.

			\begin{prop}
				The $\infty$-category $\mathbf{Def}^{\leq 0}(\sSet)$ is equivalent to the $\infty$-category of formal $\mathbb{E}_1$-moduli problems from \cite{luriedagx}.
			\end{prop}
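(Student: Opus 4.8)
The plan is to show that the two notions of moduli problem coincide by carefully comparing the $\infty$-categories on which they are defined. Lurie's formal $\mathbb{E}_1$-moduli problems in \cite{luriedagx} are pullback-preserving functors $X:\mathbf{Art}^{\leq 0}_{\mathbb{E}_1}\to \sSet$ sending the ground field to the terminal object, where $\mathbf{Art}^{\leq 0}_{\mathbb{E}_1}$ is Lurie's $\infty$-category of connective $\mathbb{E}_1$-Artinian local algebras. The essential content is therefore the identification of our $\infty$-category $\mathbf{Art}^{\leq 0}$ of connective Artinian local dg algebras, viewed up to quasi-isomorphism, with Lurie's $\infty$-category of connective $\mathbb{E}_1$-Artinian algebras. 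Once this equivalence of source categories is established, the equivalence of functor categories $\mathrm{Lex}(-,\sSet)$ follows formally, since a left exact functor out of one category transports to the other along an equivalence.

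First I would recall Lurie's definition of his Artinian $\infty$-category: an $\mathbb{E}_1$-algebra $\Gamma$ over $\ground$ is Artinian if it is connective, has finite-dimensional total cohomology, and its $\pi_0$ is a local Artinian ring with residue field $\ground$. I would then argue that an associative dg algebra model for such an object is precisely a connective Artinian local dg algebra in our sense, i.e.\ a finite-dimensional connective dg algebra augmented over $\ground$ with nilpotent augmentation ideal; here the finite-dimensionality in each degree combined with connectivity and boundedness of cohomology are reconciled using that any such $\mathbb{E}_1$-algebra admits a finite-dimensional strictly associative model. The key point is that the $\infty$-category presented by our model category $\mathbf{Art}^{\leq 0}$ (with quasi-isomorphisms as weak equivalences) is equivalent to the $\infty$-categorical localisation defining Lurie's $\mathbf{Art}^{\leq 0}_{\mathbb{E}_1}$. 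This is where I would invoke the general principle that the $\infty$-category underlying a model category of (augmented, connective, finite-dimensional) dg algebras with quasi-isomorphisms computes the same mapping spaces as the corresponding $\infty$-category of $\mathbb{E}_1$-algebras, a rectification-type statement for associative algebras over a field.

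The main obstacle will be matching the finiteness and localness conditions precisely, and in particular verifying that the two ambient $\infty$-categories agree on the nose rather than merely being related by an adjunction. The delicate issue is that Lurie works with $\mathbb{E}_1$-algebras defined via an operadic/$\infty$-categorical formalism while we work with strict dg algebras; bridging these requires a rectification result identifying the $\infty$-category of $\mathbb{E}_1$-algebras over a field with the dg-localisation, together with care that the Artinian conditions (finite total cohomology, local $\pi_0$ with residue field $\ground$, nilpotent augmentation ideal) transport correctly under this rectification. I would also need to confirm that pullbacks are computed compatibly, but this is handled by the right properness arguments already used in the proof of \ref{finlims} and \ref{mcstackeasy}. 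Having established the equivalence $\mathbf{Art}^{\leq 0}\simeq \mathbf{Art}^{\leq 0}_{\mathbb{E}_1}$, the induced equivalence on left exact functors $\mathbf{Def}^{\leq 0}(\sSet)=\mathrm{Lex}(\mathbf{Art}^{\leq 0},\sSet)\simeq \mathrm{Lex}(\mathbf{Art}^{\leq 0}_{\mathbb{E}_1},\sSet)$ is immediate, completing the proof.
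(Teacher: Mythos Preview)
Your approach has a genuine gap. You characterize Lurie's formal $\mathbb{E}_1$-moduli problems as ``pullback-preserving functors $X:\mathbf{Art}^{\leq 0}_{\mathbb{E}_1}\to \sSet$ sending the ground field to the terminal object,'' but this is not Lurie's definition. In \cite[1.1.14]{luriedagx} Lurie only requires preservation of those pullback squares in which one leg is a \emph{small} morphism (equivalently, by \cite[1.1.17]{luriedagx}, an elementary one). Small morphisms are in particular $\pi_0$-surjective, so not every morphism in $\mathbf{Art}^{\leq 0}$ is small, and Lurie's condition is a priori strictly weaker than full left exactness. Consequently, even after you identify the source $\infty$-categories via rectification, transporting along $\mathrm{Lex}(-,\sSet)$ does not finish the argument: you still owe a proof that Lurie's class of functors coincides with the left exact ones.

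The paper's route is different and more direct. It observes that both notions are full subcategories of the same ambient functor category $\mathbf{Fun}(\mathbf{Art}^{\leq 0},\sSet)$, so one need only compare the membership conditions. On the paper's side, an argument in the style of Proposition~\ref{mcstackeasy} (which in the Artinian local setting simplifies, since every surjection is already a finite composite of square-zero extensions and no semisimple step is needed) shows that preserving all pullbacks is equivalent to preserving pullbacks along square-zero extensions. On Lurie's side, \cite[3.2.4]{luriedagx} provides the matching reduction. Both conditions thus collapse to the same intermediate one. Your concern about rectifying the source categories is legitimate and something the paper does take for granted, but the step you are actually missing is this comparison of the two pullback-preservation conditions.
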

			\begin{proof}
				Since both are constructed as full subcategories of $\mathbf{Fun}(\mathbf{Art}^{\leq 0}_\mathrm{q.i.},\sSet)$ it suffices to show that a functor $X$ is a $\sSet$-valued formal moduli problem in our sense if and only if it is a formal $\mathbb{E}_1$-moduli problem in the sense of Lurie. As in the proof of \ref{mcstackeasy}, a functor $X$ is an $\sSet$-valued formal moduli problem if and only if $X(\ground)$ is contractible and $X$ preserves pullbacks along square zero extensions. The equivalence between the two notions now follows from \cite[3.2.4]{luriedagx}.
			\end{proof}
			\begin{prop}
			The $\infty$-functor $\Psi_\mathrm{q.i.}:\aalg_\mathrm{q.i.} \to \mathbf{Def}(\sSet)$ defined by
				$$\Psi_\mathrm{q.i.}(A)(R) \coloneqq \mathrm{Map}_{\aalg_\mathrm{q.i.}}(\Omega(R^*), A)$$
				is an equivalence.
			\end{prop}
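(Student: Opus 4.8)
The plan is to prove this in exactly the same spirit as the global statement \ref{lurierem}, but over the Artinian-local base $\mathbf{Art}$ and with quasi-isomorphisms in place of MC equivalences. The organising idea is that both the source $\aalg_\mathrm{q.i.}$ and the target $\mathbf{Def}(\sSet)$ are ind-completions of one and the same small $\infty$-category of finite objects, and that $\Psi_\mathrm{q.i.}$ is precisely the comparison functor induced by this identification; assembling the equivalences then makes $\Psi_\mathrm{q.i.}$ an equivalence by construction, so that, unlike in \ref{lurierem}, no separate full faithfulness argument is needed.

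First I would treat the source side. The (augmented) conilpotent bar-cobar adjunction $\Omega:\cncog\rightleftarrows\aalg_\mathrm{q.i.}:\check B$ is a Quillen equivalence, so it induces an equivalence of $\infty$-categories $\aalg_\mathrm{q.i.}\simeq\cncog$; since every conilpotent coalgebra is cofibrant, the derived $\Omega$ is fully faithful and sends the finite-dimensional conilpotent coalgebras $\cncog_\mathrm{fd}$ to the objects $\Omega C$ with $C\in\cncog_\mathrm{fd}$. Combining the equivalence $\cncog\simeq\mathbf{proArt}^\mathrm{op}$ recorded in the deformation-theory discussion with the duality $\mathbf{Art}^\mathrm{op}\simeq\cncog_\mathrm{fd}$, $R\mapsto R^*$, gives, just as $\ccog_*\simeq\mathbf{ind}(\fdccog_*)$, an equivalence $\cncog\simeq\mathbf{ind}(\cncog_\mathrm{fd})$ under which the generating finite objects are exactly the $\Omega(R^*)$ appearing in the statement. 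Thus $\aalg_\mathrm{q.i.}\simeq\mathbf{ind}(\cncog_\mathrm{fd})$, with $R^*$ (Yoneda-embedded) corresponding to $\Omega(R^*)$.

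Next I would treat the target side. As recorded in the excerpt, $\mathbf{Art}$ has finite limits, so $\cncog_\mathrm{fd}\simeq\mathbf{Art}^\mathrm{op}$ has finite colimits; hence by \cite[5.3.5.4]{Lurie11a} the presheaf embedding identifies $\mathbf{ind}(\cncog_\mathrm{fd})$ with the full subcategory of left exact presheaves, giving $\mathbf{ind}(\cncog_\mathrm{fd})\simeq\mathrm{Lex}((\cncog_\mathrm{fd})^\mathrm{op},\sSet)=\mathrm{Lex}(\mathbf{Art},\sSet)=\mathbf{Def}(\sSet)$. Composing with the source-side equivalences yields $\aalg_\mathrm{q.i.}\simeq\mathbf{Def}(\sSet)$, and it remains to identify this composite with $\Psi_\mathrm{q.i.}$. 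Unwinding the embedding, the left exact functor attached to $A$ sends $R\in\mathbf{Art}$ to $\mathrm{Map}_{\mathbf{ind}(\cncog_\mathrm{fd})}(R^*,X)$, where $X$ corresponds to $A$; transporting along $\mathbf{ind}(\cncog_\mathrm{fd})\simeq\cncog\simeq\aalg_\mathrm{q.i.}$ and using that the Yoneda image of $R^*$ is $\Omega(R^*)$, this is $\mathrm{Map}_{\aalg_\mathrm{q.i.}}(\Omega(R^*),A)=\Psi_\mathrm{q.i.}(A)(R)$. In particular this re-proves that $\Psi_\mathrm{q.i.}(A)$ really is a nonconnective formal moduli problem: left exactness is automatic from the presheaf description, and $\Psi_\mathrm{q.i.}(A)(\ground)\simeq\mathrm{Map}_{\aalg_\mathrm{q.i.}}(\Omega(\ground),A)\simeq\mathrm{Map}_{\aalg_\mathrm{q.i.}}(\ground,A)\simeq *$.

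The main obstacle is the careful $\infty$-categorical bookkeeping needed to make the identification $\cncog\simeq\mathbf{ind}(\cncog_\mathrm{fd})$ precise and to match it with the concrete formula for $\Psi_\mathrm{q.i.}$: one must ensure that the filtered colimits of finite-dimensional conilpotent coalgebras along injections are genuine homotopy colimits (using that cofibrations are injections together with left properness of the conilpotent model structure) and that the finite-dimensional objects are compact, so that the canonical functor $\mathbf{ind}(\cncog_\mathrm{fd})\to\cncog$ is an equivalence rather than merely fully faithful, and that the localisations defining $\aalg_\mathrm{q.i.}$ and $\mathbf{Art}$ are compatible with these colimits. Everything else is either cited or formal, and as an alternative one can avoid the abstract ind/lex theorem entirely by following the proofs of \ref{mcstackadj} and \ref{stackrepcor} verbatim, with $\fdccog_*$ and $\ccogp$ replaced by $\cncog_\mathrm{fd}$ and $\cncog$, to establish prorepresentability of $\mathbf{Def}(\sSet)$ directly and then conclude essential surjectivity exactly as in \ref{lurierem}.
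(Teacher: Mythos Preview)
Your proposal is correct, and in fact the route the paper actually takes is precisely the ``alternative'' you sketch in your final sentence: the paper's entire proof is a one-line reference back to \ref{lurierem}, i.e.\ extend a formal moduli problem to the procategory as in \ref{mcstackadj}, obtain prorepresentability by a conilpotent coalgebra as in \ref{stackrepcor}, and then invoke conilpotent Koszul duality to translate into $\aalg_\mathrm{q.i.}$.

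Your main argument is genuinely different and more structural. Rather than proving full faithfulness (Yoneda) and essential surjectivity (prorepresentability) separately, you identify both source and target as $\mathbf{ind}(\cncog_\mathrm{fd})$: the source via conilpotent Koszul duality plus the identification $\cncog\simeq\mathbf{ind}(\cncog_\mathrm{fd})$, the target via Lurie's $\mathrm{Ind}\simeq\mathrm{Lex}$ theorem. This has the advantage of making the equivalence conceptually inevitable and of explaining \emph{why} the paper's prorepresentability argument works (it is just the density of compact generators in an ind-completion). The cost, as you correctly flag, is the extra bookkeeping to check that filtered colimits of injections in $\cncog$ are homotopy colimits and that finite-dimensional objects are compact; this is entirely parallel to the paper's own verification of $\ccog_*\simeq\mathbf{ind}(\fdccog_*)$ and goes through for the same reasons (cofibrations are injections, all objects cofibrant). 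The paper's approach buys brevity by reusing machinery already set up; yours buys a cleaner conceptual picture at the price of invoking one additional categorical input.
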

			\begin{proof}
				Completely analogous to the proof of \ref{lurierem}: a formal moduli problem extends to a continuous functor defined on procategories, which gives representability by a conilpotent coalgebra. Usual conilpotent Koszul duality identifies these with augmented dg algebras.
			\end{proof}
			
			Since the connective cover functor $\tau_{\leq 0}:\mathbf{proArt} \to \mathbf{proArt}^{\leq 0}$ is a left adjoint, we obtain by dualising a functor $\tau_{\geq 0}:\cncog \to\cncog_{\geq 0}$ which is right adjoint to the inclusion. Say that a dg algebra $A$ is coconnective if the natural map $\ground \to \tau_{\leq 0}A$ is a quasi-isomorphism.
			
			\begin{defi}
				The coconnective cover functor $\tau_{>0}: \aalg_\mathrm{q.i.} \to \aalg_\mathrm{q.i.}$ is defined by the composition $\tau_{>0}A\coloneqq \Omega\tau_{\geq 0}BA$. 
			\end{defi}
			Observe that $\tau_{>0}A$ is indeed coconnective, since $\tau_{\geq 0}BA$ is concentrated in nonnegative cohomological degrees. Moreover, if $A$ was coconnective then $\tau_{\geq 0}BA \simeq BA$ and so $\tau_{>0}A\simeq A$. 
			
			\begin{prop}\label{lurieintertwine}
				Lurie's equivalence $\Psi: \aalg_\mathrm{q.i.}  \to \mathbf{Def}^{\leq 0}(\sSet)$ fits into a commutative diagram of $\infty$-categories
				$$\begin{tikzcd}
					\aalg_\mathrm{MC}  \ar[d,"\Psi_\mathrm{MC}"]\ar[r]& \aalg_\mathrm{q.i.} \ar[r,"\tau_{>0}"] \ar[d,"\Psi_\mathrm{q.i.}"] & \aalg_\mathrm{q.i.} \ar[d,"\Psi"]\\  
					\mathbf{St}^\mathrm{aug,dg}_\mathrm{MC}(\sSet) \ar[r]&  \mathbf{Def}(\sSet) \ar[r]& \mathbf{Def}^{\leq 0}(\sSet).
				\end{tikzcd}$$
			\end{prop}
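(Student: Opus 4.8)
The plan is to reduce the commutativity of each of the two squares to a natural equivalence of representable mapping spaces, and then to dispatch these by soft arguments rather than computation. Recall the explicit formulas $\Psi_\mathrm{MC}(A)(R)\simeq\mathrm{Map}_{\aalg_\mathrm{MC}}(\Omega(R^*),A)$ and $\Psi_\mathrm{q.i.}(A)(R)\simeq\mathrm{Map}_{\aalg_\mathrm{q.i.}}(\Omega(R^*),A)$ from the analogue of \ref{lurierem} and the preceding proposition, and note that the two bottom arrows are restriction along $\mathbf{Art}\into\fdaalg$ and $\mathbf{Art}^{\leq 0}\into\mathbf{Art}$. The very first step I would take is to record that Lurie's equivalence $\Psi$ is itself representable in the same way, namely $\Psi(B)(R)\simeq\mathrm{Map}_{\aalg_\mathrm{q.i.}}(\Omega(R^*),B)$ for connective Artinian $R$; this is exactly Lurie's $\mathbb{E}_1$ Koszul duality of \cite{luriedagx}, read through the identification of $\mathbf{Def}^{\leq0}(\sSet)$ with $\mathbb{E}_1$-moduli problems established above. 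Once this is in hand, both squares are assertions about representable functors, natural in $A$ and $R$.

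For the left square I would argue as follows. Fix $R\in\mathbf{Art}$ and $A\in\aalg$. Since $R$ is Artinian local, $R^*$ is a finite dimensional conilpotent dg coalgebra, so $\Omega(R^*)$ is cofibrant in the usual model structure (being a cobar construction on a conilpotent coalgebra) and also cofibrant in the MC model structure, as every $\Omega C$ is cofibrant by \ref{mcmostralg}. Moreover every curved algebra is fibrant in the MC model structure by \ref{rightproperlem}. The top arrow $\aalg_\mathrm{MC}\to\aalg_\mathrm{q.i.}$ is the localization functor, which is the derived right adjoint $\mathbb{R}(\id)$ of the Quillen coreflection of \ref{qiMC}(1), and is the identity on objects because all objects are MC-fibrant. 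The derived adjunction of that coreflection then gives, naturally in both variables,
\[
\mathrm{Map}_{\aalg_\mathrm{MC}}(\Omega(R^*),A)\;\simeq\;\mathrm{Map}_{\aalg_\mathrm{q.i.}}(\mathbb{L}(\id)\,\Omega(R^*),A)\;=\;\mathrm{Map}_{\aalg_\mathrm{q.i.}}(\Omega(R^*),A),
\]
where $\mathbb{L}(\id)\,\Omega(R^*)=\Omega(R^*)$ because $\Omega(R^*)$ is already $\aalg_\mathrm{q.i.}$-cofibrant. This is precisely $\mathrm{res}\circ\Psi_\mathrm{MC}\simeq\Psi_\mathrm{q.i.}\circ L$.

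For the right square I would pass to coalgebras. Fix $R\in\mathbf{Art}^{\leq0}$, so that $R^*$ is a finite dimensional conilpotent dg coalgebra concentrated in nonnegative cohomological degrees, i.e.\ $R^*\in\cncog_{\geq0}$. Conilpotent Koszul duality, the Quillen equivalence $\Omega\dashv B$ between $\cncog$ and $\aalg_\mathrm{q.i.}$, yields natural equivalences $\mathrm{Map}_{\aalg_\mathrm{q.i.}}(\Omega(R^*),A)\simeq\mathrm{Map}_{\cncog}(R^*,BA)$ and $\mathrm{Map}_{\aalg_\mathrm{q.i.}}(\Omega(R^*),\tau_{>0}A)\simeq\mathrm{Map}_{\cncog}(R^*,B\tau_{>0}A)$. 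Since $\tau_{>0}A=\Omega\tau_{\geq0}BA$ and the bar--cobar unit $\tau_{\geq0}BA\to B\Omega\tau_{\geq0}BA$ is a weak equivalence of conilpotent coalgebras, we have $B\tau_{>0}A\simeq\tau_{\geq0}BA$. Finally, because $\tau_{\geq0}$ is right adjoint to the fully faithful inclusion $\iota:\cncog_{\geq0}\into\cncog$ and $R^*$ lies in $\cncog_{\geq0}$, the counit $\iota\tau_{\geq0}BA\to BA$ induces a natural equivalence $\mathrm{Map}_{\cncog}(R^*,\tau_{\geq0}BA)\simeq\mathrm{Map}_{\cncog}(R^*,BA)$. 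Chaining these, all induced by the natural map $\tau_{>0}A\to A$, produces $\Psi(\tau_{>0}A)(R)\simeq\Psi_\mathrm{q.i.}(A)(R)$ naturally, which is the commutativity of the right square.

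The main obstacle is not any single computation but the $\infty$-categorical bookkeeping around naturality and the identification of Lurie's functor. Concretely, I expect the delicate points to be: first, confirming that $\Psi$ of \cite{luriedagx} genuinely agrees with the representable formula $R\mapsto\mathrm{Map}(\Omega(R^*),B)$ on connective $R$, so that the right square really is a comparison of mapping spaces; and second, checking that every equivalence used, the coreflection unit and counit, the bar--cobar unit, and the truncation counit $\tau_{>0}A\to A$, is induced by the evident canonical map, so that the resulting equivalences assemble into natural transformations of functors $\aalg_\mathrm{q.i.}\to\mathbf{Def}^{\leq0}(\sSet)$ compatible with the restriction maps in the bottom row. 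Establishing this naturality uniformly in $A$ and $R$, rather than just objectwise, is where most of the care will be required.
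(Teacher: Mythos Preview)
Your proposal is correct and follows essentially the same route as the paper. For the right square your argument is identical to the paper's: pass to conilpotent coalgebras via bar--cobar, use $B\tau_{>0}A\simeq\tau_{\geq 0}BA$, and invoke the adjunction $\cncog_{\geq 0}\hookrightarrow\cncog$. For the left square the paper instead computes on the coalgebra side, writing
\[
\mathrm{Map}_{\aalg_\mathrm{MC}}(\Omega(R^*),A)\simeq\mathrm{Map}_{\acog}(R^*,\check BA)\simeq\mathrm{Map}_{\cncog}(R^*,\mathrm{nil}\,\check BA)\simeq\mathrm{Map}_{\cncog}(R^*,BA)\simeq\mathrm{Map}_{\aalg_\mathrm{q.i.}}(\Omega(R^*),A),
\]
using that $R^*$ is conilpotent and $\mathrm{nil}\,\check BA\cong BA$. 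Your argument via the Quillen coreflection $\algqim\leftrightarrow\algmcm$ directly on the algebra side is simply the transport of this same adjunction across bar--cobar, so the two are equivalent.

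One small correction: in your displayed adjunction isomorphism you wrote $\mathrm{Map}_{\aalg_\mathrm{q.i.}}(\mathbb{L}(\id)\,\Omega(R^*),A)$, but $\mathbb{L}(\id)$ lands in $\aalg_\mathrm{MC}$, not $\aalg_\mathrm{q.i.}$. The correct application is $\mathrm{Map}_{\aalg_\mathrm{MC}}(\Omega(R^*),A)=\mathrm{Map}_{\aalg_\mathrm{MC}}(\mathbb{L}(\id)\Omega(R^*),A)\simeq\mathrm{Map}_{\aalg_\mathrm{q.i.}}(\Omega(R^*),\mathbb{R}(\id)A)$, and then $\mathbb{R}(\id)A=A$ since every object is MC-fibrant. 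The conclusion is unaffected.
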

			\begin{proof}
				Let $A \in 	\aalg_\mathrm{MC} $ and $R \in \mathbf{Art}$. We compute \begin{align*} \Psi_\mathrm{MC}(A)(R)&\coloneqq \mathrm{Map}_{\aalg_\mathrm{MC} }(\Omega(R^*),A)&\\
					& \simeq \mathrm{Map}_{\acog }(R^*,\check BA)&\\
					&  \simeq \mathrm{Map}_{\cncog }(R^*,\mathrm{nil}\check BA)& \text{since }R\in \mathbf{Art}\\
					&  \simeq \mathrm{Map}_{\cncog }(R^*,BA)&\\
					&  \simeq \mathrm{Map}_{\aalg_\mathrm{q.i.}}(\Omega(R^*),A)&\\
					& \eqqcolon \Psi_\mathrm{q.i.}(A)(R)&	
				\end{align*}which shows that the left-hand square commutes. As for the right-hand square, if $A \in 	\aalg_\mathrm{q.i.} $ and $R \in \mathbf{Art}^{\leq 0}$, then we have natural equivalences
				\begin{align*} \Psi_\mathrm{q.i.}(A)(R)&\coloneqq \mathrm{Map}_{\aalg_\mathrm{q.i.} }(\Omega(R^*),A)&\\
					&  \simeq \mathrm{Map}_{\cncog}(R^*,BA)& \\
					&  \simeq \mathrm{Map}_{\cncog}(R^*,\tau_{\geq 0}BA)& \text{since }R\in \mathbf{Art}^{\leq 0}\\
					&  \simeq \mathrm{Map}_{\aalg_\mathrm{q.i.}}(\Omega(R^*),\tau_{>0}A)&\\
					&\eqqcolon\Psi(\tau_{>0}A)(R)
				\end{align*}as required.
			\end{proof}
			\begin{cor}
				The image of the natural map $\mathbf{Def}(\sSet) \to \mathbf{Def}^{\leq 0}(\sSet)$ consists precisely of the prorepresentable functors.
			\end{cor}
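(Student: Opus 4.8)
The plan is to transport the restriction functor through the equivalences of Proposition~\ref{lurieintertwine} so as to identify its essential image with $\Psi$ of the coconnective algebras, and then to recognise $\Psi$ of the coconnective algebras as exactly the prorepresentable functors by unwinding conilpotent Koszul duality. The whole argument is formal once these two identifications are in place.

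First I would record the reduction. Since $\Psi_\mathrm{q.i.}$ is an equivalence and the right-hand square of Proposition~\ref{lurieintertwine} commutes, the restriction functor $\mathbf{Def}(\sSet)\to\mathbf{Def}^{\leq0}(\sSet)$ has the same essential image as $\Psi\circ\tau_{>0}$; as $\Psi$ is an equivalence, this image is $\Psi$ applied to the essential image of $\tau_{>0}$. Now $\tau_{>0}A=\Omega\tau_{\geq0}BA$ is always coconnective, and the observation following its definition gives $\tau_{>0}A\simeq A$ whenever $A$ is coconnective; hence the essential image of $\tau_{>0}$ is precisely the full subcategory of coconnective augmented algebras. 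So the image of restriction is $\Psi(\{\text{coconnective }A\})$, and it remains to match this with the prorepresentable functors.

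Next I would take \emph{prorepresentable} to mean representable by a pro-object of $\mathbf{Art}^{\leq0}$, i.e.\ lying in the essential image of $\mathfrak A\mapsto\mathrm{Map}_{\mathbf{proArt}^{\leq0}}(\mathfrak A,-)$; under the duality $(\mathbf{proArt}^{\leq0})^\mathrm{op}\simeq\cncog_{\geq0}$ this is the condition $Y(R)\simeq\mathrm{Map}_{\cncog_{\geq0}}(R^*,C)$ for some connective conilpotent coalgebra $C$. For coconnective $A$ we have $BA\simeq\tau_{\geq0}BA\in\cncog_{\geq0}$, so using the computation of Proposition~\ref{lurieintertwine} together with $A\simeq\tau_{>0}A$,
\[
\Psi(A)(R)\simeq\mathrm{Map}_{\cncog}(R^*,BA)\simeq\mathrm{Map}_{\cncog_{\geq0}}(R^*,BA),
\]
the last step being the homotopy fully faithfulness of the coreflective inclusion $\cncog_{\geq0}\hookrightarrow\cncog$; thus $\Psi(A)$ is prorepresentable by $(BA)^*$. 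Conversely, given a prorepresentable $Y$ with $Y(R)\simeq\mathrm{Map}_{\cncog_{\geq0}}(R^*,C)$, I would set $A\coloneqq\Omega C$, which is coconnective since $C$ is a connective conilpotent coalgebra, and check via $B\Omega C\simeq C$ (conilpotent Koszul duality, cf.\ \ref{qiMC}) that $\Psi(A)\simeq Y$, placing $Y$ in the image. Combining the two directions yields $\Psi(A)$ prorepresentable $\iff A$ coconnective, which is the assertion.

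The main obstacle is the careful bookkeeping at the boundary between the connective and coconnective worlds: one must be sure that the coalgebra truncation $\tau_{\geq0}$ really is a homotopy coreflection, so that mapping spaces out of a connective $R^*$ are unchanged on passing between $\cncog$ and $\cncog_{\geq0}$ (this is exactly the identity already invoked in Proposition~\ref{lurieintertwine}), and that the bar and cobar constructions exchange coconnective augmented algebras with connective conilpotent coalgebras as expected. The conceptually subtle point, which is what makes the statement non-vacuous, is that restricting to connective test algebras collapses a general $A$ to its coconnective cover $\tau_{>0}A$; this is precisely why the connective moduli problems $\Psi(A)$ with $A$ not coconnective are non-prorepresentable and fail to extend to $\mathbf{Def}(\sSet)$.
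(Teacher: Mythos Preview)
Your argument is correct. The first step---identifying the essential image of restriction with $\Psi$ applied to the coconnective augmented dg algebras via the commuting square of Proposition~\ref{lurieintertwine} and the idempotency of $\tau_{>0}$---is exactly what the paper does.

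For the second step, however, the paper simply invokes Lurie's \cite[3.2.7]{luriedagx}, which characterises the prorepresentable formal $\mathbb{E}_1$-moduli problems as precisely those of the form $\Psi(A)$ with $A$ coconnective. You instead unwind this characterisation directly: you translate prorepresentability into corepresentability by a connective conilpotent coalgebra via the duality $(\mathbf{proArt}^{\leq0})^\mathrm{op}\simeq\cncog_{\geq0}$, and then use that $A\mapsto BA$ restricts to an equivalence between coconnective augmented algebras and connective conilpotent coalgebras (the key points being that $BA\simeq\tau_{\geq0}BA$ when $A$ is coconnective, and that $\cncog_{\geq0}\hookrightarrow\cncog$ is fully faithful as a coreflective inclusion). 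This is a legitimate and self-contained proof of what Lurie's result supplies; the paper's approach is shorter but relies on an external citation, while yours makes the Koszul-dual mechanism explicit and keeps the argument internal to the paper's framework.
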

			\begin{proof}
				This follows from \cite[3.2.7]{luriedagx}, since by \ref{lurieintertwine} the image consists of those formal moduli problems of the form $\Psi(A)$ for $A$ coconnective.
			\end{proof}
			\begin{cor}The $\infty$-functor
				$\mathbf{St}^\mathrm{aug,dg}_\mathrm{MC}(\sSet) \to \mathbf{Def}(\sSet)$ given by restriction admits a fully faithful left adjoint (i.e. is a coreflection).
			\end{cor}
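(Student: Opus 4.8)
The plan is to transport the statement across the two Koszul-duality equivalences to the algebra side, where the restriction functor becomes a localization functor that is already known to be coreflective. Recall the equivalences $\Psi_\mathrm{MC}\colon \aalg_\mathrm{MC}\xrightarrow{\simeq}\mathbf{St}^\mathrm{aug,dg}_\mathrm{MC}(\sSet)$ and $\Psi_\mathrm{q.i.}\colon \aalg_\mathrm{q.i.}\xrightarrow{\simeq}\mathbf{Def}(\sSet)$. The left-hand square of \ref{lurieintertwine} is a commuting square of $\infty$-functors in which the bottom horizontal map is exactly the restriction functor $\mathrm{res}$ and the top horizontal map is the canonical functor $\ell\colon \aalg_\mathrm{MC}\to\aalg_\mathrm{q.i.}$, so that $\mathrm{res}\simeq\Psi_\mathrm{q.i.}\circ\ell\circ\Psi_\mathrm{MC}^{-1}$. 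It therefore suffices to produce a fully faithful left adjoint to $\ell$ and conjugate it by the two equivalences.

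The functor $\ell$ is the localization functor inverting those quasi-isomorphisms that are not MC equivalences: by \ref{mcisqi} every MC equivalence of augmented dg algebras is a quasi-isomorphism, so $\ell$ is the identity on objects and sends a morphism to itself viewed in the coarser localization. Concretely, $\ell$ is the right derived functor $\mathbb{R}\id$ of the identity functor $\id\colon \aalg_\mathrm{MC}\to\aalg_\mathrm{q.i.}$: this functor preserves weak equivalences (again by \ref{mcisqi}) and every object of $\aalg_\mathrm{MC}$ is fibrant, the fibrations of the sliced model structure being created by the forgetful functor to $\calgp$ via \ref{slicethm} and every curved algebra being fibrant by \ref{rightproperlem}(3), so no replacement is needed. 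By the augmented analogue of \ref{qiMC}(1), available by the remark following \ref{bousfieldrem}, the identity adjunction $\id\colon \aalg_\mathrm{q.i.}\rightleftarrows\aalg_\mathrm{MC}\colon \id$ is a Quillen coreflection: the left derived functor $\mathbb{L}\id\colon \aalg_\mathrm{q.i.}\to\aalg_\mathrm{MC}$ is homotopically fully faithful and is left adjoint to $\mathbb{R}\id=\ell$.

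Passing to underlying $\infty$-categories, a Quillen adjunction induces an adjunction of $\infty$-categories, and the coreflectivity condition (the derived unit is a weak equivalence) upgrades to the statement that $\mathbb{L}\id$ is a fully faithful $\infty$-functor. Hence $\ell$ admits the fully faithful left adjoint $j\coloneqq\mathbb{L}\id$. Conjugating, the composite $G\coloneqq\Psi_\mathrm{MC}\circ j\circ\Psi_\mathrm{q.i.}^{-1}$ is fully faithful, being a composite of an equivalence, a fully faithful functor, and an equivalence; and using $\mathrm{res}\simeq\Psi_\mathrm{q.i.}\circ\ell\circ\Psi_\mathrm{MC}^{-1}$ together with $j\dashv\ell$ one sees that $G$ is left adjoint to $\mathrm{res}$. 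This exhibits $\mathrm{res}$ as a coreflection, as claimed.

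The triangle identities for $G\dashv\mathrm{res}$ are formal once these identifications are in place, so the only inputs requiring genuine care are the passage from the model-categorical Quillen coreflection to a coreflective adjunction of the underlying quasi-categories, and the coherence of the identification $\mathrm{res}\simeq\ell$ supplied by \ref{lurieintertwine} with that adjunction. I expect the former to be the main obstacle: one must invoke that a Quillen adjunction whose left derived functor is homotopically fully faithful induces a coreflective localization on underlying $\infty$-categories, and ensure this is applied compatibly with the commuting square of \ref{lurieintertwine}. Both are standard, but they are the sole non-formal ingredients; everything else is transport of structure along equivalences.
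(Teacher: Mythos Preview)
Your proposal is correct and follows essentially the same approach as the paper: identify the restriction functor, via the commuting square of \ref{lurieintertwine}, with the natural functor $\aalg_\mathrm{MC}\to\aalg_\mathrm{q.i.}$, and then invoke the augmented version of \ref{qiMC}(1) to obtain the fully faithful left adjoint. The paper's proof is terser but the content is identical.
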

			\begin{proof}
				By \ref{lurieintertwine}, the functor in question is equivalent to the natural functor $\aalg_\mathrm{MC} \to \aalg_\mathrm{q.i.}$. By the augmented version of \ref{qiMC}(1), this has a fully faithful left adjoint. 
			\end{proof}
			\begin{rem}
				Since truncation is not well-defined on MC homotopy types, there is no connective cover functor $\mathbf{Art}_\mathrm{MC} \to \mathbf{Art}_\mathrm{MC}^{\leq 0}$ and so we refrain from discussing moduli problems defined on the latter category.
			\end{rem}
			
			\begin{rem}
				One can also consider formal moduli problems defined on the category $\mathbf{cuArt}$ of curved Artinian local algebras. These are represented by conilpotent curved coalgebras, and hence the $\infty$-category of such formal moduli problems is equivalent to the $\infty$-category $\alg_\mathrm{q.i.}$ of dg algebras up to quasi-isomorphism.
				\end{rem}

		\subsection{Noncommutative moduli spaces}
		Since $\Ho(\ccogp)$ is enriched, tensored, and cotensored over $\Ho(\dgcat)$, it follows that its opposite category $\Ho(\pccalg_\varnothing)$ is also enriched, tensored, and cotensored over $\Ho(\dgcat)$. Concretely, the enrichment is given by the formula $\{A,A'\}\simeq \MCdg(A'^*, \Omega(A^*))$.
		\begin{defi}
			A noncommutative moduli space is an MC stack $X:\fdcalg_\varnothing\to \dgcat$ valued in dg categories such that its continuous extension $\hat X$ preserves cotensors up to homotopy: for every dg category $\mathcal{D}$ and finite dimensional curved algebra $A$ there is a natural isomorphism $\hat X(\underline{\Hom}(\mathcal{D},A))\simeq \underline\Hom(\mathcal{D},XA)$ in the homotopy category of dg categories.
		\end{defi}
		There is a natural $\infty$-category $\mathbf{NCMod}$ of noncommutative moduli spaces, constructed as a full subcategory of the functor category $\mathbf{Fun}(\fdcalg_\varnothing,\dgcat )$.
		\begin{rem}Since a cotensor can be viewed as an enriched limit, asking that $\hat X$ preserve cotensors is an enriched version of asking that it preserve limits.\end{rem}
		\begin{prop}\label{ncmodrep}
			If $X$ is a noncommutative moduli space then there exists a curved algebra $A_X$ and a natural isomorphism $X(A)\simeq \MCdg(A\otimes A_X)$ in the homotopy category of dg categories.
		\end{prop}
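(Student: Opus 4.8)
The plan is to carry out the prorepresentability argument of Corollary \ref{stackrepcor} in the $\Ho(\dgcat)$-enriched setting rather than the unenriched simplicial one. By Proposition \ref{mcstackadj} applied to the complete $\infty$-category $\dgcat'$, the noncommutative moduli space $X$ extends to a continuous functor $\hat X:\pccalg_\varnothing\to\dgcat'$ admitting a left adjoint $G:\dgcat'\to\pccalg_\varnothing$. Recall from the corollary preceding the definition of a noncommutative moduli space that $\Ho(\pccalg_\varnothing)$ is enriched, tensored and cotensored over $\Ho(\dgcat')\simeq\Ho(\dgcat)$, with external hom $\{A,A'\}\simeq\MCdg(A'^*,\Omega(A^*))$; write $\sw$ for the tensoring. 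The strategy is to show that $\hat X$ is corepresented, in this enrichment, by $A_1\coloneqq G(\ground)$ (the image under $G$ of the one-object unit dg category), and then to rewrite the corepresented functor in the desired form $A\mapsto\MCdg(A\otimes A_X)$.

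The heart of the argument, and the step I expect to be the main obstacle, is to promote the ordinary adjunction $G\dashv\hat X$ to an enriched one by showing that $G$ preserves the tensoring, i.e.\ $G(\mathcal{D}\otimes\mathcal{E})\simeq\mathcal{D}\sw G\mathcal{E}$ in $\Ho(\pccalg_\varnothing)$; the difficulty is that the enrichment lives only on the homotopy category. The dual fact for $\hat X$ is exactly the defining condition of a noncommutative moduli space, namely cotensor preservation $\hat X(\underline{\Hom}(\mathcal{D},A))\simeq\underline{\Hom}(\mathcal{D},XA)$, but only for \emph{finite-dimensional} $A$. To avoid extending this to all pseudocompact algebras, I would exploit that $\pccalg_\varnothing\simeq\mathbf{pro}(\fdcalg_\varnothing)$ (dually $\ccog_*\simeq\mathbf{ind}(\fdccog_*)$), so that finite-dimensional curved algebras detect isomorphisms in $\Ho(\pccalg_\varnothing)$; it therefore suffices to test the Yoneda argument against finite-dimensional $A$, where the cotensor hypothesis applies verbatim. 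For such $A$ one computes, naturally in $A$,
\begin{align*}
\Ho(\pccalg_\varnothing)\big(G(\mathcal{D}\otimes\mathcal{E}),A\big)
&\cong\Ho(\dgcat)\big(\mathcal{D}\otimes\mathcal{E},\hat X A\big)\\
&\cong\Ho(\dgcat)\big(\mathcal{E},\underline{\Hom}(\mathcal{D},\hat X A)\big)\\
&\cong\Ho(\dgcat)\big(\mathcal{E},\hat X\,\underline{\Hom}(\mathcal{D},A)\big)\\
&\cong\Ho(\pccalg_\varnothing)\big(G\mathcal{E},\underline{\Hom}(\mathcal{D},A)\big)\\
&\cong\Ho(\pccalg_\varnothing)\big(\mathcal{D}\sw G\mathcal{E},A\big),
\end{align*}
using in turn the adjunction $G\dashv\hat X$, the closed monoidal structure on $\Ho(\dgcat)$, cotensor preservation, the adjunction again, and the tensor--cotensor adjunction in $\Ho(\pccalg_\varnothing)$. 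The Yoneda lemma then yields $G(\mathcal{D}\otimes\mathcal{E})\simeq\mathcal{D}\sw G\mathcal{E}$; taking $\mathcal{E}=\ground$ and using $\mathcal{D}\otimes\ground\simeq\mathcal{D}$ together with the unit isomorphism $\ground\sw A_1\simeq A_1$ gives $G\mathcal{D}\simeq\mathcal{D}\sw A_1$ with $A_1=G(\ground)$.

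Finally I would run the corepresentability computation. For a finite-dimensional curved algebra $A$ and any dg category $\mathcal{D}$, and using $X(A)=\hat X(A)$,
\begin{align*}
\Ho(\dgcat)\big(\mathcal{D},X(A)\big)
&\cong\Ho(\pccalg_\varnothing)\big(G\mathcal{D},A\big)\\
&\cong\Ho(\pccalg_\varnothing)\big(\mathcal{D}\sw A_1,A\big)\\
&\cong\Ho(\dgcat)\big(\mathcal{D},\{A_1,A\}\big),
\end{align*}
so the Yoneda lemma in $\Ho(\dgcat)$ produces a natural isomorphism $X(A)\simeq\{A_1,A\}$. It then remains to unwind the external hom: by the explicit formula $\{A_1,A\}\simeq\MCdg(A^*,\Omega(A_1^*))$, and since $\Hom(A^*,\Omega(A_1^*))\cong A\hat\otimes\Omega(A_1^*)=A\otimes\Omega(A_1^*)$ because $A$ is finite-dimensional, setting $A_X\coloneqq\Omega(A_1^*)$ — a genuine (non-pseudocompact) curved algebra — gives $X(A)\simeq\MCdg(A\otimes A_X)$, as required. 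Naturality in $A$ is inherited from the naturality of each isomorphism in the chains above.
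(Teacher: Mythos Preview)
Your proof is correct and uses essentially the same ingredients as the paper's: the left adjoint $G$ from Proposition \ref{mcstackadj}, the cotensor-preservation hypothesis, the tensor--cotensor adjunctions in $\Ho(\pccalg_\varnothing)$, and the final identification $A_X=\Omega((G\ground)^*)$. The organization differs slightly. You first establish the auxiliary fact $G(\mathcal{D}\otimes\mathcal{E})\simeq\mathcal{D}\sw G\mathcal{E}$ by applying Yoneda in $\Ho(\pccalg_\varnothing)$, testing only against finite-dimensional $A$; this forces you to invoke the fact that finite-dimensional algebras detect isomorphisms in the procategory. The paper instead runs the same chain of adjunctions but applies Yoneda in $\Ho(\dgcat)$ (varying $\mathcal{D}$) to obtain directly $\{G\mathcal{E},A\}\simeq\underline{\Hom}(\mathcal{E},XA)$, and then sets $\mathcal{E}=\ground$. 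This sidesteps the detection step entirely, since the Yoneda lemma in $\Ho(\dgcat)$ needs no restriction on the test objects. Both routes are valid; the paper's is marginally more economical, while yours makes explicit the pleasant fact that $G$ intertwines the two tensorings.
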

		\begin{proof}
			By \ref{mcstackadj}, the $\infty$-functor $\hat X$ has a left adjoint $G$. Let $\mathcal{D,E}$ be dg categories and let $A\in \fdcalg_\varnothing$. We have isomorphisms
			\begin{align*}
				\Ho(\dgcat)(\mathcal{D},\{G\mathcal{E},A\})&\simeq \Ho(\pccalg_\varnothing)(G\mathcal{E},\underline{\Hom}(\mathcal{D},A))& \text{by (co)tensoring}\\
				&\simeq \Ho(\dgcat)(\mathcal{E},\hat X\underline{\Hom}(\mathcal{D},A))& \text{by adjunction}\\
				&\simeq \Ho(\dgcat)(\mathcal{E},\underline{\Hom}(\mathcal{D},XA))& \text{by assumption on $\hat X$}\\
				&\simeq \Ho(\dgcat)(\mathcal{D},\underline{\Hom}(\mathcal{E},XA))& \text{by (co)tensoring}
			\end{align*}
			and hence by the Yoneda lemma there is a natural isomorphism $\{G\mathcal{E},A\}\simeq \underline{\Hom}(\mathcal{E},XA)$ in $\Ho(\dgcat)$. Taking $\mathcal{E}=k$ now gives us a natural isomorphism $\{G{k},A\}\simeq XA$ in $\Ho(\dgcat)$. Putting $C\coloneqq (Gk)^*$, we hence have a natural isomorphism $XA\simeq \MCdg(A^*,\Omega C)$ in $\Ho(\dgcat)$. But $\Hom(A^*, \Omega C)$ is simply the tensor product $A\otimes \Omega C$, and so $\MCdg(A^*, \Omega C)\simeq\MCdg(A\otimes \Omega C)$, and so we may take $A_X=\Omega C$.
		\end{proof}
		
		Let $\mathcal{W}:\dgcat \to \sSet$ denote the $\infty$-functor obtained by composing the dg nerve with the core functor. Since both are right adjoints, $\mathcal{W}$ is itself a right adjoint and hence preserves limits. Hence every noncommutative moduli space $X$ has an underlying MC stack in simplicial sets $\mathcal{W}X$. 
		
	The following is an enriched version of \ref{lurierem}.
		
		\begin{prop}\label{ncmodcat}The $\infty$-functor $\Phi:\calgp \to \mathbf{NCMod}$ defined by $$\Phi(A)(R)\coloneqq \MCdg(R\otimes A)$$ is an equivalence on homotopy categories. The $\infty$-functor $\mathcal{W}:\mathbf{NCMod} \to \mathbf{St}_\mathrm{MC}(\sSet)$ is an equivalence on homotopy categories. There is a natural equivalence $\mathcal{W}\Phi\simeq \Psi$.
		\end{prop}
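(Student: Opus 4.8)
The plan is to prove the third assertion ($\mathcal W\Phi\simeq\Psi$) and the first assertion ($\Phi$ an equivalence) directly, and then obtain the second assertion formally. Everything is organised around the single reformulation
$$\Phi(A)(R)=\MCdg(R\otimes A)=\MCdg(\Hom(R^*,A))\simeq\{\Omega(R^*),A\},$$
where $\{-,-\}$ denotes the $\Ho(\dgcat')$-enrichment of $\Ho(\calgp)$ and the last equivalence uses $\check B\Omega(R^*)\simeq R^*$ from Theorem~\ref{thm:barcobarequiv}(1). I abbreviate $\MCdg(C,A)=\MCdg(\Hom(C,A))$, and use that $R\mapsto R^*$ is a contravariant equivalence $\fdcalg_\varnothing\simeq(\fdccog_*)^{\mathrm{op}}$ and that $\mathcal W(\mathcal E)\simeq\mathrm{Map}_{\dgcat'}(\ground,\mathcal E)$ by construction of $\mathcal W$ as the core of the dg nerve.

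First I would establish $\mathcal W\Phi\simeq\Psi$ by a Quillen-adjunction computation. Fix a finite-dimensional coalgebra $R^*$. Since $\MCdg$ is a right Quillen bifunctor (Corollary~\ref{mcdgquillenbi}) and $R^*$ is cofibrant, $\MCdg(R^*,-)\colon\calgp\to\dgcat'$ is right Quillen; its left adjoint $L$ satisfies, by Proposition~\ref{mcdgadj} and the cotensoring of Proposition~\ref{enrtenscotens},
$$\dgcat'(\mathcal D,\MCdg(R^*,A))\cong\calgp(\algmc\mathcal D,\Hom(R^*,A))\cong\calgp(R^*\sw\algmc\mathcal D,A),$$
so $L\mathcal D=R^*\sw\algmc\mathcal D$. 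Using $\algmc\ground=\ground=\Omega\ground$ and the defining formula $R^*\sw\Omega D=\Omega(R^*\otimes D)$ gives $L\ground=R^*\sw\Omega\ground=\Omega(R^*)$. As $\ground$ is cofibrant in $\dgcat'$ and $A$ is fibrant, the adjunction yields natural equivalences of mapping spaces
$$\Psi(A)(R)=\mathrm{Map}_{\calgp}(\Omega R^*,A)\simeq\mathrm{Map}_{\dgcat'}(\ground,\MCdg(R^*,A))\simeq\mathcal W\MCdg(R^*,A)=\mathcal W\Phi(A)(R),$$
natural in $R$ and $A$; in particular this re-derives $[\Omega R^*,A]=[\Omega R^*,A]_3$, so no separate path-object argument is required.

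For the first assertion I would begin by checking $\Phi$ lands in $\mathbf{NCMod}$. The value at $0$ is $\MCdg(0)$, the terminal dg category; pullback preservation holds because $\MCdg(-,A)\colon(\ccogp)^{\mathrm{op}}\to\dgcat'$ is right Quillen, hence sends the homotopy pushouts of coalgebras dual to pullbacks of finite-dimensional algebras to homotopy pullbacks of dg categories. Cotensor preservation is where the enrichment does the work: by the reformulation, $\widehat{\Phi(A)}(S)\simeq\{\Omega(S^*),A\}$, and since $S\mapsto\Omega(S^*)$ is a contravariant equivalence $\pccalg_\varnothing\simeq\calgp$ it carries the cotensor $\underline\Hom(\mathcal D,S)$ to the tensor $\mathcal D\sw\Omega(S^*)$; the tensor-hom relation $\{\mathcal D\sw B,A\}\simeq\underline\Hom_{\dgcat}(\mathcal D,\{B,A\})$ then gives exactly $\widehat{\Phi(A)}(\underline\Hom(\mathcal D,S))\simeq\underline\Hom_{\dgcat}(\mathcal D,\widehat{\Phi(A)}(S))$. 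Essential surjectivity is then immediate from Proposition~\ref{ncmodrep}: any $X\in\mathbf{NCMod}$ is equivalent to $\MCdg(-\otimes A_X)=\Phi(A_X)$.

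The remaining step, full faithfulness of $\Phi$, is the one I expect to be the main obstacle. I would construct an explicit inverse $\Theta\colon\mathbf{NCMod}\to\calgp$ by $\Theta(X)=A_X$, where $A_X=\Omega((G_X\ground)^*)$ with $G_X$ the left adjoint of the continuous extension $\widehat X$ supplied by Proposition~\ref{mcstackadj}. Then $\Phi\Theta\simeq\mathrm{id}$ directly by Proposition~\ref{ncmodrep}, while $\Theta\Phi(A)\simeq A$ since $\Phi(A)(R)=\MCdg(R\otimes A)$ forces $A_{\Phi(A)}\simeq A$. The delicate point is the functoriality and naturality of $X\mapsto A_X$: a morphism of noncommutative moduli spaces induces a natural transformation of continuous extensions, hence by uniqueness of adjoints a map of the $G_X$, and one must verify these assemble into a functor $\Theta$ realising the claimed natural isomorphisms — equivalently, that the space of natural transformations $\mathrm{Map}_{\mathbf{NCMod}}(\Phi A,\Phi A')$, computed as the end $\int_R\mathrm{Map}_{\dgcat}(\{\Omega R^*,A\},\{\Omega R^*,A'\})$, is identified with $\mathrm{Map}_{\calgp}(A,A')$ by an enriched co-Yoneda argument. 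Finally the second assertion follows formally: since $\Phi$ is an equivalence and $\mathcal W\Phi\simeq\Psi$ with $\Psi$ an equivalence (Proposition~\ref{lurierem}), we obtain $\mathcal W\simeq\Psi\circ\Phi^{-1}$, an equivalence.
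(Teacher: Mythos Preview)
Your approach is correct and covers all three assertions, but it diverges from the paper in how full faithfulness of $\Phi$ is obtained, and your route is noticeably longer.

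The paper's proof is very short. It identifies $\Phi(A)(R)$ as the enriched representable $\{A^!,R\}$ in the $\Ho(\dgcat')$-enriched category $\Ho(\pccalg_\varnothing)$, where $A^!\coloneqq(\check BA)^*$. Since $A\mapsto A^!$ is a contravariant equivalence $\calgp\simeq\pccalg_\varnothing$, the enriched Yoneda lemma gives full faithfulness of $\Ho(\Phi)$ in one line; essential surjectivity is Proposition~\ref{ncmodrep}, and $\mathcal W\Phi\simeq\Psi$ follows immediately from $\mathcal W\{A^!,R\}\simeq\mathrm{Map}_{\pccalg_\varnothing}(A^!,R)$. The paper does not explicitly verify that $\Phi(A)$ satisfies the cotensor-preservation axiom for $\mathbf{NCMod}$, which you do check.

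Your approach also rewrites $\Phi(A)(R)$ as an enriched hom, but on the $\calgp$ side, namely as $\{\Omega(R^*),A\}$. This is equally valid, but since the varying argument now sits in the \emph{contravariant} slot of the enrichment, the Yoneda argument is less immediate. You instead attempt to build an explicit inverse $\Theta:X\mapsto A_X$ and reduce full faithfulness to the functoriality of $G_X(\ground)$, which you correctly flag as the delicate point; only at the end do you arrive at the enriched end/co-Yoneda computation, which is essentially the paper's argument seen through the equivalence $\calgp\simeq(\pccalg_\varnothing)^{\mathrm{op}}$. Working on the $\pccalg_\varnothing$ side from the start, as the paper does, collapses this detour: $\Phi$ becomes literally the restricted enriched Yoneda embedding $A^!\mapsto\{A^!,-\}$, and no inverse need be constructed. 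Your computation of $\mathcal W\Phi\simeq\Psi$ via the left adjoint $L\ground=\Omega(R^*)$ of $\MCdg(R^*,-)$ is correct and amounts to the same identity the paper uses.
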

		
		\begin{proof}
			The functor $\Phi$ is essentially surjective by \ref{ncmodrep}. If $A$ is a curved algebra, recall that we write $A^!\coloneqq \check B(A)^*$. We have natural isomorphisms in the homotopy category of dg categories
			\begin{align*}
				\Phi(A)(R)&\cong \MCdg(R^*,A)\\
				& \simeq \MCdg(R^*,\Omega((A^!)^*))\\
				&\simeq \{A^!,R\}
				\end{align*}so the enriched Yoneda lemma tells us that $\Ho(\Phi)$ is fully faithful. Since we have natural equivalences $\mathcal{W}\{A^!,R\}\simeq \mathrm{Map}_{\pccalg_\varnothing}(A^!,R) \simeq \mathrm{Map}_{\calgp}(\Omega(R^*),A)$, we see that $\mathcal{W}\Phi\simeq \Psi$. Since $\Psi$ is an equivalence of $\infty$-categories by \ref{lurierem}, it follows that $\Ho(\mathcal{W})$ is an equivalence, as required.
		\end{proof}
	\begin{rem}\label{wrmk}
		An immediate corollary of the above proof is that if $A$ is a curved algebra then there is a weak equivalence of simplicial sets $\mathcal{W}(\MCdg(A))\simeq \mathrm{Map}_{\calgp}(\ground, A)$, cf.\ \ref{deligneprop}.
		\end{rem}
		\begin{rem}One could lift $\Phi$ and $\mathcal{W}$ to equivalences of $\infty$-categories by using the symmetric monoidal $\infty$-category of pointed curved coalgebras in place of the $\infty$-category of dg categories; the key tool here is the enriched Yoneda embedding.
		\end{rem}
		
		\begin{rem}
			Since $\MCdg$ is a right adjoint, it induces a natural morphism $$\MCdg_*:\mathbf{St}_\mathrm{MC}(\calgp) \to \mathbf{St}_\mathrm{MC}(\dgcat)$$which sends a stack $X$ to the stack $ \MCdg \circ X$. Using \ref{mcequivtensor} one can check that if $A$ is a curved algebra, there is a well-defined functor $Y(A): \fdcalg_\varnothing \to \calgp$ which sends $R$ to $\Hom(R^*,A)$. It is easy to see that $Y(A)$ is an MC stack, that $Y$ is functorial in $A$, and moreover that the diagram
			$$\begin{tikzcd}
				\calgp \ar[d,"Y", swap]\ar[r,"\Phi"]& \mathbf{NCMod} \ar[d,"incl"]\\
				\mathbf{St}_\mathrm{MC}(\calgp) \ar[r,"\MCdg_*", swap]& \mathbf{St}_\mathrm{MC}(\dgcat)
				\end{tikzcd}
				$$commutes. Since $\calgp$ is equivalent to the closed monoidal $\infty$-category $\ccogp$, it follows that $\calgp$ is enriched over itself, with internal hom given by $[A,A'] \coloneqq \Hom(BA, A')$. We have an equivalence $Y(A)\simeq [\Omega(R^*),A]$ and hence, by the enriched Yoneda lemma, $Y$ is fully faithful. As with dg categories, the image of $Y$ consists of those functors which preserve enriched limits, and $\MCdg_*$ gives an equivalence of this image with $\mathbf{NCMod}$.
			\end{rem}
		
		\subsection{Tangent spaces}
		Let $X: \fdcalg_\varnothing \to \mathcal{D}$ be an MC stack valued in an $\infty$-category $\mathcal{D}$. Let $*$ be the terminal object of $\mathcal{D}$, and $x:* \to X(\ground)$ be a morphism. We define the tangent space to $X$ at $x$ to be the fibre of the natural map $X(\ground[\varepsilon]/\varepsilon^2) \to X(\ground)$, where $X(\ground)$ is pointed by the morphism $x$. We denote the tangent space by $T_xX$; it is an object of $\mathcal{D}$. 
		
		We will compute the tangent spaces associated to both noncommutative moduli spaces and MC stacks in simplicial sets. Before we do so, we need a lemma on square zero extensions. If $V$ is a dg vector space then we let $\ground \oplus V$ denote the square zero extension of $\ground$ by $V$; it is the dg algebra whose underlying chain complex is $\ground \oplus V$ and whose multiplication is given by $(\lambda+v)(\lambda'+v') = \lambda\lambda' + \lambda v ' + \lambda' v$.
		
		\begin{lem}\label{sqzlem}
			Let $V$ be a dg vector space and let $A\coloneqq \ground \oplus V$ be the square zero extension. The dg category $\MCdg(A)$ is quasi-equivalent to the disjoint union $\sqcup_{H^1(V)}H^*(A)$ of $H^1(V)$ copies of the one-object dg category $H^*(A)$.
			\end{lem}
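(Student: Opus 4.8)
The plan is to build an explicit comparison dg functor into $\MCdg(A)$ and verify it is a quasi-equivalence, using Lemma \ref{mccohomlem} to control the objects and a direct computation of the twisted hom-complexes to control the morphisms.

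First I would pin down the objects. Since $V$ is a square-zero ideal, any $x\in A^1=V^1$ satisfies $x^2=0$, so the Maurer--Cartan equation $dx+x^2=0$ collapses to $dx=0$; thus $\MC(A)=Z^1(V)$. By Lemma \ref{mccohomlem} two such elements are $3$-homotopic (equivalently, isomorphic in $H^0\MCdg(A)$) exactly when they differ by a coboundary, so the isomorphism classes of objects of $\MCdg(A)$ are in natural bijection with $H^1(A)=H^1(V)$. This already produces the correct index set, and I would choose a cocycle representative $x_c\in Z^1(V)$ for each class $c\in H^1(V)$.

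Next I would compute the endomorphism dg algebra of a chosen object $x$, namely the two-sided twist $\End(x)={}^{[x]}A^{[x]}=A^x$. Because $x\in V^1$ and $V\cdot V=0$, the twisting derivation $[x,-]$ annihilates both the unit and all of $V$, so $A^x$ is literally equal to $A$ with its original differential, independently of $x$. The algebra $A=\ground\oplus V$ is formal: sending the unit to the unit and cocycle representatives to representatives of $H^*(V)$ defines a dg algebra quasi-isomorphism $H^*(A)\to A$, which is multiplicative precisely because $H^*(V)\cdot H^*(V)=0=V\cdot V$. Combining these, I define $F\colon \sqcup_{H^1(V)}H^*(A)\to\MCdg(A)$ to send the object of the $c$-th copy to $x_c$ and to act on endomorphisms by this formality quasi-isomorphism; there are no morphisms between distinct copies to specify, so $F$ is a well-defined dg functor. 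By the object count it is essentially surjective on $H^0$, and on endomorphism complexes it is a quasi-isomorphism by formality.

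The remaining, and genuinely delicate, point is quasi-full-faithfulness across distinct copies: for $c\neq c'$ one must show the off-diagonal hom-complex ${}^{[x_{c'}]}A^{[x_c]}$ is acyclic, matching the zero hom-complex in the disjoint union. Writing $w\coloneqq x_{c'}-x_c$, the differential becomes $\lambda+v\mapsto d_V v+\lambda w$, since the cross terms $ya$ and $\tilde a x$ only survive on the unit and vanish on $V$. I would analyze this via the subcomplex spanned by the unit $1$ in degree $0$ and $w$ in degree $1$: as $w$ is a cocycle with $D(1)=w$, this subcomplex is acyclic whenever $w\neq0$, and passing to the quotient expresses $H^*\bigl({}^{[x_{c'}]}A^{[x_c]}\bigr)$ in terms of $H^*(V)$ with the class $[w]$ removed from degree $1$. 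Establishing the full vanishing is the main obstacle, and the computation shows the surviving cohomology is governed by $H^0(V)$ and by $H^1(V)/\ground[w]$; this is exactly the step where the structure of $V$ must be used to force acyclicity of every off-diagonal complex. Once that acyclicity is secured, $F$ is quasi-fully faithful, hence the desired quasi-equivalence.
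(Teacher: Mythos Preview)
Your overall strategy differs from the paper's. The paper does not build a functor \emph{into} $\MCdg(A)$; instead it chooses a quasi-isomorphism $V\to H^*V$ (so a dg algebra map $A\to H^*A$), argues that this induces a bijection on $\MCmod$, concludes that $A\to H^*A$ is an MC equivalence, and invokes Lemma~\ref{mccat1} to obtain a quasi-equivalence $\MCdg(A)\to\MCdg(H^*A)$. Only after this reduction to zero differential does the paper assert that the off-diagonal hom-complexes are acyclic.

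However, the gap you flag is real and cannot be closed: the off-diagonal acyclicity you need is \emph{false} in general, and your own computation already exhibits the obstruction. Take $V=\ground e_1\oplus\ground e_2$ concentrated in cohomological degree $1$ with zero differential (so $A=H^*A$). For $x=0$ and $y=e_1$ the twisted hom-complex has differential $D(\lambda+v)=\lambda e_1$, whence $H^0=0$ but $H^1=V^1/\ground e_1\cong\ground$. Thus ${}^{[y]}A^{[x]}$ is not acyclic, and $\MCdg(A)$ cannot be quasi-equivalent to any disjoint union of one-object dg categories indexed by $H^1(V)$, since the off-diagonal $H^1$ would then have to vanish. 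This matches exactly what you found: after removing the acyclic two-term subcomplex $\ground\!\cdot\!1\to\ground\!\cdot\!w$, the remaining cohomology is $H^0(V)$ in degree $0$, $H^1(V)/\ground[w]$ in degree $1$, and $H^n(V)$ in all other degrees, and there is no mechanism in the square-zero structure of $A$ to kill these.

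So the statement as written appears to be in error, and the paper's own proof shares the problem: its assertion that ``$\MCdg(A)(x,y)$ is acyclic if $x\neq y$'' in the zero-differential case already fails for the example above. What your argument (and the paper's) does correctly establish is that the isoclasses of $H^0\MCdg(A)$ biject with $H^1(V)$, and that each endomorphism dg algebra is quasi-isomorphic to $H^*(A)$. That weaker conclusion is what is actually used in the applications to tangent spaces.
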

		\begin{proof}
			An MC element of $A$ is precisely a degree 1 cocycle in $V$, and the hom-complex between two such cocycles $x,y$ is $A$ equipped with the differential $\lambda + v \mapsto dv +\lambda(y-x)$. If $x$ and $y$ are cohomologous in $V$ then there is some $v$ with $dv=y-x$, and the morphisms $1+v: x \to y$ and $1-v: y \to x$ are mutually inverse in the homotopy category. Conversely, if $x$ and $y$ are isomorphic in the homotopy category, an isomorphism between them is represented by an element $\lambda + v$ with $dv = \lambda(x-y)$. Since the corresponding morphism is invertible, $\lambda \neq 0$, and hence $x$ and $y$ are cohomologous. Choose a quasi-isomorphism $V \to H^*V$ which sends a cocycle to its cohomology class. This induces a morphism of dg categories $\MCdg(A) \to \MCdg(H^*A)$ which, by the above, gives a bijection $\MCmod(A) \to \MCmod(H^*A)$ on isoclasses of objects in the homotopy categories. Hence $A$ and $H^*A$ are MC equivalent, and so $\MCdg(A) \to \MCdg(H^*A)$ was actually a quasi-equivalence. Hence to prove the claim we may assume that $V$ has zero differential. In this case, the objects of $\MCdg(A)$ are in bijection with the elements of $V^1$, and $\MCdg(A)(x,y)$ is acyclic if $x\neq y$. Hence $\MCdg(A)$ is quasi-equivalent to the disjoint union $\sqcup_{x\in V^1}A^x$. But the twist $A^x$ is isomorphic to $A$, and the result follows.
		\end{proof}

		\begin{prop}\label{tgtncmod}
			Let $A$ be a curved algebra and let $X$ be the noncommutative moduli space $\Phi(A)$. A morphism $x:* \to X(\ground)$ is the same as an MC element of $A$, and the tangent space $T_xX$ is quasi-equivalent to the disjoint union $\sqcup_{H^{1}(A^x)}(\ground \oplus H^{*}(A^x))$ of copies of the square zero extension of $\ground$ by the cohomology of the dg vector space $A^x$.
			\end{prop}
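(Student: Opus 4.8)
The plan is to unwind the definitions to pin down the basepoint, reduce to the case $x=0$ by twisting, exhibit the relevant comparison map as a fibration of dg categories, compute its fibre directly, and feed the result into Lemma~\ref{sqzlem}. First I would record that $X(\ground)=\Phi(A)(\ground)=\MCdg(\ground\otimes A)=\MCdg(A)$, whose objects are by definition the MC elements of $A$; thus a point $x\colon *\to X(\ground)$ is precisely an MC element $x\in\MC(A)$, giving the first assertion.

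Next I would reduce to $x=0$. Since $x$ is an MC element, the pair $(\id,x)$ is an isomorphism of curved algebras $A^x\to A$ carrying $0\in\MC(A^x)$ to $x$. Tensoring with $\id_R$ for each finite dimensional $R$ gives isomorphisms $R\otimes A^x\cong R\otimes A$ of curved algebras compatible with the projections induced by $R\to\ground$, hence a natural isomorphism $\Phi(A^x)\cong\Phi(A)$ of MC stacks carrying the basepoint $0$ to $x$. Because the tangent space depends only on the pointed stack, $T_x\Phi(A)\simeq T_0\Phi(A^x)$, and $A^x$ is a dg algebra. So I may assume from now on that $A$ is a dg algebra and $x=0$.

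Now the tangent space is the fibre over the object $0$ of the dg functor $\MCdg(\pi)$, where $\pi\colon A\oplus A\varepsilon=(\ground[\varepsilon]/\varepsilon^2)\otimes A\to A$ sends $\varepsilon\mapsto 0$. As $\varepsilon$ has degree $0$, square zero and trivial differential, $\pi$ is a square zero extension of dg algebras with kernel $A\varepsilon$, so by Proposition~\ref{squarezero1} the functor $\MCdg(\pi)$ is a fibration of dg categories; consequently its strict fibre over $0$ computes the homotopy fibre $T_0\Phi(A)$. The heart of the argument is to identify this fibre with $\MCdg(\ground\oplus A)$, the MC dg category of the square zero extension of $\ground$ by $A$ regarded merely as a dg vector space. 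A direct calculation of the MC equation in $A\oplus A\varepsilon$ shows that the objects lying over $0$ are exactly the elements $b\varepsilon$ with $b\in Z^1(A)$, matching the objects of $\MCdg(\ground\oplus A)$; and unwinding the two-sided twist one checks that the morphism complex of the fibre between $b\varepsilon$ and $b'\varepsilon$ — namely those elements of ${}^{[b'\varepsilon]}(A\oplus A\varepsilon)^{[b\varepsilon]}$ whose image under $\MCdg(\pi)$ is a scalar multiple of $\id_0$ — is, with its differential and composition, naturally isomorphic to the complex ${}^{[b']}(\ground\oplus A)^{[b]}$ in $\MCdg(\ground\oplus A)$ via $\lambda\cdot 1+c\varepsilon\mapsto \lambda+c$. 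This yields an isomorphism of dg categories between the fibre and $\MCdg(\ground\oplus A)$.

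Finally, I would apply Lemma~\ref{sqzlem} with $V=A$ to conclude that the fibre is quasi-equivalent to $\sqcup_{H^1(A)}(\ground\oplus H^*(A))$; undoing the reduction by replacing $A$ with $A^x$ gives $T_x\Phi(A)\simeq\sqcup_{H^1(A^x)}(\ground\oplus H^*(A^x))$, as claimed. The main obstacle is the fibre identification of the previous paragraph: one must treat with care both the meaning of ``fibre over the object $0$'' in $\dgcat$ (which is legitimate because $\MCdg(\pi)$ is an honest fibration) and the matching of the twisted differentials, since the scalar $\ground$-component of a fibre morphism interacts with the endpoints $b,b'$ in exactly the way the unit component does in the square zero extension $\ground\oplus A$.
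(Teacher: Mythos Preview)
Your proof is correct and follows essentially the same route as the paper's: both identify the tangent space as the fibre of $\MCdg(A[\varepsilon]/\varepsilon^2)\to\MCdg(A)$, compute this fibre explicitly as $\MCdg(\ground\oplus A^x)$, and then invoke Lemma~\ref{sqzlem}. The only differences are cosmetic: you first twist by $x$ to reduce to the case $x=0$ and a dg algebra, whereas the paper carries the twist $d^x$ through the computation directly; and you explicitly cite Proposition~\ref{squarezero1} to justify that the strict fibre computes the homotopy fibre, a point the paper leaves implicit.
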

		\begin{proof}
			The terminal object of $\dgcat$ is the one-object dg category $\ground$, and a morphism $\ground \to \mathcal{C}$ is the same as an object of $\mathcal{C}$. Since $X(\ground)\simeq \MCdg(A)$, the first claim follows. To prove the second claim, fix an MC element $x$ of $A$. The tangent space $T_xX$ is then the fibre of the fibration $\MCdg(A[\varepsilon]/\varepsilon^2) \to \MCdg(A)$. The objects of this dg category are the MC elements $y=x+z\varepsilon$ of $A[\varepsilon]/\varepsilon^2$, which are in bijection with the cohomological degree $1$ cocycles $z$ in the two-sided twist $A^{x}$. If $x+z\epsilon$ and $x+z'\varepsilon$ are two such MC elements, the complex of maps between them is the square zero extension $\ground \oplus A\varepsilon$, with differential given by $\lambda + a\varepsilon \mapsto\lambda z \varepsilon - \lambda z ' \varepsilon + d^x(a)\varepsilon$. This identifies $T_xX$ with the dg category $\MCdg(\ground \oplus A^x)$, and the result now follows from \ref{sqzlem}.
			\end{proof}
		
		If $X$ is a noncommutative moduli space, then recall that $\mathcal{W}X$ is an MC stack in simplicial sets, where $\mathcal{W}$ denotes the functor which sends a dg category to the core of its dg nerve. Since $\mathcal{W}$ is a right adjoint, it commutes with limits, and so we have a weak equivalence $\mathcal{W}(T_xX)\simeq T_x(\mathcal{W}X)$ of simplicial sets. 
		
		If $V$ is a dg vector space, it has a generalised Eilenberg--Mac Lane space, which is a simplicial set which we denote by $K(V)$; concretely it can be obtained by applying the Dold--Kan correspondence to the connective cover of $V$.

		\begin{prop}\label{tgtsset}
			Let $A$ be a curved algebra and let $Y$ be the MC stack in simplicial sets $\Psi(A)$. A morphism $x:* \to Y(\ground)$ is the same as an MC element of $A$, and the tangent space $T_xY$ is weakly equivalent to the coproduct $\coprod_{H^{1}(A^x)}K(\ground\oplus H^*(A^x))$.
			\end{prop}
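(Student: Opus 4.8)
The plan is to deduce the statement from the already-established dg-category-valued computation \ref{tgtncmod} by pushing it through the functor $\mathcal{W}$. By \ref{ncmodcat} there is a natural equivalence $\mathcal{W}\Phi\simeq\Psi$, so that $Y=\Psi(A)\simeq\mathcal{W}X$ where $X=\Phi(A)$ is the noncommutative moduli space with $X(R)\simeq\MCdg(R\otimes A)$. In particular $Y(\ground)\simeq\mathcal{W}(\MCdg(A))$, whose vertices are the objects of $\MCdg(A)$, i.e.\ the MC elements of $A$; this gives the first assertion about basepoints, exactly as in the proof of \ref{tgtncmod}. Since $\mathcal{W}$ is a right adjoint it preserves limits, and in particular the fibre defining the tangent space, so I would obtain a weak equivalence $T_xY=T_x(\mathcal{W}X)\simeq\mathcal{W}(T_xX)$.

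Next I would invoke the proof of \ref{tgtncmod}, which identifies the dg-category tangent space $T_xX$ with $\MCdg(\ground\oplus A^x)$, the MC dg category of the square-zero extension of $\ground$ by the two-sided twist $A^x$. Applying \ref{wrmk} to the curved algebra $\ground\oplus A^x$ then yields $\mathcal{W}(\MCdg(\ground\oplus A^x))\simeq\mathrm{Map}_{\calgp}(\ground,\ground\oplus A^x)$, exhibiting $T_xY$ as the derived mapping space from $\ground$ into the square-zero extension, i.e.\ its Maurer--Cartan space. Equivalently, since both the dg nerve and the core functor preserve disjoint unions, $\mathcal{W}$ preserves coproducts, and feeding the decomposition of \ref{sqzlem} through $\mathcal{W}$ gives $T_xY\simeq\coprod_{H^1(A^x)}\mathcal{W}\bigl(\ground\oplus H^*(A^x)\bigr)$, where the indexing by $H^1(A^x)$ is furnished by \ref{mccohomlem}, which computes $\MCmod(\ground\oplus A^x)\cong H^1(A^x)$ and hence accounts for $\pi_0$.

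The remaining, and principal, task is to identify each component $\mathcal{W}\bigl(\ground\oplus H^*(A^x)\bigr)$ -- the core of the dg nerve of the one-object square-zero dg category with endomorphism algebra $\ground\oplus H^*(A^x)$ -- with the generalised Eilenberg--Mac Lane space $K(\ground\oplus H^*(A^x))$. The crucial simplification is that $\ground\oplus A^x$ is square zero: every MC element lies in a square-zero ideal, so its associated twist is trivial, and the Maurer--Cartan equation together with its simplicial thickenings becomes linear. Consequently the MC space is a generalised Eilenberg--Mac Lane space whose homotopy is governed by the underlying complex through Dold--Kan, and I would compute the homotopy groups of a single component directly, reducing via \ref{sqzlem} and the triviality of the twists to the case of zero differential, where the Dold--Kan computation is transparent, and then match degrees with those of $K(\ground\oplus H^*(A^x))$.

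I expect the hard part to be exactly this last identification: pinning down the homotopy groups of the Maurer--Cartan space of the square-zero extension (equivalently, of the core of the dg nerve of $\ground\oplus H^*(A^x)$) and reconciling them, degree by degree, with the Dold--Kan description of the Eilenberg--Mac Lane space. The delicate points are the overall degree shift introduced by passing from the mapping-space model to $K$, and the behaviour of the degree-zero part, so the bulk of the work lies in carefully tracking these conventions rather than in any further structural input.
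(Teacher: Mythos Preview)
Your overall strategy matches the paper's: reduce to the dg-categorical computation of \ref{tgtncmod} via $\mathcal{W}\Phi\simeq\Psi$, then identify $\mathcal{W}$ of each component with an Eilenberg--Mac Lane space. The first two-thirds of your outline (basepoints, $\mathcal{W}$ commuting with the fibre, decomposition into components indexed by $H^1(A^x)$) are essentially identical to what the paper does.

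The difference lies entirely in what you flag as the ``hard part''. You propose to compute the homotopy groups of $\mathcal{W}(\ground\oplus H^*(A^x))$ directly by unwinding Dold--Kan and tracking degree shifts. The paper bypasses this computation with a one-line adjunction argument: for any dg algebra $B$ one has $\mathcal{W}(B)\simeq\mathrm{Map}_{\dgcat}(\ground,B)\simeq\mathrm{Map}_{\alg_{\mathrm{q.i.}}}(\ground,B)$, and since the square-zero extension functor $V\mapsto\ground\oplus V$ is right adjoint to the forgetful functor from dg algebras to dg vector spaces, one gets $\mathrm{Map}_{\alg}(\ground,\ground\oplus V)\simeq\mathrm{Map}_{\ground}(\ground,\ground\oplus V)\simeq K(\ground\oplus V)$ immediately. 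This sidesteps all the degree-tracking you anticipate. Note also that the paper works with $\mathrm{Map}_{\alg_{\mathrm{q.i.}}}$ rather than the $\mathrm{Map}_{\calgp}$ you obtain from \ref{wrmk}; the former is where the square-zero adjunction is visibly Quillen, so the adjunction identity descends to mapping spaces without further argument.

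A minor stylistic difference: you pass $\mathcal{W}$ through the coproduct by arguing that nerve and core preserve disjoint unions, whereas the paper rewrites the coproduct $\coprod_{H^1(V)}(\ground\oplus V)$ as the \emph{product} $V^1\times(\ground\oplus V)$ of a discrete dg category with a one-object one, and then invokes only the limit-preservation of $\mathcal{W}$. Both work; the paper's version stays closer to the right-adjoint bookkeeping already in play.
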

		\begin{proof}
			A morphism $* \to Y(\ground)$ is a vertex of the simplicial set $\mathrm{Map}(\ground,A)$, hence a morphism $\ground \to A$, which is an MC element of $A$. Since $\Psi(A)\simeq \mathcal{W}\Phi(A)$ by \ref{ncmodcat}, we may compute $T_xY$ as $\mathcal{W}T_x\Phi(A)$. By \ref{tgtncmod}, this latter simplicial set is $\mathcal{W}(\coprod_{H^{1}(V)}(\ground\oplus V))$, where for brevity we write $V\coloneqq H^*(A^x)$. Observe that the coproduct $\coprod_{H^{1}(V)}(\ground \oplus V)$ is equivalently the product of dg categories $V^{1} \times (\ground \oplus V)$, where we regard the set $V^{1}$ as a discrete dg category and the dg algebra $\ground \oplus V$ as a one-object dg category. Since $\mathcal{W}$ is a right adjoint, it follows that $T_xY$ is equivalent to the product $\mathcal{W}(V^{1})\times \mathcal{W}(\ground \oplus V)$. Since $V^{1}$ is a discrete dg category, $\mathcal{W}(V^{1})$ is the discrete simplicial set $V^{1}$. If $B$ is any dg algebra then the simplicial set $\mathcal{W}(B)$ is equivalent to the mapping complex $\mathrm{Map}_{\dgcat}(\ground,B) \simeq \mathrm{Map}_{\alg}(\ground ,B)$, where we consider $\alg$ with weak equivalences the quasi-isomorphisms. In particular, since taking square-zero extensions is right adjoint to the forgetful functor to dg vector spaces, we have an equivalence $\mathrm{Map}_{\alg}(\ground ,\ground \oplus V)\simeq \mathrm{Map}_{\ground}(\ground, \ground\oplus V)\simeq K(\ground\oplus V)$. Hence $T_xY$ is the simplicial set $V^{1}\times K(V)\simeq \coprod_{V^{1}}K(\ground\oplus V)$. 
			\end{proof}

		\begin{rem}If $A$ is a dg algebra, let $Z:\acalg_\mathrm{fd} \to \dgcat$ be the MC stack from \ref{dgcatbars} given by $Z(R)\coloneqq \MCdg(\bar{R}^*,A)\simeq \mathrm{fib}(\MCdg(R\otimes A) \to \MCdg(A))$, where the dg category $\MCdg(A)$ is pointed by the zero MC element. Then the MC stacks $X\coloneqq\Phi(A)$ and $Z$ have the same tangent space: there are quasi-equivalences $Z(\ground )\simeq \ground$ and $Z(\ground[\varepsilon]/\varepsilon^2) \simeq T_0X$, and hence the tangent space to $Z$ at the unique object $* $ of the dg category $\ground$ is $T_*Z \simeq T_0X$.
			
		If $A$ is an augmented curved algebra, let $W:\alg^\mathrm{fd} \to \dgcat$ be the MC stack whose value on $R$ is the fibre of the natural map $\MCdg(R\otimes A) \to \MCdg(R)$, where the latter dg category is pointed by $0$. If $X$ is as above then the natural map $W(\ground) \to X(\ground)$ is a quasi-equivalence, and as in the proof of \ref{tgtncmod} if $x\in X(\ground)$ then the tangent space $T_xW$ is the coproduct $\sqcup_{H^{1}(\bar{A}^x)}(\ground \oplus \bar{A}^x)$.
		
		Finally, if $A$ is an augmented dg algebra, then the MC stack $V: \fdaalg \to \dgcat$ given by $R \mapsto \MCdg(\bar R^*, \bar A)$ has a unique point $*$ with tangent space $T_*V\simeq T_0W$, where $W$ is as above.
			\end{rem}
			
			\begin{rem}
				If $X$ is a formal moduli problem, one can define its tangent space $T_xX$ at a vertex $x\in X(\ground)_0$ in exactly the same manner as above. Since $X(\ground)$ is contractible, this is independent of the choice of vertex, and we just write $TX$. If $X=\Psi_\mathrm{q.i.}(A)$ for some augmented dg algebra $A$ then $TX$ is the generalised Eilenberg--Mac Lane space $K(\bar A [1])$: this holds since the linear dual of the dg coalgebra $B(\ground[\varepsilon]/\varepsilon^2)$ is simply $\ground[x]$, with $x$ in cohomological degree $1$, and one has $\mathrm{Map}_{\aalg_\mathrm{q.i.}}(\ground[x],A)\simeq K(\bar A [1])$. If $E_n$ denotes the augmented dg algebra $\ground[\varepsilon]/\varepsilon^2$ with $\varepsilon$ placed in homological degree $n$, then the $E_n$ assemble into a spectrum object $E$ in $\aalg_\mathrm{q.i.}$; cf.\ \cite[3.2.1]{luriedagx}. Since $X$ preserves finite limits, it follows that $X(E)$ is a spectrum object in simplicial sets, i.e.\ a spectrum, which is equivalent to the Eilenberg--Mac Lane spectrum of the chain complex $\bar A [1]$ \cite[3.2.6]{luriedagx}. We call $X(E)$ the tangent complex; note that the zeroth space of $X(E)$ is precisely the tangent space $TX$.
				
				Similarly, let $Y:\aalg_\mathrm{MC}\to \mathcal{D}$ be an MC stack on augmented dg algebras - the reason for the restriction is because we want our source category to have a zero object. Given a spectrum object $E\in\mathrm{Sp}(\aalg_\mathrm{MC})$, since $Y$ preserves finite limits we obtain a spectrum object $Y(E)\in \mathrm{Sp}(\mathcal{D})$. However, the collection $E_n$ of augmented dg algebras above do not seem to form a spectrum object in $\aalg_\mathrm{MC}$.

								\end{rem}

\subsection{Examples}
We give some examples of MC stacks and noncommutative moduli spaces. Typically, we will start with a classical moduli problem and then extend it to a derived moduli problem. For example, instead of classifying MC elements in an algebra up to gauge equivalence, we will classify them up to homotopy gauge equivalence. Or instead of classifying local systems of vector spaces, we will classify local systems of dg vector spaces.

Since there are four possible categories of algebras - namely with or without curvature, and with or without augmentation - every moduli problem will have four different variants, depending on which category of algebras it is defined. These four variants will be controlled by four slightly different algebras. We refrain from discussing every possible variant and usually restrict ourselves to moduli problems defined on curved, non-augmented algebras.

\subsubsection{Moduli of MC elements in an algebra}
This is the prototypical example. Let $A$ be a curved algebra and let $M_A$ be the functor which sends a finite dimensional curved algebra $R$ to the dg category $M_A(R)\simeq \MCdg(R\otimes A)\cong \MCdg(R^*,A)$. Then $M_A\simeq \Phi(A)$ and moreover we have $\mathcal{W}(M_A) \simeq \Psi(A)$.

If $A$ is a dg algebra, then there is a natural moduli functor $M_A^\mathrm{dg}$ which sends a finite dimensional augmented curved algebra $R$ to the dg category $\MCdg(\bar{R}^*,A)$. This is the moduli functor of MC elements in the dg algebra $A$. Note that we need to restrict the morphisms in this dg category to avoid gauges between MC elements that correspond to curved morphisms with nontrivial curvature.

If $A$ is a augmented curved algebra, there is a natural moduli functor $M_A^\mathrm{aug}$ which sends a finite dimensional dg algebra $R$ to the dg category $\MCdg(R^*,\bar{A})$. This is the moduli functor of MC elements in the augmented curved algebra $A$.

Finally, if $A$ is an augmented dg algebra, there is a natural moduli functor $M_A^\mathrm{aug, dg}$ which sends a finite dimensional augmented dg algebra $R$ to the dg category $\MCdg(\bar{R}^*, \bar{A})$. This is the moduli functor of MC elements in the augmented dg algebra $A$.

\subsubsection{Pseudocompact completions}
The $1$-functor $\pccalg_\varnothing \to \calgp$ which forgets the topology has a left adjoint, the pseudocompact completion, which we denote by $A \mapsto \check A$. Viewing $\pccalg_\varnothing$ as the procategory of $\fdcalg_\varnothing$, the pseudocompact completion of $A$ is the cofiltered system of finite dimensional quotients of $A$. Across the equivalence $\pccalg_\varnothing \simeq \ccogp^\mathrm{op}$, the functor forgetting the topology corresponds to the linear dual functor, which is equivalently the convolution algebra functor $\Hom(-,\ground)$. By \ref{modelenrich} this functor is right Quillen, and hence pseudocompact completion gives an $\infty$-functor $\calgp \to \pccalg_\varnothing$.

If $A$ is a pseudocompact curved algebra, let $X_A$ be the MC stack in simplicial sets defined by $X_A(R)\coloneqq\mathrm{Map}_{\pccalg_\varnothing}(A,R)$. Then $X_A\simeq \Psi(\Omega(A^*))$.
 If $A$ is a curved algebra, let $Y_A$ be the MC stack $Y_A(R)\coloneqq \mathrm{Map}_{\calg_\varnothing}(A,R)$. Then $Y_A\simeq X_{\check A}$ and hence $Y_A\simeq \Psi(\Omega(\check A^*))$.

\subsubsection{Moduli of flat connections}
Let $M$ be a smooth manifold, $E$ a vector bundle on $M$, and let $A\coloneqq\Omega(\End(E))$ be the graded algebra of $\End(E)$-valued differential forms. A connection $\nabla$ on $E$ is given by a $1$-form $x \in A^1$, and the curvature of $\nabla$ is the $2$-form $h\coloneqq dx + x^2$. Fixing a connection on $E$ we may view $A$ as a curved algebra, with curvature element $h$.

An arbitrary connection is flat if and only if its associated curvature form vanishes, which gives a bijection
$$\{\text{flat connections on }E\}\longleftrightarrow \{\text{MC elements of }A\}.$$ 
We regard two flat connections on $E$ as equivalent if they differ by a gauge equivalence; this is the case if and only if the associated MC elements of $A$ are gauge equivalent.

We claim that two MC elements of A are gauge equivalent if and only if they are homotopy gauge equivalent. To see this, let $x,y$ be two MC elements and let $(f,g,h_1,h_2)$ be a gauge equivalence between them, so that we have $gf = 1 + d^xh_1$ and $fg = 1 +d^yh_2$. Since $d^xh_1$ is nilpotent, $1+d^xh_1$ is a unit, and similarly $1+d^yh_2$ is a unit. Hence $f$ is invertible and we see that it gives a gauge equivalence between $x$ and $y$. So the noncommutative moduli space of flat connections on $E$ is the MC stack $\Phi(A)$.
\subsubsection{Moduli of complex structures}
This example is similar to the previous. Let $M$ be a complex manifold and $E$ a smooth vector bundle on $M$. A choice of almost complex structure on $E$ makes the graded algebra $A\coloneqq \Omega^{0,*}(\End(E))$ of $\End(E)$-valued antiholomorphic forms into a curved algebra. An MC element of $A$ is the same thing as a complex structure on $E$, and two complex structures are equivalent if and only if the associated MC elements are homotopy gauge equivalent. Hence the noncommutative moduli space of complex structures on $E$ is the MC stack $\Phi(A)$.

\subsubsection{Moduli of objects in dg categories}
Fix a dg category $\mathcal{D}$ and consider the functor $M_\mathcal{D}$ which takes a curved algebra $R$ to the dg category $\MCdg(R\otimes \algmc\mathcal{D})$. Note that $M_\mathcal{D}(\ground)\simeq \mathcal{D}$ by \ref{algmciscoref}, so we view $M_\mathcal{D}$ as the moduli functor of objects in $\mathcal{D}$. Clearly $M_\mathcal{D}$ is $\Phi(\algmc(\mathcal{D}))$. The functor that sends $\mathcal{D}$ to $M_D$ is left adjoint to the functor that sends a noncommutative moduli space $X$ to $X(\ground)$; we regard $X(\ground)$ as the underlying dg category of $X$.

Note that To\"en and Vaqui\'e's moduli functor $\mathcal{M}_\mathcal{D}$ describes essentially the moduli of perfect complexes on $\mathcal{D}$; indeed it is Morita invariant \cite{toenvaquie} whereas our functor is not.

We compute the tangent spaces of $M_\mathcal{D}$. An MC element in $\algmc(\mathcal{D})$ is the same as an object of $\mathcal{D}$. Fixing $d \in \mathcal{D}$, \ref{tgtncmod} tells us that the tangent space $T_dM_\mathcal{D}$ is the dg category $\MCdg(\ground \oplus H^*(\algmc(\mathcal{D})^d))$. Recall that the definition of $\algmc$ required a choice of object; without loss of generality we may take this to be $d$. Then the corresponding $MC$ element of $\algmc(\mathcal{D})$ is simply $0$. By \ref{algmciscoref} there is a quasi-equivalence $\MCdg(\algmc(\mathcal{D}))\simeq \mathcal{D}$ which sends $0$ to $d$, and this induces a quasi-isomorphism $\algmc(\mathcal{D})\simeq \End_\mathcal{D}(d)$. Hence the tangent space $T_dM_\mathcal{D}$ is quasi-equivalent to the dg category $\MCdg(\ground \oplus \mathrm{Ext}_\mathcal{D}^*(d,d))$, where we write $\mathrm{Ext}_\mathcal{D}^*(d,d)$ for the cohomology of $\End_\mathcal{D}(d)$. Compare this to the tangent complex to To\"en and Vaqui\'e's moduli functor at a pseudoperfect module $E$, which is $\mathrm{Ext}^*(E,E)[1]$ by \cite[3.17]{toenvaquie}.

\subsubsection{Moduli of twisted modules}
Fix a curved algebra $A$. A twisted module over $A$ is the same thing as a vector space $V$ and a choice of MC element $x \in A\otimes \End(V)$; the corresponding twisted module is $(A\otimes V)^{[x]}$. Hence the noncommutative moduli space of twisted modules on $A$ with underlying vector space $A\otimes V$ is the MC stack $\Phi(A\otimes \End(V))$. Note that if $V$ is finite dimensional then $\End(V)$ is a matrix algebra.

\subsubsection{Moduli of dg modules}
Fix a dg algebra $A$ and let $BA$ be its ordinary, non-extended bar construction, which is a conilpotent curved coalgebra. Fix a vector space $V$. Given an $A$-module structure on $V$ we obtain a $BA$-comodule structure on the tensor product $BA \otimes V$, and such a comodule structure is precisely an MC element in the convolution algebra $E\coloneqq\Hom(BA,\End(V))$. Hence the moduli space of dg-$A$-modules with fixed underlying vector space $V$ is the moduli space of MC elements in the curved algebra $E$; this is precisely the MC stack $\Phi(E)$.

\subsubsection{Moduli of local systems}
Let $X$ be a path connected pointed topological space. A local system of dg-$\ground$-vector spaces on $X$ is a dg-module over the group algebra $\ground [\pi_1(X)]$. The noncommutative moduli space of local systems with fixed fibre $V$ is the same as the noncommutative moduli space of $\ground [\pi_1(X)]$-modules with underlying module $V$, which is controlled by the dg algebra $\Hom(B\ground [\pi_1(X)],\End(V))$.

Similarly, an $\infty$-local system is a module over the dg algebra $C_*(\Omega X)\simeq \Omega C_*(X)$. Since $B\Omega C_*(X)\simeq C_*(X)$, the moduli space of $\infty$-local systems with fixed fibre $V$ is controlled by the dg algebra $\Hom(C_*(X), \End(V))\simeq C^*(X)\hat\otimes \End(V)$.

\begin{rem}Moduli problems that are naturally controlled by differential graded Lie algebras or $L_\infty$-algebras are not covered by our theory, since we do not have a notion of MC equivalence in this setting (for pronilpotent dg Lie algebras the relevant notion is that of a filtered quasi-isomorphism, c.f.\ \cite{GoldmanMillson}). For example, deformations of algebras over operads, deformations of complex structures, and deformations of $A_\infty$-structures on a given vector space are all moduli functors of this form.
\end{rem}

\bibliography{biblibrary}
\bibliographystyle{alpha}

\end{document}